\documentclass[11pt]{amsart}
\usepackage[english]{babel}
\usepackage[utf8x]{inputenc}
\usepackage[T1]{fontenc}
\usepackage[]{float}
\usepackage[margin=2cm]{geometry}
\usepackage{empheq}
\usepackage{amsmath}
\usepackage{amsthm}
\usepackage{amssymb}
\usepackage{graphicx}
\usepackage[colorinlistoftodos]{todonotes}
\usepackage[colorlinks=true, allcolors=blue]{hyperref}

\newtheorem{theorem}{Theorem}[section]
\newtheorem{proposition}[theorem]{Proposition}
\newtheorem{corollary}{Corollary}[theorem]
\newtheorem{lemma}[theorem]{Lemma}
\newtheorem{remark}{Remark} [section]
\newtheorem{definition}{Definition} [section]

\DeclareMathSymbol{,}{\mathpunct}{operators}{"2C}
\DeclareMathSymbol{.}{\mathpunct}{operators}{"2E}
\numberwithin{equation}{section}

\setlength{\marginparwidth}{2cm}
\newcommand{\ep}{\epsilon}
\newcommand{\be}{\begin{equation}}
\newcommand{\ee}{\end{equation}}
\newcommand{\p}{\partial}
\newcommand{\RP}{\text{Re}}
\newcommand{\IP}{\text{Im}}
\newcommand{\R}{\mathbb{R}}

\newcommand{\C}{\mathbb{C}}
\newcommand{\CI}{\mathcal{I}}
\newcommand{\CH}{\mathcal{H}}
\newcommand{\CR}{\mathcal{R}}
\newcommand{\CC}{\mathcal{C}}
\newcommand{\Bb}{\mathbf{b}}
\newcommand{\CD}{\mathcal{D}}
\newcommand{\BF}{\mathbf{F}}

\newcommand{\BK}{\mathbf{K}}
\newcommand{\BX}{\mathbf{X}}
\newcommand{\CE}{\mathcal{E}}
\newcommand{\CU}{\mathcal{U}}
\newcommand{\BP}{\mathbf{P}}

\begin{document}
\begingroup
\allowdisplaybreaks

\title[Water waves linearized at shear flows]{Capillary gravity water waves linearized at monotone shear flows: eigenvalues and inviscid damping}

\author[X. Liu]{Xiao Liu}
\address[X. Liu]{School of Mathematics, Georgia Institute of Technology, Atlanta, GA 30332}
\email{xliu458@gatech.edu}

\author[C. Zeng]{Chongchun Zeng}
\address[C. Zeng]{School of Mathematics, Georgia Institute of Technology, Atlanta, GA 30332}
\email{zengch@math.gatech.edu}
\thanks{CZ is supported in part by the National Science Foundation DMS-1900083.}

\begin{abstract}
We consider the 2D capillary gravity waves of finite depth $x_2 \in (-h, 0)$ linearized at a monotonic shear flow $U(x_2)$. The focuses are the eigenvalue distribution and linear inviscid damping. Unlike the 
Euler equation 
in a fixed channel 
where eigenvalues exist only in low wave numbers $k$ of the horizontal variable $x_1$, we first prove that the linearized capillary gravity wave has two branches of eigenvalues $-ik c^\pm (k)$, where the wave speeds $\R \ni c^\pm (k)  = O(\sqrt{|k|})$ for $|k|\gg1$ 
are asymptotic to those of the linear irrotational capillary gravity waves. Under the additional assumption $U''\ne 0$, we obtain the complete continuation of these two branches, which are all the eigenvalues 
 in this (and some other) case(s). In particular, $-ik c^-(k)$ could bifurcate into unstable eigenvalues at $c^-(k)=U(-h)$. 
 The bifurcation of unstable eigenvalues from inflection values of $U$ is also obtained. Assuming there are 
no embedded eigenvalues for any 
wave number $k$, 
the linearized velocity and surface profile $(v(t, x), \eta(t, x_1))$ are considered in both periodic-in-$x_1$ and $x_1\in\R$ cases. 
Each solution can be split into $(v^p, \eta^p)$ and $(v^c, \eta^c)$ whose $k$-th Fourier modes in $x_1$ correspond to the eigenvalues  and the continuous spectra of the wave number $k$, respectively. The component $(v^p, \eta^p)$ is governed by the  dispersion relation 
$\omega(k) =  - k c^\pm (k)$ 
in the case of $x_1 \in \R$. 
The other component $(v^c, \eta^c)$ satisfies the linear inviscid damping as fast as $|v_1^c|_{L_x^2}, |\eta^c|_{L_x^2} = O(\frac 1{|t|})$ and $|v_2^c|_{L_x^2}=O(\frac 1{t^2})$ as $|t| \to \infty$. Furthermore, additional $L_x^2 L_t^q$, $q\in (2, \infty]$, decay of $tv_1^c$ and $t^2 v_2^c$  is obtained after leading asymptotic terms are singled out, which are in various forms of $t$-dependent translations in $x_1$ of certain functions of $x$.  
\end{abstract}
\maketitle

\section{Introduction}

Consider the two dimensional capillary gravity water waves in the moving domain of finite depth 
\[
\CU_t=\{(x_1,x_2)\in \mathbb{T}_L \times \mathbb{R} \mid -h< x_2 < \eta(t,x)\}, \quad  \mathbb{T}_L:=\mathbb{R}/  L\mathbb{Z}, \; L>0,
\]
or 
\[
\CU_t=\{(x_1,x_2)\in \R \times \mathbb{R} \mid -h< x_2 < \eta(t,x)\}.
\]
The free surface is given by $S_t=\{(t,x) \mid x_2=\eta(t,x_1)\}$. 
For $x \in \CU_t$, let $v=(v_1(t,x), v_2(t, x)) \in \mathbb{R}^2$ denote the fluid velocity and $p=p(t,x)\in \mathbb{R}$ the pressure. They satisfy the free boundary problem of the incompressible Euler equation:
\begin{subequations} \label{E:Euler}
 \begin{empheq} 
 {align}
  & \partial_t v+(v\cdot \nabla)v+ \nabla p+g\vec{e}_2=0,  & x\in \CU_t, \label{E:Euler-1} \\
  & \nabla \cdot v=0, &x\in   \CU_t,  \label{E:Euler-2}\\
  & \p_t \eta (t, x_1)=v(t,x)\cdot(-\p_{x_1} \eta (t, x_1),1)^T, & x\in  S_t, \label{E:Euler-3} \\
  & p(t, x)=\sigma\kappa(t,x), & x\in S_t, \label{E:Euler-4}\\
  & v_2 (x_1, -h)=0, &  x_2=-h, \label{E:Euler-5}
\end{empheq}
\end{subequations}
where $\sigma > 0$, $\kappa(t,x)=-\frac{\eta_{x_1x_1}}{(1+\eta_{x_1}^2)^{\frac{3}{2}}}$ is the mean curvature of $S_t$ at $x$ which corresponds to the surface tension, $g>0$ is the gravitational acceleration,  and the constant density is normalized to be 1.

Shear flows are a fundamental class of stationary solutions in the form of 
\be \label{E:shear}
v_*:= \big(U(x_2), 0\big)^T, \quad S_*:=\{(t,x)|x_2=\eta_* (x_1) \equiv 0\}, \quad \nabla p_* = -g \vec{e}_2.
\ee 
Our primary goal in this paper is to analyze the capillary gravity water wave system linearized at a monotone shear flow satisfying 
\be \tag{{\bf H}} 
U \in C^{l_0}([-h, 0]), \;\; l_0\ge 3, \quad U'(x_2)\ne 0, \; \; \forall x_2\in [-h,0].
\ee

\begin{remark} \label{R:sign}
Due to the symmetry of horizontal reflection
\[
(x_1, x_2)\to (-x_1, x_2), \quad (v_1, v_2, \eta, p) \to (-v_1, v_2, \eta, p),  
\]
the case of $U'<0$ is completely identical except the signs of $U''$ in Theorem \ref{T:e-values}(3d) should be reversed.
\end{remark}

One of the crucial aspect of the linearized problem is the stability/instability, which is also related to the generation of surface and internal waves due to small disturbance. Mathematically, robust instability is often produced by eigenvalues of the linearized system which have positive real parts and lead to  solutions with exponential growth in $t$, while eigenvalues with negative real parts correspond to linear solutions with exponential decay. Purely imaginary eigenvalues, where there are infinitely many in the free linear capillary gravity waves (namely, linearized at zero), give periodically oscillatory linear solutions. Continuous spectra are also expected to exist, which do exist in the case of the Euler equation in a fixed channel linearized at a shear flow where certain algebraic decay of the linear solutions -- linear inviscid damping -- had been obtained under certain conditions. See Subsections \ref{SS:background} and \ref{SS:Couette} for references and the explicit example of the Couette flow. Hence  the two main aspects of the linearized capillary gravity waves that we are focusing on are the eigenvalue distribution and the linear inviscid damping.

\subsection{Linearization} \label{SS:Linearization}

We first derive the linearized system of \eqref{E:Euler} at the shear flow $(v_*=(U(x_2), 0)^T, \eta_* =0)$ given in \eqref{E:shear}  satisfied by the linearized solutions which we denote by $(v, \eta, p)$.  
Let $(S_t^\epsilon, v^\epsilon (t, x), p^\ep (t, x))$ be a one-parameter family of solutions of \eqref{E:Euler} with $(S_t^0, v^0(t,x), p^0(t, x))=(S_*, v_*, p_*)$. 
Differentiating  the Euler equation \eqref{E:Euler-1} and  \eqref{E:Euler-2} with respect to $\ep$ and then evaluating it at $\ep=0$ yield 
\begin{subequations} \label{E:LEuler}
\be\label{E:LEuler-1}
    \partial_t v+U(x_2) \p_{x_1} v+(U'(x_2) v_2 , 0)^T +\nabla p=0, \quad  \;\; \nabla\cdot v=0, \quad \; x_2 \in (-h, 0). 
\ee
Taking its divergence and also evaluating  the above linearized Euler equation at $x_2=-h$,  
we obtain
\be\label{E:LEuler-2}
 -\triangle p
 =2U'(x_2)\partial_{x_1}v_2, \quad x_2 \in (-h, 0),  \; \text{ and }  \; 
 \partial_{x_2}p|_{x_2=-h} =0. 
\ee
From the kinematic boundary condition \eqref{E:Euler-3}, we have 
\be\label{E:LEuler-4} 
    \partial_t \eta= v_2|_{x_2=0} -U(0) \partial_{x_1}\eta.
\ee
Finally differentiating  \eqref{E:Euler-4}, where the left side is $p^\ep (t, x_1, \eta^\ep(t, x_1))$, 
and using $\partial_{x_2}p_*=-g$, we obtain  
\be\label{E:LEuler-5}
    p=g\eta-\sigma \partial_{x_1}^2\eta, \quad \text{ at } \ x_2=0. 
\ee
\end{subequations}
The above (\ref{E:LEuler-1} -- \ref{E:LEuler-5}) form the linearization of the capillary gravity water wave problem \eqref{E:Euler} at the shear flow $(v_*, S_*, p_*)$ with initial values $(v_{10}(x), v_{20} (x), \eta_0(x_1))$. In fact it can be reduced to an evolutionary problem of the unknowns $(v, \eta)$, while 
$p$  can be recovered by the boundary value problem of the elliptic system \eqref{E:LEuler-2} and \eqref{E:LEuler-5}. 

\subsection{Backgrounds and motivations} \label{SS:background}

Due to its physical and mathematical significance there have been extensive studies of the Euler equation   linearized at shear currents. Many of these works were on a fixed channel with slip boundary conditions 
\be \label{E:E-channel}
\text{\eqref{E:Euler-1}--\eqref{E:Euler-2} with } g=0, \;\; \; x_2 \in (-h, 0), \quad v_2 (x_1, 0)= v_2(x_1, -h)=0, 
\ee
and some of the results have been extended to free boundary problems such as the gravity waves. The spectral analysis is naturally a crucial part of such linear systems. Eigenvalues yield linear solutions exponential in time, while the continuous spectra often lead to algebraic decay of solutions, the so-called inviscid damping due to the lack of a priori dissipation mechanism of the Euler equation.   

$\bullet$ {\it Eigenvalues.} Since the variable coefficients in the linearized Euler system depend only on $x_2$, the subspace of the $k$-th Fourier mode is invariant under the linear evolution for any $k \in \R$. Hence it is a common practice to seek eigenvalues and eigenfunctions in the form of 
\be \label{E:e-func} 
v(t, x) = e^{ik(x_1 - c t)} \big(v_{10}(x_2), v_{20} (x_2) \big), \;\; \text{ also } \; \eta(t, x_1) = e^{ik(x_1 - c t)} \eta_0 (x_1) \; \text{in the free boundary case},  
\ee
where apparently the eigenvalues take the form $\lambda =-ikc$ with the wave speed $c= c_R + i c_I \in \C$. The linear system is spectrally unstable if there exist such $c$ with $c_I>0$, which appear in conjugate pairs. Solutions in the above form with $c\in U([-h, 0])$ are in a subtle situation and are referred to as singular modes (see Definition \ref{D:modes} and Remark \ref{R:singularM} for singular and non-singular modes). In seeking solutions in the form of \eqref{E:e-func}, the wave number $k \in \R$ is often treated as a parameter. 

Classical results on the spectra of the Euler equation \eqref{E:E-channel} in a channel  linearized at a shear flow
include:
\begin{itemize}
\item Unstable eigenvalues  are isolated for any wave number $k \in \R$ and do not exist for $|k|\gg 1$.
\item Rayleigh's necessary condition of instability \cite{Ray1880}: unstable eigenvalues do not exist for any $k$ if $U'' \ne 0$ on $[-h, 0]$ (see also \cite{Fj50}).
\item Howard's Semicircle Theorem \cite{How61}: for any $k\ne 0$, eigenvalues exist only with $c$ in the disk 
\be \label{E:semi-circle}
\big(c_R-\tfrac 12 (U_{max}+U_{min})\big)^2+c_I^2\leq \tfrac 14 (U_{max}- U_{min})^2. 
\ee
\item Unstable eigenvalues may exist with $c$ near inflection values of $U$ (Tollmien \cite{To35} formally, also \cite{LinC55}). 
\end{itemize}
Many classical results can be found in books such as \cite{DR04, MP94} {\it etc.} For a class of shear flows, the rigorous bifurcation of unstable eigenvalues was proved, e.g., in \cite{FH98, Lin03}. In particular, continuation of branches of unstable eigenvalues were obtained by Lin in the latter. 

It has been extended to the linearized free boundary problem of gravity  waves (i.e. $g>0$ and $\sigma=0$ in \eqref{E:Euler}) at shear flows (see \cite{Yih72, HL08, RR13} {\it etc.}) that: a.) assuming $U'>0$ and $U''\ne 0$ on $[-h, 0]$, there are no singular neutral modes in $U\big((-h, 0)\big)$ (i.e. solution in the form of \eqref{E:e-func} with $c \in U\big((-h, 0)\big)$); b.) the semicircle theorem still holds for unstable eigenvalues; and c.) for a class of shear flows, singular neutral modes may exist at inflection values of $U$ and the bifurcation and continuation of branches of unstable eigenvalues were also obtained. Compared to channel flows with fixed boundaries, new phenomena of the linearized gravity waves include: a.) in addition, critical values of $U$, where $U'=0$, and $c=U(-h)$ may be limiting singular neutral modes; and b.) there are non-singular neutral modes, i.e. $c \in \R \setminus U([-h, 0])$. Another related result is Miles' critical layer theory \cite{Mi57, BSWZ16} on the instability of shear flows in two-phase fluid interface problem due to the resonance between the temporal frequency of the linear irrotational capillary gravity waves at  the completely stationary water and the shear flow in the air in the above.   

$\bullet$ {\it Inviscid damping.} The analysis of the inviscid damping phenomenon started with the Euler equation in a fixed periodic channel \eqref{E:E-channel} linearized at the Couette flow $U(x_2)=x_2$. In 1907, Orr \cite{Orr1907} observed that the linearized vertical velocity $v_2(t, x)$ tends to zero as $t\to \infty$. Under the assumption $\int v_{10} (x_1, x_2) dx_1=0$, $\forall x_2 \in (-h, 0)$, which removes the shear flow component of the linear solutions through an invariant splitting, explicit calculations (see, e.g., \cite{Ca60, LZ11}) yield, as $t\to \infty$, 
\be \label{E:decay-0} 
\omega_0 \in L^2 \implies |v|_{L^2} = o(1), \;\; 
\omega_0 \in H^1\implies |v|_{L^2} = O(\tfrac 1{|t|}), \;\; \omega_0 \in H^2 \implies |v_2|_{L^2} = O(\tfrac 1{|t|^2}), 
\ee
where $\omega_0$ denotes the initial vorticity. More general shear flows in a fixed channel have also been studied extensively. For a class of general stable shear flows, Bouchet and Morita \cite{BM10} predicted similar decay estimates of the linearized velocity as well as the vorticity depletion phenomenon. For monotone shear flows without infection points, an $O(|t|^{-\nu})$ decay of the stream function was proved in \cite{Ste95} and then the \eqref{E:decay-0} type decay in \cite{Zi16, Zi17} under a smallness assumption of $LU''$ (also $\omega_0|_{x_2=-h, 0}=0$ in order for the $O(t^{-2})$ decay of $v_2$). A significant contribution is \cite{WZZ18} by Wei-Zhang-Zhao where the \eqref{E:decay-0} type estimates were obtained for general monotone shear flows without singular modes. In the follow-up works \cite{WZZ19, WZZ20, WZZhu20}, vorticity depletion and velocity decay (as well as an $L_t^2$ decay if $\omega_0\in L^2$ only) were also obtained for a class of non-monotone shear flows. As the decay rates in \eqref{E:decay-0} are basically optimal, some leading order effects from both the interior and the boundary were identified  in \cite{Zi16, Jia20S}. In the absence of boundary impact, for compactly supported initial vorticity, linear inviscid damping near a class of monotone shear flows was also obtained in Gevrey spaces \cite{Jia20A}. In \cite{GNRS20}, a different approach using methods from the study of Schr\"odinger operators was successfully adopted to analyze inviscid damping. See also \cite{BCV19, IJ21} for important developments for the linear inviscid damping at circular flows in $\R^2$. 

While this paper focuses on the linearized capillary gravity waves at shear flows, among the rich literatures on the related nonlinear dynamics of the 2-d Euler equation on fixed domains we refer the readers to \cite{AK98} for nonlinear Lyapunov stability of steady states based on energy-Casimir functions by Arnold; the remarkable asymptotic stability of shear flows in Gevrey class \cite{BeMa15, IJ20a, IJ20b, MZ20} based on the linear inviscid damping; and for nonlinear instability of steady states \cite{FSV97, Gre00, Lin04, LZ13}, {\it etc.}    

$\bullet$ {\it Intuitions and goals on linearized capillary gravity waves.} 
The goal of this paper is to study thoroughly the capillary gravity water waves linearized at a shear flow $U(x_2)$ under the above monotonicity assumption ({\bf H}), focusing on the spectral distribution, stability/instability, and, if the eigenvalues are properly separated from the continuous spectra, the spectral projections and the linear inviscid damping.

For an illustration, some explicit computations of the linearized capillary gravity wave system \eqref{E:LEuler} at the Couette flow $U(x_2)=x_2$ are given in Subsection \ref{SS:Couette}. There it is easy to see that, on the one hand, the linear inviscid damping \eqref{E:decay-0} holds for the rotational part of the solutions. On the other hand, there exist two branches of neutral modes $c^\pm (k)$ (see \eqref{E:dispersion-Q}) approaching infinity at the same rate as \eqref{E:dispersion-F} of the irrotational capillary gravity waves linearized at zero. They form the two branches of the dispersion relation of the irrotational components  in the linearized water wave system at the Couette flow, which is linearly stable. 

Based on the above cited existing results on the channel flows with fixed boundaries, as well as those on the gravity water waves, and the explicit calculations of the capillary gravity waves linearized at the Couette flow, the analysis of the linearization \eqref{E:LEuler} of the capillary gravity water waves at a general monotonic shear flow $U(x_2)$ is expected to include the following. 

a.) {\it Eigenvalues for wave numbers $|k|\gg 1$.} Like matrices, eigenvalues (or equivalently, singular and non-singular modes) of \eqref{E:LEuler} correspond to roots of a ``characteristic" function (the $\BF(k, c)$ defined in \eqref{E:BF}) analytic in $k \in \R$ and $c \in \C \setminus U([-h, 0])$. In contrast to the linearized Euler equation on a fixed domain where no eigenvalues exist for any large wave number $k$, as seen in the  linearized irrotational solutions of the capillary gravity waves at both the trivial (zero) solution and the Couette flow, likely there exist two non-singular neutral modes $c^\pm(k)$ for each wave number $|k|\gg 1$. These branches behave rather differently compared to the linearized gravity waves since the surface tension is dominant for $|k|\gg1$. This part would be handled by an asymptotic analysis (Section \ref{S:e-values}).

b.) {\it Analytic continuation and bifurcation of branches of eigenvalues and the spectral stability (Section \ref{S:e-values})}. Each branch of non-singular modes could continue as long as they do not collide with each other or reach $U([-h, 0])$, the boundary of the domain of analyticity of the characteristic function. The bifurcation analysis for $c$ near $U([-h, 0])$, more specifically near inflection values of $U$ which possibly generates the instability (compare with the gravity wave case \cite{HL08, RR13, HL13}) and $U(-h)$, would require very careful study of the dependence of the solutions to the classical Rayleigh equation \eqref{E:Ray-H1-1} on the singular parameter $c$ (Section \ref{S:Ray-Homo}). Our main tool is a local transformation which isolates the singular part of the solutions. 
 
c.) {\it Spectral projections and the linear inviscid damping (Section \ref{S:Linear}).} Assuming that the eigenvalues are properly separated from the continuous spectra, a decomposition of linear solutions $(v, \eta) = (v^p, \eta^p) + (v^c, \eta^c)$ into components corresponding to the eigenvalues and the continuous spectra, respectively, is expected. However, the boundedness of this spectral projection still needs to be obtained which is conceptually related to a lower bound of the angles both the two infinite dimensional components. This is a generalization of the Hodge decomposition of the free linear capillary gravity waves (linearized at zero) into the irrotational and the rotational parts. Both this intuition and the calculations of the capillary gravity waves linearized at the Couette flow suggest that $(v^p, \eta^p)$ is mostly related to the surface motion and dispersive (possibly with some unstable modes), while the other component $(v^c, \eta^c)$ is more determined by the internal rotations and thus by the vorticity.   
Whether the Euler equation is in a fixed domain or with free boundaries, the vorticity is transported  in the same fashion by the fluid flow in the interior of the fluid domain. Hence it is natural to expect the linear inviscid damping \eqref{E:decay-0} of $(v^c, \eta^c)$. One may further ask whether \eqref{E:decay-0} is optimal in general. If so, a deeper question is whether it is possible to identify the leading order parts of $(v^c, \eta^c)$ for $|t|\gg1$? These studies would be based on the careful analysis of the  spectral contour integrals (Section \ref{S:Linear}) and the solutions to the homogeneous (Section \ref{S:Ray-Homo}) and non-homogeneous Rayleigh equation (Section \ref{S:Ray-BC}). 



\subsection{Main results} 

We first give the main theorem on the eigenvalue distribution along with its implication on the linear stability. The results on the linear inviscid damping are somewhat more technical and only roughly outlined here. Their more precise statements are given in Theorems \ref{T:decay-per} and \ref{T:decay-R} in Subsection \ref{SS:MainT}. See Definition \ref{D:modes}, Lemma \ref{L:e-v-basic-1}(5), and Remark \ref{R:singularM} for what are referred to as singular and non-singular modes. Particularly, by slightly adjusting the  same argument as in \cite{How61, Yih72},  the Semi-circle Theorem still holds for the unstable modes of the linearized system  \eqref{E:LEuler} of the capillary gravity water waves at shear flows, namely, any unstable mode satisfies \eqref{E:semi-circle}. We shall take this as granted in the rest of the paper. 


\begin{theorem} \label{T:e-values}
({\bf Eigenvalues.}) Suppose $U \in C^3$ and $U'>0$ on $[-h, 0]$, then the following hold. 
\begin{enumerate} 
\item There exists $k_0>0$ such that for any $k\in \R$ with $|k|\ge k_0$, there are no singular modes and  exactly two non-singular modes $c^+(k) \in ( U(0), +\infty)$ and $c^-(k) \in (-\infty, U(-h))$ which correspond to semi-simple eigenvalues $-ikc^\pm (k)$. Moreover,  
\begin{enumerate}
\item $c^\pm (k)$ are even and analytic in $k$ and $c^+(k)$ can be extended for all $k \in \R$ with $c^+(k) > U(0)$;  
\item $\lim_{|k| \to \infty}  c^\pm (k)/\sqrt{\sigma |k|} =\pm 1$;    
\item if $U(-h)$ is not a singular mode for any $k \in \R$, then $c^-(k)$  can also be extended to be  even and analytic in all $k\in \R$ with $c^-(k) < U(-h)$; and 
\item if singular modes do not exist  $\forall \, k\in \R$, then $(k, c^\pm(k))$ are the only non-singular modes of \eqref{E:LEuler} which is linearly stable. 
\item  for $k >0$, $c^+(k)$ (and $c^-(k) < U(-h)$ as well if it can be extended for all $k \in \R$) has either none or exactly one critical point depending on whether \eqref{E:c(k)-mono-1} holds. In the latter case, the critical point is non-degenerate. 
\end{enumerate}
\item There exists $g_\# \ge 0$ depending only on $U$ and $\sigma$ such that the following hold.
\begin{enumerate} 
\item If $g> g_\#$, then the non-singular modes $c^-(k) < U(-h)$ can also be extended to be  even and analytic in all $k\in \R$ and $\pm (c^\pm(k))' >0$ for $k>0$; 
\item $g_\#=0$ if and only if 
\be \label{E:sigma-0} 
\sigma \ge \int_{-h}^0 \big(U(x_2) - U(-h)\big)^2 dx_2.
\ee 
\end{enumerate} 
\item If $U''\ne 0$ on $[-h, 0]$ is also satisfied, then 
the following hold with the $g_\#$ given in the above statement (2).
\begin{enumerate} 
\item The only possible singular mode is $c=U(-h)$. 
\item If $g> g_\#$ then there are no singular modes,  
$c^\pm(k)$ are the only non-singular modes, and thus \eqref{E:LEuler} is linearly stable.
\item If $g=g_\#$ and $U\in C^5$, then there exists $k_\#>0$ such that 
$c^-(k)$ can be extended as an even $C^{1, \alpha}$ function (for any $\alpha \in [0, 1)$) for all $k \in \R$. Moreover $c^- (k) < U(-h)$ is analytic for all $k \ne \pm k_\#$, and $c^-(\pm k_\#) = U(-h)$. For each $k \in \R$,  $c^\pm (k)$ are the only singular or non-singular modes and thus \eqref{E:LEuler} is spectrally stable.  
\item If $g< g_\#$  and $U\in C^5$, then there exist $k_\#^+ > k_\#^->0$ such that we have the following. 
\begin{enumerate}
\item Assume $U''>0$ on $[-h, 0]$, then $c^-(k)$ can be extended as an even $C^{1, \alpha}$ function (for any $\alpha \in [0, 1)$) for all $k \in \R$ and analytic except at $k = \pm k_\#^\pm$ such that 
\[
c^-(\pm k_\#^\pm) = U(-h), \quad c^-(k) < U(-h), \;\, \forall |k| \notin [k_\#^-, k_\#^+], \quad  c_I^-(k) >0, \;\, \forall |k| \in (k_\#^-, k_\#^+).  
\] 
Moreover, for each $k$, all singular and non-singular modes are exactly $c^+ (k)$, $c^-(k)$, as well as $\overline{c^-(k)}$ if $|k| \in (k_\#^-, k_\#^+)$. Consequently, \eqref{E:LEuler} is spectrally unstable iff A.) $x_1 \in \R$ or B.) $x_1\in \mathbb{T}_L$ and there exists $m \in \mathbb{N}$ such that $\frac {2\pi m}L \in (k_\#^-, k_\#^+)$. 
\item Assume $U''<0$ on $[-h, 0]$, then $c^-(k)$ can be extended as an even $C^{1, \alpha}$ real valued function (for any $\alpha \in [0, 1)$) for $|k| \notin (k_\#^-, k_\#^+)$, analytic in $k$ if $|k| \notin [k_\#^-, k_\#^+]$, and $c^-(\pm k_\#^\pm) = U(-h)$. Moreover,  all singular and non-singular modes  are exactly $c^+ (k)$ and $c^-(k)$, where the latter is only for $|k| \notin (k_\#^-, k_\#^+)$, and \eqref{E:LEuler} is spectrally stable.    
\end{enumerate}
\end{enumerate} 
\item If $U \in C^5$ and $U'' (x_{20}) =0$ for some $x_{20} \in [-h, 0)$. Let $c_0 = U(x_{20})$. 
\begin{enumerate}
\item There exists $\sigma_0, k_0>0$ such that for any $\sigma\in (0, \sigma_0)$, there exists a unique $k> k_0$ unique in $[k_0, \infty)$  such that $c_0$ is a singular neutral mode for $\pm k$. 
\item If $x_{20} \in (-h, 0)$, $U''' (x_{20})\ne 0$, and $c_0$ is a singular neutral mode for some $k_0>0$, then, under a non-degenerate condition (verified for the one obtained in (4a) for small $\sigma$), there exist unstable modes near $c_0$ for $k$ close to $k_0$ on one side of $k_0$. 
\end{enumerate}
\end{enumerate}
\end{theorem}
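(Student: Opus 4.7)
The plan is to analyze the Rayleigh equation for the $k$-th Fourier mode of \eqref{E:LEuler}, namely $(U-c)(\phi''-k^2\phi)-U''\phi=0$ on $(-h,0)$ with Dirichlet condition $\phi(-h)=0$ at the bottom and a capillary-gravity condition at $x_2=0$ of the schematic form $(c-U(0))^2\phi'(0)=\bigl(g+\sigma k^2-U'(0)(c-U(0))\bigr)\phi(0)$. The main object is a scalar dispersion function $F(k,c)$ built by solving the Rayleigh equation from $x_2=-h$ with the Dirichlet condition, evaluating at the top, and imposing the capillary-gravity condition; its zeros are exactly the modes, and the whole argument consists of tracking these zeros as $c$ approaches the potential singular value $U(-h)$ in different regimes of $g$ and the sign of $U''$.

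For part (3)(a), I would apply Frobenius analysis at any interior critical value $c_0=U(x_{20})$ with $x_{20}\in(-h,0)$: at the regular singular point of the Rayleigh equation there are two local solutions, one analytic and proportional to $(x_2-x_{20})$, the other carrying a nonzero log term whose coefficient is proportional to $U''(x_{20})/U'(x_{20})$. A genuine eigenfunction must be a multiple of the analytic solution near $x_{20}$, which together with the two boundary conditions gives an overdetermined system; the relevant determinant is nonzero precisely because $U''\ne 0$. The endpoint $c=U(0)$ is excluded by the same mechanism, since the capillary-gravity condition involves $\phi'(0)$ and keeps the log-term active, leaving $c=U(-h)$ as the only candidate singular mode. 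For part (3)(b), statement (2)(a) extends the real branch $c^-(k)<U(-h)$ globally when $g>g_\#$, so $c=U(-h)$ is never attained, and (3)(a) then rules out all singular modes. An unstable eigenvalue would be isolated, lie in Howard's semicircle, and disappear for $|k|\gg 1$ by part (1), so continuity in $k$ would force a bifurcation off a singular neutral mode, which does not exist. Hence $c^\pm$ are the only modes and \eqref{E:LEuler} is linearly stable.

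The core of the proof is (3)(c) and (3)(d). Near $c=U(-h)$ the Rayleigh equation develops a regular singular point that approaches the boundary $x_2=-h$; Frobenius analysis there matched to the solution built from $x_2=0$ yields, under $U\in C^6$, an expansion
\[
F(k,c)=F_0(k)-\bigl(c-U(-h)\bigr)F_1(k)-\frac{U''(-h)}{U'(-h)}\bigl(c-U(-h)\bigr)^2\log\bigl(U(-h)-c\bigr)F_2(k)+O\bigl((c-U(-h))^2\bigr),
\]
analytic in $c$ on each side of $U(-h)$ and $C^{1,\alpha}$ across it for every $\alpha<1$. The threshold $g_\#$ is characterized by $F_0(\cdot;g_\#)$ having a double zero at some $\pm k_\#$; existence of $g_\#\ge 0$ is part (2). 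For $g=g_\#$, a $C^{1,\alpha}$ implicit function theorem applied to $F$ produces the tangential contact $c^-(\pm k_\#)=U(-h)$ with the stated regularity and analyticity away from $\pm k_\#$, and since the modes stay real the system is spectrally stable, proving (3)(c). For $g<g_\#$, $F_0(\cdot;g)$ has two simple zeros $\pm k_\#^\pm$; for $|k|\in(k_\#^-,k_\#^+)$ the equation $F(k,c)=0$ has no real root near $U(-h)$, and a local reduction to a quadratic bifurcation model whose discriminant carries the sign of $U''(-h)/U'(-h)$ produces complex-conjugate unstable roots on the physical sheet when $U''>0$ on $[-h,0]$ and no admissible eigenvalue in this range when $U''<0$, so that the branch simply disappears, giving (3)(d).

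The main obstacle is the lack of analyticity of $F$ at $c=U(-h)$, which rules out direct use of holomorphic bifurcation tools. The resolution is to establish the Frobenius expansion above with remainder estimates uniform in $(k,c)$, verify that $F_0,F_1,F_2$ are real-analytic in $k$ and identify them with the quantities governing (1) and (2), and read off the sign of the bifurcation discriminant from the coefficient $U''(-h)/U'(-h)$ (with the global sign of $U''$ on $[-h,0]$ entering through the sign of $F_2(k)$ along the branch). Choosing the branch of the logarithm so that unstable eigenvalues sit on the physical Riemann sheet, and verifying that the complex bifurcating roots correspond to genuine eigenfunctions of \eqref{E:LEuler} rather than resolvent singularities off the spectrum, is the final delicate point that ties the spectral picture together.
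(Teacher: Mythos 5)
Your overall strategy coincides with the paper's: both reduce everything to a dispersion function built from the Rayleigh shooting problem with the capillary--gravity condition at $x_2=0$, both exploit that this function is only $C^{1,\alpha}$ (not analytic) across $c=U(-h)$ when restricted to $c_I\ge 0$, and both run a finite-regularity implicit-function argument there. However, as written the proposal has genuine gaps. First, it is badly incomplete relative to the theorem: parts (1), (2), (1e) and (4) are used as black boxes rather than proved. The existence, simplicity, evenness and asymptotics of $c^\pm(k)$ for $|k|\gg1$ require the contraction/Cauchy-integral estimates on $Y$ of Lemma \ref{L:ev-large-k}; the structure of $g_\#$ and of the zero set of $k\mapsto F(k,U(-h))$ (no zero, one double zero, or two simple transversal zeros $\pm k_\#^\pm$) is not automatic but rests on the concavity $\p_K^2F<0$ in $K=k^2$ and the explicit value $\p_KF(0,U(-h))=-\sigma+\int_{-h}^0(U-U(-h))^2dx_2$ (Lemmas \ref{L:F-signs}, \ref{L:g-thresh}), which is exactly where \eqref{E:sigma-0} comes from; the monotonicity/conjugacy claims in (1e) need Lemma \ref{L:c(k)-mono} and Proposition \ref{P:conjugacy}; and nothing in your plan touches the inflection-value statements (4), which in the paper require separate estimates of $Y(k,c_0)$ and $\p_kY(k,c_0)$ plus an implicit-function argument using $\p_{c_R}F_I\propto U'''(x_{20})\ne0$ (Proposition \ref{P:inflectionV}). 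Your exclusion of interior singular modes is also stated too loosely: the precise input is that $\IP F(k,U(x_2))=\pi(U(0)-c)^2U''(x_2)y_-(x_2)^2/(U'|y_-(0)|^2)\ne0$, which needs the strict positivity of $y_-$ up to the critical layer (Lemma \ref{L:y-lower-b}), not just an ``overdetermined system'' count; and the ``only modes''/stability claims in (3b)--(3d) require the argument-principle continuation and index count of Lemma \ref{L:continuation} and Proposition \ref{P:e-v-0}, which your heuristic ``unstable modes must bifurcate off a singular neutral mode'' does not by itself deliver (it does not rule out extra real non-singular modes or collisions away from $U([-h,0])$ without the index bookkeeping).

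Second, and most seriously, the local analysis at $c=U(-h)$ in (3c)--(3d) is not set up correctly. You never verify the non-degeneracy $\p_cF(k,U(-h))\ne0$ at the contact points, which is indispensable both for your $C^{1,\alpha}$ implicit function theorem at $g=g_\#$ and for following the branch through $U(-h)$ at $g<g_\#$; in the paper this is Lemma \ref{L:F-esti}, a nontrivial sign analysis done separately for $U''>0$ (monotonicity of $\p_cF$ in $K$ via $\p_{Kc}Y<0$ plus the explicit $k=0$ value $-U'(-h)$) and for $U''<0$ (using the constraint $F=0$ to eliminate $Y$). Moreover, your ``local reduction to a quadratic bifurcation model whose discriminant carries the sign of $U''(-h)/U'(-h)$'' is not the mechanism that operates here and cannot be justified at $C^{1,\alpha}$ regularity: there is no Weierstrass/quadratic normal form at $c=U(-h)$. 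What actually happens (Lemma \ref{L:bifurcation}) is that a single $C^{1,\alpha}$ branch $\CC(k)$ crosses $U(-h)$ transversally in $k$ (since $\p_kF(k_\#^\pm,U(-h))\ne0$), stays real on the side where $\CC_R<U(-h)$, and on the other side its imaginary part is forced by the one-sided limit $F_I=(U(0)-c_R)^2Y_I$ with $Y_I\propto U''$, yielding $\CC_I(k)\approx -Y_I(k,\CC_R(k))(U(0)-U(-h))^2/\p_cF(k_\#,U(-h))$, which is $O((k-k_\#)^2)$ and of the sign of $U''$; there is no square-root-type creation of a conjugate pair, and for $U''<0$ the root of the extended function has $\CC_I<0$ and hence corresponds to no mode at all (the conjugate root of the genuine problem is excluded by the IFT uniqueness), which is how the branch ``disappears''. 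Your postulated expansion of $F$ with the $(c-U(-h))^2\log(U(-h)-c)$ term is plausible in form, but establishing it with uniform remainders, together with the identification of the coefficient of the imaginary jump with $Y_I$, is precisely the content of the critical-layer machinery (the matrices $B$, $\Gamma_0$ and the Hilbert-transform representation of $Y$ in Section 3) that your plan assumes rather than supplies.
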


\begin{remark} 
a.) 
The linear stability in (1d) and (3b) holds due to the inviscid damping in Theorems \ref{T:decay-per} and \ref{T:decay-R} and the dynamics in the directions of eigenfunctions being only oscillatory. \\
b.) If $U'' \ne 0$ and  \eqref{E:sigma-0} is satisfied, then the results in the above (3b) hold. \\
c.) Conceptually both the surface tension and the gravity have stablizing effects. For a given monotonic shear flow, assumption (1.8) is a sufficient condition to ensure that the surface tension itself is strong enough to stabilize the whole branch of the point spectra continued from $c^-(\infty)= -\infty$. \\
d.) Condition \eqref{E:c(k)-mono-1} is also directly on $U$, $g$, and $\sigma$, but less explicit, and we leave it in Subsection \ref{SS:e-v-basic}.  
\end{remark}

The existence of the unbounded branches of non-singular neutral modes $c^\pm(k)$ are in contrast to the gravity waves or the Euler equation on fixed channels. 
The geometric multiplicity of $-ik c^\pm (k)$ occurs only among different $k$. These temporal frequencies $-kc^\pm(k)$ are asymptotic to those (see \eqref{E:dispersion-F}) of the irrotational capillary gravity waves linearized at zero. Moreover, after normalizing the $L^2$ norm of the $v$ component of the eigenfunction to be $1$, the $L^2$ and $H^1$ differences in the $v$ and $\eta$ components, respectively, between the eigenfunctions of \eqref{E:LEuler} and the linearized irrotational capillary gravity waves are of the order $O(|k|^{-\frac 32})$ as $|k| \to \infty$ (see Remark \ref{R:e-func}). 
In the case (happening only if (1.8) is not satisfied) where the branch $c^-(k)$ reaches $U(-h)$, where the bifurcation equation has the worst regularity, subtle bifurcations occur. 
This had been pointed out as a possibility in the linearized gravity waves \cite{HL08, RR13, HL13}, but not analyzed. 
In particular, it runs out that the sign of $U''$ determines whether $c^- (k)$ becomes unstable or disappears at $U(-h)$. 

The spectral stability in the case $U''<0$ can also be obtained by directly modifying the usual  proof of the Rayleigh theorem 
in the fixed channel flow case, as done in \cite{Yih72} for the gravity wave. 
Our proof provides a complete picture of the eigenvalue distribution as in the above theorem, however.

While $U(0)$ is never a singular mode, just like the Rayleigh's theorem in the channel flow case the change of sign of $U''$ turns out to be necessary for the existence of {\it interior} singular modes, which is also  sufficient if $\sigma \ll1$. In the contrast this may not be sufficient if the stabilizing gravity $g$ and surface tension $\sigma$ are strong, see Remark  \ref{R:inflectionV}. 
\\ 

In the following outline of the linear inviscid damping results, $k \in \BK= \frac {2\pi}L\mathbb{N}$ if $x_1\in \mathbb{T}_L$, while $k \in \BK= \R$ if $x_1 \in \R$. The initial velocity $v_0= (v_{10}, v_{20})$ is always assumed to satisfy $\p_{x_1}^{n_1} \p_{x_2}^{n_2} v_0 \in L^2 (\mathbb{T}_L \times [-h, 0])$ or $L^2 (\R \times [-h, 0])$ for some $n_1$'s and $n_2$'s and similarly $\p_{x_1}^n \eta_0 \in L^2$. To avoid too much technicality, here we skip the detailed assumptions on their regularity  in $x_1$, but focus on that of $x_2$ only. Precise statements are given in Theorems \ref{T:decay-per} and \ref{T:decay-R}. The following $L_t^q$ is always for $t \in \R$. 

{\bf Main results on inviscid damping.} Assume ({\bf H}) and there are no singular modes for all $k\in \BK$, then any solution $(v(t, x), \eta(t, x_1))$ to \eqref{E:LEuler} can be decomposed into $v = v^p + v^c$ and $\eta = \eta^p + \eta^c$, where $(v^p, \eta^p)$ belongs to the invariant subspace generated by the eigenfunction of the non-singular modes for all $k \in \BK$, while $(v^c, \eta^c)$ and its vorticity $\omega^c$ satisfy the following estimates. 
{\it \begin{enumerate} 
\item If the initial vorticity $\omega_0 \in L_{x_2}^2$, then for any $q \in [2, \infty]$, $v^c \in L_x^2 L_t^q$ and $\eta^c \in L_t^q$.
\item If $\omega_0 \in H_{x_2}^1$, then $tv_2^c$, $t \eta^c \in L_x^2 L_t^{q_1}$ for any $q_1 \in [2, \infty]$. Moreover there exists $\Omega^c (x) \in H_{x_2}^1$ such that for any $q_2\in (2, \infty]$, 
\[
t v_1^c - U'(x_2)^{-1} \p_{x_1}^{-1}\Omega^c (x_1- U(x_2)t, x_2), \,\omega^c - \Omega^c (x_1- U(x_2)t, x_2), \, \p_{x_2}^2 v_2^c - \p_{x_1} \Omega^c (x_1- U(x_2)t, x_2) \in L_{x}^2  L_t^{q_2}. 
\]  
\item If $\omega_0 \in H_{x_2}^2$, then there exist $\Lambda_B (x), \Lambda_T(x) \in H_{x_2}^1$ such that for any $q_2\in (2, \infty]$, 
\[
t^2 v_2^c - U'(x_2)^{-2} \p_{x_1}^{-1}\Omega^c (x_1- U(x_2)t, x_2)  - \Lambda_B (x_1- U(-h)t, x_2)   
- \Lambda_T (x_1- U(0)t, x_2) \in L_{x}^2  L_t^{q_2}. 
\]  
\end{enumerate}}

In the above results, the assumption of the non-existence of singular modes, which is equivalent to the absence of embedded eigenvalues of \eqref{E:LEuler} for each wave number $k$, turns out to yield the spectral decomposition of the phase space of \eqref{E:LEuler} into the invariant subspaces corresponding to the non-singular modes/point spectra and the continuous spectra $-ik U([-h, 0])$ for each $k \in \R$. 

The component $(v^c, \eta^c)$ corresponds to the continuous spectra and enjoys temporal algebraic decay as in the case \eqref{E:E-channel} of the Euler equation in a fixed channel. 
For the case of $x_1 \in \R$, certain stronger decay in $|k|\ll1$ (for long waves) is also assumed on the initial values, see Theorem \ref{T:decay-R} and Remark \ref{R:LongW}. 
Additional to the above $L_t^q$ bounds, derivatives-in-$t$ estimates are given in Theorems \ref{T:decay-per} and \ref{T:decay-R} as well which also imply pointwise-in-$t$ decay. Compared with \eqref{E:decay-0}, these additional $L_t^q$ estimates represent an improvement of decay of roughly an order of $O(t^{-\frac 1q})$ (after appropriate $t$-dependent translations in $x_1$ of some asymptotic leading terms are identified and singled out in the cases of $tv_1^c$, $t^2 v_2^c$, {\it etc.}). 
For the Euler equation in a fixed channel \eqref{E:E-channel}, a.) when $\omega_0\in L^2$, the $|v|_{L_t^2}$ estimate was also obtained in \cite{WZZ19, WZZhu20}; b.) comparable asymptotic leading terms were identified in Lemma 3 of \cite{Zi16}  and  in Lemma 5.1 of \cite{Jia20S}. The Fourier transforms (in $x_1$) of these leading terms $\Omega^c$, $\Lambda_T$, and $\Lambda_B$ are given explicitly in \eqref{E:Omega-1}, \eqref{E:Lambda_B}, and \eqref{E:Lambda_T}, which represent the impact of the interior flow and the top and bottom boundaries, respectively.
See also \eqref{E:Lambda-BVP} for singular elliptic boundary value problems satisfied by $\Lambda_T$ and $\Lambda_B$. In particular, the free boundary effect is explicitly reflected in the boundary conditions \eqref{E:Ray-3} of the corresponding Rayleigh equation \eqref{E:Ray} and the form \eqref{E:Lambda_T} of $\Lambda_T$. The error estimates in addition to these leading asymptotic terms also justify that the estimates of $tv_1$ and $t^2 v_2$ in \eqref{E:decay-0} are optimal. The precise asymptotic leading terms could be useful for further analysis.   

The component $(v^p, \eta^p)$ are given by superpositions of the eigenfunctions of those non-singular modes, which is governed by a (possibly unstable) multi-branched dispersion relation given by $k \to -k c$ for all non-singular modes $c$ of the $k$-th Fourier modes in $x_1$. According to the above spectral analysis, this dispersion relation is asymptotic to that of the linear irrotational capillary gravity wave for $|k|\gg1$. In the case of $x_1\in \R$, in the absence of singular modes, all non-singular modes are  given by $c^\pm (k)$  which are neutral/stable. 
The dispersion of $(v^p, \eta^p)$ implies that it should decay if $x_1 \in \R$, but at a slower rate. 
Hence the dynamics of \eqref{E:LEuler} has two layers: {\it faster inviscid decay of $(v^c, \eta^c)$ leaves the remaining $(v^p, \eta^p)$ decaying at a slower rate due to the dispersion like a linear irrotational capillary gravity wave. }

In the periodic-in-$x_1$ case, as the non-existence of singular modes is assumed only for $k \in \frac {2\pi}L \mathbb{N}$, there can still be other non-singular modes besides $c^\pm (k)$ which may have bifurcated from  inflection values of $U([-h, 0])$ (as well as unstable modes from $U(-h)$) at some $k \notin \frac {2\pi}L \mathbb{N}$. In particular instability may appear in finitely many dimensions in low wave numbers.

\subsection{Outline of the proofs.} 

In the preliminary analysis in Subsection \ref{SS:Pre}, we first apply the Fourier transform in $x_1$ to \eqref{E:LEuler}, resulting in decoupled systems for each wave number $k$. The problem can be further reduced to the evolution of $\hat v_2(t, k, x_2)$, the Fourier transform of $v_2$. The Laplacian transform\footnote{Working on the Laplace transform of the unknowns is essentially equivalent to analyzing the resolvent of the linear operator in \eqref{E:LEuler}.} $V_2(k, c, x_2)$ of $\hat v_2 (t, k, x_2)$, where $s= -ikc$ is the Laplace transform variable, satisfies a non-homogeneous boundary value problem \eqref{E:Ray} of the Rayleigh equation, solutions to the associated homogeneous problem of which correspond to eigenvalues and eigenfunctions. 

A detailed analysis of the homogeneous Rayleigh equation \eqref{E:Ray-H1-1}, carried out in Section \ref{S:Ray-Homo}, lays the foundation of the study of both the eigenvalue distribution and the inviscid damping. The dependence of the estimates of the solutions on the wave number $k$ is also carefully tracked.  

We first study the Rayleigh equation away from the singularity where $|U(x_2) -c| \ge O(\mu)$, $\mu =\frac 1{\langle k \rangle} = (1+k^2)^{-\frac 12}$. Near the singularity where $|U(x_2) -c| \le O(\mu)$, different from those in, e.g., \cite{WZZ18, Jia20S}, our approach is an improved version of the technique in \cite{BSWZ16} based on the ODE blow-up and invariant manifold method. Through a transformation, solutions to the  homogeneous Rayleigh equation near $U(x_2)-c=0$ are expressed in a form involving the explicit $\log (U-c)$ and the heaviside function with coefficients smooth in $(k, c_R, x_2)$. 

We focus on a pair of fundamental solutions $y_\pm (k, c, x_2)$ to the  homogeneous Rayleigh equation which satisfy the corresponding homogeneous boundary conditions \eqref{E:Ray-2}-\eqref{E:Ray-3} in \eqref{E:Ray} at $x_2=0, -h$, respectively (boundary condition \eqref{E:Ray-3} reflects {\it the free boundary} setting). For $y_\pm$, we establish a.) their a priori bounds; b.) the convergence to their limits $y_{0\pm}(k, c_R, x_2)$ as $c_I \to 0+$; and c.) the smoothness of $y_{0\pm}$, particularly, in $c_R$. Recall $U \in C^{l_0}$, we prove $y_{0\pm}$ is $C^{l_0-2}$ in $c_R$ except at $c_R= U(-h), U(0)$. Due to the analyticity of $y_\pm$ in $c$ with $c_I>0$, the estimates of $y_{0\pm}$ also yield those of $y_\pm$ for $c_I>0$. Eventually general solutions to the non-homogeneous boundary value problem \eqref{E:Ray} of the Rayleigh equation are expressed using $y_\pm$. Finally, the quantity $Y(k, c) = \p_{x_2} y_- (k, c, 0)/y_-(k, c, 0)$ related to the Reynolds stress is carefully studied, which plays an important role in the analysis of the Rayleigh equation.  

The analysis of the Rayleigh equation near the singularity  based on a canonical form presented in Section \ref{S:Ray-Homo}  gives the most detailed information of the solutions to this singular ODE. In the representation formula \eqref{E:Ray-H0-2}, additional to the explicit form of the singular part of the solutions, the Taylor expansion of the smooth transformation $B(\cdot)_{2\times 2}$ can be carried out to an arbitrary order if needed. The section is a lengthy, but we believe this technique applied to the Rayleigh equation is widely useful for various purposes. In a forthcoming work we are studying the linearized Euler equation at a non-monotonic shear flow with a similar approach.  

In Section \ref{S:e-values} we prove the results on the eigenvalue distribution based on the detailed analysis in Section \ref{S:Ray-Homo}. We first obtain $c^\pm (k)$ for $|k|\gg1$, followed by an argument based on analytic continuation and index calculation. Bifurcations may occur at inflection values of $U$ and particularly subtle at $c= U(-h)$, which are on the boundary of the analyticity of the bifurcation equation $F(k, c)=0$. The regularity obtained in Section \ref{S:Ray-Homo} implies, when restricted to $c_I\ge 0$, $F\in C^{l_0-2}$ near $c \in U\big((-h, 0)\big)$ and $F \in C^{1, \alpha}$ near $c=U(-h)$. This makes the bifurcation analysis possible near $c=U(-h)$ and much easier even in the relatively classical case near inflection values of $U$.  

Among the results in Theorem \ref{T:e-values}, in statement (1), $c^\pm(k)$ are obtained for large $|k|$ in Lemma \ref{L:ev-large-k}(3) with more detailed estimates, the extension of $c^\pm (k)$ in Corollary \ref{C:branches-1}, and the semi-simplicity of the eigenvalues $-ikc^\pm(k)$ in Lemma \ref{L:ev-large-k}(3), Proposition \ref{P:e-v-0}, and Corollaries \ref{C:branches-1} and \ref{C:v^c}. 
Under the additional assumption of non-existence of singular modes, the non-existence of other non-singular modes is proved in Proposition \ref{P:e-v-0}. The analysis of the critical points of $c^\pm(k)$ is given in  Lemmas \ref{L:c(k)-mono}. The conjugacy to the linearized irrotational waves is proved in Proposition \ref{P:conjugacy}. See also Remark \ref{R:conjugacy}. 
The existence of $g_\#$ is  proved in Lemma \ref{L:g-thresh}, along with the existence of $k_\#$ and/or $k_\#^\pm$ in statement (3). The rest of statement (3) is proved at the end of Subsection \ref{SS:ev-details} after a series of lemmas.  
Statement (4) is proved in Subsection \ref{SS:inflectionV} with more details.

Under the assumption of the absence of singular modes, general solutions $y_B(k, c, x_2)$ to the non-homogeneous boundary value problem \eqref{E:Ray} of the Rayleigh equation are studied in Section \ref{S:Ray-BC}, which are expressed in the variation of parameter formula using $y_\pm$ obtained in Section \ref{S:Ray-Homo}. We establish the basic a priori and convergence (as $c_I \to 0+$) estimates in Subsection \ref{SS:Ray-NH-HBC}. The latter is often referred to as the limiting absorption principle (e.g. \cite{WZZ19, Jia20S}). For the inviscid damping estimates, it is crucial to obtain the smoothness of $y_B$ in $c$ (in Subsection \ref{SS:Ray-NH-CGW}). Since singularity occurs in the Rayleigh equation along $c = U(x_2)$, $\p_c^j y_B$, $j=1,2$, behaves badly there. Instead we apply a differential operator $D_c$ to the Rayleigh system \eqref{E:Ray} which differentiates along the direction of $c_R = U(x_2)$, hence $D_c^j y_B$ satisfies another boundary value problem of the Rayleigh equation in the form of \eqref{E:Ray} and enjoys reasonable estimates.  Essentially this approach is similar to those used in \cite{WZZ18, Jia20A, WZZhu20} for the Euler equation on fixed channels. The main results of Subsection \ref{SS:Ray-NH-CGW} are the estimates of $\p_c^{j_1} \p_{x_2} y_B$, $j_1=1, 2$ and $j_2=0,1$, with the most singular terms identified.     

The splitting and the linear inviscid damping estimates of solutions $(v, \eta)$ to the linearized capillary gravity waves \eqref{E:LEuler} are obtained in Section \ref{S:Linear}. While the vorticity $\omega$ is not sufficient to recover the whole solution (which is different from the fixed channel case as in e.g. \cite{Zi17, WZZ18}), the solutions are expressed in terms of the inverse Laplace transform of $V_2(k, c, x_2)$, the Laplace transform of $\hat v_2(t, k, x_2)$, which is estimated in Section \ref{S:Ray-BC}. Unlike e.g. \cite{WZZ18, Jia20S}, technically we do not immediately push the contour integral (in $c$) of the inverse Laplace transform to the limit spectra set $U([-h, 0])$, but first keep it along the boundary of a small neighborhood of it in the complex plane. This allows easy integration by parts in $c$ to establish the decay estimates in $t$ after the leading asymptotic terms are obtained by applying the Cauchy integral theorem to the most singular terms of $\p_c^j V_2$, $j=1,2$. In fact, in deriving the decay estimates of $v$, $\eta$, $tv_2$, and $t \eta$ where the leading asymptotic terms were not involved, a priori estimates, but not the limiting absorption principle, is sufficient. 

The above approach to obtain the inviscid decay also applies to the Euler equation in a fixed channel 
linearized at a shear flow $U(x_2)$. Similarly, while the asymptotic leading order terms of $tv_1$, $\omega$, and $\p_{x_2}^2 v_2$ are all generated by the asymptotic vorticity $\Omega^c$, that of $t^2 v_2$ involves two additional functions $\Omega_T$ and $\Omega_B$ due to the contributions from the top and bottom boundaries. 
We give a brief summary of the results for the channel flow in Subsection 
\ref{SS:Euler-Channel} and see also Remark \ref{R:boundryC}.\\

{\bf Notation}: Throughout the paper, $C>0$ denotes a generic constant which might change from line to line, but always independent of $k$, $c$, and $x_2$; $\delta(x)$ the delta function; $P.V.$ (or $(P.V.)_c$) the principle value (or the principle value with respect to variable $c$ {\it etc.}). The Japanese bracket $\langle k \rangle = \sqrt{k^2+1}$ is adopted. For $c= c_R + ic_I$ close to $U([-h, 0])$, $x_2^c$ denotes $U^{-1} (c_R)$ after some extension of $U$. We always denote $\mu =\langle k \rangle^{-1}$ and $' = \p_{x_2}$.

\section{Main results and preliminaries} \label{S:MainT-Pre}

In this section we give the precise statements of linear inviscid damping, along with some preliminary analysis. It is well-known that the pressure $p$ is determined by $v$ and $\eta$ (see \eqref{E:LEuler-2} and \eqref{E:LEuler-5}), so very often we shall focus only on $v$ and $\eta$.

\subsection{A brief motivational study of the Couette flow $U(x_2)=x_2$} \label{SS:Couette} 

We first describe two main relevant properties using the Couette flow as an illustration. The linearized velocity can be decomposed uniquely into the rotational and irrotational/potential parts (see e.g. \cite{SZ08a})  
\[
v = v^{ir} + v^{rot}, \ \text{ where } \, \nabla \cdot v^{ir, rot}=0, \; 
\] 
where 
\[
v^{ir} = \nabla \varphi, \quad \Delta \varphi =0, \;\; x_2 \in (-h, 0), \, \text{ and }\, \p_{x_2} \varphi|_{x_2=-h} =0,
\]
and $v^{rot}$ satisfies 
\[
\nabla \cdot v^{rot} =0, \quad v_2^{rot} |_{x_2=-h, 0}=0.  
\]
In particular, the rotational part can almost be determined by the vorticity $\omega$ in the {\it same} way as in the Euler equation \eqref{E:E-channel} in the fixed channel $x_2 \in (-h, 0)$ with slip boundary condition 
\be \label{E:vor}
v^{rot} = ( -\p_{x_2}, \p_{x_1})^T \Delta^{-1} \omega + (a, 0)^T, \, \text{ and } \, \omega =\nabla \times v= \p_{x_1} v_2 - \p_{x_2} v_1,
\ee
where $a$ is a constant and $\Delta^{-1}$ is the inverse Laplacian in the 2-d region $x_2 \in (-h, 0)$ ($L$-periodic in $x_1$ or  $x_1 \in \R$) under the zero Dirichlet boundary condition along $x_2=-h, 0$. In the periodic-in-$x_1$ case, the constant $a$ may be non-zero and is determined by the physical quantity circulation.  

{\bf I.} {\it Inviscid damping.} For the 2-d Euler equation \eqref{E:Euler-1}, one often also consider the corresponding vorticity formulation 
\be \label{E:Euler-vor}
\p_t \omega + v\cdot \omega =0. 
\ee
Linearizing it at $\omega_* = - 1$, which is the vorticity of the Couette flow, yields the linearized vorticity 
\[
\omega (t, x) = \omega_0 (x_1 -x_2t, x_2)
\] 
expressed in term of its initial  value $\omega_0$. Since $v^{rot}$ component of the linearized capillary gravity waves \eqref{E:LEuler} at the Coutte flow corresponds to the divergence free velocity field determined by its vorticity $\omega$ by \eqref{E:vor} which is the same way  as in the fixed boundary problem of the channel flow, the inviscid damping \eqref{E:decay-0} of the latter (in the periodic-in-$x_1$ case) implies 
\[
\Big|v^{rot} - \frac 1L\Big( \int_{-\frac L2}^{\frac L2} v_1 dx_1\Big) \vec{\bf e}_1  \Big|_{L^2} \le C(1+|t|)^{-1} |\omega_0|_{H^2}, \quad |v_2^{rot}|_{L^2} \le C(1+|t|)^{-2} |\omega_0|_{H^2}.
\]

{\bf II. } {\it Singular and non-singular modes.} Unlike the Euler equation in a fixed channel, there is the additional surface profile $\eta$ coupled to the irrotational part $v^{ir}$ of the velocity, which may not decay. In fact, for any $k \in \R$, let 
\[
v (t, x) = (1+ k^2)^{\frac 14} e^{ik(x_1- c^\pm (k) t) -|k|h} \big(i\cosh k(x_2+h), \sinh k (x_2+h) \big) + c.c. 
\]
\[
\eta (t, x_1) =  i (1+ k^2)^{\frac 14} e^{ik(x_1-c^\pm (k) t) -|k|h} \sinh k h/ (k c^\pm (k)) + c.c.,  
\]
\begin{align*}
p(t, x) = & i (1+ k^2)^{\frac 14} e^{ik(x_1- c^\pm (k) t) -|k|h} \Big(  (g+\sigma k^2)\frac {\sinh kh}{k c^\pm (k)} - k  \int_0^{x_2} (x_2'-c^\pm(k)) \sinh k(x_2'+h) dx_2' \Big) + c.c. \\
=& i (1+ k^2)^{\frac 14} e^{ik(x_1- c^\pm (k) t) -|k|h} \Big(  (g+\sigma k^2)\frac {\sinh kh}{k c^\pm (k)} - (x_2-c^\pm(k)) \cosh k(x_2+h) \\
& - c^\pm (k) \cosh kh +  k^{-1} ( \sinh k(x_2+h) -\sinh kh) \Big) + c.c.
\end{align*}
where ``c.c.'' denotes ``complex conjugates'' and 
\be \label{E:dispersion-Q}
c^\pm (k) = \frac {-1 \pm \sqrt{1+ 4k (g+\sigma k^2) \coth kh}}{2k \coth kh} \implies F(k, c)|_{c^\pm (k)}\triangleq (c^2 k \coth kh + c -(g+\sigma k^2))|_{c^\pm (k)}=0. 
\ee
Even though we write down these formulas based on Lemma \ref{L:e-value} in the below, it is straight forward to verify that they are solutions to (\ref{E:LEuler-1}--\ref{E:LEuler-5}) for the Couette flow. Therefore $-ikc^\pm(k)$ are eigenvalues of the linearized systems associated with the above eigenfunctions. As these solutions do not grow or decay as $t\to \infty$, $c^\pm (k)$ are 
neutral modes. 

It is worth paying slightly closer attention to the wave speed $c^\pm (k)$ and the function $F(k, c)$, all of which are even in $k$. We make the following observations.
\begin{enumerate}
\item $\lim_{k\to \infty} c^\pm (k) /(\sigma |k|)^{\frac 12} =\pm 1$, so for $|k|\gg1$ the dispersion relation $ k \to - kc^\pm(k)$ is asymptotic to those of the irrotational capillary gravity waves linearized at zero solution (system \eqref{E:LEuler} with $U\equiv 0$ and $\nabla \times v\equiv 0$) given by $-k c_{ir}^\pm$ with   
\be \label{E:dispersion-F}
c_{ir}^\pm (k) = \pm \sqrt{ k^{-1} (g+\sigma k^2) \tanh kh}, \quad C^{-1} \le |c_{ir}^\pm (k)| \le C (1+ k^2)^{\frac 14},  
\ee
which can be obtained through direct calculation based on the Fourier transform. 
\item $c^+(k)>0$ for all $k \in \R$, so it is a branch of non-singular neutral modes, namely, wave speeds outside $[-h, 0]$, the range of $U$. 
\item While $c^-(k) <-h$ in \eqref{E:dispersion-Q} as seen in the above observation (1) for large $k$, it can happen $c^-(k) \in [-h, 0]$ for $0< g, \sigma \ll 1$ and thus becomes singular modes (those in the range of $U$). 
\item Since $k \coth kh \ge h^{-1}$ with ``$=$'' achieved at $k=0$, for $g, \sigma \gg1$, $c^\pm (k) \sim c_{ir}^\pm (k)= \sqrt{\frac {g+\sigma k^2}{k \coth kh}}$ and thus both $c^\pm (k) \notin [-h, 0]$ are non-singular modes. Moreover, one may verify $\frac d{dk} |c^\pm (k)| >0$ for all $k>0$ if $\sigma \gg g \gg 1$. In particular, in the case of $x_1 \in \R$, this implies that a.) the dispersion relations $k \to - k c^\pm (k)$ determine a linear dispersive wave system formed by the superposition of these non-singular modes and b.) this dispersive system is conjugate to the irrotational capillary gravity waves linearized at zero, whose the wave speed is given by \eqref{E:dispersion-F}.  The conjugacy isomorphism can be constructed by associating the modes $k_1^\pm$ of \eqref{E:dispersion-Q} and $k_2^\pm$ of \eqref{E:dispersion-F} if they have the same temporal frequency $k_1^\pm c^\pm (k_1^\pm) = k_2^\pm c_{ir}^\pm (k_2^\pm)$. Moreover, $-ik c^\pm (k)$ would turn out to the only eigenvalues for the linearization at the Couette flow for $g, \sigma \gg1$ (see Proposition \ref{P:e-v-0}(2)). 
\end{enumerate}

{\it Generalization of the linear analysis to a general shear flow $U(x_2)$?} 
From the above discussion, one sees that solutions to  the capillary gravity water waves  linearized at the Couette flow exhibit inviscid damping in  their rotational parts while there are infinite many non-singular modes with irrotational eigenfunctions whose evolution is determined by two branches of dispersion relations. However, several complications arise in the linearization at a general shear flow $U(x_2)$ including at least the following. 
\begin{itemize} 
\item The crucial function $F(k, c)$ defined in \eqref{E:dispersion-Q} which determines the wave speed $c$ and consequently the dispersion relations, while analytic for $c \in \C \setminus U([-h, 0])$, may become rather singular for $c$ approaching $U([-h, 0])$. What regularity of $F(k, c)$ can one expect?  
\item Consequently, if a branch of non-singular modes approaches $U([-h, 0])$, possibly very subtle bifurcations may occur at the boundary of analyticity of $F$. Can instability be generated?
\item The linear inviscid damping (still of the rotational parts?) becomes much more involved, even in the case of the channel flow (see e.g. \cite{Zi16, WZZ18, Jia20S}).  
\end{itemize}    
In the rest of this paper, we address these issues, with some results even more explicit and detailed than the above, through careful analysis starting at rather fundamental level under reasonable assumptions.

\subsection{Main theorems on the invariant splitting and linear inviscid damping} \label{SS:MainT}

In this subsection,  assuming there are no singular modes, we present the theorems on the inviscid damping of linearized system  \eqref{E:LEuler} of the capillary gravity water wave problem \eqref{E:Euler} at the shear flow $(v_*, S_*, p_*)$. See Definition \ref{D:modes} Lemma \ref{L:e-v-basic-1}(5), \eqref{E:no-S-M}, and Remark \ref{R:singularM} for singular and non-singular modes. 
In this case, we shall prove that any linear solution $(v, \eta)$ to \eqref{E:LEuler} can be decomposed into the component $(v^p, \eta^p)$ corresponding to the non-singular modes and $(v^c, \eta^c)$ to the continuous spectra due to $U([-h, 0])$. This splitting is invariant under \eqref{E:LEuler} and $(v^c, \eta^c)$ is of the order $O(|t|^{-1})$ (and the vertical component $v_2^c = O(t^{-2})$) as $ |t| \to \infty$. In fact, we identify their asymptotic  leading  order terms so that the remainders decay even faster. These leading order terms are in the form of horizontal translations of three functions $\Omega^c$, $\Lambda_B$, and $\Lambda_T$, which represent the contributions from the interior vorticity and the bottom and top boundary conditions. Their Fourier transforms are given explicitly in \eqref{E:Omega-1}, \eqref{E:Lambda_B}, and \eqref{E:Lambda_T}, respectively,  using the initial vorticity $\omega_0$, the fundamental solutions $y_\pm(k,c, x_2)$ to the homogeneous Rayleigh equation, and $\Omega^c$ also by the Laplace transform of $v_2$. 
The results are stated for the cases of $x_1\in \mathbb{T}_L$ and $x_1\in \R$ separately in the following. 

\begin{theorem} \label{T:decay-per} ({\bf Inviscid damping: periodic-in-$x_1$ case})
Suppose $x_1 \in \mathbb{T}_L$. Assume $U\in C^{l_0}$, $l_0\ge 3$, $U'>0$ on $[-h, 0]$, and there are no singular modes (see \eqref{E:no-S-M}  and Lemma \ref{L:e-v-basic-1}(5)) for any $k \in \frac {2\pi}L \mathbb{N}$. For any $q_1\in [2, \infty]$, $q_2\in (2, \infty]$, and $\ep>0$, there exists $C>0$ depending only on $q_1$, $q_2$, $\ep$, and $U$, such that, for any $n_1\in \R$, integer $n_0\ge 0$, and solution $(v(t, x), \eta(t, x_1))$ of \eqref{E:LEuler} with initial value $(v_0(x), \eta_0(x_1))$ and the corresponding initial vorticity $\omega_0(x)$, there exist unique solutions $(v^\dagger (t, x), \eta^\dagger (t, x_1))$, $\dagger =p, c$, to \eqref{E:LEuler} and $L$-periodic-in-$x_1$ functions $\Omega^c(x)$, $\Lambda_B(x)$, and $\Lambda_T(x)$ determined by $(v_0, \eta_0)$ linearly  such that
\[
(v, \eta) = (v^c, \eta^c) + (v^p, \eta^p)
\]
and the following hold. 
\begin{enumerate}
\item  $(v^c, \eta^c)$ satisfy the following estimates
\[
|\p_t^{n_0} v^c|_{H_{x_1}^{n_1} L_{x_2}^2 L_t^{q_1} (\R)} \le  C  \big( |\eta_0|_{H_{x_1}^{n_0+n_1+\frac 12 -\frac 1{q_1}}} + |v_{10} (\cdot, 0)|_{H_{x_1}^{n_0+n_1-\frac 32-\frac 1{q_1}}} + |\omega_0|_{H_{x_1}^{n_0+n_1-\frac 12 -\frac 1{q_1} +\ep} L_{x_2}^2} \big),
\]
\[
|\p_t^{n_0} \eta^c|_{H_{x_1}^{n_1} L_t^{q_1} (\R)} \le C  \big( |\eta_0|_{H_{x_1}^{n_0+n_1 - 1 -\frac 1{q_1}}} + |v_{10} (\cdot, 0)|_{H_{x_1}^{n_0+n_1- 2-\frac 1{q_1}}} + |\omega_0|_{H_{x_1}^{n_0+n_1- 2 -\frac 1{q_1} +\ep} L_{x_2}^2} \big);
\]
if $U\in C^4$, 
\begin{align*}
\big| t\p_t^{n_0} v_2^c  \big|_{H_{x_1}^{n_1-\frac 32} L_{x_2}^2  L_t^{q_1} (\R)} + |t \p_t^{n_0} \eta^c|_{H_{x_1}^{n_1} L_t^{q_1} (\R)} \le & C  \big( |\eta_0|_{H_{x_1}^{n_0+n_1 - 1 -\frac 1{q_1}}} + | v_{10} (\cdot, 0)|_{H_{x_1}^{n_0+n_1- 3-\frac 1{q_1}}} \\
&+ |\omega_0|_{H_{x_1}^{n_0+n_1- 2 -\frac 1{q_1} +\ep} L_{x_2}^2} + |\p_{x_2} \omega_0|_{H_{x_1}^{n_0+n_1- 3 -\frac 1{q_1} +\ep} L_{x_2}^2} \big), 
\end{align*}
\begin{align*}
&\big|\p_t^{n_0} \big( t v_1^c - U'(x_2)^{-1} \p_{x_1}^{-1}\Omega^c (x_1- U(x_2)t, x_2)\big) \big|_{H_{x_1}^{n_1} L_{x_2}^2  L_t^{q_2} (\R)} \\
&
+ \big|\p_t^{n_0} \big( \omega^c - \Omega^c (x_1- U(x_2)t, x_2) \big) \big|_{H_{x_1}^{n_1-1} L_{x_2}^2  L_t^{q_2} (\R)} \\
&+  \big|\p_t^{n_0} \big( \p_{x_2}^2 v_2^c - \p_{x_1} \Omega^c (x_1- U(x_2)t, x_2)\big) \big|_{H_{x_1}^{n_1-2} L_{x_2}^2  L_t^{q_2} (\R)} \\
\le & C  \big(  |\eta_0|_{H_{x_1}^{n_0+n_1+\frac 12 -\frac 1{q_2}}} + |v_{10} (\cdot, 0)|_{H_{x_1}^{n_0+n_1-\frac 32-\frac 1{q_2}}}  + |\omega_0|_{H_{x_1}^{n_0+n_1-\frac 12 -\frac 1{q_2} +\ep} L_{x_2}^2}+ |\p_{x_2} \omega_0|_{H_{x_1}^{n_0+n_1-\frac 32 -\frac 1{q_2} +\ep} L_{x_2}^2} \big); 
\end{align*}
and if, in addition, $U \in C^5$, then 
\begin{align*}
&\big|\p_t^{n_0} \big( t^2 v_2^c - U'(x_2)^{-2} \p_{x_1}^{-1}\Omega^c (x_1- U(x_2)t, x_2) 
 - \Lambda_B (x_1- U(-h)t, x_2)  
- \Lambda_T (x_1- U(0)t, x_2) \big) \big|_{H_{x_1}^{n_1} L_{x_2}^2  L_t^{q_2} (\R)}  \\ 
\le & C  \big(  |\eta_0|_{H_{x_1}^{n_0+n_1+\frac 12 -\frac 1{q_2}}} + |v_{10} (\cdot, 0)|_{H_{x_1}^{n_0+n_1-\frac 32-\frac 1{q_2}}}  + |\omega_0|_{H_{x_1}^{n_0+n_1-\frac 12 -\frac 1{q_2} +\ep} L_{x_2}^2}\\
&  \qquad \qquad \qquad \qquad \qquad \qquad + |\p_{x_2} \omega_0|_{H_{x_1}^{n_0+n_1-\frac 32 -\frac 1{q_2} +\ep} L_{x_2}^2} + |\p_{x_2}^2 \omega_0|_{H_{x_1}^{n_0+n_1-\frac 52 -\frac 1{q_2} +\ep} L_{x_2}^2} \big).
\end{align*}
\item  
 $\Omega^c$ and $\Lambda_\dagger$, $\dagger=B, T$, satisfy
\[
 |\Omega^c -  \omega_0|_{H_{x_1}^{n_1} L_{x_2}^2} \le C  \big(  |\eta_0|_{H_{x_1}^{n_1 }} + |v_{10} (\cdot, 0)|_{H_{x_1}^{n_1-2}}  + |\omega_0|_{H_{x_1}^{n_1-1 +\ep} L_{x_2}^2} \big),
\]
\[
|k \hat \Lambda_B(k, \cdot)|_{L_{x_2}^q} \le C \langle k\rangle^{-\frac 1q} |\hat \omega_0(k, -h)|, 
\quad |k \hat \Lambda_T(k, \cdot)|_{L_{x_2}^q} \le C \langle k\rangle^{-\frac 1q} ( |\hat \omega_0(k, 0)| + |\hat \eta_0(k)|), \quad \; \forall q\in [1, \infty],
\]
\[
|k\p_{x_2}  \hat \Lambda_B(k, \cdot)|_{L_{x_2}^q} \le C \langle k\rangle^{1-\frac 1q} |\hat \omega_0(k, -h)|, 
\;\; |k \p_{x_2} \hat \Lambda_T(k, \cdot)|_{L_{x_2}^q} \le C \langle k\rangle^{1-\frac 1q} ( |\hat \omega_0(k, 0)| + |\hat \eta_0(k)|), \;\; \forall q\in [1, \infty),
\]
where $\hat f (k, x_2)$ denotes the Fourier transform of a function $f(x_1, x_2)$ with respect to $x_1$.  Moreover, $\Lambda_\dagger$, $\dagger=B, T$, satisfy $\hat \Lambda_\dagger (k=0, x_2) =0$ and  
\begin{subequations} \label{E:Lambda-BVP}
\be \label{E:LambdaT-BVP} \begin{cases}  
- (U- U(0)) \Delta \Lambda_T + U'' \Lambda_T =0, \qquad \qquad \qquad \qquad \qquad \qquad \qquad x_2 \in (-h, 0), \\ 
\Lambda_T (x_1, -h) = 0, \qquad U'(0)^{2}\p_{x_1} \Lambda_T (x_1, 0) =  U''(0) \eta_0 (x_1, 0) - \omega_0(x_1, 0); 
\end{cases}\ee
\be \label{E:LambdaB-BVP} \begin{cases}  
- (U- U(-h)) \Delta \Lambda_B + U'' \Lambda_B =0, \qquad \qquad \qquad \qquad \qquad \qquad \qquad x_2 \in (-h, 0), \\ 
U'(-h)^{2} \p_{x_1} \Lambda_B (\cdot, -h) = - \omega_0(x_1, -h), \\
\big(U(0)-U(-h)\big)^2 \p_{x_2} \Lambda_B (x_1, 0) - \big(U'(0) (U(0)-U(-h)) + g - \sigma \p_{x_1}^2\big) \Lambda_B (x_1, 0) =0. 
\end{cases}\ee
If $U\in C^4$, then 
\begin{align*}
|\p_{x_2} \Omega^c -  \p_{x_2} \omega_0|_{H_{x_1}^{n_1} L_{x_2}^2} 
\le C  \big(  |\eta_0|_{H_{x_1}^{n_1+1 }} + |v_{10} (\cdot, 0)|_{H_{x_1}^{n_1-1}}  + |\omega_0|_{H_{x_1}^{n_1 +\ep} L_{x_2}^2}+ |\p_{x_2} \omega_0|_{H_{x_1}^{n_1-1 +\ep} L_{x_2}^2} \big).
\end{align*}
\end{subequations}
\item
There exist $\lambda_0 \ge 0$ and integer $N \ge 0$ (given in \eqref{E:lambda&N})
such that, for any $n_1\in \R$ and integer $n_2\in [1, l_0]$, 
\begin{align*}
& \big|\p_{x_1}^{n_1+1} \big(v_1^p(t, \cdot) - \hat v_{10} (k=0, \cdot)\big)\big|_{L_{x_1}^2 H_{x_2}^{n_2-1}} 
+ |\p_{x_1}^{n_1} v_2^p(t, \cdot)|_{L_{x_1}^2 H_{x_2}^{n_2}} \\
\le & Ce^{\lambda_0 |t|} (1+ |t|^{N-1}) \big(|\eta_0|_{H_{x_1}^{n_1+n_2+1}} + |v_{10} (\cdot, 0)|_{H_{x_1}^{n_1+n_2-\frac 12}} + |\omega_0|_{H_{x_1}^{n_1+n_2-1}L_{x_2}^2}\big), 
\end{align*}
\[
|\eta^p(t, \cdot) - \eta_0(0)|_{H_{x_1}^{n_1}} \le C e^{\lambda_0 |t|} (1+ |t|^{N-1})  \big(|\eta_0|_{H_{x_1}^{n_1}} + |v_{10} (\cdot, 0)|_{H_{x_1}^{n_1-\frac 32}} + |\omega_0|_{H_{x_1}^{n_1-2}L_{x_2}^2}\big). 
\]
\item Let 
\[
\BX^\dagger = \{ (v^\dagger, \eta^\dagger)|_{t=0} \mid \text{ all } (v_0, \eta_0)\} \subset H^1 \big(\mathbb{T}_L \times (-h, 0)\big) \times H^2 (\mathbb{T}_L ), \quad \dagger = c, p,
\]
then they are closed invariant subspaces of $H^1 \big(\mathbb{T}_L \times (-h, 0)\big) \times H^2 (\mathbb{T}_L )$ under \eqref{E:LEuler}. Moreover \eqref{E:LEuler} is also well-posed in the $L^2 \times H^1$ completion of $\BX^p$.
\end{enumerate}
\end{theorem}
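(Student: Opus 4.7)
The overall plan is to Fourier-transform \eqref{E:LEuler} in $x_1$, then Laplace-transform in $t$, and invert via a carefully deformed contour in the $c$-plane, relying on the detailed Rayleigh-equation analysis (for both the homogeneous fundamental solutions $y_\pm$ and the non-homogeneous solution $y_B$) outlined for Section \ref{S:Ray-Homo} and Section \ref{S:Ray-BC}. Concretely, for each $k\in \frac{2\pi}{L}\Z$ the Laplace transform $V_2(k,c,x_2)$ of $\hat v_2(t,k,x_2)$ (dual variable $s=-ikc$) satisfies the non-homogeneous Rayleigh BVP \eqref{E:Ray}, whose solution can be built explicitly from $y_\pm$ by variation of parameters; once $v_2$ has been recovered from the inverse Laplace transform, the remaining components $v_1$ and $\eta$ follow from \eqref{E:LEuler-4}, incompressibility, and the vorticity relation.

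Under the no-singular-modes hypothesis, the spectrum of the $k$-th mode decomposes cleanly: a continuous piece along $-ikU([-h,0])$, plus the two isolated eigenvalues $-ikc^\pm(k)$ and, for small $|k|$, possibly finitely many additional unstable ones bifurcating from inflection values of $U$ or from $U(-h)$ per Theorem \ref{T:e-values}. I would deform the inversion contour in $c$ into the disjoint union of small loops around each eigenvalue and a dumbbell contour hugging a thin $O(\mu)$-neighborhood of the segment $U([-h,0])$; these two pieces define $(v^p,\eta^p)$ and $(v^c,\eta^c)$ respectively. Invariance of the splitting under \eqref{E:LEuler} is automatic since each piece is a spectral projection of the same semigroup, and linearity in $(v_0,\eta_0)$ is built in.

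For the decay estimates in statement (1), I would keep the $(v^c,\eta^c)$ contour at distance $O(\mu)$ rather than push it onto $U([-h,0])$, and integrate by parts in $c$ against the oscillatory factor $e^{-ikct}$; each integration by parts trades one factor of $t$ for a $\p_c$-derivative of $V_2$, whose size is exactly what Subsection \ref{SS:Ray-NH-CGW} controls, with the most singular components identified. To capture the leading profiles $\Omega^c,\Lambda_B,\Lambda_T$ in \eqref{E:Omega-1}, \eqref{E:Lambda_B}, \eqref{E:Lambda_T}, I would isolate the explicit most-singular pieces of $\p_c V_2$ (contributing $1/(U(x_2)-c)$) and of $\p_c^2 V_2$ (which additionally carry $1/(U(x_2)-c)^2$, $1/(U(-h)-c)^2$, and $1/(U(0)-c)^2$ terms coming from the endpoint behavior of $y_\pm$ and of the Reynolds-stress quantity $Y$) and evaluate them in closed form by the Cauchy integral formula along the dumbbell contour; the resulting profiles are precisely $x_1$-translations of $\Omega^c$, $\Lambda_B$, $\Lambda_T$, while the remainder is one $c$-derivative less singular and yields the sharper $L^{q_2}_t$ bound. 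The elliptic problems \eqref{E:LambdaT-BVP}--\eqref{E:LambdaB-BVP} are then verified by direct substitution using the Rayleigh equation and the homogeneous boundary conditions that $y_\pm$ satisfy at $x_2=0$ and $x_2=-h$.

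Statement (3) reduces, in each mode $k$, to the finite-dimensional linear ODE generated by the isolated eigenvalues: $\lambda_0$ is the supremum of $k\,\IP c$ over the finitely many unstable eigenvalues, $N$ is the maximal algebraic multiplicity, and the neutral modes $c^\pm(k)$ contribute only oscillatory growth once the zero-mode mean $\hat v_{10}(k{=}0,\cdot)$ and $\eta_0(0)$ are subtracted. Statement (4) then follows because both contour-integral projections are bounded linear operators on $H^1\times H^2$ uniformly in the relevant norms, so $\BX^p$ and $\BX^c$ are closed invariant subspaces, and the construction extends to the $L^2\times H^1$ completion by density together with the a priori estimates from statement (1). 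The hard part will be the clean identification and optimality of the boundary leading terms $\Lambda_T$ and $\Lambda_B$: these stem precisely from the non-analytic behavior of $y_\pm$ and of $Y$ at the endpoints $c=U(0),U(-h)$ of the spectral cut, where the free-boundary condition \eqref{E:Ray-3} couples $\eta$ to $v$ and thereby produces the Robin-type boundary term in \eqref{E:LambdaB-BVP}; correctly sorting which endpoint singularity generates which profile and verifying that the remainder enjoys the improved $L^{q_2}_t$ integrability (rather than merely $L^\infty_t$) is the delicate technical core of the argument.
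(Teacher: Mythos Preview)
Your proposal is correct and follows essentially the same approach as the paper: Fourier--Laplace reduction to the Rayleigh BVP \eqref{E:Ray}, contour deformation separating the eigenvalue loops (defining $(v^p,\eta^p)$) from a rectangle $\CD_{r_1,r_2}$ around $U([-h,0])$ (defining $(v^c,\eta^c)$), then integration by parts in $c$ against $e^{-ikct}$ combined with the $\p_c^j V_2$ estimates of Proposition~\ref{P:pcy_B} and Lemma~\ref{L:V2-esti-1}, with the leading profiles $\Omega^c,\Lambda_B,\Lambda_T$ extracted via the Cauchy integral theorem from the explicit singular pieces (Lemmas~\ref{L:mode-decay-2}--\ref{L:mode-decay-3}). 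One technical point worth sharpening: the paper does not directly bound $\p_c^j V_2$ but instead uses the characteristic derivative $D_c = U'(x_2^c)\p_{c_R}+\p_{x_2}$ (so that $D_c(U-c)^{-1}$ stays no worse than $(U-c)^{-1}$), and the boundary profiles $\Lambda_B,\Lambda_T$ arise specifically from the $V_2''(-h)$ and $V_2''(0)$ correction terms in the quantity $\tilde V_2$ of Lemma~\ref{L:V2-esti-1}, which after substituting the Rayleigh equation yield the simple poles $1/(U(-h)-c)$ and $1/(U(0)-c)$ rather than second-order poles.
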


\begin{remark}
1.) Observe that, for any compactly supported smooth function $f(t, x)$, $q\ge1$, it holds  
\[
|f(t_0, x)|^q  \le q \int_{t_0}^{+\infty} |f(t', x)|^{q-1} |\p_t f(t', x)| dt' \le q |f(\cdot, x)|_{L_t^q ([t_0, +\infty))}^{q-1}  |\p_t f(\cdot, x)|_{L_t^q ([t_0, +\infty))},  
\]
which implies 
\begin{align*}
|f(t_0, \cdot)|_{L_x^2} = &  \Big(\int |f(t_0, x)|^2 dx\Big)^{\frac 12} \le q^{\frac 1q} \Big( \int |f(\cdot, x)|_{L_t^q ([t_0, +\infty))}^{\frac {2(q-1)}q}  |\p_t f(\cdot, x)|_{L_t^q ([t_0, +\infty))}^{\frac 2q}  dx\Big)^{\frac 12} \\
\le & q^{\frac 1q}  |f|_{L_x^2 L_t^q ([t_0, +\infty))}^{\frac {q-1}q}  |\p_t f|_{L_x^2 L_t^q ([t_0, +\infty))}^{\frac 1q}. 
\end{align*}
By the standard density argument, this inequality also holds for any function $f \in L_x^2 W_t^{1,q}$.
Hence the above estimates in statement (1) also imply various pointwise-in-$t$ decay of $v^c, \eta^c \in L_x^2$ as $t \to \infty$. \\
2.) The function $\Omega^c (x)$ is referred to as the scattering limit of the vorticity in \cite{Zi17, WZZ18, Jia20S}. \\
3.) The assumption of non-existence of singular modes is satisfied if the horizontal period $L$ is small (by Theorem \ref{T:e-values}(1) as $\frac {2\pi}L$ is large) or if $U'' \ne 0$ and \eqref{E:sigma-0} hold (by Theorem \ref{T:e-values}(2b)).
\end{remark}

The proof of this theorem is completed in Subsection \ref{SS:per-case}. 

From \eqref{E:Lambda_B} and \eqref{E:Lambda_T}, \eqref{E:BF}, and Lemma \ref{L:y-lower-b}(2), $\hat \Lambda_{B, T}(0, x_2)=0$ and the elliptic boundary value problem \eqref{E:LambdaB-BVP} has a unique solution $\Lambda_B$, while \eqref{E:LambdaT-BVP} has a unique solution $\Lambda_T$ under the assumption of the non-existence of singular modes. Moreover, according to the definitions \eqref{E:Lambda_T}, \eqref{E:BF}, \eqref{E:y-pm}, \eqref{E:y0}, and Lemma \ref{L:B}, $\p_{x_2} \Lambda_B$ and $\p_{x_2}\Lambda_T$ exhibit logarithmic singularity at $x_2 = -h$ and $0$, respectively. In particular, $\Lambda_B=0$ vanishes if the initial vorticity $\omega_0|_{x_2=-h}=0$, while $\Lambda_T =0$ if $U''(0) \eta_0- \omega_0|_{x_2=0}=0$. 
%
In this paper as we focus on the damping estimates with additional $L_t^q$ decay of $(v, \eta)$ after the leading order terms are singled out, we adopted $L_x^2$ based norms to somewhat simplify the calculations. If the decay in other $L_x^r$ or $L_x^\infty$ based norms is necessary, some basic estimates in these norms are also given in Subsection \ref{SS:Ray-NH-HBC} and one may make an attempt  following the procedure as in Sections \ref{S:Ray-BC} and \ref{S:Linear}. To avoid more technicality, the assumptions on the regularity of $\omega_0$ in $x_1$ in the theorem may not be close to optimal, particularly when $q_1$ and $q_2$ are away from $2$, see Remark \ref{R:contour}(b) as well as Remark \ref{R:pcy2}. Moreover, the small $\ep$ may not be necessary, see e.g. \cite{WZZ19, WZZhu20} in the fixed boundary case. The assumptions on the more essential regularity of $\omega_0$ in $x_2$ 
are optimal even in the existing results in the fixed boundary case.  

In the estimates of the component $(v^p, \eta^p)$ which are superpositions of eigenfunctions, the possible exponential growth (if $\lambda_0>0$) is caused by unstable modes, where $\lambda_0$ is the maximum real parts of the eigenvalues and $N$ is the maximum multiplicity of those eigenvalues of the maximal real parts. Due to Theorem \ref{T:e-values}(1), growth does not occur for $|k|\gg1$. It is also worth pointing out that, taking $n_2=0$, the the regularity of $\eta^p$  is $\frac 32$ order better than that of $v^p$ restricted to the surface $x_2=0$, which is consistent with the regularity results of nonlinear capillary gravity waves in the existing literature. In the contrast, the regularity requirement on $\omega_0$ in the damping estimates of $(v^c, \eta^c)$ is stronger than that of $(v^p, \eta^p)$.  Compared with the above example of the linearization at the Couette flow, conceptually these phenomena is due to the fact that the component $(v^c, \eta^c)$ is mainly the rotational part of the solution which depends on the vorticity more heavily, while $(v^p, \eta^p)$ are more like the irrotational part. 

The estimate in statement (3) at $t=0$ implies the  boundedness of the projection onto $\BX^p$, whose kernal is $\BX^c$. Some more detailed information of this projection can be found in Lemma \ref{L:Bb} and \ref{L:inv-decomp-per}. In fact the subspace $\BX^p$ is generated by the eigenfunction of all non-singular modes for all $k \in \R$. 

The inviscid decay estimates in the case of $x_1\in \R$ is slightly subtle due to the presence of small wave number $|k| \ll 1$. We use similar notations in the following theorem.

\begin{theorem} \label{T:decay-R} ({\bf Inviscid damping: $x_1 \in \R$ case})
Suppose $x_1 \in \R$. Assume $U\in C^{l_0}$, $l_0\ge 3$, $U'>0$ on $[-h, 0]$, and there are no singular modes (see \eqref{E:no-S-M}  and Lemma \ref{L:e-v-basic-1}(5)) for any $k \in \R$. For any $q_1\in [2, \infty]$, $q_2\in (2, \infty]$, and $\ep>0$, there exists $C>0$ depending only on $q_1$, $q_2$, $\ep$, and $U$, such that, for any $n_1\in \R$, integers $n_0\ge 0$, and solution $(v(t, x), \eta(t, x_1))$ of \eqref{E:LEuler} with initial value $(v_0(x), \eta_0(x_1))$, there exist solutions $(v^\dagger (t, x), \eta^\dagger (t, x_1))$, $\dagger =p, c$, to \eqref{E:LEuler} and functions $\Omega^c(x)$, $\Lambda_B(x)$, and $\Lambda_T(x)$ determined by $(v_0, \eta_0)$ linearly 
such that
\[
(v, \eta) = (v^c, \eta^c) + (v^p, \eta^p)
\]
and the following hold. 
\begin{enumerate}
\item $(v^c, \eta^c)$ satisfy the following estimates
\begin{align*}
|\p_t^{n_0} \p_{x_1}^{n_1} v_1^c|_{L_x^2 L_t^{q_1} (\R)} + |\p_t^{n_0} \p_{x_1}^{n_1-1}  (1-\p_{x_1}^2)^{\frac 12}& v_2^c|_{L_x^2 L_t^{q_1} (\R)}  \le C  \big( \big| |\p_{x_1}|^{n_0+n_1-\frac 1{q_1}} \eta_0\big|_{H_{x_1}^{\frac 12 }} \\
&+\big| |\p_{x_1}|^{n_0+n_1-\frac 1{q_1}} v_{10} (\cdot, 0)\big|_{H_{x_1}^{-\frac 32}} + \big| |\p_{x_1}|^{n_0+n_1-\frac 1{q_1}}\omega_0\big|_{H_{x_1}^{\ep-\frac 12} L_{x_2}^2} \big),
\end{align*}
\[
|\p_t^{n_0} \p_{x_1}^{n_1} \eta^c|_{L_{x_1}^2 L_t^{q_1} (\R)} \le C  \big( \big| |\p_{x_1}|^{n_0+n_1-\frac 1{q_1}} \eta_0\big|_{H_{x_1}^{-1}} +\big| |\p_{x_1}|^{n_0+n_1-\frac 1{q_1}} v_{10} (\cdot, 0)\big|_{H_{x_1}^{-2}} + \big| |\p_{x_1}|^{n_0+n_1-\frac 1{q_1}}\omega_0\big|_{H_{x_1}^{\ep-2} L_{x_2}^2} \big);
\]
if $U \in C^4$, then 
\begin{align*}
\big| t\p_t^{n_0} \p_{x_1}^{n_1} (1-&\p_{x_1}^2)^{-\frac 14} v_2^c \big|_{L_{x}^2  L_t^{q_1} (\R)}  + |t \p_t^{n_0} \p_{x_1}^{n_1+1}  
\eta^c|_{L_{x_1}^2 L_t^{q_1} (\R)} 
\le  C  \big(  | 
\eta_0|_{\dot H_{x_1}^{n_0+n_1-\frac 1{q_1}}} \\
&+\big| |\p_{x_1}|^{n_0+n_1-\frac 1{q_1}} v_{10} (\cdot, 0)\big|_{H_{x_1}^{-2}} + \big| |\p_{x_1}|^{n_0+n_1-\frac 1{q_1}}\omega_0\big|_{H_{x_1}^{\ep -1} L_{x_2}^2} 
+  \big| |\p_{x_1}|^{n_0+n_1-\frac 1{q_1}} \p_{x_2} \omega_0\big|_{H_{x_1}^{\ep-2} L_{x_2}^2}\big)
\end{align*}
\begin{align*}
&\big|\p_t^{n_0} \p_{x_1}^{n_1+1} \big( t v_1^c - U'(x_2)^{-1} \p_{x_1}^{-1}\Omega^c (x_1- U(x_2)t, x_2) \big) \big|_{L_{x}^2  L_t^{q_2} (\R)}\\
&+ \big|\p_t^{n_0} \p_{x_1}^{n_1} \big( \omega^c - \Omega^c (x_1- U(x_2)t, x_2) \big) \big|_{L_{x}^2  L_t^{q_2} (\R)} \\
&+  \big|\p_t^{n_0}\p_{x_1}^{n_1-1} \big( \p_{x_2}^2 v_2^c - \p_{x_1} \Omega^c (x_1- U(x_2)t, x_2) \big) \big|_{L_{x}^2  L_t^{q_2} (\R)} \\
\le & C  \big( \big| |\p_{x_1}|^{n_0+n_1-\frac 1{q_2}} \eta_0\big|_{H_{x_1}^{\frac 32 }}+\big| |\p_{x_1}|^{n_0+n_1-\frac 1{q_2}} v_{10} (\cdot, 0)\big|_{H_{x_1}^{-\frac 12}} + \big| |\p_{x_1}|^{n_0+n_1-\frac 1{q_2}}\omega_0\big|_{H_{x_1}^{\ep+\frac 12} L_{x_2}^2} \\
& \qquad +  \big| |\p_{x_1}|^{n_0+n_1-\frac 1{q_2}} \p_{x_2} \omega_0\big|_{H_{x_1}^{\ep-\frac 12} L_{x_2}^2}\big);
\end{align*}
and if, in addition, $U \in C^5$, then 
\begin{align*}
&\big|\p_t^{n_0}\p_{x_1}^{n_1} \big( t^2 v_2^c - U'(x_2)^{-2} \p_{x_1}^{-1}\Omega^c (x_1- U(x_2)t, x_2) 
 - \Lambda_B (x_1- U(-h)t, x_2)  \\
 & \qquad \qquad \qquad \qquad \qquad \qquad\qquad  \qquad \qquad\qquad  \qquad 
- \Lambda_T (x_1- U(0)t, x_2) \big) \big|_{L_{x}^2  L_t^{q_2} (\R)}  \\ 
\le & C  \big( \big| |\p_{x_1}|^{n_0+n_1-1-\frac 1{q_2}} \eta_0\big|_{H_{x_1}^{\frac 32 }}+\big| |\p_{x_1}|^{n_0+n_1-1-\frac 1{q_2}} v_{10} (\cdot, 0)\big|_{H_{x_1}^{-\frac 12}} + \big| |\p_{x_1}|^{n_0+n_1-1-\frac 1{q_2}}\omega_0\big|_{H_{x_1}^{\ep+\frac 12} L_{x_2}^2} \\
& \qquad +  \big| |\p_{x_1}|^{n_0+n_1-1-\frac 1{q_2}} \p_{x_2} \omega_0\big|_{H_{x_1}^{\ep-\frac 12} L_{x_2}^2}+  \big| |\p_{x_1}|^{n_0+n_1-1-\frac 1{q_2}} \p_{x_2}^2 \omega_0\big|_{H_{x_1}^{\ep-\frac 32} L_{x_2}^2} \big).
\end{align*}
\item $\Lambda_T$ and $\Lambda_B$ satisfy \eqref{E:Lambda-BVP} and the same estimates as in Theorem \ref{T:decay-per}(2). Moreover, for any $q \in [1, \infty)$, it holds  
\[
|\Omega^c -\omega_0|_{H_{x_1}^{n_1} L_{x_2}^2} \le C \big(|\eta_0|_{H_{x_1}^{n_1}} + |v_{10}(\cdot, 0)|_{H_{x_1}^{n_1-2} } + |\omega_0|_{H_{x_1}^{n_1-1+\ep}L_{x_2}^2} \big),
\]
and if $U\in C^4$, then 
\[
|\p_{x_2} \Omega^c -  \p_{x_2} \omega_0|_{H_{x_1}^{n_1} L_{x_2}^2} 
\le C  \big(  |\eta_0|_{H_{x_1}^{n_1+1 }} + |v_{10} (\cdot, 0)|_{H_{x_1}^{n_1-1}}  + |\omega_0|_{H_{x_1}^{n_1 +\ep} L_{x_2}^2}+ |\p_{x_2} \omega_0|_{H_{x_1}^{n_1-1 +\ep} L_{x_2}^2} \big).
\]
\item For any $n_1\in \R$ and the following integer $n_2$, 
\[
|\p_{x_1}^{n_1} \p_{x_2}^{n_2} v_1^p(t, \cdot) |_{L_{x}^2}^2  \le  C \big(|\p_{x_1}^{n_1}\eta_0|_{H_{x_1}^{n_2+1}}^2 + |\p_{x_1}^{n_1} v_{10} (\cdot, 0)|_{H_{x_1}^{n_2-\frac 12}}^2 + |\p_{x_1}^{n_1} \omega_0|_{H_{x_1}^{n_2-1}L_{x_2}^2}^2\big), \quad \forall n_2\in [0, l_0-1],
\]
\[
|\p_{x_1}^{n_1} \p_{x_2}^{n_2} v_2^p(t, \cdot)|_{L_{x}^2 }^2 \le C \big(|\p_{x_1}^{n_1+1} \eta_0|_{H_{x_1}^{n_2}}^2 + |\p_{x_1}^{n_1+1} v_{10} (\cdot, 0)|_{H_{x_1}^{n_2-\frac 32}}^2 + |\p_{x_1}^{n_1+1} \omega_0|_{H_{x_1}^{n_2-2}L_{x_2}^2}^2\big), \quad \forall n_2\in [0, l_0],
\]
\[
|\eta^p(t, \cdot)|_{\dot H_{x_1}^{n_1}}^2\le C \big(|\eta_0|_{\dot H_{x_1}^{n_1}}^2 + |\p_{x_1}^{n_1} v_{10} (\cdot, 0)|_{H_{x_1}^{-\frac 32}}^2 + |\p_{x_1}^{n_1}\omega_0|_{H_{x_1}^{-2}L_{x_2}^2}^2\big). 
\]
\item Let 
\[
\BX^\dagger = \{ (v^\dagger, \eta^\dagger)|_{t=0} \mid \text{ all } (v_0, \eta_0)\} \subset H^1 \big(\R \times (-h, 0)\big) \times H^2 (\R ), \quad \dagger = c, p,
\]
then they are invariant closed subspaces of $H^1 \big(\R \times (-h, 0)\big) \times H^2 (\R )$ under \eqref{E:LEuler}. Moreover \eqref{E:LEuler} is also well-posed in the $L^2 \times H^1$ completion of $\BX^p$. If, in addition, \eqref{E:c(k)-mono-1} holds for both $\pm$ and $0 \in U([-h, 0])$, then \eqref{E:LEuler} restricted to the $L^2 \times H^1$ completion of $\BX^p$, or $\BX^p \cap (H^n \times H^{n+1})$ with $n \le l_0-1$, is conjugate through an isomorphism to the irrotational capillary gravity waves  linearized at zero (characterized by its wave speed \eqref{E:dispersion-F}). 
\end{enumerate}\end{theorem}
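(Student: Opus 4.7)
The plan is to parallel the proof strategy of the periodic case (Theorem \ref{T:decay-per}) while carefully accounting for the continuous wave number $k \in \R$. As in Section \ref{S:Linear}, I would apply the Fourier transform in $x_1$ to \eqref{E:LEuler}, reducing it to a family of boundary value problems parametrized by $k$. Via the Laplace transform in $t$, the unknown $V_2(k,c,x_2)$ satisfies the nonhomogeneous Rayleigh problem \eqref{E:Ray}, whose solutions are expressed in variation-of-parameters form using the fundamental solutions $y_\pm$ analyzed in Section \ref{S:Ray-Homo}. The inverse Laplace transform would be taken not directly along $U([-h,0])$, but along the boundary of a small $O(\mu)$-neighborhood of this spectral set; the residues at the non-singular modes $c^\pm(k)$ (which are the only modes for every $k \in \R$ under the no-singular-mode hypothesis, by Theorem \ref{T:e-values}(1d)) would yield $(v^p, \eta^p)$, while the contribution from the spectrum yields $(v^c, \eta^c)$.

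For the estimates in statement (1), after this splitting I would use the smoothness of $V_2(k,c,\cdot)$ in $c$ established in Subsection \ref{SS:Ray-NH-CGW} (through the derivative operator $D_c$ that differentiates along $c_R = U(x_2)$) to integrate by parts in $c$ in the inverse Laplace representation, producing powers of $t^{-1}$. The $L_t^q$ improvement comes from Plancherel together with applying the Cauchy integral theorem to the most singular parts of $\p_c^j V_2$, identifying the leading asymptotic terms as the $t$-dependent translations $\Omega^c(x_1 - U(x_2)t, x_2)$, $\Lambda_B(x_1 - U(-h)t, x_2)$, and $\Lambda_T(x_1 - U(0)t, x_2)$ with Fourier transforms given explicitly in \eqref{E:Omega-1}, \eqref{E:Lambda_B}, \eqref{E:Lambda_T}. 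For statement (2), the elliptic boundary value problems \eqref{E:Lambda-BVP} and the weighted-in-$k$ bounds for $\Lambda_{B,T}$ follow by direct computation from those Fourier representations together with the fundamental solution bounds of Lemma \ref{L:B}, exactly as in the periodic case; the estimates on $\Omega^c - \omega_0$ reduce to controlling the difference between the limiting absorption formula for $V_2$ and the free transport solution.

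Statement (3) concerns the component $(v^p, \eta^p)$, which under the no-singular-mode assumption is a superposition of eigenfunctions corresponding to the neutral modes $c^\pm(k)$ over $k \in \R$. Since these modes are real, the evolution on $\BX^p$ is purely oscillatory, and the relevant estimates reduce to $L_k^2$ bounds on the Fourier coefficients of the eigenfunction expansion, which I would obtain through the explicit eigenfunction formulas established in the analogues of the Couette computation of Subsection \ref{SS:Couette} combined with the asymptotics $c^\pm(k) = O(\sqrt{\sigma|k|})$ and the eigenfunction difference estimate of Remark \ref{R:e-func}. Statement (4) on invariance and closedness of $\BX^p, \BX^c$ is then immediate from this spectral decomposition, and the conjugacy to the linearized irrotational capillary gravity waves under \eqref{E:sigma-0} and $0 \in U([-h,0])$ follows Proposition \ref{P:conjugacy}: one associates modes $k_1^\pm$ of \eqref{E:dispersion-Q} with $k_2^\pm$ of \eqref{E:dispersion-F} by matching temporal frequencies $k_1 c^\pm(k_1) = k_2 c_{ir}^\pm(k_2)$, which is well defined because $\pm (c^\pm)' > 0$ for $k>0$ under \eqref{E:sigma-0} by Theorem \ref{T:e-values}(2).

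The main obstacle will be the low-wave-number regime $|k| \ll 1$, which is absent in the periodic case where $k \in \frac{2\pi}{L}\mathbb{N}$ is bounded below. Near $k=0$, the fundamental solutions $y_\pm(k,c,x_2)$ and the Reynolds-stress quantity $Y(k,c)$ degenerate, the inverse of the boundary operator in \eqref{E:Ray} picks up negative powers of $|k|$, and the resulting kernel-type operators lose uniform bounds; this is the reason for the additional negative-index homogeneous Sobolev norms on the initial data in the statement (e.g.\ the factor $\big||\p_{x_1}|^{n_0+n_1-1/q_1} v_{10}(\cdot,0)\big|_{H^{-3/2}_{x_1}}$), which encode extra decay at $k=0$. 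I would carefully track the order of vanishing of the bifurcation-like function $F(k,c)$ and of $y_\pm(k, c, \cdot)$ as $k \to 0$, absorb each factor of $|k|^{-1}$ into the appropriate negative-order homogeneous Sobolev norm, and match the indices to those in Remark \ref{R:LongW}. Once this low-frequency bookkeeping is in place, the same contour-integration and integration-by-parts-in-$c$ machinery used in the periodic case delivers all four statements.
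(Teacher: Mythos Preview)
Your proposal is correct and follows essentially the same route as the paper: the proof in Subsection~\ref{SS:R-case} applies the mode-by-mode estimates of Lemmas~\ref{L:mode-decay-1}--\ref{L:mode-decay-3} (which already carry explicit, uniform powers of $|k|$ and $\mu=\langle k\rangle^{-1}$), integrates in $k$ using Plancherel, and handles $(v^p,\eta^p)$ via Lemma~\ref{L:Bb}(3) together with the uniform bounds $|dist(c^\pm(k),U([-h,0]))|\ge C^{-1}\mu^{-1/2}$ and $|\partial_c F(k,c^\pm(k))|\ge C^{-1}\mu^{-3/2}$ from Proposition~\ref{P:e-v-0} and Corollary~\ref{C:branches-1}; part~(4) is exactly Lemma~\ref{L:inv-decomp-R} and Proposition~\ref{P:conjugacy}.

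One small correction to your framing of the low-$k$ obstacle: the fundamental solutions $y_\pm$ and the quantity $Y(k,c)$ do \emph{not} degenerate as $k\to 0$---the estimates of Lemma~\ref{L:y-pm} and Subsection~\ref{SS:Y} are stated in terms of $\mu=\langle k\rangle^{-1}$, which stays bounded. The negative powers of $|k|$ you need to absorb come rather from the inverse Laplace transform structure (the factor $|k|$ in the contour integral of Lemma~\ref{L:integralF}) and from the relation $ik\hat v_1=-\hat v_2'$, and these are already tracked explicitly in Lemmas~\ref{L:mode-decay-1}--\ref{L:mode-decay-3}. So no separate ``order of vanishing of $F(k,c)$'' analysis is needed; you simply read off the $|k|$-exponents from those lemmas and match them to the homogeneous Sobolev indices in the statement.
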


\begin{remark} \label{R:LongW}
In the above estimates, for some $n_0$ and $n_1$, the $|\p_{x_1}|^{-s}$, $s>0$, applied to the initial values indicates some stronger decay assumptions for wave number $|k|\ll1$. 
\end{remark}

The proof of this theorem is completed in Subsection \ref{SS:R-case}. 
Most of the remarks after Theorem \ref{T:decay-per} are also valid. In particular, there are only two branches of non-singular modes corresponding to eigenvalues $ik c^\pm (k)$ of both algebraic and geometric multiplicity two, hence there is no growth at all. The conjugacy of the dynamics of $(v^p, \eta^p)$ to the linear irrotational capillary gravity waves is basically a restatement of Theorem \ref{T:e-values}(2b). 

\subsection{Preliminary linear analysis} \label{SS:Pre}

To analyze the linear system \eqref{E:LEuler}, we first reduce it to an evolution problem of the Fourier transform of $v_2$ in $x_1$, which in turn determines $v_1$, $\eta$, and $p$. We then apply the Laplace transform in $t$ to obtain a non-homogeneous boundary value problem of the well-known Rayleigh equation in $x_2 \in (-h, 0)$ with a non-homogeneous Robin type boundary condition at $x_2=0$ due the boundary conditions at the free boundary. The main analysis will focus on the Rayleigh equation.  

Consider the Fourier transforms of the unknowns $(v(t, x), \eta(t, x_1), p(t, x))$ in $x_1$ 
\[
v (x) =  \sum_{k \in \tfrac {2\pi}L \mathbb{Z}} \hat v (k, x_2) e^{ik x_1}, \;\; \eta (x_1) = \sum_{k \in \tfrac {2\pi}L \mathbb{Z}} \hat \eta (k) e^{ik x_1}, \;\; p(x)= \sum_{k \in \tfrac {2\pi}L \mathbb{Z}} \hat p (k, x_2) e^{ik x_1},  
\] 
in the case of $x_1 \in \mathbb{T}_L$ and 
\[
v(x)=\frac 1{2\pi} \int_\R \hat v (k, x_2) e^{ik x_1} dk, \;\; \eta (x_1)=\frac 1{2\pi} \int_\R \hat \eta (k) e^{ik x_1} dk, \;\; p(x)=\frac 1{2\pi} \int_\R \hat p (k, x_2) e^{ik x_1} dk,  
\]
in the case of $x_1\in \R$, where we skipped the variable $t$. The Fourier transform of the linearized system \eqref{E:LEuler} takes the form 
\be \label{E:LE-F}
    \begin{cases}
    \partial_t\hat{v} +ikU(x_2)\hat{v} + (U' (x_2) \hat v_2, 0)^T + (ik \hat p, \hat p')^T =0, \qquad ik \hat v_1 + \hat v_2' =0, \qquad \qquad & x_2 \in (-h,0)\\
    (k^2-\partial_{x_2}^2)p=2ikU'(x_2)\hat{v}_2, \qquad \qquad \qquad &x_2 \in (-h,0)\\
    \partial_t\hat{\eta}=-ikU (0)\hat{\eta}+\hat{v}_2 (t, k, x_2=0),   \\   
     \hat p (t, k, 0)=   (g+\sigma k^2)\hat{\eta}, \\
\hat{v}_2 (t, k, -h) =0,  \quad \hat p' (t, k, -h)=0, \\
    \end{cases}\ee
where $'$ denotes the derivative with respect to $x_2$ as in the rest of the paper. Due to the divergence free condition on $v$ and the boundary conditions, it is easy to see 
\be \label{E:0-mean}
\hat v_2 (t, 0, x_2)=0, \quad \hat p(t, 0, x_2)=g,  \quad \hat v_1(t, 0, x_2) = v_{10} (0, x_2), \quad \hat \eta(t, 0)= \hat \eta_0 (0). 
\ee
For $k \ne 0$, $\hat v_1$ can also be determined by $\hat v_2$ using  the divergence free condition, $\hat \eta$ by the third equation of \eqref{E:LE-F}, while $\hat p$ by $\hat v_2$ and $\hat \eta$ by solving the elliptic boundary value problem. So we shall mainly focus on $\hat v_2$. 

Combining the equation of $\hat v_2$ acted by $k^2 - \p_{x_2}^2$ and  the one of $\hat p$ acted by $\p_{x_2}$, we obtain 
\begin{subequations} \label{E:LEuler-v2} 
\be \label{E:LEuler-v2-1}
(\p_t + ik U) (k^2 - \p_{x_2}^2) \hat v_2  + ik U'' \hat v_2 =0, \quad x_2 \in (-h, 0),   
\ee
which is actually the linearized transport equation of the vorticity (as defined in \eqref{E:vor}) 
\[
\hat \omega = ik \hat v_2 - \hat v_1' = \tfrac ik (k^2 - \p_{x_2}^2) \hat v_2 
\]
 in its Fourier transform. In addition to the above equation, we need its boundary information to completely determine $\hat v_2$. Applying $\p_{x_2}$ to the first equation of \eqref{E:LE-F}, then evaluating  at $x_2=0$, and using the equation of $\hat p$, we have 
\[
(\p_t+ ik U(0)) \hat v_2' (t, k, 0) - ik U'(0) \hat v_2 (t, k, 0) +  k^2 (g + \sigma k^2) \hat \eta(t, k) =0. 
\]
Finally applying $\p_t + ik U(0)$ to the above equation and using the third equation of \eqref{E:LE-F}, we obtain  
\be \label{E:LEuler-v2-2} 
\big( (\p_t+ ik U)^2 \hat v_2'  - ik U' (\p_t+ ik U) \hat v_2  +  k^2 (g + \sigma k^2) \hat v_2 \big)\big|_{x_2=0}  =0, \quad \hat v_2 |_{x_2= -h}=0, 
\ee 
\end{subequations}  
where we also included the boundary value of $\hat v_2$ at $x_2=-h$. 

To analyze the evolutionary problem, we apply the Laplace transform $\mathcal{L}$ to the unknowns 
\be \label{E:LT-1}
V(s) = (V_1 (s), V_2(s)):=\mathcal{L}\{\hat{v}\}(s), \quad P(s):=\mathcal{L}\{p\}(s), \quad \tilde{\eta}:=\mathcal{L}\{\hat{\eta}\}(s). 
\ee
An often used change of variable for $k \ne 0$ is  
\be \label{E:s-c}
c:= is/k  =c_R+ic_I
\ee
with $c_R$ and $c_I$ being the real and imaginary parts. From \eqref{E:LEuler-v2}, our main unknown $V_2(k, c, x_2)$ satisfies the following non-homogeneous Rayleigh equation 
\begin{subequations} \label{E:Ray} 
\be \label{E:Ray-1}
-V_2'' +(k^2+\frac{U''}{U-c})V_2 =\frac{(k^2-\partial_{x_2}^2)\hat{v}_{20}}{ik(U-c)}= - \frac { \hat \omega_0 }{U-c},  \qquad \qquad x_2\in (-h,0),
\ee
where $\hat \omega = \hat \omega_0 (k, x_2)$ is the Fourier transform of the initial vorticity, 
with the obvious boundary condition 
\be \label{E:Ray-2} 
V_2 (-h)=0.
\ee
Here we skipped the $k$ and $c$ variables of $V_2$. Similarly, the Laplace transform applied to the boundary equation \eqref{E:LEuler-v2-2} 
implies 
\begin{align*}
& \big((U-c)^2V_2'-(U'(U-c)+(g+\sigma k^2))V_2\big)\big|_{x_2=0} 
= -\tfrac 1{k^2} \big( \p_t \hat v_2' - ick \hat v_2' + 2ik U \hat v_2' - ik U' \hat v_2 \big)\big|_{t=x_2=0} \\
=& -\tfrac 1{k^2} \big( (\p_t  + ik U) \hat v_2' - ik U' \hat v_2  +ik (U-c) \hat v_2'  \big)\big|_{t=x_2=0}. 
\end{align*}
Therefore we obtain 
\be \label{E:Ray-3} 
\big((U-c)^2V_2'-(U'(U-c)+(g+\sigma k^2))V_2\big)\big|_{x_2=0} =(g+\sigma k^2)\hat\eta_0 -\tfrac{i}{k}(U(0)-c)\hat{v}_{20}' (0).
\ee
\end{subequations} 
The last boundary condition can be viewed as determining the dispersion relation which is highly nonlocal. The Laplace transforms of $V_1$ and $\tilde \eta$ of $\hat v_1$ and $\hat \eta$ can be recovered from the divergence free condition and the third equation of \eqref{E:LE-F} 
\be \label{E:t-eta}
V_1 = \tfrac ik V_2', \quad  \tilde \eta (c, k) = \frac {V_2 (c, k, 0) + \hat \eta_0(k)}{ik\big( U(0)-c\big)}.
\ee
Hence in most of the paper we shall focus on the non-homogeneous boundary value problem \eqref{E:Ray} of the Rayleigh equation and then use it to obtain the eigenvalue distribution of  \eqref{E:LEuler} and the inviscid damping of its solutions. 

System \eqref{E:Ray} is a boundary value problem of a non-homogeneous second order ODE with  coefficients  analytic in $k\in \R$ and $c  \in \C \setminus U([-h, 0])$, so it has a unique solution analytic in $k$ and $c$ except for those $(k, c)$ for which the corresponding homogeneous system of \eqref{E:Ray}, where $\hat v_{20}=0$ and $\hat \eta_0 =0$, has non-trivial solutions. Such singular $(k, c)$ also give the eigenvalues of \eqref{E:Ray} in the form of $-ick$. In fact we have the following lemma. 

\begin{lemma} \label{L:e-value}
For $k \in \mathbb{R}\backslash \{0\}$, there exists a non-trivial solution $\big(c, V_2(x_2)\big)$ with  $c  \notin U([-h, 0])$ to the  corresponding  homogeneous problem of \eqref{E:Ray} (namely, with $\hat v_{20}=0$ and $\hat \eta_0 =0$)  if and only if $-ikc$ is an eigenvalue of the linearized capillary-gravity wave system \eqref{E:LEuler} associated with the linear solution in the form of \eqref{E:e-func} given by 
\be \label{E:e-function} \begin{split}
&v_1(t, x) = \frac ik e^{ik(x_1- c t)} V_2' (x_2), \quad v_2 (t, x) = e^{ik(x_1-c t)} V_2 (x_2), \quad \eta(t, x_1) = e^{ik(x_1- c t)} \frac { V_2(0)}{ik (U(0)-c)},\\ 
&p(t, x)= e^{ik(x_1- c t)} \Big(  \frac {g+\sigma k^2 }{ik (U(0)- c)}V_2(0) -ik  \int_0^{x_2} (U- c)  V_2 dx_2' \Big).   
\end{split} \ee
\end{lemma} 

\begin{proof} 
On the one hand, it is straight forward to verify that the above $v$, $\eta$, and $p$ satisfy \eqref{E:LEuler-4}, \eqref{E:LEuler-5}, $\p_{x_2} p|_{x_2=-h}=0$, and $\nabla \cdot v=0$. The Poisson equation of $p$ in \eqref{E:LEuler-2} is a consequence of the linearized Euler equation in \eqref{E:LEuler-1}, the $v_2$ equation of which is also easily verified. Hence we only need to consider the $v_1$ equation in \eqref{E:LEuler-1}. In fact, that equation holds for the above $(v, \eta, p)$ if 
\[
- (U- c) V_2' + U'  V_2 + \frac {g+\sigma k^2}{U(0)- c}  V_2(0) + k^2 \int_0^{x_2} (U- c) V_2 dx_2'  =0.
\]
The $x_2$-derivative of this function is equal to $0$ due to the Rayleigh equation \eqref{E:Ray-1} and its boundary value equal is to $0$ at $x_2=0$ due to the boundary condition \eqref{E:Ray-3}.

On the other hand, suppose $\big(k, c, v_2(t, x), \eta(t, x_1), p(t, x)\big)$ is a solution to \eqref{E:LEuler} in the form of \eqref{E:e-func} with $k \ne 0$ and $c \notin U([-h, 0])$. 
Equation \eqref{E:LEuler-v2-1} implies that $V_2$ must be a solution to the corresponding homogeneous equation of \eqref{E:Ray-1}, while \eqref{E:LEuler-v2-2} yields the homogeneous boundary conditions of the types of (\ref{E:Ray-2}-\ref{E:Ray-3}).  Therefore $(c, V_2(x_2))$ have to be homogeneous solutions to \eqref{E:Ray}. Subsequently, $v_1$ is obtained from $\nabla \cdot v=0$, $\eta$ from the third equation in \eqref{E:LE-F}, and $p$ from the $v_2$ equation in \eqref{E:LE-F} along with its boundary value at $x_2=0$. 
\end{proof}



\begin{definition} \label{D:modes}
$(k, c)$ is a non-singular mode if $c \in \C \setminus U([-h, 0])$ and there exists a non-trivial solution $V_2(x_2)$ to the corresponding homogeneous problem of \eqref{E:Ray} (thus also yields a solution to \eqref{E:LEuler} in the form of \eqref{E:e-func}). $(k, c)$  is a singular mode if  $c \in U([-h, 0])$ and there exists a $H_{x_2}^2$ solution $y(x_2)$  to 
\be \label{E:Ray-reg}
(U-c) (-y'' + k^2y) + U'' y =0
\ee
along with the corresponding homogeneous   boundary conditions of (\ref{E:Ray-2}--\ref{E:Ray-3}). (See also Remark \ref{R:singularM}.)
\end{definition}

After acquiring good understanding on the homogeneous problem of the Rayleigh equation \eqref{E:Ray} (Section \ref{S:Ray-Homo}) and its eigenvalues (Section \ref{S:e-values}), we proceed to analyze the general non-homogeneous problem of \eqref{E:Ray} (Section \ref{S:Ray-BC}), in particular, the dependence of solutions on $c$. Finally in Section \ref{S:Linear} we apply the inverse Laplace transform to estimate the solution to the linear system \eqref{E:LEuler}. Recall the inverse Laplace transform 
\be \label{E:ILT}
f(t) = \frac 1{2\pi i} \int_{\gamma - i\infty}^{\gamma +i\infty}  e^{st} F(s)  ds = \frac {|k|}{2\pi} \int_{-\infty + \frac {i\gamma}k}^{+\infty + \frac {i\gamma}k} e^{-ikc t} F(-ikc) dc, 
\ee
where $\gamma$ is a real number so that $F(s)$ is analytic in the region $\RP\, s>\gamma$ and the change of variable \eqref{E:s-c} was used in the second equality. Due to the analyticity, the integral can be eventually carried out along contours enclosing $U([-h, 0])\subset \C$ and the non-singular modes of \eqref{E:LEuler}. Assuming there is no singular modes in $U([-h, 0])$, we shall eventually obtain the decay in $t$ of the component of the linear solution corresponding to the integral along the contour surrounding  $U([-h, 0])$ by integration by parts in $c$.

\section{Analysis of the Rayleigh equation} \label{S:Ray-Homo}

In this section, we shall thoroughly analyze the homogeneous Rayleigh equation
\be \label{E:Ray-H1-1}
-y''(x_2)+ \big(k^2 + \tfrac{U''(x_2)}{U(x_2)-c}\big)y(x_2)=0, \quad x_2\in [-h,0],
\ee
where
\[
k\in \R, \quad c=c_R + i c_I \in \C, \quad '= \p_{x_2}.
\]
Throughout this section (except for some lemmas in Subsection \ref{SS:Y}), we assume 
\be \label{E:U'}
U'(x_2)>0, \quad \forall x_2 \in [-h, 0]. 
\ee
As pointed out in the introduction, due to the symmetry of the reflection in $x_1$ variable, the case of $U'<0$ can be reduced to the above one. Hence all results under \eqref{E:U'} hold for all uniformly monotonic $U(x_2)$, namely those $U$ satisfying $U'\ne 0$ on $[-h, 0]$. 

To some extent, we will also consider the non-homogeneous Rayleigh equation 
\be \label{E:Ray-NH-1}
-y''(x_2)+\big(k^2 + \tfrac{U''(x_2)}{U(x_2)-c}\big)y(x_2)=\phi \big(k, c, x_2\big), \quad x_2\in [-h,0].
\ee
More detailed forms and conditions of $\phi(k, c, x_2)$ will be specified when we obtained detailed estimates in Sections \ref{S:Ray-BC} and \ref{S:Linear}. 
As in typical problems of linear estimates based on density argument, we shall mostly work on $\phi$ with sufficient regularity, but carefully tracking its norms involved in the estimates. 

The solutions to the Rayleigh equation \eqref{E:Ray-H1-1} 
are obviously even in $k$ and thus $k\ge 0$ will be assumed mostly. Similarly complex conjugate of solutions also solve \eqref{E:Ray-H1-1}  with $c$ 
replaced by $\bar c$, 
so we will restrict our consideration to $c_I\ge 0$. We have to consider the cases of $c \in \C$ away from $U([-h, 0])$, near $U([-h, 0])$, and then finally $c \in U([-h, 0])$, separately. Due to small  scales in $x_2$ created by $k\gg1$, 
the dependence of the estimates on $k\gg1$ will be carefully tracked. 

Recall $U \in C^{l_0}$. For technical convenience we extend $U$ to be a $C^{l_0}$ function on a neighborhood $[-h_0-h, h_0]$ of $[-h, 0]$, where 
\be \label{E:h0}
h_0 = \min\Big\{\frac h2, \,  \frac {\inf_{[-h, 0]} U'}{4 |U''|_{C^0([-h, 0])}}
\Big\} >0,  
\ee
such that, on $[-h_0-h, h_0]$,  
\be \label{E:U-ext}
U' \ge \tfrac 12 \inf_{[-h, 0]} U'(x_2), \quad |U'|_{C^{l_0-1}([-h_0-h, h_0])} \le 2|U'|_{C^{l_0-1} ([-h, 0])}.
\ee
In the analysis of the most singular case of $c$ close to the range $U([-h, 0])$, we let $x_2^c$ be such that 
\be \label{E:x2c}
c_R = U(x_{2}^c), \; \text{ if } c_R \in U([-h_0-h, h_0]).
\ee
We also extend the non-homogeneous term $\phi(k, c, x_2)$ for $x_2 \in [-h_0-h, h_0]$ while keeping its relevant bounds comparable.


\subsection{Rayleigh equation in the regular region}

In the initial step we consider the rather regular case where $k^2 |U-c|$ is bounded from below. For not so small $k$, we first transform the homogeneous Rayleigh equation \eqref{E:Ray-H1-1} into a system of first order (complex valued) ODEs. Let 
\[
z_\pm = y' \pm |k| y, 
\]
and then \eqref{E:Ray-H1-1} takes the form of the coupled equations 
\be \label{E:Ray-z-1} 
z_\pm' = \pm |k| z_\pm + \tfrac 12 \beta(k, c, x_2) (z_+ - z_-), \qquad \beta(k, c, x_2) = \tfrac {U''}{|k| (U-c)}.
\ee

\begin{lemma} \label{L:Ray-regular}  
There exists $C>0$ depending only on $|U'|_{C^1}$, and $|(U')^{-1}|_{C^0}$,  such that for any $\rho \in (0, 1]$, $k\ne 0$, and $\mathcal{I} = [x_{2l}, x_{2r}] \subset [-h_0-h, h_0]$ satisfying 
\be \label{E:kc-away-1}
\left| \frac {1}{U-c} \right| \le  \rho k^2 (1+ |U''|_{C^0([-h_0-h, h_0])})^{-1}, 
\quad \forall \, x_2 \in \mathcal{I},  
\ee
and any solution $z=(z_+, z_-)^T$ to \eqref{E:Ray-z-1} with 
\be \label{E:iniC-L}
|z_+(x_{2l})| \ge |z_- (x_{2l})|, 
\ee
it holds, for $x_2 \in \mathcal{I}$, $|z_+(x_{2})| \ge |z_- (x_{2})|$ and 
\be \label{E:z_pm-l} \begin{split} 
& \left| z_+(x_2) - e^{|k| (x_2 - x_{2l})}z_+(x_{2l}) \right| + \left| z_-(x_2) - e^{- |k| (x_2 - x_{2l})}z_-(x_{2l}) \right|  \\
\le & C|k|^{-1}  \log \big(1+ C\rho k^2(x_2 -x_{2l})\big) e^{|k|(x_2-x_{2l})} |z_+(x_{2l})|. 
\end{split} \ee
Moreover, for any solution with 
\be \label{E:iniC-R}
|z_+(x_{2r})| \le |z_- (x_{2r})|,
\ee
we have, for $x_2 \in \mathcal{I}$, $|z_+(x_{2})| \le |z_- (x_{2})|$ and   
\be \label{E:z_pm-r} \begin{split} 
& \left| z_+(x_2) - e^{|k| (x_2 - x_{2r})}z_+(x_{2r}) \right| + \left| z_-(x_2) - e^{- |k| (x_2 - x_{2r})}z_-(x_{2r}) \right|  \\
\le & C|k|^{-1} \log \big(1+ C\rho k^2(x_{2r} -x_{2})\big) e^{|k|(x_{2r}-x_{2})}   |z_-(x_{2r})|.
\end{split} \ee
\end{lemma}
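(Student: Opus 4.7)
The plan is to view system \eqref{E:Ray-z-1} as a perturbation of the decoupled pair $z_\pm' = \pm|k|z_\pm$, whose flows are $e^{\pm|k|(x_2-x_{2l})}$, and to treat the coupling $\frac{\beta}{2}(z_+-z_-)$ as a correction that is small in an integral sense. Hypothesis \eqref{E:kc-away-1} immediately gives $|\beta|\le\rho|k|$ on $\mathcal{I}$, since $|U''|\le 1+|U''|_{C^0}$. Propagation of the ordering $|z_+(x_2)|\ge|z_-(x_2)|$ then follows from the identity
\[
\frac{d}{dx_2}\bigl(|z_+|^2-|z_-|^2\bigr) = 2|k|\bigl(|z_+|^2+|z_-|^2\bigr) + (\Re\beta)\,|z_+-z_-|^2,
\]
combined with $|\Re\beta|\,|z_+-z_-|^2\le 2\rho|k|(|z_+|^2+|z_-|^2)$: the right side is $\ge 2(1-\rho)|k|(|z_+|^2+|z_-|^2)\ge 0$ for $\rho\le 1$, so $|z_+|^2-|z_-|^2$ is non-decreasing and the ordering propagates from \eqref{E:iniC-L}.

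The key technical step is the logarithmic integrability estimate
\[
A(x_2) := \int_{x_{2l}}^{x_2}|\beta(s)|\,ds \le C|k|^{-1}\log\bigl(1+C\rho k^2 (x_2-x_{2l})\bigr).
\]
The change of variables $u=U(s)$ (admissible by \eqref{E:U'}) and the boundedness of $|U''/U'|$ reduce this to $C|k|^{-1}\int_{U(x_{2l})}^{U(x_2)}du/|u-c|$. The hypothesis \eqref{E:kc-away-1} forces $c$ to lie at distance at least $(1+|U''|_{C^0})/(\rho k^2)$ from $[U(x_{2l}),U(x_2)]$, and a direct calculation with the $\sinh^{-1}$ primitive (separately handling $c_R$ inside and outside this interval) bounds the $u$-integral by $C\log\bigl(1+C\rho k^2(x_2-x_{2l})\bigr)$. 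A further elementary optimization in $|k|$ shows that $A$ is uniformly bounded over $|k|>0$, $\rho\in(0,1]$, and $x_2-x_{2l}\in[0, h_0+h]$, which will be crucial to turn Gronwall's exponential into a linear factor.

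With ordering and the integrability bound in hand, I would pass to the gauge-adjusted variables $\tilde z_\pm := e^{\mp|k|(x_2-x_{2l})}z_\pm$. The Duhamel identity
\[
\tilde z_+(x_2) - \tilde z_+(x_{2l}) = \int_{x_{2l}}^{x_2}\frac{\beta(s)}{2}\bigl(\tilde z_+(s) - e^{-2|k|(s-x_{2l})}\tilde z_-(s)\bigr)\,ds,
\]
together with the rewritten ordering $e^{-2|k|(s-x_{2l})}|\tilde z_-(s)|\le|\tilde z_+(s)|$, bounds the integrand pointwise by $|\beta(s)||\tilde z_+(s)|$. Gronwall yields $|\tilde z_+(x_2)|\le e^{A(x_2)}|\tilde z_+(x_{2l})|$, and the uniform boundedness of $A$ upgrades this to $|\tilde z_+(x_2)-\tilde z_+(x_{2l})|\le (e^A-1)|\tilde z_+(x_{2l})|\le CA\,|z_+(x_{2l})|$. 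Multiplying back by $e^{|k|(x_2-x_{2l})}$ gives the $z_+$ estimate in \eqref{E:z_pm-l}. The $z_-$ estimate comes from the analogous Duhamel formula $z_-(x_2)=e^{-|k|(x_2-x_{2l})}z_-(x_{2l}) + \int e^{-|k|(x_2-s)}\frac{\beta(s)}{2}(z_+-z_-)\,ds$: combining $|z_\pm(s)|\le C e^{|k|(s-x_{2l})}|z_+(x_{2l})|$ with the kernel bound $e^{-|k|(x_2-s)}\cdot e^{|k|(s-x_{2l})}\le e^{|k|(x_2-x_{2l})}$ factors out $e^{|k|L}|z_+(x_{2l})|$, and the integral of $|\beta|$ contributes the logarithmic factor. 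The second claim \eqref{E:z_pm-r} follows from the first by time-reversal symmetry, running the same argument leftward from $x_{2r}$ with the roles of $z_+$ and $z_-$ exchanged.

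The principal obstacle I anticipate is the logarithmic integrability estimate together with its uniform boundedness in $|k|$, $\rho$ and $L$: one must carefully treat both configurations of $c$ relative to $U(\mathcal{I})$ to extract the $\log(1+C\rho k^2 L)$ dependence, and then verify that $|k|^{-1}\log(1+C\rho k^2 L)$ remains bounded over all admissible parameters so that the Gronwall exponential factor can be linearized into the stated bound.
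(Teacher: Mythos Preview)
Your proposal is correct and follows essentially the same approach as the paper's proof: the same monotonicity identity for the ordering $|z_+|\ge|z_-|$, the same logarithmic bound on $\int|\beta|$ via case analysis on the position of $c_R$ relative to the interval (you first change variables to $u=U(s)$, the paper stays in $x_2$ and works with $x_2^c=U^{-1}(c_R)$), and the same Duhamel treatment of $z_-$. The only cosmetic difference is that for $z_+$ the paper uses the explicit factorization $z_+(x_2)=\exp\bigl(|k|(x_2-x_{2l})+\tfrac12\int_{x_{2l}}^{x_2}\beta(1-z_-/z_+)\,dx_2'\bigr)z_+(x_{2l})$ and bounds $|e^X-1|$ directly, rather than reaching the same estimate via Gronwall on the gauge-adjusted variable.
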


While \eqref{E:iniC-L} provides some technical convenience, indeed some assumption of this type on the initial values is needed to ensure estimates of solutions such as  \eqref{E:z_pm-l}. For example, if $|\beta|\ll k$, the standard ODE theory implies that there are two solutions behaving like $e^{\pm k (x_2-x_{2l})}$ corresponding to the Lyapunov exponents close to $\pm k$, then the decaying solution may not satisfy \eqref{E:z_pm-l} with $C$ uniform in $k\gg1$.  

\begin{proof}
We start with the observation of a simple consequence of \eqref{E:kc-away-1}. Namely, 
one may  compute straight forwardly  
\be \label{E:temp-mono-1}
(|z_+|^2 - |z_-|^2 )' =2 |k| (|z_+|^2 + |z_-|^2 ) +  \RP \beta |z_+-z_-|^2 \ge 0. 
\ee
This monotonicity along with boundary conditions yields an order relation between $|z_\pm|$ which can be used to control terms in \eqref{E:Ray-z-1}. 

We shall focus on the case under assumption \eqref{E:iniC-L}, which ensures 
\be \label{E:temp-order}
|z_+| \ge |z_-|, \quad \forall \, x_2 \in \mathcal{I}.
\ee
By factorizing $z_+$ on the right side of \eqref{E:Ray-z-1}, its solutions satisfy   
\be \label{E:z+}
z_+(x_2) - e^{|k|(x_2 - x_{2l})} z_+ (x_{2l})=  \big(e^{ \frac12 \int_{x_{2l}}^{x_2} \beta (k, c, x_2') \big(1- \frac {z_-(x_2')}{z_+(x_2')}\big) dx_2'} -1\big) e^{|k| (x_2 - x_{2l})} z_+ (x_{2l}).  
\ee
If $c_R \in U([-h_0-h, h_0])$, let $x_2^c$ be defined as in \eqref{E:x2c} and we use \eqref{E:kc-away-1} to estimate  
\begin{align*}
\int_{x_{2l}}^{x_2} |\beta (k, c, x_2')| 
dx_2' \le& \frac C{|k|} \int_{x_{2l}}^{x_2} (|x_2' - x_2^c|^2 + c_I^2)^{-\frac 12} dx_2' 
=  \frac C{|k|} \left| \log \frac {x_2- x_{2}^c + \sqrt{(x_2 - x_2^c)^2 + c_I^2}}{x_{2l}- x_{2}^c + \sqrt{(x_{2l} - x_2^c)^2 + c_I^2}  }  \right|,
\end{align*}
where the last equality is the exact integral. If $x_2^c \le x_{2l} \le x_2$, then the numerator in the logarithm is greater than the denominator. Applying the triangle inequality to $x_2$, $x_{2l}$ and $c$, we obtain 
\begin{align*}
\left| \log \frac {x_2- x_{2}^c + \sqrt{(x_2 - x_2^c)^2 + c_I^2}}{x_{2l}- x_{2}^c + \sqrt{(x_{2l} - x_2^c)^2 + c_I^2}  }  \right| \le & \left| \log \left(1+ \frac {C(x_2-x_{2l})}{x_{2l}- x_{2}^c + |U(x_{2l}) - c|}\right) \right| 
\le  \log\big(1+ C\rho k^2 (x_2 -x_{2l})\big).
\end{align*}
If $x_{2l} \le x_{2} \le x_2^c$, multiplying the top and bottom of the quotient by their conjugates and proceeding much as in the previous case, we have 
\begin{align*}
\left| \log \frac {x_2- x_{2}^c + \sqrt{(x_2 - x_2^c)^2 + c_I^2}}{x_{2l}- x_{2}^c + \sqrt{(x_{2l} - x_2^c)^2 + c_I^2}  }  \right|= & \left| \log \frac {x_2^c- x_{2l} + \sqrt{(x_{2l} - x_2^c)^2 + c_I^2}}{x_{2}^c- x_{2} + \sqrt{(x_{2} - x_2^c)^2 + c_I^2}  }  \right|  
\le  \log\big(1+ C \rho k^2 (x_2 -x_{2l})\big).
\end{align*}
Finally, in the case $x_{2l} < x_2^c < x_2$,  by splitting the interval at $x_2^c$ and applying the above estimates on the two subintervals, we obtain 
\begin{align*}
& \left| \log \frac {x_2- x_{2}^c + \sqrt{(x_2 - x_2^c)^2 + c_I^2}}{x_{2l}- x_{2}^c + \sqrt{(x_{2l} - x_2^c)^2 + c_I^2}  }  \right| \\
=&  \left| \log \frac {x_2- x_{2}^c + \sqrt{(x_2 - x_2^c)^2 + c_I^2}}{|c_I|} + \log \frac {|c_I|}{x_{2l}- x_{2}^c + \sqrt{(x_{2l} - x_2^c)^2 + c_I^2}  }  \right|\\
\le& \log\big(1+ C \rho k^2 (x_2 -x_{2}^c)\big) + \log\big(1+ C \rho k^2 (x_2^c -x_{2l})\big) 
\le 2\log\big(1+ C \rho k^2 (x_2 -x_{2l})\big).
\end{align*}
Therefore the desired estimate \eqref{E:z_pm-l}  on $z_+$ follows from \eqref{E:z+} and \eqref{E:temp-order} and 
\begin{align*}
&\left| e^{ \frac12 \int_{x_{2l}}^{x_2} \beta (k, c, x_2') \big(1- \frac {z_-(x_2')}{z_+(x_2')}\big) dx_2'} -1 \right| \le C \int_{x_{2l}}^{x_2} |\beta (k, c, x_2') | dx_2'  
\le C|k|^{-1} \log\big(1+ C \rho k^2(x_2 -x_{2l})\big), 
\end{align*}
as $C|k|^{-1} \log\big(1+ C \rho k^2(x_2 -x_{2l})\big)$ is bounded uniformly in all $k\ne 0$. 
If $c_R \notin U([-h_0-h, h_0])$, one can bound $|\beta|$ by $\tfrac C{|k|} \min\{1, \rho k^2\}$ which is also bounded for all $k\ne 0$. If $\rho k^2 \le 1$, then $\rho k^2 (x_2 - x_{2l})$ is bounded by $C \log\big(1+ \rho k^2 (x_2 -x_{2l})\big)$. If $1 \le \rho k^2$, then   
\[
x_2 - x_{2l} \le C \log\big(1+x_2 -x_{2l}\big) \le C \log\big(1+ \rho k^{2} (x_2 -x_{2l})\big).
\]
Therefore in both cases we have  
\[
\int_{x_{2l}}^{x_2} |\beta (k, c, x_2')| dx_2' \le \tfrac C{|k|} \min\{1, \rho k^2 \} (x_2-x_{2l}) \le \tfrac C{|k|} \log\big(1+ \rho k^2(x_2 -x_{2l})\big)
\]
and thus \eqref{E:z_pm-l} for $z_+$ follows from \eqref{E:z+} and \eqref{E:temp-order}. 

Turning attention to $z_-$, from the variation of parameter formula, we have 
\be \label{E:z-}
z_-(x_2) - e^{-|k| (x_2 - x_{2l})} z_-(x_{2l}) = \frac12 \int_{x_{2l}}^{x_2} e^{-|k| (x_2 - x_2')} \beta (k, c, x_2') \big(z_+ (x_2') - z_-(x_2')\big) dx_2',   
\ee
which along with \eqref{E:kc-away-1}, \eqref{E:z_pm-l} for $z_+$, and \eqref{E:temp-order}, implies 
\begin{align*}
&|z_-(x_2) - e^{ -|k| (x_2 - x_{2l})} z_-(x_{2l}) | 
\le Ce^{|k| (x_2-x_{2l})}  |z_+(x_{2l})|\int_{x_{2l}}^{x_2} |\beta(k, c, x_2')| dx_2'. 
\end{align*}
The desired estimate on $z_-$ follows from the above inequality on $\int |\beta|$. 
The estimates on $z_\pm(x_2)$ with initial condition $z_\pm (x_{2r})$ satisfying \eqref{E:iniC-R} can be  derived in exactly the same fashion. 
\end{proof}

In the following we use the above lemma to analyze some solutions to the homogeneous and non-homogeneous Rayleigh equations \eqref{E:Ray-H1-1} and \eqref{E:Ray-NH-1}. 

\begin{lemma} \label{L:Ray-regular-1}
Consider 
\[
(\Theta_1, \Theta_2) \in \{\sinh, \cosh\}^2 \setminus \{(\cosh, \sinh)\}.
\]
There exists $C >0$ depending only on $|U'|_{C^1}$ and $|(U')^{-1}|_{C^0}$, such that, for any $k\ne 0$, $\rho \in (0, 1]$, $C_0\ge 0$, and interval $\CI=[x_{2l}, x_{2r}] \subset [-h, 0]$ satisfying \eqref{E:kc-away-1}, 
\begin{enumerate}
\item if a solution $y(x_2)$ to \eqref{E:Ray-H1-1}   
satisfies 
\be \label{E:y-ini-1}
\big||k|  y (x_{2l}) - \sinh |k|s\big|\le C_0 \Theta_1 (|k|s), \;\; |y'(x_{2l}) - \cosh ks|\le C_0 \Theta_2 (|k|s),  \quad s\ge 0, 
\ee
then it holds that, for all $x_2 \in \CI$,  
\[
\big||k| y (x_{2}) - \sinh  |k| (x_{2} - x_{2l}+s)| \le   C \big( C_0 + (1+C_0)\big( \rho + |k|^{-1} \log (1+C  \rho k^2)   \big) \big) \Theta_1 (|k|(x_2 -x_{2l}+s)),
\]
\[
|y'(x_{2}) - \cosh k (x_2-x_{2l}+s)| \le  C \big( C_0 + (1+C_0)\big( \rho + |k|^{-1}\log (1+C  \rho k^2)   \big) \big) \Theta_2 (|k|(x_2 -x_{2l}+s));
\]
\item if a solution $y(x_2)$ to \eqref{E:Ray-H1-1}   
satisfies 
\be \label{E:y-ini-2}
\big||k|  y (x_{2r}) - \sinh |k|s\big| \le C_0 \Theta_1 (|ks|),  \;\; |y'(x_{2r}) - \cosh ks| \le C_0 \Theta_2 (|ks|), \quad s\le 0, 
\ee
then it holds that, for all $x_2 \in \CI$, 
\[
\big||k| y (x_{2}) - \sinh  |k| (x_2 - x_{2r} +s)\big| \le C \big( C_0 + (1+C_0)\big( \rho + |k|^{-1}\log (1+C  \rho k^2)   \big) \big)  \Theta_1 (|k (x_2 - x_{2r} +s)|),
\]
\[
|y'(x_{2}) - \cosh k (x_2 - x_{2r}+s)| \le  C \big( C_0 + (1+C_0)\big( \rho + |k|^{-1}\log (1+C  \rho k^2)   \big) \big)  \Theta_2 (|k (x_2 - x_{2r} +s)|). 
\]
\item Moreover, the solution $y(x_2)$ to \eqref{E:Ray-NH-1} with $y(x_{20}) = y'(x_{20})=0$ for some $x_{20} \in \CI$ satisfies 
\be \label{E:y-nh-1} \begin{split}
&\Big| |k|y (x_{2})- \int_{x_{20}}^{x_2} \phi(k, c, x_2') \sinh |k(x_2 - x_2')| dx_2' \Big| \\
&+ \Big|y' (x_{2})- \int_{x_{20}}^{x_2} \phi(k, c, x_2') \cosh k(x_2 - x_2') dx_2' \Big| \\
\le & C\big( \rho + |k|^{-1} \log (1+ C \rho k^2)   \big)  \Big|\int_{x_{20}}^{x_2} \phi\big(k,  c, x_2'\big) \sinh |k(x_2 - x_2')| dx_2'\Big|.
\end{split} \ee 
\end{enumerate}
\end{lemma}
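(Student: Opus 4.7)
The plan is to reduce both parts to Lemma \ref{L:Ray-regular} by passing to the first-order variables $z_\pm = y' \pm |k|y$ governed by \eqref{E:Ray-z-1}. For part (1), I would first translate \eqref{E:y-ini-1} into bounds on $z_\pm(x_{2l})$: using $\cosh|k|s \pm \sinh|k|s = e^{\pm|k|s}$, the main parts give $z_\pm^{\mathrm{main}}(x_{2l}) = e^{\pm|k|s}$ and the errors are $O\bigl(C_0(\Theta_1(|k|s)+\Theta_2(|k|s))\bigr)$. A case check of the three admissible pairs $(\Theta_1,\Theta_2)$ confirms $\Theta_1(|k|s)+\Theta_2(|k|s) \le 2e^{|k|s}$ and, more importantly, that the error in $z_-(x_{2l})$ stays below the main term $e^{-|k|s}$ after absorbing a universal constant into $C$, so hypothesis \eqref{E:iniC-L} of Lemma \ref{L:Ray-regular} is met. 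The pair $(\cosh,\sinh)$ is excluded precisely because its error in $z_-$ is of order $C_0\cosh|k|s$, which dominates $e^{-|k|s}$ and breaks the ordering required downstream.

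Applying \eqref{E:z_pm-l} then gives $z_\pm(x_2) = e^{\pm|k|(x_2-x_{2l})}z_\pm(x_{2l}) + R_\pm$ with $|R_\pm| \le C|k|^{-1}\log(1+C\rho k^2) e^{|k|(x_2-x_{2l})}|z_+(x_{2l})|$. Recovering $|k|y = \tfrac12(z_+-z_-)$ and $y' = \tfrac12(z_++z_-)$ and using the hyperbolic addition formulas, the leading terms assemble into $\sinh|k|(x_2-x_{2l}+s)$ and $\cosh k(x_2-x_{2l}+s)$. The total error splits into a $C_0$ part carried by the exponentials that recombines into $C_0\,\Theta_j(|k|(x_2-x_{2l}+s))$, plus a $|k|^{-1}\log(1+C\rho k^2)$ part from \eqref{E:z_pm-l}; the improvement to the stated $\rho + |k|^{-1}\log(1+C\rho k^2)$ comes from a refined bound on $\int|\beta|$ via \eqref{E:kc-away-1} on the short subinterval where $|k|(x_2-x_{2l}+s)\le 1$. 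Part (2) is entirely symmetric, using \eqref{E:iniC-R} and \eqref{E:z_pm-r}; the restriction $s\le 0$ is exactly what forces $|z_-(x_{2r})|\ge |z_+(x_{2r})|$.

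For part (3), I would use variation of parameters with two fundamental solutions $y_s, y_c$ of \eqref{E:Ray-H1-1} specified at $x_{20}$ by $(y_s,y_s')(x_{20}) = (0,1)$ and $(y_c,y_c')(x_{20}) = (1,0)$. Part (1) with $s=0$ gives $y_s(x_2) = |k|^{-1}\sinh|k|(x_2-x_{20}) + E_s$ with $E_s$ of order $\delta := C(\rho + |k|^{-1}\log(1+C\rho k^2))$ times the corresponding hyperbolic; for $y_c$, whose initial data saturate \eqref{E:iniC-L} with $z_+(x_{20}) = |k| = -z_-(x_{20})$, Lemma \ref{L:Ray-regular} applied directly yields $y_c(x_2) = \cosh k(x_2-x_{20}) + E_c$ with the same $\delta$-error. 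Since \eqref{E:Ray-H1-1} has no first-order term, the Wronskian is conserved and equals $-1$, so the particular solution of \eqref{E:Ray-NH-1} with zero Cauchy data at $x_{20}$ is
\[
y(x_2) = -\int_{x_{20}}^{x_2}\bigl[y_s(x_2)y_c(x_2') - y_c(x_2)y_s(x_2')\bigr]\phi(k,c,x_2')\,dx_2'.
\]
The $\sinh$ addition formula collapses the leading-order kernel to $|k|^{-1}\sinh|k(x_2-x_2')|$, matching the target in \eqref{E:y-nh-1} up to an overall sign to be tracked against \eqref{E:Ray-NH-1}. The remainder, obtained by expanding $y_s y_c' - y_c y_s'$ with $y_{s,c} = y_{s,c}^{\mathrm{main}} + E_{s,c}$, is a sum of cross terms each bounded by the product of a hyperbolic and a $\delta$-error, producing the stated $\delta$ times $\bigl|\int \phi \sinh|k(x_2-x_2')|\,dx_2'\bigr|$.

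The main obstacle I anticipate is the error bookkeeping in the second paragraph: upgrading the exponential bound from Lemma \ref{L:Ray-regular} to genuine $\sinh$- or $\cosh$-type bounds requires separating the regime $|k|(x_2-x_{2l}+s)\lesssim 1$ from $\gtrsim 1$, and squeezing the additional $\rho$ gain into the final constant calls for a careful revisit of $\int|\beta|$ near the singular point $x_2^c$. The admissibility restriction on $(\Theta_1,\Theta_2)$ must be preserved throughout, both for the initial step in parts (1)--(2) and in the fundamental-solution estimates feeding the Wronskian computation in part (3).
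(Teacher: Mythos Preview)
Your approach for part (1) has a genuine gap. You claim that after translating \eqref{E:y-ini-1} into the $z_\pm$ variables, ``the error in $z_-(x_{2l})$ stays below the main term $e^{-|k|s}$'' so that hypothesis \eqref{E:iniC-L} of Lemma \ref{L:Ray-regular} holds. This is false for general $C_0 \ge 0$. Take for instance $(\Theta_1,\Theta_2)=(\sinh,\sinh)$: the error in $z_-(x_{2l}) = y'(x_{2l}) - |k|y(x_{2l})$ is of order $C_0\sinh|k|s$, which for $s>0$ and $C_0$ large completely swamps the main term $e^{-|k|s}$. Even worse, for $(\Theta_1,\Theta_2)=(\sinh,\cosh)$ the error is $O(C_0 e^{|k|s})$, and the ordering $|z_+(x_{2l})|\ge |z_-(x_{2l})|$ can fail for any $C_0>0$ once $s$ is small. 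The condition \eqref{E:iniC-L} is a hard threshold, not something that can be absorbed into the final constant.

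The paper circumvents this by a two-step argument. First it proves the estimate in the special case $C_0=0$, where $z_\pm(x_{2l})=e^{\pm|k|s}$ exactly and Lemma \ref{L:Ray-regular} applies cleanly; this is also where the refinement to $\rho+|k|^{-1}\log(1+C\rho k^2)$ is carried out by splitting on $|k|(x_2-x_{2l})\lessgtr 1$, as you correctly anticipated. Second, for general $C_0$, it writes $y$ as a linear combination of two fixed fundamental solutions $Y_1,Y_2$ (with initial data corresponding to $s=|k|^{-1}$ and $s=0$, both in the $C_0=0$ regime), and then tracks the hyperbolic addition identities carefully: the ``main'' hyperbolic parts cancel exactly, and the residual terms group into $C_0$ times the appropriate $\Theta_j$, plus $(1+C_0)$ times the $C_0=0$ error. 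This decomposition is where the restriction on $(\Theta_1,\Theta_2)$ actually enters. Your part (3) strategy via two fundamental solutions and the Wronskian is workable, though the paper's version is a bit cleaner: writing the variation-of-parameters formula for the first-order system, only the column $S(x_2,x_2')(0,1)^T$ appears, which is exactly the single solution $\tilde y(\cdot,x_2')$ with data $(0,1)$ at $x_2'$, so only the $s=0$, $C_0=0$ case of part (1) is needed.
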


\begin{proof}
We first consider the special solution $y (x_2)$ to the homogeneous 
\eqref{E:Ray-H1-1} satisfying \eqref{E:y-ini-1} with $C_0=0$, namely,  with the initial values
\[
y(x_{2l})= |k|^{-1} \sinh |k|s, \;\; y'(x_{2l}) = \cosh |k|s, \quad s\ge 0, 
\]
whose corresponding form in terms of $z_\pm$ with initial values $z_\pm (x_{2l})= e^{\pm |k|s}$ satisfies the assumptions of Lemma \ref{L:Ray-regular}.
On the one hand, for $|k| ( x_2 - x_{2l}) \le 1$, it holds 
\begin{align*}
& |k|^{-1} e^{|k|(x_{2}-x_{2l})} \log \big(1+ C\rho k^2(x_2 -x_{2l})\big) \le C \rho |k|(x_2 -x_{2l})  \le C \rho\sinh |k| (x_2 -x_{2l}),  
\end{align*}
while, for $|k| (x_2 - x_{2l})\ge 1$, we have 
\begin{align*}
&|k|^{-1} e^{|k|(x_{2}-x_{2l})} \log \big(1+ C\rho k^2(x_2 -x_{2l})\big) \le  C|k|^{-1}  \log (1+ C\rho k^2) \sinh |k|(x_{2}-x_{2l}).  
\end{align*}
Therefore Lemma \ref{L:Ray-regular}  and $\rho \in (0, 1]$ imply  
\begin{align*}
|z_+(x_{2}) - e^{|k|(x_{2} -x_{2l}+s)}| & +|z_-(x_{2})  - e^{-|k|(x_{2}-x_{2l}+s)}|\\
\le & C \big( \rho + |k|^{-1} \log (1+ C \rho k^2)   \big) e^{|k|s} \sinh |k|( x_2 -x_{2l} ). 
\end{align*}
Recovering $y(x_2)$ and $y'(x_2)$ from $z_\pm(x_2)$, we obtain the desired estimates in the case of $\Theta_1=\Theta_2=\sinh$  
under the additional assumption $C_0=0$. 

In the following we prove the estimates for a homogeneous solution $y(x_2)$ to \eqref{E:Ray-H1-1}  under \eqref{E:y-ini-1} with general $C_0\ge 0$. Let $Y_1(x_2)$ and $Y_2(x_2)$ be solution to \eqref{E:Ray-H1-1} with initial values 
\[
Y_1(x_{2l}) = |k|^{-1} \sinh 1, \;\;  Y_1'(x_{2l})=\cosh1; \quad Y_2(x_{2l}) =0, \;\; Y_2'(x_{2l})=1. 
\]  
Clearly $Y_1$ and $Y_2$ satisfy the above estimates with $s=|k|^{-1}$ and $s=0$, respectively, and  
\[
y(x_2) = |k| ( \sinh 1)^{-1} y(x_{2l}) Y_1(x_2) + \big(y' (x_{2l}) - |k| (\coth 1) y(x_{2l})\big) Y_2(x_2).   
\]
Therefore, for $x_2 \in \CI$,  
\begin{align*}
\big| |k| y& (x_2)  - \sinh |k| (x_2 -x_{2l} +s)\big| =   \Big| ( \sinh 1)^{-1} \Big(|k| y(x_{2l}) \big( |k| Y_1(x_2) - \sinh( |k|(x_2 - x_{2l})+ 1) \big)  \\
&+ ( |k| y(x_{2l}) - \sinh |k|s) \sinh( |k|(x_2 - x_{2l})+ 1) \Big) +(\sinh 1)^{-1}  \sinh |k|s \sinh( |k|(x_2 - x_{2l})+ 1) \\
&+ \big(y' (x_{2l}) -  (\coth 1) |k|y(x_{2l})\big) \big(|k| Y_2(x_2) - \sinh  |k|(x_2 - x_{2l}) \big) \\
& +  \big(y' (x_{2l}) - \cosh |k|s -  (\coth 1) (|k|y(x_{2l}) - \sinh |k|s) \big) \sinh |k|(x_2 - x_{2l}) \\
&+ (\cosh |k|s - (\coth1) \sinh |k|s) \sinh  |k|(x_2 - x_{2l}) - \sinh |k| (x_2 -x_{2l} +s)\Big|.
\end{align*}
In the above summation, all the hyperbolic trigonometric combinations without $y(x_{2l})$ or $Y_{1,2}(x_2)$ are eventually cancelled and the remaining terms can be estimated by the using the assumptions on the initial values and the already obtained estimates on $Y_1$ and $Y_2$. We have 
\begin{align*}
&\big| |k| y (x_2)  - \sinh |k| (x_2 -x_{2l} +s)\big| \le \big( (1+C_0) \big( \rho + |k|^{-1}\log (1+C  \rho k^2)   \big) +C_0\big) \\
&\qquad \qquad \qquad \qquad \times  \big(\Theta_1( |k|s) \sinh( |k|(x_2 - x_{2l})+ 1) + \cosh |k|s \sinh  |k|(x_2 - x_{2l}) \big)\\
\le & \big( (1+C_0) \big( \rho + |k|^{-1}\log (1+C  \rho k^2)   \big) +C_0\big) \Theta_1 |k|(x_2 - x_{2l}+ s), 
\end{align*}
where the last inequality was obtained by considering the two possible cases of $\Theta_1$ spearately.  The inequality on $y'(x_2)$ can be obtained similarly as 
\begin{align*}
\big| y' &(x_2)  - \cosh |k| (x_2 -x_{2l} +s)\big| \le   \Big| ( \sinh 1)^{-1} \Big(|k| y(x_{2l}) \big( |k| Y_1'(x_2) - \cosh( |k|(x_2 - x_{2l})+ 1) \big)  \\
&+ ( |k| y(x_{2l}) - \sinh |k|s) \cosh( |k|(x_2 - x_{2l})+ 1) \Big) +(\sinh 1)^{-1}  \sinh |k|s \cosh( |k|(x_2 - x_{2l})+ 1) \\
&+ \big(y' (x_{2l}) -  (\coth 1) |k|y(x_{2l})\big) \big(|k| Y_2'(x_2) - \cosh  |k|(x_2 - x_{2l}) \big) \\
& +  \big(y' (x_{2l}) - \cosh |k|s -  (\coth 1) (|k|y(x_{2l}) - \sinh |k|s) \big) \cosh |k|(x_2 - x_{2l}) \\
&+ (\cosh |k|s - (\coth1) \sinh |k|s) \cosh  |k|(x_2 - x_{2l}) - \cosh |k| (x_2 -x_{2l} +s)\Big|
\end{align*}
and thus 
\begin{align*}
&\big| y' (x_2)  - \cosh |k| (x_2 -x_{2l} +s)\big| \le \big( (1+C_0) \big( \rho + |k|^{-1}\log (1+C  \rho k^2)   \big) +C_0\big) \\
&\quad  \times  \big(\Theta_1(|k|s) \cosh( |k|(x_2 - x_{2l})+ 1) + \cosh |k|s \sinh  |k|(x_2 - x_{2l}) + \Theta_2 (|k|s) \cosh  |k|(x_2 - x_{2l})  \big)\\
\le & \big( (1+C_0) \big( \rho + |k|^{-1}\log (1+C  \rho k^2)   \big) +C_0\big) \Theta_2 |k|(x_2 - x_{2l}+ s). 
\end{align*}
This proves the desired estimates under the assumption \eqref{E:y-ini-1}. The proofs of the inequalities under assumption \eqref{E:y-ini-2} are similar and we omit the details.  

Using the variation of parameter formula, we can write the solution $y(x_2)$ with $y(x_{20}) = y'(x_{20})=0$ to the non-homogeneous Rayleigh equation \eqref{E:Ray-NH-1} as  
\[
\begin{pmatrix} y \\ y' \end{pmatrix} (x_2) = 
\int_{x_{20}}^{x_2} \phi \big(k, c, x_2'\big) S(x_2, x_{2}') \begin{pmatrix} 0 \\ 1 \end{pmatrix}dx_2'
\]
where $S(x_2, x_{2}')$ is the $2\times 2$ fundamental matrix of the homogeneous equation \eqref{E:Ray-H1-1} with initial value $S(x_2', x_2') =I$. Therefore, 
\[
S(x_2, x_{2}') \begin{pmatrix} 0 \\ 1 \end{pmatrix}=\begin{pmatrix} \tilde y(x_2, x_2')  \\ \tilde y' (x_2, x_2') \end{pmatrix} 
\]
where $\tilde y(\cdot, x_2')$ is the solution to \eqref{E:Ray-H1-1} whose initial value is given by $\tilde y(x_2', x_2') = 0$ and $\tilde y(x_2', x_2') =1$. The desired estimates follow from applying the above estimates in the homogeneous case with $s=0=C_0$ and $\Theta_1=\Theta_2=\sinh$.  
\end{proof}

Practically the above estimates are more effective for $k$ bounded from below. To end this subsection, we give the following simple estimate of the Rayleigh equation for $k$ bounded from above,
which compares $y(x_2)$ to the free solution (where the $U$ term is removed)
\[
y_F(x_2) = \big(\cosh k(x_2 - x_{20}) \big) y(x_{20})+ k^{-1} \big(\sinh k(x_2 -x_{20})\big) y'(x_{20}).
\]
Here $k^{-1} \sinh ks|_{k=0} = s$ is understood. 

\begin{lemma} \label{L:regular-small-k}
For any $k^*, M>0$, there exists $C>0$ depending only on $h$, $k^*$, and $M$ such that for any 
$|k|\le k^*$, $C_0>0$, $x_{20} \in \mathcal{I} = [x_{2l}, x_{2r}] \subset [-h, 0]$ satisfying 
\[
\left| \frac {1}{U-c}\right| \le C_0\le M, \quad \forall \, x_2 \in \mathcal{I},  
\]
and any solution $y(x_2)$ to \eqref{E:Ray-NH-1}, it holds 
\begin{align*}
| y(x_2)- y_F(x_2)| + |y'(x_2)- y_F' (x_2)|\le  C \Big( & C_0\big( | y(x_{20})| |x_2-x_{20}|+ |y'(x_{20})| |x_2-x_{20}|^2 \big) \\
&+ \Big|\int_{x_{20}}^{x_2} | \phi (k, c, x_2')| 
dx_2' \Big| \Big). 
\end{align*}
\end{lemma}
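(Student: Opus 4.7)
The plan is to introduce the perturbation $w:=y-y_F$ and combine the variation of parameters formula with a standard Gronwall bootstrap. Since $y_F$ solves $-y_F''+k^2 y_F=0$ with $y_F(x_{20})=y(x_{20})$ and $y_F'(x_{20})=y'(x_{20})$, subtracting from \eqref{E:Ray-NH-1} shows that $w$ solves the constant-coefficient problem $-w''+k^2 w=\phi-\tfrac{U''}{U-c}y$ with zero Cauchy data at $x_{20}$. By variation of parameters in terms of the fundamental solutions $\cosh k(x_2-x_{20})$ and $k^{-1}\sinh k(x_2-x_{20})$ of $-\partial_{x_2}^2+k^2$, one has
\begin{equation*}
w(x_2)=\int_{x_{20}}^{x_2}\tfrac{\sinh k(x_2-x_2')}{k}\Bigl(\tfrac{U''(x_2')}{U(x_2')-c}y(x_2')-\phi(k,c,x_2')\Bigr)dx_2',
\end{equation*}
together with the analogous formula for $w'$ obtained by replacing $k^{-1}\sinh k(x_2-x_2')$ by $\cosh k(x_2-x_2')$. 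Because $|k|\le k^*$ and $|x_2-x_2'|\le h$, both kernels are bounded by a constant depending only on $h$ and $k^*$, while $|U''/(U-c)|\le C_0|U''|_{C^0}\le M|U''|_{C^0}$ throughout $\mathcal I$.

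The key step is to first obtain a uniform a priori bound on $|y|$ itself. Combining $y=y_F+w$ with the obvious bound $|y_F(x_2)|\le C(|y(x_{20})|+|y'(x_{20})||x_2-x_{20}|)$ and the above formula for $w$ yields
\begin{equation*}
|y(x_2)|\le C\bigl(|y(x_{20})|+|y'(x_{20})||x_2-x_{20}|\bigr)+C\int_{x_{20}}^{x_2}\bigl(C_0|y(x_2')|+|\phi(k,c,x_2')|\bigr)dx_2'.
\end{equation*}
Since $C_0\le M$ and $|x_2-x_{20}|\le h$, Gronwall's inequality gives $|y(x_2)|\le C(|y(x_{20})|+|y'(x_{20})||x_2-x_{20}|+\int_{x_{20}}^{x_2}|\phi|dx_2')$ with $C$ depending only on $h$, $k^*$, $M$.

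Substituting this a priori bound back into the Duhamel formulas for $w$ and $w'$ produces the claimed estimate. Indeed, the $\phi$-contribution is directly controlled by $C\int|\phi|\,dx_2'$ since the kernels are bounded, while the $y$-contribution yields $CC_0\int_{x_{20}}^{x_2}|y(x_2')|dx_2'$ in each case; plugging in the a priori bound and computing explicit antiderivatives produces the displayed factors $C_0|y(x_{20})||x_2-x_{20}|$ and $C_0|y'(x_{20})||x_2-x_{20}|^2$, and the remaining iterated integral $\int\int|\phi|$ is absorbed into $C\int|\phi|$ using $|x_2-x_{20}|\le h$ and $C_0\le M$. The argument is routine; no genuine obstacle arises, since the smallness in $|x_2-x_{20}|$ responsible for the factors $|x_2-x_{20}|$ and $|x_2-x_{20}|^2$ is supplied automatically by integrating the a priori bound for $|y|$ against the bounded kernels. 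The slightly asymmetric dependence on $C_0$ in the final estimate simply reflects that the Duhamel term carrying $\phi$ requires no iteration through Gronwall, whereas the initial-data part is coupled to $y$ and hence picks up the factor $C_0$.
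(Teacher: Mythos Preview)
Your proof is correct and follows essentially the same approach as the paper: both write the Duhamel/variation-of-parameters formula for $w=y-y_F$ in terms of the bounded kernels $k^{-1}\sinh k(x_2-x_2')$ and $\cosh k(x_2-x_2')$, then close via Gronwall using $C_0\le M$ and $|x_2-x_{20}|\le h$. The only cosmetic difference is that the paper splits $y=y_F+\tilde y$ inside the integrand and applies Gronwall directly to $\tilde y$, whereas you first Gronwall an a priori bound on $|y|$ and then substitute back; the two organizations are equivalent.
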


\begin{proof} 
The proof is based on some straight forward elementary argument and we shall only outline it. Let $\tilde y = y - y_F$. We can write the solution $y(x_2)$ using the variation of constant formula
\begin{align*}
\begin{pmatrix} \tilde y (x_2) \\ \tilde y'(x_2) \end{pmatrix} = & \int_{x_{20}}^{x_2} \left( \frac {U'' y_F}{U-c} -\phi   \right) (x_2') \begin{pmatrix} k^{-1} \sinh k(x_2 -x_{2}') \\ \cosh k(x_2 - x_{2}') \end{pmatrix} dx_2' \\
&+ \int_{x_{20}}^{x_2} \left( \frac {U'' \tilde y}{U-c} \right) (x_2') \begin{pmatrix} k^{-1} \sinh k(x_2 -x_{2}') \\ \cosh k(x_2 - x_{2}') \end{pmatrix} dx_2'. 
\end{align*}
It implies 
\begin{align*}
| \tilde y(x_2)| + |\tilde y'(x_2)| \le & C \Big(C_0\big( | y(x_{20})| |x_2-x_{20}|+ |y'(x_{20})| |x_2-x_{20}|^2 \big)  + \left| \int_{x_{20}}^{x_2} |\phi(k, c, x_2')| dx_2' \right|\Big) \\
&+ CC_0 \left|\int_{x_{20}}^{x_2} |\tilde y(x_2')|dx_2' \right|
\end{align*}
and the estimates on $y-y_F$ and $y'-y_F'$ 
follow immediately from the Gronwall inequality. 
\end{proof}

\subsection{Rayleigh equation near singularity and its convergence as $c_I \to 0+$} \label{SS:a-esti} 

In the rest of the section, we shall mostly focus on the case when $(1+k^2)^{\frac 12} |U-c|$ is small, so 
\be \label{E:x2c-1}
c_R = U(x_2^c), \quad x_2^c \in [-\tfrac 12 h_0-h, \tfrac 12 h_0],
\ee
will always be assumed, while the domains of $U$ and $\phi$ have been extended to $[-h_0-h, h_0]$. Due to complex conjugacy, we only need to consider $c_I \ge 0$. In particular,  if $x_2^c \in (-h, x_2)$, the strong singularity in \eqref{E:Ray-H1-1} will lead to $y\big(c_R + i(0+), k, x_2\big)  \notin \R$ even if $y(-h), y'(-h) \in \R$. 
Even though some estimates are stated for $c_I>0$, most of the inequalities are mostly uniform as $c_I \to 0+$ and thus hold for the limits. 

In order to obtain estimates uniform in $k \in \R$, 
rescale 
\be \label{E:tau}
\mu = \langle k \rangle^{-1} = \tfrac 1{\sqrt{k^2+1}}, \quad x_2 = x_2^c + \mu \tau, \quad c_I= \mu \ep, \quad  w = (w_1, w_2)^T = (\mu^{-1} y, y')^T \in \C^2, 
\ee 
where $x_2^c$ satisfies \eqref{E:x2c-1} as well as in the above. Equation \eqref{E:Ray-H1-1} becomes 
\be \label{E:Ray-H2}
w_\tau = \begin{pmatrix} 0 & 1 \\ 1- \mu^2 + \frac {\mu^2 U'' ( x_2^c + \mu \tau )}{U( x_2^c + \mu \tau ) -c} & 0 \end{pmatrix} w -  \begin{pmatrix} 0 \\ \tilde \phi(\mu, c,\tau) \end{pmatrix},  
\ee
where 
\[
\tilde \phi (\mu, c, \tau) = \mu \phi\big(k, c, x_2^c + \mu \tau \big).
\]
We shall consider this ODE on intervals $\tau \in [-M, M]$ such that 
\be \label{E:M}
[x_2^c - \mu M, x_2^c + \mu M] \subset [-h_0-h, h_0], 
\ee
is that $U$ is well-defined when $|\tau|\le M$. 
As $c_I\to 0+$, one would naturally expect $w(\tau)$ to converge to solutions to 
\be \label{E:Ray-H0-1}
W_\tau = \begin{pmatrix} 0 & 1 \\  1- \mu^2 + \frac {\mu^2 U'' ( x_2^c + \mu \tau )}{U( x_2^c + \mu \tau ) -c_R} & 0 \end{pmatrix} W -  \begin{pmatrix} 0 \\ \tilde \phi\big(\mu, c_R,\tau\big) \end{pmatrix}.
\ee
However, this  limit equation becomes singular at $\tau=0$ and conditions have to be specified there. \\

\noindent $\bullet$ {\bf Fundamental matrix of the homogeneous Rayleigh equation.} 
Its construction is adapted from the one used in \cite{BSWZ16}. Let    
\be \label{E:Gamma-1}
\Gamma (\mu, c_R, \ep, \tau)
= (1-\mu^2) \tau + \frac {\mu U''(x_2^c)}{2 U'(x_2^c)} \log (\tilde U^2 + \ep^2 ) + \gamma (\mu, c_R, \ep, \tau) + \int_{-M}^\tau \frac {i\mu \ep U_2(\tau')}{\tilde U(\tau')^2 +\ep^2} d\tau', 
\ee
where, for $j=1,2,3$,   
\be \label{E:tU}
U_j (c_R, \mu, \tau) = (\tfrac {d^j}{d x_2^j} U) ( x_2^c + \mu \tau), \;\; \tilde U(c_R, \mu, \tau) = \tfrac 1\mu \big(U(x_2^c + \mu \tau) - c_R\big) =  \tfrac 1\mu \big(U(x_2^c + \mu \tau) - U(x_2^c) \big),  
\ee
and the remainder $\gamma$ of $\Gamma$ is given by 
\be \label{E:gamma-1}
\gamma(\mu, c_R, \ep, 0)=0, \quad \gamma_\tau = \frac{\mu \big(U_1(0) U_2 - U_2(0) U_1\big)\tilde U  }{U_1(0) (\tilde U^2 +\ep^2)}, 
\; \Longrightarrow \; \Gamma_\tau = 1 -\mu^2+ \frac {\mu U_2}{\tilde U -i \ep}.  
\ee
It is not hard to see that $\gamma (\mu, c_R, 0, \tau)$ is $C^{l_0-2}$ in $\tau$ and $\mu$ and $C^{l_0-3}$ in $c_R$. 
We often skip writing the explicit dependence on those variables other than $\tau$. Denote 
\be \label{E:Gamma0} \begin{split}
\Gamma_0 (\mu, c_R, \tau) =& \lim_{\ep \to 0+} \Gamma (\mu, c_R, \ep, \tau) \\
=& (1-\mu^2) \tau + \frac {\mu U''(x_2^c)}{U'(x_2^c)} \log |\tilde U(\tau)| + \gamma(\mu, c_R, 0, \tau) + \frac {i\pi \mu U''(x_2^c)}{2 U'(x_2^c)} \big(sgn(\tau)+1\big), 
\end{split} \ee
where we note that the integrand of the imaginary  part of $\Gamma$ converges to a delta mass as $\ep \to 0+$ and produces a jump in $\Gamma_0$ at $\tau=0$ (see Lemma \ref{L:Gamma-B} in the below). Let $\tilde B (\mu, c_R, \ep, \tau)$ be a $2\times 2$ matrix given by 
\be \label{E:tB-1}
\tilde B_\tau = \begin{pmatrix} \Gamma(\mu, c_R, \ep, \tau) & 1 \\ -\Gamma(\mu, c_R, \ep, \tau)^2 & -\Gamma(\mu, c_R, \ep, \tau) \end{pmatrix} \tilde B, \quad \tilde B (\mu, c_R, \ep, 0) =I_{2\times 2},  
\ee
and 
\be \label{E:tPhi-0}
\tilde \Phi (\mu, c, \tau) = \begin{pmatrix}\tilde \Phi_1 (\mu, c, \tau) \\ \tilde  \Phi_2 (\mu, c, \tau) \end{pmatrix} =\int_{-M}^\tau \tilde \phi(\mu, c, \tau') \tilde B(\mu, c_R, \tfrac {c_I}\mu, \tau')^{-1}\begin{pmatrix} 0 \\  1 \end{pmatrix}   d\tau'.  
\ee
It is worth pointing out that $\Gamma_0$ is real for $\tau<0$ and imaginary for $\tau>0$. To keep the notations simple we often skip the arguments other than $\tau$.  
In the following lemma we collect some basic estimates of $\Gamma$ and $\tilde B$ where we often bound the $\log |\tau|$ singularity in $\Gamma$ by $|\tau|^{-\alpha}$, $\alpha >0$, for simplicity. 

\begin{lemma} \label{L:Gamma-B}
For any $M> 0$ satisfying \eqref{E:M} and $\alpha, \alpha'  \in (0, 1)$ with $\alpha + \alpha' <1$, there exists $C >0$ depending only on $M$, $\alpha$, $\alpha'$, $|U'|_{C^2}$, and $|(U')^{-1}|_{C^0}$, such that, for any $0< \ep < M$, the following hold for $|\tau|\le M$,  
\be \label{E:tB-2} 
 \det \tilde B =1, \quad|\tilde B - I | \le e^{|\tau| + C ( |\tau|^3 + \mu^2|\tau|^{\alpha})} -1, \quad |\tilde B^{-1} -I | \le 4 (e^{|\tau| + C( |\tau|^3 + \mu^2|\tau|^{\alpha})} -1).
\ee
\be \label{E:Gamma-d} 
|\Gamma (\ep, \tau) - \Gamma_0 (\tau) | \le  C\mu \big(\mu \ep | \log\ep| + \tfrac \ep{\ep +|\tau|} + \log (1+ \tfrac {C\ep^2}{\tau^2}) \big)
\ee
\be \label{E:temp-tB-d}
\left| \tilde B(\ep, \tau) -\tilde B_0(\tau) \right| \le C \mu \min\{  \ep^\alpha (|\tau|^{1-\alpha} + \mu |\tau|^{\alpha'}), \  \ep (1+ |\log \ep| +\mu \log^2 \ep)\} 
\ee
where $\tilde B_0(\mu, c_R, \tau) = \lim_{\ep \to 0+} \tilde B(\mu, c_R, \ep, \tau)$. Moreover, general solutions of \eqref{E:Ray-H2} with $c_I>0$ is given by 
\be \label{E:tw-1}
 \begin{pmatrix} \mu^{-1} y(x_2) \\ y'(x_2) \end{pmatrix} =w(\tau)= \begin{pmatrix} 1 & 0 \\ \Gamma(\mu, c_R, \ep, \tau) &1 \end{pmatrix} \tilde B(\mu, c_R, \ep, \tau) \big(b - \tilde \Phi (\mu, c, \tau) \big), \; b=  \begin{pmatrix} b_1 \\ b_2 \end{pmatrix}  \in \C^2.
\ee
\end{lemma}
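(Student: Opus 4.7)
The proof breaks into six logically distinct pieces, which I would execute in the order (i) $\det\tilde B=1$, (ii) the a priori bound on $\tilde B - I$, (iii) the cofactor bound on $\tilde B^{-1}-I$, (iv) the pointwise convergence of $\Gamma$, (v) the Gronwall-style convergence of $\tilde B$, and (vi) the variation-of-parameters identity \eqref{E:tw-1}. Step (i) is immediate from Liouville's formula, since the coefficient matrix $N(\tau) := \begin{pmatrix} \Gamma & 1 \\ -\Gamma^2 & -\Gamma \end{pmatrix}$ has trace zero. A useful structural observation is that $N$ is nilpotent of index two and $N(\tau_1)N(\tau_2) = (\Gamma(\tau_1)-\Gamma(\tau_2))\begin{pmatrix} \Gamma(\tau_2) & 1 \\ -\Gamma(\tau_1)\Gamma(\tau_2) & -\Gamma(\tau_1) \end{pmatrix}$, which provides slack for sharper Picard iterates if needed. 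For step (iii), because $\det\tilde B=1$ the inverse is the cofactor matrix, so $\tilde B^{-1}-I$ has entries that are (up to signs and a swap) entries of $\tilde B - I$, giving $|\tilde B^{-1}-I|\le 4|\tilde B - I|$ in any reasonable norm.

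For step (ii), I would apply Gronwall to $\tilde B = I + \int_0^\tau N\tilde B\,d\tau'$ to get $\|\tilde B - I\| \le \exp\bigl(\int_0^\tau\|N\|\,d\tau'\bigr) - 1$, and then estimate the exponent using $\|N\| \le 1 + 2|\Gamma| + |\Gamma|^2$. From the definition of $\Gamma$, the real part is $(1-\mu^2)\tau$ plus a log contribution of size $O(\mu|\log(|\tau|+\ep)|)$ and a remainder $\gamma = O(\mu^2|\tau|)$ (the latter uses that $U_1(0)U_2 - U_2(0)U_1$ vanishes at $\tau=0$, yielding an extra factor of $\mu\tau$), while the imaginary integral is $O(\mu)$ uniformly in $\ep$. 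Thus $|\Gamma|\le |\tau| + C\mu(1+|\log(|\tau|+\ep)|)$. Integrating, and using the $\ep$-independent bound $\int_0^\tau|\log(|\tau'|+\ep)|^j\,d\tau' \le C_\alpha|\tau|^\alpha$ for $j=1,2$, yields $\int_0^\tau\|N\|\,d\tau'\le |\tau|+C(|\tau|^3+\mu^2|\tau|^\alpha)$ after absorbing lower-order terms via $|\tau|\le M$ and $\mu\le 1$.

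Step (iv) is the heart of the convergence analysis. I would split $\Gamma(\ep)-\Gamma_0$ into three pieces: the log-difference contributes $\tfrac{\mu U''}{2U'}\log(1+\ep^2/\tilde U^2) = O\bigl(\mu\log(1+C\ep^2/\tau^2)\bigr)$; the identity $(\tilde U^2+\ep^2)^{-1} - \tilde U^{-2} = -\ep^2/[\tilde U^2(\tilde U^2+\ep^2)]$ combined with the extra vanishing of the $\gamma_\tau$-numerator at $\tau=0$ gives $|\gamma(\ep)-\gamma(0)|\le C\mu^2\ep$; and for the imaginary integral, the substitution $s=\tilde U(\tau')$ turns it into $\int \tfrac{i\mu\ep}{s^2+\ep^2}\cdot \tfrac{U_2}{U_1}\,ds$, splitting the slowly varying factor $U_2/U_1$ into its value at $\tau'=0$ (which produces the Heaviside-type limit $\tfrac{i\pi\mu U''}{2U'}(\mathrm{sgn}\,\tau+1)$ with error $O(\mu\ep/(|\tau|+\ep))$ from arctan tails) plus a Lipschitz remainder of size $O(\mu^2\ep|\log\ep|)$. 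Summing the three pieces yields the stated bound. Step (v) then follows from Gronwall applied to the identity $\tilde B(\ep)-\tilde B_0 = \int_0^\tau[N(\Gamma(\ep))-N(\Gamma_0)]\tilde B(\ep)\,d\tau' + \int_0^\tau N(\Gamma_0)(\tilde B(\ep)-\tilde B_0)\,d\tau'$, using the Lipschitz estimate $|N(\Gamma_1)-N(\Gamma_2)|\le C(1+|\Gamma_1|+|\Gamma_2|)|\Gamma_1-\Gamma_2|$ together with (iv) and the already-proven bound on $\tilde B(\ep)$; the two expressions inside the minimum in the claim correspond to an $L^\infty$-in-$\tau'$ bound on $|\Gamma(\ep)-\Gamma_0|$ (using $\log(1+\ep^2/\tau^2)\le (\ep/|\tau|)^{2\alpha}$) versus an $L^1$-in-$\tau'$ bound using $\int_0^\tau \ep/(\ep+|\tau'|)\,d\tau' \le C\ep(1+|\log\ep|)$.

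Finally, step (vi) is a direct verification: setting $P(\tau):=\begin{pmatrix}1 & 0\\ \Gamma & 1\end{pmatrix}\tilde B$, a two-line matrix computation together with the identity $\Gamma_\tau = 1-\mu^2+\mu U_2/(\tilde U-i\ep)=1-\mu^2+\mu^2 U''/(U-c)$ shows $P_\tau=A(\tau)P$ with $A$ the coefficient matrix of \eqref{E:Ray-H2}, so $w=P(b-\tilde\Phi)$ with $\tilde\Phi$ as in \eqref{E:tPhi-0} solves the inhomogeneous equation by variation of constants. The chief obstacle throughout is keeping constants uniform as $\ep\to 0+$ and as $\tau\to 0$ simultaneously: both the logarithmic singularity in $\Gamma$ and the near-delta behavior of $\ep/(\tilde U^2+\ep^2)$ must be controlled by $\ep$-independent dominants, which forces the use of $|\tau|^\alpha$-type slack rather than pointwise bounds and drives the appearance of the $\mu^2|\tau|^\alpha$ term in the final exponent.
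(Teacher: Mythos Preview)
Your proposal follows the same route as the paper's proof and is correct in its overall architecture. There is one technical slip in step (ii) worth flagging: with the bound $\|N\|\le 1+2|\Gamma|+|\Gamma|^2=(1+|\Gamma|)^2$, integrating the middle term $2|\Gamma|$ contributes pieces of size $|\tau|^2$ and $C\mu|\tau|$ to the exponent, and neither can be absorbed into $|\tau|+C(|\tau|^3+\mu^2|\tau|^\alpha)$ without changing the leading coefficient $1$ in front of $|\tau|$ (you need $|\tau|^2\le C|\tau|^3$ and $\mu|\tau|\le C\mu^2|\tau|^\alpha$, both of which fail for small $|\tau|$ or $\mu$ near $1$). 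The fix is the one the paper uses: since $N=\begin{pmatrix}1\\-\Gamma\end{pmatrix}\begin{pmatrix}\Gamma & 1\end{pmatrix}$ is rank one, its operator norm is exactly $1+|\Gamma|^2$, so the linear-in-$|\Gamma|$ term never appears and the exponent is $|\tau|+\int_0^{|\tau|}|\Gamma|^2\,d\tau'$, which does yield the stated form. Everything else---the cofactor argument for $\tilde B^{-1}-I$, the three-piece decomposition of $\Gamma-\Gamma_0$, the Gronwall/variation-of-parameters estimate for $\tilde B-\tilde B_0$ with the $L^\infty$ versus $L^1$ dichotomy producing the $\min$, and the verification of \eqref{E:tw-1}---matches the paper's argument.
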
 

\begin{remark} \label{R:uniformity}
Even though $\ep>0$ is assumed in the above and the remaining statements in this and the next subsections, as $C>0$ is independent of $\ep=\langle k \rangle c_I \in (0, M]$ in a priori estimates and thus they 
hold even as $\ep \to 0+$.
\end{remark}

Expression \eqref{E:tw-1} essentially is the variation of parameter formula including the fundamental matrix of the Rayleigh equation. Due to $\det \tilde B=1$, it is possible to extend the definition of $\tilde B$ to include all $x_2 \in [-h_0-h, h_0]$, but its bound would be non-uniform in $k\gg1$ for $|x_2 -x_2^c| \gg \mu$. 

\begin{proof} 
Since $\Gamma$ has a logarithmic singularity at the worst (even for $\ep=0$), 
$\tilde B$ is obviously well-defined. 
The zero trace value of the coefficient matrix in \eqref{E:tB-1} yields $\det B=1$. The form \eqref{E:tw-1} of general solutions of \eqref{E:Ray-H2} for $c_I>0$ follows from 
straightforward verifications. 

Equation \eqref{E:tB-1} implies 
\[
|(\tilde B -I)_\tau | \le 1+\Gamma^2 + (1+\Gamma^2) |\tilde B-I|, 
\]
where $1 +\Gamma^2$ is the operator norm of the coefficient matrix. From Gronwall inequality, we obtain 
\[
|\tilde B - I | \le e^{|\tau + \int_0^\tau \Gamma^2 d\tau'|} -1.
\]
It is clear from the definition of $\gamma$ that 
\[
|\gamma_\tau| \le C\mu^2, \quad \left| \int_{-M}^\tau \frac {i\mu \ep U_2(\tau')}{\tilde U(\tau')^2 +\ep^2} d\tau' \right| \le C \mu.
\]
The definition of $\Gamma$, the boundedness of $|\tilde U|$, and the estimate on $\gamma_\tau$ imply, for $\tau\in  [-M,M]$,   
\begin{align*}
\left| \int_0^\tau \Gamma^2 d\tau' \right| \le C(|\tau|^2 + \mu^2) |\tau| + C \mu^2 \left|\int_0^\tau \log^2 (| \tau'| + \ep) d\tau' \right| \le C (|\tau|^3 +  \mu^2|\tau|^{\alpha}), 
\end{align*}
where $C $ is a generic constant determined by $M$ and $k_*$ and the H\"older inequality was used to obtain $|\tau|^{\alpha}$, for any $\alpha \in (0, 1)$. 
The desired estimate in \eqref{E:tB-2} on $\tilde B -I$ follows immediately which along with $\det B=1$ in turn yields the estimate on $\tilde B^{-1} -I$. 

The definition of $\gamma$ implies 
\[
|\gamma (\ep, \tau) - \gamma (0, \tau) | \le \left|\int_0^\tau \frac {C \mu^2 \ep^2}{(\tau')^2 +\ep^2} d\tau' \right| = C \mu^2 \ep \tan^{-1} \frac {|\tau|}\ep. 
\]
Regarding the imaginary part of $\Gamma$, we observe 
\begin{align*}
&\int_{-M}^\tau \left| \frac {U_2(\tau')}{\tilde U(\tau')^2 +\ep^2} - \frac {U_2(0)}{U_1(0)^2 (\tau')^2 +\ep^2} \right| d\tau' \\
\le & \int_{-M}^\tau \frac { \left| U_2(\tau')\big(U_1(0)^2 (\tau')^2 +\ep^2 \big)- U_2(0)\big(\tilde U(\tau')^2 +\ep^2\big)\right|}{\big(\tilde U(\tau')^2 +\ep^2\big)\big(U_1(0)^2 (\tau')^2 +\ep^2 \big) }  d\tau'\\
\le & C\mu \int_{-M}^\tau \frac {|\tau'|}{(\tau')^2 +\ep^2}  d\tau' \le C\mu (1+| \log\ep|), 
\end{align*}
where we used the smoothness of $U_1$ and $U_2$ in $\mu \tau$. It implies 
\be \label{E:Gamma-temp-1}
\left| \int_{-M}^\tau \frac {\mu \ep U_2(\tau')}{\tilde U(\tau')^2 +\ep^2} d\tau' - \frac {\mu U_2(0)}{U_1(0)} \Big(\tan^{-1} \frac {U_1(0)\tau}\ep +\frac \pi2\Big) \right| \le C \mu \ep (1+\mu | \log\ep|), 
\ee
and thus
\begin{align*}
\left| \int_{-M}^\tau \frac {\mu \ep U_2(\tau')}{\tilde U(\tau')^2 +\ep^2} d\tau' - \frac {\pi \mu U_2(0)}{2U_1(0)} \big(sgn(\tau) +1\big) \right| 
\le  & C\mu \big(\mu \ep | \log\ep| + \min\{1, \, \ep |\tau|^{-1}\}\big) \\
\le & C\mu \big(\mu \ep | \log\ep| + \frac 1{1 + \tfrac {|\tau|}\ep}\big).
\end{align*}
The error estimate \eqref{E:Gamma-d} follows consequently. 

Proceeding to consider $\tilde B(\ep, \tau) - \tilde B_0(\tau)$ where $\tilde B_0(\mu, c_R, \tau) =\tilde B(\mu, c_R, 0, \tau)$, we have 
\begin{align*}
&\p_\tau \big(\tilde B(\ep, \tau) -\tilde B_0(\tau)\big) -\begin{pmatrix} \Gamma_0 (\tau) & 1 \\ -\Gamma_0 (\tau)^2 & -\Gamma_0 (\tau) \end{pmatrix}\big(\tilde B(\ep, \tau) -\tilde B_0 (\tau)\big) \\
= & \begin{pmatrix} \Gamma (\ep, \tau) -  \Gamma_0 (\tau) & 0 \\ -\Gamma (\ep, \tau)^2 + \Gamma_0 (\tau)^2 & -\Gamma (\ep, \tau) +\Gamma_0(\tau) \end{pmatrix} \tilde B(\ep, \tau).
\end{align*}
Recalling that $\tilde B_0 (\tau)$ is the elementary fundamental matrix of the above corresponding homogeneous ODE system, the variation of parameter formula implies  
\begin{align*}
&\left| \tilde B(\ep, \tau) -\tilde B_0(\tau) \right| 
= \left| \int_0^\tau \tilde B_0(\tau) \tilde B_0(\tau')^{-1} \begin{pmatrix} \Gamma (\ep, \tau') -  \Gamma_0 (\tau') & 0 \\ -\Gamma (\ep, \tau')^2 + \Gamma_0(\tau')^2 & -\Gamma (\ep, \tau') +\Gamma_0(\tau') \end{pmatrix} \tilde B(\ep, \tau')d\tau' \right| \\
\le & C \left| \int_0^\tau \big( 1+ |\Gamma (\ep, \tau')| +|\Gamma_0 (\tau')| \big) |\Gamma (\ep, \tau') -\Gamma_0 (\tau')| d\tau' \right| \\
\le & C \left|\int_0^\tau \big( 1+\mu \big|\log |\tau'|\big| \big) |\Gamma (\ep, \tau') -\Gamma_0 (\tau')| 
d\tau' \right| \le C\big| 1+\mu |\log (\cdot)|\big|_{L^{\frac 1{1-\alpha}}} |\Gamma(\ep, \cdot) - \Gamma_0(\cdot)|_{L^{\frac 1\alpha}}. 
\end{align*}
The second desired upper bound in \eqref{E:temp-tB-d} of $\tilde B- \tilde B_0$ follows from direct estimating the above integral without using the H\"older inequality. For the first upper bound there we use, for any $|\tau_1|, |\tau_2| \le M$,  
\be \label{E:dGamma-p} 
|\Gamma(\ep, \cdot) - \Gamma_0(\cdot)|_{L^\rho [\tau_1, \tau_2]} \le C\mu \ep^{\frac 1\rho}, \; \rho \in (1, +\infty); \quad |\Gamma(\ep, \cdot) - \Gamma_0(\cdot)|_{L^1 [\tau_1, \tau_2]} \le C\mu \ep (1+|\log \ep|), 
\ee
which can be verified by straight forward computation. 
The proof of the lemma is complete.
\end{proof}

\noindent $\bullet$ {\bf A priori estimates.} A direct corollary of the form \eqref{E:tw-1} of the general solution to the Rayleigh equation \eqref{E:Ray-H2} is an estimate of $w(\tau)$ in terms of $b$ and $\tilde \Phi$. Let $\tilde \Gamma(\tau)$ denote 
\[
\tilde \Gamma (\tau) = \frac {\mu U''(x_2^c)}{U'(x_2^c)}\big(\frac 12 \log (\tilde U(\tau)^2 +\ep^2) +i \big( \tan^{-1} \frac {U'(x_2^c) \tau}\ep +\frac \pi2 \big)\big).
\]

\begin{corollary} \label{C:A-esti-w} 
For $b \in \C^2$ and $|\tau| \le M$, let 
\[
\tilde b (\tau)= \begin{pmatrix} 1 & 0 \\ \Gamma(\tau) &1 \end{pmatrix} \tilde B (\tau) b, \quad \tilde b_0 (\tau)= \begin{pmatrix} 1 & 0 \\ \Gamma_0(\tau) &1 \end{pmatrix} \tilde B_0 (\tau) b,
\]
then under the same assumptions of Lemma \ref{L:Gamma-B}, it holds, for any $\alpha_1 \in [0, 1-\alpha)$, 
\begin{align*}
&|\tilde b_1 (\tau) - b_1 | \le C (|\tau|+ \mu^{2}|\tau|^\alpha ) |b|, \quad \big|\tilde b_2 (\tau) - \big(b_2 + b_1 \tilde \Gamma(\tau) \big) \big|
\le 
C \big(|\tau|+ \mu (|\tau|^\alpha + \ep^\alpha)\big)|b|
\end{align*}
\[
|\tilde b_1 (\tau) - \tilde b_{01}(\tau)| \le C\mu\ep^\alpha \big(|\tau| |b| +  \min\{|\tau|^{1-\alpha},\, \ep^{1-\alpha}  (1+| \log \ep|)\} |b_1|  \big),  
\]
\[
|\tilde b_2 (\tau) - \tilde b_{02}(\tau)| \le C\mu\big( \ep^\alpha |\tau|^{\alpha_1} |b| + \big( \tfrac \ep{\ep+|\tau|} + \log \big( 1+ \tfrac {C\ep^2}{\tau^2}\big)\big) |b_1|  \big).
\]
\end{corollary}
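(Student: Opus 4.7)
The plan is to expand the matrix product defining $\tilde b$ componentwise, separating terms already controlled by Lemma \ref{L:Gamma-B} from those needing a small additional argument. Writing $\tilde B = I + (\tilde B - I)$ yields
\[
\tilde b_1 = b_1 + \big((\tilde B - I)b\big)_1, \qquad \tilde b_2 = b_2 + \big((\tilde B - I)b\big)_2 + \Gamma\, \tilde b_1,
\]
together with the analogous identities for $\tilde b_0$ obtained by the substitution $(\tilde B,\Gamma) \mapsto (\tilde B_0,\Gamma_0)$. All four inequalities then follow by inserting these decompositions into the left-hand sides and invoking the bounds \eqref{E:tB-2}--\eqref{E:temp-tB-d} of Lemma \ref{L:Gamma-B} together with the auxiliary asymptotic \eqref{E:Gamma-temp-1}.

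Inequality (1) is immediate from $|\tilde B - I| \le e^{|\tau| + C(|\tau|^3 + \mu^2|\tau|^\alpha)} - 1 \le C(|\tau| + \mu^2|\tau|^\alpha)$ for $|\tau|\le M$. For (2), I would write
\[
\tilde b_2 - b_2 - b_1\tilde\Gamma = (\Gamma - \tilde\Gamma)b_1 + \big((\tilde B - I)b\big)_2 + \Gamma\big((\tilde B - I)b\big)_1,
\]
bound the last two summands by $|\tilde B - I|$ together with $|\Gamma| \le C(|\tau| + \mu|\log(|\tau|+\ep)|)$, and estimate $|\Gamma - \tilde\Gamma|$ by observing that the $\tfrac{\mu U''(x_2^c)}{2U'(x_2^c)}\log(\tilde U^2 + \ep^2)$ piece of $\Gamma$ cancels against the log term of $\tilde\Gamma$, the remainder $(1-\mu^2)\tau + \gamma$ is of size $C|\tau|$, and the gap between $\int_{-M}^\tau \tfrac{\mu\ep U_2}{\tilde U^2 + \ep^2}\,d\tau'$ and its arctan approximation is at most $C\mu\ep(1+\mu|\log\ep|)\le C\mu\ep^\alpha$ by \eqref{E:Gamma-temp-1}.

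For (3) and (4), I decompose
\[
\tilde b_1 - \tilde b_{01} = \big((\tilde B - \tilde B_0)b\big)_1, \quad \tilde b_2 - \tilde b_{02} = (\Gamma - \Gamma_0)(\tilde Bb)_1 + \big((\tilde B - \tilde B_0)b\big)_2 + \Gamma_0\big((\tilde B - \tilde B_0)b\big)_1,
\]
and invoke \eqref{E:Gamma-d} and \eqref{E:temp-tB-d}. The reason both $|b|$ and $|b_1|$ appear on the right is that within the variation-of-parameter identity for $\tilde B - \tilde B_0$ established inside the proof of Lemma \ref{L:Gamma-B}, the integrand is essentially $(\Gamma-\Gamma_0)\tilde B(\tau')b$, and $(\tilde Bb)_1 = b_1 + O\bigl((|\tau'|+\mu^2|\tau'|^\alpha)|b|\bigr)$. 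The constant-in-$\tau'$ part $b_1$ couples against $\Gamma - \Gamma_0$ either via the pointwise bound $|\Gamma - \Gamma_0| \le C\mu\ep^\alpha|\tau|^{-\alpha}$ (yielding the $|\tau|^{1-\alpha}$ term) or via the sharp $L^1$ bound in \eqref{E:dGamma-p} (yielding the $\ep^{1-\alpha}(1+|\log\ep|)$ term), whichever is smaller; the $O(|\tau'||b|)$ remainder integrates into the $|\tau||b|$ contribution. In (4), the extra $|b_1|$ factor is produced analogously from $(\Gamma - \Gamma_0)b_1$ combined with \eqref{E:Gamma-d}, and the parameter $\alpha_1 \in [0,1-\alpha)$ represents the H\"older exponent left after pairing the logarithmic growth of $\Gamma_0$ against $|\tau|^{1-\alpha}$.

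The main obstacle is precisely this bookkeeping in (3) and (4): one must track the $|b_1|$-dependent and $|b|$-dependent pieces separately, since only the former enjoys the improved rates coming directly from $|\Gamma - \Gamma_0|$ without the extra factor $|\tau|$ accumulated by passing through $\tilde B - I$, and one must apply the correct H\"older splitting so that each of the two entries in the minimum in (3) is produced by the corresponding bound in \eqref{E:dGamma-p}. Beyond this, every ingredient is already at hand in Lemma \ref{L:Gamma-B}.
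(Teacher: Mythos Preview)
Your approach to inequalities (1), (2), and (4) matches the paper's essentially line for line: the decomposition $\tilde b_2 - \tilde b_{02} = (\Gamma - \Gamma_0)(\tilde Bb)_1 + ((\tilde B - \tilde B_0)b)_2 + \Gamma_0((\tilde B - \tilde B_0)b)_1$ is exactly what the paper writes, and your explanation of where $\alpha_1$ comes from (pairing the $\mu|\log|\tau||$ in $|\Gamma_0|$ against the $|\tau|^{1-\alpha}$ from \eqref{E:temp-tB-d}) is right. One point worth making explicit, which the paper records as \eqref{E:temp-3.2-1}, is that $|\tilde B_{12}| \le C(|\tau| + \mu^2|\tau|^\alpha)$ is small, so the contribution $|\Gamma - \Gamma_0|\,|\tilde B_{12}|\,|b_2|$ picks up a factor of $|\tau|^\beta$ that converts the $\tfrac{\ep}{\ep+|\tau|} + \log(1+C\ep^2/\tau^2)$ profile into $\ep^\beta$; this is why the singular profile in (4) is multiplied only by $|b_1|$ and not by $|b|$.

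The one genuine difference is your treatment of (3). You propose extracting $((\tilde B - \tilde B_0)b)_1$ directly from the variation-of-parameters identity inside the proof of Lemma~\ref{L:Gamma-B}, separating the $b_1$ and $|b|$ contributions in the integrand. This can be made to work, but your description glosses over the presence of the factor $\tilde B_0(\tau)\tilde B_0(\tau')^{-1}$ in that integrand, so ``the integrand is essentially $(\Gamma-\Gamma_0)\tilde B(\tau')b$'' needs justification. The paper bypasses this entirely: since $\tilde b$ and $\tilde b_0$ are both homogeneous solutions (this is the content of \eqref{E:tw-1}), one has $\partial_\tau(\tilde b_1 - \tilde b_{01}) = \tilde b_2 - \tilde b_{02}$ with $\tilde b_1(0) = \tilde b_{01}(0) = b_1$, and (3) follows by integrating (4) over $[0,\tau]$. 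The $|\tau||b|$ term comes from integrating $\ep^\alpha|\tau'|^{\alpha_1}$, and the minimum in the $|b_1|$ term comes from estimating $\int_0^\tau(\tfrac{\ep}{\ep+|\tau'|} + \log(1+C\ep^2/\tau'^2))\,d\tau'$ either by H\"older against $L^{1/\alpha}$ (giving $\ep^\alpha|\tau|^{1-\alpha}$ via \eqref{E:dGamma-p}) or by the full $L^1$ bound (giving $\ep(1+|\log\ep|)$). This is cleaner and avoids re-entering the internals of Lemma~\ref{L:Gamma-B}.
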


\begin{proof}
The estimates on $\tilde b$ follows from straight forward calculation based on  \eqref{E:Gamma-temp-1} and the bound on $\tilde B -I$ given in Lemma \ref{L:Gamma-B} and we omit the details. 

Regrading $\tilde b(\tau) -\tilde b_0(\tau)$, let $\tilde B_{jl}$ denote the entries of $\tilde B$. Using Lemma \ref{L:Gamma-B} where the estimates are uniform in $\ep>0$, we have 
\begin{align*}
& |\tilde b_2 (\tau) - \tilde b_{02}(\tau)| \le  (1+ |\Gamma_0|) |\tilde B - \tilde B_{0}| |b|  + |\Gamma- \Gamma_0| (|\tilde B_{11}| |b_{1} |  +  |\tilde B_{12}| |b_{2}|) \\
\le & C\big( (1+ \mu \big|\log |\tau|\big|) |\tilde B - \tilde B_{0}| |b| +|\Gamma- \Gamma_0| (
|b_1| + (|\tau|+\mu |\tau|^{\alpha'}) |b_{2}|  ) \big)\\
\le & C\mu \big(\ep^\alpha |\tau|^{\alpha_1} |b|  + \big(\tfrac \ep{\ep+ |\tau|} + \log \big( 1+ \tfrac {C\ep^2}{\tau^2}\big)\big) (|b_1| +  |\tau|^{\alpha'} |b_{2}|  ) \big).
\end{align*}
Since 
\be \label{E:temp-3.2-1}
|\tau|^\beta \big(\tfrac \ep{\ep+|\tau|} + \log \big( 1+ \tfrac {C\ep^2}{\tau^2}\big)\big) \le C \ep^\beta, \quad \beta\in (0, 1], 
\ee
the upper on $\tilde b_2(\tau) -\tilde b_{02}(\tau)$ follows accordingly. 

To derive the estimate on $\tilde b_1(\tau)-\tilde b_{01}(\tau)$, we notice $\tilde b_1(0)= \tilde b_{01} (0)=b_1$ and the desired estimate follows from integrating $\p_\tau (\tilde b_1-\tilde b_{01}) = \tilde b_2 - \tilde b_{02}$ using \eqref{E:dGamma-p}.
\end{proof}

\begin{remark} \label{R:A-esti-w}
The above estimates 
imply, that for any solution $w(\tau)$ to \eqref{E:Ray-H2}  
\begin{align}
&\big|w_1 (\tau) - \big(b_1 - \tilde \Phi_1(\tau)\big)\big| \le C (|\tau|+ \mu^{2}|\tau|^\alpha ) \big|b - \tilde \Phi(\tau)\big|, \label{E:A-esti-w-1}\\
&\Big|w_2 (\tau) - \Big(b_2 - \tilde \Phi_2(\tau) + 
\tilde \Gamma(\tau) \big(b_1 - \tilde \Phi_1(\tau)\big) \Big)\Big|
\le 
C \big(|\tau|+ \mu (|\tau|^\alpha + \ep^\alpha)\big) \big|b - \tilde \Phi(\tau)\big|.\label{E:A-esti-w-2}  
\end{align}
\end{remark} 

The following lemma gives another estimate of $w(\tau)$ in terms of some initial value $w(\tau_0)$ which we shall use mainly for $\tau_0$ away from $0$. 

\begin{lemma} \label{L:apriori-1}
For any $M> 0$ satisfying \eqref{E:M} and $\alpha \in (0, 1)$, there exists $C >0$ depending only on $M$, $\alpha$, $|U'|_{C^2}$, and $|(U')^{-1}|_{C^0}$, such that, for any $0< \ep < M$, and $\tau_0, \tau \in  [-M,M]$, 
the following hold for any solution $w(\tau)$ to \eqref{E:Ray-H2}:
\begin{subequations} \label{E:Ray-H2-ap}
\be \label{E:Ray-H2-ap-1} \begin{split} 
\big|w_1(\tau) - w_1 &(\tau_0)\big| \le  C |\tau - \tau_0| \big(|w(\tau_0)| + \mu \big| \log \big(\tau_0^2 +\ep^2\big) \big| |w_1(\tau_0)|\big) \\
&+ C \mu |\tau-\tau_0|^\alpha \big(|w(\tau_0)| +\big|\tilde \Phi_1 (\cdot)- \tilde \Phi_1(\tau_0)\big|_{L^\infty [\tau_0, \tau]}\big) + C \big| \tilde \Phi( \cdot) - \tilde \Phi(\tau_0)\big|_{L^1 [\tau_0, \tau]},
\end{split}\ee
\be \label{E:Ray-H2-ap-2} \begin{split} 
& \big|w_2(\tau) - \big( w_2(\tau_0) + \tilde \Phi_2 (\tau_0) - \tilde \Phi_2(\tau) - \tilde \Gamma(\tau_0) w_1(\tau_0) + \tilde \Gamma (\tau) ( w_1(\tau_0)+ \tilde \Phi_1 (\tau_0) - \tilde \Phi_1(\tau)) \big)\big| \\
\le & C\Big(( |\tau|^{\alpha} +\mu \ep^{\alpha}) \big| \tilde \Phi (\tau_0)- \tilde \Phi(\tau)\big|+ \big(\mu {\ep^\alpha} + |\tau|^\alpha+ |\tau_0|^\alpha (1+ \mu |\log (\tau^2+\ep^2)|) \big) |w(\tau_0)|    \\
& + \mu |\tau|^\alpha |\log (\tau_0^2+\ep^2)|  |w_1(\tau_0)| \Big). 
\end{split}\ee
\end{subequations} 
\end{lemma}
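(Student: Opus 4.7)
The plan is to reduce \eqref{E:Ray-H2-ap} to the a priori estimates in Corollary \ref{C:A-esti-w} and Remark \ref{R:A-esti-w}, which control $w(\tau)$ in terms of the constant vector $b \in \mathbb{C}^2$ from \eqref{E:tw-1}. The task is to re-express $b$ in terms of $w(\tau_0)$ and propagate the error bounds, then transfer the estimate for $w_2$ to one for $w_1$ by integration in $\tau$.

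First I would invert \eqref{E:tw-1} at $\tau_0$, writing $b - \tilde \Phi(\tau_0) = \tilde B(\tau_0)^{-1} \bigl( \begin{smallmatrix} 1 & 0 \\ -\Gamma(\tau_0) & 1 \end{smallmatrix}\bigr) w(\tau_0)$. Using $|\tilde B(\tau_0)^{-1} - I| \le C(|\tau_0| + \mu^2|\tau_0|^\alpha)$ from \eqref{E:tB-2}, this yields $b_1 - \tilde \Phi_1(\tau_0) = w_1(\tau_0) + E_1$ and $b_2 - \tilde \Phi_2(\tau_0) = -\Gamma(\tau_0)w_1(\tau_0) + w_2(\tau_0) + E_2$, with errors $|E_1| \le C(|\tau_0|+\mu^2|\tau_0|^\alpha)|w(\tau_0)|$ and $|E_2| \le C(|\tau_0|+\mu^2|\tau_0|^\alpha)(1+|\Gamma(\tau_0)|)|w(\tau_0)|$. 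Substituting these into \eqref{E:A-esti-w-2} evaluated at $\tau$ produces \eqref{E:Ray-H2-ap-2}, after absorbing the differences $|\Gamma - \tilde \Gamma|$ (computed directly from \eqref{E:Gamma-1}, \eqref{E:Gamma-temp-1}, and the definition of $\tilde \Gamma$) into the error terms, and using $|\tilde \Gamma(\tau_0)| \le C\mu|\log(\tau_0^2+\ep^2)|$ to bound the cross term $\tilde \Gamma(\tau) \cdot E_1$, which produces the $\mu|\tau|^\alpha|\log(\tau_0^2+\ep^2)||w_1(\tau_0)|$ contribution.

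For \eqref{E:Ray-H2-ap-1}, rather than inverting the formula again at $\tau$, I would use $w_1' = w_2$ to write $w_1(\tau) - w_1(\tau_0) = \int_{\tau_0}^\tau w_2(\tau')\,d\tau'$ and plug in the expression for $w_2(\tau')$ derived above. The constant-in-$\tau'$ part yields the factor $|\tau-\tau_0|(|w(\tau_0)| + \mu|\log(\tau_0^2+\ep^2)||w_1(\tau_0)|)$; the $\tilde \Gamma(\tau')w_1(\tau_0)$ piece is bounded by $|w_1(\tau_0)|\int_{\tau_0}^\tau |\tilde \Gamma(\tau')|\,d\tau' \le C\mu|\tau-\tau_0|^\alpha|w_1(\tau_0)|$ via H\"older's inequality, exactly as in the proof of Lemma \ref{L:Gamma-B}, using the $L^p$-integrability of $|\log(\cdot)|$ for any $p < \infty$; the $\tilde \Phi$ contributions integrate to the claimed $L^1$ and $L^\infty$ norms of $\tilde \Phi(\cdot) - \tilde \Phi(\tau_0)$; and the error term from \eqref{E:Ray-H2-ap-2} integrates similarly, producing the remaining $|\tau-\tau_0|^\alpha$ factor.

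The main technical obstacle is the careful bookkeeping of the logarithmic singularities near $\tau=0$: the products $\tilde \Gamma(\tau)\tilde \Gamma(\tau_0)$ that arise when both $\tau$ and $\tau_0$ are close to zero could naively produce a $\mu^2 \log^2$ divergence, but these are absorbed by pairing each log factor with either a compensating power $|\tau|^\alpha$ (from the $|\tilde B - I|$ bound in \eqref{E:tB-2}) or a power of $\ep$ via \eqref{E:temp-3.2-1}, keeping all errors uniformly small in both $\mu$ and $\ep$.
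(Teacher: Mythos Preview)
Your proposal is correct and follows essentially the same approach as the paper: invert \eqref{E:tw-1} at $\tau_0$ to express $b-\tilde\Phi(\tau_0)$ in terms of $w(\tau_0)$ (this is exactly \eqref{E:b-0}--\eqref{E:b-2} in the paper), substitute into Corollary~\ref{C:A-esti-w} to obtain \eqref{E:Ray-H2-ap-2}, and then integrate $w_2=w_1'$ to deduce \eqref{E:Ray-H2-ap-1}. The paper's proof is organized in precisely this order, and your identification of the logarithmic bookkeeping as the main technical point matches the paper's handling via the $|\tau_0|^\alpha(1+\mu|\log(\tau^2+\ep^2)|)$ and $\mu|\tau|^\alpha|\log(\tau_0^2+\ep^2)|$ terms.
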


\begin{proof} 
We shall first estimate $b -\tilde \Phi (\tau_0)$ based on $w(\tau_0)$ and then apply Corollary \ref{C:A-esti-w}. From \eqref{E:tw-1} and $\det \tilde B=1$ which allows us to write $\tilde B^{-1}$ explicitly, we have 
\be \label{E:b-0}
b - \tilde \Phi(\tau_0)=  \begin{pmatrix} \tilde B_{22} & - \tilde B_{12} \\  -\tilde B_{21} &  \tilde B_{11} \end{pmatrix}  \begin{pmatrix} 1 & 0 \\ -\Gamma &1 \end{pmatrix} w\Big|_{\tau_0} = \begin{pmatrix} \tilde B_{22}  + \Gamma \tilde B_{12} & - \tilde B_{12} \\  -\tilde B_{21} -\Gamma  \tilde B_{11} & \tilde B_{11} \end{pmatrix} w\Big|_{\tau_0}.   
\ee
Using Lemma \ref{L:Gamma-B}, one may estimate 
\be \label{E:b-1}
|b_1 - \tilde \Phi_1(\tau_0) - w_1(\tau_0)| \le C (|\tau_0| + \mu |\tau_0|^\alpha) |w(\tau_0)|, 
\ee
\be \label{E:b-2}
|b_2 - \tilde \Phi_2(\tau_0) + \tilde \Gamma(\tau_0) w_1(\tau_0) - w_2(\tau_0)| \le C \big(|\tau_0| + \mu (|\tau_0|^\alpha + \ep^\alpha)\big) |w(\tau_0)|,
\ee
where we also used \eqref{E:Gamma-temp-1}. Combining these inequalities and Corollary \ref{C:A-esti-w}, we obtain 
\begin{align*}
& \big|w_2(\tau) - \big( w_2(\tau_0) + \tilde \Phi_2 (\tau_0) - \tilde \Phi_2(\tau) - \tilde \Gamma(\tau_0) w_1(\tau_0) + \tilde \Gamma (\tau) ( w_1(\tau_0)+ \tilde \Phi_1 (\tau_0) - \tilde \Phi_1(\tau)) \big)\big| \\
\le & C \big(|\tau|+ \mu(|\tau|^{\alpha'} + \ep^{\alpha'})\big) \big|b - \tilde \Phi(\tau)\big| + C \big(|\tau_0| + \mu (|\tau_0|^\alpha + \ep^\alpha) + | \tilde \Gamma (\tau)|(|\tau_0| + \mu |\tau_0|^\alpha)\big) |w(\tau_0)|\\
\le & C( |\tau|^{\alpha'} +\mu \ep^{\alpha'}) \big( \big| \tilde \Phi (\tau_0)- \tilde \Phi(\tau)\big| + |w(\tau_0)| + |\tilde \Gamma(\tau_0)| |w_1(\tau_0)|\big)\\
&+ C \big(\mu\ep^\alpha + |\tau_0|^\alpha (1+ \mu |\log (\tau^2+\ep^2)|) \big) |w(\tau_0)| \\
\le & C\Big(( |\tau|^{\alpha} +\mu \ep^{\alpha}) \big| \tilde \Phi (\tau_0)- \tilde \Phi(\tau)\big|+ \big(\mu\ep^\alpha + |\tau|^\alpha+ |\tau_0|^\alpha (1+ \mu |\log (\tau^2+\ep^2)|) \big) |w(\tau_0)|    \\
& + \mu |\tau|^\alpha |\log (\tau_0^2+\ep^2)|  |w_1(\tau_0)| \Big),
\end{align*}
where $\alpha' \in (\alpha, 1)$. This yields inequality \eqref{E:Ray-H2-ap-2} of $w_2(\tau)$. The estimate of $w_1(\tau)$ is obtained through integrating that of $w_2(\tau)= w_{1\tau}(\tau)$. 
\end{proof} 

\noindent $\bullet$ {\bf Convergence estimates as $c_I \to 0+$.}  
As $\ep= \mu^{-1} c_I= \langle k \rangle c_I \to 0+$, from Lemma \ref{L:Gamma-B}, it is natural to expect that the limit of solutions to the non-homogenous Rayleigh equation \eqref{E:Ray-NH-1} is also given by formula \eqref{E:tw-1} with $\Gamma$, $\tilde B$, and $\tilde \Phi$ replaced by $\Gamma_0$, $\tilde B_0$, and $\tilde \Phi_0 =\lim_{\ep \to 0+} \tilde \Phi$. 

With the above preparations, we are ready to obtain the convergence and error estimates of solutions to the Rayleigh equation \eqref{E:Ray-H0-1}. While the limits of non-homogeneous Rayleigh equation under appropriate assumptions on $\phi(k, c, x_2)$ can be studied in the framework in this section, we shall just focus on the homogeneous case, i.e. with $\phi \equiv 0$, and leave the non-homogeneous one to Section \ref{S:Ray-BC}. In fact, \eqref{E:Gamma0} and Lemma \ref{L:Gamma-B} imply that, as $c_I\to 0$, $w_1(\tau)$ would converge to a H\"older continuous limit, while $w_2(\tau)$ develops a jump proportional to $w_1(0)$ and a logarithmic singularity at $\tau=0$. More precisely, the limit $W(\tau)$ of solutions should (see the proposition in the below) satisfy the Rayleigh equation \eqref{E:Ray-H0-1} with $c\in \R$ for $\tau\ne 0$ and satisfy at $\tau=0$, 
\be \label{E:Ray-H0-jump} \begin{cases} 
&W_1 \in C^0\big([-M, M]\big), \quad W_2 \in C^0\big([-M, M] \backslash \{0\}\big), \\
&  \lim_{\tau \to 0+} \big( W_2(\tau) - W_2 (-\tau) \big) = \frac {i\pi \mu U''(x_2^c)}{U'(x_2^c)} W_1(0).
\end{cases} \ee
It is worth pointing out that the existence of the limit of $W_2(\tau) -W_2(-\tau)$ does not imply a simple jump discontinuity of $W_2$, which actually has a symmetric logarithmic singularity. In the distribution sense, the limit homogeneous Rayleigh equation \eqref{E:Ray-H0-1} (with $\phi=0$) along with \eqref{E:Ray-H0-jump} 
can be written as 
\be \label{E:Ray-NH0-1} \begin{split}
W_\tau = & (P.V.)_\tau \begin{pmatrix} 0 & 1 \\  1- \mu^2 + \frac {\mu^2 U'' ( x_2^c + \mu \tau )}{U( x_2^c + \mu \tau ) -c} & 0 \end{pmatrix} W 
+ \begin{pmatrix} 0 \\ \frac {i\pi \mu U''(x_2^c) }{U'(x_2^c)} W_1(0) \end{pmatrix}\delta (\tau). 
\end{split} \ee
Here  $\delta(\tau)$ denotes the delta function of $\tau$ and ``$(P.V.)_\tau$'' indicate the principle value when the corresponding distributions are applied to test functions of $\tau$. They occur in $W_{2\tau}$ only. In terms of the original unknown $y(x_2)$, the limit of \eqref{E:Ray-NH-1} as $c_I \to 0+$ is 
\be \label{E:Ray-H0-3} 
- y'' +  k^2 y + (P.V.)_{x_2} \big(\tfrac{U''y}{U- c}\big)  = 
- \tfrac {i\pi U''(x_2^c)}{U'(x_2^c)}  y(x_2^c)  \delta_{x_2} (x_2-x_2^c), 
\ee
where the subscript $\cdot_{x_2}$ indicates the distributions as generalized functions of $x_2$. 
 For $c_I \to 0-$, the parallel results hold except with the complex conjugate. It also means that homogeneous Rayleigh equation takes different limit as $c_I \to \pm 0$. 

\begin{lemma} \label{L:W-general}
General solutions of homogeneous \eqref{E:Ray-H0-1} (with $\phi=0$) along with \eqref{E:Ray-H0-jump} are 
\be \label{E:W-general} 
W(\tau) = \begin{pmatrix} 1 & 0 \\ \Gamma_0 (\mu, c_R, \tau) &1 \end{pmatrix} \tilde B_0(\mu, c_R, \tau) b_0, \quad b_0=  \begin{pmatrix} b_{01} \\ b_{02} \end{pmatrix}  \in \C^2.
\ee
Moreover, $W(\tau) \in C^0$ if $W_1(0)=0$. 
\end{lemma}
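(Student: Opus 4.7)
The plan is to verify the proposed formula \eqref{E:W-general} directly: first by checking it solves \eqref{E:Ray-H0-1} (with $\phi=0$) away from $\tau=0$ and satisfies \eqref{E:Ray-H0-jump}, reading off the jump from the explicit expression \eqref{E:Gamma0} for $\Gamma_0$; then by a matching argument to show every admissible $W$ has this form.

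For the verification, fix $b_0 \in \C^2$ and write $\tilde B_0(\tau) b_0 = (u(\tau), v(\tau))^T$, so $W_1 = u$ and $W_2 = \Gamma_0 u + v$. Away from $\tau=0$, $\tilde B_0$ satisfies the ODE \eqref{E:tB-1} with $\ep=0$, giving $u_\tau = \Gamma_0 u + v$ and $v_\tau = -\Gamma_0^2 u - \Gamma_0 v$. A direct calculation then yields $W_{1\tau} = u_\tau = W_2$ and
\[
W_{2\tau} = \Gamma_{0\tau} u + \Gamma_0 u_\tau + v_\tau = \Gamma_{0\tau} u,
\]
which by \eqref{E:gamma-1} at $\ep=0$ equals $\bigl(1-\mu^2 + \mu^2 U''/(U-c_R)\bigr)W_1$, matching \eqref{E:Ray-H0-1}. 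Continuity of $W_1$ on $[-M,M]$ and of $W_2$ on $[-M,M]\setminus\{0\}$ follows from Lemma \ref{L:Gamma-B}. For the jump, I would split
\[
W_2(\tau) - W_2(-\tau) = \bigl(\Gamma_0(\tau) - \Gamma_0(-\tau)\bigr) u(\tau) + \Gamma_0(-\tau)\bigl(u(\tau) - u(-\tau)\bigr) + v(\tau) - v(-\tau)
\]
and apply \eqref{E:Gamma0} to get $\lim_{\tau\to 0+}(\Gamma_0(\tau) - \Gamma_0(-\tau)) = i\pi\mu U''(x_2^c)/U'(x_2^c)$. The second term vanishes since $u$ is H\"older continuous (from $u_\tau = \Gamma_0 u + v$ with $\Gamma_0 \in L^p_{\mathrm{loc}}$) while $\Gamma_0(-\tau) = O(|\log\tau|)$; the third vanishes by continuity of $v$. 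This produces exactly the jump $\frac{i\pi\mu U''(x_2^c)}{U'(x_2^c)}W_1(0)$ required by \eqref{E:Ray-H0-jump}.

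For the converse, take any $W$ satisfying these conditions. On each side $\tau>0$ and $\tau<0$, \eqref{E:Ray-H0-1} is a regular linear ODE, so $W$ can be represented there as $\begin{pmatrix} 1 & 0 \\ \Gamma_0 & 1 \end{pmatrix} \tilde B_0 b_0^{\pm}$ with constants $b_0^{\pm} \in \C^2$ (obtained by inverting the formula at any reference point and using $\det \tilde B_0 = 1$). Continuity of $W_1$ at $\tau=0$ and $\tilde B_0(0)=I$ force $b_{01}^+ = b_{01}^-$. The jump condition for $W_2$, combined with the jump identity just computed for \eqref{E:W-general}, then forces $v^+(0+) - v^-(0-) = b_{02}^+ - b_{02}^- = 0$, so $b_0^+ = b_0^-$ and the representation is global.

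The final continuity assertion is immediate: if $W_1(0) = 0$ then $b_{01} = 0$, so $W_1 = b_{02}(\tilde B_0)_{12}$. From the ODE $(\tilde B_0)_{12,\tau} = \Gamma_0 (\tilde B_0)_{12} + (\tilde B_0)_{22}$ with $(\tilde B_0)_{12}(0)=0$, one obtains $(\tilde B_0)_{12}(\tau) = O(\tau)$ near $0$, whence $\Gamma_0 W_1 = O(\tau\log|\tau|)\to 0$, and $W_2$ extends continuously across $\tau=0$. The main subtlety I anticipate is verifying that $v^\pm$ admit finite one-sided limits when written in terms of $W$, i.e.\ $v^\pm = W_2 - \Gamma_0 W_1$; this requires the symmetric logarithmic real parts of $\Gamma_0 W_1$ to cancel the corresponding singularities of $W_2$ extracted from the ODE $W_{2\tau} = \Gamma_{0\tau} W_1$. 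The bound reduces to the fact that $\Gamma_0$ and $\Gamma_0^2$ are locally integrable, which is already contained in Lemma \ref{L:Gamma-B}, so no new estimate should be needed.
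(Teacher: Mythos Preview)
Your proof is correct and follows essentially the same route as the paper: represent $W$ on each side of $\tau=0$ via the formula \eqref{E:tw-1} with constants $b_0^{\pm}$, then use continuity of $W_1$ and the jump condition on $W_2$ to force $b_0^+ = b_0^-$. You add an explicit forward verification that \eqref{E:W-general} satisfies the equation and jump condition, which the paper leaves implicit, and your treatment of the continuity claim when $W_1(0)=0$ (via $(\tilde B_0)_{12}(\tau)=O(\tau)$) is a slightly sharper restatement of the paper's appeal to the H\"older bound on $\tilde B_0$ and the logarithmic bound on $\Gamma_0$ from Lemma~\ref{L:Gamma-B}.
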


\begin{proof} 
On $[-M, 0)$ and $(0, M]$, \eqref{E:Ray-H0-1} is regular and thus Lemma \ref{L:Gamma-B}, in particular the form \eqref{E:tw-1} of the general solutions implies the above \eqref{E:W-general} with parameters $b_0^\pm =(b_{01}^\pm, b_{02}^\pm)^T \in \C^2$. The continuity of $W_1(\tau)$ and the estimates of $\Gamma$ and $\tilde B$ in Lemma \ref{L:Gamma-B} immediately yields $b_{01}^+= b_{01}^-$. Finally $b_{02}^+ = b_{02}^-$ follows from the jump condition of $W_2(\tau)$ at $\tau=0$ after writing $b_{02}^\pm$ using \eqref{E:b-0} and again using the estimates of $\Gamma$ and $\tilde B$.  

Finally, the continuity of $W(\tau)$ under the assumptions $W_1(0)=0$ follows from \eqref{E:W-general}, the H\"older continuity of $\tilde B$, and the logarithmic upper bound of $\Gamma_0$. 
\end{proof}

The following proposition provides the convergence estimates. 

\begin{proposition} \label{P:converg}
For any $M> 0$ satisfying \eqref{E:M} and $\alpha, \alpha' \in (0, 1)$, there exists $C >0$ depending only on $M$, $\alpha$, $\alpha'$, $|U'|_{C^2}$, and $|(U')^{-1}|_{C^0}$, such that, for any $0< \ep < M$,  $\tau\in  [-M,M]$, 
and solutions $w(\tau)$ and $W(\tau)$ to \eqref{E:Ray-H2} and \eqref{E:Ray-H0-1} (with $\phi=0$) in the forms \eqref{E:tw-1} and \eqref{E:W-general} with parameters $b, b_0 \in \C^2$, respectively,  
the following hold:
\be \label{E:w-d-1b} \begin{split}
|w_1(\tau) - W_1(\tau) - (w_1 (0) - & W_1(0))| \le  C\big( |\tau| (|b_2- b_{02}|+ \mu \ep^\alpha |b_{02}| )\\ 
& + (|\tau|+\mu|\tau|^\alpha)|w_1(0)- W_1(0) | +  \ep^{\alpha'} \mu |\tau|^{1-\alpha'} |W_1(0)| \big),
\end{split}\ee
\be \label{E:w-d-2b} \begin{split}
|w_2(\tau) - W_2  (\tau )|  \le& C \Big( \mu\ep^\alpha |\tau|^{\frac {1-\alpha}2} (|W_1(0)|+ |b_{02}|) + (1 + \mu \big|\log (|\tau| +\ep)\big|)|w_1(0)- W_1(0) | \\
&+ |b_2 - b_{02}|   +\mu \big( \tfrac \ep{\ep+ |\tau|} + \log \big( 1+ \tfrac {C\ep^2}{\tau^2}\big)\big) |W_1(0) | \Big).
\end{split} \ee
Moreover, for any $\tau, \tau_0 \in [-M, M]$, let $\tau_*=\min\{|\tau|, |\tau_0|\}>0$, 
we have 
\be \label{E:w-d-1} \begin{split}
|w_1(\tau) - W_1  (\tau) - (w_1(\tau_0) - & W_1(\tau_0))| \le  C \mu \ep^\alpha |\tau-\tau_0|   |W(\tau_0)|  + C|\tau-\tau_0|^{\alpha}  |w(\tau_0) - W(\tau_0)| \\
&+ C\mu \Big( |\tau - \tau_0| \big(\tfrac \ep{\ep+|\tau_0|} + \log \big( 1+ \tfrac {C\ep^2}{\tau_0^2}\big) \big)  +\ep^{\alpha'} |\tau-\tau_0|^{1-\alpha'}\Big) \\
&\times \big(|W_1(\tau_0)| + |\tau_0|^\alpha |W(\tau_0)|  \big), 
\end{split}\ee
\be \label{E:w-d-2} \begin{split}
|w_2(\tau) -  W_2  (\tau)| \le &  C \Big( \big(1+ \mu |\log (\ep+|\tau|)\big|\big) |w(\tau_0) - W(\tau_0)|    \\
&+   \mu \ep^\alpha |W(\tau_0)|
+\mu \big(\tfrac \ep{\ep+ \tau_*} + \log \big( 1+ \tfrac {C\ep^2}{\tau_*^2}\big)\big)  \big(|W_1(\tau_0)| + |\tau_0|^\alpha |W(\tau_0)| \big)   \Big). 
\end{split}\ee
\end{proposition}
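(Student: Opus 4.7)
The plan is to exploit the closed-form representations of solutions from Lemma \ref{L:Gamma-B} and Lemma \ref{L:W-general}. Write
\[
w(\tau) = A(\tau) b, \qquad W(\tau) = A_0(\tau) b_0, \qquad A(\tau) := \begin{pmatrix} 1 & 0 \\ \Gamma(\tau) & 1 \end{pmatrix} \tilde B(\tau),
\]
and similarly for $A_0$ with $(\Gamma_0, \tilde B_0)$ in place of $(\Gamma, \tilde B)$. The difference then decomposes as
\[
w(\tau) - W(\tau) = A(\tau)(b - b_0) + \bigl(A(\tau) - A_0(\tau)\bigr) b_0,
\]
so every estimate reduces to (i) a bound on $b - b_0$ (read off from initial data), (ii) the a priori estimates in Corollary \ref{C:A-esti-w}, and (iii) the convergence bounds $|\Gamma - \Gamma_0|$, $|\tilde B - \tilde B_0|$ from Lemma \ref{L:Gamma-B}.

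First, for the pair \eqref{E:w-d-1b}--\eqref{E:w-d-2b}, I would parametrize at $\tau = 0$. Since $\tilde B(0) = \tilde B_0(0) = I$, the first-component identifications $b_1 = w_1(0)$ and $b_{01} = W_1(0)$ are immediate, while $b_2$ and $b_{02}$ appear directly in the statement (the ill-defined $w_2(0)$, $W_2(0)$ are thus avoided). For \eqref{E:w-d-1b}, apply Corollary \ref{C:A-esti-w} to $\tilde b - \tilde b_0$ and integrate once, noting that $\tilde \Gamma(\tau)$ drops out of the first component. For \eqref{E:w-d-2b}, the matrix identity above yields contributions of the form $\tilde \Gamma(\tau)(b_1 - b_{01})$, $(\tilde B - \tilde B_0) b_0$, and $(\Gamma - \Gamma_0) b_{01}$, the last two being controlled by Lemma \ref{L:Gamma-B}; the shape $\mu(\epsilon/(\epsilon + |\tau|) + \log(1 + C\epsilon^2/\tau^2))|W_1(0)|$ comes exactly from the bound \eqref{E:Gamma-d} on $\Gamma - \Gamma_0$, while the $\mu \epsilon^\alpha |\tau|^{(1-\alpha)/2}$ term absorbs the cross interaction of the H\"older-type $\tilde B - \tilde B_0$ estimate \eqref{E:temp-tB-d} with the $\log|\tau|$ growth of $\Gamma_0$, balanced through \eqref{E:temp-3.2-1}.

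Second, for the general comparison \eqref{E:w-d-1}--\eqref{E:w-d-2} at $\tau$ versus $\tau_0$, I would use the cocycle identity
\[
w(\tau) - W(\tau) = A(\tau) A(\tau_0)^{-1}\bigl(w(\tau_0) - W(\tau_0)\bigr) + \bigl(A(\tau) A(\tau_0)^{-1} - A_0(\tau) A_0(\tau_0)^{-1}\bigr) W(\tau_0).
\]
The first piece is handled by Lemma \ref{L:apriori-1} applied to $w - W$ viewed as a solution of \eqref{E:Ray-H2} with a driving term from the $\Gamma - \Gamma_0$ error, giving the $(1 + \mu|\log(\epsilon + |\tau|)|)|w(\tau_0) - W(\tau_0)|$ and $|\tau - \tau_0|^\alpha$ contributions. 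For the second piece, expand telescopically: the difference of $A A(\tau_0)^{-1}$ and $A_0 A_0(\tau_0)^{-1}$ splits into a part driven by $\Gamma - \Gamma_0$ (evaluated at both $\tau$ and $\tau_0$, producing the worst-point factor $\mu(\epsilon/(\epsilon + \tau_*) + \log(1 + C\epsilon^2/\tau_*^2))$) and a part driven by $\tilde B - \tilde B_0$ (yielding $\mu \epsilon^\alpha$-type bounds). The coefficient separation between $|W_1(\tau_0)|$ and $|W(\tau_0)|$ is tracked by noting that only second-row entries pick up the $\Gamma_0$ factor, so $W_1$ appears with the singular log term while the full $W$ appears only with the milder $\mu \epsilon^\alpha |\tau_0|^\alpha$ prefactor coming from multiplication with $\Gamma_0(\tau_0)$.

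The main obstacle is uniformity as either $\tau$ or $\tau_0$ approaches zero. The ingredient $\tilde B_0(\tau_0)^{-1}$ is only H\"older continuous, $\Gamma_0$ has a $\log|\tau|$ singularity, and the convergence estimate \eqref{E:temp-tB-d} trades powers of $\epsilon$ against powers of $|\tau|$. One must choose the H\"older exponent $\alpha'$ slightly larger than $\alpha$ in places where integration against $\log$-singular factors is required, invoking \eqref{E:dGamma-p} to produce the $\epsilon^{\alpha'}|\tau - \tau_0|^{1-\alpha'}$ term in \eqref{E:w-d-1}. Keeping the $\tau_*$-dependence in \eqref{E:w-d-2} sharp requires being careful to evaluate $\Gamma - \Gamma_0$ at whichever of $\tau$, $\tau_0$ is closer to the singularity before combining terms.
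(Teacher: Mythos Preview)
Your proposal is correct and follows essentially the same route as the paper. The paper's proof of \eqref{E:w-d-1b}--\eqref{E:w-d-2b} introduces the intermediate $\tilde w(\tau) = A(\tau) b_0$ and splits $w - W = (w - \tilde w) + (\tilde w - W)$, which is exactly your decomposition $A(\tau)(b-b_0) + (A(\tau)-A_0(\tau))b_0$; it then obtains \eqref{E:w-d-2b} from Corollary \ref{C:A-esti-w} and \eqref{E:w-d-1b} by integrating $\partial_\tau(w_1-W_1)=w_2-W_2$ using \eqref{E:dGamma-p}, just as you indicate. For \eqref{E:w-d-1}--\eqref{E:w-d-2} the paper does not phrase things via the cocycle $A(\tau)A(\tau_0)^{-1}$ but instead inverts \eqref{E:b-0} to express $b-b_0$ in terms of $(w-W)(\tau_0)$ and $W(\tau_0)$ (isolating the singular contribution $(\Gamma_0 W_1 - \Gamma w_1)(\tau_0)$), then substitutes into the already-established bound \eqref{E:temp-3.2-2}; this is algebraically equivalent to your telescoping of the propagator difference, and the same absorption trick via \eqref{E:temp-3.2-1} handles the $\mu^2\ep^\alpha|\log|\tau_0|||W_1(\tau_0)|$ cross term. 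One small wording issue: your phrase ``Lemma \ref{L:apriori-1} applied to $w-W$ viewed as a solution of \eqref{E:Ray-H2} with a driving term'' is slightly off---the first piece in your cocycle split is a \emph{homogeneous} solution of \eqref{E:Ray-H2} with initial data $(w-W)(\tau_0)$, and Lemma \ref{L:apriori-1} applies to it directly; no forcing from $\Gamma-\Gamma_0$ enters that piece.
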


\begin{remark} \label{R:converg}
When the above convergence estimate is applied in the rest of the manuscript, it always holds that $|W_1 (\tau_0)| \le M |\tau_0|^{\alpha_0}$ for some $\alpha_0>0$ which makes the right sides of \eqref{E:w-d-1} and  \eqref{E:w-d-2} converging to $0$ as $\ep \to 0$ locally uniformly in $\tau \ne 0$.   
\end{remark}

\begin{proof} 
We first work on the error estimates in terms of $W_1 (0)$ and $b_2$. Let 
\[
\tilde w(\tau) = \begin{pmatrix} 1 & 0 \\ \Gamma(\mu, c_R, \ep, \tau) &1 \end{pmatrix} \tilde B(\mu, c_R, \ep, \tau) b_0. 
\]
Controlling $w_2 - \tilde w_2$ and $\tilde w_2 - W_2$ by Corollary \ref{C:A-esti-w} ($w_2 - \tilde w_2$ by \eqref{E:A-esti-w-1} and \eqref{E:A-esti-w-2} in particular), where we recall the estimates are uniform in $\ep>0$,  we have 
\be \label{E:temp-3.2-2} \begin{split}
|w_2 (\tau) - W_2&(\tau)| \le  |w_2 (\tau) - \tilde w_2(\tau)| + |\tilde w_2 (\tau) - W_2(\tau)| \\
\le & C \Big( |b  - b_0| + |\tilde \Gamma(\tau)| |b_1 - b_{01} | + \mu \ep^\alpha |\tau|^{\alpha_1} |b_0| +\mu\big( \tfrac \ep{|\tau|+\ep} + \log \big( 1+ \tfrac {C\ep^2}{\tau^2}\big)\big) |b_{01} |  \Big)\\
\le& C \Big( |b_2 - b_{02}|  + (1 + \mu \big|\log (|\tau| +\ep)\big|)|w_1(0)- W_1(0) | \\
&  +\mu\big( \tfrac \ep{\ep+|\tau|} + \log \big( 1+ \tfrac {C\ep^2}{\tau^2}\big)\big)  |W_1(0) | + \mu\ep^\alpha |\tau|^{\alpha_1}\big(|W_1(0) | + |b_{02}| \big) \Big).
\end{split} \ee
where $\alpha_1 \in [0, 1-\alpha)$ and we also used 
\[
W_1 (0) = b_{01}, \quad w_1 (0) = b_{1}.  
\]
This completes the proof of inequality \eqref{E:w-d-2b}. The estimate \eqref{E:w-d-1b} on $w_1-W_1$
is derived by integrating $\p_\tau (w_1-W_1) = w_2 - W_2$ and using \eqref{E:dGamma-p} and \eqref{E:Gamma-d}. 

In the following, based on \eqref{E:temp-3.2-2} we establish the error estimates in terms of initial values given at some $\tau_0 \ne 0$. From formula \eqref{E:b-0} we have 
\begin{align*}
& b  - b_{0}  - \big( \Gamma_0 (\tau_0) W_1(\tau_0) - \Gamma (\tau_0) w_1(\tau_0)\big) (0,\, 1)^T \\
=& \Big(\tilde B^{-1} (w-W) + (\tilde B^{-1} - \tilde B_0^{-1}) W + \Big(\Gamma_0 (W_1-w_1) (\tilde B_0^{-1} -I) + \Gamma_0 w_1 (\tilde B_0^{-1} - \tilde B^{-1}) \\
& + (\Gamma_0-\Gamma) w_1 (\tilde B^{-1} - I)\Big) (0, \, 1)^T\Big)\Big|_{\tau_0}.
\end{align*}
From \eqref{E:temp-3.2-1} and Lemma \ref{L:Gamma-B}, one may estimate 
\[
|\Gamma_0(\tau_0)| |\tilde B(\tau_0)^{-1} -I | \le C \big(1 +\mu  \big|\log |\tau_0| \big|\big) (|\tau_0| +\mu^2 |\tau_0|^{\alpha'}) \le C (|\tau_0|+ \mu |\tau_0|^\alpha), 
\]
\[
|\Gamma_0| |\tilde B_0^{-1} - \tilde B^{-1} | \big|_{\tau_0} \le C \big(1 + \mu \big|\log |\tau_0| \big| \big) \mu \ep^{\alpha} (|\tau_0|^{1-\alpha} + \mu|\tau_0|^{\alpha_1})  \le C \mu \ep^\alpha  |\tau_0|^{\alpha_1}, 
\]
\[
|\Gamma_0 - \Gamma| |\tilde B^{-1} -I | \big|_{\tau_0} \le C\mu(|\tau_0| +\mu^2 |\tau_0|^{\alpha'}) \big(\mu \ep | \log\ep| + \tfrac \ep{\ep+ |\tau_0|} + \log (1+ \tfrac {C\ep^2}{\tau_0^2}) \big) \le C\mu \ep^\alpha  |\tau_0|^{\alpha_1},
\]
where $\alpha_1 \in [0, 1-\alpha)$.  Therefore we obtain 
\begin{align*}
\big|b_2 -&b_{02}  - \big( \Gamma_0 (\tau_0) W_1(\tau_0) - \Gamma (\tau_0) w_1(\tau_0)\big) \big| + \big|b_1 - b_{01} \big| \le  C \big( |(w - W)(\tau_0)| + \mu \ep^\alpha |\tau_0|^{\alpha_1} |W(\tau_0)| \big). 
\end{align*}  
Applying \eqref{E:b-1} and \eqref{E:b-2} to control $b_0$ in \eqref{E:temp-3.2-2}, we can estimate
\begin{align*}
|w_2(\tau) -  W_2  (\tau)| \le&  C \Big(  \big(1+ \mu \big|\log (\ep+ |\tau|)\big|\big) \big( |w(\tau_0) - W(\tau_0)| + \mu \ep^\alpha|\tau_0|^{\alpha_1} |W(\tau_0)| \big)   \\
& + |(\Gamma_0 W_1 - \Gamma w_1)(\tau_0)| + \mu \ep^\alpha \big( |W(\tau_0)|  +  \mu \big|\log |\tau_0|\big| |W_1(\tau_0)| \big) \\
&+\mu \big(\tfrac \ep{\ep+|\tau|} + \log \big( 1+ \tfrac {C\ep^2}{\tau^2}\big)\big) \big(|W_1(\tau_0)| + |\tau_0|^\alpha |W(\tau_0)| \big)   \Big)
\end{align*}
Inequality \eqref{E:w-d-2} is obtained by simplifying the above. In particular, we used 
\[
\ep^\alpha |\log \tau_*| \le C\ep^{\alpha'} \, \text{ if } \, \tau_*\ge \min\{1, \ep^2\} \, \text{ and } \log \big( 1+ \tfrac {C\ep^2}{\tau_*^2}\big) \ge \log \big( \tfrac {\ep^2}{\tau_*} \tfrac 1{\tau_*}\big) \ge |\log \tau_*| \, \text{ if } \, \tau_*\le \min\{1, \ep^2\}, 
\]
to absorb the term 
$\mu^2 \ep^\alpha \big|\log |\tau_0|\big| |W_1(\tau_0)|$. 

Again we integrate \eqref{E:w-d-2} to derive \eqref{E:w-d-1}. The only non-trivial terms are those involving $\tau_*$ 
\begin{align*}
\left| \int_{\tau_0}^\tau \tfrac \ep{\ep+ \min\{|\tau'|, |\tau_0|\}} + \log \big( 1+ \tfrac {C\ep^2}{\min\{|\tau'|, |\tau_0|\}^2}\big)  d\tau'\right| \le& C \ep^\alpha |\tau-\tau_0|^{1-\alpha} + \big| |\tau| - |\tau_0| \big|\big(\tfrac \ep{\ep+|\tau_0|} + \log \big( 1+ \tfrac {C\ep^2}{\tau_0^2}\big) \big) \\
\le & C \ep^\alpha |\tau-\tau_0|^{1-\alpha} + |\tau - \tau_0| \big(\tfrac \ep{\ep+|\tau_0|} + \log \big( 1+ \tfrac {C\ep^2}{\tau_0^2}\big) \big), 
\end{align*}
which are obtained by considering whether $|\tau'| \ge |\tau_0|$ and using \eqref{E:dGamma-p}.  
\end{proof}

\subsection{A priori bounds on the two fundamental solutions $y_\pm(k, c, x_2)$ to the homogeneous Rayleigh equation with $c_I \ge 0$} \label{SS:Ray-H-Fund}

In this subsection, we analyze and derive the basic estimates of of two fixed solutions $y_\pm (k, c, x_2)$  to the homogeneous equation \eqref{E:Ray-H1-1} with initial values 
\be \label{E:y-pm}
y_- (-h) =0, \quad y_-'(-h) =1, \quad \text{ and } \quad y_+ (0)=\frac {(U(0)-c)^2}{g+\sigma k^2}, \quad y_+'(0)=1 + \frac {U'(0)(U(0)-c)}{g+\sigma k^2}, 
\ee
which also depend on parameters $k$ and $c \in \C$. The initial condition of $y_+$ at $x_2=0$ is motivated by the linearized capillary gravity water wave problem \eqref{E:Ray}. (If it had been the linearized Euler equation at a shear flow in the channel, then naturally the boundary condition would be $y_+ (0)=0$ and $y_+'(0)=1$.)
As throughout this section, we often skip the arguments rather than $x_2$.  Particularly when working near $x_2^c = U^{-1} (c_R)$, we shall continue using the notations introduced in Subsection \ref{SS:a-esti}, like $c_R, \mu, \ep$, {\it etc.} The following lemma is standard. Due to conjugacy, we only consider $c_I\ge 0$.



\begin{lemma} \label{L:Ray-H-reg}
For $c \notin U([-h, 0])$ and $x_2 \in [-h, 0]$, the solutions $y_\pm (k, c, x_2)$ are even in $k$, analytic in $k^2$ and $c$, and is $C^{l_0+2}$ in $x_2$. Moreover $y_\pm (k, \bar c, x_2) = \overline {y_\pm (k, c, x_2)}$. 
\end{lemma}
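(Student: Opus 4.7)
The plan is to treat this as a standard consequence of smooth/analytic parameter dependence for the Cauchy problem defining $y_\pm$ on $[-h,0]$.

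First, for evenness in $k$, observe that $k$ enters the Rayleigh equation \eqref{E:Ray-H1-1} only through $k^2$, and the initial data \eqref{E:y-pm} for $y_\pm$ depend only on $k^2$ (either through $g+\sigma k^2$ or not at all). Hence $y_\pm(k,c,x_2)$ and $y_\pm(-k,c,x_2)$ solve the same second-order linear Cauchy problem; uniqueness gives equality. The reality/conjugacy identity is equally quick: the coefficients $k^2$ and $U''(x_2)$ are real, so $\overline{y_\pm(k,c,x_2)}$ solves the same ODE as $y_\pm(k,\bar c,x_2)$, and the initial values in \eqref{E:y-pm} are manifestly holomorphic in $c$, so their complex conjugates are the corresponding expressions with $c$ replaced by $\bar c$. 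Uniqueness then closes this step.

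For joint analyticity in $(k^2,c)$, fix a compact set $K \subset \C \setminus U([-h,0])$. Since $U$ is continuous on the compact interval $[-h,0]$, the quantity $U(x_2)-c$ is bounded away from $0$ uniformly in $(x_2,c) \in [-h,0] \times K$, so the coefficient
\[
k^2 + \frac{U''(x_2)}{U(x_2)-c}
\]
is holomorphic in $(k^2,c)$ on $\C \times (\C \setminus U([-h,0]))$ for each fixed $x_2$, with estimates uniform on $[-h,0] \times K$. The initial values in \eqref{E:y-pm} are polynomial in $(k^2,c)$. Applying the classical theorem on holomorphic parameter dependence for linear ODEs (obtained, e.g., by expressing $y_\pm$ as the uniform limit of Picard iterates and using Morera's theorem in $(k^2,c)$ for the integrand in each iterate) gives that $y_\pm(k,c,x_2)$ is holomorphic in $(k^2,c)$ on $\C \times (\C \setminus U([-h,0]))$ for each $x_2 \in [-h,0]$.

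For the $x_2$-regularity, I would use a bootstrap based on the ODE itself. Since $U \in C^{l_0}$, the coefficient $k^2 + U''/(U-c)$ lies in $C^{l_0-2}([-h,0])$. The initial data yield $y_\pm \in C^1$; the equation then forces $y_\pm'' \in C^0$ and hence $y_\pm \in C^2$. Iterating, each time we differentiate the identity $y_\pm'' = \big(k^2+U''/(U-c)\big)y_\pm$, we gain one derivative of $y_\pm$ as long as the coefficient has the required regularity. This pushes $y_\pm$ up to the regularity permitted by $U$ via the standard Picard/contraction argument for $C^m$ parameter dependence, establishing the stated $x_2$-regularity class.

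No step is a real obstacle here; the lemma is a packaging of classical ODE parameter-dependence results, and the only thing to check is that $c \notin U([-h,0])$ precisely kills the only possible singularity in the coefficient, which is exactly why the separated analysis of Subsection~\ref{SS:a-esti} is reserved for the complementary regime.
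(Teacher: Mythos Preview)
Your approach is exactly the ``standard'' argument the paper alludes to (the paper gives no proof of this lemma), and the treatment of evenness, analyticity in $(k^2,c)$, and the conjugacy identity is correct.

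One point to flag on the $x_2$-regularity: your bootstrap argument, as written, actually yields $y_\pm \in C^{l_0}$, not $C^{l_0+2}$. With $U \in C^{l_0}$ the coefficient $k^2 + U''/(U-c)$ is only $C^{l_0-2}$, so once $y_\pm \in C^{l_0-2}$ the relation $y_\pm'' = (\text{coeff})\,y_\pm$ gives $y_\pm'' \in C^{l_0-2}$ and hence $y_\pm \in C^{l_0}$; iterating further does not improve this. The exponent $l_0+2$ in the lemma statement appears to be a slip in the paper itself --- later uses (e.g.\ \eqref{E:y_--Hs-reg} and the remark after the proof of Theorem~\ref{T:decay-per}(3)) only require $y_\pm \in C^{l_0}$, which is what your argument delivers. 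So your reasoning is sound; just be aware that it establishes $C^{l_0}$, which is the correct and sufficient conclusion.
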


In the next step we give a priori estimates of $y_\pm (k, c, x_2)$. In particular, we consider up to three subintervals,
\be \label{E:kc-away-2}
\CI_2 := (x_{2l}, x_{2r}) = \left\{ x_2 \in [-h, 0]\, : \,   \frac 
1{|U(x_2)-c|} >  
\rho_0 \mu^{-\frac 32} \right\}, 
\quad \rho_0 = \frac 
4{h_0 \inf_{[-h_0-h, h_0]} U'},    
\ee
\be \label{E:CIs}
\CI_1 = [-h, x_{2l}), 
\quad \CI_3=(x_{2r}, 0]. 
\ee
Here $\mu= \langle k \rangle^{-1}$ as in \eqref{E:tau}. Clearly $[-h, 0]= \CI_1 \cup \CI_2 \cup \CI_3$ and any of these subintervals may be empty. 
If $\CI_2 =\emptyset$, then $[-h, 0]$ is considered as $\CI_1$ for $y_-$ and as $\CI_3$ for $y_+$  
in the statement of the following lemma. The choice of the above constant $\rho_0$ and the fact $0\le \mu \le 1$ ensure 
\be \label{E:rho_0}
c_R \in U([-\tfrac 14 h_0-h, \tfrac 14 h_0]) \; \text{ if } \; \CI_2 \ne \emptyset. 
\ee

\begin{lemma} \label{L:y-pm}  
For any $\alpha  \in (0, \frac 12)$, there exists $C >0$ depending only on $\alpha$, $|U'|_{C^2}$, and $|(U')^{-1}|_{C^0}$ (also on $g$ and $\sigma$ for the estimates of $y_+$), such that, for any $c \in \C \backslash U([-h, 0])$, the following hold:
\be \label{E:y-_1}
|\mu^{-1} y_- (x_2) - \sinh (\mu^{-1}(x_2+h))|   \le C \mu^{\alpha}\sinh (\mu^{-1}(x_2+h)),
\ee
\be \label{E:y+_1}
|\mu^{-1} y_+ (x_2) - \sinh (\mu^{-1}x_2)|   \le C\big(\mu^{\alpha} +\mu |c|^2 \big)\cosh (\mu^{-1}x_2), 
\ee
for all $x_2 \in [-h, 0]$. Moreover, if $\CI_2 =\emptyset$, 
then for all $x_2\in [-h, 0]$, 
\be \label{E:y-_2}
|y_-'(x_2) - \cosh (\mu^{-1}(x_2+h))|  \le C \mu^{\alpha}\sinh (\mu^{-1}(x_2+h)),  
\ee
\be \label{E:y+_2}
|y_+'(x_2) - \cosh (\mu^{-1}x_2)|  \le  C \big(\mu^{\alpha} + \mu |c|^2 \big)\cosh (\mu^{-1}x_2).
\ee
If otherwise $\CI_2 \ne \emptyset$, 
then 
\be \label{E:y-_3}
|y_-'(x_2) - \cosh (\mu^{-1}(x_2+h))|  \le
\begin{cases}  
C \mu^{\alpha}\sinh (\mu^{-1}(x_2+h)),  \qquad & x_2 \in \CI_1\\
C\mu^{\alpha}\cosh (\mu^{-1}(x_2+h)),  & x_2 \in \CI_3
\end{cases} \ee
\be \label{E:y+_3}
|y_+'(x_2) - \cosh (\mu^{-1}x_2)|  \le 
C\mu^{\alpha}\cosh (\mu^{-1}x_2), \qquad  x_2 \in \CI_1 \cup \CI_3, 
\ee
and for $x_2 \in \CI_2$, 
\be \label{E:y-_4} \begin{split}
\big|y_-'(x_2) - \cosh (\mu^{-1}(x_2+h)) &- \frac {U''(x_2^c)}{U'(x_2^c)} y_-(x_{2l})\log |U(x_2) - c|  \big| \\
& \le C \mu^{\alpha} \big( 1 + \mu \big| \log |U(x_2) - c|\big| \big)\cosh (\mu^{-1} (x_2+h)),   
\end{split} \ee
\be \label{E:y+_4} \begin{split}
\big|y_+'(x_2) - \cosh (\mu^{-1} x_2) &- \frac {U''(x_2^c)}{U'(x_2^c)} y_+ (x_{2r})\log |U(x_2) - c|  \big| \\
& \le C \mu^{\alpha} \big( 1  + \mu  \big| \log |U(x_2) - c|\big| \big)\cosh (\mu^{-1} x_2).   
\end{split} \ee
\end{lemma}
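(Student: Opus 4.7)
Decompose $[-h,0] = \CI_1 \cup \CI_2 \cup \CI_3$ as in \eqref{E:kc-away-2}--\eqref{E:CIs}. The defining inequality of $\CI_2$ combined with the choice $\rho_0 = 4/(h_0 \inf U')$ and $U' \ge \tfrac12 \inf_{[-h,0]} U'$ implies that on $\CI_1 \cup \CI_3$ the regular-region hypothesis \eqref{E:kc-away-1} of Lemma~\ref{L:Ray-regular} is satisfied with $\rho = C \mu^{1/2}$ for $|k| \gg 1$, and that $|U-c|^{-1}$ is uniformly bounded for $|k|$ bounded (so that Lemma~\ref{L:regular-small-k} applies there instead). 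In both regimes the comparison error is of the form $\rho + \mu |\log(\rho k^2)| = O(\mu^{1/2} |\log \mu|) \le C \mu^\alpha$ for every $\alpha \in (0, 1/2)$. On $\CI_2$, the same choice of $\rho_0$ and lower bound on $U'$ forces the rescaled variable $\tau = (x_2 - x_2^c)/\mu$ to satisfy $|\tau| \le C \mu^{1/2}$, so the fundamental-matrix analysis of Subsection~\ref{SS:a-esti} applies with small parameter $\mu^{1/2}$.

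\textbf{Treatment of $y_-$.} The initial data $y_-(-h) = 0$, $y_-'(-h) = 1$ matches $|k|^{-1} \sinh (|k|s), \cosh(|k|s)$ at $s = 0$ exactly. A direct application of Lemma~\ref{L:Ray-regular-1}(1) with $C_0 = 0$, $s = 0$, $x_{2l} = -h$, and $(\Theta_1, \Theta_2) = (\sinh, \cosh)$ (or Lemma~\ref{L:regular-small-k} for small $|k|$) yields \eqref{E:y-_1}--\eqref{E:y-_2} and the $\CI_1$ portion of \eqref{E:y-_3}; if $\CI_2 = \emptyset$ this finishes the proof for $y_-$. Otherwise I transfer the values $w(\tau_l) = (\mu^{-1} y_-(x_{2l}), y_-'(x_{2l}))^T$ into the representation \eqref{E:tw-1} and invoke Lemma~\ref{L:apriori-1} on $\CI_2$ with $\tau_0 = \tau_l$. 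In the homogeneous case $\tilde\Phi \equiv 0$, and using $|\tau|, |\tau_l| \le C \mu^{1/2}$ together with $|w(\tau_l)| \le C \cosh(\mu^{-1}(x_{2l}+h))$ the bound \eqref{E:Ray-H2-ap-2} reduces to
\[
w_2(\tau) = w_2(\tau_l) + \bigl(\tilde\Gamma(\tau) - \tilde\Gamma(\tau_l)\bigr) w_1(\tau_l) + O(\mu^\alpha) \cosh(\mu^{-1}(x_2+h)).
\]
By \eqref{E:Gamma-1} and \eqref{E:Gamma-temp-1}, $\tilde\Gamma(\tau)$ agrees with $\frac{\mu U''(x_2^c)}{U'(x_2^c)} \log|U(x_2) - c|$ up to an $O(\mu)$ additive error, and $\mu^{-1} y_-(x_{2l}) = w_1(\tau_l)$. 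Reverting to $y_-'$ and absorbing the $O(\mu)$ drift in $\tilde\Gamma(\tau_l)$ into the error produces exactly \eqref{E:y-_4}. Integrating $y_-' = w_2$ across $\CI_2$ also yields \eqref{E:y-_1} on $\CI_2$. Finally, a last application of Lemma~\ref{L:Ray-regular-1}(1) on $\CI_3$ with $x_{2l} = x_{2r}$ and $s \ge 0$ chosen so that $\sinh(|k|s) = \mu^{-1} y_-(x_{2r})$ propagates the estimates to $\CI_3$, using the $C_0$ accumulated from the previous two steps, which remains $O(\mu^\alpha)$; this gives the $\CI_3$ part of \eqref{E:y-_3}.

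\textbf{Treatment of $y_+$.} The argument is symmetric and proceeds backward from $x_2 = 0$ via Lemma~\ref{L:Ray-regular-1}(2), then Lemma~\ref{L:apriori-1} on $\CI_2$, then Lemma~\ref{L:Ray-regular-1}(2) once more on $\CI_1$. The one genuinely new feature is the initial data: because $g + \sigma k^2 \ge c^{-1} \mu^{-2}$ the values satisfy $|k| y_+(0) = O(\mu |c|^2)$ and $y_+'(0) - 1 = O(\mu |c|)$, so Lemma~\ref{L:Ray-regular-1}(2) is applied with $s = 0$, $(\Theta_1, \Theta_2) = (\cosh, \cosh)$, and $C_0 = C \mu |c|^2$. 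This directly produces the $(\mu^\alpha + \mu |c|^2)\cosh(\mu^{-1}x_2)$ error in \eqref{E:y+_1}, \eqref{E:y+_2}; the subsequent passage through $\CI_2$ (yielding \eqref{E:y+_4}) and $\CI_1$ is identical to that for $y_-$ with $x_{2l}$ replaced by $x_{2r}$ and $y_-(x_{2l})$ by $y_+(x_{2r})$.

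\textbf{Main obstacle.} The delicate bookkeeping is on $\CI_2$, where one must (i) match $\tilde\Gamma(\tau)$ with $\frac{\mu U''(x_2^c)}{U'(x_2^c)} \log|U(x_2) - c|$ using \eqref{E:Gamma-d} and \eqref{E:Gamma-temp-1}, (ii) replace $w_1(\tau_l)$ in the leading term by $\mu^{-1} y_\pm(x_{2l/r})$ and absorb the difference into an $O(\mu^\alpha)\cosh$-type remainder, and (iii) ensure all bounds are uniform as $c_I \to 0+$. The key quantitative input is that $|\tau| \le C \mu^{1/2}$ on $\CI_2$ converts the $|\tau|^{2\alpha}$ and $\mu |\tau|^\alpha$ terms arising in Lemma~\ref{L:apriori-1} into the stated $\mu^\alpha$ error, while terms of the form $\mu |\log|U(x_2)-c||$ are kept as-is in the right-hand side of \eqref{E:y-_4}, \eqref{E:y+_4}.
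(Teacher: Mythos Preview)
Your proposal is correct and follows essentially the same three-step scheme as the paper: regular-region estimates (Lemmas~\ref{L:Ray-regular-1} and \ref{L:regular-small-k}) on $\CI_1$ and $\CI_3$, and the a~priori estimate Lemma~\ref{L:apriori-1} on $\CI_2$, with the key input $|\tau|\le C\mu^{1/2}$ on $\CI_2$ and $\rho=O(\mu^{1/2})$ on $\CI_1\cup\CI_3$.

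Two small corrections. First, on $\CI_1$ for $y_-$ you should take $(\Theta_1,\Theta_2)=(\sinh,\sinh)$ rather than $(\sinh,\cosh)$; otherwise the output bound on $y_-'$ is $C\mu^\alpha\cosh$ rather than the sharper $C\mu^\alpha\sinh$ required in \eqref{E:y-_2} and the $\CI_1$ case of \eqref{E:y-_3}. Second, on $\CI_3$ for small $|k|$, Lemma~\ref{L:Ray-regular-1} is not directly available because the regularity parameter $\rho$ in \eqref{E:rho-1} need not be $\le 1$; the paper instead applies Lemma~\ref{L:apriori-1} on the entire rescaled interval starting from $\tau_0=\mu^{-1}(-h-x_2^c)$ (using $w_1(\tau_0)=0$, $w_2(\tau_0)=1$) to get the crude bound $|y_-'|\le C$, which suffices since $\mu\sim 1$. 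Your point (ii) in the obstacle is a non-issue: $w_1(\tau_l)=\mu^{-1}y_-(x_{2l})$ holds by the definition \eqref{E:tau}, so no replacement is needed.
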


\begin{remark} \label{R:y-pm}
Even though the lemma assumes $c \in \C \backslash U([-h, 0])$, the estimates are uniform in $c$ and thus they also hold for the limits of solutions as $c_I \to 0+$, while the limits as $c_I \to 0-$ are the conjugates of those as $ c_I \to 0+$. 
Moreover, the constant $C$ does not depend on $\sigma>0$, and in particular, $C$ for $y_-$ does not depend on $g$ either.  
\end{remark}

It is possible that $x_2^c \notin [-h, 0]$ as the domain of $U$ has been extended. However, the constant $C$ in \eqref{E:y-_2}, \eqref{E:y+_2}, \eqref{E:y-_3}, and \eqref{E:y+_3} are independent of the extensions of $U$ satisfying \eqref{E:U-ext}. 


\begin{proof}
The estimates of $y_\pm$ can be derived in exactly the same procedure by reversing the direction of the variable $x_2$. 
We shall focus on $y_-(k,c, x_2)$ and give a brief description on the argument for $y_+$ afterwards. The cases of $x_2$ close to  and away from $x_2^c$ will be considered differently based on Lemma \ref{L:Ray-regular-1} and Proposition \ref{P:converg}, respectively. 

{\it Step 1.} Assume $\CI_1 \ne \emptyset$. We consider $k$ in two cases. The first on is for those larger $|k|$ such that 
\be \label{E:rho-1}
\rho:= \rho_0 \mu^{-\frac 32} k^{-2} (1+|U''|_{C^0 ([-h_0-h, h_0])})  
\le \min\{ 1, C \mu^{\frac 12}\},  
\ee
where \eqref{E:kc-away-1} is satisfied and Lemma \ref{L:Ray-regular-1} is applicable. Observe
\be\label{E:mu-k}
\mu^{-1} -k  = \sqrt{1+k^2} -k = \tfrac 1{\mu^{-1} + k} \in (0, \mu), 
\ee
and 
\be \label{E:mu-k-s} \begin{split}
|\sinh (\mu^{-1}(x_2+h)) - \sinh (k(x_2+h))| =& 2\sinh \tfrac {x_2+h}{\mu^{-1} + k} \cosh (\tfrac 12(\mu^{-1} + k) (x_2+h)) \\
\le & C \mu \sinh (\mu^{-1}(x_2+h)),
\end{split} \ee
where the last inequality could be derived by considering whether $\mu^{-1}(x_2+h) \ge1$. The same upper bound also holds for $\cosh$. Therefore applying Lemma \ref{L:Ray-regular-1} on $\CI_1$ with $s=0$ and $C_0=0$, 
we immediately obtain the desired estimates \eqref{E:y-_1}, \eqref{E:y-_2}, \eqref{E:y-_3} on $y_-$ and $y_-'$ on $\CI_1$, respectively. Otherwise in the case of smaller $|k|$, the desired estimates follows from Lemma \ref{L:regular-small-k} with $\phi=0$. 

{\it Step 2.} Assume $\CI_2 \ne \emptyset$ and $x_{2r} > x_{2l}$ otherwise step 1 has completed the proof. In this case, $x_2^c \in [-\tfrac {h_0}4-h, \tfrac {h_0}4]$ due to \eqref{E:rho_0}. 
Let 
\be \label{E:M-1} 
M = \rho_0^{-1} |(U')^{-1}|_{C^0} = \tfrac 14 h_0, 
\ee
which implies   
\be \label{E:CI-2}  
\CI_2 
\subset [x_2^c -\mu M, x_2^c + \mu M] \subset [x_2^c - 2\mu M, x_2^c +2\mu M] \subset [-h_0-h, h_0].   
\ee
Therefore results in Subsection \ref{SS:a-esti} in the corresponding rescaled variables $w_{1,2}(\tau)$ and $x_2 = x_2^c +\mu \tau$ given in \eqref{E:tau} are applicable. Moreover the definition of $\CI_2$ further yields 
\[
|\tau| = \mu^{-1} |x_2 -x_2^c|  \le C \mu^{\frac 12}, \quad \forall \, x_2 \in\CI_2. 
\]
Let 
\[
\tau_0 = \mu^{-1} (x_{2l} - x_2^c).
\]
Lemma \ref{L:apriori-1} (with $\phi=0$) implies that, for any $x_2 \in \CI_2$
\begin{align*}
\Big| y_-'(x_2) +  \tfrac {U''(x_2^c)}{U'(x_2^c)} y_-(x_{2l})  \log  \tfrac {|U(x_{2l}) - c|}{|U(x_{2}) - c|}&   - y_-'(x_{2l})\Big| \le C  \big(1 + \mu^{\frac {\alpha'}2} \big| \log |U(x_{2l}) - c|\big|\big) |y_-(x_{2l})| \\
& + C \big( \mu^{\frac {\alpha'}2} +\mu |\tau_0|^{\alpha'} \big|\log |U(x_2) - c|\big|\big) \big(\mu^{-1}|y_-(x_{2l})| + |y_-'(x_{2l})|\big), 
\end{align*}
for any $\alpha' \in (0, 1)$. Moving the $\log |U(x_{2l})-c|$ term to the right side, we obtain 
\be \label{E:y-temp-0} \begin{split}
\Big| y_-'(x_2) -  \tfrac {U''(x_2^c)}{U'(x_2^c)} y_-(x_{2l}) & \log |U(x_2) - c|   - y_-'(x_{2l})\Big| \le C  \big(1 + \big| \log |U(x_{2l}) - c|\big|\big) |y_-(x_{2l})| \\
& \quad + C \big( \mu^{\frac {\alpha'}2} +\mu |\tau_0|^{\alpha'} \big|\log |U(x_2) - c|\big|\big) \big(\mu^{-1}|y_-(x_{2l})| + |y_-'(x_{2l})|\big).
\end{split} \ee
Notice that, no matter whether $\CI_1 =\emptyset$ or not, \eqref{E:y-_1} and \eqref{E:y-_2} are satisfied at $x_{2l}$ due to either the initial condition of $y_-(x_2)$ or the above step 1. On the one hand, regarding the above first term on the right side, it holds that either $y_-(x_{2l})=0$ if $x_{2l}= -h$ or $\mu^{\frac 32}\le C |x_2^c -x_{2l}|$ if $x_{2l}>-h$, hence this term would only contribute an error term of at most $O(\mu^{-\alpha''} |y_-(x_{2l})|)$, for any $\alpha'' >0$, 
in the upper bounds. On the other hand,  $0\le x_2 -x_{2l} \le C \mu^{\frac 32}$ implies that replacing the above $\mu^{-1} y_-(x_{2l})$, $y_-'(x_{2l})$ and $\cosh (\mu^{-1} (x_{2l}+h))$ by $\cosh (\mu^{-1}(x_2+h))$ would also only produce an error terms of at most $O\big(\mu^{\frac {\alpha'}2} \cosh (\mu^{-1}(x_2+h)) \big)$ in the upper bounds. Therefore we have 
\be \label{E:y-temp-1} \begin{split}
\big|y_-'(x_2) - \cosh (\mu^{-1}(x_2+h)) &- \tfrac {U''(x_2^c)}{U'(x_2^c)} y_-(x_{2l})\log |U(x_2) - c|  \big| \\
& \le C\big( \mu^{\frac {\alpha'}2} + \mu |\tau_0|^{\alpha'} \big| \log |U(x_2) - c|\big| \big)\cosh (\mu^{-1}(x_2+h)),   
\end{split} \ee
and thus \eqref{E:y-_4} follows by letting $\alpha'=2\alpha$. 

Integrating \eqref{E:y-temp-1} over $[x_{2l}, x_2] \subset \CI_2$, we have, for $\alpha' \in (2\alpha, 1)$,  
\begin{align*}
|\mu^{-1} y_-(x_2) - \sinh  (\mu^{-1} & (x_2+h))| \le C \mu^{\alpha} \sinh (\mu^{-1}(x_{2l} +h)) + \frac C\mu \int_{x_{2l}}^{x_2}| y_-(x_{2l})| \big(1+ \big| \log |x_2' - x_2^c|\big|\big) \\
& \qquad \qquad \quad + \big( \mu^{\frac {\alpha'}2} + \mu |\tau_0|^{\alpha'} \big| \log |x_2' - x_2^c|\big| \big) \cosh (\mu^{-1}(x_{2l}+h)) dx_2'\\
\le & C \mu^{\alpha} \sinh (\mu^{-1} (x_{2} +h)) + C |\tau_0|^{\alpha'} \cosh (\mu^{-1}(x_{2l}+h)) \int_{x_{2l}}^{x_2}  \big| \log |x_2' - x_2^c|\big|  dx_2'.  
\end{align*}
where we used \eqref{E:y-_1}, 
$|x_2 - x_{2l}|\le C \mu^{\frac 32}$, and 
and the first term of the right side of \eqref{E:y-temp-0} was incorporated into others as remarked just below \eqref{E:y-temp-0}. 
For $|x_2- x_{2l}| \le \frac 12 |x_{2l} - x_2^c|$, we have 
\[
|\tau_0|^{\alpha'} \int_{x_{2l}}^{x_2}  \big| \log |x_2' - x_2^c|\big|  dx_2' \le \mu^{-\alpha'} |x_{2l} - x_2^c|^{\alpha'} |x_2 - x_{2l}| \big(1+\big| \log |x_{2l} - x_2^c|\big| \big) \le C \mu^{\alpha}|x_2 - x_{2l}|, 
\] 
while for $|x_2- x_{2l}| \ge \frac 12 |x_{2l} - x_2^c|$, 
\[
|\tau_0|^{\alpha'} \int_{x_{2l}}^{x_2}  \big| \log |x_2' - x_2^c|\big|  dx_2' \le C \mu^{-\alpha'} |x_{2l} - x_2^c|^{\alpha'} |x_2 - x_{2l}|^{1-\frac 13(\alpha' - 2\alpha)}  \le C \mu^{\alpha}|x_2 - x_{2l}|. 
\] 
Therefore we obtain  
\begin{align*}
|\mu^{-1} y_-(x_2) - \sinh  (\mu^{-1}(x_2+h))| \le & C \mu^{\alpha} \big(\sinh (\mu^{-1}(x_{2} +h)) + |x_2 - x_{2l}|\cosh (\mu^{-1}(x_{2l}+h))\big)\\
\le & C \mu^{\alpha} \sinh (\mu^{-1} (x_{2} +h))
\end{align*}
which proves \eqref{E:y-_1} on $\CI_2$. 

{\it Step 3.} Assume $\CI_3 = [x_{2r}, 0] \ne \emptyset$, which implies $x_{2r} >-h$. In this case, surely $\CI_2 \ne \emptyset$ either and $\mu^{\frac 32} \le C|U(x_{2r}) -c|$. With \eqref{E:y-_1} for $y_-$ and \eqref{E:y-_4}  for $y_-'$ established at $x_2 = x_{2r}$, $y_-(x_2)$ satisfies assumption \eqref{E:y-ini-1} for the interval $\CI_3$ with $\Theta_1 =\sinh$, $\Theta_2 =\cosh$, and $C_0= C \mu^\alpha$.  

As in the step 1, for larger $|k|$ so that \eqref{E:rho-1} holds, the desired estimates \eqref{E:y-_1} and \eqref{E:y-_4} in $\CI_3$ follow directly from \eqref{E:mu-k}, \eqref{E:mu-k-s}, and Lemma \ref{L:Ray-regular-1}.

For smaller $k$, say, $|k| \le k_1$, we express $y_-(x_2)$ and $y_-'(x_2)$ in terms of $w(\tau)$, $\tau \in [\mu^{-1}(-h - x_2^c), -\mu^{-1} x_2^c]$, as in \eqref{E:tau}. Let 
\[
M= (1+k_1^2)^{\frac 12} (2h_0+h),  
\quad \tau_0 = \mu^{-1} (-h - x_2^c).
\]
Since $\CI_2 \ne \emptyset$, otherwise $[-h, 0] = \CI_1$ for $y_-(x_2)$, it along with \eqref{E:rho_0} and $|k|\le k_1$ implies 
\[
x_2^c \in [-h_0-h, h_0] \Longrightarrow |h+x_2^c|, \, |x_2^c| \le 2h_0+h \Longrightarrow [\mu^{-1}(-h - x_2^c), -\mu^{-1} x_2^c] \subset [-M, M].
\] 
Namely, the domain of $w(\tau)$ is contained in $[-M, M]$. Applying \eqref{E:Ray-H2-ap-2} (with $\phi=0$), 
using $w_1(\tau_0)=0$, $w_2(\tau_0)=1$, and 
\[
\CI_3 \ne \emptyset \Longrightarrow \rho_0^{-1} \mu^{\frac 32} 
=|U(x_{2r}) -c| \le |U(x_2) -c|, \quad \forall x_2 \in \CI_3,
\]
we obtain $|y_-' (x_2) | \le C$ on $\CI_3$. It in turn implies 
\begin{align*}
|\mu^{-1} y_-(x_2) - \sinh (\mu^{-1}(x_2 +h))| \le & \mu^{-1} |y_-(x_2) - y_-(x_{2r})| + |\mu^{-1} y_- (x_{2r}) - \sinh (\mu^{-1}(x_{2r} +h))|  \\
&+ |\sinh (\mu^{-1}(x_{2r} + h)) - \sinh (\mu^{-1}(x_2+h))|\\ 
\le & C \big( |x_2 - x_{2r}| + \sinh (\mu^{-1}(x_{2r}+h))\big) \le C \sinh (\mu^{-1}(x_2+h)).
\end{align*} 
Therefore \eqref{E:y-_1} and \eqref{E:y-_3} hold on $\CI_3$ due to $|k| \le k_1$. 

{\it Estimating $y_+$} Finally, we give a brief sketch of the argument for $y_+$, for which we proceed from $\CI_3$ to $\CI_1$.  

Suppose $\CI_3 \ne \emptyset$. The initial values of $y_+$ at $x_2=0$ satisfy \eqref{E:y-ini-2} with $\Theta_1=\Theta_2=\cosh$ and $C_0= C (1+|c|^2)\mu$.  For larger $|k|$ so that \eqref{E:rho-1} holds, the desired estimates \eqref{E:y+_1} and \eqref{E:y+_2} in $\CI_3$ follow directly from \eqref{E:mu-k}, \eqref{E:mu-k-s}, and Lemma \ref{L:Ray-regular-1}. The estimates for smaller $k$ is again a consequence of Lemma \ref{L:regular-small-k}. 

Suppose $\CI_2 \ne \emptyset$ which implies $|c|\le C$. Inequality \eqref{E:y-temp-0} with $x_{2l}$ replaced by $x_{2r}$ still follows from exactly the same argument, namely, for $x_2 \in \CI_2$ and any $\alpha' \in [0, 1)$,  
\begin{align*}
\Big| y_+'(x_2) -  \tfrac {U''(x_2^c)}{U'(x_2^c)} y_+(x_{2r}) & \log |U(x_2) - c|   - y_+'(x_{2r})\Big| \le C  \big(1 + \big| \log |U(x_{2r}) - c|\big|\big) |y_+(x_{2r})| \\
& \quad + C \big( \mu^{\frac {\alpha'}2} +\mu |\tau_0|^{\alpha'} \big|\log |U(x_2) - c|\big|\big) \big(\mu^{-1}|y_+(x_{2r})| + |y_+'(x_{2r})|\big).
\end{align*}
If $x_{2r}=0$, then 
\[
\big| \log |U(x_{2r}) - c|\big| |y_+(x_{2r})| = \big| \log |U(0) - c|\big| |y_+(0)| \le C \mu^2 \cosh \mu^{-1} x_2. 
\]
Otherwise, $x_{2r}<0$ and thus, for any $\alpha' \in (0, 1)$, 
\[
|U(x_{2r})-c| = \rho_0^{-1} \mu^{\frac 32} \, \Longrightarrow \big| \log |U(x_{2r}) - c|\big| |y_+(x_{2r})| 
\le \mu^{\alpha'}   \cosh \mu^{-1} x_2, 
\]
where \eqref{E:y+_1} at $x_2 = x_{2r}$ was also used. These estimates, along with \eqref{E:y+_1} and \eqref{E:y+_2} at $x_2=x_{2r}$ yield \eqref{E:y+_4} on $\CI_2$. Inequality \eqref{E:y+_1} follows from direct integrating the estimate on $y_+'$, actually without going through the technical argument at the end of step 2 for $y_-$ since the $\cosh$, instead of $\sinh$, is in the upper bound in \eqref{E:y+_1}. 

Suppose $\CI_1 \ne \emptyset$ where it must hold $\CI_2 \ne \emptyset$ and $|c|\le C$. From step 2, $y_+(x_2)$ satisfies assumption \eqref{E:y-ini-2} for the interval $\CI_1$ with $\Theta_1 = \Theta_2 =\cosh$, and $C_0= C\mu^\alpha $. For larger $|k|$, the desired estimates \eqref{E:y+_1} and \eqref{E:y+_3} follow from Lemma \ref{L:Ray-regular-1} and for smaller $|k|$ from Lemma \ref{L:regular-small-k}.
\end{proof}

\subsection{Limits of solutions to the homogeneous Rayleigh equation with $c_I=0+$
} \label{SS:Ray-H-sing}

Now that the convergence of solutions of the Rayleigh equation as $c_I \to 0+$ has been established in Proposition \ref{P:converg}, in this subsection, we shall focus on the analysis of the limit equation \eqref{E:Ray-H0-1} along with the jump condition \eqref{E:Ray-H0-jump} at the singularity $\tau=0$. In this subsection we consider $c = U(x_2^c) \in U\big([-\tfrac 12 h_0-h, \tfrac 12 h_0]\big)$ unless otherwise specified. 
As transformation \eqref{E:tw-1} was rather helpful in the proof of Proposition \ref{P:converg}, its limit would also turn out to be an effective tool in the study of  \eqref{E:Ray-H0-1}. However $\tilde B(\tau)$ as well as $\tilde B_0(\tau)$ appears only H\"older in $\tau$, or equivalently in $x_2$. 
In the notations given in \eqref{E:tau} in Subsection \ref{SS:a-esti}, we first prove the following lemma to isolate the singularity in $\tilde B_0$. Recall $U\in C^{l_0}$, $x_2^c$ and $c_R$ correspond to each other via \eqref{E:x2c}, $\tilde U, U_1 \in C^{l_0-1}$, and $U_2 \in C^{l_0-2}$ are defined in \eqref{E:tU}, and $\Gamma_0 (\mu, c, \tau) = \Gamma (\mu, c, \ep=0, \tau)$ in \eqref{E:Gamma0}. 

\begin{lemma} \label{L:B} 
Assume $l_0\ge 3$. There exists a unique continuous-in-$\tau$ real $2\times 2$ matrix valued $B(\mu, c, \tau)$  satisfying 
\be \label{E:B-1} 
B_\tau = \begin{pmatrix} 0 & 1 \\ 1-\mu^2 +  \frac {\mu U_2}{\tilde U} &0 \end{pmatrix} B - B \begin{pmatrix} 0 & 0 \\ 1 +\frac {\mu U_2(0)}{\tilde U(0) \tau} &0 \end{pmatrix}, \;\; \tau\ne 0; \; \quad B(\mu, c, 0)=I_{2\times 2}.
\ee
Moreover the following hold. 
\begin{enumerate} 
\item The matrix $B(\mu, c, \tau)$ is $C^{l_0-2}$ in $c\in U\big([-\tfrac 12 h_0-h, \tfrac 12 h_0]\big)$, $\tau$, and $\mu$ and 
\[
\det B=1, \;\; B_\tau(\mu, c, 0) = \begin{pmatrix} - \frac {\mu U''(x_2^c)}{U'(x_2^c)} & 1 \\ \mu^2\big(  -1 + \frac {U'''(x_2^c)}{U'(x_2^c)} - \frac {5U''(x_2^c)^2}{2U'(x_2^c)^2} \big)& \frac {\mu U''(x_2^c)}{U'(x_2^c)} \end{pmatrix}, 
\]
\[
B(0, c, \tau) =  \begin{pmatrix} \cosh \tau -\tau \sinh \tau & \sinh \tau \\ \sinh \tau - \tau \cosh \tau  & \cosh \tau \end{pmatrix} =  \begin{pmatrix} \cosh \tau & \sinh \tau \\ \sinh \tau  & \cosh \tau \end{pmatrix}\begin{pmatrix} 1 & 0 \\ -\tau & 1\end{pmatrix}.  
\]  
Moreover for any $M>0$ satisfying \eqref{E:M}, there exists $C>0$ depending only on $|U'|_{C^{l_0-1}}$  and $|(U')^{-1}|_{C^0}$, such that $|B|_{C^{l_0-2}} \le C$. 
\item $B$ and $\tilde B_0$ are conjugate, namely, 
\be \label{E:conj-tB-B}
B (\mu, c, \tau) = \begin{pmatrix} 1 & 0 \\ \Gamma_0(\mu, c, \tau) & 1\end{pmatrix}  \tilde B_0 (\mu, c, \tau) \begin{pmatrix} 1 & 0 \\ -  \Gamma_0^\# (\mu, c, \tau)    & 1\end{pmatrix}, 
\ee
where 
\[
\Gamma_0^\# (\mu, c, \tau) = \tau + \frac {\mu U''(x_2^c)}{U'(x_2^c)} \big( \log |\tau| + \frac {i\pi}2 (sgn(\tau)+1) \big). 
\]
\item General solutions to \eqref{E:Ray-H0-1} satisfying \eqref{E:Ray-H0-jump} 
are  
\be \label{E:Ray-H0-2} \begin{split} 
W(\tau) = &\begin{pmatrix} W_1(\tau)\\ W_2(\tau) \end{pmatrix} =B (\mu, c, \tau) \begin{pmatrix} 1 & 0 \\ \Gamma_0^\# (\mu, c, \tau) & 1\end{pmatrix} \big(b - \tilde \Phi_0(\mu, c, \tau)\big), \quad b= \begin{pmatrix} b_1 \\ b_2 \end{pmatrix} \in \C^2,\\
= & \begin{pmatrix} (B_{11} +\Gamma_0^\# B_{12}) (b_1 - \tilde \Phi_{01}) + B_{12} (b_2 - \tilde \Phi_{02}) \\  (B_{21} +\Gamma_0^\# B_{22}) (b_1 - \tilde \Phi_{01}) + B_{22} (b_2 - \tilde \Phi_{02}) \end{pmatrix}, 
\end{split}\ee
where $B_{j_1j_2}$ are the entries of $B$ and $\tilde \Phi_0 = (\tilde \Phi_{01}, \tilde \Phi_{02})^T =\lim_{\ep \to 0+} \tilde \Phi$ with $\tilde \Phi$ given in \eqref{E:tPhi-0}.
\item If $\phi\equiv 0$, the general solution $W(\tau)$ to \eqref{E:Ray-H0-1}--\eqref{E:Ray-H0-jump} with $b\in \C^2$ as in \eqref{E:Ray-H0-2} satisfies $W_1 \in C_{loc}^{\alpha'}$ for any $\alpha' \in (0,1)$, $W_1 (0)=  b_1$, and
\[
\lim_{\tau \to 0}  \Big (
W_2(\tau) - b_2 - W_1 (0) \tfrac {\mu U''(x_2^c)}{U'(x_2^c)} \big(\log (U'(x_2^c) |\tau|) + \frac {i\pi}2 (sgn(\tau)+1) \Big) =0.
\]
\item Finally, $W(\tau)$ are $C^{l_0-2}$ in $\mu$, $c_R$, and $\tau$ if $\phi\equiv 0$ and $b_1=W_1(0)=0$.  
\end{enumerate}
\end{lemma}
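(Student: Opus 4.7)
The plan is to decouple the singular ODE \eqref{E:B-1} into scalar equations by exploiting the vanishing of certain entries of $B$ at $\tau=0$, thereby removing the $1/\tau$ singularity of $\mu U_2/\tilde U$, and then to identify $B$ with the conjugate $P\tilde B_0 P^{-1}$ (where $P=\bigl(\begin{smallmatrix}1 & 0\\ \Gamma_0 & 1\end{smallmatrix}\bigr)$) by a uniqueness argument. Writing \eqref{E:B-1} entrywise, the second column $(B_{12},B_{22})^T$ solves the Rayleigh system \eqref{E:Ray-H0-1} (with $\phi=0$) with initial data $B_{12}(0)=0$, $B_{22}(0)=1$. Because $\tilde U(\tau)=\tau\hat U(\tau)$ with $\hat U\in C^{l_0-1}$ and $\hat U(0)=U'(x_2^c)>0$, the substitution $B_{12}=\tau v$ produces an equation with a regular singular point at $\tau=0$ whose coefficients are jointly smooth in $(\mu,c_R,\tau)$, and Frobenius theory supplies a unique smooth solution determined by $v(0)=1$, $v'(0)=\mu U''(x_2^c)/(2U'(x_2^c))$. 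Setting $\tilde B_{11}=B_{11}-B_{22}$ in the remaining two equations gives
\begin{equation*}
\tilde B_{11,\tau}=B_{21}-2\Gamma_{0,\tau}B_{12},\qquad B_{21,\tau}=\Gamma_{0,\tau}\tilde B_{11},
\end{equation*}
with $\tilde B_{11}(0)=B_{21}(0)=0$. Since $B_{12}=\tau v$, the product $\Gamma_{0,\tau}B_{12}$ is smooth; the further substitution $\tilde B_{11}=\tau u$ regularizes the second equation in the same way. A Gronwall-type estimate then gives the uniform $C^{l_0-2}$ bound on $B$, and Taylor-expanding at $\tau=0$ produces the asserted values of $B_\tau(\mu,c,0)$. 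The explicit $\mu=0$ formula follows from direct integration of the resulting constant-coefficient system, and $\det B\equiv 1$ holds because the coupling matrices in \eqref{E:B-1} have equal trace.

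For the conjugacy \eqref{E:conj-tB-B}, define $\hat B:=P\tilde B_0 P^{-1}$. A direct calculation using $P_\tau P^{-1}=\bigl(\begin{smallmatrix}0 & 0\\ \Gamma_{0,\tau} & 0\end{smallmatrix}\bigr)$ and the ODE \eqref{E:tB-1} for $\tilde B_0$ shows that $\hat B$ solves \eqref{E:B-1} on $\tau\neq 0$, while the bound $|\tilde B_0-I|\le C(|\tau|+\mu^2|\tau|^\alpha)$ from \eqref{E:tB-2} combined with $|\Gamma_0|\le C(1+\mu|\log|\tau||)$ forces $\Gamma_0^2(\tilde B_0-I)\to 0$; hence $\hat B$ extends continuously to $\tau=0$ with $\hat B(0)=I$, and the uniqueness from Step~1 yields $\hat B=B$. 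For \eqref{E:Ray-H0-2}, Lemma \ref{L:W-general} expresses a homogeneous solution as $W=P\tilde B_0 b_0$; variation of parameters adds $-\tilde\Phi_0$ in the non-homogeneous case, giving $W=P\tilde B_0(b-\tilde\Phi_0)$, and using $P\tilde B_0=BP$ rewrites this as $W=BP(b-\tilde\Phi_0)$. For statement (4), with $\phi\equiv 0$ the formula reduces to $W=BPb$; since $B_{12}$ and $\tilde B_{11}$ vanish at $\tau=0$, one has $W_1(0)=b_1$ and $W_1\in C^{\alpha'}_{\text{loc}}$, while
\begin{equation*}
W_2(\tau)-b_2-b_1\Gamma_0(\tau)=B_{21}(\tau)b_1+\Gamma_0(\tau)(B_{22}(\tau)-1)b_1+(B_{22}(\tau)-1)b_2=O(\tau\log|\tau|),
\end{equation*}
which together with $\log|\tilde U(\tau)|=\log(U'(x_2^c)|\tau|)+O(\tau)$ produces exactly the claimed limit. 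Statement (5) is then immediate: if $\phi\equiv 0$ and $b_1=W_1(0)=0$, then $W=(B_{12}b_2,\,B_{22}b_2)^T$, which inherits the $C^{l_0-2}$ regularity of $B$.

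The main difficulty is that the coefficient $\mu U_2/\tilde U\sim 1/\tau$ in \eqref{E:B-1} is genuinely singular, so neither existence of a $C^0$ solution nor uniqueness at $\tau=0$ is provided by standard ODE theory; everything rests on the algebraic structure of \eqref{E:B-1} which forces $B_{12}$ and $B_{11}-B_{22}$ to vanish to first order at $\tau=0$, cancelling the singularity. The same cancellation is what allows the conjugate $P\tilde B_0 P^{-1}$, whose two factors are each individually logarithmically divergent, to extend continuously to $\tau=0$, and it is why the regularity in (5) survives the singular $\Gamma_0$ once the coefficient $b_1$ of the log-carrying branch is set to zero. The cap $l_0-2$ on regularity rather than $l_0$ reflects the appearance of $U_2=U''(x_2^c+\mu\tau)\in C^{l_0-2}$ in the reduced coefficients.
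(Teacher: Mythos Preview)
Your argument is correct and takes a genuinely different route from the paper in the construction of $B$. You decouple \eqref{E:B-1} entrywise, observe that $(B_{12},B_{22})$ solves the homogeneous Rayleigh system while $(\tilde B_{11},B_{21})$ solves an inhomogeneous variant, and remove the $1/\tau$ singularity by the Frobenius substitutions $B_{12}=\tau v$, $\tilde B_{11}=\tau u$. The paper instead makes the equation autonomous via the blow-up $\tau_s=\tau$, obtaining a five-dimensional $C^{l_0-2}$ system with fixed point $(I_{2\times 2},0)$, and identifies $B$ as the unique trajectory on the one-dimensional unstable manifold of that fixed point; the four-dimensional center subspace $\{\tau=0\}$ carries only non-decaying conjugation dynamics, which is exactly the dynamical translation of your statement that the second Frobenius solution is singular and hence excluded by continuity. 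Your approach is more elementary and makes the entrywise cancellations explicit, but your appeal to ``Frobenius theory'' for the $C^{l_0-2}$ regularity in $(\mu,c_R,\tau)$ is, as usually stated, an analytic-coefficient result; for merely $C^{l_0-2}$ data one must either supply a separate contraction argument for the Volterra form $(\tau^2 v')'=\tau g v$ with uniform parameter dependence, or check that the one-sided smooth solutions on $\tau\gtrless 0$ share the same finite Taylor jet at $0$. The paper's invariant-manifold route delivers this regularity in one stroke from the $C^k$ unstable-manifold theorem. For parts (2)--(5) your treatment coincides with the paper's.
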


\begin{remark}
If needed, higher order Taylor expansions of $B$ can be obtained based on \eqref{E:B-2} through rather standard calculations in the analysis of local invariant manifolds. 
\end{remark}

One is reminded that both $\Gamma_0(\tau)$ has a logarithmic singularity and a jump at $\tau=0$ which leads to such singularities of $W_2(\tau)$ there even in the homogeneous case. Since $\Gamma_0 \notin \R$ for $\tau>0$, $\tilde B_0$ should not be real for $\tau>0$. Hence it is a non-obvious statement that this conjugate matrix $B$ is real. The above lemma isolates the singularity of $\tilde B_0$ into the explicit $\Gamma_0$ along with the smooth $B$. Conceptually, the smoothness of $B$ in $c_R$ is related to the smoothness of the spectral resolution of the identity with respect to the spectral parameter, and thus would play crucial role in proving the partial inviscid damping to the linearized Euler equation at the shear flow $U(x_2)$. 

\begin{proof}
The construction of $B(\mu, c, \tau)$ is adapted  from the one in \cite{BSWZ16}, where the main issue is to handle the singularity caused by $\tilde U(\mu, c, 0)=0$. We first make \eqref{E:B-1} autonomous by changing the independent variable an auxiliary one $s$ such that $\tau_s = \tau$ and thus we have 
\be \label{E:B-2} \begin{cases} 
B_s = \begin{pmatrix} 0 & \tau \\ (1-\mu^2) \tau +  \frac {\mu \tau U_2}{\tilde U} &0 \end{pmatrix} B - B \begin{pmatrix} 0 & 0 \\  \tau +\frac {\mu U_2(0)}{U_1(0)} &0 \end{pmatrix},\\
\tau_s = \tau.
\end{cases} \ee   
Obviously solutions to \eqref{E:B-1} correspond (up to a translation in $s$) to those to the $C^{l_0-2}$ ODE system \eqref{E:B-2} of 5-dim which converge to $(I_{2\times 2}, 0)$ as $s \to -\infty$, namely those on the unstable manifold of the steady state $(I_{2\times 2}, 0)$. The linearized system of \eqref{E:B-2} at $(I_{2\times 2}, 0)$ is given by 
\[\begin{cases} 
B_s =  \frac {\mu U''(x_2^c)}{U'(x_2^c)} \mathcal{A} B+\tau \begin{pmatrix} 0 & 1 \\ \mu^2\big( -1 + \frac {U_3(0)}{U_1(0)} - \frac {U_2(0)^2}{2U_1(0)^2}\big) &0 \end{pmatrix}, \quad \text{ where } \mathcal{A} B =\begin{pmatrix} 0 & 0 \\   1 &0 \end{pmatrix} B - B \begin{pmatrix} 0 & 0 \\ 1 &0 \end{pmatrix},  \\
\tau_s = \tau.
\end{cases}\]  
It is easy to compute that, on the one hand, an eigenvector associated to the eigenvalue $1$ is  
\[
(B_1, 1), \quad B_1= \begin{pmatrix} - \frac {\mu U_2(0)}{U_1(0)} & 1 \\  \mu^2\big(  -1 + \frac {U_3(0)}{U_1(0)} - \frac {5U_2(0)^2}{2U_1(0)^2} \big) & \frac {\mu U_2(0)}{U_1(0)} \end{pmatrix}.
\]
On the other hand, one may verify  
\[
e^{s\mathcal{A}} B = \begin{pmatrix} 1 & 0 \\   s &1 \end{pmatrix} B \begin{pmatrix} 1 & 0 \\   -s &1 \end{pmatrix} 
\]
which implies that in the 4-dim center subspace $\{ \tau=0\}$ there is not any decay backward in $s$. 
Therefore there exists a unique $C^{l_0-2}$ unstable manifold of 1-dim which corresponds a unique solution $B(\mu, c, \tau)$ satisfying $B(\mu, c, 0)=I$ and $B_\tau (\mu, c, 0)= B_1$ and $C^{l_0-2}$ in all its variables. In fact, the 4-dim center subspace $\{ \tau=0\}$ is also invariant under the nonlinear system \eqref{E:B-2}, where the flow is given by the above non-decaying linear flow of conjugation. Therefore this $B(\mu, c_R, \tau)$ is the only solution to \eqref{E:B-1} decaying to $I$ as $s \to -\infty$, or equivalently $\tau \to 0+$. Even though this construction is local in $\tau$, the domain of $B$ can be extended due to the linearity of equation \eqref{E:B-1}. 

With the existence of the $C^{l_0-2}$ solution $B(\mu, c, \tau)$ to \eqref{E:B-1} established through \eqref{E:B-2}, letting $\mu =0$ in \eqref{E:B-2} and then transforming back to \eqref{E:B-1}, we have 
\[
B_\tau (0, c, \tau) = \begin{pmatrix} 0 & 1 \\ 1  &0 \end{pmatrix} B(0, c, \tau)  - B(0, c, \tau) \begin{pmatrix} 0 & 0 \\ 1 &0 \end{pmatrix}, \;\; \tau\ne 0;  \quad B(0, c, 0)=I_{2\times 2}.
\]
This equation can be solved explicitly to yield 
\[
B (0, c, \tau) = \begin{pmatrix} \cosh \tau & \sinh \tau \\ \sinh \tau  & \cosh \tau \end{pmatrix}\begin{pmatrix} 1 & 0 \\ - \tau  & 1 \end{pmatrix}= \begin{pmatrix} \cosh \tau -\tau \sinh \tau & \sinh \tau \\ \sinh \tau - \tau \cosh \tau  & \cosh \tau \end{pmatrix}. 
\]

The conjugation relation is the consequence of the facts that both $B$ and the right side of \eqref{E:conj-tB-B} a.) are equal to $I$ at $\tau=0$, b.) satisfy the same ODE system \eqref{E:B-1} for $\tau\ne 0$, c.) are continuous in $\tau$ due to the construction of $B$ and \eqref{E:tB-2} in Lemma \ref{L:Gamma-B}, and d.) the uniqueness of solutions to \eqref{E:B-1} satisfying a.)--c.), which is obtained in the above construction based on the local invariant manifold theory. The property $\det B=1$ follows directly from \eqref{E:conj-tB-B} and \eqref{E:tB-2}.

Formula \eqref{E:Ray-H0-2} of the general solutions follows from the conjugacy relation \eqref{E:conj-tB-B} and Lemma \ref{L:W-general}. 
Under the assumption $\phi \equiv 0$, since $W_2(\tau)$ has at most logarithmic singularity at $\tau=0$ and $W_{1\tau} = W_2$, the H\"older continuity of $W_1$ in $\tau$ follows. From formula \eqref{E:Ray-H0-2} and $|B(\mu, c, 0)-I|= O(|\tau|)$, we obtain $W_1(0)= b_1$. The limit property of $W_2(\tau) - b_2$ also follows from similar calculation. Finally, the $C^{l_0-2}$ smoothness of $W(\tau)$ under the assumptions $\phi\equiv0$ and $b_1 = W_1(0)=0$ is again obvious from the representation of the solution \eqref{E:Ray-H0-2}. 
The proof of the lemma is complete. 
\end{proof}

For $c \in U([-\tfrac {h_0}2 -h, \tfrac {h_0}2])$, with the help of $B(\mu, c, \tau)$ and Lemma \ref{L:B} we shall analyze the $2\times 2$ fundamental matrices in two different forms of the homogeneous problem \eqref{E:Ray-H0-1} with the condition \eqref{E:Ray-H0-jump} at $\tau=0$
\be \label{E:Ray-H0-FM-0} \begin{split}
& S^0 (\mu, c, \tau) =B (\mu, c, \tau) \begin{pmatrix} 1 & 0 \\ \Gamma_0^\#(\mu, c, \tau)  & 1\end{pmatrix}, \\ 
& S(\mu, c, \tau, \tau_0) 
= B (\mu, c, \tau) \begin{pmatrix} 1 & 0 \\ \Gamma_0^\#(\mu, c, \tau) -\Gamma_0^\# (\mu, c, \tau_0) & 1\end{pmatrix} B(\mu, c, \tau_0)^{-1}, 
\end{split} \ee
where $\tau_0$ in $S$ is the initial value of the independent variable and hence $S(\mu, c, \tau_0, \tau_0)=I$. 
To analyze $S^0$ and $S$, let 
\be \label{E:S-sing-1} \begin{split} 
\tilde S^0 (\mu, c, \tau) =&B (\mu, c, \tau) \begin{pmatrix} 0 & 0 \\ 1  & 0\end{pmatrix} = \begin{pmatrix} B_{12} (\mu, c, \tau) & 0 \\ B_{22} (\mu, c, \tau)  & 0\end{pmatrix},  \\
\tilde S (\mu, c, \tau, \tau_0) =& B (\mu, c, \tau) \begin{pmatrix} 0 & 0 \\ 1 & 0 \end{pmatrix} B(\mu, c, \tau_0)^{-1}\\
=  & \begin{pmatrix} B_{12} (\mu, c,  \tau)B_{22} (\mu, c,  \tau_0) & - B_{12} (\mu, c,  \tau)B_{12} (\mu, c,  \tau_0) \\ B_{22} (\mu, c,  \tau) B_{22} (\mu, c,  \tau_0) & - B_{22} (\mu, c,  \tau) B_{12} (\mu, c,  \tau_0)    \end{pmatrix},
\end{split}\ee
where $\det B=1$ was used to compute the more explicit form of $\tilde S$ in the above, and 
\be \label{E:S-err-1} \begin{split}
& S_{err}^0  = S^0   -  \begin{pmatrix} \cosh \tau & \sinh \tau \\ \sinh \tau  & \cosh \tau \end{pmatrix} - \tfrac {\mu U''(x_2^c)}{U'(x_2^c)} \big( \log |\tau|  + \tfrac {i \pi} 2 \big(sgn(\tau) +1 \big)\big) \tilde S^0, \\
& S_{err} 
= S
-  \begin{pmatrix} \cosh (\tau-\tau_0) & \sinh (\tau-\tau_0)  \\ \sinh (\tau-\tau_0)   & \cosh (\tau-\tau_0)  \end{pmatrix} - \tfrac {\mu U''(x_2^c)}{U'(x_2^c)} \big( \log | \tfrac \tau{\tau_0} |  + \tfrac {i \pi} 2 \big(sgn(\tau)- sgn(\tau_0) \big)\big) \tilde S.
\end{split}\ee
The following lemma provides some very basic estimates on $S$. More detailed ones on $S_{jl}$ will be derived when needed.  


\begin{lemma} \label{L:Ray-H0-FM}
Assume $U\in C^{l_0}$, $l_0\ge 3$. The fundamental matrices $S^0(\mu, c, \tau)$ and $S(\mu, c, \tau, \tau_0)$ and their entries $S_{jl}^0$ and $S_{j_1j_2}$ satisfy the following for any $\alpha \in (0, 1)$.  
\begin{enumerate} 
\item $S^0$  is $C^{l_0-2}$ in its variables if $\tau \ne0$ and $S$ is $C^{l_0-2}$ in its variables if $\tau \ne0$ and $\tau_0\ne 0$.  
\item $S_{11}^0$, $S_{12}^0$, and $S_{22}^0$ are $C^\alpha$ in $\tau$ and $C^{l_0-2}$ in $\mu$ and $c$. If $\tau_0 \ne 0$, then $S_{11}$ and $S_{12}$ are $C^\alpha$ in $\tau$ and $C^{l_0-2}$ in $\mu$, $c$, and $\tau_0$.  
\item If $\tau \ne 0$, then $S_{12}$ and $S_{22}$ are $C^\alpha$ in $\tau_0$ and $C^{l_0-2}$ in $\mu$, $c$, and $\tau$. 
\item $S_{12}$ and $\tau_0 S_{11}$ are $C^\alpha$ in $\tau_0$  and $\tau$ and $C^{l_0-2}$ in $\mu$ and $c$.
\item $S_{err}$ and $S_{err}^0$ are $C^{l_0-2}$ in $\mu\in [0, 1]$, $c\in U([-\frac 12 h_0-h, \frac 12 h_0])$, and $\tau, \tau_0 \in [-M, M]$. 
\item For any $M$ satisfying \eqref{E:M}, there exists $C>0$ depending only on $M$, $|U'|_{C^{l_0-1}}$,  and $|(U')^{-1}|_{C^0}$ such that for any $\tau, \tau_0 \in [-M, M]$, 
\[
|\p_\mu^{j_1} D_1\ldots D_{j_2}  S_{err}| \le C|\tau-\tau_0|, \quad |D_1\ldots D_{j_1}  S_{err}| \le C\mu, \quad  |\p_\mu^{j_1} \p_c^{j_2} S_{err}^0 | \le C|\tau|,
\]
for $0\le j_1\le l_0-3$, $0\le j_2 \le l_0-3-j_1$, and $D_1, \ldots D_j \in \{\p_c, \tfrac 1{U'(x_2^c)} (\p_\tau + \p_{\tau_0})\}$, and for $l_0\ge 4$, 
\[
|D_1\ldots D_{j_2}  S_{err}| \le C\mu |\tau-\tau_0|, \quad |\p_c^{j_2} S_{err}^0 | \le C\mu |\tau|,
\]
for $0\le j_2 \le l_0-4$.
\end{enumerate} 
\end{lemma}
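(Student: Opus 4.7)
The plan is to deduce everything from the representation $S^{0}=B(\mu,c,\tau)\bigl(\begin{smallmatrix}1&0\\ \Gamma_{0}(\mu,c,\tau)&1\end{smallmatrix}\bigr)$ and, similarly, from the factorization
\[
S(\mu,c,\tau,\tau_{0})=B(\mu,c,\tau)B(\mu,c,\tau_{0})^{-1}+\bigl(\Gamma_{0}(\mu,c,\tau)-\Gamma_{0}(\mu,c,\tau_{0})\bigr)\tilde S(\mu,c,\tau,\tau_{0}),
\]
isolating the only source of singularity, namely the scalar $\Gamma_{0}$. Write $\Gamma_{0}=\Gamma_{0}^{sing}+\Gamma_{0}^{sm}$ where $\Gamma_{0}^{sing}:=\tfrac{\mu U''(x_{2}^{c})}{U'(x_{2}^{c})}\bigl(\log|\tau|+\tfrac{i\pi}{2}(sgn(\tau)+1)\bigr)$; then, by the expansion $\log|\tilde U(\tau)|=\log|\tau|+\log U'(x_{2}^{c})+O(\mu\tau)$ together with the definition of $\gamma$, the remainder $\Gamma_{0}^{sm}$ is $C^{l_{0}-2}$ in $\tau$ and $C^{l_{0}-3}$ in $c$. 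Combined with Lemma~\ref{L:B}, which gives $B\in C^{l_{0}-2}$ and $B(\mu,c,0)=I$ (hence $B_{12}(\mu,c,\tau)=O(\tau)$), this directly yields statement (1), since off $\tau=0$ (and $\tau_{0}=0$) every factor is $C^{l_{0}-3}$.

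For (2)--(4) I would read off the entries: $S^{0}_{12}=B_{12}$ and $S^{0}_{22}=B_{22}$ are $C^{l_{0}-2}$; $S^{0}_{11}=B_{11}+\Gamma_{0}B_{12}$ is the sum of a smooth function and $B_{12}(\tau)\Gamma_{0}(\tau)=O(\tau\log|\tau|)$, which is $C^{\alpha}$ in $\tau$ and smooth in $\mu,c$. Expanding $S_{jl}$ with the formula above gives
\[
S_{12}=\bigl(BB(\tau_{0})^{-1}\bigr)_{12}-(\Gamma_{0}(\tau)-\Gamma_{0}(\tau_{0}))\,B_{12}(\tau)B_{12}(\tau_{0}),
\]
and the singular correction has the factor $B_{12}(\tau)B_{12}(\tau_{0})=O(\tau\tau_{0})$ which absorbs both logs, showing $S_{12}$ is $C^{\alpha}$ jointly in $(\tau,\tau_{0})$. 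The analogous computation for $S_{11}$, $S_{22}$, and $\tau_{0}S_{11}$ gives precisely the mixed H\"older/smooth regularities claimed in (2)--(4), since the factors $B_{12}(\tau)$ (resp.\ $B_{12}(\tau_{0})$) kill the $\log|\tau|$ (resp.\ $\log|\tau_{0}|$) but not necessarily the other one.

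Statement (5) is then immediate: substituting $\Gamma_{0}=\Gamma_{0}^{sing}+\Gamma_{0}^{sm}$ into the definitions of $S^{0}_{err}$ and $S_{err}$ cancels the $\log|\tau|$ and $sgn$ contributions by construction, leaving
\[
S^{0}_{err}=B\bigl(\begin{smallmatrix}1&0\\ \Gamma_{0}^{sm}&1\end{smallmatrix}\bigr)-\bigl(\begin{smallmatrix}\cosh\tau&\sinh\tau\\ \sinh\tau&\cosh\tau\end{smallmatrix}\bigr),\qquad S_{err}=BB(\tau_{0})^{-1}+(\Gamma_{0}^{sm}(\tau)-\Gamma_{0}^{sm}(\tau_{0}))\tilde S-\bigl(\begin{smallmatrix}\cosh(\tau-\tau_{0})&\sinh(\tau-\tau_{0})\\ \sinh(\tau-\tau_{0})&\cosh(\tau-\tau_{0})\end{smallmatrix}\bigr),
\]
both of which are $C^{l_{0}-3}$ in all arguments.

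For (6), the key observation is a double vanishing of $S_{err}$. Using the explicit formula $B(0,c,\tau)=\bigl(\begin{smallmatrix}\cosh\tau&\sinh\tau\\ \sinh\tau&\cosh\tau\end{smallmatrix}\bigr)\bigl(\begin{smallmatrix}1&0\\ -\tau&1\end{smallmatrix}\bigr)$ from Lemma~\ref{L:B} and $\Gamma_{0}(0,c,\tau)=\tau$, a direct matrix computation collapses the middle factors and gives $S(0,c,\tau,\tau_{0})=\bigl(\begin{smallmatrix}\cosh(\tau-\tau_{0})&\sinh(\tau-\tau_{0})\\ \sinh(\tau-\tau_{0})&\cosh(\tau-\tau_{0})\end{smallmatrix}\bigr)$; since the coefficient $\mu U''(x_{2}^{c})/U'(x_{2}^{c})$ of the $\log$ term vanishes at $\mu=0$, this forces $S_{err}|_{\mu=0}=0$, and similarly $S^{0}_{err}|_{\mu=0}=0$. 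On the other hand, $S(\mu,c,\tau_{0},\tau_{0})=I$ and the log coefficient $\log|\tau/\tau_{0}|+\tfrac{i\pi}{2}(sgn(\tau)-sgn(\tau_{0}))$ also vanishes at $\tau=\tau_{0}$, so $S_{err}(\mu,c,\tau_{0},\tau_{0})=0$ (the analogue $S^{0}_{err}(\mu,c,0)=0$ for the $\p_{\mu}^{j_{1}}$ estimate follows after noting that the entire expression for $S^{0}_{err}$ is, by the smoothness in (5), built of smooth ingredients that vanish at $\tau=0$ up to an $O(\mu)$ term). Since by (5) $S_{err}$ is $C^{l_{0}-3}$, the fundamental theorem of calculus applied first in $\mu$ and then in $\tau-\tau_{0}$ (the second using the invariance of $\tau-\tau_{0}$ under the translation generator $\p_{\tau}+\p_{\tau_{0}}$, which is precisely why the derivations $D_{j}$ in the hypothesis have this symmetric form) yields the factorizations $S_{err}=\mu(\tau-\tau_{0})G$ and $S^{0}_{err}=\mu\tau G^{0}$ with $G,G^{0}\in C^{l_{0}-5}$ bounded on $[-M,M]$. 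Differentiating these factorizations produces the two families of estimates in (6): one $\p_{\mu}$ costs the $\mu$ factor but preserves $|\tau-\tau_{0}|$ (resp.\ $|\tau|$), whereas $D_{j}$ or $\p_{c}$ alone preserves both. The main technical point, and the place where care is needed, is verifying that the $D_{j}$-derivatives indeed commute well with the singular decomposition: this reduces to checking that $\p_{c}\Gamma_{0}^{sm}$ and $(\p_{\tau}+\p_{\tau_{0}})\bigl(\Gamma_{0}^{sm}(\tau)-\Gamma_{0}^{sm}(\tau_{0})\bigr)$ are uniformly bounded on $[-M,M]$, which follows from the explicit form of $\gamma$ and the identity $\log|\tilde U(\tau)|-\log|\tau|\in C^{l_{0}-2}$.
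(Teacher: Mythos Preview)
Your approach is essentially the same as the paper's: the paper introduces $S^\# = B(\tau)\bigl(\begin{smallmatrix}1&0\\ g_\#&1\end{smallmatrix}\bigr)B(\tau_0)^{-1}$ where $g_\#$ is exactly your $\Gamma_0^{sm}(\tau)-\Gamma_0^{sm}(\tau_0)$, identifies $S_{err}=S^\#-S^\#|_{\mu=0}$, and exploits the same double vanishing at $\mu=0$ and at $\tau=\tau_0$ via the integral representation $\mu(\tau-\tau_0)\int_0^1\!\int_0^1\partial_\mu\partial_\tau S^\#\,d\theta_2d\theta_1$.

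There is one small bookkeeping gap in your treatment of (6). Your factorization $S_{err}=\mu(\tau-\tau_0)G$ with $G\in C^{l_0-5}$ delivers the first family of estimates only for $j_1+j_2\le l_0-5$, one short of the stated range $j_1+j_2\le l_0-4$. The paper recovers this extra derivative by handling the first family (where no factor of $\mu$ is required) via the \emph{single} vanishing $S_{err}|_{\tau=\tau_0}=0$ alone: since $\partial_\mu^{j_1}D_1\cdots D_{j_2}S_{err}$ is still $C^1$ in $\tau$ (only $j_1+j_2\le l_0-4$ derivatives have been applied to a $C^{l_0-3}$ object) and still vanishes on the diagonal $\tau=\tau_0$ (each of $\partial_\mu$, $\partial_c$, and $\partial_\tau+\partial_{\tau_0}$ preserves that diagonal), the bound $\le C|\tau-\tau_0|$ follows by the mean value theorem without spending a second derivative on the $\mu$-factorization.
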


The reason we consider $\p_\tau + \p_{\tau_0}$ of $S$ instead of individual $\p_\tau$ or $\p_{\tau_0}$ is not only that it yields better estimate. Recall the change of variables $\tau = \mu^{-1} (x_2 - x_2^c)$. The above fundamental matrix is in the form of $S\big(\mu, c, \mu^{-1} (x_2 - x_2^c), \mu^{-1} (x_{20} - x_2^c)\big)$. Therefore $\p_c - \tfrac {\p_\tau + \p_{\tau_0}}{\mu U'(x_2^c)}$ corresponds to the partial differentiation with respect to $c$ in the $(c, x_2)$ coordinates. Here we also used 
\be \label{E:pc-x2c}
\p_c x_2^c = \tfrac 1{U'(x_2^c)}. 
\ee

\begin{proof}
The argument for $S^0$ and $S$ are very similar and we shall mainly focus on $S$. Let  
\be \label{E:S-reg}
S^\# (\mu, c, \tau, \tau_0) = B (\mu, c, \tau) \begin{pmatrix} 1 & 0 \\ \tau -\tau_0 
& 1\end{pmatrix} B(\mu, c, \tau_0)^{-1}.
\ee
Clearly we have 
\be \label{E:S-decom}
S = S^\# + \tfrac {\mu U''(x_2^c)}{U'(x_2^c)} \Big( \log \big| \tfrac \tau{\tau_0}\big|  + \tfrac {i \pi} 2 \big(sgn(\tau)- sgn(\tau_0) \big)\Big) \tilde S.  
\ee
All the $C^{l_0-2}$ smoothness follows from that of $B$. The $C^\alpha$ H\"older regularity in $\tau$ and $\tau_0$ in statements (2)--(4) is due to $B_{12}(\mu, c, 0)=0$. 

Straight forward computation based on Lemma \ref{L:B} yields
\[
S^\#(0, c, \tau, \tau_0)= \begin{pmatrix} \cosh (\tau-\tau_0) & \sinh (\tau-\tau_0)  \\ \sinh (\tau-\tau_0)   & \cosh (\tau-\tau_0)  \end{pmatrix}, \quad S^\#(\mu, c, \tau_0, \tau_0)= I.
\]
Therefore 
\[
S_{err} (\mu, c, \tau, \tau_0)= S^\# (\mu, c, \tau, \tau_0)- S^\#(0, c, \tau, \tau_0).
\]
It follows immediately that $S_{err}$ and its derivatives in $c$, $\tau$, and $\tau_0$ are of the order $O(|\mu|)$.  
By mimicking $f(\mu, s) = f(0, s) + \mu \int_0^1 f_\mu (\theta_1 \mu, 0) + s\int_0^1 f_{\mu s} (\theta_1 \mu, \theta_2 s) d\theta_2 d\theta_1$, we have  
\begin{align*}
|S^\#(\mu, c, \tau, \tau_0) - S^\#(0, c, \tau, \tau_0)| =& \mu |\tau -\tau_0| \left| \int_0^1 \int_0^1 \p_\mu \p_\tau S^\#(\theta_1 \mu, c, \tau_0 + \theta_2 (\tau-\tau_0), \tau_0) d\theta_2 d\theta_1\right| \\
\le & C\mu |\tau-\tau_0|.  
\end{align*}
Moreover, for $1\le j_2  \le l_0-4$ and $D_1, \ldots D_{j_2} \in \{\p_c, \tfrac 1{U'(x_2^c)} (\p_\tau + \p_{\tau_0})\}$, we have    
\[
D_1\ldots D_{j_2} S^\# =0,  \text{ if } \mu=0, \qquad \p_\mu D_1\ldots D_{j_2} S^\# =0,  \text{ if } \tau=\tau_0.   
\]
A similar procedure yields 
\begin{align*}
|  D_1\ldots D_{j_2} S^\# (\mu, c, \tau, \tau_0)| =& \mu |\tau-\tau_0| \left|\int_0^1 \int_0^1 \p_\mu \p_\tau D_1\ldots D_{j_2} S^\# (\theta_1\mu, c, \tau_0+ \theta_2 (\tau-\tau_0), \tau_0) d\theta_2d\theta_1\right| \\
\le& C\mu |\tau-\tau_0|.  
\end{align*}
Finally, since $S^\#$ is $C^{l_0-2}$ in all variables, for $l_0\ge 4$, $1\le j_1 \le l_0-3$, and $0\le j_2 \le l_0- j_1- 3$, the estimate on $\p_\mu^{j_1} D_1\ldots D_{j_2} S_{err}$ follows from its $C^1$ smoothness and vanishing at $\tau=\tau_0$.  

To analyze $S^0$, parallelly we consider 
\[
S_0^\# (\mu, c, \tau) = B (\mu, c, \tau) \begin{pmatrix} 1 & 0 \\ 
\tau
& 1\end{pmatrix}.
\]
Subsequently we have 
\[
S^0 = S_0^\# + \tfrac {\mu U''(x_2^c)}{U'(x_2^c)} \big( \log |\tau|  + \tfrac {i \pi} 2 (sgn(\tau) +1)\big) \tilde S^0, \;\; S_0^\#|_{\mu =0}= \begin{pmatrix} \cosh \tau & \sinh \tau \\ \sinh \tau  & \cosh \tau \end{pmatrix}, \;\; S_0^\#|_{\tau=0} = I. 
\]
The rest of the proof follows exactly as in the case of $S$.  
\end{proof} 


Recall the expressions \eqref{E:Ray-H0-2} of a solution $W(\tau)$ to the non-homogeneous Rayleigh equation \eqref{E:Ray-H0-1} along with \eqref{E:Ray-H0-jump} at $\tau=0$. 
We can use this formula to solve for the parameter $b$ from $W(\tau_0)$ for some $\tau_0\in [-M, M]$ and then rewrite $W(\tau)$ using the fundamental matrix $S(\mu, c, \tau, \tau_0)$ as 
\be \label{E:Ray-H0-4} \begin{split} 
W (\tau) = & S(\mu, c, \tau, \tau_0) W(\tau_0) - B (\mu, c, \tau) \begin{pmatrix} 1 & 0 \\ \Gamma_0^\#(\mu, c, \tau) & 1\end{pmatrix} \big(\tilde \Phi_0(\mu, c, \tau) - \tilde \Phi_0(\mu, c, \tau_0) \big). 
\end{split} \ee

\subsection{Dependence in $c$ and $k$ of the fundamental solutions to the Homogeneous Rayleigh equation \eqref{E:Ray-H1-1} with $c_I=0+$} \label{SS:pcy0}

In this subsection we revisit the two fundamental solutions 
\be \label{E:y0}
y_{0\pm} (k, c, x_2) = \lim_{c_I \to 0+} y_\pm (k, c+ic_I, x_2), \quad x_2 \in [-h, 0],   
\ee
of the homogeneous Rayleigh equation \eqref{E:Ray-H1-1} for $c\in U([-\tfrac {h_0}2-h, \tfrac {h_0}2])$ satisfying initial conditions \eqref{E:y-pm}. We often skip the dependence on $c$ and $k$ (or equivalently, on $\mu=(1+k^2)^{-\frac 12}$) when there is no confusion. The following lemma is a summary of results from Proposition \ref{P:converg}, Lemmas \ref{L:B}, and Remark \ref{R:A-esti-w}, where $x_2^c$ is defined in \eqref{E:x2c}.  

 \begin{lemma} \label{L:y0}
Assume $U \in C^{l_0}$, $l_0\ge 3$. For $c \in U([-\tfrac {h_0}2-h, \tfrac {h_0}2])$ and $x_2 \in [-h, 0]$, the following hold.
\begin{enumerate} 
\item As $c_I \to 0+$, $y_\pm (k, c + ic_I, x_2) \to y_{0\pm} (k, c, x_2)$ uniformly in $x_2$ and $c$. 
\item As $c_I \to 0+$, $y_\pm' \to y_{0\pm}'$ locally uniformly in $\{U(x_2) \ne c\}$ 
and also in $L_c^\infty L_{x_2}^r$ and $L_{x_2}^\infty L_c^r$ for any $r \in [1, \infty)$. 
\item For each $c$, $y_{0-} (x_2) \in \R$ if $U(x_2) \le c$, $y_{0+}(x_2) \in \R$ if $U(x_2) \ge c$, $y_{0\pm} \in C^\alpha ([-\tfrac {h_0}2-h, \tfrac {h_0}2 ])$ for any $\alpha \in [0, 1)$ and $C^{l_0}$ in $x_2\ne x_2^c$. 
\item Moreover,  
\[
\begin{pmatrix} \tfrac 1\mu y_{0\pm} (x_2)  \\ y_{0\pm}' (x_2) \end{pmatrix}= B \big(\mu, c, \tfrac 1\mu(x_2-x_2^c)\big) \begin{pmatrix} 1 & 0 \\ \Gamma_0^\#\big(\mu, c, \tfrac 1\mu(x_2-x_2^c)\big) & 1\end{pmatrix}\begin{pmatrix} \tfrac 1\mu y_{0\pm} (x_2^c) \\ b_{2\pm} \end{pmatrix}, 
\]
where 
\[
b_{2\pm} = \lim_{x_2 \to x_2^c} \Big( y_{0\pm}' (x_2) - \tfrac {U''(x_2^c)}{U'(x_2^c)} y_{0\pm} (x_2^c) \big( \log \big(\tfrac{U'(x_2^c)}\mu  |x_2 - x_2^c|\big) + \tfrac {i\pi}2 (sgn(x_2-x_2^c)+1)  \big) \Big)  \text{ exists}.
\]
\end{enumerate}
\end{lemma}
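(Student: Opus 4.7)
The plan is to piece together the a priori bounds from Lemma \ref{L:y-pm}, the convergence estimates from Proposition \ref{P:converg}, and the regular representation from Lemma \ref{L:B}, treating the regular and singular regions of $[-h,0]$ separately. By symmetry I will focus on $y_-$; the argument for $y_+$ is analogous with the direction of $x_2$ reversed and the appropriate initial data from \eqref{E:y-pm}.

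First, I split $[-h,0]$ into the subintervals $\CI_1,\CI_2,\CI_3$ from \eqref{E:kc-away-2}--\eqref{E:CIs}. On $\CI_1\cup\CI_3$ the coefficient $\frac{U''}{U-c}$ is uniformly bounded (with bound depending on $\mu$ but not on $c_I\ge 0$), so $y_-(k,c+ic_I,\cdot)$ depends continuously on $c_I\ge 0$ and one gets $y_-\to y_{0-}$, $y_-'\to y_{0-}'$ uniformly there by standard Gronwall/continuous dependence for ODEs. On $\CI_2$ I pass to the rescaled variables $\tau=\mu^{-1}(x_2-x_2^c)$, $w=(\mu^{-1}y_-,y_-')^T$ of \eqref{E:tau} and apply Proposition \ref{P:converg} with $\tau_0=\mu^{-1}(x_{2l}-x_2^c)$ (or $\tau_0=-\mu^{-1}h$ if $\CI_1=\emptyset$). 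The a priori bounds of Lemma \ref{L:y-pm} at the endpoint $x_{2l}$ control $|W(\tau_0)|$, and Remark \ref{R:converg} applies since $W_1(\tau_0)$ is $O(|\tau_0|)$-small in the relevant sense. Estimates \eqref{E:w-d-1} and \eqref{E:w-d-2} then yield $w\to W$ uniformly in $x_2$ for $w_1$ and locally uniformly away from $x_2=x_2^c$ for $w_2$. This proves (1) and the pointwise part of (2).

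For the $L^r$ convergence of $y_-'$ in (2), I integrate \eqref{E:w-d-2} in $\tau$ (or $c_R$) against a test set. The dominant singular term on the right of \eqref{E:w-d-2} is $\mu(\tfrac{\ep}{\ep+\tau_*}+\log(1+\tfrac{C\ep^2}{\tau_*^2}))$, which for each fixed $\tau\ne 0$ (resp.\ fixed $c$) tends to $0$ as $\ep\to 0$ and is dominated by an $L^r$-function in $\tau$ (resp.\ in $c$) uniformly in $\ep$ for any $r<\infty$; dominated convergence then gives convergence in $L_{x_2}^\infty L_c^r$ and $L_c^\infty L_{x_2}^r$. Statement (3) on reality: for $c\in U([-h,0])$ and $x_2<x_2^c$, the coefficient $\frac{U''}{U-c}$ is real and the initial data $y_-(-h)=0$, $y_-'(-h)=1$ are real, so solving the IVP on $[-h,x_2^c)$ with a real equation gives $y_{0-}\in\R$ there, and the same conclusion follows by taking the $c_I\to 0+$ limit in the a priori solution. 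The $C^{l_0}$ regularity away from $x_2^c$ is immediate from $U\in C^{l_0}$ and the fact that the rescaled limit equation \eqref{E:Ray-H0-1} is a linear ODE with $C^{l_0-2}$ coefficients there. The $C^\alpha$ global regularity of $y_{0\pm}$ follows from the representation formula in (4) once it is proved, because $B$ is continuous by Lemma \ref{L:B} and $\tau\mapsto\Gamma_0(\mu,c,\tau)$ has only a logarithmic singularity, so the product $B\,\mathrm{diag}(1,1)$ applied to the column with entries $(\mu^{-1}y_{0-}(x_2^c),\,b_{2-})^T$ produces a $C^\alpha$ first component.

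Finally, statement (4) is obtained by applying Lemma \ref{L:W-general} and Lemma \ref{L:B}(3) to the limit $W(\tau)=\lim_{\ep\to 0+}w(\tau)$, which solves the homogeneous equation \eqref{E:Ray-H0-1} with jump condition \eqref{E:Ray-H0-jump}. The only point requiring genuine work is the existence of $b_{2\pm}$: from \eqref{E:W-general} and Lemma \ref{L:B}(4) we read off
\[
W_2(\tau)-\tfrac{\mu U''(x_2^c)}{U'(x_2^c)}W_1(0)\bigl(\log|\tau|+\tfrac{i\pi}{2}(\mathrm{sgn}(\tau)+1)\bigr)\longrightarrow b_2,\qquad \tau\to 0,
\]
which, rewritten in the original variables using $W_1(0)=\mu^{-1}y_{0-}(x_2^c)$ and $\tau=\mu^{-1}(x_2-x_2^c)$, gives the existence of $b_{2-}$ in the form stated, after absorbing the $\log(1/\mu)$ into $\log(U'(x_2^c)|x_2-x_2^c|/\mu)$. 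The main obstacle I anticipate is the uniformity in $c$ (and not just in $x_2$) of the convergence in (1)--(2): near values of $c$ for which $x_2^c$ approaches the endpoints $\{-h,0\}$ one of the intervals $\CI_1$ or $\CI_3$ degenerates, and the matching between the regular-region estimates of Lemma \ref{L:y-pm} at $x_{2l}$, $x_{2r}$ and the rescaled estimates on $\CI_2$ must be done so that the error bounds remain uniform; this is controlled by \eqref{E:rho_0} together with the $\mu^\alpha$-type factors already built into Lemma \ref{L:y-pm} and Proposition \ref{P:converg}.
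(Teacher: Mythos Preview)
Your proposal is correct and follows the same route the paper indicates: the paper presents Lemma \ref{L:y0} as a direct summary of Proposition \ref{P:converg}, Lemma \ref{L:B}, and Remark \ref{R:A-esti-w} without giving a detailed proof, and you have filled in precisely those ingredients (together with the a priori bounds of Lemma \ref{L:y-pm} as input data) in the expected way. Your derivation of the formula in (4) and of $b_{2\pm}$ from Lemma \ref{L:B}(3)--(4), including the bookkeeping of the $\log(U'(x_2^c)/\mu)$ factor, matches the intended reading.
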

 
\begin{remark} \label{R:Ray-lim}
When $c$ takes the  end point values $U(-h)$, 
according to the above representation formula and the smoothness of $B$, actually $y_{0-} \in C^{l_0} ([-h_0-h, h_0])$. 
\end{remark}

\begin{remark} \label{R:jump-at-sing}
Suppose $c \in U\big( (-h, 0)\big)$ and $y(k, c, x_2) = \lim_{\ep\to 0+} y(k, c+i\ep, x_2)$ where $y(k, c+i\ep, x_2)$ is a solution to the homogeneous Rayleigh equation \eqref{E:Ray-H1-1} with $y(-h), y'(-h)\in \R$. The above analysis in Subsection \ref{SS:a-esti} implies that a.) $y(k, c, x_2) \in \R$ for $x_2 \in [-h, x_2^c]$; and b.) if $U''(x_2^c)\ne 0$, 
an imaginary part $\IP\, y(k, c, x_2)$ occurs for $x_2 > x_2^c$ which satisfies the homogeneous Rayleigh equation \eqref{E:Ray-H1-1} for $x_2 \in [x_2^c, 0]$ with initial condition 
\[
\IP\, y(x_2^c) =0,  \quad \IP\, y'(x_2^c) = \tfrac {\pi U''(x_2^c)}{U'(x_2^c)} y (x_2^c). 
\]
\end{remark}


The main goal of this subsection is to analyze the differentiation of $y_{0-}$ in $c$. 
Even though most of the results also hold for $y_{0+}$, the proof is slightly more technical. We shall skip those analysis of $y_+$ as they are not necessary for the rest of the paper.  See Remark \ref{R:y_+}. 
 
The proof of the following lemma would be embedded in those of the four subsequent lemmas, actually mainly Lemma \ref{L:pcy2}.  

\begin{lemma} \label{L:pcy0}
Assume $U\in C^{l_0}$, $l_0\ge 3$. For $k, c\in \R$, it holds that  \\
a.) $y_{0-}$ is locally $C^\alpha$ in both $k$ and $c$ for any $\alpha \in [0, 1)$; \\ 
b.) $(y_{0-}, y_{0-}')$ are locally $C^\alpha$ in both $k$ and $c$ for any $\alpha \in [0, 1)$ at any $(k, c, x_2)$ satisfying $U(x_2) \ne c$;  \\
c.) $(y_{0-}, y_{0-}')$ 
are $C^{l_0-2}$ in both $k$ and $c$ at any $(k, c, x_2)$ satisfying  $U(x_2) \ne c$ and $c \ne U(-h)$; \\
d.) $y_{0-} (k, c, x_2^c)$ is $C^{l_0-2}$ in $c$ and $k$ if $c \in U([-h, 0])$;\\
e.) $(y_{0-}, y_{0-}')$ are $C^{l_0-2}$ in $k$, at any $(k, c, x_2)$ except for $y_{0-}'$ at $c = U(x_2)$;\\
f.) assume $l_0\ge 4$, then,  
for any $l=0,1$, $j_1, j_2 \ge 0$, $j_1+j_2 \le l_0-4$, $r \in [1, \infty)$, and $x_2 \in [-h, 0]$, 
\[
(U(-h) -c)^{j_2} \p_k^{j_1} \p_c^{j_2} \p_{x_2}^l y_{0-} (k, c, x_2), 
\]
is locally $L_k^\infty W_c^{1,r}$ for $c$ near $U(-h)$. 
\end{lemma}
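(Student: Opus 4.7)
The plan is to separate the analysis into a regular regime where $x_2$ is bounded away from the critical layer $x_2^c(c)$ (and $c$ away from $U(-h)$), and a singular regime where we use the representation from Lemma \ref{L:y0}(4) together with the smooth matrix $B$ from Lemma \ref{L:B} and the fundamental matrix $S(\mu, c, \tau, \tau_0)$ from Lemma \ref{L:Ray-H0-FM}. A key organizing observation, noted in the paper just after Lemma \ref{L:Ray-H0-FM}, is that differentiation in $c$ with $x_2$ held fixed corresponds in the rescaled $(c, \tau, \tau_0)$ variables to the operator $D_c := \partial_c - \tfrac{1}{\mu U'(x_2^c)}(\partial_\tau + \partial_{\tau_0})$, so the mixed derivative bounds in Lemma \ref{L:Ray-H0-FM}(6) translate directly into $c$-derivative estimates of $y_{0-}$ in the original coordinates.

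For parts (b), (c), and the regularity part of (e), I work on closed subintervals of $x_2 \in [-h,0]$ that avoid $x_2^c$ and for $c$ away from $U(-h)$. On such a subinterval the coefficient $U''(x_2)/(U(x_2)-c)$ of \eqref{E:Ray-H1-1} is $C^{l_0-2}$ jointly in $(k, c, x_2)$ and analytic in $k^2$, so standard smooth parameter dependence for linear ODEs yields the claimed $C^{l_0-3}$ joint regularity on $[-h, x_2^c-\delta]$ propagated from the initial data at $x_2=-h$, and on $[x_2^c+\delta, 0]$ propagated from the limiting Cauchy data at $x_2^c$ (handled by part (d) below). The better $C^{l_0-2}$ regularity in $k$ alone in (e) reflects the fact that $k$ enters the equation only through $k^2$, not through the critical-layer location $x_2^c$. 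Part (a) and the H\"older half of (b) then follow by combining these interior estimates with the uniform $C^\alpha$ control of $B$ and $\Gamma_0$ in $\tau$ from Lemmas \ref{L:B} and \ref{L:Ray-H0-FM}, together with the Lipschitz dependence $\tau = \mu^{-1}(x_2 - x_2^c(c))$.

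For part (d), set $\tau_0(c) := \mu^{-1}(-h - x_2^c(c))$ and evolve the initial condition $w(\tau_0) = (0,1)^T$ using $S$; since $B(\mu, c, 0) = I$, a direct computation from \eqref{E:Ray-H0-FM-0} gives
\[
y_{0-}(k, c, x_2^c) \;=\; \mu\, S_{12}\big(\mu, c, 0, \tau_0(c)\big) \;=\; -\mu\, B_{12}\big(\mu, c, \tau_0(c)\big).
\]
Since $B \in C^{l_0-2}$ by Lemma \ref{L:B} and $x_2^c \in C^{l_0}$ in $c$ by the implicit function theorem (using $U' > 0$), the chain rule gives the claimed $C^{l_0-3}$ smoothness of $y_{0-}(k, c, x_2^c)$ in both $c$ and $k$; the corresponding conclusion for $y_{0-}$ evaluated on $[x_2^c + \delta, 0]$ then feeds back into part (c).

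The main obstacle is part (f), the weighted $W^{1,r}_c$ control uniformly in $k$ near $c = U(-h)$. Here $\tau_0(c) = (U(-h)-c)/(\mu U'(-h)) + O((U(-h)-c)^2) \to 0$ as $c \to U(-h)$. I would decompose the representation of $y_{0-}$ using the splitting in \eqref{E:S-err-1}: the $\cosh/\sinh$ piece is $C^\infty$ in all parameters and poses no difficulty; the explicit singular piece $\tfrac{\mu U''(x_2^c)}{U'(x_2^c)} \log|\tau/\tau_0|\,\tilde S$ has $j_2$-th $c$-derivatives growing like $(U(-h)-c)^{-j_2}$ through $\partial_c^{j_2}\log|\tau_0(c)|$, precisely cancelled by the weight $(U(-h)-c)^{j_2}$; and the remainder $S_{err}$ satisfies $|\partial_\mu^{j_1} D_c^{j_2} S_{err}| \le C \mu^{-j_2} |\tau - \tau_0|$ by Lemma \ref{L:Ray-H0-FM}(6), and since $|\tau - \tau_0| \sim \mu^{-1}(x_2+h)$ and $(U(-h)-c)^{j_2} \sim (\mu \tau_0)^{j_2}$, the weighted derivative stays bounded uniformly in $k$. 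The hard part will be the careful bookkeeping between the $\mu^{-1}$ produced by each application of $D_c$, the vanishing of $S_{err}$ at $\tau = \tau_0$, and the endpoint behavior of $\tau_0$; once this is done, applying one further $c$-derivative produces at worst a $1/(U(-h)-c)$-type singularity from $\partial_c \log|\tau_0|$, which is in $L^r_c$ for every $r < \infty$ and yields the claimed $W^{1,r}_c$ bound.
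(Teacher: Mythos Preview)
Your overall strategy matches the paper's: the regularity statements (a)--(e) are indeed read off from the representation via the fundamental matrix $S(\mu,c,\tau,\tau_0)$ and the properties of $B$ and $S_{err}$ in Lemmas~\ref{L:B} and~\ref{L:Ray-H0-FM}, and your computation $y_{0-}(k,c,x_2^c) = -\mu\,B_{12}(\mu,c,\tau_0(c))$ for part (d) is exactly what the paper uses. (One small point: for (c) on $[x_2^c+\delta,0]$ you cannot literally propagate from Cauchy data at $x_2^c$ since $y_{0-}'$ is logarithmically singular there; the paper instead writes $y_{0-}$ directly via $S$ from a regular base point $x_{20}$ and reads off the regularity from Lemma~\ref{L:Ray-H0-FM}. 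This is easily repaired.)

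There is, however, a genuine gap in your argument for (f). You assert that after multiplying by the weight $(U(-h)-c)^{j_2}$ and taking one further $c$-derivative, the worst singularity is of type $1/(U(-h)-c)$, ``which is in $L^r_c$ for every $r<\infty$.'' This is false: $1/(U(-h)-c)$ is not in $L^r_c$ near $c=U(-h)$ for any $r\ge 1$. The reason the result is nonetheless true is a cancellation you have not used: in the singular piece $\log|\tau/\tau_0|\,\tilde S$, the matrix $\tilde S(\mu,c,\tau,\tau_0)$ carries a factor $B_{12}(\mu,c,\tau_0)$ in every entry (see \eqref{E:S-sing-1}), and since $B_{12}(\mu,c,0)=0$ this factor is $O(\tau_0)$ smoothly. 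Hence $\tilde S_{12}/\tau_0$ and $\tilde S_{22}/\tau_0$ are $C^{l_0-3}$, so the product $\tau_0^{j_2-1}\,\partial_c^{j_2}\big(\log|\tau_0|\cdot\tilde S\big)$ (note the exponent $j_2-1$, not $j_2$) is already bounded by $C(1+|\log|U(-h)-c||)$. This is precisely the content of the paper's Lemma~\ref{L:pcy-4}: the weight needed to bound $\partial_c^{j_2}y_{0-}$ is only $|U(-h)-c|^{\max\{0,j_2-1\}}$, one power less than you budgeted. With this extra factor of $\tau_0$ in hand, the weighted $(j_2{+}1)$-st derivative is bounded by $C(1+|\log|U(-h)-c||)$ rather than $1/(U(-h)-c)$, and \emph{that} is what lies in $L^r_c$ for all $r<\infty$.
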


To obtain the estimates, for fixed $c\in \R$ near $U([-h, 0])$, as in Lemma \ref{L:y-pm}, we divide $[-h, 0]$ into subintervals 
\be \label{E:kc-away-4}
\CI_2 := (x_{2l}, x_{2r}) = \left\{ x_2 \in [-h, 0]\, :  \tfrac {1}{|U(x_2)-c|} > \tfrac {\rho_0}\mu \right\},
\quad \CI_1 = [-h, x_{2l}], 
\quad \CI_3=[x_{2r}, 0], 
\ee
where $\rho_0$ is defined as in \eqref{E:kc-away-2}. $\CI_2$ is an interval due to the monotonic assumption of $U$. 
Clearly $[-h, 0]= \CI_1 \cup \CI_2 \cup \CI_3$ and any of these subintervals may be empty. 
If $\CI_2=\emptyset$, 
then $[-h, 0]$ is considered as $\CI_1$ for $y_{0-}$ and as $\CI_3$ for $y_{0+}$. If $\CI_2 \ne \emptyset$, then \eqref{E:rho_0} holds and $x_2^c \in [-\tfrac 12 h_0-h, \tfrac 12 h_0]$ is well defined.
In the next three lemmas, we obtain the estimates on $y_{0-}$ on subintervals in the order of $\CI_1$, $\CI_2$, and $\CI_3$. The proof of Lemma \ref{L:pcy0} is mainly contained in that of Lemma \ref{L:pcy2} as the smooth dependence of solutions to the Rayleigh equation on $k$ and $c$ and the initial values is trivial on $\CI_1$ and $\CI_3$. While we mainly focus on $y_{0-}$ in the following lemmas, we shall also just outline the estimates on $\p_c y_{0+}$, which would be enough for the rest of the paper.  

\begin{lemma} \label{L:pcy1}
Assume $l_0\ge 2$ and $\CI_1 \ne \emptyset$. For any $k\in \R$ and any $c \in \R$, the following estimates hold for $x_2 \in \CI_1$ and $j_1, j_2\ge 0$ with $j_1+j_2>0$, 
\be \label{E:pcy1} \begin{split} 
\mu^{-1} |\p_k^{j_1} \p_c^{j_2} y_{0-} (x_2)| + |\p_k^{j_1} \p_c^{j_2} y_{0-}'(x_2)| \le& C_{j_1, j_2} \mu  \big(|U(x_2) -c|^{-j_2} + |U(-h) -c|^{-j_2}\big) \\
&\times \big( 1+ \mu^{-j_1} (x_2+h)^{j_1} \big) \sinh (\mu^{-1} (x_2+ h)) \\
\le & C_{j_1, j_2} \mu^{1-j_1-j_2} \sinh (\mu^{-1} (x_2+ h)),
\end{split}\ee
where $C_{j_1, j_2}>0$ depends only on $j_1$, $j_2$,  $|U'|_{C^2}$, and $|(U')^{-1}|_{C^0}$. Moreover, it also holds, for any $x_2 \in \CI_3$ 
\[
\mu^{-1} |\p_c y_{0+} (x_2)| + |\p_c y_{0+}' (x_2)| \le C (\sinh \mu^{-1}|x_2| +  \mu (1+ |c|) \cosh \mu^{-1}x_2).  
\]
\end{lemma}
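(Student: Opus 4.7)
The plan is an induction on $N = j_1 + j_2 \ge 1$, the base case $N=0$ (without the extra polynomial factor) being contained in Lemma \ref{L:y-pm}. Setting $y^{(j_1,j_2)} := \p_k^{j_1}\p_c^{j_2} y_{0-}$ and differentiating the homogeneous equation \eqref{E:Ray-H1-1} in $k$ and $c$, one sees that $y^{(j_1,j_2)}$ solves a non-homogeneous Rayleigh equation of the form \eqref{E:Ray-NH-1} with source
\[
\phi_{j_1,j_2}(k,c,x_2) = -\sum \binom{j_1}{\ell_1}\binom{j_2}{\ell_2}\p_k^{\ell_1}\p_c^{\ell_2}\bigl(k^2 + \tfrac{U''}{U-c}\bigr)\cdot y^{(j_1-\ell_1, j_2-\ell_2)},
\]
the sum running over $(\ell_1,\ell_2)$ with $\ell_1+\ell_2\ge 1$. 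The explicit identity $\p_c^{\ell_2}(U''/(U-c)) = \ell_2!\,U''/(U-c)^{\ell_2+1}$ and the fact that $\p_k^{\ell_1}k^2$ vanishes for $\ell_1\ge 3$ control the coefficient. Moreover, since the Cauchy data \eqref{E:y-pm} at $x_2=-h$ are independent of $k$ and $c$, we have $y^{(j_1,j_2)}(-h)=(y^{(j_1,j_2)})'(-h)=0$, so $y^{(j_1,j_2)}$ is completely determined by $\phi_{j_1,j_2}$.

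The second step is to convert inductive bounds on the source into bounds on $y^{(j_1,j_2)}$ via the inhomogeneous Rayleigh estimates already proved. On $\CI_1$ the condition $|U(x_2)-c|\ge \mu/\rho_0$ allows Lemma \ref{L:Ray-regular-1}(3) to be applied with $\rho\simeq \mu$ for $|k|\gg 1$ (checking \eqref{E:kc-away-1}) and Lemma \ref{L:regular-small-k} for $|k|$ bounded. Both give, up to harmless error terms,
\[
\mu^{-1}\bigl|y^{(j_1,j_2)}(x_2)\bigr| + \bigl|(y^{(j_1,j_2)})'(x_2)\bigr| \le C\Bigl|\int_{-h}^{x_2} \phi_{j_1,j_2}(k,c,x_2')\cosh\bigl(\mu^{-1}(x_2-x_2')\bigr)\,dx_2'\Bigr|.
\]
Inserting the inductive bound for the lower order factors $y^{(j_1-\ell_1, j_2-\ell_2)}$ and the formula for $\p_c^{\ell_2}(U''/(U-c))$ puts the integrand in the form $\mu(|U(x_2')-c|^{-j_2}+|U(-h)-c|^{-j_2})$ times a polynomial in $\mu^{-1}(x_2'+h)$ of degree $j_1$ times $\sinh(\mu^{-1}(x_2'+h))$.

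The main technical obstacle is keeping track of the polynomial factor $(1+\mu^{-j_1}(x_2+h)^{j_1})$ and the ``boundary singularity'' factor $|U(-h)-c|^{-j_2}$ simultaneously. The former arises because each $\p_k$ applied to the free solution $\sinh(\mu^{-1}(x_2+h))$ produces the spatial weight $(x_2+h)$ (from differentiating the exponent in $\mu$); iterating $j_1$ times yields the asserted polynomial, and the convolution identity $\int_{-h}^{x_2} \sinh\mu^{-1}(x_2-x_2')(\mu^{-1}(x_2'+h))^{j_1}\sinh\mu^{-1}(x_2'+h)\,dx_2' \le C\mu(\mu^{-1}(x_2+h))^{j_1}(x_2+h)\sinh\mu^{-1}(x_2+h)$ passes this factor through each induction step. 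The boundary factor is captured by splitting the source integral into a neighborhood of $x_2'=-h$, where the cosh kernel is $O(1)$ and the monotonicity of $U$ gives $|U(x_2')-c|\ge \tfrac12|U(-h)-c|$ for $x_2'$ close to $-h$, and its complement, where $|U(x_2')-c|$ is comparable to $|U(x_2)-c|$ by the same monotonicity. This proves the sharper (first) inequality of \eqref{E:pcy1}; the coarser bound follows from $|U(x_2)-c|^{-1}\le \rho_0/\mu$ on $\CI_1$ together with the uniform boundedness of $s^{j_1}e^{-s}\sinh^{-1}(s)\cdot\sinh s$ in $s=\mu^{-1}(x_2+h)\ge 0$.

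The estimate on $\p_c y_{0+}$ over $\CI_3$ follows from the same variation-of-parameter argument run backward from $x_2=0$; the only difference is that the initial data \eqref{E:y-pm} at $0$ do depend on $c$ (through $(U(0)-c)^2/(g+\sigma k^2)$ and the analogous $y_+'(0)$), so the Cauchy data for $\p_c y_{0+}$ are non-zero but bounded by $\mu(1+|c|)$, producing the extra $\mu(1+|c|)\cosh$ term alongside the integral contribution.
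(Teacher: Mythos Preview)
Your approach is essentially the same as the paper's: differentiate the Rayleigh equation, note the vanishing Cauchy data at $x_2=-h$, and feed the resulting source into Lemma~\ref{L:Ray-regular-1}(3) (large $|k|$) or Lemma~\ref{L:regular-small-k} (bounded $|k|$), inducting on the total order of differentiation. The $\p_c y_{0+}$ part is also handled identically, by splitting into a homogeneous piece carrying the $c$-dependent initial data and an inhomogeneous piece with zero data.

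One technical difference worth noting: the paper differentiates in $K=k^2$ rather than in $k$ directly. This makes the source cleaner (only $-j_1\,\p_K^{j_1-1}\p_c^{j_2}y_{0-}$ from the $K$ term, with no stray factors of $k$), so the inductive bound for $\p_K^{j_1}\p_c^{j_2}y_{0-}$ carries the pure factor $(x_2+h)^{j_1}$; the claimed factor $(1+\mu^{-j_1}(x_2+h)^{j_1})$ then drops out only at the very end via the conversion $\p_k^{j_1}=\sum_\ell \tilde m_{j_1,\ell}\,k^{j_1-2\ell}\p_K^{j_1-\ell}$. Your direct-in-$k$ route is viable but forces you to track the $2k$ coming from $\p_k k^2$ at every inductive step, which is where the $\mu$-bookkeeping becomes delicate; indeed your stated convolution bound is off by a factor of $\mu$ (the correct estimate is $\int_{-h}^{x_2}(\mu^{-1}(x_2'+h))^{j_1}\sinh\mu^{-1}(x_2'+h)\sinh\mu^{-1}(x_2-x_2')\,dx_2'\le C\mu(\mu^{-1}(x_2+h))^{j_1+1}\sinh\mu^{-1}(x_2+h)$, without the extra $(x_2+h)$ you wrote). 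With the correct bound the induction closes, but the $K$-variable makes this step transparent. The paper also bounds the $|U-c|^{-j_2}$ integral by pulling out the kernel via $\sinh a\cosh b\le\sinh(a+b)$ and integrating $\int_{-h}^{x_2}|U(x_2')-c|^{-2}dx_2'$ explicitly, which is slightly more direct than your splitting argument but amounts to the same thing.
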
 

The above estimate holds in a neighborhood of $\CI_1$ actually. 

\begin{proof} 
It is obvious that, for $x_2 \in \CI_1$, $y_{0-}$ is analytic in $c$ and $k$. Let $K=k^2 = \mu^{-2}-1$. One may compute that $\p_K^{j_1} \p_c^{j_2} y_{0-} (x_2)$ satisfies the non-homogeneous Rayleigh equation \eqref{E:Ray-NH-1} in the form of 
\be \label{E:pcy1-temp-1}
-\p_K^{j_1} \p_c^{j_2} y_{0-}'' + \big(K + \frac {U''}{U-c}\big) \p_K^{j_1} \p_c^{j_2} y_{0-}= - j_1 \p_K^{j_1-1} \p_c^{j_2} y_{0-}- \sum_{j'=0}^{j_2-1}  \frac {m_{j_2, j'} U''}{ (U-c)^{j_2+1-j'}} \p_K^{j_1} \p_c^{j'} y_{0-},  
\ee 
with  some constants $m_{j_2, j'}$. Note that the definition of $\CI_2$ implies that \eqref{E:kc-away-1} is satisfied on $\CI_1$ with 
\be \label{E:rho-2}
\rho= \rho_0 \mu^{-1}k^{-2} (1+|U''|_{C^0([-h_0-h, h_0])}) = \rho_0 k^{-1} \sqrt{1+k^{-2}} (1+|U''|_{C^0([-h_0-h, h_0])}).  
\ee
We shall estimate the derivatives of $y_{0-}$ with respect to $c$ and $k$ for large $k$ and small $k$ separately. 

For any $k_*\ge 1$ sufficiently large so that $\rho\le 1$, we shall apply \eqref{E:y-nh-1} with $x_{02} =-h$ to prove 
\be \label{E:pcy1-temp-2} \begin{split}
\mu^{-1} |\p_K^{j_1} \p_c^{j_2} y_{0-}(x_2) | +& |\p_K^{j_1} \p_c^{j_2} y_{0-}'(x_2) | \\
\le & C_{j_1, j_2} \mu (x_2+h)^{j_1}\big(|U(x_2)-c|^{-j_2} + |U(-h) - c|^{-j_2}\big) \sinh (\mu^{-1} (x_2+h)), 
\end{split} \ee
for any $|k|\ge k_*$, $j_1, j_2 \ge 0$ with $j_1 +j_2 >0$. The proof is a simple mathematical induction in $j_1+j_2$. 

Since \eqref{E:pcy1-temp-2} does not include the case $j_1=j_2=0$, there are two base cases $(j_1, j_2)=(0, 1)$ and $(j_1, j_2)=(1, 0)$, which we have to consider separately. For $\p_c y_{0-}$, from \eqref{E:pcy1-temp-1}, \eqref{E:y-nh-1}, Lemma \ref{L:y-pm}, and the definition of $\CI_2$, we have, for any $x_2 \in \CI_1$,
\begin{align*}
k | \p_c y_{0-}(x_2) |+ |\p_c y_{0-}'(x_2) | \le & C \int_{-h}^{x_2} \cosh (\mu^{-1} (x_2-x_2')) \frac {\mu \sinh (\mu^{-1} (x_2'+h))} {(U(x_2')-c)^{2}} dx_2' \\
\le & C\mu \sinh (\mu^{-1} (x_2+h)) \int_{-h}^{x_2} \frac 1 {(U(x_2')-c)^{2}} dx_2' \\
\le & C \mu \big(|U(x_2)-c|^{-1} + |U(-h) - c|^{-1}\big) \sinh (\mu^{-1} (x_2+h)),
\end{align*}
where \eqref{E:mu-k} and \eqref{E:mu-k-s} are also used for $k\ge k_*$ to convert the estimates in terms of $k$ into those in terms of $\mu$. Similarly, $\p_K y_{0-}$ satisfies 
\begin{align*}
k | \p_K y_{0-}(x_2) |+ |\p_K y_{0-}'(x_2) | \le & C\mu \int_{-h}^{x_2} \cosh (\mu^{-1} (x_2-x_2')) \sinh (\mu^{-1} (x_2'+h)) dx_2' \\
\le & C \mu (x_2+h) \sinh (\mu^{-1} (x_2+h)).
\end{align*}
With the estimates in the base cases established, for $j_1 + j_2>1$, using the induction assumption (and Lemma \ref{L:y-pm} for $j_1=j' =0$ in \eqref{E:pcy1-temp-1}) and proceeding much as in the above, we obtain 
\begin{align*}
& k |\p_K^{j_1} \p_c^{j_2} y_{0-}(x_2) |+ |\p_K^{j_1} \p_c^{j_2} y_{0-}'(x_2) | \\
\le & C\mu \sinh (\mu^{-1} (x_2+h)) \int_{-h}^{x_2}  j_1(x_2'+h)^{j_1-1}\big(|U(x_2')-c|^{-j_2} + |U(-h) - c|^{-j_2}\big) \\
& + (x_2'+h)^{j_1} |U(x_2')-c|^{-2}  \big(|U(x_2')-c|^{1-j_2} + |U(-h) - c|^{1-j_2}\big) dx_2',
\end{align*}
and  \eqref{E:pcy1-temp-2} follows consequently. 

For $|k| \le k_*$, as $\mu \sim 1$, we apply Lemma \ref{L:regular-small-k} to \eqref{E:pcy1-temp-1} on $[-h, x_2]$ with 
\[
C_0 = \max\{ (U(-h) -c)^{-1}, \ (U(x_2) -c)^{-1} \} \le \rho_0\mu^{-1} \le C. 
\]
Following a similar induction procedure and using Lemma \ref{L:regular-small-k}, we obtain, for $x_2 \in \CI_1$, $l=0, 1$, and $j_1, j_2\ge 0$ with $j_1+j_2>0$,  
\begin{align*}
|\p_K^{j_1} \p_c^{j_2} \p_{x_2}^l y_{0-}(x_2)| \le & C_{j_1, j_2} (x_2+h)^{j_1}  \big(|U(x_2)-c|^{-j_2} + |U(-h) - c|^{-j_2}\big).  
\end{align*}  
Therefore \eqref{E:pcy1-temp-2} holds for all $k \in \R$. 

Since 
\[
\p_k = 2k \p_K \Longrightarrow \p_k^j = \sum_{0\le l\le \frac j2} \tilde m_{j, l} k^{j-2l} \p_K^{j-l} 
\]
for some constants $\tilde m_{j, l}$, \eqref{E:pcy1-temp-2} implies \eqref{E:pcy1} on $\CI_1$ (actually in a neighborhood of $\CI_1$). 

{\bf Estimating $\p_c y_{0+}$ on $\CI_3$.} Let $y_1 (x_2)$ be solutions to the homogeneous Rayleigh equation \eqref{E:Ray-H1-1} with initial values 
\[
y_1(0)= -2 (U(0) -c)/(g +\sigma k^2), \quad  y_1' (0)= -U'(0)/(g+\sigma k^2), 
\] 
and $y_2(x_2)$ be the solution to the initial value problem of the non-homogeneous Rayleigh equation 
\[
-y_2'' + \big(k^2 + \tfrac {U''}{U-c}\big) y_{0+} = - \tfrac {U''}{(U-c)^2} y_{0+}, \quad y_2(0)= y_2'(0)=0. 
\]
On $\CI_3$, $y_2$ can be estimated much as $y_{0-}$ on $\CI_1$, while $y_1$ much as in the proof of Lemma \ref{L:y-pm}. When Lemma \ref{L:Ray-regular-1} is used to estimate $y_1$ for large $|k|$, we set $s=0$, $\Theta_1=\Theta_2=\cosh$, and $C_0= C \mu (1+|c|)$. The desired inequality on $\p_c y_{0+}$ follows from  $\p_c y_{0+} = y_1 + y_2$. 
\end{proof}

\begin{lemma} \label{L:pcy2}
Assume $U\in C^{l_0}$, $l_0\ge 3$, and $\CI_2 \ne \emptyset$, then Lemma \ref{L:pcy0}a.)--e.) hold for $x_2\in \CI_2$. Moreover, there exists $C >0$ depending only on $|U'|_{C^{l_0-1}}$, and $|(U')^{-1}|_{C^0}$, such that, for any $k\in \R$ and any $c \in \R$, the following estimates hold.
\begin{enumerate} 
\item For  $1\le j\le l_0-2$,  
\be \label{E:pcy4} \begin{split} 
&| \p_c^j  \big(y_{0-} (k, c, x_2^c)\big) |  \le  C \mu^{1-j} \cosh (\mu^{-1}(x_2^c +h)), \; \text{ if } x_2^c \in [-h, 0]; \\
& \p_c \big(y_{0-} (k, c, x_2^c)\big)|_{c=U(-h)} = U'(-h)^{-1}.   
\end{split} \ee
\item If $l_0\ge 5$, then, for any $x_2 \in \CI_2$, we have  
\be \label{E:pcy3} \begin{split}
\mu^{-1} |\p_c y_{0-} (x_2) | \le & 
C\left(1+ \left|\log \frac {|U(x_2) - c|}{|U(x_{2l})-c|}\right|\right) \sinh (\mu^{-1}(x_{2}+h)), 
\end{split} \ee
\be \label{E:pcy3'} \begin{split}
\Big|\p_c y_{0-}' (x_2) + \frac {U''(x_2^c)}{U'(x_2^c)} \big((P.V.)_c (\frac 1{U(x_2) -c} ) &+ i \pi \delta_c (U(x_2)-c) \big) y_{0-}(x_2^c) \Big|\\
\le & C \Big( 1 + \Big|\log \frac {|U(x_2) - c|}{|U(x_{2l})-c|}\Big|\Big) \cosh (\mu^{-1}(x_{2}+h)),
\end{split} \ee
\[
\mu^{-1} |\p_c y_{0+} (x_2) | \le C\left(1+ \left|\log \frac {|U(x_2) - c|}{|U(x_{2r})-c|}\right|\right) \cosh (\mu^{-1}x_{2}), 
\]
\begin{align*}
\Big|\p_c y_{0+}' (x_2) + \frac {U''(x_2^c)}{U'(x_2^c)} \big((P.V.)_c (\frac 1{U(x_2) -c} ) &+ i \pi \delta_c (U(x_2)-c) \big) y_{0+}(x_2^c) \Big|\\
\le & C \Big( 1 + \Big|\log \frac {|U(x_2) - c|}{|U(x_{2r})-c|}\Big|\Big) \cosh (\mu^{-1}x_{2}),
\end{align*}
and for  $2\le j\le l_0-4$ and $c\ne U(x_2)$ and $c \ne U(-h)$, 
\be \label{E:pcy3-1}\begin{split}
 \mu^{-1} |\p_c^j  y_{0-} (x_2)| 
\le& C  (|U(x_2) -c|^{1-j} + |U(-h) -c|^{1-j}) \sin (\mu^{-1} (x_2+ h)), 
\end{split}\ee 
\be \label{E:pcy3-2}\begin{split}
 |\p_c^j  y_{0-}' (x_2)| 
\le& C \mu |U(x_2) -c|^{-1}(|U(x_2) -c|^{1-j} + |U(-h) -c|^{1-j}) \sinh (\mu^{-1} (x_2+ h)). 
\end{split}\ee 
\end{enumerate}
\end{lemma}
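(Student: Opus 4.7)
The plan is to represent $y_{0-}$ on $\CI_2$ via the fundamental matrix formula \eqref{E:Ray-H0-4} with $\tau_0 = \tau_{20} := \mu^{-1}(x_{20} - x_2^c)$ where $x_{20}$ is a reference point chosen near the left boundary $x_{2l}$ of $\CI_2$ (taking $x_{20} = -h$ if $\CI_1 = \emptyset$). Note that $|\tau_{20}| \le M$ uniformly: if $\CI_1 \ne \emptyset$ then $|\tau_{20}| \le 1/(\rho_0 \inf U')$, while if $\CI_1 = \emptyset$ then $c$ is forced close to $U(-h)$ so that $x_2^c$ is close to $-h$ and $|\tau_{20}| = O(1)$ as well. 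On $\CI_1$ (in particular at $x_{20}$), Lemma \ref{L:pcy1} supplies bounds on $\p_c^j y_{0-}(x_{20})$ and $\p_c^j y_{0-}'(x_{20})$. All singular $c$-dependence will be extracted from the explicit decomposition \eqref{E:S-err-1} of $S$ into the free $\cosh/\sinh$ matrix, the $\log|\tau/\tau_{20}| + \tfrac{i\pi}{2}(\mathrm{sgn}\,\tau - \mathrm{sgn}\,\tau_{20})$ term multiplied by the smooth $\tilde S$, and the remainder $S_{err}$ controlled by Lemma \ref{L:Ray-H0-FM}(6).

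The key observation, already recorded below \eqref{E:pc-x2c}, is that $\p_c$ at fixed $(x_2, x_{20})$ acts on $S(\mu, c, \tau, \tau_{20})$ as the combined operator $D := \p_c - \tfrac{1}{\mu U'(x_2^c)}(\p_\tau + \p_{\tau_{20}})$ --- precisely the $D$-type operator appearing in Lemma \ref{L:Ray-H0-FM}(6). That lemma bounds $D^j S_{err}$ and $\p_\mu^{j_1} D^j S_{err}$ by $\mu|\tau - \tau_{20}|$ and $|\tau - \tau_{20}|$ respectively, so the smooth remainder is harmless. The singular contribution to $\p_c y_{0-}'$ comes from $D$ applied to the sign and log terms: $D \log|\tau/\tau_{20}|$ produces, after identifying $1/(\mu U'(x_2^c)\tau)$ with $1/(U(x_2) - c)$ modulo smooth errors absorbed into $B$, the $(P.V.)_c \tfrac{1}{U(x_2) - c}$ contribution, while $D\,\mathrm{sgn}(\tau) = -\tfrac{2}{\mu U'(x_2^c)}\delta(\tau)$ becomes $-2\delta_c(U(x_2) - c)$ distributionally in $c$. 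Multiplying by the coefficient $\tfrac{\mu U''(x_2^c)}{U'(x_2^c)}$ and the boundary value $y_{0-}(x_2^c)$ reproduces \eqref{E:pcy3'}; estimate \eqref{E:pcy3} then follows by integrating $\p_{x_2}(\p_c y_{0-}) = \p_c y_{0-}'$ starting from $x_{20}$ using the $\CI_1$ bound of Lemma \ref{L:pcy1}.

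Higher derivatives \eqref{E:pcy3-1} and \eqref{E:pcy3-2} follow by induction in $j$: $D^j \log|\tau|$ yields the dominant $|U(x_2) - c|^{-j}$ singularity, while all other contributions (from $D$-derivatives of $\tilde S$, $B$, $S_{err}$, and of the initial data $\p_c^{j'} y_{0-}(x_{20})$ with $j' \le j$ estimated by Lemma \ref{L:pcy1}) are of the same or lower order. The qualitative statements a.)--e.) of Lemma \ref{L:pcy0} restricted to $\CI_2$ are then immediate consequences of this representation: $y_{0-}$ is $C^\alpha$ in $(k,c)$ (from the $\log|\tau|$), is $C^{l_0-3}$ when $c \ne U(x_2)$ (i.e.\ $\tau \ne 0$) and $c \ne U(-h)$ (which keeps $x_{20}$ in the regular region), and is $C^{l_0-2}$ in $k$ alone since $k$-differentiation does not alter the singular locus. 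The boundary-value estimate \eqref{E:pcy4} is obtained by specializing to $\tau = 0$: since $B_{12}(\mu, c, 0) = 0$ from Lemma \ref{L:B}(1), the singular $\Gamma_0$ contribution to the $(1,\cdot)$-entry of $S(\mu, c, 0, \tau_{20})$ vanishes, and hence $y_{0-}(k, c, x_2^c)$ depends on $c$ only through the $C^{l_0-2}$ smooth matrix entries $B_{1j}(\mu, c, 0)$, the smooth $B(\mu, c, \tau_{20})^{-1}$, and the smoothly varying initial data at $x_{20}$.

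The main obstacle will be the careful tracking of the principal value and delta function structure in \eqref{E:pcy3'}: the jump of $\mathrm{sgn}\,\tau = \mathrm{sgn}(U(x_2) - c)$ at $c = U(x_2)$ must be interpreted distributionally in $c$, and one must verify that this interpretation is compatible with the smooth matching at $x_{20}$ so that no spurious singularities arise in the transition from the $\CI_1$ estimates of Lemma \ref{L:pcy1} to the $\CI_2$ representation. A secondary technicality is the treatment of $y_{0+}$ on $\CI_2$: the analogous estimates follow by using $x_{2r}$ in place of $x_{2l}$ as the reference and invoking the $y_{0+}$-analogue of Lemma \ref{L:pcy1} (given at the end of that lemma's proof), but require minor bookkeeping due to the different form of the initial condition \eqref{E:y-pm} at $x_2 = 0$ and the $\cosh$-type (rather than $\sinh$-type) upper bounds in Lemma \ref{L:y-pm}.
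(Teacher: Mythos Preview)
Your approach is essentially the same as the paper's: represent $y_{0-}$ on $\CI_2$ via the fundamental matrix $S(\mu,c,\tau,\tau_0)$ with a reference point $x_{20}$ near $x_{2l}$, differentiate using the operator $\p_c - \tfrac{1}{\mu U'(x_2^c)}(\p_\tau+\p_{\tau_0})$, and extract the singular distribution in $c$ from the explicit $\log|\tau/\tau_0|+\tfrac{i\pi}{2}(\mathrm{sgn}\,\tau-\mathrm{sgn}\,\tau_0)$ factor while controlling $S_{err}$ by Lemma~\ref{L:Ray-H0-FM}(6).

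One point the paper makes explicit that you underplay: it splits into two cases according to whether $|\tau_0|$ is bounded below. In \emph{Case~1} ($x_{2l}(c_*)\ge\mu-h$) one takes $x_{20}=x_{2l}(c_*)$ so $-C\tau_0\ge 1$, and $\log|\tau_0|$ together with its $c$-derivatives are $O(1)$. In \emph{Case~2} ($x_{2l}(c_*)<\mu-h$) one is forced to take $x_{20}=-h$, and then $\tau_0=-\mu^{-1}(x_2^c+h)$ can be arbitrarily small, so the $\log|\tau_0|$ in the decomposition is unbounded. Your remark that ``$|\tau_{20}|=O(1)$'' is an \emph{upper} bound and does not control this. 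The paper's remedy is that $y_{0-}(-h)=0$ kills the first column of $S$, so only $\tilde S_{12},\tilde S_{22}$ enter; since $B_{12}(\mu,c,0)=0$, the quotients $\tilde S_{12}/(\tau\tau_0)$ and $\tilde S_{22}/\tau_0$ are $C^{l_0-4}$ and $C^{l_0-3}$ respectively, which absorbs the $\tau_0\to 0$ singularity (and one then uses elementary inequalities like $|\tau\tau_0\log|\tau/\tau_0||\le C|\tau-\tau_0|$). You should make this cancellation explicit rather than folding it into the vague ``no spurious singularities'' remark.
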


In the above lemma $\delta(\cdot)$ denotes the delta mass supported at $0$ and $(P.V.)_c$ and $\delta_c$ emphasize them as distributions of the variable $c$. 
Near $U(x_2) =c$ or $U(-h)=c$, singular distributions of $\p_c^j y_{0-}$ and $\p_c^j y_{0-}'$ at the level comparable to those negative exponents in \eqref{E:pcy3-1} and \eqref{E:pcy3-2} would occur. The quantities with $log$ upper bounds are $L^p$ functions for any $p\in [1, \infty)$. 


\begin{remark} \label{R:pcy2}
Statement (2) also holds for $l_0\ge 3$ with slightly weaker upper bounds. From the proof, it is easy to see that if $l_0\ge 4$, then \eqref{E:pcy3}, \eqref{E:pcy3'}, and \eqref{E:pcy3-1} and \eqref{E:pcy3-2} for $j\le l_0-3$ hold with an additional $\mu^{-1}$ or all $\sinh$ replaced by $\cosh$ on the right sides. If $l_0\ge 3$, then these inequality hold for $j \le l_0-2$ with all $\sinh$ on the right sides replaced by $\cosh$ besides the additional $\mu^{-1}$.  
\end{remark}

\begin{proof}
Since $\CI_2 \ne \emptyset$, it is easy to prove that \eqref{E:rho_0} holds and $x_2^c \in [-\tfrac 14 h_0-h, \tfrac 14 h_0]$ is well defined. Let $M$ be defined as in \eqref{E:M-1} and \eqref{E:CI-2} still holds. This allows us to work in the $\tau = \mu^{-1} (x_2 -x_2^c)$ coordinate and apply Lemma  \ref{L:B}, \ref{L:Ray-H0-FM}, and \ref{L:y0}. 
It is natural to express $y_{0-}$ using the fundamental matrix $S(\mu, c, \tau, \tau_0)$ defined in \eqref{E:Ray-H0-FM-0}. One is reminded that $x_{2l}$ depends on $c$. To study the regularity of $y_{0-}$ and $y_{0-}'$ with respect to $c$ at some $c_* \in U([-\tfrac 12 h_0-h, \tfrac 12 h_0])$, we fix some   $x_{20}\in [-h, x_{2l}(c_*)]$ (so independent of $c$) in a $O(\mu)$ neighborhood of $x_{2l} (c_*)$. 
For $c$ near $c_*$, $x_2 \in \CI_2$, we can write 
\be \label{E:pcy-temp-1}
\begin{pmatrix} \mu^{-1} y_{0-} (k, c, x_2)  \\ y_{0-}' (k, c, x_2)  \end{pmatrix} = S\big( \mu, c, \tau, \tau_0 \big) \begin{pmatrix} \mu^{-1} y_{0-} (k, c, x_{20})  \\ y_{0-}' (k, c, x_{20})  \end{pmatrix}, \quad \tau = \frac {x_2-x_2^c}\mu, \; \; \tau_0=\frac {x_{20}-x_2^c}\mu.  
\ee
Note that $\tau= \mu^{-1} (x_2-x_2^c) =0$ iff $U(x_2) =c$ and $\tau_0=\mu^{-1} (x_{20}-x_2^c) =0$ iff $U(x_{20}) =c$, the latter of which happens iff $U(-h)=c_*$. 
Clearly $y_-(x_{20})$ and $y_-'(x_{20})$ are smooth in $c$ and $k$ either due to the initial conditions or due to the smoothness of the Rayleigh equation on $\CI_1$. 
Hence the regularity statement (c) of Lemma \ref{L:pcy0} follows from statement (1) in Lemma \ref{L:Ray-H0-FM}. If $c\ne U(x_2)$ is close to $U(-h)$, then we could fix $x_{20}=-h$. In this case, $y_{0-}$ and $y_{0-}'$ involve only $S_{12}$ and $S_{22}$ due to $y_{0-} (-h)=0$, and thus statement (b) follows from statement (3) in Lemma \ref{L:Ray-H0-FM}. When $c$ is close to $U(x_2)$, the $C^\alpha$ regularity of $y_{0-}$ in $k$ and $c$ is a consequence of statement (2) in Lemma \ref{L:Ray-H0-FM}, unless $c=U(x_2)= U(x_{20})= U(-h)$. Near the last exceptional case, the $C^\alpha$ regularity of $y_-$ in $k$ and $c$ is due to (4) of Lemma \ref{L:Ray-H0-FM}. Statement (e) of the $C^{l_0-2}$ smoothness in $k$ of $(y_{0-}, y_{0-}')$ also following from the properties of $S$ given in Lemma \ref{L:Ray-H0-FM}.  

We shall derive the estimates of the differentiation by $\p_c$ at $c_*$ in two cases. 

* {\it Case 1: $x_{2l}(c_*) \ge \mu -h$.} In this case, fix $x_{20} = x_{2l} (c_*)$  which implies $-C \tau_0 \ge 1$. 
Hence $sgn(\tau_0) =-1$ and $\log |\tau_0|$ as well as its derivatives are of order $O(1)$  when $c$ varies slightly. Therefore the $\tau_0$ related terms  can be estimated easily. From the estimate at $x_{20}$ derived in Lemma \ref{L:pcy1} (or from the initial condition at $x_2=-h$), \eqref{E:pc-x2c}, and Lemma \ref{L:Ray-H0-FM}, for $1\le j \le l_0-3$, it holds on $\CI_2$, 
\begin{align*}
\p_c^j \begin{pmatrix} \mu^{-1} y_{0-} (x_2)  \\ y_{0-}' (x_2)  \end{pmatrix} = & \sum_{j'=0}^j \big(\p_c - \tfrac { \p_\tau + \p_{\tau_0}}{\mu U'(x_2^c)}\big)^{j'} \Big(\tfrac {\mu U''(x_2^c)}{U'(x_2^c)} \big(\log |\tfrac \tau{\tau_0}| + \tfrac {i \pi}2(sgn(\tau)- sgn(\tau_0)) \big) \tilde S (\tau, \tau_0)\Big) \\
& \times \p_c^{j-j'} \begin{pmatrix} \mu^{-1} y_{0-} (x_{20})  \\ y_{0-}' (x_{20})  \end{pmatrix} + O\big(\mu^{1-j} \sinh (\mu^{-1} (x_2+h))\big)\\
= & \sum_{j'=0}^j \big(\p_c - \tfrac { \p_\tau + \p_{\tau_0}}{\mu U'(x_2^c)}\big)^{j'} \Big(\tfrac {\mu U''(x_2^c)}{U'(x_2^c)} \big(\log | \tau| + \tfrac {i \pi}2 sgn(\tau) \big) \tilde S (\tau, \tau_0)\Big) \p_c^{j-j'} \begin{pmatrix} \mu^{-1} y_{0-} (x_{20})  \\ y_{0-}' (x_{20})  \end{pmatrix} \\
&+ O\big(\mu^{1-j}  \sinh (\mu^{-1} (x_2+h))\big), \\
=&  \sum_{j'=0}^j \p_c^{j'} \Big(\tfrac {\mu U''(x_2^c)}{U'(x_2^c)} \big(  \log \tfrac {|U(x_2) - c|}\mu + \tfrac {i \pi}2 sgn (U(x_2) -c) \big) \tilde S \big(\tfrac {U(x_2) -c}\mu, \tfrac {U(x_{20}) -c}\mu \big)\Big) \\
&\times \p_c^{j-j'} \begin{pmatrix} \mu^{-1} y_{0-} (x_{20})  \\ y_{0-}' (x_{20})  \end{pmatrix} + O\big(\mu^{1-j}  \sinh (\mu^{-1} (x_2+h))\big),
\end{align*}
where $\tilde S$ is given in \eqref{E:S-sing-1} and the constant $C$ in the $O(\cdot)$ terms depends only on $|U'|_{C^{l_0-1}}$ and $|(U')^{-1}|_{C^0}$. We also used that $\sinh$ and $\cosh$ are comparable at $\tfrac {x_2+h}\mu$ for $x_2 \in \CI_2$ in this case.  

For $j=1$, keeping the most singular terms arising from the derivatives of $\log$ and $sgn$ in the distribution sense, we have 
\begin{align*}
\p_c \begin{pmatrix} \mu^{-1} y_{0-} (x_2)  \\ y_{0-}' (x_2)  \end{pmatrix} = & - \tfrac {\mu U''(x_2^c)}{U'(x_2^c)} \big((P.V.)_c (\tfrac 1{U(x_2) -c} )+  i \pi  \delta_c (U(x_2) -c) \big) \tilde S (\tau, \tau_0) \begin{pmatrix} \mu^{-1} y_{0-} (x_{20})  \\ y_{0-}' (x_{20})  \end{pmatrix} \\
&+ O\big( \big( 1 + \big| \log \tfrac {|U(x_2) -c|}\mu \big| \big) \sinh (\mu^{-1} (x_2+h))\big).
\end{align*}
Using \eqref{E:S-sing-1}, \eqref{E:Ray-H0-2}, the smoothness of $B$ and $B(\mu, c, 0)=I$, one may compute 
\be \label{E:pcy-temp-5.3}
\big( B_{22} (\tau_0),  -B_{12} (\tau_0)\big) \big( \mu^{-1} y_{0-} (x_{20}),  y_{0-}' (x_{20})  \big)^T = \mu^{-1} y_{0-} (x_{2}^c),
\ee 
\be \label{E:pcy-temp-3}
\tilde S (\tau, \tau_0) \begin{pmatrix} \mu^{-1} y_{0-} (x_{20})  \\ y_{0-}' (x_{20})  \end{pmatrix} = \mu^{-1} y_{0-} (x_2^c) \begin{pmatrix} B_{12} (\mu, c, \tau)  \\ B_{22} (\mu, c, \tau)  \end{pmatrix} =  \mu^{-1} y_{0-} (x_2^c) \left( \begin{pmatrix} 0 \\ 1  \end{pmatrix} + O(|\tau|) \right), 
\ee
and it yields the desired estimates for $j=1$ in this case.

Similarly, at $x_2 \ne x_2^c$ for $2\le j \le l_0-3$, keeping the worst term and using \eqref{E:pcy-temp-3}, we have 
\begin{align*}
\p_c^j \begin{pmatrix} \mu^{-1} y_{0-} (x_2)  \\ y_{0-}' (x_2)  \end{pmatrix} = &  \big(\big( - \tfrac { \p_\tau }{\mu U'(x_2^c)}\big)^{j} \log | \tau|  \big) \tfrac {\mu U''(x_2^c)}{U'(x_2^c)} \tilde S (\tau, \tau_0)  \begin{pmatrix} \mu^{-1} y_{0-} (x_{20})  \\ y_{0-}' (x_{20})  \end{pmatrix} \\
&+ O\big(\mu^{1-j} | \tau|^{1-j} \sinh (\mu^{-1} (x_2+h))\big)\\
= & \mu^{1-j} \sinh (\mu^{-1} (x_2+h)) \begin{pmatrix} O(| \tau|^{1-j})    \\  O( | \tau|^{-j})  \end{pmatrix}.
\end{align*}
The desired inequality \eqref{E:pcy3-1} in case 1  follows. 

To finish the analysis in this case, we consider $y_{0-} (k, c, x_2^c)$. From \eqref{E:pcy-temp-5.3}
we obtain $C^{l_0-2}$ smoothness in $k$ and $c$. Differentiating  \eqref{E:pcy-temp-5.3} in $c$ and using Lemma \ref{L:B} and \ref{L:pcy1}, one may estimate, for $1\le j \le l_0-2$,  
\begin{align*}
\big|\p_c^j \big(y(c, x_2^c)\big)\big| \le & \left| \sum_{j'=0}^j \Big( \big((\p_c - \tfrac {\p_\tau}{\mu U'(x_2^c)})^{j-j'} B_{22} \big) (c, \tau_0)  \p_c^{j'} y_{0-} (c, x_{20}) \right.\\
& \quad \left. - \mu \big( (\p_c - \tfrac {\p_\tau}{\mu U'(x_2^c)})^{j-j'} B_{12} \big) (c, \tau_0)  \p_c^{j'} y_{0-}' (c, x_{20}) \Big) \right| \le C \mu^{1-j}  \cosh (\mu^{-1} (x_2^c+h)),
\end{align*} 
which proves \eqref{E:pcy4} in case 1. 

* {\it Case 2: $-h \le x_{2l} (c_*) \le \mu-h$.} In this case, let $x_{20} =-h$. While we have to deal with possibly very small $\tau_0$ in \eqref{E:pcy-temp-1}, the initial values $(y_{0-} (x_{20}), y_{0-}'(x_{20})) =(0,1)$. Hence from Lemma \ref{L:Ray-H0-FM} we obtain,  for $0\le j \le l_0-4$, $x_2 \in \CI_2$, 
\be \label{E:pcy-temp-5} \begin{split}
&\p_c^j \begin{pmatrix} \mu^{-1} y_{0-} (x_2)  \\ y_{0-}' (x_2)  \end{pmatrix} =  \big(\p_c - \tfrac { \p_\tau + \p_{\tau_0}}{\mu U'(x_2^c)}\big)^{j} \begin{pmatrix} S_{12} (\tau, \tau_0) \\  S_{22} (\tau, \tau_0) \end{pmatrix}  \\
= & \big(\p_c - \tfrac { \p_\tau + \p_{\tau_0}}{\mu U'(x_2^c)}\big)^{j} \Big(\tfrac {\mu U''(x_2^c)}{U'(x_2^c)} \big(\log |\tfrac \tau{\tau_0}| + \tfrac {i \pi}2(sgn(\tau)- sgn(\tau_0)) \big) \begin{pmatrix} \tilde S_{12} (\tau, \tau_0) \\ \tilde S_{22} (\tau, \tau_0) \end{pmatrix}\Big) \\
&   + O(\mu^{1-j} |\tau-\tau_0|). 
\end{split} \ee

From \eqref{E:S-sing-1} and Lemma \ref{L:B}, $\tfrac {\tilde S_{12}}{\tau \tau_0}$ and $\tfrac {\tilde S_{22}}{\tau_0}$ are $C^{l_0-3}$ functions, which could be used to reduce some singularity. As $\mu |\tau-\tau_0|= |x_2 +h|$, one can compute for $j=1$,  
\begin{align*}
\mu^{-1} \p_c y_{0-} (x_2) = & - \tfrac {U''(x_2^c) }{U'(x_2^c)^2} \tfrac {\tilde S_{12}}{\tau \tau_0} (\p_\tau + \p_{\tau_0}) \Big( \tau \tau_0 \big(\log |\tfrac \tau{\tau_0}| + \tfrac {i \pi}2(sgn(\tau)- sgn(\tau_0)) \big) \Big) \\
& +O\Big( \big| \tau \tau_0 \big(\log |\tfrac \tau{\tau_0}| + \tfrac {i \pi}2(sgn(\tau)- sgn(\tau_0)) \big)\big|\Big) +  O (\mu^{-1} |x_2 +h|).
\end{align*}
We use the following elementary inequalities to handle the above $\log$ terms:
\[
\big| \log |\tfrac \tau{\tau_0}| \big| =  \left|\int_{|\tau|}^{|\tau_0|} \tfrac 1{\tau'} d\tau'\right| \le \frac {\big||\tau| -|\tau_0|\big|}{\min\{ |\tau|, |\tau_0|\}}, \quad |\tau| + |\tau_0| \le |\tau-\tau_0| + 2\min\{|\tau|, |\tau_0|\},    
\]
which also imply
\[
\big|\tau \tau_0 \log |\tfrac \tau{\tau_0}| \big| \le C |\tau -\tau_0|, \quad ( |\tau| + |\tau_0|)\big| \log |\tfrac \tau{\tau_0}| \big| \le |\tau -\tau_0| \big( 2+ \big| \log |\tfrac \tau{\tau_0}| \big| \big). 
\]
The delta functions produced by differentiating $sgn$ are cancelled by $\tau\tau_0$. Finally $sgn(\tau)- sgn(\tau_0) \ne 0$ only when $-h \le x_2^c \le x_2$ which implies $\mu (|\tau_0| + |\tau|) = x_2 +h$. Summarizing these estimates we obtain 
\[
|\mu^{-1} \p_c y_{0-} (x_2) | \le C \mu^{-1} |x_2 +h| \big( 1 + \big| \log |\tfrac \tau{\tau_0}| \big| \big). 
\] 
If $\mu-h \ge x_{2l}(c_*) >-h$, then $\tfrac 1C \mu \le x_2^c(c_*) - x_{2l}(c_*) \le - \mu \tau_0 \le C \mu$, while $\mu |\tau_0| = |x_{2l}(c_*) - x_2^c (c_*)|$ if $x_{2l}(c_*) = -h$. Hence $\big| \log |\mu\tau_0| - \log |x_{2l} (c_*) - x_2^c (c_*)|\big| \le C$, which along with the estimate in case 1 yields \eqref{E:pcy3}.

Much as in the above, we estimate $\p_c y_{0-}' (x_2)$ in case 2 using \eqref{E:S-sing-1} and Lemma \ref{L:B}
\begin{align*}
\p_c y_{0-}' (x_2) = & \tfrac {\mu U''(x_2^c) }{U'(x_2^c)} \tfrac {\tilde S_{22}}{\tau_0} \tfrac {U(x_{20})-c}\mu \p_c  \big(\log \tfrac {|U(x_2)-c|}{|U(x_{20}-c|} + \tfrac {i \pi}2(sgn(U(x_2)-c)- sgn(U(x_{20})-c)) \big) + O\big(1+ \big| \log |\tfrac \tau{\tau_0}|\big| \big) \\
=& - \tfrac {\mu U''(x_2^c) }{U'(x_2)} \tilde S_{22}  \big((P.V.)_c (\tfrac 1{U(x_{2})-c}) + i \pi\delta_c(U(x_{2})-c) \big) + O\big(1+ \big| \log |\tfrac \tau{\tau_0}|\big| \big)\\
=&  \tfrac {\mu U''(x_2^c) }{U'(x_2^c)} B_{12}(\tau_0)  \big((P.V.)_c (\tfrac 1{U(x_{2})-c}) + i \pi\delta_c(U(x_{2})-c) \big) + O\big(1+ \big| \log |\tfrac \tau{\tau_0}|\big| \big).
\end{align*}
It  along with  \eqref{E:pcy-temp-5.3}
implies \eqref{E:pcy3'}. 

Similarly, for $\tau \ne 0$, $\tau_0\ne 0$, and $2\le j \le l_0-4$, where $sgn$ are constants, one may compute
\begin{align*}
\p_c^j \begin{pmatrix} \mu^{-1} y_{0-} (x_2)  \\ y_{0-}' (x_2)  \end{pmatrix} \le & C\mu^{1-j} \Big( \sum_{j'=0}^j  \left| ( \p_\tau + \p_{\tau_0})^{j'}  \Big( 
\begin{pmatrix} \tau \tau_0 \\ \tau_0 \end{pmatrix}  \log |\tfrac \tau{\tau_0}|\Big)\right| +  |\tau-\tau_0| \Big). 
\end{align*}
For $j\ge3$, we have 
\begin{align*}
& | ( \p_\tau + \p_{\tau_0})^{j}  \big(\tau\tau_0 \log |\tfrac \tau{\tau_0}| \big) \le C(|\tau^{2-j} - \tau_0^{2-j}| + |\tau +\tau_0| |\tau^{1-j} - \tau_0^{1-j}| + |\tau\tau_0||\tau^{-j} - \tau_0^{-j}|)  \\
\le & C |\tau\tau_0| |\tau^{-1}-\tau_0^{-1}| (|\tau|^{1-j} + |\tau|^{2-j} |\tau_0|^{-1} + \ldots + |\tau_0|^{1-j}) \le  C |\tau-\tau_0| (|\tau|^{1-j}  + |\tau_0|^{1-j}).  
\end{align*} 
If $j=2$, the first term on the right side of the first inequality would be $\log  |\tfrac \tau{\tau_0}|$ which as shown previously also satisfies the above final estimate. Similarly, one can also calculate,  for $\tau\ne 0$, $\tau_0\ne 0$, and $j\ge 2$, 
\begin{align*}
&| ( \p_\tau + \p_{\tau_0})^{j}  \big(\tau_0 \log |\tfrac \tau{\tau_0}| \big) \le  C( |\tau^{1-j} - \tau_0^{1-j}| + |\tau_0||\tau^{-j} - \tau_0^{-j}|)  \\
\le & C |\tau_0| |\tau^{-1}-\tau_0^{-1}| (|\tau|^{1-j} + |\tau|^{2-j} |\tau_0|^{-1} + \ldots + |\tau_0|^{1-j}) \le  C|\tau|^{-1} |\tau-\tau_0| (|\tau|^{1-j}  + |\tau_0|^{1-j}).   
\end{align*}
The cases of $j=0,1$ have been considered earlier and would only make minor contributions. Therefore \eqref{E:pcy3-1} and \eqref{E:pcy3-2} are satisfied in case 2  as well. 

Regarding $y_{0-} (k, c, x_2^c)$, much as in case 1, but with much simpler initial value at $\tau_0 = \tfrac {-h-x_2^c}\mu$, we have 
\[
y_{0-}(k, c, x_2^c) = - \mu B_{12} (\mu, c, \tau_0)  
\]
which also yields its $C^{l_0-2}$ smoothness. Differentiating in $c$ and using Lemma \ref{L:B} and \ref{L:pcy1}, one may estimate, for $1\le j \le l_0-2$,  
\begin{align*}
\big|\p_c^j \big(y_{0-}(k, c, x_2^c)\big)\big| = & \mu \big|\big((\p_c - \tfrac {\p_\tau}{\mu U'(x_2^c)})^j B_{12} \big) (\mu, c, \tau_0) \big| \le C \mu^{1-j}, 
\end{align*} 
which proves the inequality in \eqref{E:pcy4} in case 2. Finally, from Lemma \ref{L:B}, 
\[
\p_c \big(y_{0-} (k, c, x_2^c)\big)|_{c=U(-h)} = -\mu \big((\p_c - \tfrac {\p_\tau}{\mu U'(-h)}) B_{12} \big) (\mu, U(-h), 0) = U'(-h)^{-1}. 
\]

{\bf Estimating $\p_c y_{0+}$ on $\CI_2$.} In this case $\CI_2\ne \emptyset$ implies $|c|\le C$. Much as in the above argument for $y_{0-}$, we consider the estimates related to $\p_c y_{0+}$ at some $c_* \in [-\frac 12 h_0-h, \frac 12 h_0]$. Observe that, as an expression of solution to the homogeneous Rayleigh equation, \eqref{E:pcy-temp-1} also applies to $y_{0+}$ on $\CI_2$ with $x_{20}$ chosen near $x_{2r}(c_*)$. In the case of $x_{2r}(c_*) \le -\mu$,  the same arguments yields the desired estimates of $\p_c y_+$. 

In the case of $x_{2r}(c_*) \in [-\mu, 0]$, we take $x_{20}=0$ and proceed roughly as in the above case 2. Due to the initial condition \eqref{E:y-pm}, equation  \eqref{E:pcy-temp-5} is replaced by 
\begin{align*}
&\p_c \begin{pmatrix} \mu^{-1} y_{0+} (x_2)  \\ y_{0+}' (x_2)  \end{pmatrix} = \big(\p_c - \tfrac { \p_\tau + \p_{\tau_0}}{\mu U'(x_2^c)}\big) \left( S(\tau, \tau_0) \begin{pmatrix} \mu^{-1} y_{0+} (0)  \\ y_{0+}' (0)  \end{pmatrix}\right)\\
=&  \big(\p_c - \tfrac { \p_\tau + \p_{\tau_0}}{\mu U'(x_2^c)}\big) \left(\tfrac {\mu U''(x_2^c)}{U'(x_2^c)} \big(\log |\tfrac \tau{\tau_0}| + \tfrac {i \pi}2(sgn(\tau)- sgn(\tau_0)) \big) \tilde S (\tau, \tau_0) \begin{pmatrix} \mu^{-1} y_{0+} (0)  \\ y_{0+}' (0)  \end{pmatrix}\right)   + O( |\tau-\tau_0|), 
\end{align*}
where \eqref{E:S-err-1} and Lemma \ref{L:Ray-H0-FM} are used. Let 
\[
W(\tau, \tau_0) = (W_1, W_2)^T = \tilde S(\tau, \tau_0) \big(  \mu^{-1} y_{0+} (0),  y_{0+}' (0) \big)^T. 
\]
Recall from  initial condition \eqref{E:y-pm}
\[
|y_{0+}(0)| \le C \mu^4 \tau_0^2, \quad |y_{0+}'(0)-1| \le C \mu^3 |\tau_0|. 
\]
On the one hand, from \eqref{E:S-sing-1}, and Lemma \ref{L:B}, we have that 
\[
\Big( \frac {W_1}{\tau\tau_0}, \frac {W_2}{\tau_0}\Big) = \Big(\frac {B_{12} (\tau)}{\tau}, B_{22} (\mu) \Big) \Big( B_{22}(\tau_0)  \frac {y_{0+}(0)}{\mu \tau_0} - \frac {B_{12} (\tau_0)}{\tau_0} y_{0-}'(0) \Big)
\]
are $C^{l_0-3}$ function with bounds uniform in $c$ and $\mu$. Hence the estimate on $\p_c y_{0+}(x_2)$ is obtained much as that of $\p_c y_{0-}(x_2)$. On the other hand, as \eqref{E:pcy-temp-3} and \eqref{E:pcy-temp-5.3} also apply to $y_{0+}$, it holds 
\[
W_2 =B_{22} (\tau) \mu^{-1} y_{0+} (x_2^c)  = (1+ O(|\tau|)) \mu^{-1} y_{0+} (x_2^c). 
\]
With these estimates, the desired estimate on $\p_c y_{0+}' (x_2)$ follows much as that of $\p_c y_{0-}'(x_2)$. This completes the proof. 
\end{proof} 


\begin{lemma} \label{L:pcy3}
Assume $l_0\ge 3$, $\CI_2 \ne \emptyset$, and $\CI_3 \ne \emptyset$, then Lemma \ref{L:pcy0}a.)--e.) hold for $x_2 \in \CI_3$. Moreover, if $l_0\ge 5$, then there exists $C >0$ depending only on $|U'|_{C^{l_0-1}}$ and $|(U')^{-1}|_{C^0}$, such that, for any $k\in \R$ and any $c \in \R$, the following estimates hold for $x_2 \in \CI_3$ 
\be \label{E:pcy2} 
\mu^{-1} |\p_c y_{0-} (x_{2})| + |\p_c y_{0-}' (x_{2})| \le C \Big(1 +  \log \frac \mu{\min\{\mu,  |U(-h)-c|\}} \Big) \cosh (\mu^{-1}(x_{2}+h)), 
\ee
and for $2\le j \le l_0-4$, 
\[
\mu^{-1} |\p_c^j y_{0-} (x_{2})| + |\p_c^j y_{0-}' (x_{2})| \le C\big( \mu^{1-j} + |U(-h)-c|^{1-j} 
\big) \cosh (\mu^{-1}(x_{2}+h)).
\]
Moreover, if $\CI_2\ne \emptyset$ and $\CI_1\ne \emptyset$, then it also holds for $x_2 \in \CI_1$, 
\[
\mu^{-1} |\p_c y_{0+} (x_{2})| + |\p_c y_{0+}' (x_{2})| \le C \Big(1 +  \log \frac \mu{\min\{\mu,  |U(0)-c|\}} \Big) \cosh (\mu^{-1}x_{2}). 
\]
\end{lemma}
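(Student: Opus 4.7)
The plan is to transport the estimates already established on $\CI_2$ from the endpoint $x_{2r}$ across the regular interval $\CI_3$, using the ODE machinery of Lemma \ref{L:Ray-regular-1} (for larger $|k|$, where the parameter $\rho = C\mu \le 1$ in \eqref{E:kc-away-1}) or Lemma \ref{L:regular-small-k} (for bounded $|k|$). Throughout $\CI_3$ one has $|U(x_2)-c| \ge \rho_0^{-1}\mu$, so the coefficient $U''/(U-c)$ in the Rayleigh equation is uniformly $O(\mu^{-1})$, and the equation \eqref{E:pcy1-temp-1} satisfied by $\p_c^{j} y_{0-}$ is regular on $\CI_3$. The regularity conclusions (a)--(e) of Lemma \ref{L:pcy0} on $\CI_3$ then follow at once from the corresponding regularity of $(y_{0-}(x_{2r}), y_{0-}'(x_{2r}))$ established in Lemma \ref{L:pcy2}, together with smooth parameter dependence of solutions to ODEs with smooth coefficients.

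For $j=1$, I differentiate \eqref{E:Ray-H1-1} once in $c$ to obtain an equation for $\p_c y_{0-}$ with source $-U''y_{0-}/(U-c)^2$. The initial data at $x_{2r}$ is controlled by Lemma \ref{L:pcy2}: using $|U(x_{2r})-c|=\rho_0^{-1}\mu$ and $|U(x_{2l})-c| \ge \min\{\rho_0^{-1}\mu, |U(-h)-c|\}$, one obtains
\[
\mu^{-1}|\p_c y_{0-}(x_{2r})| + |\p_c y_{0-}'(x_{2r})| \le C\Big(1+\log\tfrac{\mu}{\min\{\mu,|U(-h)-c|\}}\Big)\cosh(\mu^{-1}(x_{2r}+h)).
\]
Propagating the homogeneous part by Lemma \ref{L:Ray-regular-1}(1) with $\Theta_1=\Theta_2=\cosh$ gives the desired bound. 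For the forcing contribution, applying Lemma \ref{L:Ray-regular-1}(3) with $\phi = -U''y_{0-}/(U-c)^2$, and using $|y_{0-}(x_2')| \le C\mu\sinh(\mu^{-1}(x_2'+h))$ (Lemma \ref{L:y-pm}) together with $\int_{x_{2r}}^{x_2}(U-c)^{-2}\,dx_2' \le C\mu^{-1}$, the contribution is $\le C\cosh(\mu^{-1}(x_2+h))$, absorbed into the logarithmic factor.

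For $2 \le j \le l_0-5$ I induct on $j$. The delicate point is that the generic bound from Lemma \ref{L:pcy2} evaluated at $x_{2r}$ contains a term $|U(-h)-c|^{1-j}$ which, for $|U(-h)-c| \ll \mu$, strictly exceeds the target $\mu^{-1}|U(-h)-c|^{2-j}+\mu^{2-j}|U(-h)-c|^{-1}$. Propagating the crude bound by itself is therefore insufficient. Instead I re-express $(\mu^{-1}y_{0-}, y_{0-}')$ near $x_{2r}$ through the fundamental-matrix formula \eqref{E:Ray-H0-2} with coefficients $b_{1-} = \mu^{-1}y_{0-}(x_2^c)$, which is $C^{l_0-3}$ in $c$ with $|\p_c^{j} b_{1-}| \le C\mu^{-j}$ by \eqref{E:pcy4}, and $b_{2-}$ as in Lemma \ref{L:y0}(4), and differentiate $j$ times in $c$ at fixed $x_2 \in \CI_3$ via the $(c,x_2)$-variable operator $\p_c - (\p_\tau+\p_{\tau_0})/(\mu U'(x_2^c))$. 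In the case $x_{2l}=-h$, so that $\tau_0=(-h-x_2^c)/\mu \sim -\mu^{-1}|U(-h)-c|$ is small, the factor $B_{12}(\tau_0)\sim \tau_0$ inside $\tilde S_{12}$ in \eqref{E:S-sing-1} multiplies the worst $\tau_0$-derivatives of the logarithm in \eqref{E:S-decom}, lowering the apparent singularity from $|U(-h)-c|^{1-j}$ to the claimed $\mu^{-1}|U(-h)-c|^{2-j}$. The remaining $\mu^{2-j}|U(-h)-c|^{-1}$ term arises from integrating the forcing $\sum_{j'<j}U''(U-c)^{-(j+1-j')}\p_c^{j'}y_{0-}$ from $x_{2r}$ across $\CI_3$ via Lemma \ref{L:Ray-regular-1}(3), together with the inductive hypothesis and $\int_{x_{2r}}^{x_2}(U-c)^{-(j-1)}\,dx_2' \le C\mu^{2-j}$.

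The estimate on $\p_c y_{0+}$ for $x_2 \in \CI_1$ is obtained by the symmetric argument, propagating leftward from $x_{2l}$ using Lemma \ref{L:pcy2}'s bound on $\p_c y_{0+}(x_{2l})$, with $|U(0)-c|$ playing the role previously played by $|U(-h)-c|$. The main obstacle is precisely the refined bookkeeping at $x_{2r}$ (respectively $x_{2l}$ for $y_{0+}$) for $c$ very close to $U(-h)$ (respectively $U(0)$) described above: one must open up the $B/\Gamma_0$ representation rather than use Lemma \ref{L:pcy2}'s bound as a black box, and carefully track how the smallness of $B_{12}(\tau_0)$ partially cancels the $c$-singularities produced by differentiating $\log|\tau/\tau_0|$. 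Once this is unpacked, the subsequent propagation across the regular interval $\CI_3$ is routine.
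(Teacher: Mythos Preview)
Your treatment of regularity (a)--(e) and the $j=1$ estimate matches the paper's: both propagate the $x_{2r}$ data across $\CI_3$ via Lemma \ref{L:Ray-regular-1}/Lemma \ref{L:regular-small-k}, splitting $\p_c^j y_{0-}=y_1+y_2$ into a homogeneous part with initial data from Lemma \ref{L:pcy2} and a nonhomogeneous part driven by lower-order $\p_c^{j'}y_{0-}$.

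For $j\ge 2$ you correctly flag that the black-box bound $\mu^{1-j}+|U(-h)-c|^{1-j}$ at $x_{2r}$ from Lemma \ref{L:pcy2}, once carried by $y_1$, strictly exceeds the stated target when $|U(-h)-c|<\mu$ and $j\ge 3$ (for $j=2$ the two coincide). However, your proposed remedy does not close this gap. The gain you invoke from $B_{12}(\tau_0)\sim\tau_0$ in $\tilde S_{12}=-B_{12}(\tau)B_{12}(\tau_0)$ is \emph{already} consumed in the proof of Lemma \ref{L:pcy2}: that proof estimates $(\p_\tau+\p_{\tau_0})^j\big[\tau\tau_0\log|\tau/\tau_0|\big]=O\big(|\tau-\tau_0|(|\tau|^{1-j}+|\tau_0|^{1-j})\big)$, and it is precisely the factor $\tau_0$ (from $B_{12}(\tau_0)$) that turns the naive $|\tau_0|^{-j}$ into $|\tau_0|^{1-j}$. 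Evaluated at $x_{2r}$ (where $\tau\sim 1$, $|\tau_0|\sim\mu^{-1}|U(-h)-c|$) this gives exactly $|U(-h)-c|^{1-j}$; there is no further factor of $|\tau_0|$ available to reach $\mu^{-1}|U(-h)-c|^{2-j}$. So re-opening the $B/\Gamma_0$ representation reproduces the same $x_{2r}$ bound and cannot improve $y_1$.

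The paper's own proof is equally thin here: it bounds $y_2$ by the target (using the inductive hypothesis on $\CI_3$ and $\mu^{1-j}\log(\mu/|U(-h)-c|)\le \mu^{2-j}|U(-h)-c|^{-1}$), but does not explain how the $y_1$ contribution---which is $\lesssim(\mu^{1-j}+|U(-h)-c|^{1-j})\cosh$ by Lemma \ref{L:Ray-regular-1}---fits under the target for $j\ge 3$. In practice this does not matter downstream: the only higher-order case used later (e.g.\ in Lemma \ref{L:Y-Cauchy}) is $j=2$, where the issue does not arise.
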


\begin{remark} 
Using Remark \ref{R:pcy2}, the above estimates with an an additional $\mu^{-1}$ on the right sides also hold for $l_0\ge 4$ and $j \le l_0-3$.
\end{remark} 

\begin{proof}
The assumption $\CI_2\ne \emptyset$ and $\CI_3=[x_{2r}, 0]\ne \emptyset$ imply 
$x_{2r}>-h$ and 
$\mu^{-1} \big(U(x_{2r}) -c \big)$ is uniformly bounded from above and below away from 0. The regularity of $y_{0-}$ and $y_{0-}'$ in $c$ and $k$ for $x_2 \in \CI_3$
follow directly from such smoothness at $x_{2r}$ obtained in Lemma \ref{L:pcy2}. Their estimates at $x_{2r}$ can be summarized into 
\[
\mu^{-1} |\p_c y_{0-} (x_{2r})| + |\p_c y_{0-}' (x_{2r})| \le C \Big(1 +  \log \frac \mu{\min\{\mu,  |U(-h)-c|\}} \Big) \cosh (\mu^{-1}(x_{2r}+h)), 
\]
and for $2\le j \le l_0-4$ 
\[
\mu^{-1} |\p_c^j y_{0-} (x_{2r})| + |\p_c^j y_{0-}' (x_{2r})| \le C\big( \mu^{1-j} + |U(-h)-c|^{1-j} \big) \cosh (\mu^{-1}(x_{2r}+h))
\]
where we also used $1 \le \mu^{-1}(x_{2r}+h)$ as $x_{2l}>-h$. 
Much as the proof of Lemma \ref{L:pcy1}, we shall obtain the estimates  inductively in $j$ by considering the cases of small and large $k$ separately. 

As $\rho_0>0$, we take $k_*\ge 1$ such that $ \rho <1$ (defined in \eqref{E:rho-2}) for $|k|\ge k_*$ and thus 
\eqref{E:kc-away-1} is satisfied on $\CI_3$ with $\rho< \min\{1, C \mu\}$. We shall obtain the estimates for this case of $|k|\ge k_*$ by splitting $\p_c y_{0-}$ into homogeneous and non-homogeneous parts. 
For $j \ge 1$, let $y_1(x_2)$ be the solution to the homogeneous Rayleigh equation \eqref{E:Ray-H1-1} with initial condition 
\[
y_1(x_{2r}) = \p_c^j y_{0-}(x_{2r}), \; \; y_1'(x_{2r})=\p_c^j y_{0-}'(x_{2r}), 
\]
and $y_2(x_2)$ be the solution to the non-homogeneous Rayleigh equation \eqref{E:Ray-NH-1} with the zero initial conditions at $x_2 = x_{2r}$ and the non-homogeneous term given by the right side of \eqref{E:pcy1-temp-1} (with $j_1=0$ and $j_2=j$).
Clearly it holds 
\be \label{E:pcy-temp-6}
\p_c^j y_{0-} = y_1+y_2, \;\text{ on } \; \CI_3.
\ee 
Using the the above estimates on $\p_c y_{0-}$ at $x_{2r}$,
we apply Lemma \ref{L:Ray-regular-1} to $y_1$  
with 
\[
\Theta_1 =\Theta_2 =\cosh, \quad s=0, \quad C_0= \mu^{-1} |\p_c^j y_{0-} (x_{2r})| +|\p_c^j y_{0-}' (x_{2r})| +1 
\]
to obtain, for $x_2 \in \CI_3$,  
\begin{align*}
& \mu^{-1} |y_{1} (x_2)| + |y_{1}' (x_2)| \le C \big(\mu^{-1} |\p_c^j y_{0-} (x_{2r})| +|\p_c^j y_{0-}' (x_{2r})| +1 \big) \cosh \mu^{-1} (x_2 - x_{2r}).  
\end{align*}
Concerning $y_2(x_2)$, Lemmas \ref{L:Ray-regular-1} and the same computation as in the proof of Lemma \ref{L:pcy1} implies, for any $x_2 \in \CI_3$,  
\[
|\mu^{-1} y_2(x_2)| + |y_2' (x_2)| \le C \sum_{j'=0}^{j-1} \int_{x_{2r}}^{x_2}  \frac {\cosh(\mu^{-1} (x_2- x_2'))}{|U(x_{2}') -c|^{j+1-j'} } |\p_c^{j'} y_{0-} (x_2')| dx_2'.  
\] 
The desired estimate for $j=1$ follows from \eqref{E:pcy-temp-6}, Lemma \ref{L:y-pm}, and direct integration. For $j\ge 2$, one may compute inductively using the above estimates and \eqref{E:pcy-temp-1},
\begin{align*}
& |\mu^{-1} y_2(x_2)| + |y_2' (x_2)| \le C \sum_{j'=0}^{j-1} (U(x_{2r}) -c)^{j'-j} \frac {|\p_c^{j'} y_{0-} (x_{2r})|}{\cosh\mu^{-1} (x_{2r}+h)} \cosh\mu^{-1} (x_2 +h) \\
\le & C\Big( \mu^{1-j} + \mu^{-1} |U(-h)-c|^{2-j} +\mu^{1-j} \log \frac \mu{\min\{\mu,  |U(-h)-c|\}}\Big)\cosh\mu^{-1} (x_2 +h). 
\end{align*}
If $|U(-h)-c|\ge \mu$, the desired estimate follows immediately, otherwise it follows from the fact $\log x \le x$ for any $x\ge 1$. 

In the case $k\le k_*$, $\mu\sim 1$ and Lemma \ref{L:regular-small-k} yields the estimates through a similar induction. 

The estimates on $\p_c y_{0+}$ is also obtained much as $\p_c y_{0-}$ using Lemmas \ref{L:Ray-regular-1} and \ref{L:regular-small-k} based on the estimates of $\p_c y_{0+}$ at $x_{2l}$ obtained in Lemma \ref{L:pcy2}. In particular, the fact that $\CI_2 \ne \emptyset$ also implies $|c|<C$ is also used.  
We skip the details. 
\end{proof}


The following lemma proves Lemma \ref{L:pcy0}(f) and (the case of $x_2=0$) will be used in analyzing the eigenvalues. 

\begin{lemma} \label{L:pcy-4}
Assume $U \in C^{l_0}$, $l_0\ge 3$. For any $k \in \R$ and $x_2 \in [-h, 0]$, there exist $R, \tilde C>0$ such that 
\be \label{E:pcy5}
|U(-h) -c|^{j} | \p_k^{j_1} \p_c^{j_2} \p_{x_2}^l y_{0-} (k, c, x_2) | \le \begin{cases}  \tilde C \big(1+ \big| \log |U(-h)-c|\big|\big), & j +1= j_2  \in \{0, 1\}, \\
\tilde C, & j +1= j_2\ge 2, 
\end{cases} \ee  
for any $|c-U(-h)|\le R$, $l=0,1$, $j_1, j_2 \ge 0$, $ j_1+j_2\le l_0-3$. Here $\tilde C$ can be taken independent of $k$ for $k$ in any bounded set.  
\end{lemma}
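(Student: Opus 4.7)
My plan is to invoke the quantitative estimates already established in Lemmas \ref{L:pcy1}, \ref{L:pcy2}, and \ref{L:pcy3}, after choosing $R$ small enough (depending on $x_2$ and the bound on $k$) so that $|U(x_2)-c|$ stays bounded below throughout $|c-U(-h)|\le R$. The case $x_2=-h$ is trivial because $y_{0-}(k,c,-h)=0$ and $y_{0-}'(k,c,-h)=1$ are independent of $(k,c)$, so $\p_k^{j_1}\p_c^{j_2}\p_{x_2}^l y_{0-}(k,c,-h)$ vanishes identically whenever $j_1+j_2\ge 1$. Henceforth fix $x_2\in(-h,0]$ and $k$ in a bounded set, so $\mu=\langle k\rangle^{-1}$ is bounded below by some $\mu_0>0$; take $R\le (U(x_2)-U(-h))/4$, which guarantees $|U(x_2)-c|\ge (U(x_2)-U(-h))/2>0$ uniformly in $c$, and in particular excludes $x_2\in\CI_1$.

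For pure $\p_c^{j_2}$ derivatives ($j_1=0$) I split into two subcases depending on the relation between $x_2+h$ and $\mu$. If $U(x_2)-U(-h)>2\mu/\rho_0$, then $x_2\in\CI_3$ and Lemma \ref{L:pcy3} applies directly: for $j_2=1$, \eqref{E:pcy2} is already the claim, while for $j_2\ge 2$, multiplying the three-term bound of Lemma \ref{L:pcy3} by $|U(-h)-c|^{j_2-1}$ produces
\[
C\bigl(|U(-h)-c|^{j_2-1}\mu^{2-j_2}+|U(-h)-c|+|U(-h)-c|^{j_2-2}\mu^{3-j_2}\bigr)\cosh(\mu^{-1}(x_2+h)),
\]
which is bounded as $|U(-h)-c|\to 0$ since $\mu$ is bounded. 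Otherwise $x_2$ is within $O(\mu)$ of $-h$ and may lie in $\CI_2$; there Lemma \ref{L:pcy2} applies, and the terms involving $|U(x_2)-c|^{1-j_2}$ or $|U(x_2)-c|^{-1}$ in \eqref{E:pcy3}--\eqref{E:pcy3-2} remain bounded because $|U(x_2)-c|$ is bounded below by a constant depending on $x_2$ and $k$, while the $|U(-h)-c|^{1-j_2}$ contributions are absorbed exactly as in the $\CI_3$ case. The logarithmic factor in \eqref{E:pcy3} involves the ratio $|U(x_2)-c|/|U(x_{2l})-c|$, both of order $\mu$ up to constants, and hence is uniformly bounded.

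For mixed derivatives $\p_k^{j_1}\p_c^{j_2}$ with $j_1\ge 1$ I proceed by induction on $j_1+j_2$. The mixed derivative satisfies the non-homogeneous Rayleigh equation \eqref{E:pcy1-temp-1} with right-hand side involving strictly lower-order derivatives and zero initial data at $x_2=-h$. The variation of constants formulas of Lemma \ref{L:Ray-regular-1} (for large $|k|$) or Lemma \ref{L:regular-small-k} (for small $|k|$), applied along the path from $-h$ to $x_2$, propagate the inductive bounds and yield estimates of the claimed form. The main technical obstacle is the induction in the sub-case where $x_2\in\CI_2$ is close to $-h$: one must propagate the bound through the singular region near $x_2^c$, and the $(U-c)^{-j_2-1+j'}$ singularities in the integrand must be absorbed by the $(\xi+h)$-weight coming from the vanishing $y_{0-}(-h)=0$ and the $\sinh(\mu^{-1}(\xi+h))$ factor inherited from the inductive bounds, mirroring the cancellation mechanism underlying Lemma \ref{L:pcy2} case 2. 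Because $\p_k$ does not introduce additional singularities in $c$, this induction is a parallel extension of the arguments already carried out for the pure $\p_c$ derivatives, and the log factor $1+|\log|U(-h)-c||$ appears only through the base case bounds already established in Lemma \ref{L:pcy3}.
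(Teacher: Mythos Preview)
Your approach differs from the paper's and has genuine gaps. The paper does not invoke Lemmas~\ref{L:pcy1}--\ref{L:pcy3} as black boxes; instead it goes back to the explicit representation \eqref{E:pcy-temp-5} with $x_{20}=-h$ (case~2 of Lemma~\ref{L:pcy2}), writes $(\mu^{-1}y_{0-},y_{0-}')^T=(S_{12},S_{22})^T$, and uses the decomposition $S=S_{err}+\text{hyperbolic}+\frac{\mu U''(x_2^c)}{U'(x_2^c)}(\log|\tau/\tau_0|+\frac{i\pi}{2}(sgn\tau-sgn\tau_0))\tilde S$ from \eqref{E:S-decom}. Since $S_{err}$ is $C^{l_0-3}$ (Lemma~\ref{L:Ray-H0-FM}(5)), the log factor is independent of $k$, and $\tilde S_{12}/\tau_0,\tilde S_{22}/\tau_0,(U(-h)-c)/\tau_0$ are all $C^{l_0-3}$ (from $B\in C^{l_0-2}$ with $B_{12}(0)=0$), one can differentiate directly up to total order $l_0-3$ and read off the bound. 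This handles $\p_k^{j_1}$ and $\p_c^{j_2}$ simultaneously without induction.

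Your route has two concrete problems. First, the quantitative estimates in Lemmas~\ref{L:pcy2} and~\ref{L:pcy3} are stated only for $2\le j_2\le l_0-5$, so even the pure-$\p_c$ case falls two orders short of the target $j_1+j_2\le l_0-3$. Second, and more seriously, your mixed-derivative induction proposes to propagate from $-h$ to $x_2$ via Lemmas~\ref{L:Ray-regular-1} or~\ref{L:regular-small-k}, but both of those require $|U-c|$ bounded below on the interval (condition~\eqref{E:kc-away-1}), which fails across $x_2^c$ when $c_R>U(-h)$. The nonhomogeneous term in \eqref{E:pcy1-temp-1} is singular at $x_2^c$, not at $-h$, so the vanishing $y_{0-}(-h)=0$ and the $\sinh$ weight do not absorb it; the cancellation in Lemma~\ref{L:pcy2} case~2 is an algebraic fact about the $B$-matrix structure (specifically that $B_{12}(\tau_0)/\tau_0$ is smooth), not something obtainable from the Rayleigh ODE alone. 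To complete your induction you would essentially have to reprove the fundamental-matrix representation at each step, which is exactly what the paper does once, directly.
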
 

Unlike in most other lemmas, the constants $R$ and $\tilde C$ may depend on $k$ and $x_2$. 

\begin{proof}
The lemma is trivial if $x_2 =-h$, so we assume $x_2 > -h$. Since the lemma is concerned with $c$ close to $U(-h)$ where $R$ and $\tilde C$ may depend on $x_2$ and $k$, we consider 
\[
c = U(x_2^c), \;\; x_2^c \in [-h_0-h, (-h+x_2)/2] \, \Longrightarrow \, \tau =\mu^{-1} (x_2 - x_2^c) \ge \mu^{-1} (x_2+ h)/2 >0. 
\]
If $x_2 - x_2^c >\mu$, then \eqref{E:pcy5} clearly holds as $x_2$ is away from the singularity of $y_{0-}$. Otherwise,  let $\tau_0 = - \mu^{-1} (x_2^c +h)$. From \eqref{E:S-err-1} and the $C^{l_0-2}$ smoothness of $S_{err}$ due to Lemma \ref{L:Ray-H0-FM}, 
we have 
\begin{align*}
& \tau_0^{j} \p_k^{j_1} \p_c^{j_2} \begin{pmatrix} \mu^{-1} y_{0-} (x_2)  \\ y_{0-}' (x_2)  \end{pmatrix} \\
= &\tau_0^{j}\big(\p_c - \tfrac { \p_\tau + \p_{\tau_0}}{\mu U'(x_2^c)}\big)^{j_2} \left[ \big( \log |\tfrac \tau{\tau_0}| + \tfrac {i \pi}2  (sgn(\tau)- sgn(\tau_0)) \big) \p_k^{j_1}  \Big(\tfrac {\mu U''(x_2^c)}{U'(x_2^c)} \begin{pmatrix} \tilde S_{12} (\mu, c, \tau, \tau_0)  \\ \tilde S_{22} (\mu, c, \tau, \tau_0) \end{pmatrix} \Big)\right] + O(1) 
\end{align*}
where we also used that $\log |\tfrac \tau{\tau_0}| + \tfrac {i \pi}2  (sgn(\tau)- sgn(\tau_0))$ is independent of $k$. The desired inequality \eqref{E:pcy5} follows from straight forward calculations using the $C^{l_0-3}$ smoothness of $\tfrac {\tilde S_{12}}{\tau_0}$, $\tfrac {\tilde S_{22}}{\tau_0}$, and $\tfrac {U(-h) - c}{\tau_0}$. 
\end{proof}

\begin{remark} \label{R:y_+}
Most of the above regularity results and estimates also hold for $y_{0+}(k, c, x_2)$. Since $y_+$ plays a less substantial role as $y_-$ in the rest of the paper, we only gave the basic estimates on $y_{+}$. 
\end{remark} 

In the above $\p_c y_\pm$ was considered only for $c\in U([-\frac {h_0}2 -h, \frac {h_0}2])$. To end this subsection, we extend some estimate for $c \in \C$ using the analyticity of $y_\pm$ in $c$ in the following lemma. 

\begin{lemma} \label{L:pcy-complex}
Assume $U\in C^5$. The following hold. 
\begin{enumerate} 
\item For any $c \in \C$ with $c_I>0$,
it holds 
\[
\p_c y_- (k, c, x_2)  = \frac 1{2\pi i} \int_{\R} \frac {\p_c y_{0-} (k, c', x_2)}{c' -c} dc', 
\]
\[
(U(x_2)-c) \p_c y_-' (k, c, x_2)  = \frac 1{2\pi i} \int_{\R} \frac {(U(x_2)-c') \p_c y_{0-}' (k, c', x_2)}{c' -c} dc'. 
\]
\[
\frac {\p_c y_+(k, c, x_2)}{(U(x_2)-c-i)^2} = \frac 1{2\pi i} \int_{\R} \frac {\p_c y_{0+} (k, c', x_2)}{(U(x_2)-c'-i)^2(c' -c)} dc',
\]
\[
\frac {(U(x_2)-c) \p_c y_+'(k, c, x_2)}{(U(x_2)-c-i)^3} = \frac 1{2\pi i} \int_{\R} \frac {(U(x_2)-c')\p_c y_{0+} (k, c', x_2)}{(U(x_2)-c'-i)^3(c' -c)} dc'.
\]
\item  For any $r\in (1, \infty)$, 
\begin{enumerate} 
\item there exists $C >0$ depending only on $r$, $|U'|_{C^{4}}$, and $|(U')^{-1}|_{C^0}$, such that for any $k \in \R$, $x_2\in [-h, 0]$, $c_I > 0$,
\[
\mu^{-1} |\p_c y_- (k, c, x_2)|_{L_{c_R}^r(\R)} + |(U(x_2)- c) \p_c y_{-}' (k, c, x_2)|_{L_{c_R}^r(\R)} \le C \cosh \mu^{-1} (x_2+h); 
\]
\item 
as $c_I \to 0+$, $\p_c y_-$ and $(U-c) \p_c y_{-}'$ converge to $\p_c y_{0-}$ and $(U-c_R) \p_c y_{0-}'$ in $L_{c_R}^r(\R)$, respectively, for any $x_2\in [-h, 0]$. Moreover, the convergence also holds in $L_{c_R, x_2}^r (\R \times [-h, 0])$. 
\end{enumerate}
\item For any $r\in (1, \infty)$ and compact interval $\CI \subset \R$, 
\begin{enumerate} 
\item there exists $C >0$ depending only on $r$, $\CI$, $|U'|_{C^{4}}$, and $|(U')^{-1}|_{C^0}$, such that for any $k \in \R$, $x_2\in [-h, 0]$, $c_I > 0$, 
\[
\mu^{-1} |\p_c y_+ (k, c, x_2)|_{L_{c_R}^r(\CI)} + |(U(x_2)- c) \p_c y_{+}' (k, c, x_2)|_{L_{c_R}^r(\CI)} \le C \cosh \mu^{-1} (x_2+h); 
\]
\item 
as $c_I \to 0+$, $\p_c y_+$ and $(U-c) \p_c y_{+}'$ converge to $\p_c y_{0+}$ and $(U-c_R) \p_c y_{0+}'$ in $L_{c_R}^r(\CI)$, respectively, for any $x_2\in [-h, 0]$. Moreover, the convergence also holds in $L_{c_R, x_2}^r (\CI \times [-h, 0])$. 
\end{enumerate}\end{enumerate}
\end{lemma}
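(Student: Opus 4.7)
The plan is to derive the representations in Part~(1) via Cauchy's integral formula on the upper half $c$-plane, then extract Parts~(2) and~(3) from standard $L^r$-boundedness of the Cauchy transform combined with the estimates on $\p_c y_{0\pm}$ from Lemmas~\ref{L:pcy1}--\ref{L:pcy3}. By Lemma~\ref{L:Ray-H-reg}, $y_\pm(k,c,x_2)$ is analytic in $c \in \C\setminus U([-h,0])$, so $\p_c y_\pm$ and $(U(x_2)-c)\p_c y_\pm'$ (and the weighted $y_+$ analogues) are analytic in $\{c_I>0\}$. For each fixed $k$ and $x_2$, I would apply Cauchy's formula on a large semicircular contour in $\{c_I>0\}$ enclosing $c$; once the arc contribution is shown to vanish as the radius tends to infinity, the real-axis integral delivers the claimed representation, with boundary values identified as $\p_c y_{0\pm}$ through Lemma~\ref{L:y0}.

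The main technical obstacle will be establishing decay in $c$ at infinity. For $y_-$, since the Rayleigh equation \eqref{E:Ray-H1-1} is a small perturbation of $-y''+k^2 y=0$ as $|c|\to\infty$, writing $y_- = \mu\sinh(\mu^{-1}(x_2+h)) + R$ and iterating the variation-of-parameters formula gives $R = O(|c|^{-1})$ uniformly in $x_2\in[-h,0]$. Differentiating \eqref{E:pcy1-temp-1} with $j_1=0,\,j_2=1$, whose right-hand side is then $O(|c|^{-2})$, shows $\p_c y_- = O(|c|^{-2})$ and $(U(x_2)-c)\p_c y_-' = O(|c|^{-1})$. For $y_+$, the $c$-dependent initial data at $x_2=0$ grow like $|c|^2$, forcing $\p_c y_+ = O(|c|)$, but the weight $(U(x_2)-c\pm i)^{-2}=O(|c|^{-2})$ supplies the additional decay needed so that the integrands in Part~(1) are at worst $O(|c|^{-1})$, which is enough for the arc term to disappear. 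Combined with the logarithmic interior singularities identified in Lemma~\ref{L:pcy2}, these decay rates place $\p_c y_{0-}$ and $(U-c_R)\p_c y_{0-}'$ in $L^r_{c_R}(\R;L^\infty_{x_2})$ with the stated norms.

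For the uniform bounds in (2a) and (3a), I would decompose the Cauchy kernel as
\[
\frac{1}{2\pi i(c'-c)} = \frac{c'-c_R}{2\pi i\big((c'-c_R)^2+c_I^2\big)} + \frac{1}{2\pi}\cdot\frac{c_I}{(c'-c_R)^2+c_I^2},
\]
recognizing the second term as a scaled Poisson kernel and the first as a mollified Hilbert transform, both bounded on $L^r(\R)$ for $1<r<\infty$ uniformly in $c_I>0$; applied to the representations of Part~(1) with the boundary-value bounds above, this yields the claim. For $y_+$ the restriction to a compact interval $\CI$ absorbs the fact that the weighted boundary values lie only locally in $L^r_{c_R}$. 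The convergence in (2b) and (3b) is then standard Hardy-space boundary theory — equivalently, dominated convergence applied to the Cauchy integral, using the uniform $L^r$ control together with the pointwise convergence supplied by Lemma~\ref{L:y0} — and the joint $L^r_{c_R,x_2}$ convergence is obtained via Fubini's theorem.
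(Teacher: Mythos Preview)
Your approach is essentially the paper's: Cauchy representation in the upper half-plane, decay at infinity to kill the far-field contribution, then uniform $L^r$-boundedness of the Cauchy transform (the paper simply says ``boundedness of convolution by $\frac{1}{c'+ic_I}$'', which is exactly your Poisson-plus-mollified-Hilbert decomposition). One point deserves care: you cite Lemma~\ref{L:y0} to identify the boundary values as $\p_c y_{0\pm}$, but that lemma covers convergence of $y_\pm$ and $y_\pm'$, not of $\p_c y_\pm$. The paper closes this by first integrating by parts along $\R+i\beta$ to express the Cauchy integral in terms of $y_-(k,c',x_2)/(c'-c)^2$ (the boundary terms at $c'\to\pm\infty$ cancel by the $O(|c|^{-1})$ remainder estimate you already noted), then passing $\beta\to 0+$ via Lemma~\ref{L:y0}, and finally integrating by parts back on $\R$ using the $L^r$ integrability of $\p_c y_{0-}$ from Lemmas~\ref{L:pcy1}--\ref{L:pcy3}. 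With that maneuver inserted, your sketch is complete.
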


The multiplier $U(x_2)-c$ in front of $\p_c y_\pm'$ is added to regularize their singularities near $U(x_2)=c$ and the denominators $(U(x_2)-c-i)^n$, $n=2,3$, in the expressions related to $\p_c y_+$ are to make it decay as $|c| \to \infty$ (recall the initial conditions \eqref{E:y-pm} of $y_+$ involving $c$).  

\begin{proof}
Let $B_{h_0} \subset \C$ be the open disk with diameter segment $U([-\frac {h_0}2 -h, \frac {h_0}2])$. For any $c \notin B_{h_0}$, let 
\[
\rho = k^{-2} (1+ |U''|_{C^0}) \max_{[-h, 0]} |U-c|^{-1} \le Ck^{-2} (1+|c|)^{-1}.  
\]
There exists $k_*>0$ such that $\rho<1$ for any $|k| \ge k_*$. Lemma \ref{L:Ray-regular-1} (with $x_{2l} =-h$, $\CI=[-h, 0]$, $C_0=0$, and $\Theta_1=\Theta_2 =\sinh$) implies, for $|k|\ge k_*$ and $c \notin B_{h_0}$, 
\be \label{E:pcy-c-temp-0.5} \begin{split} 
\big|y_- (k, c, x_2) - |k|^{-1} \sinh |k| (x_2+h)\big| + \mu \big|y_-' (k, c, & x_2) - \cosh |k| (x_2+h)\big| \\
\le & C\mu k^{-1} (1+ |c|)^{-1} \sinh |k| (x_2+h). 
\end{split} \ee
For $|k|< k_*$, Lemma \ref{L:regular-small-k} implies that the above inequality still holds for  $c \notin B_{h_0}$. 

From equation \eqref{E:pcy1-temp-1} ($j_1=0$ and $j_2=1$) of $\p_c y_-$, applying \eqref{E:y-nh-1}  with $\phi = -\frac {U''}{(u-c)^2} y_-$ and using Lemma \ref{L:y-pm}, we have for $|k|\ge k_*$  and $c \notin B_{h_0}$, 
\begin{align}
\mu^{-1} |\p_c y_- (k, c, x_2)| + |\p_c y_-' (k, c, x_2)|  \le & C\mu (1+ |c|)^{-2} \sinh \mu^{-1} (x_2+h).    \label{E:pcy-c-temp-1}
\end{align}
For $|k|\le k_*$, Lemma \ref{L:regular-small-k} implies that the above inequality still holds for $c \notin B_{h_0}$.   

For any $c \in \C$ with $c_I>0$, the analyticity of $\p_c y_-$ and its $O(|c|^{-2})$ decay as $|c|\to \infty$ imply, for any $\beta \in (0, c_I)$, 
\[
\p_c y_- (k, c, x_2) = \frac 1{2\pi i} \int_{\R + i\beta} \frac {\p_c y_- (k, c', x_2)}{c' -c} dc' = \frac 1{2\pi i} \int_{\R + i\beta} \frac {y_- (k, c', x_2)}{(c' -c)^2} dc',
\]
where the boundary terms at infinity in the above integration by parts vanish due to the uniform-in-$c$ bound on $|y_-|$ given in \eqref{E:pcy-c-temp-0.5}. Letting $\beta\to 0+$, the same bound 
and Lemma \ref{L:y0} yield 
\[
\p_c y_- (k, c, x_2) = \frac 1{2\pi i} \int_{\R} \frac {y_{0-} (k, c', x_2)}{(c' -c)^2} dc' = \frac 1{2\pi i} \int_{\R} \frac {\p_c y_{0-} (k, c', x_2)}{c' -c} dc'= - \frac 1{2\pi i} \int_{\R} \frac {\p_c y_{0-} (k, c', x_2)}{(c_R-c')+ ic_I} dc',
\]
where we integrated by parts again. The desired estimate on $|\p_c y_-|_{L_{c_R}^r}$ follows from the boundedness of the convolution kernel $\frac 1{c'+ic_I}$ on $L^r (\R)$, \eqref{E:pcy-c-temp-1} for $|c|\gg1$, and Lemmas \ref{L:pcy1}--\ref{L:pcy3}. 

The results for $\p_c y_-'$ are derived in the same manner. In fact 
\begin{align*}
& (U(x_2)-c) \p_c y_-' (k, c, x_2) = \frac 1{2\pi i} \int_{\R + i\beta} \frac {(U(x_2)-c') \p_c y_-' (k, c', x_2)}{c' -c} dc' \\
=& \frac 1{2\pi i} \int_{\R + i\beta} \frac {(U(x_2)-c) \p_c y_-' (k, c', x_2)}{c' -c} - \p_c y_-' (k, c', x_2) dc' \\
=& \frac 1{2\pi i} \int_{\R + i\beta} \frac {(U(x_2)-c)y_-' (k, c', x_2)}{(c' -c)^2} dc'= \frac 1{2\pi i} \int_{\R} \frac {(U(x_2)-c)y_{0-}' (k, c', x_2)}{(c' -c)^2} dc'\\
=&  \frac 1{2\pi i} \int_{\R } \frac {(U(x_2)-c') \p_c y_{0-}' (k, c', x_2)}{c' -c} dc',
\end{align*}
where we used \eqref{E:pcy-c-temp-0.5} to cancel the two boundary terms at infinity  in the above both  integrations by parts and also used the integrability of $(U(x_2)-c) \p_c y_-' (k, c, x_2)$ near $U(x_2)=c$ given in Lemma \ref{L:pcy2}. The latter also yields the estimate on $(U-c)\p_c y_-'$.   

In statement (2b), the pointwise-in-$x_2$ convergence in $L_{c_R}^r$ is standard due to the convergence of the convolution kernel $\frac 1{c'+ic_I}$ on $L^r (\R)$ as $c_I\to 0+$, as well as the analyticity of $y_-$ for $c_I>0$. The convergence in $L_{c_R, x_2}^r$ follows from the pointwise-in-$x_2$ convergence in $L_{c_R}^r$, the $L_{x_2}^\infty L_{c_R}^r$ bounds in statement (2a), and the dominant convergence theorem. 

Finally, $\p_c y_+$ can be analyzed similar. However, the initial values \eqref{E:y-pm} induce an $O(|c|^2)$ growth in $y_+$ and $y_+'$ and an $O(|c|)$ growth of $\p_c y_+$ and $\p_c y_+'$ for $|c|\gg 1$ (Lemma \ref{L:regular-small-k}). Instead we consider, for $c_I>0$, 
\[
\frac {\p_c y_+(k, c, x_2)}{(U(x_2)-c-i)^2} = \frac 1{2\pi i} \int_{\R + i\beta} \frac {\p_c y_+ (k, c', x_2)}{(U(x_2)-c'-i)^2(c' -c)} dc',
\]
which holds for any $\beta \in (0, c_I)$. From this Cauchy integral formula we proceed much as in the above and obtain the integral representation in term of $\p_c y_{0+}$. The derivation of the corresponding formula of $\p_c y_+'$ is also similar. The desired convergence and estimates of $\p_c y_+$ and $\p_c y_+'$ in $L_{c_R}^r(\CI)$ on a compact interval $\CI$ again follow from the properties of the convolution by the kernel $\frac 1{c'+ic_I}$. 
\end{proof}

\subsection{An important quantity $Y= y_{-}'(0)/y_{-} (0)$} \label{SS:Y}

To end this section, we analyze a quantity related to the Reynolds stress, which is crucial for the linearized water wave problem: 
\be \label{E:Y} \begin{split}
&Y(k, c) = Y_R (k, c) + iY_I (k, c) := \frac {y_{-}' (k, c, 0)}{y_{-} (k, c, 0)},  \quad  c = c_R + i c_I \in \C \setminus U([-h, 0]), \\
&Y(k, c) = \lim_{\ep \to 0+} Y(k, c+i\ep)= \frac {y_{0-}' (k, c, 0)}{y_{0-} (k, c, 0)}, \quad c\in U\big([-h, 0)\big),
\end{split} \ee
where $y_{-} (k, c,x_2)$ is the solution to homogeneous Rayleigh equation \eqref{E:Ray-H1-1} satisfying $y_{-}(-h)=0$ and $y_{-}'(-h)=1$ defined in Subsection \ref{SS:Ray-H-Fund} and $y_{0-}(k, c, x_2) = \lim_{\ep \to 0+} y_- (k, c+i \ep, x_2)$ for $c \in \R$. 
Due to Remark \ref{R:y-pm}, $y_-(k, c_R+ic_I, x_2)$ satisfies estimates uniform in $0< \ep \ll1$. With slight abuse of notations, we would not distinguish $y_{0-}$ from $y_{-}$ in the rest of this section. Apparently the domain of $Y(k, c)$ is given by 
\[
D(Y) = \{ (k, c) \in \R \times \C \mid c\ne U(0), \ y_- (k, c, 0)\ne 0\},
\] 
and those excluded points (except $c= U(0)$) exactly are the eigenvalues of of the linearized Euler equation in the fixed channel $x_2 \in [-h, 0]$ at the shear flow $(U(x_2), 0)$. $Y$ is not defined at $c = U(0)$ since $y_-'(x_2)$ has singularity at $x_2 =0$. We first summarize some basic or standard properties of $y_-(k, c, 0)$ in the following lemma.  


\begin{lemma} \label{L:y-lower-b}
Assume $U \in C^3$. The following hold.
\begin{enumerate} 
\item For any $k \in \R$, $y_- (k, c, x_2)>0$ for any $x_2\in (-h, 0]$ and $c\in \R \setminus U\big((-h, 0)\big)$.
\item  There exists $C>0$ depending on $U$ such that, for any $k, c  \in \R$, it holds 
\be \label{E:y-lower-b-1} 
(Ck)^{-1} \sinh k(x_2+h) \le y_-(k, c, x_2) \le C k^{-1} \sinh k(x_2+h) \; \text{ if } \; (U(x_2)-c)(U(-h) -c) \ge 0.
\ee
\item There exists $C>0$ depending only on $U$ such that, for any $c = U(x_2^c)$, $x_2^c \in [-h, 0)$, it holds,  for any $k \in \R$, 
\[
C^{-1}\mu^2 |U''(x_2^c)| \sinh \mu^{-1}(x_2^c+h) \sinh \mu^{-1} |x_2^c| \le | \IP\, y_- (k, c, 0)| \le C\mu^2 |U''(x_2^c)| \sinh \mu^{-1}(x_2^c+h) \sinh \mu^{-1} |x_2^c|.
\]
\item There exists $k_*>0$ and $C>0$ depending only on $M>0$, $|U'|_{C^2}$ and $|(U')^{-1}|_{C^0}$ such that, if $|k| \ge k_*$ or $|c - (U(-h)+U(0))/2|\ge M + (U(0)-U(-h))/2$ then 
\be \label{E:y-lower-b} 
|y_-(k, c, 0)| \ge (Ck)^{-1} \sinh kh.  
\ee
\item Suppose a closed subset $S \subset \C$ satisfies $y_-(k,c, 0) \ne 0$ for all $c\in S$ and $k \in \BK$ where $\BK =\R$ or $\frac {2\pi}L \mathbb{Z}$, then there exists $C>0$ depending only on $S$ and $U$ such that \eqref{E:y-lower-b} holds for all $k \in \BK$ and $c \in S$. 
\end{enumerate}
\end{lemma}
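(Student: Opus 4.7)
The plan is to treat the five parts in order, leveraging the substitution $Y := y_-/(U-c)$ and the machinery of Sections~\ref{SS:Ray-regular}--\ref{SS:pcy0}. Under this substitution the Rayleigh equation becomes the regular Sturm--Liouville form $\bigl((U-c)^2 Y'\bigr)' = k^2 (U-c)^2 Y$ with $Y(-h) = 1/U'(-h)$ (defined, when $c=U(-h)$, via Taylor expansion of $y_-$ at the regular singular point; cf.\ Remark~\ref{R:Ray-lim}) and $(U-c)^2 Y'|_{-h} = U(-h)-c$. For (1), integrating in $x_2$ yields
\[
(U-c)^2 Y'(x_2) = \int_{-h}^{x_2} k^2(U-c)^2 Y\,dx_2',
\]
which forces $Y$ to retain the sign of $Y(-h) > 0$ whenever $(U-c)^2$ is bounded below; hence $y_- = (U-c) Y$ is positive on $(-h, 0]$, the endpoint values $c=U(-h), U(0)$ being handled in the same way after the Frobenius analysis at the regular singular point. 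For (2), on $[-h, x_2^c]$ estimate~\eqref{E:y-_1} of Lemma~\ref{L:y-pm} (uniform as $c_I \to 0^+$ by Remark~\ref{R:y-pm}) gives $y_- = \mu \sinh(\mu^{-1}(x_2+h))(1 + O(\mu^\alpha))$; then $\mu\sinh(\mu^{-1}\cdot)$ is comparable to $k^{-1}\sinh(k\cdot)$ via \eqref{E:mu-k}--\eqref{E:mu-k-s} for $|k|\ge 1$ and directly for $|k|\le 1$ using Lemma~\ref{L:regular-small-k}.

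For (3), Remark~\ref{R:jump-at-sing} identifies $\IP y_-$ on $[x_2^c, 0]$ as the solution of the homogeneous Rayleigh equation there with $\IP y_-(x_2^c) = 0$ and $\IP y_-'(x_2^c) = \tfrac{\pi U''(x_2^c)}{U'(x_2^c)} y_-(x_2^c)$; so by linearity $\IP y_-(0) = \tfrac{\pi U''(x_2^c)}{U'(x_2^c)} y_-(x_2^c)\, z(0)$, where $z$ is the regular solution on $[x_2^c, 0]$ with $z(x_2^c) = 0$, $z'(x_2^c) = 1$. Applying the fundamental-matrix representation~\eqref{E:Ray-H0-2}--\eqref{E:Ray-H0-4} of Lemma~\ref{L:B} with base point $\tau_0 = 0$ (so the logarithmic branch vanishes), together with the matrix estimates of Lemma~\ref{L:Ray-H0-FM}, yields $z(0) \asymp \mu \sinh(\mu^{-1}|x_2^c|)$; combining with $y_-(x_2^c) \asymp \mu \sinh(\mu^{-1}(x_2^c+h))$ from (2) gives the claimed two-sided bound. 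This is the step I expect to be most delicate: the regular branch on $[x_2^c, 0]$ must be tracked through the rescaling $\tau = \mu^{-1}(x_2 - x_2^c)$ on an interval whose length $|x_2^c|$ may be arbitrarily small relative to $\mu$, so that the $\sinh/\cosh$ asymptotics combine cleanly with the smooth-matrix estimates to produce exactly the $\mu^2\,\sinh\cdot\sinh$ scaling on both sides.

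For (4), when $|k| \ge k_*$ with $k_*$ large enough that $C\mu^\alpha \le 1/2$ in~\eqref{E:y-_1}, evaluating at $x_2 = 0$ gives $y_-(k, c, 0) \ge \tfrac12 \mu \sinh(\mu^{-1} h) \ge (Ck)^{-1}\sinh(kh)$ uniformly in $c$. When instead $|c - (U(-h)+U(0))/2| \ge M + (U(0)-U(-h))/2$, then $|U-c| \ge M$ on $[-h,0]$, so $\mathcal{I}_2 = \emptyset$, and Lemma~\ref{L:y-pm} (for $|k|\ge k_*$) or Lemma~\ref{L:regular-small-k} with $C_0 \le 1/M$ (for $|k|\le k_*$) delivers the same lower bound. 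Finally (5) is a standard compactness argument: $|y_-(k,c,0)|$ is continuous in $(k, c) \in \BK \times S$ by Lemma~\ref{L:Ray-H-reg} and Lemma~\ref{L:y0}, and it already satisfies the required lower bound outside a compact subset of $\BK \times S$ by (4); the nonvanishing hypothesis plus continuity produce a uniform positive minimum on the remaining compact set, which after comparison with $(Ck)^{-1}\sinh(kh)$ (itself bounded on that compact set) yields the global estimate.
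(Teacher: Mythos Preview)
Your overall strategy is close to the paper's, but there is a concrete execution error in (1) and a genuine gap in (4).

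In (1), the value $Y(-h)=1/U'(-h)$ holds only when $c=U(-h)$; for every other $c\in\R\setminus U((-h,0))$ you have $Y(-h)=y_-(-h)/(U(-h)-c)=0$, and the integrated identity must carry the nonzero boundary term you yourself wrote down:
\[
(U-c)^2 Y'(x_2) \;=\; \bigl(U(-h)-c\bigr) + \int_{-h}^{x_2} k^2(U-c)^2 Y\,dx_2'.
\]
The positivity bootstrap runs off the sign of $U(-h)-c$ (positive for $c\le U(-h)$, negative for $c\ge U(0)$, giving $Y>0$ or $Y<0$ respectively and hence $y_-=(U-c)Y>0$ in both cases), not off ``$Y(-h)>0$''. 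The paper packages the same Sturm--Liouville idea as a contradiction: if $y_-$ had another zero $x_{21}$, multiplying $-((U-c)^2\xi')'+k^2(U-c)^2\xi=0$ by $\xi=y_-/(U-c)$ and integrating between $-h$ and $x_{21}$ forces $\int(U-c)^2(|\xi'|^2+k^2|\xi|^2)=0$.

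In (4), for $|k|\le k_*$ Lemma~\ref{L:regular-small-k} with $C_0=1/M$ yields only $|y_-(0)-k^{-1}\sinh kh|\le C(k_*,M)\,h^2/M$, and for small $M$ the constant $C(k_*,M)$ (which comes from a Gronwall factor $e^{C h/M}$) blows up, so the inequality gives no lower bound. The paper invokes the semicircle theorem to guarantee $y_-(k,c,0)\ne 0$ on $\{|c-\tfrac{U(-h)+U(0)}{2}|\ge M+\tfrac{U(0)-U(-h)}{2}\}$, then combines continuity on the compact part with the Lemma~\ref{L:regular-small-k} asymptotics as $|c|\to\infty$.

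Two smaller points. In (2) for small $|k|$, Lemma~\ref{L:regular-small-k} cannot be applied on $[-h,x_2^c]$ since $1/(U-c)$ is unbounded at $x_2^c$; the paper instead uses positivity from (1) together with continuity and compactness of the ratio $y_-/(\mu\sinh(\mu^{-1}(\cdot+h)))$. In (3) the paper takes a shorter route than your fundamental-matrix calculation: $\IP\, y_-$ on $[x_2^c,0]$ has exactly the structure of $y_-$ on $[-h,x_2^c]$ (zero value at the singular endpoint, nonzero derivative, $c\notin U$ of the open interval), so the Claim gives nonvanishing and the proof of \eqref{E:y-_1} rerun on $[x_2^c,0]$ delivers the two-sided bound directly.
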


\begin{remark} \label{R:y-lower-b}
According to Lemmas \ref{L:regular-small-k} and \ref{L:y-pm}, the assumption $y_-(k, c, 0)\ne 0$ on $S$ in Statement (5) is automatically satisfied except possibly a compact set of $(k, c) \subset \BK \times S$. In particular, due to statement (3), it is satisfied for $S=\C$ if $U'' \ne 0$ on $[-h, 0]$.  We also recall $y_-(k, c, 0)=0$ is equivalent to that $-ikc$ is an eigenvalue of the linearized Euler equation at the shear flow $U$ on the fixed channel $x_2 \in (-h, 0)$ associated with an eigenfunction $v_2 (x_2) = e^{ikx_1}y_-(k, c, x_2)$. 
\end{remark}

\begin{proof}
We first claim the following standard result.

{\it Claim.} Let $y(x_2)$ is a solution to the homogeneous Rayleigh equation \eqref{E:Ray-H1-1} on an interval $\CI=(x_{2l}, x_{2r}) \subset [-h, 0]$ with $c \in \R \setminus U(\CI)$ such that $(y(x_{20}), y'(x_{20})) \in \{0\} \times (\R\setminus \{0\})$ at some $x_{20} \in \overline{\CI}$, then $y(x_2) \in \R \setminus \{0\}$ at any $x_2 \in \overline \CI \setminus \{x_{20}\}$.  

If $U(x_{20}) \ne c$, then the claim $y(x_2) \in \R$ and $y$ is in $C^\alpha$ on $\overline \CI$ are obvious since the coefficients of \eqref{E:Ray-H1-1} are real. If $U(x_{20})=c\notin U(\CI)$, then it must hold $x_{20} \in \{x_{2l}, x_{2r}\}$ and Lemma \ref{L:B} implies that $y\in C^1 (\overline \CI)$ and $W= (\mu^{-1} y, y') (\cdot + x_{20})$ satisfies \eqref{E:Ray-H0-2} with $W_1(0)=0$ and $\tilde \Phi_0\equiv0$. This formula yields $y\in \R$. Finally, suppose $y(x_{21})=0$ at some $x_{21} \in \overline \CI \setminus \{x_{20}\}$. Let $y = (U - c) \xi$. Again $\xi \in C^1(\CI \cup\{x_{20}, x_{21}\})$ due to Lemma \ref{L:B} and it is standard to verify 
\be \label{E:Y-temp-1}
- \big( (U-c)^2 \xi' \big)' + k^2 (U-c)^2 \xi =0, \quad x_2 \in \CI.
\ee
Multiplying it by $\xi$ and integrating it between $x_{20}$ and $x_{21}$ leads to a contradiction. Hence the claim is proved. 

For $c \in \R$, applying the above claim to $y_-$ on the interval $[-h, 0]$ if $c \notin U\big((-h, 0)\big)$ and on $[-h, x_2^c]$ if $c \in U\big((-h, 0)\big)$, respectively, implies that $y_-(x_2)\in \R$ does not change signs on these intervals. Hence we obtain statement (1) and $y_-(x_2) >0$ for $x_2 \in (-h, x_2^c]$ if $c \in U\big((-h, 0)\big)$. Along with \eqref{E:y-_1}, the continuity of $\frac {y_-(k, c, x_2)}{\mu \sinh \mu^{-1} (x_2+h)}$, and Lemma \ref{L:regular-small-k}, it also yields Statement (2). 

In the view of Lemma \ref{L:y0}, Remark \ref{R:jump-at-sing}, and statement (1), $y(x_2)=\IP\, y_-(x_2)$ is also a solution on $[x_2^c, 0]$ satisfying $y(x_2^c)=0$ and $y'(x_2^c)= \frac {\pi U''(x_2^c)}{U'(x_2^c)} y_-(x_2^c)$. Statement (3) follows from statement (2) applied to $y_-$ on $[-h, x_2^c]$ and to $\IP \, y_-$ on $[x_2^c, 0]$. 

From \eqref{E:y-_1} and Remark \ref{R:y-pm}, there exists $k_*>0$ such that \eqref{E:y-lower-b} holds for all $|k| \ge k_*$ and $c \in \C$. For $|k|\le k_*$, the restriction on $c$ involving $M>0$ ensures $y_-(k, c, 0)\ne 0$ due to the semicircle theorem (of the channel flow) and thus  
we obtain \eqref{E:y-lower-b}  from Lemma \ref{L:regular-small-k}, which completes the proof of statement (4). 

Finally assume $y_-(k, c, 0)\ne 0$ for all $k\in \BK$ and $c \in S$. Recalling  the convergence estimates \eqref{E:w-d-1} and the locally H\"older continuity of $y_-$ in $c\in \R$ (Lemma \ref{L:pcy0}), we obtain the continuity of $y_-$ in $c\in \C$ for $c_I\ge 0$. Lemmas  \ref{L:regular-small-k} and \ref{L:y-pm} along with the continuity of $y_-(k, c, 0)$ and the non-vanishing assumption imply that \eqref{E:y-lower-b} holds for all $k \in \BK$ and $c \in S$ with $c_I \ge 0$. As $y_-(k, \bar c, x_2) = \overline {y_-(k, c, x_2)}$, statement (5) follows  and it completes the proof of the lemma. 
\end{proof}


In the following we give some basic properties of $Y(k, c)$. 

\begin{lemma} \label{L:Y-def}
Assume $U\in C^{l_0}$, $l_0\ge 3$. It holds that $Y(k, \bar c) = \overline {Y(k, c)}$ and  $Y$ is a.) analytic in both $(k, c) \in D(Y)\setminus (\R \times U([-h, 0]))$, and, when restricted to $c_I\ge 0$, b.) $C^{l_0-2}$ in $(k, c) \in D(Y) \setminus (\R \times \{U(-h)\})$, and c.) $C^{l_0-2}$ in $k$ and locally $C^\alpha$ in $(k, c) \in D(Y)$  for any $\alpha \in [0, 1)$.  Moreover, 
\begin{enumerate}
\item $Y(k, U(-h)) \in \R$ and $Y(0, U(-h)) =\frac {U'(0)}{U(0)-U(-h)}$.
\item There exists $C, \rho>0$ depending only on $U$ such that 
\[
|Y(k, c)| \le C \big(\mu^{-1} + \big|\log  \min \big\{1, |U(0) -c|\big\}\big| \big), \; \;  \forall k\in \R, \; |c - U(0)| \le \rho. 
\]
\item For any $\alpha \in (0, \frac 12)$, there exist $k_0>0$ and $C>0$ depending only on  $\alpha$, $|U'|_{C^2}$, and $|(U')^{-1}|_{C^0}$ such that,  
\[
|Y(k, c) - k \coth kh| \le C (\mu^{\alpha-1} + |\log \min\{1, \, |U(0)-c|\}|), \quad \forall |k| \ge k_0, \; c \ne U(0).  
\]
\item For any $M>0$ and $k_*>0$, there exists $C>0$ depending only on $k_*$ and $M$ such that 
\[
|Y(k, c) - k \coth kh| \le \frac C{ dist(c, U([-h, 0]))}, \quad \forall |k| \le k_*, \; \Big|c - \frac {U(-h) + U(0)}2\Big|\ge M + \frac {U(0)-U(-h)}2. 
\]
\end{enumerate} 
\end{lemma}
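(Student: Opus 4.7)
The plan is to derive all four numbered claims, plus the symmetry, analyticity, and regularity assertions, from the pointwise estimates on the fundamental solution $y_-$ established in Subsections \ref{SS:Ray-H-Fund}--\ref{SS:pcy0}, combined with the sharp lower bounds on $y_-(k,c,0)$ in Lemma \ref{L:y-lower-b}. Symmetry $Y(k,\bar c)=\overline{Y(k,c)}$ is immediate from $y_-(k,\bar c,x_2)=\overline{y_-(k,c,x_2)}$ (Lemma \ref{L:Ray-H-reg}). Analyticity on $D(Y)\setminus(\R\times U([-h,0]))$ follows because $(y_-,y_-')(k,c,0)$ is analytic in $k^2$ and $c$ there (Lemma \ref{L:Ray-H-reg}) and the denominator is non-vanishing on $D(Y)$. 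For $c_I\ge 0$, the regularity of $Y$ reduces to that of $(y_-,y_-')(k,c,0)$ given by Lemma \ref{L:pcy0}: away from $c\in\{U(0),U(-h)\}$ one has $C^{l_0-3}$ smoothness since $U(x_2)=U(0)\ne c$ at $x_2=0$; at the endpoints the $C^{l_0-3}$-in-$k$ and locally $C^\alpha$-in-$(k,c)$ statements of Lemma \ref{L:pcy0}(a),(d),(e) give the claim.

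For item (1), $c=U(-h)\notin U((-h,0))$, so the claim embedded in the proof of Lemma \ref{L:y-lower-b} shows $y_-(k,U(-h),x_2)$ is real on $[-h,0]$; since the ODE \eqref{E:Ray-H1-1} at this $c$ has real coefficients all the way to $x_2=0$, also $y_-'(k,U(-h),0)\in\R$, hence $Y(k,U(-h))\in\R$. At $k=0$ and $c=U(-h)$, direct substitution shows $y(x_2)=(U(x_2)-U(-h))/U'(-h)$ solves $y''=U''y/(U-U(-h))$ with $y(-h)=0$, $y'(-h)=1$, so this function coincides with $y_-$; dividing $y_-'(0)=U'(0)/U'(-h)$ by $y_-(0)=(U(0)-U(-h))/U'(-h)$ yields the stated formula.

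For items (2)--(4), I estimate the numerator and the denominator of $Y$ separately. Lemma \ref{L:y-pm} at $x_2=0$ (invoking Remark \ref{R:y-pm} for $c_I=0$) gives $|\mu^{-1}y_-(k,c,0)-\sinh(\mu^{-1}h)|\le C\mu^\alpha\sinh(\mu^{-1}h)$, hence $|y_-(k,c,0)|\ge C^{-1}\mu\sinh(\mu^{-1}h)$ for $|k|\ge k_0$; this is the key lower bound for items (2) and (3) in the large-$|k|$ regime. For the numerator in (3), Lemma \ref{L:y-pm} provides $|y_-'(k,c,0)-\cosh(\mu^{-1}h)|\le C(\mu^\alpha+\mu|\log|U(0)-c||)\cosh(\mu^{-1}h)$; dividing and passing from $\mu^{-1}$ to $k$ via \eqref{E:mu-k-s} at cost $O(\mu)\sinh$ delivers the asserted asymptotic $Y=k\coth kh+O(\mu^{\alpha-1}+|\log\min\{1,|U(0)-c|\}|)$. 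For item (4), the hypothesis places $c$ outside the Howard semicircle, so $y_-(k,c,0)\ne 0$ and Lemma \ref{L:y-lower-b}(4) applies; writing $y_-=y_F+y_R$ with the free solution $y_F(x_2)=k^{-1}\sinh k(x_2+h)$, Lemma \ref{L:regular-small-k} with $\phi=-U''y_-/(U-c)$ and $C_0=1/\mathrm{dist}(c,U([-h,0]))$ yields $|y_-(0)-k^{-1}\sinh kh|+|y_-'(0)-\cosh kh|\le C/\mathrm{dist}(c,U([-h,0]))$, and division gives the claim.

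The main obstacle will be item (2) when $|k|\le k_0$ is bounded and $c\in U([-h,0])$ approaches $U(0)$: the Rayleigh equation becomes singular at $x_2=0$ in the limit, so Lemma \ref{L:regular-small-k} is not directly applicable across the singular point, and the naive large-$|k|$ lower bound on $y_-(k,c,0)$ degenerates. I will handle this by working in the rescaled variable of Subsection \ref{SS:a-esti} with $x_2^c$ near $0$, extracting the explicit logarithmic singularity of $y_-'(k,c,0)$ in $|U(0)-c|$ from \eqref{E:y-_4} of Lemma \ref{L:y-pm}, and controlling the denominator via the $C^\alpha$ continuity of $y_-(k,c,0)$ in $c$ (Lemma \ref{L:pcy0}(a)) together with strict positivity of $y_-(k,U(0),0)$ obtained from \eqref{E:y-_1} and Remark \ref{R:y-pm} uniformly for $|k|\le k_0$; shrinking $\rho$ depending on $k_0$ then yields the desired bound on the full range of $k$.
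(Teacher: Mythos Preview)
Your overall strategy matches the paper's almost exactly: symmetry and analyticity from Lemma \ref{L:Ray-H-reg}; regularity from Lemma \ref{L:pcy0}; item (1) from the explicit solution \eqref{E:y-0}; item (3) from Lemma \ref{L:y-pm}; item (4) from Lemma \ref{L:regular-small-k} combined with the semicircle theorem. Two points deserve correction, however.

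For item (2) with $|k|\le k_0$ you overcomplicate things and, more importantly, cite the wrong source for strict positivity of $y_-(k,U(0),0)$. Inequality \eqref{E:y-_1} only gives $|y_-(k,c,0)-\mu\sinh(\mu^{-1}h)|\le C\mu^{1+\alpha}\sinh(\mu^{-1}h)$, and for bounded $|k|$ the factor $C\mu^\alpha$ need not be smaller than $1$, so no sign information follows. The paper instead invokes Lemma \ref{L:y-lower-b}(1), whose qualitative ODE argument (the Claim in its proof) yields $y_-(k,c,0)>0$ for every $k\in\R$ and every $c\in\R\setminus U((-h,0))$, in particular at $c=U(0)$. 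Once you have that, continuity in $(k,c)$ (for $c_I\ge 0$) together with compactness in $k\in[-k_0,k_0]$ gives a uniform lower bound on $|y_-(k,c,0)|$ for $|c-U(0)|\le\rho$ with some $\rho>0$; this is precisely the mechanism of Lemma \ref{L:y-lower-b}(5). No rescaled-variable detour is needed, and the numerator bound with the explicit $\log|U(0)-c|$ already comes directly from Lemma \ref{L:y-pm} (specifically \eqref{E:y-_4} when $0\in\CI_2$, or \eqref{E:y-_3} otherwise).

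A smaller omission: for the $C^\alpha$ regularity of $Y$ jointly in $(k,c)$ with $c_I\ge 0$, Lemma \ref{L:pcy0} covers only variation of $c$ along $\R$. The paper also appeals to Proposition \ref{P:converg} to handle variation in the $c_I$ direction; you should include this.
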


\begin{proof} 
The analyticity and the conjugacy property of $Y$ are obvious from its definition. The property $Y(k, U(-h))\in \R$ is a direct corollary of Lemma \ref{L:y-lower-b}(1).
The $C^{l_0-2}$ smoothness of $Y$ away from $U(-h)$ and $U(0)$ 
follows from Lemma \ref{L:pcy0} and the analyticity of $Y$ in $c$ with $c_I>0$. 
The  H\"older continuity of $Y$ 
is again a corollary of Lemma \ref{L:pcy0} for $c$ varying along $\R$ and Proposition \ref{P:converg} for $c$ varying along $i\R$. 
The explicit form of $Y\big(0, U(-h)\big)$ is a direct consequence of the observation 
\be\label{E:y-0}
y_-\big(0, U(-h), x_2\big) = \big(U(x_2)-U(-h)\big)/U'(-h). 
\ee

To end the proof of the lemma, we obtain the quantitive estimate on $Y(k, c)$. From Lemma \ref{L:y-lower-b}, $y_-(k, U(0), 0)\ne 0$ for any $k\in \R$. Along with Lemma \ref{L:y-pm}, it implies that \eqref{E:y-lower-b} holds for $|c-U(0)| \le \rho$ for some $\rho>0$ depending only on $U$. Statement (2) follows from 
the upper bound of $|y_{-}'(k, c, 0)|$ given in Lemma \ref{L:y-pm}. 
Statement (3) is also a direct consequence of Lemma \ref{L:y-pm} where $k_0$ is involved to ensure $y_-(k, c, 0)\ne 0$. In statement (4), the restriction on $c$ guarantees $y_-(k, c, 0) \ne 0$ due to the semicircle theorem and the desired inequality follows Lemma \ref{L:regular-small-k}. 
\end{proof}


The analyticity of $Y$ in $c$ allows us to use the Cauchy integral to analyze $Y(k, c)$. For $r>0$, let 
\be \label{E:CD_r}
\CD_r = B\big( U([-h, 0]), r\big) \subset \C
\ee 
be the $r$-neighborhood of $U([-h, 0]) \subset \C$. 

\begin{lemma} \label{L:Y-Cauchy-0} 
Assume $U \in C^3$, $k\in \R$, and $r>0$ satisfy $y_-(k, \cdot, 0)\ne 0$ on $\C \setminus \CD_r$, then for any $c \in \C \setminus \overline{\CD_r}$ 
and $n \ge 1$ we have  
\be \label{E:Y-Cauchy-0} 
Y(k, c) = k \coth kh - \frac 1{2\pi i}  \oint_{\p \CD_r} \frac {Y(k, c')}{c'-c} dc', \quad \p_c^n Y(k, c) = - \frac {n!}{2\pi i}  \oint_{\p \CD_r} \frac {Y(k, c')}{(c'-c)^{n+1}} dc', 
\ee
where $\oint$ denote the integral along the contours counterclockwisely. 
\end{lemma}

Here $\p_c Y = \frac 12 (\p_{c_R} - i \p_{c_I})Y $ denotes the derivative of $Y$ as a function of the complex variable $c$ and thus $\p_c Y= \p_{c_R} Y= -i \p_{c_I} Y$ due to its analyticity. 

\begin{proof}
The assumption implies $Y(k, \cdot)$ is analytic in $\C \setminus \overline{\CD_r}$ and continuous in $\C \setminus \CD_r$.  
For any $r' \gg1$, the analyticity of $Y$ 
yields   
\be \label{E:Y-Cauchy-0.5} 
Y(k, c) = \frac 1{2\pi i} \left( \oint_{\p \CD_{r'}} - \oint_{\p \CD_{r}} \right) \frac {Y(k, c')}{c'-c} dc'.  
\ee
Applying Lemma \ref{L:Y-def}(4) with $M=1$ and $k_*=1+|k|$, we have 
\[
|Y(k, c') - k \coth kh| \le  C/dist\big(c', U([-h, 0])\big), \quad \forall c' \in \p \CD_{r'}. 
\]
Therefore 
\[
\lim_{r' \to +\infty} \left( \frac 1{2\pi i} \oint_{\p \CD_{r'}} \frac {Y(k, c')}{c'-c} dc' - k\coth kh \right) = 0 
\]
and thus the desired integral formula of $Y(k, c)$ follows. 
The representation of $\p_c^n Y$ 
simply follows from direct differentiation. 
\end{proof}

\begin{remark} 
Though not needed in the rest of the paper, this lemma could be modified for general $k$ and $c \notin U([-h,0])$. In this case, $0< r<dist\big(c, U([-h,0])\big) $ should be chosen so that $y_-(k, \cdot, 0)\ne 0$  along $\p \CD_r$. The integral representation formula would involve the residue at those roots of $y_-(k , \cdot, 0)$ outside $\CD_r$. The estimates should also be modified accordingly. 
\end{remark}

To analyze the remaining integral in \eqref{E:Y-Cauchy-0}, 
we start with the imaginary part $Y_I$ of $Y$.  


\begin{lemma} \label{L:Y-I}
$Y_I (k, c)=0$ for $c\in \R\backslash U\big((-h, 0]\big)$. 
Assume $U \in C^3$, $c = U(x_2^c) \in U\big((-h, 0)\big)$, and $y_-(k, c, 0) \ne 0$, then  
\[
Y_I (k, c) =\frac {\pi U''(x_2^c) y_{-} (k, c, x_2^c)^2}{U'(x_2^c) |y_{-} (k, c, 0)|^2}.
\]  
\end{lemma}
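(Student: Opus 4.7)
The plan is to split into three cases according to the position of $c$ on the real axis and, in the nontrivial case, to exploit the constancy of a Wronskian on $[x_2^c,0]$.

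First, for $c\in\R\setminus U([-h,0])$ the coefficients of the homogeneous Rayleigh equation \eqref{E:Ray-H1-1} are real on $[-h,0]$ and the initial data $y_-(-h)=0$, $y_-'(-h)=1$ are real; so $y_-(k,c,x_2)\in\R$ for all $x_2$, hence $Y\in\R$ and $Y_I=0$. The endpoint $c=U(-h)$ is already recorded in Lemma~\ref{L:Y-def}(1).

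Next, fix $c=U(x_2^c)\in U\big((-h,0)\big)$ with $y_-(k,c,0)\ne 0$. By Lemma~\ref{L:y-lower-b}(1) (or the reality claim in its proof), $y_-(k,c,x_2)\in\R$ on $[-h,x_2^c]$. By Remark~\ref{R:jump-at-sing}, on $[x_2^c,0]$ the imaginary part $\eta(x_2):=\IP\, y_-(k,c,x_2)$ is itself a real solution of the homogeneous Rayleigh equation \eqref{E:Ray-H1-1} with
\[
\eta(x_2^c)=0,\qquad \eta'(x_2^c)=\tfrac{\pi U''(x_2^c)}{U'(x_2^c)}\,y_-(k,c,x_2^c),
\]
while the real part $\xi(x_2):=\RP\, y_-(k,c,x_2)$ is also a real solution on the same interval, with $\xi(x_2^c)=y_-(k,c,x_2^c)$ (real).

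Now I would compute
\[
Y_I(k,c)=\IP\frac{y_-'(0)}{y_-(0)}=\frac{\xi(0)\eta'(0)-\xi'(0)\eta(0)}{|y_-(k,c,0)|^2}=\frac{W(\xi,\eta)\big|_{x_2=0}}{|y_-(k,c,0)|^2},
\]
where $W(\xi,\eta)=\xi\eta'-\xi'\eta$ is the Wronskian. Because \eqref{E:Ray-H1-1} has no first-order term, $W(\xi,\eta)$ is constant on $[x_2^c,0]$. Evaluating it at $x_2=x_2^c$ using the data above yields
\[
W(\xi,\eta)\big|_{x_2^c}=y_-(k,c,x_2^c)\cdot\tfrac{\pi U''(x_2^c)}{U'(x_2^c)}y_-(k,c,x_2^c)=\tfrac{\pi U''(x_2^c)}{U'(x_2^c)}y_-(k,c,x_2^c)^2,
\]
which plugged into the previous display gives the claimed formula.

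The only place requiring a moment of care is justifying the jump relation used at $x_2^c$: one must confirm the one-sided limit $c_I\to 0+$ really produces the stated $\frac{\pi U''(x_2^c)}{U'(x_2^c)}y_-(x_2^c)$ jump in $\IP\,y_-'$ with $\IP\,y_-$ continuous and vanishing at $x_2^c$. This is precisely the content of Remark~\ref{R:jump-at-sing} (equivalently, the $B$-representation in Lemma~\ref{L:B}(4) applied to $W$ in Lemma~\ref{L:y0}(4)), so the argument is complete. The constancy of the Wronskian and the reality of $\xi$, $\eta$ on $[x_2^c,0]$ are then routine.
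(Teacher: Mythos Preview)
Your proof is correct but takes a different route from the paper. The paper argues via limiting absorption: for $\ep>0$ it sets $y(\ep,x_2)=y_-(k,c+i\ep,x_2)/y_-(k,c+i\ep,0)$, computes
\[
\IP\,y'(\ep,0)=\frac{1}{2i}\int_{-h}^0\partial_{x_2}(y'\bar y-y\bar y')\,dx_2=\int_{-h}^0\frac{\ep U''|y|^2}{|U-c|^2+\ep^2}\,dx_2,
\]
and then lets $\ep\to 0+$ using the convergence estimates and H\"older continuity of $y_{0-}$ so that the Poisson kernel concentrates to a delta mass at $x_2^c$. You instead work directly at $c_I=0+$, split $y_{0-}=\xi+i\eta$ on $(x_2^c,0]$, and exploit the constancy of the Wronskian $W(\xi,\eta)$ together with the jump data from Remark~\ref{R:jump-at-sing}. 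Your approach is more structural and leans on results already established (Remark~\ref{R:jump-at-sing}, Lemma~\ref{L:B}); the paper's approach is more self-contained. One small point worth making explicit in your write-up: since $\xi'$ has a logarithmic singularity at $x_2^c$ while $\eta$ vanishes linearly there, the Wronskian is constant on the open interval $(x_2^c,0]$ and the evaluation at $x_2^c$ should be read as the limit $x_2\to x_2^c+$, where $\xi'\eta\to 0$. You hint at this but it deserves a sentence.
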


\begin{proof}
The vanishing of $Y_I(k, c)$ for $c\in \R\backslash U((-h, 0])$ is obvious from its definition and Lemma \ref{L:y-lower-b}(1). To derive the expression of $Y_I(k, c)$ for $c = U(x_2^c)$ with $x_2^c \in (-h, 0)$ and 
$y_-(k, c, 0) \ne 0$, 
we may consider $y(\ep, x_2) = \frac {y_{-} (k, c+i\ep, x_2)}{y_{-}(k, c+i\ep, 0)}$, $\ep>0$, which is also a solution to the homogeneous Rayleigh equation with $y(\ep, -h) =0$ and $y(\ep, 0)=1$. It is straight forward to calculate 
\[
\IP \, y'(\ep, 0) = \frac 1{2i} \int_{-h}^0 \p_{x_2} (y' \bar y - y \bar y') dx_2 =\int_{-h}^0 \frac {\ep U'' |y|^2}{|U-c|^2 +\ep^2} dx_2.  
\]   
Applying the convergence estimates \eqref{E:w-d-1}  
and the H\"older continuity of $y_{0-} (k, c, x_2)\in \R$ in $x_2$, we obtain the desired 
\[
Y_I (k, c) = \lim_{\ep \to 0+} \IP \, y'(\ep, 0) = \frac {\pi U''(x_2^c) y_{-} (k, c, x_2^c)^2}{U'(x_2^c) |y_{-} (k, c, 0)|^2}.   
\]
This completes the proof of the lemma. 
\end{proof} 

The above formula yields some refined estimates of $Y_I$ for $c \in U([-h, 0])$. 

\begin{lemma} \label{L:Y-I-1}
Assume $U\in C^{l_0}$, $l_0\ge 3$,
then the following hold for $Y_I (k, c)$.
\begin{enumerate}  
\item $Y_I(k, c)$ is $C^{l_0-2}$ in $(k, c) \in D(Y) \cap \big(\R \times U\big((-h, 0)\big)\big)$ and it satisfies  
\[
\lim_{c \to U(0)-} Y_I (k, c) = \frac {\pi U''(0)}{U'(0)}, \quad \lim_{c\to U(-h)+} \frac{ Y_I(k, c)}{(c-U(-h))^2} = \frac {\pi U''(-h) }{U'(-h)^3 |y_-(k, U(-h), 0)|^2}.
\]
Moreover, if $l_0\ge 4$, then,  for any $q\in [1, \infty)$, $j_1, j_2\ge 0$, $j_2 \le 2$, and $j_1+j_2\le l_0-4$, $\p_k^{j_1} \p_{c_R}^{j_2} Y_I$ is $L_k^\infty W_{c_R}^{1, q}$ locally in $(k, c) \in D(Y) \cap \big(\R \times U\big([-h, 0)\big)\big)$.
\item Assume $U\in C^5$, then there exists $C>0$ depending only on $|U'|_{C^{4}}$ and $|(U')^{-1}|_{C^0}$ such that, for any 
$c \in U\big([-h, 0)\big) \cap D\big(Y(k, \cdot)\big)$, we have   
\[
\frac{\mu^2 \sinh^2 (\mu^{-1} (x_2^c +h))}{C|y_{-} (k, c, 0)|^2} \le Y_I \big(k, c\big) \le \frac{C\mu^2  \sinh^2 (\mu^{-1} (x_2^c +h))}{|y_{-} (k, c, 0)|^2},  
\]
\begin{align*}
|\p_{c_R} Y_I(k, c)| \le & C \frac { \mu \sinh (2\mu^{-1}(x_2^c+h)) }{|y_{-} (k, c, 0)|^2}\\
&+ C \frac{\mu^3 \sinh (\mu^{-1}h) \sinh (2\mu^{-1} (x_2^c+h))}{|y_{-} (k, c, 0)|^3} 
\Big( 1+ \big|\log \min\{1, |\mu^{-1}(U(0)-c)|\}\big|\Big), 
\end{align*}
where $\mu = \langle k\rangle^{-1}= (1+k^2)^{-\frac 12}$. 
\end{enumerate}
\end{lemma}

\begin{proof}
Lemma \ref{L:pcy0} implies the $C^{l_0-2}$ smoothness of $y_-(k, c, 0)$ in $k$ and $c \in U\big( (-h, 0)\big)$ and that of $y_-(k, c, x_2^c)$ in $c \in U([-h, 0])$, hence $Y_I$ is $C^{l_0-2}$   in $(k, c) \in D(Y) \cap \R \times U\big((-h, 0)\big)$. Moreover,  $y_-(k, c, 0) >0$ for $c \in \{(U(-h), U(0)\}$ due to Lemma \ref{L:y-lower-b}(1), which along with Lemma \ref{L:pcy0}a.) implies that $y_-(k, c, 0) \ne 0$  for $c \in U([-h, 0])$ near $U(-h)$ or $U(0)$ and thus $Y_I(k, c)$ is H\"older continuous  for $c \in U([-h, 0])$ near $U(-h)$ and  $U(0)$. 
The local regularity of $\p_k^{j_1} \p_{c_R}^{j_2} Y_I$ follows from Lemma \ref{L:pcy0}(f). 

The upper bound estimate of $Y_I$ and its limits as $c$ approaches $U(0)-$ and $U(-h)+$ are direct corollaries of Lemmas \ref{L:y-pm} and \ref{L:y-lower-b}(5) and Remark \ref{R:y-pm}, as well as  \eqref{E:pcy4}, \eqref{E:y-lower-b-1} and \eqref{E:y-lower-b}. 
In  
\begin{align*}
\p_{c_R} Y_I (k, c) = & \p_{c} \left( \frac {\pi U''(x_2^c)}{U'(x_2^c)}\right) \frac{y_{-} (k, c, x_2^c)^2} {|y_{-} (k, c, 0)|^2} + \frac {2\pi U''(x_2^c) y_{-} (k, c, x_2^c) \p_{c} \big(y_{-} (k, c, x_2^c)\big)}{U'(x_2^c) |y_{-} (k, c, 0)|^2} \\
& - \frac {2\pi U''(x_2^c) y_{-} (k, c, x_2^c)^2 \big(y_-(k, c, 0) \cdot \p_c y_-(k, c, 0)\big)}{U'(x_2^c) |y_{-} (k, c, 0)|^4},  
\end{align*}
$ \p_{c} \big(y_{-} (k, c, x_2^c)\big)$ is estimated by \eqref{E:pcy4}. The other key term $\p_c y_-(k, c, 0)$ will be considered
in three possible cases of $c\in U([-h, 0])$ according to the division of $[-h, 0]= \CI_1 \cup \CI_2 \cup \CI_3$ defined in \eqref{E:kc-away-4} in Subsection \ref{SS:pcy0}. Observing $c\in U([-h, 0])$ implies $\CI_2\ne \emptyset$ and $x_2=0\in \CI_2 \cup \CI_3$. 

* {\it Case 1}: $x_2=0\in \CI_3$ and $x_{2l} =-h$. The former happens  if and only if $c\le U(0)- \rho_0^{-1} \mu$, while $x_{2l}=-h$ if and only if $c \le U(-h)+ \rho_0^{-1} \mu$. 
Lemma \ref{L:pcy3} implies 
\[
|\p_c y_{-} (k, c, 0)| \le C \mu  \big( 1+ \big|\log \big(\mu^{-1}(c-U(-h))\big)\big|\big)\cosh \mu^{-1}h.  
\]

* {\it Case 2}: $x_2=0\in \CI_3$ and $x_{2l} > -h$ which occurs if and only if $U(-h)+ \rho_0^{-1} \mu \le c \le U(0)- \rho_0^{-1}\mu$. Also from Lemma \ref{L:pcy3}, we have
\[
|\p_c y_{-} (k, c, 0)| \le C \mu \cosh \mu^{-1}h.
\]

* {\it Case 3}: $x_2=0 \in \CI_2$ which happens iff $U(0)-c\le \rho_0^{-1}\mu$ and thus $x_{2r}=0$. From the definitions \eqref{E:kc-away-4} of $\CI_2$, \eqref{E:kc-away-2} of $\rho_0$, and \eqref{E:h0} of $h_0$, it holds   
\[
0\le U(x_{2r}) - U(x_{2l}) \le 2 \rho_0^{-1} \mu \le \tfrac 12 h_0 \inf U' \Longrightarrow -x_{2l} =x_{2r} - x_{2l} \le \tfrac 12 h_0 \Longrightarrow x_{2l} >-h.  
\]
This in turn implies $c- U(x_{2l}) = \rho_0^{-1} \mu$ and thus Lemma \ref{L:pcy2} yields 
\[
|\p_c y_{-} (k, c, 0)| \le C \mu  \big( 1+ \big|\log \big(\mu^{-1}(U(0) -c)\big)\big|\big)\cosh \mu^{-1} h. 
\]

The desired estimates on $\p_{c_R} Y_I$ follow from \eqref{E:pcy4}, Lemmas \ref{L:y-lower-b} and \ref{L:y-pm}, and the above estimates. In particular, in the above case 1, we also used 
\[
\mu \sinh |\mu^{-1} (x_2^c+h)| \big|\log |\mu^{-1} \big(c-U(-h)\big)| \big| \le C\mu \cosh (\mu^{-1}(x_2^c+h)),
\]
which can be shown by considering whether $|\mu^{-1}(c-U(-h))| \le 1$ separately. 
\end{proof}


In the following we analyze $Y(k, c)$ by writing it as a Cauchy integral of $Y_I$. 

\begin{lemma} \label{L:Y-Cauchy}
Assume $U \in C^{l_0}$, $l_0\ge 5$, and $k \in \R$ satisfy that $y_-(k, c, 0)\ne 0$ for all $c \in U([-h, 0])$, then $Y(k, c)$ and $\p_k^{j_1} \p_{c_R}^{j_2} Y(k, c)$ are $L_k^\infty L_{c_R}^q$ locally in $k \in \R$ and $c_R$ in the domain $D(Y) \cap\{c_I\ge 0\}$ 
for any $q\in (1, \infty)$, $0 \le j_2 \le 2$, and $0\le j_1\le l_0 -3-j_2$. Assume, in addition, $y_-(k, c, 0)\ne 0$ for all $c \in \C$, then,  for any $c\notin U([-h, 0])$,  
\be \label{E:Y-Cauchy-1}
Y(k, c)=\frac{1}{\pi}\int_{U(-h)}^{U(0)} \frac{Y_I (k, c')}{c'-c}dc'+k\coth kh,
\ee
and for $c\in U\big([-h, 0)\big)$, 
\be \label{E:Y-Cauchy-2}
Y(k, c)= - \CH \big(Y_I(k, \cdot)\big) (c) + i Y_I(k,c) +k\coth kh. 
\ee 
\end{lemma}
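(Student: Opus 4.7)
My plan is to recover $Y(k,\cdot)-k\coth kh$ from its jump $2iY_I(k,\cdot)$ across the cut $U([-h,0])$ via a Cauchy-type integral, following the template of Lemma \ref{L:Y-Cauchy-0} but with the contour deformed to squeeze the spectral segment. The four ingredients already in hand are: analyticity of $Y$ in $c\notin U([-h,0])$ (Lemma \ref{L:Y-def}); the jump relation $Y(k,c_R+i0)-Y(k,c_R-i0)=2iY_I(k,c_R)$ on $U([-h,0])$, which follows from the conjugation identity $Y(k,\bar c)=\overline{Y(k,c)}$ together with the fact that $Y$ is real on $\R\setminus U([-h,0])$ (Lemma \ref{L:Y-I}); the decay $Y-k\coth kh=O(|c|^{-1})$ at infinity (Lemma \ref{L:Y-def}(4)); and the $W^{1,q}$-control on $Y_I$ and its $k,c_R$ derivatives (Lemma \ref{L:Y-I-1}).

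Under the global non-vanishing hypothesis on $y_-$, I would fix $c\notin U([-h,0])$ and apply Cauchy's formula on a large circle $C_R$ deformed to surround the cut by a thin ``dog-bone'' contour $\Gamma_\delta$. Since $Y$ is analytic on the annular domain between them, the $C_R$ integral vanishes as $R\to\infty$ by the decay ingredient, leaving
\[
Y(k,c)-k\coth kh=-\frac{1}{2\pi i}\oint_{\Gamma_\delta}\frac{Y(k,c')}{c'-c}\,dc'.
\]
As $\delta\to 0^+$, the two long sides of $\Gamma_\delta$ combine via the jump relation to produce $\frac{1}{\pi}\int_{U(-h)}^{U(0)}Y_I(k,c')(c'-c)^{-1}\,dc'$, and the two small endpoint arcs contribute nothing in the limit because the kernel is uniformly bounded (as $c\notin U([-h,0])$) while $Y$ is continuous at $U(-h)$ and only logarithmically singular at $U(0)$ by Lemma \ref{L:Y-def}. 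This proves (\ref{E:Y-Cauchy-1}). Taking the boundary value $c_I\to 0^+$ and invoking Sokhotski--Plemelj (with the sign convention $\CH f(x)=\pi^{-1}\,\mathrm{P.V.}\int f(y)(x-y)^{-1}\,dy$) then immediately yields (\ref{E:Y-Cauchy-2}).

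For the regularity statement, which requires only non-vanishing on $U([-h,0])$, by continuity $y_-\ne 0$ on some complex neighborhood $N$ of $U([-h,0])$, and the same dog-bone deformation on $\partial N$ gives a localized decomposition
\[
Y(k,c)=A(k,c)+\frac{1}{\pi}\int_{U(-h)}^{U(0)}\frac{Y_I(k,c')}{c'-c}\,dc',\qquad A(k,c):=\frac{1}{2\pi i}\oint_{\partial N}\frac{Y(k,c')}{c'-c}\,dc',
\]
valid on $N\setminus U([-h,0])$; since $\partial N$ stays away from the cut, $A$ is jointly analytic in $c\in\mathrm{int}(N)$ and $C^{l_0-3}$ in $k$, hence contributes harmless smooth terms. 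Commuting $\p_k^{j_1}\p_{c_R}^{j_2}$ past the Cauchy transform and transferring the $c_R$-derivatives onto $Y_I$ reduces the $L^q_{c_R}$ bound to the $L^q\to L^q$ boundedness of the Hilbert transform for $q\in(1,\infty)$ applied to $\p_k^{j_1}\p_{c_R}^{j_2}Y_I$; Lemma \ref{L:Y-I-1}(1) supplies the $L^\infty_k W^{1,q}_{c_R}\subset L^\infty_k L^q_{c_R}$ control exactly in the range $j_2\le 2$, $j_1+j_2\le l_0-4$ that the lemma asserts.

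The main technical delicacy I expect is executing the two limits ($R\to\infty$ and $\delta\to 0^+$) rigorously and uniformly in $c$: at $c=U(0)$ the logarithmic blowup of $Y$ from Lemma \ref{L:Y-def}(2), and at $c=U(-h)$ only the H\"older continuity of $Y$ from Lemma \ref{L:Y-def}(c) is available, so one has to check that these endpoint behaviors are harmless once integrated against the Cauchy kernel $1/(c'-c)$ with $c$ bounded away from the cut. Beyond that the argument is a fairly standard Plemelj-type deformation, and the $L^q$ bounds drop out immediately from the boundedness of the Hilbert transform combined with Lemma \ref{L:Y-I-1}.
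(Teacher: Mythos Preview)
The proposal is correct and follows essentially the same route as the paper: deform the Cauchy integral of Lemma~\ref{L:Y-Cauchy-0} onto the cut, use the conjugacy $Y(k,\bar c)=\overline{Y(k,c)}$ together with the endpoint control from Lemma~\ref{L:Y-def} to obtain \eqref{E:Y-Cauchy-1}, pass to $c_I\to 0^+$ via Sokhotski--Plemelj for \eqref{E:Y-Cauchy-2}, and deduce the regularity from the $L^q$ boundedness of the Hilbert transform applied to $\p_k^{j_1}\p_{c_R}^{j_2}Y_I$ via Lemma~\ref{L:Y-I-1}. The only point the paper makes slightly more explicit than you do is that the singular behavior of $Y_I$ near $c'=U(0)$ is absorbed by the localization property of the Cauchy kernel, since the regularity claim is local in $c_R\in D(Y)$ and $U(0)\notin D(Y)$.
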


Here $\CH$ denotes the Hilbert transform in $c\in \R$, namely, 
\[
\CH \big(Y_I(k, \cdot)\big) (c) =\frac{1}{\pi} \text{P.V.} \int_\R \frac{Y_I (k, c')}{c-c'}dc' = \frac{1}{\pi} \text{P.V.} \int_{U(-h)}^{U(0)} \frac{Y_I (k, c')}{c-c'}dc', 
\] 
where P.V.$\int$ represent the principle value of the singular integral. We also recall $Y(k, c) = Y(k, c+i0)$ and $Y(k, c-i0) = \overline {Y(k, c+i0)}$ for $c \in \R$.   

\begin{proof}
Let us first assume 
$y_-(k, c, 0)\ne 0$ for all $c \in \C$, then $Y(k, c)$ is well-defined for all $c \ne U(0)$. 
We shall apply Lemma \ref{L:Y-Cauchy-0}. 
The contour $\p \CD_{r}$ is the union of two segments $[U(-h), U(0)] \pm ir$, the left half circle centered at $U(-h)$ with radius $r$, and the right half circle centered at $U(0)$ with radius $r$. As $r \to 0+$, due to the continuity of $Y$ (when restricted to $\{c_I\ge 0\}$) at $c\ne U(0)$ and its logarithmic upper bound near $U(0)$ given in Lemma \ref{L:Y-def}, the Cauchy integrals along the two half circles converge to zero as $r \to 0+$. Hence the integral form \eqref{E:Y-Cauchy-1} of $Y(k, c)$ follows from taking the limit of \eqref{E:Y-Cauchy-0} as $r \to 0+$ and the conjugacy $Y(k, \overline {c'}) = \overline {Y(k, c')}$. 

For $c_I \ne 0$, the integral form \eqref{E:Y-Cauchy-1} can be rewritten as 
\[
Y(k, c) = \frac 1{\pi } \int_{U(-h)}^{U(0)} \frac {(c'-c_R + i c_I)}{(c'-c_R)^2 + c_I^2} Y_I (k, c') dc' +k\coth kh.
\]
A standard treatment of the above singular integral as $c_I \to 0+$, along with the regularity of $Y_I(k, c')$ in $c' \in U\big([-h, 0)\big)$ given in Lemma \ref{L:Y-I} and \ref{L:Y-I-1}, yields \eqref{E:Y-Cauchy-2}. 

The regularity of $Y$ follows from that of $Y_I$ and the boundedness in $L^q$ of the convolution by $\frac 1{c' + i c_I}$ with the parameter $c_I \ge 0$. Here the singularity of $Y_I$ near $c=U(0)$ does not affect the regularity of $Y$ away from $U(0)$ due to the localization property of this convolution operator. 

Finally, if we only assume $y_-(k, c, 0)\ne 0$ for $c \in U([-h, 0])$, due to its analyticity in $\C \setminus U([-h, 0])$, its continuity when restricted to $\{c_I\ge 0\}$, and $\lim_{|c| \to \infty} y_-(k, c, 0)= k^{-1} \sinh kh$ (Lemma \ref{L:regular-small-k}), $Y(k, \cdot)$ has at most  finitely many singular points $\C \setminus U([-h, 0])$. Hence there would be at most  finitely many  additional contour integrals of $Y$ in \eqref{E:Y-Cauchy-1} along contours in $\C\setminus U([-h, 0])$ enclosing the roots of $y_-(k, \cdot, 0)$. Those integrals in the analytic region of $Y$ would not affect the regularity of $Y$. The proof of the lemma is complete.  
\end{proof}

With the representation of $Y$ in terms of Cauchy integrals, we may also calculate its derivatives in more details. 

\begin{corollary} \label{C:Y-Cauchy} 
It holds, for $c \notin U([-h, 0])$,   
\be \label{E:Y-Cauchy-3}
\p_c Y(k, c)=\frac{1}{\pi}\int_{U(-h)}^{U(0)} \frac{Y_I (k, c')}{(c'-c)^2}dc'=\frac{1}{\pi}\int_{U(-h)}^{U(0)} \frac{\p_{c_R} Y_I (k, c')}{c'-c}dc' -\frac {U''(0)}{U'(0) \big(U(0)-c\big)},
\ee
and for $c\in U\big([-h, 0)\big)$, 
\be \label{E:Y-Cauchy-4} \begin{split}
\p_c Y(k, c)= - \CH \big(\p_{c_R} Y_I(k, \cdot)\big) (c) &+ i \p_{c_R} Y_I(k,c)-\frac {U''(0)}{U'(0) \big(U(0)-c\big)}.
\end{split} \ee 
\end{corollary}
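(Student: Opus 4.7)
The plan is to derive both formulas by starting from the Cauchy integral representation \eqref{E:Y-Cauchy-1} of $Y(k,c)$ established in Lemma \ref{L:Y-Cauchy}, then differentiating, integrating by parts in $c'$, and finally letting $c_I \to 0+$ via Plemelj--Sokhotski.

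First, for $c \notin U([-h,0])$, the integrand in \eqref{E:Y-Cauchy-1} is analytic in $c$ and uniformly bounded on the compact interval of integration by Lemma \ref{L:Y-I-1}. Hence differentiation under the integral is permitted, and since the constant $k \coth kh$ contributes nothing, we immediately obtain
\[
\p_c Y(k,c) = \frac{1}{\pi}\int_{U(-h)}^{U(0)} \frac{Y_I(k,c')}{(c'-c)^2}\,dc',
\]
which is the first equality in \eqref{E:Y-Cauchy-3}.

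Next, to obtain the second equality, I integrate by parts in $c'$ using $(c'-c)^{-2} = -\p_{c'}(c'-c)^{-1}$. Lemma \ref{L:Y-I-1}(1) guarantees enough regularity of $Y_I(k,\cdot)$ on $U((-h,0))$ for this, while the boundary values are controlled by Lemma \ref{L:Y-I-1}(2): one has $\lim_{c'\to U(-h)+} Y_I(k,c') = 0$ (since $y_-(k,U(-h),x_2^c)=0$ there, as $x_2^c=-h$) and $\lim_{c'\to U(0)-} Y_I(k,c') = \pi U''(0)/U'(0)$. The boundary contribution at $c'=U(0)$ therefore yields exactly $-U''(0)/(U'(0)(U(0)-c))$, while the boundary term at $c'=U(-h)$ vanishes. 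This gives \eqref{E:Y-Cauchy-3}.

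Finally, for $c \in U([-h,0))$, I replace $c$ by $c+ic_I$ with $c_I>0$ in the second form of \eqref{E:Y-Cauchy-3} and let $c_I \to 0+$. The term $-U''(0)/(U'(0)(U(0)-c-ic_I))$ converges to $-U''(0)/(U'(0)(U(0)-c))$ by continuity since $c\ne U(0)$. For the Cauchy integral of $\p_{c'} Y_I$, the Plemelj--Sokhotski jump relation yields
\[
\lim_{c_I \to 0+}\frac{1}{\pi}\int_{U(-h)}^{U(0)} \frac{\p_{c'} Y_I(k,c')}{c'-c-ic_I}\,dc' = -\CH(\p_{c_R} Y_I(k,\cdot))(c) + i\,\p_{c_R} Y_I(k,c);
\]
here the validity of Plemelj--Sokhotski (and hence the existence of the Hilbert transform) is guaranteed by the $W^{1,q}_{\mathrm{loc}}$ regularity of $\p_{c_R} Y_I$ in $c \in U([-h,0))$ from Lemma \ref{L:Y-I-1}(1). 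Combining gives \eqref{E:Y-Cauchy-4}. The only mild subtlety, which I view as the main technical point, is verifying that the boundary term at $c'=U(0)$ is handled consistently in both formulas: because of the logarithmic singularity of $Y(k,c)$ at $c=U(0)$ (Lemma \ref{L:Y-def}(2)), one must check that the integration by parts is carried out in the open interval and the limit $c_I \to 0+$ is taken at a fixed $c \in U([-h,0))$, both of which are compatible with the regularity asserted in Lemmas \ref{L:Y-I-1} and \ref{L:Y-Cauchy}.
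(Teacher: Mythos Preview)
Your proof is correct and follows exactly the approach the paper sketches: differentiate \eqref{E:Y-Cauchy-1} under the integral sign, integrate by parts using the boundary values $Y_I(k,U(-h)+)=0$ and $Y_I(k,U(0)-)=\pi U''(0)/U'(0)$ from Lemma~\ref{L:Y-I-1}(2), then take $c_I\to 0+$ via the Plemelj--Sokhotski formula. The paper explicitly omits these routine computations, and you have supplied them accurately.
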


Using the regularity of and estimates on $Y_I$ and $\p_{c_R} Y_I$ given in Lemma \ref{L:Y-I-1} , \eqref{E:Y-Cauchy-3} follows from direct differentiation and integration by parts, along with the explicit form of $Y_I \big(k, U(0)-\big)$. Equality \eqref{E:Y-Cauchy-4} is obtained by taking the limit of \eqref{E:Y-Cauchy-3} as $c_I \to 0+$. We omit the details of these straight forward calculations.

\section{Eigenvalues of the linearization of the water wave at shear flows} \label{S:e-values}

In this section, we shall discuss the distribution of eigenvalues of the linearized gravity-capillary water wave system \eqref{E:LEuler} at the shear flow $\big( U(x_2), 0\big)^T$. As \eqref{E:LEuler}  preserves Fourier mode $e^{ikx_1}$ for any $k$, the wave number $k\in \R$ would be treated as a parameter in this section. According to Lemma \ref{L:e-value}, $-ikc \in \C$, $c \in \C \setminus U([-h, 0])$, is an eigenvalue of \eqref{E:LEuler} with parameter $k$ if and only if 
\be \label{E:BF} \begin{split} 
& \BF (k, c) = \BF_R + i \BF_I := (g + \sigma k^2)  y_+  (k, c, -h)= (g + \sigma k^2) (y_+ y_-' - y_+' y_-) (k, c, 0)   \\
= & \big(U(0)-c\big)^2 y_-'(k, c, 0) - \big(U'(0)\big(U(0)-c\big)+ g+\sigma k^2\big) y_-(k, c, 0) =0,  
\end{split} \ee
where the last equal sign in the first row is due to the conservation of the Wronskian of \eqref{E:Ray-H1-1}. Let 
\[
\BF(k, c)= \lim_{\ep \to 0+} \BF(k, c+i\ep) = \overline{\lim_{\ep \to 0+} \BF(k, c-i\ep)}, \quad c \in U\big([-h, 0]\big). 
\]
It is easy to see that, if $\BF(k, c)=0$, then $y_-(k, c, x_2)$ also generates the associated eigenfunction of \eqref{E:LEuler}. In the literatures, those zero point $c$ of $\BF$ with $c_I>0$ are often referred to as unstable modes, while those zero point $c\in \R$ as neutral modes. We recall that Yih proved that the semicircle theorem also holds for free boundary problem \cite{Yih72}, namely, \eqref{E:semi-circle} holds for all unstable modes.

From the analysis in Subsection \ref{SS:pcy0}, it is not clear whether $\BF$ is $C^1$ at $c = U(-h)$ which would be crucial for the bifurcation analysis of eigenvalues. We also consider an almost equivalent quantity 
\be \label{E:dispersion}
F(k, c) = y_-(k, c, 0)^{-1} \BF = F_R + i F_I = Y(k, c)\big(U(0)-c\big)^2-U'(0)\big(U(0)-c\big)-(g+\sigma k^2) =0,  
\ee
where $Y(k, c)$ is defined in \eqref{E:Y}, and 
\[
F(k, c)= \lim_{\ep \to 0+} F(k, c+i\ep) = \overline{\lim_{\ep \to 0+} F(k, c-i\ep)},  \quad \forall c \in U\big([-h, 0]\big). 
\]
Apparently $\BF$ and $F$ satisfy 
\be \label{E:BF-conj}
\BF(-k, c) = \BF(k, c) = \overline {\BF(k, \bar c)}, \; \forall c\notin U\big((-h, 0)\big); 
\ee
\be \label{E:f-conj}
F(-k, c) = F(k, c) = \overline {F(k, \bar c)}, \;  c\in D(Y)\setminus U\big((-h, 0)\big). 
\ee
From Lemma \ref{L:Y-Cauchy} $F$ is $C^{1, \alpha}$ near $c_0=U(-h)$ if $y_- (k, c, 0) \ne 0$ for all $c\in U([-h, 0])$, which is crucial for the bifurcation analysis.

\subsection{Basic properties of eigenvalues}  \label{SS:e-v-basic}

Apparently 
it holds that  
\be \label{E:BF-ana}
\BF \text{ is analytic in } k\in \R \ \& \  c\notin U([-h, 0])  \text{ and } \, F \text{ analytic in } k\in \R \ \& \  c\in  D(Y) \setminus U([-h, 0]),
\ee
\[
\BF(k, c) =0 \Longleftrightarrow \; c \text{ is a non-singular or singular mode of \eqref{E:Ray}}.   
\]
In the following we first give some basic properties of $\BF$ under minimal assumptions. 

\begin{lemma} \label{L:e-v-basic-1}
Assume $U \in C^{l_0}$, $l_0\ge 3$, then for any $k \in \R$, the following hold. 
\begin{enumerate}
\item $\BF$ is well defined for all $k \in \R$ and $c \in \C$. When restricted to $c_I\ge 0$, $\BF$ is $C^{l_0-2}$ in $k$ and $c \notin \{U(-h), U(0)\}$ and 
$\BF$ is also $C^\alpha$ in both $k$ and $c$ with $c_I\ge 0$. 
\item $F(k, c)$ is well-defined for $c$ close to $U(-h)$ and $U(0)$, $C^1$ near $c=U(0)$, and 
\[
F(k, U(-h)) \in \R, \quad F\big(0, U(-h)\big) = -g, \quad F\big(k, U(0)\big) = -g - \sigma k^2, \quad 
\p_c F\big(k, U(0)\big)= U'(0). 
\] 
\item Assume $U \in C^5$, then for any $r\in (1, \infty)$, there exists $C>0$ determined only by $r$, $|U'|_{C^4}$, and $|(U')^{-1}|_{C^0}$,  such that, for any $c_I \ge 0$ and $k \in \R$, 
\[
|\p_c \BF(k, \cdot + i c_I)|_{L_{c_R}^r} \le C \mu^{-1} e^{\mu^{-1}h},  \quad \lim_{c_I \to 0+} | \BF(k, \cdot + i c_I) -  \BF(k, \cdot )|_{W_{c_R}^{1,r}}=0,   
\]
where the norm is taken on $c_R \in [-\frac 12 h_0-h, \frac 12 h_0]$ and we recall $\mu = (1+k^2)^{-\frac 12}$. 
\item $\BF(k, c) \ne 0$ if $y_-(k, c, 0)=0$. Hence $\{c \mid \BF(k, c)=0\} = \{c\mid F(k, c) =0\}$ for any $k \in \R$.  
\item $\BF(k, c)=0$ iff there exists a $C^2$ solution $y(x_2)$ to \eqref{E:Ray-reg} satisfying the corresponding homogeneous boundary conditions of (\ref{E:Ray-2}-\ref{E:Ray-3}). 
\item For any $x_2\in (0, -h)$, $F_I(k, U(x_2))\ne 0$ if $U''(x_2)\ne 0$.
\end{enumerate}
\end{lemma}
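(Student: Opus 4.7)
My plan is to prove the six items in turn, exploiting the Wronskian representation $\BF = (U(0)-c)^2 y_-'(0) - [U'(0)(U(0)-c)+g+\sigma k^2]y_-(0)$ together with the detailed regularity theory of $y_-(k,c,\cdot)$ built in Section \ref{S:Ray-Homo}, most notably Lemmas \ref{L:pcy0}, \ref{L:pcy-complex}, \ref{L:Y-def}, \ref{L:Y-I}, and \ref{L:y-lower-b}. For (1), away from $c\in\{U(-h),U(0)\}$ the regularity $C^{l_0-3}$ is immediate from Lemma \ref{L:pcy0}(c) applied at $x_2=0$, since then $U(0)\ne c$ and $c\ne U(-h)$. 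The global $C^\alpha$ in $(k,c)$ requires absorbing the logarithmic singularity of $y_-'(k,c,0)$ at $c=U(0)$ (the only non-trivial case): the factor $(U(0)-c)^2$ in the Wronskian expression turns the $\log|U(0)-c|$ behavior of $y_-'(0)$ into a $|U(0)-c|^2\log|U(0)-c|$ term, which is $C^{1,\alpha'}$ in $c$ for any $\alpha'\in[0,1)$; combining this with the $C^\alpha$ regularity of $y_-(k,c,0)$ from Lemma \ref{L:pcy0}(a) gives the claim.

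For (2), Lemma \ref{L:y-lower-b}(1),(4) gives $y_-(k,c,0)\ne 0$ in neighborhoods of $c=U(-h)$ and $c=U(0)$, so $F=\BF/y_-(k,c,0)$ is well defined there. The identity $F=Y(U(0)-c)^2-U'(0)(U(0)-c)-(g+\sigma k^2)$ together with the log upper bound on $Y$ (Lemma \ref{L:Y-def}(2)) shows $F\to -(g+\sigma k^2)$ and $\p_c F\to U'(0)$ as $c\to U(0)$, using also that $(U(0)-c)^2\p_c Y\to 0$ from the estimate of Lemma \ref{L:Y-def}. The values $F(k,U(-h))\in\R$ and $F(0,U(-h))=-g$ follow from $Y(k,U(-h))\in\R$ and $Y(0,U(-h))=U'(0)/(U(0)-U(-h))$ of Lemma \ref{L:Y-def}(1) by direct substitution. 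For (3), differentiating the Wronskian expression in $c$ yields
\[
\p_c\BF=-2(U(0)-c)y_-'(0)+(U(0)-c)^2\p_c y_-'(0)+U'(0)y_-(0)-[U'(0)(U(0)-c)+g+\sigma k^2]\p_c y_-(0);
\]
the key observation is that $(U(0)-c)^2\p_c y_-'(k,c,0)=(U(0)-c)\cdot(U(0)-c)\p_c y_-'(k,c,0)$, whose $L_{c_R}^r$ bound and $L^r$ convergence as $c_I\to 0+$ are precisely what Lemma \ref{L:pcy-complex}(2) provides (at $x_2=0$), while the other terms involve $\p_c y_-(k,c,0)$ or $y_-(k,c,0)$, whose regularity is also covered there.

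For (4), assume $y_-(k,c,0)=0$. If additionally $y_-'(k,c,0)=0$, then $y_-$ and $y_-'$ both vanish at $x_2=0$; by ODE uniqueness applied to the homogeneous Rayleigh equation on $(x_2^c,0]$ if $c\in U((-h,0))$ (or on $[-h,0]$ if $c\notin U((-h,0))$), one obtains $y_-\equiv 0$ near $x_2=0$. Continuing to $x_2=-h$ using Lemma \ref{L:B} to handle the jump across $x_2^c$ (or directly by ODE uniqueness outside $U([-h,0])$) contradicts $y_-'(-h)=1$. Thus $y_-'(k,c,0)\ne 0$; since $c=U(0)$ would force $y_-(k,U(0),0)>0$ by Lemma \ref{L:y-lower-b}(1), we have $(U(0)-c)^2\ne 0$, whence $\BF=(U(0)-c)^2 y_-'(k,c,0)\ne 0$. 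Consequently the zero sets of $\BF$ and $F$ coincide.

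For (5), if $\BF(k,c)=0$ with $c\notin U([-h,0])$, equation \eqref{E:Ray-reg} is equivalent to the homogeneous \eqref{E:Ray-H1-1} since $U-c\ne 0$, so $y_-$ itself is the required $C^2$ solution. If $c\in U((-h,0))$, I set $y:=(U-c)\xi$ where $\xi$ solves the self-adjoint form $-((U-c)^2\xi')'+k^2(U-c)^2\xi=0$ (which appears in the proof of Lemma \ref{L:y-lower-b}), with $\xi(-h)=0$; the resulting $y$ is automatically $C^2$ since $(U-c)^2$ is smooth and the equation is regular. One checks using the structure of $y_-$ from Lemma \ref{L:B} that $y$ coincides (up to scalar) with the $y_-$-branch, so $y_-(0)$, $y_-'(0)$ satisfy the homogeneous boundary conditions exactly when $\BF(k,c)=0$. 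Conversely a $C^2$ solution yields vanishing Wronskian at $x_2=0$, hence $\BF=0$. Finally for (6), I invoke Remark \ref{R:jump-at-sing}: the imaginary part of $y_-$ on $[x_2^c,0]$ is the solution $\tilde y$ of \eqref{E:Ray-H1-1} with $\tilde y(x_2^c)=0$ and $\tilde y'(x_2^c)=\pi U''(x_2^c)U'(x_2^c)^{-1}y_-(k,c,x_2^c)$, and $y_-(k,c,x_2^c)>0$ by Lemma \ref{L:y-lower-b}(1),(2). Then
\[
\BF_I(k,U(x_2))=\tfrac{\pi U''(x_2^c)}{U'(x_2^c)}\,y_-(k,c,x_2^c)\bigl[(U(0)-c)^2\hat y'(0)-[U'(0)(U(0)-c)+g+\sigma k^2]\hat y(0)\bigr],
\]
where $\hat y$ is the Rayleigh solution on $[x_2^c,0]$ with Cauchy data $(0,1)$ at $x_2^c$. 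The main obstacle is verifying that the bracketed quantity does not vanish: I expect to argue by contradiction, for if it vanished then $\hat y$ would be a $C^2$ solution on $[x_2^c,0]$ (together with its smooth extension by $y_-$ on $[-h,x_2^c]$, which matches at $x_2^c$ via the compatibility $\hat y(x_2^c)=0$) satisfying the homogeneous boundary condition at $x_2=0$, giving a $C^2$ eigenfunction of \eqref{E:Ray-reg}, but this would in turn violate an energy identity obtained by multiplying \eqref{E:Ray-reg} by $\bar y/(U-c)$ and integrating (as in the standard Rayleigh argument), since $U''(x_2^c)\ne 0$ produces a nonzero imaginary residue at the critical layer.
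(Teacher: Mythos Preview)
Your treatment of items (1)–(4) matches the paper's approach and is correct; in (4) you are more elaborate than necessary (the paper just observes that $y_-(k,c,0)=0$ forces $c\ne U(0)$ by Lemma~\ref{L:y-lower-b}(1), and then $y_-'(k,c,0)\ne 0$ since $y_-$ cannot vanish to second order at $x_2=0$ on the regular interval $(x_2^c,0]$ without contradicting $y_-(x_2^c)>0$).

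There is a genuine gap in your argument for (5) when $c\in U\big((-h,0)\big)$. The self-adjoint form $-((U-c)^2\xi')'+k^2(U-c)^2\xi=0$ is \emph{not} regular at $x_2^c$: the leading coefficient vanishes, the indicial roots are $0$ and $-1$, and the solution with $\xi\sim(x_2-x_2^c)^{-1}$ produces a $y=(U-c)\xi$ whose derivative carries exactly the logarithmic singularity of Lemma~\ref{L:y0}(4), proportional to $U''(x_2^c)y(x_2^c)$. Since $y(x_2^c)=y_-(x_2^c)>0$ by Lemma~\ref{L:y-lower-b}(2), your $y$ is $C^2$ only if $U''(x_2^c)=0$, and you never show this. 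The paper supplies the missing mechanism: $\BF(k,c)=0$ and $y_-(k,c,0)\ne 0$ give $F(k,c)=0$, hence $F_I=(U(0)-c)^2Y_I=0$, and Lemma~\ref{L:Y-I} forces $U''(x_2^c)=0$; then the representation in Lemma~\ref{L:B} (the $\Gamma_0$ term drops out) yields $y_-\in C^2$. The same observation drives the converse: a $C^2$ solution with $y(x_2^c)\ne 0$ forces $U''(x_2^c)=0$ via \eqref{E:Ray-reg} at $x_2^c$, after which \eqref{E:Ray-reg} and \eqref{E:Ray-H1-1} are equivalent on all of $[-h,0]$, and proportionality to $y_-$ follows.

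For (6) you take a different route than the paper. The paper simply invokes Lemma~\ref{L:Y-I} to write $F_I(k,c)=(U(0)-c)^2Y_I(k,c)=\pi U''(x_2^c)(U(0)-c)^2 y_-(x_2^c)^2\big/\big(U'(x_2^c)|y_-(0)|^2\big)\ne 0$, and since $y_-(0)\ne 0$ this already gives $\BF\ne 0$ (which is all that is used later). Your direct computation of $\BF_I$ via Remark~\ref{R:jump-at-sing} is correct and reduces to showing the bracket $(U(0)-c)^2\hat y'(0)-[U'(0)(U(0)-c)+g+\sigma k^2]\hat y(0)\ne 0$; by the Wronskian this equals $(g+\sigma k^2)\,y_{0+}(k,c,x_2^c)$, so you would need $y_{0+}(x_2^c)\ne 0$, which your contradiction sketch does not establish (the $\hat y$ you build satisfies the boundary condition at $x_2=0$ but there is no reason it vanishes at $x_2=-h$, so no eigenfunction contradiction arises). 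The paper's $F_I$ argument sidesteps this entirely.
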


\begin{proof}
For $c_R \in U\big([-h, 0)\big)$, the convergence of $\BF(k, c_R + i c_I)$ as $c_I \to 0+$ follows from the convergence estimates given in Proposition \ref{P:converg}. For $c$ near $U(0)$, the logarithmic singularity in $y_-'(k, c, 0)$ is cancelled by $(U(0)-c)^2$ and thus the convergence of $\BF(k, U(0) + i c_I)$ and the continuity of $\BF$ at $c=U(0)$ follow. The $C^\alpha$ and $C^{l_0-2}$ smoothness of $\BF$ is obtained from those of $y_-(k, c, 0)$ and $y_-'(k, c, 0)$ (Lemmas \ref{L:pcy0} and \ref{L:pcy2}) as well as using the factor $(U(0)-c)^2$ multiplied to $y_-'(k, c, 0)$. 

From Lemma \ref{L:y-lower-b}(1), $y_-(k, U(-h), 0), y_-(k, U(0), 0)>0$ and thus $F$ is well-defined near $c=U(-h), U(0)$. 
The property $F(k, U(-h)) \in \R$ and the value of $F(0, U(-h))$ are due to those of $Y$ given in Lemma \ref{L:Y-def}(1). The $C^1$ smoothness of $F$ for $c$ near $U(0)$ follows from Lemma \ref{L:Y-def}(2) and the definition of $F$. 
The values of $F$ and $\p_c F$ at $(k, U(0))$ is obtained by direct computation. 

Statement (3) is a corollary of Proposition \ref{P:converg}, Lemma \ref{L:pcy-complex}(3), and the definition of $\BF$.  

Suppose $y_-(k, c, 0)=0$. Lemma \ref{L:y-lower-b}(1) implies $c \ne U(0)$. As a non-trivial solution to the homogeneous Rayleigh equation \eqref{E:Ray-H1-1}, it must hold $y_-'(k, c, 0) \ne 0$. Therefore $\BF(k, c) \ne 0$. 

To prove statement (5), we first observe that $\BF(k, c)=0$ iff $y_-$ satisfies the corresponding homogeneous boundary conditions of \eqref{E:Ray-3}, which happens only if $y_-(k, c, 0)\ne 0$ and thus $Y(k, c)$ and $F(k, c)$ are well-defined. Moreover the statement is obvious for $c \notin U\big([-h, 0]\big)$ and also for $c= U(-h)$ due to the smoothness of $y_-$ (Lemma \ref{L:B}), while $F(k, U(0)) \ne 0$ due to statement (2). Hence we focus on $c \in U\big((-h, 0)\big)$ only. 
``$\Longrightarrow$'': As $c \in U\big((-h, 0)\big)$, $F(k, c)=0$ implies $Y_I(k, c) =0$ and consequently $U''(x_2^c)=0$ according to Lemma \ref{L:Y-I}. Consequently Lemma \ref{L:B}, particularly formula \eqref{E:Ray-H0-2}, and the definition of $\Gamma_0$ yield the smoothness of $y_-$ which apparently satisfies \eqref{E:Ray-reg}. ``$\Longleftarrow$'': 
This solution $y(x_2)$ has to be proportional to $y_-$ on $[-h, x_2^c]$ which yields $y(x_2^c)\ne 0$ due to \ref{L:y-lower-b}(2). Hence the smoothness of $y(x_2)$ and equation \eqref{E:Ray-reg} imply 
$U''(x_2^c)=0$. Consequently both \eqref{E:Ray-reg} and the homogeneous Rayleigh equation  \eqref{E:Ray-H1-1} are regular on $[-h, 0]$ and are equivalent to each other. Therefore $y_-(x_2)$ and $y(x_2)$ are proportional on $[-h, 0]$ and thus $y_-$ satisfies the boundary condition at $x_2=0$. 

To prove the last statement, let $c = U(x_2)$, $x_2 \in (-h, 0)$. According to Lemma \ref{L:y-lower-b}, $\IP\, y_-(k, c, 0) \ne 0$ if $U''(x_{2}) \ne0$ and thus $Y(k, c)$ is well-defined. Lemma \ref{L:Y-I} yields 
\be \label{E:Im-F} 
F_I(k, c) = \big(U(0)-c\big)^2 Y_I(k, c) = \frac {\pi (U(0)-c)^2 U''(x_{2}) y_{-} (k, c, x_{2})^2}{U'(x_{2}) |y_{-} (k, c, 0)|^2} \ne 0, 
\ee 
which prove statement (5). This is the same argument as in \cite{Yih72} in the case of gravity waves. 
\end{proof} 

\begin{remark} \label{R:singularM}
The monotonicity assumption on $U$ is used in the above proof of statement (5). If $U$ is not monotonic, $U^{-1}(c)$ may contain several points in $[-h, 0]$ for a root of $F(k, \cdot)$ and the corresponding solution $y_-(k, c, x_2)$ may not be in $H_{x_2}^2$. Therefore the set of roots of $F(k, \cdot)$, which is what really matters, may be larger than those defined as singular modes in Definition \ref{D:modes}. 
\end{remark}

In the next step, we consider $\BF$ for $|k|\gg1$. Unlike the linearized Euler equation on a fixed channel where no eigenvalues exist for large $k$. Eigenvalues do exist for each large $k$ for the linearized water wave system. According to Lemma \ref{L:ev-large-k}(2), we often consider $F(k, c)$ as well. 

\begin{lemma} \label{L:ev-large-k} 
Assume $U \in C^3$, then the following hold for any $\alpha \in (0, \frac 12)$. 
\begin{enumerate} 
\item There exists $C>0$ depending only on $\alpha$, $|U'|_{C^2}$, and $|(U')^{-1}|_{C^0}$,  such that 
\begin{align*}
&|\BF + \sigma k^2 \mu \sinh \mu^{-1}h - (U(0)-c)^2 \cosh \mu^{-1}h| \le  C \big( \mu^{\alpha-1} + |c|^2 \mu^\alpha \big)\cosh \mu^{-1} h,
\end{align*}
where we recall $\mu = (1+k^2)^{-\frac 12}$.
\item For any $k_*, M>0$, there exists $C>0$ depending only on $M$, $k_*$, $|U'|_{C^2}$, and $|(U')^{-1}|_{C^0}$,  such that, for any $|k|\le k_*$ and  $c$ satisfying $dist(c, U([-h, 0]) \ge M$, 
\[
|\BF -  (U(0)-c)^2 \cosh kh| \le C \big(1+ |c| + |U(0)-c|^2 dist(c, U([-h, 0])\big)^{-1} \big).
\]
\item There exist $k_0>0$ and $C>0$ depending only on $|U'|_{C^2}$, and $|(U')^{-1}|_{C^0}$, such that for any $|k|\ge k_0$, \eqref{E:BF} has exactly two solutions $c^\pm (k)$ which are contained in $\C \setminus U([-h, 0])$ and depend on $k$ analytically. Moreover they satisfy 
\[
c^\pm (k) \in \R, \quad c^\pm (-k) = c^\pm (k), \quad \left|c^\pm (k)  \mp \sqrt{\sigma |k|} -U(0) \right| \le C,
\quad \big|\p_c F\big(k, c^\pm (k) \big) \mp 2\sqrt{\sigma} |k|^{\frac 32} \big|  \le C |k|.
\]  
\end{enumerate}
\end{lemma}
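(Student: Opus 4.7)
The plan is as follows. Statements (1) and (2) are direct substitutions of the fundamental solution estimates from Section \ref{SS:Ray-H-Fund} into the definition \eqref{E:BF} of $\BF$; the core difficulty lies in (3), specifically in upgrading the approximate roots of the leading-order equation to actual roots of $\BF$ localized within a bounded neighborhood.

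For (1), I apply Lemma \ref{L:y-pm} at $x_2 = 0$ in the regime where $\CI_2 = \emptyset$, obtaining
\[
y_-(k,c,0) = \mu\sinh(\mu^{-1}h) + O(\mu^{1+\alpha}\sinh(\mu^{-1}h)), \quad y_-'(k,c,0) = \cosh(\mu^{-1}h) + O(\mu^\alpha\sinh(\mu^{-1}h)),
\]
and plug into $\BF = (U(0)-c)^2 y_-'(0) - (U'(0)(U(0)-c) + g + \sigma k^2)y_-(0)$. The leading pieces $(U(0)-c)^2\cosh(\mu^{-1}h) - \sigma k^2\mu\sinh(\mu^{-1}h)$ emerge, while $-g\mu\sinh - U'(0)(U(0)-c)\mu\sinh$ together with the approximation errors absorb into $O((\mu^{\alpha-1} + |c|^2\mu^\alpha)\cosh(\mu^{-1}h))$, estimated term by term (the $|c|^2\mu^\alpha\cosh$ piece coming from $(U(0)-c)^2$ times the $y_-'$ error, and $\mu^{\alpha-1}\cosh$ absorbing both $\sigma k^2\mu^{1+\alpha}\sinh$ and the lower-order deterministic terms). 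For (2), I apply Lemma \ref{L:regular-small-k} with $\phi \equiv 0$ on $[-h,0]$ using $C_0 = \mathrm{dist}(c, U([-h,0]))^{-1} \le M^{-1}$, getting $y_-(k,c,0) = k^{-1}\sinh kh + O(M^{-1})$ and $y_-'(k,c,0) = \cosh kh + O(M^{-1})$, then substitute; the $|U(0)-c|^2/\mathrm{dist}$ factor in the claim arises from $(U(0)-c)^2$ multiplying the $y_-'(0)$ error.

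For (3), statement (1) identifies the leading-order dispersion $\BF_0(k,c) := (U(0)-c)^2\cosh(\mu^{-1}h) - \sigma k^2\mu\sinh(\mu^{-1}h)$, whose zeros are $c_0^\pm(k) = U(0) \pm \sigma_k$ with $\sigma_k := \sqrt{\sigma k^2\mu\tanh(\mu^{-1}h)} = \sqrt{\sigma|k|} + O(|k|^{-3/2})$. To locate actual roots of $\BF$ within a bounded neighborhood of $c_0^\pm$ I restrict to the complex disk $|c - c_0^\pm| \le 1$; for $|k|$ large, $|U(x_2) - c| \ge \tfrac12\sqrt{\sigma|k|}$ uniformly in $x_2 \in [-h,0]$ on this disk. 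I re-run Lemma \ref{L:Ray-regular-1} with the optimal choice $\rho \sim (|c| k^2)^{-1} \sim |k|^{-5/2}$ (rather than the generic $\rho \sim \mu^{1/2}$ used in the proof of Lemma \ref{L:y-pm}), yielding the refined $y_-(k,c,0) = \mu\sinh(\mu^{-1}h)(1 + O(|k|^{-3/2}))$ and $y_-'(k,c,0) = \cosh(\mu^{-1}h)(1 + O(|k|^{-3/2}))$, whence $|\BF - \BF_0| \le C\cosh(\mu^{-1}h)/\sqrt{|k|}$ throughout the disk. Rouché's theorem applied to $\BF/\cosh(\mu^{-1}h)$ on the circle $|c-c_0^\pm|=1$, where $|\BF_0/\cosh(\mu^{-1}h)| = |c - c_0^+|\,|c - c_0^-| \ge \tfrac32\sigma_k \sim \sqrt{|k|}$ dominates the error, produces a unique simple root $c^\pm(k)$ of $\BF$ inside, within $O(|k|^{-1})$ of $c_0^\pm$ and hence within $C$ of $U(0) \pm \sqrt{\sigma|k|}$. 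Exactly two roots in $\C \setminus U([-h,0])$ is verified by a further Rouché comparison on a large disk $|c - U(0)| \le 3\sqrt{|k|}$ (inside which $|\BF_0| \gtrsim |k|\cosh$ dominates the error outside the two small disks), combined with statement (1) ruling out zeros for $|c - U(0)| \gg \sqrt{|k|}$ and statement (2) ruling out zeros in bounded regions of $c$. Analyticity in $k$ follows from the implicit function theorem in the complex $c$-plane; reality of $c^\pm$ and evenness $c^\pm(-k) = c^\pm(k)$ follow from the Rouché uniqueness combined with the symmetries \eqref{E:BF-conj}. For the derivative estimate I differentiate $F = Y(U(0)-c)^2 - U'(0)(U(0)-c) - (g + \sigma k^2)$ at $c^\pm$; the refined $y_-$ analysis and Lemma \ref{L:pcy1} give $Y(k, c^\pm) = |k| + O(|k|^{-1/2})$ and $\p_c Y(k, c^\pm) = O(|k|^{-1/2})$, so the dominant term $-2Y(U(0)-c^\pm) = \pm 2\sqrt{\sigma}|k|^{3/2} + O(|k|)$ subsumes the $\p_c Y\,(U(0)-c^\pm)^2 = O(\sqrt{|k|})$ and $U'(0) = O(1)$ contributions.

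The main obstacle is precisely the refined $O(\cosh(\mu^{-1}h)/\sqrt{|k|})$ bound on $\BF - \BF_0$: statement (1) applied naively in the regime $|c|\sim\sqrt{|k|}$ gives only $O(|k|^{1-\alpha}\cosh(\mu^{-1}h))$, which even for $\alpha$ close to $1/2$ dominates $|\BF_0/\cosh| \sim \sqrt{|k|}$ only on circles of radius $\gtrsim |k|^{1/2-\alpha}$, leaving the root only coarsely located. The key refinement comes from the observation that on the bounded disk around $c_0^\pm$, $|U(x_2)-c|$ is uniformly of order $\sqrt{|k|}$, so the perturbation $U''/(U-c)$ in the Rayleigh equation acts as $O(|k|^{-1/2})$; this justifies the much smaller $\rho$ in Lemma \ref{L:Ray-regular-1}, producing the sharper $y_\pm$ asymptotics needed to close the Rouché comparison at bounded distance.
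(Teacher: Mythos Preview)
Your arguments for (1) and (2) match the paper. For (3) you take a genuinely different route: the paper works with $F(k,c)=Y(k,c)(U(0)-c)^2-U'(0)(U(0)-c)-(g+\sigma k^2)$ and uses the Cauchy integral representation of $Y$ (Lemma \ref{L:Y-Cauchy} and Corollary \ref{C:Y-Cauchy}) to obtain $|Y-k\coth kh|\le C(|k|(1+|c|))^{-1}$ and $|\p_cY|\le C(1+|c|)^{-1}$ on $S_k=\{|c|\ge\tfrac12\sqrt{\sigma|k|}\}$, then rewrites $F=0$ as $c=f_\pm(k,c)$ via the quadratic formula and shows $f_\pm$ are contractions on $S_k$. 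Your Rouch\'e approach with the refined $\rho\sim(|c|k^2)^{-1}$ in Lemma \ref{L:Ray-regular-1} is a legitimate alternative that avoids the machinery of Subsection \ref{SS:Y}; the paper's route has the advantage that the $Y$-estimates are already packaged and give the $\p_cF$ bound immediately, while yours requires re-deriving $\p_cY$ from Lemma \ref{L:pcy1}.

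There is one gap in your ``exactly two roots'' step. Your large-disk Rouch\'e on $|c-U(0)|\le 3\sqrt{|k|}$ cannot be applied as stated, because $\BF(k,\cdot)$ is not holomorphic across $U([-h,0])$ (which lies inside that disk), and your claim that $|\BF_0|\gtrsim|k|\cosh(\mu^{-1}h)$ outside the two unit disks is false: just outside those disks $|\BF_0|/\cosh(\mu^{-1}h)=|c-c_0^+||c-c_0^-|\sim\sqrt{|k|}$, not $|k|$. Also, statement (2) concerns bounded $|k|$ and does not help here. The fix is direct and easy: your refined estimate $|\BF-\BF_0|\le C|k|^{-1/2}\cosh(\mu^{-1}h)$ in fact holds uniformly on all of $\{|c|\ge\tfrac12\sqrt{\sigma|k|}\}$ (since $|U(x_2)-c|\gtrsim\sqrt{|k|}$ there), and on that set outside the two unit disks $|\BF_0|/\cosh(\mu^{-1}h)\ge 1\cdot(2\sigma_k-1)\sim\sqrt{|k|}$ dominates; on the complement $\{|c|<\tfrac12\sqrt{\sigma|k|}\}$ statement (1) gives $|\BF_0|/\cosh(\mu^{-1}h)\gtrsim\sigma|k|$ against error $O(|k|^{1-\alpha})$. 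So $\BF\ne0$ everywhere outside the two small disks (including on $U([-h,0])$), and the two Rouch\'e roots are the only ones. With this correction your argument goes through.
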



\begin{proof}
The first statement follows directly from Lemma \ref{L:y-pm}, where the factor $(U(0)-c)^2$ is used to cancel the logarithmic singularity in the estimate of $y_-'$, and the second from Lemma \ref{L:regular-small-k} with $C_0 =  dist(c, U([-h, 0])^{-1}$. We focus on the roots of $\BF$. From Lemma \ref{L:y-pm}, 
\[
\exists k_0>0, \text{ s. t. } | y_-(k, c, 0)| \ge (1/2) \mu \sinh \mu^{-1} h  >0, \quad \forall |k| \ge k_0, \, c\in \C,
\]
and thus we can work with $F(k, c)$ and $Y(k, c)$. Let 
\[
S_k = \{ c \in \C \mid |c|\ge  \sqrt{\sigma|k|}/2\}. 
\]
From statement (1) and  Lemma \ref{L:Y-def}(3), it holds that there exist $k_0>0$ such that,  for any $|k|\ge k_0$, $F(k, c)=0$ only if $c \in S_k$. We may take larger $k_0>0$ if necessary such that $dist\big(S_k, U([-h, 0])\big)\ge 1$. From Lemma \ref{L:Y-Cauchy} and \ref{L:Y-I-1} and Corollary \ref{C:Y-Cauchy}, there exists $C>0$ depending only on $U$ such that, for all 
$|k|\ge k_0$, $c \in S_k$,   
\[
|Y(k, c) - k\coth kh | \le  \frac {C}{(1+|c|) \sinh^2 \frac h\mu}   \int_{U(-h)}^{U(0)} \sinh^2 \tfrac 1\mu ( U^{-1} (c') + h) dc', 
\]
\[
|\p_c Y(k, c) | \le \frac {C}{1+ |c|} + \frac {C}{(1+|c|)\mu \sinh^2 \frac h\mu} \int_{U(-h)}^{U(0)}  \sinh \tfrac 2\mu \big( U^{-1} (c') + h\big) dc'.  
\]
By a substitution $\tau = \tfrac 1\mu \big(U^{-1} (c') +h\big)$ 
we obtain 
\[
|Y(k, c) - k\coth kh | \le C(|k|+1)^{-1} (1+|c|)^{-1}, \quad |\p_c Y(k, c)| \le C(1+|c|)^{-1}, \quad \forall |k|\ge k_0.
\]
On the other hand, 
viewing $F(k, c)=0$ as a quadratic equation of $U(0)-c$, its roots also satisfy   
\[
c = f_\pm (k, c), \; \text{ where } \; f_\pm (k, c) = U(0) -  \frac {U'(0)}{2Y(k, c)} \pm \sqrt{\frac {U'(0)^2}{4Y(k, c)^2} + \frac {g + \sigma k^2}{Y(k, c)}}.
\] 
Using the above estimates on $Y$ and $\coth s =  1+ \tfrac 2{e^{2s}-1}$, it is straight forward to verify that for any $|k|\ge k_0$ and $c\in S_k$, 
\[
\left|f_\pm (k, c) \mp \sqrt{\sigma |k|} - U(0)\right| \le C, 
\]
and 
\begin{align*}
|\p_c f_\pm (k, c)| = & \left|\frac {U'(0)}{2Y^2} \mp \frac 12 \left(\frac {U'(0)^2}{4Y^2} + \frac {g + \sigma k^2}{Y}\right)^{-\frac 12} \left(\frac {U'(0)^2}{2Y^3} + \frac {g + \sigma k^2}{Y^2}\right)   \right| |\p_c Y| \le \frac C{|k|}.
\end{align*}
Therefore $f_\pm (k, \cdot)$ are contractions acting on $S_k$. Their fixed points $c^\pm (k)$, analytic in $k$, are the only solutions to \eqref{E:BF}, or equivalently \eqref{E:dispersion}. These $c^\pm (k) \in \R$ since $f_\pm (k, c) \in \R$ for $c \in \R$ which allows the iteration to be taken in $\R$. Finally, one may compute 
\be \label{E:pcF}
\p_c F =  (U(0)-c)^2\p_c Y + 2 (c-U(0)) Y + U'(0).  
\ee
Using the above estimates on $Y-|k|$, $\p_c Y$, and $c^\pm (k)$, one may compute
\[
\big|\p_c F\big(k, c^\pm (k) \big) \mp 2\sqrt{\sigma} |k|^{\frac 32} \big| = \big| 2Y \big(c - U(0)\big) \mp 2\sqrt{\sigma} |k|^{\frac 32} + \p_c Y \big(U(0) - c\big)^2 +U'(0)\big|_{c= c^\pm (k)} \le C |k|. 
\]
The evenness of $c^\pm (k)$ in $k$ is due to that of $\BF(k, c)$ and the uniqueness of the fixed points of the above contractions. This completes the proof of the lemma. 
 \end{proof} 

We shall track the two roots $c^\pm (k)$ of the analytic function $F(k, \cdot)$ as $|k|$ decreases, based  on a standard  analytic continuation argument.  

\begin{lemma} \label{L:continuation}
Assume $U \in C^3$. Suppose $k_0\in \R$ and $c_0 \in \C \setminus U([-h, 0])$ satisfy $\BF(k_0, c_0)=0$ and $\p_c \BF(k_0, c_0)\ne 0$, then the following hold. 
\begin{enumerate}
\item There exists an analytic function $c(k) \in \C \setminus U([-h, 0])$ defined on a maximal interval $ (k_-, k_+)\ni k_0$ such that $\BF\big(k, c(k)\big)=0$ and $\p_c \BF\big(k, c (k)\big)\ne 0$. 
\item $c(k) \in \R$ for all $k \in (k_-, k_+)$ if and only if $c_0 \in \R$.  
\item If $k_+ < \infty$ (or $k_- > -\infty$), then 
\begin{enumerate} \item 
$\lim_{k\to (k_+)-} dist(c(k), U([-h, 0])) = 0$ (or $\lim_{k\to (k_-)+} dist(c(k), U([-h, 0])) = 0$ if $k_->-\infty$), or 
\item $\liminf_{k\to (k_+)-} \min \{ |c(k) -c| \, : \, \forall c \text{ s. t. } \BF(k, c)=0, \, c\ne c(k) \} = 0$ (or $\liminf_{k\to (k_-)+} \min \{ |c(k) -c| \, : \,  \BF(k, c)=0, \, c\ne c(k) \} = 0$ if $k_- > -\infty$).
\end{enumerate}
\end{enumerate}
\end{lemma}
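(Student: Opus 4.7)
The plan is to construct $c(k)$ by real-analytic continuation from $(k_0,c_0)$ via the holomorphic implicit function theorem, and then to pin down the two possible obstructions described in (3a)--(3b).

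For (1), I will first record that by Lemma~\ref{L:Ray-H-reg} together with \eqref{E:BF-ana}, $\BF(k,c)$ is jointly holomorphic on $\C\times\big(\C\setminus U([-h,0])\big)$ (the evenness and analyticity of $y_\pm$ in $k^2$ and $c$ yielding joint holomorphy in $(k,c)$). The hypothesis $\p_c\BF(k_0,c_0)\ne 0$ then invokes the holomorphic IFT to produce a unique holomorphic $c(k)$ on a complex disk about $k_0$ with $\BF(k,c(k))=0$ and, by continuity, $\p_c\BF(k,c(k))\ne 0$ locally. I will define $(k_-,k_+)$ as the union of all open real intervals containing $k_0$ on which such a continuation exists with values in $\C\setminus U([-h,0])$; the identity theorem ensures that two such continuations agree on overlaps, making $c(k)$ well-defined and real-analytic throughout, and $(k_-,k_+)$ maximal by construction.

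For (2), the conjugation relation \eqref{E:BF-conj} gives $\BF(k,\overline{c(k)})=\overline{\BF(k,c(k))}=0$ and $\p_c\BF(k,\overline{c(k)})=\overline{\p_c\BF(k,c(k))}\ne 0$, so $k\mapsto\overline{c(k)}$ is another real-analytic branch on $(k_-,k_+)$. If $c_0\in\R$, uniqueness of the IFT solution at $k_0$ forces $c\equiv\overline c$ locally, hence throughout by the identity theorem, so $c(k)\in\R$ on $(k_-,k_+)$. Conversely, if $c(k_1)\in\R$ at some interior $k_1$, the same local argument at $k_1$ forces $c\equiv\overline c$ on $(k_-,k_+)$, giving $c_0=c(k_0)\in\R$.

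For (3), suppose $k_+<\infty$ and assume, for contradiction, that both alternatives fail: there exist $\delta>0$ and a sequence $k_n\to k_+^-$ with $dist\big(c(k_n),U([-h,0])\big)\ge\delta$ and with $|c(k_n)-c|\ge\delta$ for every other zero $c$ of $\BF(k_n,\cdot)$. The large-$|c|$ expansion in Lemma~\ref{L:ev-large-k}(1)--(2) shows that zeros of $\BF(k,\cdot)$ stay in a bounded subset of $\C$ for $k$ in any compact set, so $c(k_n)$ is bounded and, after passing to a subsequence, $c(k_n)\to c_*$ with $dist(c_*,U([-h,0]))\ge\delta$; continuity of $\BF$ at $(k_+,c_*)$ yields $\BF(k_+,c_*)=0$. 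If $\p_c\BF(k_+,c_*)\ne 0$, the holomorphic IFT at $(k_+,c_*)$ yields a holomorphic solution $\tilde c(k)$ on a complex disk about $k_+$; uniqueness combined with $c(k_n)\to\tilde c(k_+)$ forces $\tilde c\equiv c$ on $(k_+-\eta,k_+)$ for small $\eta>0$, extending $c$ past $k_+$ and contradicting maximality. If instead $\p_c\BF(k_+,c_*)=0$, Weierstrass preparation gives $\BF(k,c)=P(k,c)\,Q(k,c)$ near $(k_+,c_*)$ with $Q$ non-vanishing and $P$ a monic pseudopolynomial in $c-c_*$ of degree $m\ge 2$ with coefficients holomorphic in $k$; the $m$ roots of $P(k,\cdot)$ all collapse to $c_*$ as $k\to k_+$. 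Since $\p_c\BF(k,c(k))\ne 0$ on $(k_-,k_+)$, these $m$ roots cannot all identically equal $c(k)$, so at least one second root $\tilde c(k)\ne c(k)$ of $\BF(k,\cdot)$ exists with $\tilde c(k)\to c_*$, forcing $|c(k)-\tilde c(k)|\to 0$ and contradicting the failure of (3b). The case $k_->-\infty$ is symmetric.

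The main obstacle I anticipate is the sub-case $\p_c\BF(k_+,c_*)=0$ in (3): to close it one must genuinely extract a distinct second root from the Weierstrass preparation and rule out the degenerate scenario in which all $m$ roots of $P$ identically coincide with $c(k)$ for $k<k_+$. This is precisely where the preserved condition $\p_c\BF(k,c(k))\ne 0$ along the continuation is essential. A secondary technical point is confirming the boundedness of $c(k)$ on $[k_0,k_+)$ from the large-$|c|$ asymptotics in Lemma~\ref{L:ev-large-k} so that a limit point $c_*$ exists in the first place.
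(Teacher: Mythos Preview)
Your proposal is correct and follows essentially the same strategy as the paper: analytic continuation of the simple root via the implicit function theorem for (1), the conjugation symmetry for (2), and an endpoint obstruction analysis via root-counting for (3). The only notable difference is in the tooling for (3): where you invoke Weierstrass preparation to exhibit a second nearby root when $\p_c\BF(k_+,c_*)=0$, the paper uses the argument-principle index $\text{Ind}(\BF(k,\cdot),\Omega)$, observing that it equals $m\ge 2$ on a small disk $\Omega$ about $c_*$ and is locally constant in $k$, so for each $k_n$ near $k_+$ there are at least two zeros (with multiplicity) in $\Omega$, one of which is the simple $c(k_n)$. Both arguments are standard and yield the same conclusion; the index version is marginally more direct since it avoids the preparation theorem. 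One small tightening you should make: in your Case~1 of (3), the passage from $c(k_n)\to c_*=\tilde c(k_+)$ to $c\equiv\tilde c$ on $(k_+-\eta,k_+)$ should be spelled out as ``for large $n$, $(k_n,c(k_n))$ lies in the IFT uniqueness neighborhood, hence $c(k_n)=\tilde c(k_n)$; local IFT uniqueness at $k_n$ then gives $c\equiv\tilde c$ near $k_n$, and real-analyticity propagates this to the full overlap.'' Similarly, in the Weierstrass case it suffices to argue along the subsequence $k_n$ rather than for all $k$ near $k_+$, since contradicting the $\liminf$ in (3b) only requires a sequence.
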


\begin{proof}
We start the proof with a simple and standard consideration of the index of complex analytic functions. 
Suppose $\BF(k, c) \ne 0$ at any $c \in \p \Omega$ where $\Omega \subset \C \setminus U([-h, 0])$ is a domain with  piecewise smooth boundary $\p \Omega$, then the index   
\be \label{E:ind}
\text{Ind}\big(\BF(k, \cdot), \Omega\big):=\frac 1{2\pi i} \oint_{\p \Omega} \frac {\p_c \BF(k, c)}{\BF(k, c)} dc \in \mathbb{N} \cup \{0\}
\ee 
is equal to the number of zeros of $\BF(k, \cdot)$ inside $\Omega$, counting their multiplicities. Therefore the analyticity of $F$ in $k$ and $c$ implies that Ind$\big(\BF(k, \cdot), \Omega\big)$ is a constant in $k$ as long as $\BF(k, c) =0$ does not occur on $\p \Omega$. 

As a consequence, starting with the simple root $c_0 \in \C \setminus U([-h, 0])$ of $\BF(k_0, \cdot)$, a unique continuation of $c(k) \subset \C \setminus U([-h, 0])$ of {\it simple} roots of $\BF(k, \cdot)$ exists and is analytic in $k$. The simplicity of $c(k)$ is due to the fact Ind$\big(\BF(k, \cdot), \Omega\big)=1$ for any sufficiently small neighborhood $\Omega$ of $c(k)$ in the continuation procedure. For any $c \in \R \setminus U([-h, 0])$, we have $\BF(k, c) \in \R$ and $\p_{c_R} \BF_R(k, c) = \p_c \BF(k, c) \ne 0$. Therefore  if $c(k_1) \in \R \setminus U([-h, 0])$ for some $k_1$ along the continuation curve, then the unique extension $c(k)$ coincides with the (real) root of $\BF_R(k, c_R)$ obtained by applying the Implicit Function Theorem to the real function $\BF_R(k, c_R)$.
Hence $c(k)\in \R$ if and only if $c_0 \in \R$.  

Let $(k_-, k_+)$ be the {\it max} interval of the continuation $c(k) \subset \C \setminus U([-h, 0])$ as simple roots of $\BF(k, \cdot)$ and we shall prove statement (3). Suppose 
$k_->-\infty$, while the other case $k_+<+\infty$ can be analyzed similarly. As $k \to (k_-) +$, the solution curve $c(k)$ is bounded due to Lemma \ref{L:ev-large-k}(2). Therefore there exists a sequence $(k_j)_{j=1}^\infty \subset (k_-, k_+)$ such that $\lim_{j\to \infty} k_j = k_-$ and $c_- = \lim_{j\to \infty} c(k_j) \in \C$ exists. Statement (2) implies that $c(k)$ stays in the closure of either the upper or lower half of $\C$ 
and thus $\BF(k_-, c_-)=0$. 
Assume statement (3)(a) does not hold, then such a subsequence can be chosen such that $c_- \notin U([-h, 0])$. Therefore $c_-$ is a root in the domain of analyticity of $\BF(k_-, \cdot)$. Clearly $c_-$ is not a simple zero of $\BF(k_-, \cdot)$, otherwise $c(k)$ can be extended beyond $k_-$. Recall $c_-$ has to be an isolated root of $\BF(k_-, \cdot)$ since all roots of non-trivial analytic functions are isolated. Therefore, there exists a small neighborhood $\Omega$ of $c_-$ such that, for any $k\ge k_-$ sufficiently close to $k_-$, it hold Ind$\big(\BF(k, \cdot), \Omega\big) \ge 2$. Consequently, for each $k_j$ close to $k_-$, there exists at least another root $c$ of $\BF(k_j, \cdot)$ in $\Omega$ and thus (3)(b) holds. 
\end{proof}

The semicircle theorem of Yih \cite{Yih72} states that all imaginary roots $c$ of $\BF(k, \cdot)$ are contained in the circle with the diameter segment $U([-h, 0])$, so the only possibility for the branches $c^\pm(k)$ of simple roots of $\BF(k, \cdot)$ obtained in Lemma \ref{L:ev-large-k} can not be extended for all $k \in \R$ is when they reaches $U(0)$ or $U(-h)$, respectively. As a corollary of $\BF\big(k, U(0)\big)\ne 0$ and we have  

\begin{corollary} \label{C:branches-1} 
(1) The branch $c^+(k)$ 
can be extended for all $k \in \R$. Moreover $c^+(k) \in \R$ is even in $k$, $\p_c F\big(k, c^+(k)\big)> 0$, and $c^+(k) > U(0) + \rho_0$ for all $k\in \R$, for some $\rho_0>0$ independent of $k$. \\
(2) If $\BF(k, U(-h)) \ne 0$ for all $k \in \R$, then $c^-(k)$ of simple roots of $\BF(k, \cdot)$ obtained in can also be extended for all $k \in \R$. Moreover $c^-(k) \in \R$ is even in $k$, $\p_c F\big(k, c^-(k)\big)< 0$, and $c^-(k) < U(-h) - \rho_0$ for all $k\in \R$, for some $\rho_0>0$ independent of $k$.
\end{corollary}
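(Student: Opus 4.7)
The plan is to construct $c^\pm(k)$ by analytic continuation of the simple real roots from Lemma \ref{L:ev-large-k}(3) using Lemma \ref{L:continuation}. For statement (1), the initial data at $|k|\ge k_0$ is a simple real root of $F(k, \cdot)$ with $c^+(k) - U(0) \sim \sqrt{\sigma|k|}$ and $\p_c F(k, c^+(k)) \sim 2\sqrt{\sigma}|k|^{3/2} > 0$ by the asymptotics in Lemma \ref{L:ev-large-k}(3). Lemma \ref{L:continuation} yields a maximal interval of real-analytic continuation, on which $c^+(k)\in\R$ by Lemma \ref{L:continuation}(2). Evenness $c^+(-k)=c^+(k)$ follows from $F(-k, c) = F(k, c)$ plus uniqueness of continuation, so it suffices to continue $c^+$ down to $k = 0$ and extend by evenness.

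Suppose continuation fails at some $k_+ > 0$. By Lemma \ref{L:continuation}(3), either (a) $\operatorname{dist}(c^+(k), U([-h, 0])) \to 0$ as $k \to k_+^+$, or (b) $c^+(k)$ merges with another zero of $\BF(k, \cdot)$. For (a), $c^+(k)\in\R$, $c^+(k_0) > U(0)$, and continuity of $c^+$ combine with $F(k, U(0)) = -(g + \sigma k^2) \ne 0$ from Lemma \ref{L:e-v-basic-1}(2) and the intermediate value theorem to force $c^+(k) > U(0)$ along the continuation; hence $\operatorname{dist}(c^+(k), U([-h,0])) = c^+(k) - U(0)$, and passing to the limit $c^+(k) \to U(0)$ would yield $F(k_+, U(0)) = 0$, again contradicting Lemma \ref{L:e-v-basic-1}(2). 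So (a) is excluded.

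Mode (b) is the main obstacle. By Yih's semicircle theorem, any complex zero of $\BF(k, \cdot)$ lies in the closed disk with diameter $U([-h, 0])$, whose rightmost point is $U(0)$; once a uniform bound $c^+(k) \ge U(0) + \rho_0$ is in hand, collision with a complex zero is impossible. To exclude collision with another real zero in $(U(0), \infty)$, I will show that for every $k\in\R$ the largest real root of $F(k, \cdot)$ on $(U(0), \infty)$ is simple. The existence of a largest root $c_*(k)$ is immediate from $F(k, U(0)^+) = -(g+\sigma k^2) < 0$ (Lemma \ref{L:e-v-basic-1}(2)) together with $F(k, c) \to +\infty$ as $c \to +\infty$ (which follows from $Y(k, c) \to k\coth(kh)$ in Lemma \ref{L:Y-def}(4) via the definition of $F$), and yields $\p_c F(k, c_*(k)) \ge 0$. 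Sign analysis of the real-analytic function $c \mapsto F(k, c)$ near $c_*(k)$, combined with the bookkeeping that no real zero can enter or leave $(U(0), \infty)$ through the endpoint $U(0)$ (since $F(k, U(0)) \ne 0$) or through infinity (since $F(k, c) \to +\infty$ uniformly on bounded $k$-intervals), will give $\p_c F(k, c_*(k)) > 0$. Since $c^+(k) = c_*(k)$ for $|k|\ge k_0$ (by the "exactly two zeros" conclusion of Lemma \ref{L:ev-large-k}(3)) and both are continuous functions of $k$, they agree throughout, closing mode (b). The bound $c^+(k) \ge U(0) + \rho_0$ then follows by compactness on bounded $k$-intervals, using $c^+(k) > U(0)$, the excluded accumulation at $U(0)$, and $c^+(k) \to +\infty$ as $|k|\to\infty$.

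Statement (2) is proved by the symmetric argument with $U(0)$ replaced by $U(-h)$. The hypothesis $\BF(k, U(-h)) \ne 0$, equivalent to $F(k, U(-h)) \ne 0$ since $y_-(k, U(-h), 0) > 0$ by Lemma \ref{L:y-lower-b}(1), plays the role of the automatic nonvanishing $F(k, U(0)) = -(g+\sigma k^2) \ne 0$ used in (1), preventing failure mode (a) at $U(-h)$; mode (b) and the $\rho_0$-bound are handled identically by the left-right reflection of the sign analysis, now applied to the smallest real root of $F(k, \cdot)$ on $(-\infty, U(-h))$.
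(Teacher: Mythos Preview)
Your overall strategy via Lemma \ref{L:continuation}---continue $c^+$ from $|k|\ge k_0$ and rule out the two failure modes---is the paper's, and your handling of mode (a) using $F(k,U(0))=-(g+\sigma k^2)\ne0$ is correct. The gap is in mode (b). Your ``bookkeeping'' asserts that real zeros cannot enter or leave $(U(0),\infty)$ through the endpoints, and from this you want to deduce that the largest real root $c_*(k)$ is simple for every $k$. But real roots can also leave $(U(0),\infty)$ by merging into a complex-conjugate pair, and new real roots can appear when such a pair collapses onto the real axis; your bookkeeping does not address this mechanism. You do invoke the semicircle theorem, but only to prevent $c^+(k)$ itself from colliding with a non-real zero, not to constrain where complex zeros can form or disappear. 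Without that, the strict inequality $\p_cF(k,c_*(k))>0$ is unproved, and the identification $c^+(k)=c_*(k)$ via ``both are continuous'' is premature: continuity of $c_*$ already presupposes simplicity.

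The repair is precisely what the paper does, and it is cleaner than tracking the largest root. Fix $R>0$ with $\BF(k,c)\ne0$ for all $|k|\le k_0$ and $|c|\ge R$ (Lemma \ref{L:ev-large-k}(2)), and set $\Omega=\{c:\,U(0)<c_R<R,\ |c_I|<1\}$. Then $\BF(k,\cdot)\ne0$ on $\p\Omega$ for every $|k|\le k_0$: on the left edge by $F(k,U(0))\ne0$ together with the semicircle theorem (points $U(0)+ic_I$ with $c_I\ne0$ lie strictly outside the closed semicircle), on the horizontal edges again by the semicircle theorem, and on the right edge by the choice of $R$. Hence $\text{Ind}\big(\BF(k,\cdot),\Omega\big)$ is constant in $k$ and equals $1$ at $k=k_0$, so $\Omega$ contains exactly one zero, necessarily simple, for every $|k|\le k_0$. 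Neither alternative of Lemma \ref{L:continuation}(3) can then occur, and the continuation reaches $k=0$ with $c^+(k)\in(U(0),R)$. The semicircle theorem is exactly the ingredient that makes the zero count in $\Omega$ topologically stable; once you use it that way, your bookkeeping \emph{is} the index computation. Evenness, the sign of $\p_cF$, and the uniform bound $c^+(k)\ge U(0)+\rho_0$ then follow as you indicate, and part (2) is the mirror image under the hypothesis $\BF(k,U(-h))\ne0$.
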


\begin{proof} 
Let $k_0$ be given in Lemma \ref{L:ev-large-k}(3) and we only need to focus on $|k| \le k_0$. We may assume $k_0$ is sufficiently large such that $c^+(k_0) > U(0)$ and $c^-(k_0) < U(-h)$. From Lemma \ref{L:ev-large-k}(2), there exists $R>0$ such that $\BF(k, c) \ne 0$ for all $k \in [-k_0, k_0]$ and $|c|\ge R$. Hence $c^+ (k_0) \in (U(0), R)$ and $c^-(k_0)\in (-R, U(-h))$ are the only roots of $\BF(\pm k_0, \cdot)$, which are also simple with $\pm \p_c F(k_0, c^\pm (k_0)) >0$. 

We first consider $c^+(k)$. Let 
\[
\Omega = \{ c\in \C \mid c_R \in ( U(0), R), \, c_I \in (-1, 1)\}. 
\]
According to Lemma \ref{L:e-v-basic-1}(2), $\BF(k, U(0))\ne 0$ for any $k$. Hence the semicircle theorem and the choice of $R$ imply that a.) $c^+(k_0) \in \Omega$ and b.) $\BF(k, c)  \ne 0$  for all $|k|\le k_0$ and $c \in \p \Omega$, and thus 
\[
\text{Ind}(\BF (k, \cdot), \Omega) =\text{Ind}(\BF (k_0, \cdot), \Omega) = 1, \forall |k| \le k_0. 
\]
Therefore none of the possibilities in Lemma \ref{L:continuation}(3ab) can happen to the extension $c^+(k) \in \Omega$ starting from $k=k_0$, so this branch of simple root of $\BF(k, c\dot)$ can be uniquely extended for all $k \in [-k_0, k_0]$ with $c^+(k) \in (U(0), R)$ as the only root of $\BF(k, \cdot)$ in $\Omega$. The value of this extension at $k=-k_0$ has to coincide with $c^+ (-k_0)= c^+(k_0)$ as $c^\pm (-k_0)$ are the only roots of $\BF (-k_0, \cdot)$ while $c^-(-k_0) < U(-h)$. Therefore the extensions starting from $c^+(\pm k_0)$ have to coincide. The evenness of $c^+(k)$ in $k \in [-k_0, k_0]$ follows from that of $\BF$ and the uniqueness of its root in $\Omega$. The sign of $\p_c F\big(k, c^+(k)\big)$ remains positive from $k=k_0$ as $c^+(k)$ is always simple. The existence of $\rho_0>0$ is simple due to the continuity of $\BF$. The same argument applies to $c^-(k)$ under the assumption $\BF(k, U(-h))\ne 0$ all for $k$. The proof is complete.  
\end{proof} 

Based on the above analysis, we shall conclude that $-ik c^\pm (k)$ are the only eigenvalues of the linearized capillary gravity wave under the additonal assumption of the absence of singular modes
\be \label{E:no-S-M}
\BF(k, U(x_2)) \ne 0, \quad \forall k \in \BK, \; x_2 \in [-h, 0],
\ee
where $\BK= \R$ or $\frac {2\pi}L \mathbb{N}$ and $L$ is the period of the water wave in the $x_1$ direction. 

\begin{proposition} \label{P:e-v-0}
Assume $U \in C^3$ and \eqref{E:no-S-M} for $\BK= \R$ or $\frac {2\pi}L \mathbb{N}$, then there exists $\rho>0$ such that 
\begin{enumerate} 
\item $F_0 \triangleq \inf \{ (1+k^2)^{-\frac 12} e^{-\frac h\mu} |\BF(k, c)| \mid k \in \BK, \, c_R \in [U(-h)-\rho, U(0)+\rho], \, c_I \in [-\rho, \rho]\}  > 0$.
\item Assume $\BK=\R$, then 
$\{ c\mid \BF(k, c) =0 \}= \{c^\pm (k)\}$.
\end{enumerate}
\end{proposition}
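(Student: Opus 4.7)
The plan is to establish the uniform lower bound of Part (1) by separating large $|k|$ from bounded $|k|$, and then to deduce Part (2) by an argument-principle count of zeros that uses Part (1) to seal off the singular boundary $U([-h,0])$.

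For Part (1) with large $|k|$, Lemma \ref{L:ev-large-k}(1) yields
\[
\BF(k,c) = -\sigma k^2\mu\sinh(\mu^{-1}h) + (U(0)-c)^2\cosh(\mu^{-1}h) + O\big((\mu^{\alpha-1}+|c|^2\mu^\alpha)\cosh(\mu^{-1}h)\big).
\]
Multiplying by $\mu e^{-\mu^{-1}h}$ and using $k^2\mu^2\to 1$ and $\mu\to 0$ as $|k|\to\infty$, the leading term tends to $-\sigma/2$ uniformly in $c$ with $c_R\in[U(-h)-\rho,U(0)+\rho]$ and $|c_I|\le\rho$, while the error tends to zero; hence $|\BF|\,\mu e^{-\mu^{-1}h}\ge\sigma/4$ for $|k|\ge k_1$ sufficiently large. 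For bounded $|k|\le k_1$, the weight $\mu e^{-\mu^{-1}h}$ is bounded above, so only a uniform positive lower bound on $|\BF(k,c)|$ itself is needed. The hypothesis \eqref{E:no-S-M} gives $\BF(k,c)\ne 0$ for all $c\in U([-h,0])$ and $k\in\BK$. Continuity of $\BF$ on $\{c_I\ge 0\}$ through the endpoints $c=U(-h),U(0)$ (Lemma \ref{L:e-v-basic-1}(1)--(2)), together with the conjugacy \eqref{E:BF-conj} extending continuously to $c_I<0$, then supplies a nonvanishing complex neighborhood of $U([-h,0])$ for each fixed $k$. Compactness of $(\BK\cap[-k_1,k_1])\times U([-h,0])$ upgrades this to a uniform lower bound, provided $\rho$ is taken small enough and also smaller than $\rho_0$ from Corollary \ref{C:branches-1}, so that $c^\pm(k)$ never enters the rectangle $N_\rho:=[U(-h)-\rho,U(0)+\rho]\times[-\rho,\rho]$ for any $k\in\R$.

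For Part (2), fix any $k_0\in\R$. Choose $k_1\ge\max\{|k_0|,k_0^*\}$ with $k_0^*$ from Lemma \ref{L:ev-large-k}(3), and then choose $R>0$ large enough so that, by Lemma \ref{L:ev-large-k}(1)--(2) together with the semicircle theorem, $\BF(k,c)\ne 0$ for all $k\in[-k_1,k_1]$ and $|c|\ge R$. Set $\Omega:=\{c\in\C:|c|<R\}\setminus\overline{N_\rho}$; note that $\partial\Omega\subset\C\setminus U([-h,0])$ lies in the analytic region of $\BF$. Part (1) gives nonvanishing of $\BF$ on $\partial N_\rho\cap\overline{B(0,R)}$ and the choice of $R$ gives nonvanishing on $\{|c|=R\}$, both uniformly in $k\in[-k_1,k_1]$. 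Hence the index
\[
\mathrm{Ind}\big(\BF(k,\cdot),\Omega\big) := \frac{1}{2\pi i}\oint_{\partial\Omega}\frac{\p_c\BF(k,c)}{\BF(k,c)}\,dc
\]
is an integer-valued continuous function of $k\in[-k_1,k_1]$, hence constant. At $k=k_1$, Lemma \ref{L:ev-large-k}(3) gives exactly the two zeros $c^\pm(k_1)\in\Omega$, both simple (as zeros of $\BF$, since $\p_c\BF=y_-\,\p_c F\ne 0$ there, using Lemma \ref{L:e-v-basic-1}(4) to ensure $y_-(k_1,c^\pm(k_1),0)\ne 0$), so the index equals $2$. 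Therefore at $k=k_0$ the index is also $2$; since $c^\pm(k_0)\in\Omega$ are already simple zeros contributing $1+1$, no further zero of $\BF(k_0,\cdot)$ exists in $\Omega$. Combined with Part (1) (no zero in $N_\rho$) and the choice of $R$ (no zero with $|c|\ge R$), this identifies $\{c:\BF(k_0,c)=0\}=\{c^\pm(k_0)\}$.

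The main obstacle is twofold. In Part (1), continuity of $\BF$ up to $c=U(-h),U(0)$ is not automatic because $y_-'(k,c,0)$ has a logarithmic singularity there; it is the compensating factor $(U(0)-c)^2$ in $\BF$, together with the H\"older behavior of $y_-(k,c,0)$ from Lemma \ref{L:pcy0}, that rescues continuity (an ingredient already built into Lemma \ref{L:e-v-basic-1}(1)--(2)). In Part (2), the crux is the coordination of Part (1)'s $\rho$ with the outer radius $R$ so that no zero ever crosses $\partial\Omega$ as $k$ varies in $[-k_1,k_1]$; this combines the semicircle theorem (to confine unstable modes in a bounded region), the asymptotics of Lemma \ref{L:ev-large-k} (to push real zeros beyond $R$ uniformly in $k$), and Part (1) (to repel zeros from $U([-h,0])$).
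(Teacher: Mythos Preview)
Your proof is correct and follows essentially the same approach as the paper's: for Part (1) you split into large $|k|$ (handled by the asymptotics of Lemma~\ref{L:ev-large-k}(1)) and bounded $|k|$ (handled by continuity of $\BF$ and compactness), and for Part (2) you use the constancy of the argument-principle index on a domain obtained by removing a small neighborhood of $U([-h,0])$ from a large disk, with the reference index $2$ computed at a large $k_1$ via Lemma~\ref{L:ev-large-k}(3). The only cosmetic differences are your choice of a rectangular exclusion $N_\rho$ versus the paper's $\rho$-neighborhood, and your more explicit verification of the large-$|k|$ lower bound; your forward-reference to $\rho<\rho_0$ from Corollary~\ref{C:branches-1} in Part (1) is really a preparation for Part (2) (where $\BK=\R$ ensures $c^\pm(k)$ exist globally), and the phrase ``push real zeros beyond $R$'' in your final paragraph is a slip---you mean that the asymptotics guarantee no zero lies on or outside $|c|=R$.
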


\begin{proof}
The first statement is a direct corollary of the continuity of $\BF$, its analyticity outside $U([-h, 0])$, assumption \eqref{E:no-S-M}, and Lemma \ref{L:ev-large-k}. 

Let us consider statement (2). Corollary \ref{C:branches-1} and \eqref{E:no-S-M} imply that both $c^+ (k) \in (U(0), +\infty)$ and $c^-(k) \in (-\infty, U(-h))$ can be extended as even analytic functions of $k\in \R$.  Let $k_0, R>0$ be taken as in the proof of Corollary \ref{C:branches-1}  and we only need to focus on $|k| \le k_0$. Assumption \eqref{E:no-S-M} also yields $\rho>0$ such that 
\[
\BF(k, c) \ne 0, \;\; \forall dist\big(c, U([-h, 0])\big) = \rho, \; |k|\le k_0. 
\]
Let 
\[
\Omega = \{ c \in \C \mid |c| < R, \, dist\big(c, U([-h, 0])\big) > \rho \}, 
\]
then we have $\BF(k, c)  \ne 0$  for all $|k|\le k_0$ and $c \in \p \Omega$. Therefore 
\[
\text{Ind}(\BF (k, \cdot), \Omega) =\text{Ind}(\BF (k_0, \cdot), \Omega) = 2, \forall |k| \le k_0, 
\]
and $\BF(k, \cdot)$ does not have any other roots. 
\end{proof}

In order to obtain a more complete picture of the eigenvalue distribution we shall derive some sign properties in the following lemma, where $F$ and $Y$ are viewed as function of $c$ and $K=k^2\ge 0$. According to Lemma \ref{L:y-lower-b}(1), $F$ is well-defined for $c$ in a neighborhood of $\R \setminus U\big((-h, 0)\big)$. 

\begin{lemma} \label{L:F-signs}
Assume $U \in C^3$, then we have 
\[
\p_K^2 \big(F\big(\sqrt{K}, c \big)\big) <0, \;\ \forall k\in \R, \; c\in \R \setminus U\big((-h, 0]\big),
\]
\[
\p_K F(0, c) < - \sigma + \int_{-h}^0 \big(U(x_2) - c \big)^2 dx_2, \quad \forall c \in \R \setminus U([-h, 0]), 
\]
\[ 
\p_K F\big(0, U(-h)\big) =- \sigma + \int_{-h}^0 \big(U(x_2) - U(-h)\big)^2 dx_2. 
\]
\end{lemma}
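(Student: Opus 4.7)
The plan is to reduce everything to computing $\partial_K Y$ and $\partial_K^2 Y$ via Wronskian identities for the Rayleigh equation, since from \eqref{E:dispersion} we have $F(\sqrt{K},c)=Y(\sqrt{K},c)(U(0)-c)^2-U'(0)(U(0)-c)-g-\sigma K$ and hence
\[
\partial_K F=(U(0)-c)^2\,\partial_K Y-\sigma,\qquad \partial_K^2 F=(U(0)-c)^2\,\partial_K^2 Y.
\]
Throughout, $c\in\R\setminus U((-h,0])$, so by Lemma \ref{L:y-lower-b}(1) $y_-(k,c,x_2)>0$ for $x_2\in(-h,0]$; also the coefficients of the homogeneous Rayleigh equation \eqref{E:Ray-H1-1} are real-analytic in $K$ there, so $y_-$ is smooth in $K$.

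First I would write $\eta=\partial_K y_-$, which by differentiating \eqref{E:Ray-H1-1} in $K$ solves
\[
-\eta''+\bigl(K+\tfrac{U''}{U-c}\bigr)\eta=-y_-,\qquad \eta(-h)=\eta'(-h)=0.
\]
Combining with the $y_-$ equation gives $(y_-'\eta-y_-\eta')'=-y_-^2$; integrating over $[-h,0]$ and dividing by $y_-(0)^2$ yields the key identity
\[
\partial_K Y(k,c)=\frac{1}{y_-(0)^2}\int_{-h}^{0}y_-(k,c,x_2)^2\,dx_2.
\]
Repeating the same trick with $\zeta=\partial_K^2 y_-$, which solves $-\zeta''+(K+U''/(U-c))\zeta=-2\eta$ with zero initial data, produces $y_-'(0)\zeta(0)-y_-(0)\zeta'(0)=-2\int_{-h}^{0}y_-\eta\,dx_2$. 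Differentiating the formula for $\partial_K Y$ then gives, after straightforward algebra,
\[
\partial_K^2 Y=-\frac{2}{y_-(0)^3}\Bigl[y_-(0)\!\int_{-h}^{0}\!y_-\eta\,dx_2+\eta(0)\!\int_{-h}^{0}\!y_-^2\,dx_2\Bigr].
\]
To show this is negative, I would introduce $g=\eta/y_-$, which, using $y_-$'s equation, satisfies $(y_-^2 g')'=y_-^2$. With the asymptotics $\eta=O((x_2+h)^3)$ and $y_-=O(x_2+h)$ at $x_2=-h$ we get $g(-h)=g'(-h)=0$, hence $g'=f/y_-^2$ and $g=\int_{-h}^{x_2}f/y_-^2\,dx'\ge 0$, where $f(x_2)=\int_{-h}^{x_2}y_-^2\,dx'\ge 0$. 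Integration by parts then simplifies $\int y_-\eta=\int y_-^2 g\,dx_2$, so
\[
\partial_K^2 Y=-\frac{2}{y_-(0)^2}\Bigl[f(0)g(0)+\int_{-h}^{0}y_-^2 g\,dx_2\Bigr]<0,
\]
which proves $\partial_K^2 F(\sqrt{K},c)<0$.

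For the second and third statements, I would evaluate $\partial_K Y$ at $K=0$ using an explicit formula for $y_-(0,c,\cdot)$. For $c\ne U(-h)$, $\psi_1=U-c$ solves the Rayleigh equation with $K=0$, and reduction of order produces the second solution $\psi_2=(U-c)\int_{-h}^{x_2}(U-c')^{-2}dx'$. Matching $y_-(-h)=0$, $y_-'(-h)=1$ forces
\[
y_-(0,c,x_2)=(U(-h)-c)(U(x_2)-c)\,\phi(x_2),\qquad \phi(x_2):=\int_{-h}^{x_2}\frac{dx'}{(U(x')-c)^2}.
\]
Hence $\bigl((U(0)-c)\,y_-(0,c,x_2)/y_-(0,c,0)\bigr)^2=(U(x_2)-c)^2\bigl(\phi(x_2)/\phi(0)\bigr)^2$, giving
\[
\partial_K F(0,c)=\int_{-h}^{0}(U(x_2)-c)^2\Bigl(\frac{\phi(x_2)}{\phi(0)}\Bigr)^2 dx_2-\sigma.
\]
Since $\phi$ is strictly increasing with $\phi(-h)=0$ and $\phi(0)>0$, we have $\phi/\phi(0)<1$ on $[-h,0)$, yielding the strict inequality $\partial_K F(0,c)<-\sigma+\int_{-h}^{0}(U-c)^2 dx_2$. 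Finally, at $c=U(-h)$ the Rayleigh equation (with $K=0$) is regular and $y_1(x_2)=(U(x_2)-U(-h))/U'(-h)$ satisfies the initial conditions, so $y_-(0,U(-h),x_2)=(U(x_2)-U(-h))/U'(-h)$; substituting into the formula for $\partial_K Y$ produces exactly $\partial_K F(0,U(-h))=-\sigma+\int_{-h}^{0}(U(x_2)-U(-h))^2 dx_2$.

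The main obstacle is the algebraic reduction of $\partial_K^2 Y$ to a manifestly negative quantity; the substitution $\eta=y_-g$ and the integration-by-parts identity $\int y_-\eta=\int y_-^2 g$ are the essential steps that turn an expression of indefinite sign into a sum of two nonnegative terms. All other steps are routine once the Wronskian identity for $\partial_K Y$ is written down.
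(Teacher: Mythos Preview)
Your overall strategy is sound and close in spirit to the paper's, but there is a sign error in your formula for $\partial_K^2 Y$. Redoing the ``straightforward algebra'' carefully: writing $a=y_-(0)$, $N=a\eta'(0)-y_-'(0)\eta(0)=\int y_-^2=f(0)$, one gets $\partial_K^2 Y=\dot N/a^2-2N\eta(0)/a^3$ with $\dot N=a\zeta'(0)-y_-'(0)\zeta(0)=2\int y_-\eta$, hence
\[
\partial_K^2 Y=\frac{2}{y_-(0)^2}\Bigl[\int_{-h}^0 y_-^2 g\,dx_2-g(0)f(0)\Bigr],
\]
with a \emph{minus} sign between the two terms, not a plus. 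So the expression is not manifestly negative as you claim. The fix is exactly the integration by parts you allude to, but applied in the right place: since $f'=y_-^2$ and $g'=f/y_-^2$ with $f(-h)=0$,
\[
\int_{-h}^0 y_-^2 g\,dx_2=\int_{-h}^0 f'g\,dx_2=f(0)g(0)-\int_{-h}^0 fg'\,dx_2=f(0)g(0)-\int_{-h}^0\frac{f^2}{y_-^2}\,dx_2,
\]
giving $\partial_K^2 Y=-\tfrac{2}{y_-(0)^2}\int_{-h}^0 f^2/y_-^2\,dx_2<0$. (Your line ``integration by parts then simplifies $\int y_-\eta=\int y_-^2 g$'' is a tautology since $\eta=y_-g$; the nontrivial integration by parts is the one above.)

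The paper takes a slightly cleaner route: it works with the normalized solution $\tilde y=y_-/y_-(0)$, so that $\tilde y(-h)=0$, $\tilde y(0)=1$, and $\partial_K\tilde y$ has \emph{Dirichlet} conditions at both endpoints. The Green's function representation then gives $\partial_K\tilde y\le 0$ pointwise, and $\partial_K^2 Y=2\int\tilde y\,\partial_K\tilde y<0$ follows immediately without any algebraic cancellation. Your unnormalized approach works too, but requires the extra integration by parts to see the sign. Your treatment of the second and third statements (the explicit $y_-(0,c,\cdot)$ and the strict inequality via $0\le\phi/\phi(0)<1$) is correct and matches the paper.
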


\begin{proof}
For $K\ge 0$ and $c \in \C$ with $y_-(k, c, 0)\ne 0$ and $c_I\ge 0$, let 
\be \label{E:Ray-op}
\CR=\CR (K, c) = - \p_{x_2}^2 + K + \frac {U''(x_2)}{U(x_2) -c}, \;\; \tilde y(K, c, x_2) = \frac {y_{-} (\sqrt{K}, c, x_2)}{y_{-} (\sqrt{K}, c, 0)}, \quad \; x_2 \in [-h, 0],
\ee
be the differential operator in the Rayleigh equation \eqref{E:Ray-H1-1} and the normalization of the fundamental solution $y_{-}$ defined in \eqref{E:y-pm} and \eqref{E:y0}. Clearly 
\[
\tilde y(-h)=0, \quad \tilde y'(-h)=y_{-} (\sqrt{K}, c, 0)^{-1}, \quad  \tilde y(x_2) > 0, \;  x_2 \in (-h, 0), \quad \tilde y(0)=1, \quad Y(\sqrt{K}, c)= \tilde y' (0), 
\]
where the sign properties follows from Lemma \ref{L:y-lower-b}(1). 
It is straight forward to compute, for $c \in \R \setminus U\big((-h, 0)\big)$ and $x_2 \in (-h, 0)$, 
\[
\CR \p_K \tilde y = -\tilde y <0, \quad  \CR \p_{K}^2 \tilde y = - 2 \p_K \tilde y,  
\] 
where the smoothness of $\tilde y$ in $K$ is ensured by Lemma \ref{L:B}. 
The following claim is used to analyze these and some other functions.

{\it Claim.} Suppose $y\in C^2((-h, 0)) \cap C^0([-h, 0])$ is a solution to $(\CR y)(x_2) = f(x_2)$ and $y(-h)=y(0)=0$ with $c \in \R \setminus U\big((-h, 0)\big)$, where $f$ is $C^0$ on $[-h, 0]$, then we have the following through direct computations  
\be \label{E:F-signs-0}
(\tilde y'  y - \tilde y  y')' = \tilde y f  \Rightarrow  y'(0) =- \int_{-h}^0 \tilde yf dx_2, \;\; 
y (x_2) =  \tilde y(x_2) \int_{x_2}^0 \frac 1{\tilde y (x_2')^{2}} \int_{-h}^{x_2'} \tilde y(x_2'') f(x_2'') dx_2'' dx_2'.
\ee
 
Applying this claim to $\p_K \tilde y$ and $\p_{KK} \tilde y$ implies, for $c \in \R \setminus U\big((-h, 0]\big)$, 
\be \label{E:F-signs-0.1} \begin{split}
&\p_K Y = \p_K \tilde y' (0) = \int_{-h}^0 \tilde y^2 dx_2 >0, \\
& \p_{K}^2 Y = -2 \int_{-h}^0 \tilde y(x_2)^2 \int_{x_2}^0 \tilde y (x_2')^{-2} \int_{-h}^{x_2'} \tilde y(x_2'')^2 dx_2'' dx_2' dx_2 <0.
\end{split} \ee
The definition of $F$ implies $\p_K^2 F<0$. 

For $k=0$, through direct calculation, one may verify, for $c \notin U([-h, 0])$, 
\be \label{E:y_--0}
y_- (0, c, x_2) = (U(x_2)-c) \int_{-h}^{x_2} \frac {U(-h)-c}{(U(x_2')-c)^2} dx_2'.
\ee
For $c \in \R \setminus U([-h, 0])$, from \eqref{E:F-signs-0.1}, we have 
\begin{align*}
\p_K Y(0, c) =& \int_{-h}^0 \tilde y^2 dx_2 = \int_{-h}^0 \frac {(U-c)^2}{(U(0)-c)^2} \Big(\int_{-h}^{x_2}  \frac {dx_2' }{(U(x_2')-c)^2} \Big)^2  dx_2 \Big( \int_{-h}^{0} \frac {dx_2'}{(U(x_2')-c)^2} \Big)^{-2}, 
\end{align*}
and thus 
\be  \label{E:F-esti-temp-0.5} \begin{split}
\p_K F(0, c) = & (U(0)-c)^2 \p_K Y(0, c) -\sigma 
\\
=& \int_{-h}^0 (U-c)^2 \Big(\int_{-h}^{x_2}  \frac {dx_2' }{(U(x_2')-c)^2} \Big)^2  dx_2 \Big( \int_{-h}^{0} \frac {dx_2'}{(U(x_2')-c)^2} \Big)^{-2} - \sigma \\
< & \int_{-h}^0 (U-c)^2 dx_2 -\sigma. 
\end{split} \ee

For $k=0$ and $c = U(-h)$, we can use \eqref{E:y-0} to compute 
\be \label{E:F-esti-temp-1}
\tilde y \big(0, U(-h), x_2\big) =\big(U(x_2) - U(-h)\big)/ \big(U(0) - U(-h)\big). 
\ee
Consequently, one obtains 
explicitly
\[
\p_K Y\big(0, U(-h)\big) = \int_{-h}^0 \frac {\big(U(x_2) - U(-h)\big)^2}{\big(U(0) - U(-h)\big)^2} dx_2,   
\]
which in turn yields the desired formula of $\p_K F\big(0, U(-h)\big)$. 
\end{proof}

The information on the derivatives of $F$ leads to the following  properties of the roots of $F$.  

\begin{lemma} \label{L:g-thresh}
Assume $U\in C^3$, the following hold. 
\begin{enumerate}
\item If 
\be \label{E:sigma-1} 
\sigma \ge \int_{-h}^0 \big(U(x_2) - U(-h)\big)^2 dx_2 \Longleftrightarrow \p_K F\big(0, U(-h)\big) \le 0,
\ee 
then $F\big(k, U(-h) \big) \le -g = F(0, U(-h))$ for all $k \in \R$.
\item 
Let 
\begin{align*}
g_\# = & \max \big\{ Y\big(k, U(-h)\big) \big(U(0)-U(-h)\big)^2 - U'(0) \big(U(0)-U(-h)\big) -\sigma k^2 \mid k \in \R\big\}\\
= & \max \big\{ F\big(k, U(-h)\big) +g  \mid k \in \R\big\},    
\end{align*}
then we have
\begin{enumerate}
\item $g_\# \ge F\big(0, U(-h)\big) +g = 0$ and ``='' in the ``$\le$" holds if and only if \eqref{E:sigma-1} holds.
\item If $g > g_\#$, then $F\big(k, U(-h)\big) <0$ for all $k \in \R$.
\item If $0< g =g_\#$ , then there exists a unique $k_\# >0$ such that $F\big( \pm k_\#, U(-h)\big) =0$ and $F\big(k, U(-h)\big) <0$ for all $|k| \ne k_\#$.
\item If $0< g< g_\#$, then there exist $k_\#^+ > k_\#^- >0$ such that 
\[
F\big(k, U(-h)\big) <0, \;\;  |k| \notin (k_\#^-, k_\#^+); \quad F\big(k, U(-h)\big) >0, \;\; |k| \in (k_\#^-, k_\#^+); \quad \mp \p_k F(k_\#^\pm, U(-h)) >0. 
\] 
\end{enumerate} \end{enumerate}
\end{lemma}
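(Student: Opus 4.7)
The plan is to reduce every part of the statement to elementary calculus on the single real function
\[
f(K) := F\big(\sqrt{K},\, U(-h)\big), \qquad K \in [0,\infty),
\]
by exploiting the strict concavity of $f$. The three inputs I will use are: (i) $f(0) = F(0, U(-h)) = -g$ from Lemma~\ref{L:e-v-basic-1}(2); (ii) the derivative identity
\[
f'(0) \;=\; \p_K F(0, U(-h)) \;=\; -\sigma + \int_{-h}^0 \big(U(x_2)-U(-h)\big)^2 \, dx_2
\]
from Lemma~\ref{L:F-signs}; and (iii) strict concavity $f''(K) < 0$ for all $K \ge 0$. The third point follows from the bound $\p_K^2 F(\sqrt{K}, c) < 0$ in Lemma~\ref{L:F-signs}, which is applicable at $c = U(-h)$ because the monotonicity of $U$ gives $U(-h) \notin U\big((-h, 0]\big)$. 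I will also use $f(K) \to -\infty$ as $K \to \infty$, which is read off from Lemma~\ref{L:ev-large-k}(1): for $c=U(-h)$ fixed, $\BF \sim -\sigma k^2 \mu \sinh(\mu^{-1}h)$, hence after dividing by $y_-(k, U(-h), 0) \sim \mu \sinh(\mu^{-1}h)$ one has $F(k, U(-h)) \to -\infty$ as $|k| \to \infty$.

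With these three facts in place, Part (1) is immediate: assuming $\sigma \ge \int_{-h}^0 (U - U(-h))^2\,dx_2$ means $f'(0) \le 0$, and since $f$ is strictly concave, $f$ is non-increasing on $[0,\infty)$. Hence $f(K) \le f(0) = -g$ for all $K \ge 0$, which is the desired bound $F(k, U(-h)) \le -g$.

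For Part (2), I treat $g_\#$ as $\sup_{K \ge 0} f(K) + g$. Since $K = 0$ is in the domain, $g_\# \ge f(0) + g = 0$, giving the first half of (2a). For the equality characterization, observe by strict concavity that $\sup_{K \ge 0} f = f(0)$ iff $f'(0) \le 0$, which is precisely condition \eqref{E:sigma-1}; this gives the ``iff'' in (2a). Parts (2b)--(2d) then follow by inspecting where the strictly concave function $f$ crosses zero. In case (2b), $g > g_\#$ means $\max f < 0$, hence $F(k, U(-h)) < 0$ for all $k$. In case (2c), $0 < g = g_\#$ forces $f'(0) > 0$ (else $g_\# = 0$), so $f$ rises from $-g < 0$ to some maximum and then decreases to $-\infty$; with $\max f = 0$, strict concavity gives a \emph{unique} $K_\# > 0$ at which $f(K_\#) = 0$ and $f < 0$ elsewhere, yielding $k_\# = \sqrt{K_\#}$ by evenness. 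In case (2d), $0 < g < g_\#$ means $\max f > 0$, so strict concavity together with $f(0) < 0$ and $f(\infty) = -\infty$ yields exactly two roots $0 < K_\#^- < K_\#^+$ of $f$, with $f > 0$ between them and $f < 0$ outside $[K_\#^-, K_\#^+]$. The sign of $\p_k F$ at $k_\#^\pm = \sqrt{K_\#^\pm}$ is then read off from $\p_k F(k, U(-h)) = 2k\, f'(k^2)$ and the concavity bounds $f'(K_\#^-) > 0 > f'(K_\#^+)$.

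I do not anticipate a genuine obstacle; the main point is simply to verify that the concavity input from Lemma~\ref{L:F-signs}, which is stated for $c \in \R \setminus U((-h,0])$, is indeed applicable at $c = U(-h)$, and to confirm that $F(\cdot, U(-h))$ is smooth in $K$ down to $K=0$ (which holds since $y_-(k, U(-h), \cdot) \in C^{l_0}$ by Remark~\ref{R:Ray-lim} and $y_-(k, U(-h), 0) \ne 0$ by Lemma~\ref{L:y-lower-b}(1), so the quotient defining $F$ is regular in $K$). Everything else is the elementary behavior of a strictly concave function on a half-line with a prescribed value and slope at the origin.
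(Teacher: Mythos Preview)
Your proof is correct and follows essentially the same approach as the paper: both reduce the entire lemma to the strict concavity of $K \mapsto F(\sqrt{K}, U(-h))$ from Lemma~\ref{L:F-signs}, the value $F(0, U(-h)) = -g$ from Lemma~\ref{L:e-v-basic-1}(2), and the decay at infinity from Lemma~\ref{L:ev-large-k}(1). The paper's proof is extremely terse (two sentences), whereas you carefully spell out the concavity argument for each of (2a)--(2d) and explicitly verify that $c = U(-h)$ is admissible in Lemma~\ref{L:F-signs}; this added detail is all correct and changes nothing in the logic.
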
 

\begin{proof} 
Statement (1) is a direct consequence of the concavity of $F\big(k, U(-h)\big)$ in $K=k^2$ and $F\big(0, U(-h)\big) = -g<0$. 
Statement (2) is also an immediate implication of this concavity and Lemma \ref{L:ev-large-k}(1). 
\end{proof}

Along with statement (2b ) and Corollary \ref{C:branches-1}, \eqref{E:sigma-1} provides an explicit sufficient condition ensuring that the branch $c^-(k)$ does not reach $U([-h, 0])$ and thus staying in $(-\infty, U(-h))$ for all $k \in \R$. 

To end this subsection we prove the following monotonicity of the even functions $c^\pm (k)$ which will be used in obtaining the conjugacy between the irrotational linearized capillary gravity water waves and the component of the solutions   linearized at the shear $U(x_2)$. From the definition of $F$ and \eqref{E:y_--0}, we first compute, for $c \notin  U([-h, 0])$,   
\[
Y(0, c) = \frac{U'(0) \int_{-h}^0 (U-c)^{-2} dx_2 + (U(0)-c)^{-1} }{ (U(0)-c) \int_{-h}^0 (U-c)^{-2} dx_2} 
\]
and thus 
\[
F(0, c) = (U(0)-c)^2 Y(0, c) - U'(0) (U(0)-c) -g = \frac 1{\int_{-h}^0 (U-c)^{-2} dx_2} -g, 
\]
which is uniformly increasing on $(-\infty, U(-h))$ and uniformly decreasing on $(U(0), +\infty)$. Therefore $F(0, \cdot)$ has two real roots 
\be \label{E:c0pm}
c_0^+ \in (U(0), +\infty), \;\; c_0^- \in (-\infty, U(-h)), \; \text{ s. t. }\;  F(0, c_0^\pm) =  \frac 1{\int_{-h}^0 (U-c_0^\pm)^{-2} dx_2} -g =0,
\ee
which are unique in the above intervals. 

\begin{lemma} \label{L:c(k)-mono}
Assume $U\in C^3$,  
then the following hold. 
\begin{enumerate}
\item For $\dagger \in \{+, -\}$, suppose $c^\dagger(k) \in \R \setminus U([-h, 0])$ can be extended as simple roots of $F(k , \cdot)$ for all $k \ge k_*\ge 0$, then $(c^\dagger)'(k)=0$ has most one solution on $(k_*, +\infty)$, where $(c^\dagger)''(k)\ne 0$ is also satisfied.
\item For $\dagger \in \{+, -\}$, suppose $c^\dagger(k) \in \R \setminus U([-h, 0])$ can be extended  as simple roots of $F(k , \cdot)$ for all $k \in \R$, then $(c^\dagger)' (k) \ne 0$ for all $k \ne 0$ if and only if 
\be \label{E:c(k)-mono-1}
\sigma \ge g^{2} \int_{-h}^0 (U-c_0^\dagger)^2 \Big(\int_{-h}^{x_2}  \frac {dx_2' }{(U(x_2')-c_0^\dagger)^2} \Big)^2  dx_2, 
\ee
with $c_0^\dagger$ defined in \eqref{E:c0pm}. 
\end{enumerate}  
\end{lemma}


\begin{proof}

We shall work with $c^-(k)$, while the same proof works for $c^+(k)$. Suppose there exists $k_0>k_*\ge 0$ such that $(c^-)'(k_0)=0$, then 
\[
2k_0 (\p_K F)(k_0, c^- (k_0)) = \p_k F(k_0, c^- (k_0)) = - \p_c F(k_0, c^- (k_0)) (c^-)'(k_0)=0. 
\]
Computing the second order derivative at $k_0$, we have 
\[
(c^-)'' (k_0)= -\frac {\p_k^2 F
}{\p_c F}\Big|_{(k_0, c^- (k_0))} = -\frac { 4k_0^2 (\p_K^2 F)
+ 2(\p_K F)
}{\p_c F}\Big|_{(k_0, c^- (k_0))} = -\frac { 4k_0^2 (\p_K^2 F)
}{\p_c F}\Big|_{(k_0, c^- (k_0))},
\]
which along with Lemma \ref{L:F-signs} and $\p_c F(k, c^- (k_0))<0$ (Corollary \ref{C:branches-1}) implies 
$(c^-)''(k_0) <0$. 
Hence $k_0>k_*$ has to be the only positive critical point of $c^- (k)$. 

To prove Statement (2) where $k_*=0$, on the one hand, we first observe that since $c_0^-$ is the unique root of $F(0, \cdot)$ in $(-\infty, U(-h))$ and $c^-(0)$ is also such a root, so $c^-(0)=c_0^-$. Moreover, \eqref{E:F-esti-temp-0.5} implies that \eqref{E:c(k)-mono-1} is equivalent to $\p_K F(0, c^-(0)) \le 0$. On the other hand, observe that the evenness of $c^- (k)$ yields $(c^-)' (0)=0$. One may compute 
\[
(\p_K F)(0, c^-(0)) = \p_k^2 F(0, c^-(0))/2 =  -\p_c F(0, c^- (0)) \big((c^-)'' (0)\big)/2. 
\]

From Lemma \ref{L:ev-large-k}(3), $(c^-)'(k)<0$ for some $k \gg1$. Hence, on the one hand, if \eqref{E:c(k)-mono-1} does not hold, then $\p_c F(0, c^- (0))<0$ (Lemma \ref{L:ev-large-k}(3) and \ref{L:continuation}) and the above identity imply $(c^-)''(0) >0$. Along with $(c^-)'(0)=0$, 
it yields that $c^-$ has a critical point $k_0>0$. On the other hand, through the same argument, \eqref{E:c(k)-mono-1} yields $(c^-)''(0) \le 0$ while $(c^-)'(0)=0$. Therefore, if $\p_K F(0, c^-(0))<0$ which implies $(c-)''(0)<0$, it is impossible that there exists a unique critical point of $c^-$ where $(c^-)''<0$. In the borderline case of $\p_K F(0, c^-(0))=0$ which implies $(c-)''(0)<0$, further Taylor expansions of the even-in-$k$ functions $F(k, c)$ and $c^-(k)$ yields 
\[
\p_k^4 c^-(0) = -12 \p_K^2 F(0, c^-(0)) / \p_c F(0, c^-(0)) <0. 
\]
From the same reasoning, we obtain that $(c^-)' \ne 0$ for $k>0$.  The proof of the lemma is complete. 
\end{proof}

\subsection{Eigenvalue distribution of convex/concave shear flow $U$} \label{SS:ev-details}

To analyze eigenvalues under less implicit assumptions than \eqref{E:no-S-M}, particularly the generation of unstable modes from $c=U(-h)$, we further assume $U''\ne 0$ on $[-h, 0]$. Due to Lemma \ref{L:e-v-basic-1}(6), this rules out the possibility of roots of $\BF$ on $U\big((-h, 0])$ and provides better smoothness of $F$ for the bifurcation analysis.  

\begin{lemma} \label{L:e-v-large-k} 
Assume $U\in C^{l_0}$, $l_0\ge 5$, and  $U''\ne 0$ on $[-h, 0]$, then 
$F(k, c)$ is well defined for all $k\in \R$ and $c\in \C$ and \\
a.) $F$ is analytic in both $k\in \R$ and $c \notin U([-h, 0])$ and, when restricted to $c_I\ge 0$, is $C^{l_0-2}$ in both $k\in \R$ and $c \notin \{U(-h), U(0)\}$,  \\
b.) $\p_k^{j_1} \p_c^{j_2} F$ is locally $C^{\alpha}$ in both $k$ and $c\ne U(0)$ with $c_I\ge 0$ for any $\alpha \in [0, 1)$, $j_2=0,1$, and $0\le j_1\le l_0-4 -j_2$, \\
c.) $F$ is $C^1$ in $k$ and $c$ with $c_I\ge 0$. 
\end{lemma}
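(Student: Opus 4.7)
The plan is to first establish that $y_-(k,c,0)$ does not vanish for any $k\in\R$ and $c\in\C$ with $c_I\ge 0$, which guarantees that $F=\BF/y_-(\cdot,\cdot,0)$ is well-defined globally. Since $U''\ne 0$, Lemma \ref{L:e-v-basic-1}(6) excludes roots of $\BF$ on $U\big((-h,0)\big)$. Combining Lemma \ref{L:y-lower-b}(1) for $c\in\R\setminus U((-h,0))$, Lemma \ref{L:y-lower-b}(3) for $c\in U((-h,0))$ (where $U''(x_2^c)\ne 0$ forces $|\IP\, y_-(k,c,0)|>0$), and the classical Rayleigh argument for $c_I>0$ (multiply the homogeneous Rayleigh equation by $\bar y_-$, integrate on $[-h,0]$, and take imaginary parts: the coefficient of $c_I$ in the resulting identity is $\int_{-h}^0 U''|y_-|^2/|U-c|^2\,dx_2$, which cannot vanish if $y_-$ is a nontrivial solution with $y_-(-h)=y_-(0)=0$), we conclude that $y_-(k,c,0)\ne 0$ throughout $\{c_I\ge 0\}$. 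Lemma \ref{L:y-lower-b}(5) applied with $S=\{c:c_I\ge 0\}$ then upgrades this to the quantitative lower bound $|y_-(k,c,0)|\ge (Ck)^{-1}\sinh kh$ uniformly in $k\in\R$.

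Once $F$ is globally defined, all three regularity statements reduce to the corresponding properties of $Y(k,c)=y_-'(k,c,0)/y_-(k,c,0)$ through the identity $F=(U(0)-c)^2 Y-U'(0)(U(0)-c)-(g+\sigma k^2)$. For (a), the analyticity of $y_\pm$ in $k$ and $c\notin U([-h,0])$ from Lemma \ref{L:Ray-H-reg} gives the analyticity of $F$ off the critical strip, while the $C^{l_0-3}$ smoothness in the closed upper half plane away from the corners $\{U(-h),U(0)\}$ is Lemma \ref{L:Y-def} combined with the smoothness of the polynomial prefactors, since $D(Y)$ now covers the entire closed upper half-plane.

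For (b), the main work is at $c=U(-h)$. I would use the Cauchy/Hilbert representation of $\p_c Y$ in Corollary \ref{C:Y-Cauchy}, namely $\p_c Y=-\CH(\p_{c_R}Y_I)(c)+i\p_{c_R}Y_I(c)-\tfrac{U''(0)}{U'(0)(U(0)-c)}$ on $U([-h,0))$. The rational term is smooth away from $c=U(0)$, and the two remaining terms are controlled by the regularity of $Y_I$: Lemma \ref{L:Y-I-1}(1) gives $\p_k^{j_1}\p_{c_R}^{j_2}Y_I\in L^\infty_k W^{1,q}_c$ locally for $j_2\le 2$ and $j_1+j_2\le l_0-4$, which with the one-dimensional Sobolev embedding (choose $q>1/(1-\alpha)$) yields local $C^{0,\alpha}$ regularity. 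Since $Y_I$ and $\p_{c_R}Y_I$ both vanish at $c=U(-h)$ (from the $\sinh^2(\mu^{-1}(x_2^c+h))$ factor in Lemma \ref{L:Y-I-1}(2)), the extension of $Y_I$ by zero outside $U([-h,0])$ is $C^{1,\alpha}$ across that endpoint, so the Hilbert transform preserves $C^{0,\alpha}$ regularity there by standard harmonic analysis estimates. The same argument applied one derivative higher (using $l_0\ge 6$, so $j_1+j_2\le 2$ is still in range) delivers the $C^{1,\alpha}$ regularity of $\p_k F$.

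Finally, (c) requires only the additional examination of $c=U(0)$. Lemma \ref{L:Y-def}(2) provides $|Y(k,c)|\le C(\mu^{-1}+|\log|U(0)-c||)$, so $(U(0)-c)^2 Y$ is continuous through $c=U(0)$. For differentiability, Corollary \ref{C:Y-Cauchy} shows that the only non-integrable singularity of $\p_c Y$ near $U(0)$ is the explicit $-\tfrac{U''(0)}{U'(0)(U(0)-c)}$ pole, which is absorbed by a single factor of $(U(0)-c)$; the surviving factor vanishes at $c=U(0)$, giving $\p_c F\in C^0$ there. The main obstacle will be justifying the Hölder continuity of $\CH(\p_{c_R}Y_I)$ up to the endpoint $c=U(-h)$, because although $\p_{c_R}Y_I$ is smooth inside $U\big((-h,0)\big)$, its higher derivatives develop a jump at $U(-h)$; here the crucial input is the second-order vanishing of $Y_I$ established in Lemma \ref{L:Y-I-1}(2), which precisely cancels what would otherwise be a logarithmic loss in the transform.
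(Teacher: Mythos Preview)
Your proof is correct and follows essentially the same route as the paper. The paper's own argument is extremely terse: it cites Lemma~\ref{L:y-lower-b}(5) (together with Remark~\ref{R:y-lower-b}) for the non-vanishing of $y_-(k,c,0)$, and then Lemma~\ref{L:Y-Cauchy} for the packaged $C^{l_0-3}$ and $C^{1,\alpha}$ regularity of $Y$, supplementing with Lemmas~\ref{L:Y-def}, \ref{L:Y-I-1} and Corollary~\ref{C:Y-Cauchy} near $c=U(0)$. You unpack the same ingredients more explicitly---giving the Rayleigh multiplier argument for $c_I>0$ directly, and working with the Hilbert-transform formula from Corollary~\ref{C:Y-Cauchy} plus the $W^{1,q}$ bounds on $Y_I$ from Lemma~\ref{L:Y-I-1} rather than quoting the conclusion of Lemma~\ref{L:Y-Cauchy}---but the substance is identical. (The reference to Lemma~\ref{L:e-v-basic-1}(6) in your first paragraph is a harmless detour: it concerns $\BF$, not $y_-$, and the actual work is done by Lemma~\ref{L:y-lower-b}(1),(3) as you correctly note immediately afterward.)
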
 

\begin{remark}
Note that, in the above statement, for fixed $c \in U([-h, 0))$, $F$ is $C^{l_0-2}$ in $k$. This stronger regularity in $k$ follows from that of $(y_{0-}, y_{0-}')$ and $Y$ (see Lemmas \ref{L:y0} and \ref{L:Y-Cauchy}). Moreover, one could prove that $F$ and $\p_k F$ are also $C^{1, \alpha}$ near $c=U(0)$ with $c_I\ge 0$ by estimating $\p_{c_R}^2 Y_I (k, c) = O\big(|c-U(0)|^{-1}\big)$ using Lemmas \ref{L:pcy1}--\ref{L:pcy3} and \ref{L:Y-Cauchy} as well as Corollary \ref{C:Y-Cauchy}. 
\end{remark}


\begin{proof} 
The assumption $U''\ne 0$ implies that $y_-(k, c, 0)\ne 0$ for all $k$ and $c$ (Lemma \ref{L:y-lower-b}(5)) and thus $F$ is well defined. The analyticity and the $C^{l_0-2}$ and $C^{1, \alpha}$ (restricted to $c_I\ge 0$ for the latter two) regularity of $F$ follow directly from those of $Y$ given in Lemma \ref{L:Y-Cauchy} except at $c=U(0)$. Near $c\in U(0)$, the regularity and estimates on $Y$ (Lemma \ref{L:Y-def}, \ref{L:Y-I-1}, \ref{L:Y-Cauchy}) and $\p_c Y$ (Lemma \ref{L:Y-I-1} and Corollary \ref{C:Y-Cauchy}) yield the regularity of $F$. 
\end{proof}


As a corollary of the Lemmas \ref{L:continuation}, \ref{L:g-thresh} and \ref{L:e-v-large-k} and the semicircle theorem, we obtain a sufficient condition for \eqref{E:no-S-M} to hold for $\BK = \R$. 

\begin{corollary} \label{C:no-S-M}
Suppose $U'' \ne 0$ on $[-h, 0]$ and \eqref{E:sigma-1} hold, then \eqref{E:no-S-M} is true for all $k \in \R$.
\end{corollary}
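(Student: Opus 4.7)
The plan is to verify $\BF(k, U(x_2)) \ne 0$ for every $k \in \R$ by partitioning $x_2 \in [-h, 0]$ into three subcases — the interior and the two endpoints — and appealing to distinct already established lemmas in each.

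First I would dispose of the interior case $x_2 \in (-h, 0)$ directly from Lemma \ref{L:e-v-basic-1}(6): since $U''(x_2) \ne 0$ there by hypothesis, the imaginary part $\BF_I(k, U(x_2))$ is nonzero, so $\BF(k, U(x_2)) \ne 0$. Next, for the top endpoint $c = U(0)$, I would invoke Lemma \ref{L:e-v-basic-1}(2), which gives the explicit value $F(k, U(0)) = -g - \sigma k^2 < 0$; since the factorization $\BF = y_-(k, c, 0) F(k, c)$ (from \eqref{E:dispersion}) and the positivity $y_-(k, U(0), 0) > 0$ from Lemma \ref{L:y-lower-b}(1) apply at $c = U(0)$, one concludes $\BF(k, U(0)) \ne 0$.

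The only genuinely subtle case is the bottom endpoint $c = U(-h)$, where the surface tension assumption \eqref{E:sigma-1} enters. Here the plan is to apply Lemma \ref{L:g-thresh}(1), whose hypothesis is precisely \eqref{E:sigma-1}: this yields $F(k, U(-h)) \le -g < 0$ for every $k \in \R$. Combined with $y_-(k, U(-h), 0) > 0$ from Lemma \ref{L:y-lower-b}(1), this forces $\BF(k, U(-h)) \ne 0$. Putting the three cases together gives \eqref{E:no-S-M} for every $k \in \R$ (and in particular for the restricted set $\BK = \frac{2\pi}{L}\mathbb{N}$).

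There is essentially no hard obstacle here — the statement is really a packaging corollary that reassembles Lemma \ref{L:e-v-basic-1}(2)(6), Lemma \ref{L:y-lower-b}(1), and Lemma \ref{L:g-thresh}(1). The one point requiring mild care is ensuring the factorization $\BF = y_- F$ is legitimate at the boundary values $c = U(0)$ and $c = U(-h)$; this is fine because $y_-(k, \cdot, 0)$ does not vanish there by Lemma \ref{L:y-lower-b}(1), so dividing by it is legal and the sign information on $F$ transfers to $\BF$.
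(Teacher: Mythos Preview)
Your proof is correct and is essentially the direct argument the paper has in mind. The paper does not spell out a proof but merely names Lemmas \ref{L:continuation}, \ref{L:g-thresh}, \ref{L:e-v-large-k} and the semicircle theorem as ingredients; your choice to invoke Lemma \ref{L:e-v-basic-1}(2)(6) and Lemma \ref{L:y-lower-b}(1) in place of \ref{L:e-v-large-k} and \ref{L:continuation} is in fact more to the point, since the corollary only concerns $c\in U([-h,0])$ and requires neither the continuation argument nor the semicircle theorem --- the shared crucial input is Lemma \ref{L:g-thresh}(1) at $c=U(-h)$.
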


Assuming $U''\ne 0$, in general $c=U(-h)$ is the only point outside the domain of analyticity of $F(k, \cdot)$ which might happen to be a root and also might be the end point of branches of roots of $F(k, \cdot)$, it is a crucial step to analyze zeros of $F$ around $U(-h)$. 

\begin{lemma} \label{L:F-esti}
Assume $U \in C^5$, then (a) $\p_c F(k, U(-h)) <0$ for all $k \in \R$ if $U''>0$ on $[-h, 0]$; and (b) if $U''<0$ on $[-h, 0]$, then $\p_c F(k, U(-h)) <0$ if $F(k, U(-h)) =0$, where $F$ is understood as restricted to $c_I \ge 0$. 
\end{lemma}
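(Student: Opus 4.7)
The plan is to compute $\p_c F(k, U(-h))$ as a real one-sided limit from $c < U(-h)$ along the real axis — permissible because $F$ is $C^1$ on $\{c_I\ge 0\}$ by Lemma \ref{L:e-v-large-k}(c) — and then extract its sign from a self-adjoint ODE satisfied by an auxiliary function $\tilde z$. With $V(x_2) = U(x_2) - U(-h)$, $A = U(0)-U(-h)$, and the normalization $\tilde y(x_2) = y_-(k, U(-h), x_2)/y_-(k, U(-h), 0)$, I set $\tilde z = \tilde y/V$; by Remark \ref{R:Ray-lim} and Lemma \ref{L:y-lower-b}(1), $\tilde z$ is smooth and strictly positive on $[-h,0]$ with $\tilde z(0) = 1/A$. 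Substituting $\tilde y = V\tilde z$ into the homogeneous Rayleigh equation at $c = U(-h)$ cancels the apparent singularity $U''\tilde y/V$ against $U''\tilde z$ and leaves the self-adjoint form $(V^2 \tilde z')' = k^2 V^2 \tilde z$.

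For $c < U(-h)$, $\p_c y_-$ solves an inhomogeneous Rayleigh equation with zero Cauchy data at $x_2 = -h$, so pairing against $y_-$ via the usual Wronskian-type manipulation yields
\[
\p_c Y(k,c) = y_-(k,c,0)^{-2}\int_{-h}^0 U''(x_2)\,(U-c)^{-2}\, y_-(k,c,x_2)^2\, dx_2.
\]
Because $y_-(k, U(-h), \cdot)$ vanishes simply at $-h$, the integrand is uniformly bounded as $c \to U(-h)-$, so in the limit $\p_c Y(k, U(-h)) = \int_{-h}^0 U''\tilde z^2 \, dx_2$. Combined with $Y(k, U(-h)) = \tilde y'(0) = U'(0)/A + A\tilde z'(0)$ (from $\tilde y = V\tilde z$) and formula \eqref{E:pcF}, this gives the master identity
\[
\p_c F(k, U(-h)) \;=\; A^2 \int_{-h}^0 U''(x_2)\, \tilde z(x_2)^2 \, dx_2 \;-\; 2A^2\, \tilde z'(0) \;-\; U'(0).
\]

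The decisive structural input is that $\tilde z$ is non-decreasing: since $(V^2 \tilde z')' = k^2 V^2 \tilde z \ge 0$ and $(V^2 \tilde z')(-h) = 0$ (because $V(-h)=0$ while $\tilde z'$ is bounded there by the smoothness of $y_-(k, U(-h), \cdot)$), it follows that $V^2 \tilde z' \ge 0$ on $(-h,0]$, hence $\tilde z' \ge 0$ and $\tilde z \le 1/A$; pairing the same equation with $\tilde z$ and integrating by parts also gives $A\tilde z'(0) = \int V^2 (\tilde z')^2 + k^2 \int V^2 \tilde z^2 \ge 0$. In case (a) with $U''>0$, the pointwise bound $\tilde z^2 \le 1/A^2$ yields $A^2 \int U''\tilde z^2 \le U'(0) - U'(-h)$, so the master identity gives $\p_c F(k, U(-h)) \le -U'(-h) - 2A^2 \tilde z'(0) \le -U'(-h) < 0$. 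In case (b) with $U''<0$, all three terms on the right of the master identity are nonpositive and $-U'(0) < 0$, so $\p_c F(k, U(-h)) < 0$ follows directly (in fact without using the hypothesis $F(k,U(-h))=0$). The principal technical hurdle is handling the singular endpoint $x_2 = -h$ — the limit interchange in the formula for $\p_c Y$, the regularity of $\tilde z$ up to $-h$, and the vanishing of the boundary term $(V^2 \tilde z \tilde z')(-h)$ — all of which are supplied by the $C^{l_0}$ smoothness of $y_-(k, U(-h), \cdot)$ given by Remark \ref{R:Ray-lim}.
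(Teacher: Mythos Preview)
Your proof is correct and takes a genuinely different route from the paper's. Both arguments begin the same way, deriving the integral formula $\p_c Y(k,U(-h)) = \int_{-h}^0 U'' \tilde z^2\,dx_2$ with $\tilde z = \tilde y/V$, but then diverge. The paper never introduces $\tilde z$ explicitly; for (a) it instead establishes that $\p_c F(\sqrt K, U(-h))$ is strictly decreasing in $K=k^2$ (via $\p_{Kc}Y<0$ and $\p_K Y>0$), reducing to the explicit computation $\p_c F(0,U(-h)) = -U'(-h)$ at $k=0$. For (b) it uses the hypothesis $F(k,U(-h))=0$ to eliminate $Y$ in favor of $g+\sigma k^2$, which immediately forces the sign. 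Your approach instead extracts the self-adjoint form $(V^2\tilde z')' = k^2 V^2 \tilde z$ to obtain the monotonicity $\tilde z'\ge 0$ and the pointwise bound $\tilde z \le 1/A$, and then argues by direct sign analysis of the three terms in your master identity. This is more elementary --- no $K$-differentiation, no cross-derivatives --- and, as you note, actually yields (b) without invoking $F(k,U(-h))=0$, a strictly stronger conclusion than the paper states. The paper's $K$-monotonicity argument has the compensating advantage of revealing additional structure (concavity/monotonicity in $K$) that is used elsewhere.
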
 

\begin{proof}
We shall use the notations $\CR$ and $\tilde y$ defined in the proof of Lemma \ref{L:F-signs} and $F$ and $Y$ are also viewed as function of $c$ and $K=k^2\ge 0$.
It is straight forward to compute, for $c < U(-h)$ and $x_2 \in (-h, 0)$, 
\[
\CR \p_c \tilde y = -\tfrac {U''}{(U-c)^2}\tilde y, \quad \CR \p_{Kc} \tilde y = - \p_c \tilde y - \tfrac {U''}{(U-c)^2}\p_K \tilde y.  
\] 
Applying \eqref{E:F-signs-0} we obtain 
\be \label{E:F-signs} \begin{split}
& U''(0) \p_c Y
= U''(0) \p_c \tilde y' (0) =  U''(0)\int_{-h}^0 \frac {U'' \tilde y^2}{\big(U- c\big)^2} dx_2>0, \;\; U''(0) \p_{Kc} Y
<0, \quad \forall c < U(-h).
\end{split} \ee
These integral representation of $\p_c Y$ still holds as $c \to U(-h)-$, and thus also its sign. For $k=0$ and $c = U(-h)$, we can use \eqref{E:F-esti-temp-1} to compute 
\[
\p_c Y \big(0, U(-h)\big) 
= \frac {U'(0) - U'(-h)}{\big(U(0)-U(-h)\big)^2} \implies 
\p_c F\big(0, U(-h)\big) = - U'(-h) <0. 
\]

Finally we obtain the sign of $\p_c F(k, U(-h))$ in two cases separately, based on the sign of $U''$. Suppose $U''>0$. The above \eqref{E:F-signs-0.1} and \eqref{E:F-signs} implies that, for $c\le U(-h)$, $Y(\sqrt{K}, c)$ is strictly increasing in $K$ and $\p_c Y(\sqrt{K}, c)$ is strictly deceasing in $K$, and thus 
\[
\p_c F = (U(0)-c)^2 \p_c Y - 2 (U(0)-c) Y + U'(0)  
\]
is also strictly decreasing in $K$. Letting $c \to U(-h)-$, this monotonicity yields 
\[
\p_c F(k, U(-h)) \le \p_c F(0, U(-h)) = -U'(-h) <0. 
\] 

In the other case of $U''<0$, suppose $F(k, U(-h))=0$ for some $k \in \R$, which implies
\[
Y(k, U(-h)) = \frac {g +\sigma k^2}{(U(0)- U(-h))^2} + \frac {U'(0)}{U(0)- U(-h)}. 
\]
Therefore 
\begin{align*}
\p_c F(k, U(-h)) =& (U(0)- U(-h))^2\p_c Y(k, U(-h)) - 2 (U(0)- U(-h)) Y(k, U(-h)) + U'(0)\\
= & (U(0)- U(-h))^2\p_c Y(k, U(-h))  - U'(0)- 2 (g +\sigma k^2) /(U(0)- U(-h)).  
\end{align*}
We also have $\p_c Y(k, U(-h)) <0$ from taking the limit of \eqref{E:F-signs}.
Hence we obtain $\p_c F(k, U(-h)) <0$ and the proof of the lemma is complete.  
\end{proof}  

In the next step we shall study the roots of $F(k, \cdot)$ near $c=U(-h)$.   

\begin{lemma} \label{L:bifurcation}
Assume $U\in C^5$, and $U''\ne 0$ on $[-h, 0]$. Suppose $F\big(k_0, U(-h)\big)=0$, then there exist $\ep>0$, $\rho \in \big(0, U(0)-U(-h)\big)$, and $\CC \in C^{1, \alpha} \big([k_0-\ep, k_0+ \ep], \C\big)$ for any $\alpha\in [0, 1)$ such that 
\[
\CC(k_0)=U(-h), \;\; \CC(k) \notin U([-h, 0]), \;\; 0< |k -k_0|\le \ep, \quad U''\CC_I(k) \ge 0, \;\; |k-k_0|\le \ep,
\]
\[
\p_c F(k, \CC(k)) \ne 0, \; \text{ if } \; \CC_I (k)\ge 0,
\]
and for $c_I \in [0, \rho]$ and $|c_R-U(-h)|\le \rho$, 
\[
F(k, c) =0 \text{ with } k \in [k_0-\ep, k_0+\ep], \; 
\text{ iff } c = \CC(k) = \CC_R(k) + i \CC_I(k). 
\]
Moreover, without loss of generality assume $k_0>0$ (Lemma \ref{L:e-v-large-k} implies $k_0\ne 0$) and this branch of roots of $F$ satisfies 
\begin{enumerate}
\item If $\p_k F\big(k_0, U(-h)\big) =0$, then $\CC'(k_0)=0$, $\CC_I \equiv 0$ and $\CC (k)< U(-h)$ for all $0<|k-k_0|\le \ep$.
\item If $\pm \p_k F\big(k_0, U(-h)\big) >0$, then $\pm \CC_R'(k_0)>0$ and 
\[
\CC_R(k) < U(-h), \; \; \CC_I(k)=0, \quad \forall \, 0< \pm (k_0-k) \le \ep, 
\]
and for some $\tilde C>0$ determined by $k_0$ and $U$, 
\[
\CC_R(k) > U(-h), \;\; \left|\frac {\CC_I(k)}{ Y_I \big(k, \CC_R(k)\big)} + \frac {\big(\big(U(0)- U(-h)\big)^2} {\p_{c} F \big(k_0, U(-h)\big)}\right| \le \tilde C|k- k_0|^\alpha,     \quad \forall \, 0< \pm (k-k_0) \le \ep, 
\]
which implies 
\[
0 < |\CC_I(k)| \le 
\tilde C (k-k_0)^2, \;\; U''(0)\CC_I(k)>0,  \quad \forall \, 0< \pm (k-k_0) \le \ep.  
\] 
\end{enumerate}
\end{lemma}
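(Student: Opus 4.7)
The plan is to run the implicit function theorem (IFT) at the boundary point $(k_0,U(-h))$ and then extract the sign/rate of $\CC_I$ from a one-sided expansion. By Lemma \ref{L:F-esti}, $A:=\p_c F(k_0,U(-h))<0$ is real, so the non-degeneracy for IFT holds. By Lemma \ref{L:e-v-large-k}, $F$ is $C^{1,\alpha}$ on $\{c_I\ge 0\}$ near $(k_0,U(-h))$, and for $c_I>0$ the Cauchy-Riemann relations give $\p_{c_I}F=i\p_cF$; taking one-sided limits, the Jacobian of $(F_R,F_I)$ with respect to $(c_R,c_I)$ at $(k_0,U(-h),0)$ equals $A\cdot I_{2\times 2}$. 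A $C^{1,\alpha}$ IFT (applied after a Whitney extension of $F$ across $c_I=0$) then produces a unique local $C^{1,\alpha}$ branch $\CC(k)=\CC_R(k)+i\CC_I(k)$ in $\{c_I\ge 0\}$ with $\CC(k_0)=U(-h)$; for the case $U''<0$, where $\CC_I$ turns out to be non-positive, I would instead use the conjugate branch in $\{c_I\le 0\}$ (available from $F(k,\bar c)=\overline{F(k,c)}$ in the analytic region), so both cases reduce to the same analysis. Since $F(k,U(-h))\in\R$ by Lemma \ref{L:e-v-basic-1}(2), $B:=\p_kF(k_0,U(-h))\in\R$, and implicit differentiation at $k_0$ yields $\CC'(k_0)=-B/A\in\R$; in particular $\CC_I'(k_0)=0$ and $\sign \CC_R'(k_0)=\sign B$.

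For case (1), $B=0$ gives $\CC'(k_0)=0$. I would combine this with the concavity of $F(k,U(-h))$ in $K=k^2$ from Lemma \ref{L:F-signs} (where $\p_K^2 F<0$) to conclude that $F(k,U(-h))<0$ for all $k$ near $k_0$, $k\ne k_0$. Then the real-variable IFT, together with $\p_c F(k,U(-h))<0$ for $k$ near $k_0$, produces a real zero $\CC_R(k)<U(-h)$ of $F_R(k,\cdot)$; by uniqueness this is the branch $\CC(k)$, so $\CC_I\equiv0$. For case (2) with $\pm B>0$, the same real-IFT argument handles the side $\pm(k_0-k)>0$ where $\CC_R(k)<U(-h)$. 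On the other side $\CC_R(k)>U(-h)$, I would plug $c=\CC_R(k)+i\CC_I(k)$ into $F=0$, and Taylor expand from $c_I=0+$ using the analyticity for $c_I>0$:
\[
0=F(k,\CC_R(k))+i\CC_I(k)\,\p_c F(k,\CC_R(k))+O(|\CC_I|^{1+\alpha}).
\]
Using $F_I^+(k,c_R)=(U(0)-c_R)^2 Y_I(k,c_R)$ for $c_R>U(-h)$ (which follows from the definition of $F$ and the jump of $Y$), taking imaginary parts and solving gives
\[
\frac{\CC_I(k)}{Y_I(k,\CC_R(k))}=-\frac{(U(0)-\CC_R(k))^2}{\p_c F(k,\CC_R(k))}+O(|k-k_0|^\alpha),
\]
which matches the asserted identity after replacing $\CC_R(k)$ by $U(-h)$ and $\p_c F(k,\CC_R)$ by $\p_c F(k_0,U(-h))$ modulo a $C^\alpha$-error. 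The $|\CC_I|\le \tilde C(k-k_0)^2$ bound and the sign $U''\CC_I>0$ then follow from the Lemma \ref{L:Y-I-1}(2) asymptotic $Y_I\sim\tfrac{\pi U''(-h)}{U'(-h)^3|y_-|^2}(\CC_R-U(-h))^2$, the linear rate $\CC_R-U(-h)\sim -B(k-k_0)/A$ at $k_0$, and $A<0$.

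The main obstacle is Step (2) on the bifurcated side $\CC_R>U(-h)$: $F$ is only $C^{1,\alpha}$ up to the cut $c_I=0$ (not $C^2$), and $F_I^+$ has a one-sided behavior that vanishes to second order from the left and grows quadratically from the right, so the standard Taylor expansion needs to be justified by exploiting analyticity of $F$ strictly inside $\{c_I>0\}$ and controlling second-order remainder terms via the Hilbert-transform representation of $\p_c Y$ from Corollary \ref{C:Y-Cauchy}. The sharp rate $\CC_I=O((k-k_0)^2)$ and the identification of the leading constant both hinge on carrying this expansion through together with the Lemma \ref{L:Y-I-1}(2) quadratic vanishing of $Y_I$ at $U(-h)+$; once that is in hand, all remaining verifications (local uniqueness, $C^{1,\alpha}$ dependence, and the $\p_cF(k,\CC(k))\ne 0$ conclusion by continuity) are immediate from the IFT output.
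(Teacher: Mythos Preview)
Your approach is essentially the paper's: extend $F$ to a $C^{1,\alpha}$ function $\tilde F$ across $c_I=0$, run the IFT using the Jacobian $\partial_c F(k_0,U(-h))\,I_{2\times 2}$ (from Cauchy--Riemann and Lemma~\ref{L:F-esti}), treat the side $\CC_R<U(-h)$ by the real IFT plus uniqueness (with concavity from Lemma~\ref{L:F-signs} in case (1)), and on the side $\CC_R>U(-h)$ extract $\CC_I$ from the imaginary part of $\tilde F(k,\CC(k))=0$.

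Your ``main obstacle'' is not one. The paper simply applies the Mean Value Theorem to $\tilde F_I$ in the $c_I$ variable,
\[
0=\tilde F_I(k,\CC(k))=F_I(k,\CC_R(k))+\CC_I(k)\,\partial_{c_I}\tilde F_I(k,\CC_R(k)+i\theta),
\]
for some $\theta$ between $0$ and $\CC_I(k)$, and then uses the $C^{1,\alpha}$ regularity already established in Lemma~\ref{L:e-v-large-k} to replace the derivative by $\partial_c F(k_0,U(-h))+O(|k-k_0|^\alpha)$. Equivalently, your first-order Taylor remainder $O(|\CC_I|^{1+\alpha})$ is an immediate consequence of $C^{1,\alpha}$ alone; no analyticity inside $\{c_I>0\}$, no second-order control, and no appeal to Corollary~\ref{C:Y-Cauchy} are needed. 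Your separate conjugate-branch treatment for $U''<0$ is also unnecessary: since the IFT is applied to the extension $\tilde F$, the curve $\CC(k)$ is allowed to have $\CC_I$ of either sign, and the same MVT computation (using that $\tilde F$ and its first derivatives agree with the one-sided values of $F$ at $c_I=0$) yields $\operatorname{sign}\CC_I=\operatorname{sign}U''$ directly.
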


In the generic case $\p_k F\big(k_0, U(-h)\big) \ne 0$, locally the roots of $F(k, c)$ consists of the intersection of the graph of $\CC(k)$ and the closure of the upper half complex plane,  along with its complex conjugate. 
In this case, however, one observes that $d \CC_I(k)/d \CC_R(k)=0$ at $k=k_0$ means $\CC_I$ is very weak when it is nonzero. The following proof is based on both the Implicit Function Theorem and the Intermediate Value Theorem.    

\begin{proof}
According to Lemma \ref{L:e-v-large-k}, $F$ is $C^{1, \alpha}$ in $k$ and $c$ in the region $c_I\ge 0$. As $F_I$ is not continuous at $c \in U\big((-h, 0]\big) \subset \C$ in general, let $\tilde F(k, c) = \tilde F_R + i \tilde F_I \in \C$ be a $C^{1, \alpha}$ extension of $F$ into a neighborhood of $\big(k_0, U(-h)\big) \in \R \times \C$ which coincides with $F$ for $c_I\ge 0$. From Lemma \ref{L:F-esti}, the $2\times 2$ Jacobian matrix of $D_c \tilde F$ satisfies 
\[
D_c \tilde F \big(k_0, U(-h)\big) = \begin{pmatrix} \p_{c_R} \tilde F_R & \p_{c_I} \tilde F_R \\ \p_{c_R} \tilde F_I & \p_{c_I} \tilde F_I \end{pmatrix}\Big|_{\big(k_0, U(-h)\big)} =  
\p_{c} F \big(k_0, U(-h)\big) I_{2\times 2},  \quad \p_c F \big(k_0, U(-h)\big)<0,
\]
where we used the Cauchy-Riemann equation and the fact $F(k, c) \in \R$ for all $c < U(-h)$. Therefore the Implicit Function Theorem implies that all roots of $\tilde F(k, c)$ near $\big(k_0, U(-h)\big)$ form the graph of a $C^{1, \alpha}$ complex-valued function $\CC(k)$ which contains $\big(k_0, U(-h)\big)$. To complete the proof of the lemma, we only need to prove that $\CC(k)$ satisfies properties (1) and (2). 
 
Firstly we prove $\CC(k) \in \R$ if $\CC_R (k) \le U(-h)$ and thus $F\big(k, \CC(k)\big)= \tilde F\big(k, \CC(k)\big)=0$ as well. When restricted to $\R$, $F_R\in C^1$ and $\p_{c_R} F_R \big(k_0, U(-h)\big) = \p_c F(k_0, U(-h))<0$. The Implicit Function Theorem yields a $C^1$ real-valued function $\tilde \CC(k)$ for $k$ near $k_0$ such that 
\be \label{E:tCC}
\tilde \CC(k_0)=U(-h), \quad F_R\big(k, \tilde \CC(k))=0. 
\ee
Since $F_I(k, c)=0$ if $c \le U(-h)$, the uniqueness of solutions ensured by the Implicit Function Theorem implies that $\CC(k) = \tilde \CC(k) \in \R$ if $\tilde \CC(k) \le U(-h)$.  

Next we consider the case $\p_k F\big(k_0, U(-h)\big)=0$. Along with 
\[
\p_{c_R} F_R \big(k_0, U(-h)\big), \, \p_{KK} F_R \big(k_0, U(-h)\big)<0, \, \text{ where } K=k^2, 
\]
it implies 
\[
F_R(k_0, c) >0, \; \forall 0 < U(-h)-c\ll 1, \quad F_R \big(k, U(-h)\big) <0, \; \forall k \in \R^+\setminus \{k_0\}. 
\]
From the Intermediate Value Theorem, for $k$ near $k_0$, there exist real roots of $F_R(k, \cdot)$ slightly smaller than $U(-h)$, which must belong to $\tilde \CC(k)$ due to the uniqueness of solutions ensured by the Implicit Function Theorem. Therefore along with the last step, we conclude $\CC(k) = \tilde \CC(k) <U(-h)$ for $k\ne k_0$ close to $k_0$. 

Finally, we consider the case of $\p_k F\big(k_0, U(-h)\big)> 0$, while the opposite case can be handled similarly. 
The fact $\p_c F \big(k_0, U(-h)\big)<0$ yields 
\[
\p_k \CC(k_0)= \p_k \tilde \CC(k_0)= - \frac{\p_k F\big(k_0, U(-h)\big)}{\p_c F\big(k_0, U(-h)\big)} >0,  
\] 
where in the calculation of $\tilde \CC(k_0)$ we also used $F_I(k, c)=0$ for $c\le U(-h)$ and the smoothness of $F$. Hence we obtain $\CC(k)=\tilde \CC(k) < U(-h)$ for $k$ slightly smaller than $k_0$. In the following we shall focus on $k>k_0$ where $\CC_R(k) > U(-h)$. In this case, apparently $\tilde F_I(k, \CC_R (k)) =F_I(k, \CC_R (k)) \ne 0$ and thus $\CC_I(k) \ne 0$. From the Mean Value Theorem, there exists $\theta$ between $0$ and $\CC_I(k)$ such that   
\[
0= \tilde F_I \big(k, \CC(k)\big) = F_I \big(k, \CC_R(k)\big) + \CC_I(k) \p_{c_I} \tilde F_I \big(k, \CC_R(k) + i \theta\big). 
\]
The $C^{1, \alpha}$ regularity of $F$ and $\CC (k)$ 
implies  
\begin{align*}
\CC_I (k)= &-\frac {F_I \big(k, \CC_R(k)\big)}{\p_{c_I} \tilde F_I \big(k, \CC_R(k) + i \theta\big)}=-\frac {Y_I \big(k, \CC_R(k)\big) \big(U(0)- \CC_R(k)\big)^2}{\p_{c_I} \tilde F_I \big(k, \CC_R(k)\big) + O\big(|\CC_I(k)|^\alpha\big)} \\
=&-\frac {Y_I \big(k, \CC_R(k)\big) \big(U(0)- \CC_R(k)\big)^2}{\p_{c_I} F_I \big(k, \CC_R(k)\big) + O\big(|\CC_I(k)|^\alpha\big)} 
= -\frac {Y_I \big(k, \CC_R(k)\big) \big(U(0)- U(-h) +O( |k-k_0|)\big)^2}{\p_{c} F \big(k_0, U(-h)\big) + O\big(|k- k_0|^\alpha\big)}.  
\end{align*}
The proof of the lemma is complete. 
\end{proof}

While the branch $c^+(k) \in (U(0), +\infty)$ of neutral modes is global in $k\in \R$ and contained in $\big(U(0), \infty\big)$ as addressed in Corollary \ref{C:branches-1}, in the following we completes the picture of the other branch $c^-(k)$ by combining Lemmas \ref{L:continuation} -- \ref{L:bifurcation} and finish the proof of Theorem \ref{T:e-values}. \\

\noindent {\it Proof of Theorem \ref{T:e-values}(3).} 
Let 
$g_\#\ge 0$, $k_\#$, and/or $k_\#^\pm$ be the thresholds given in Lemma \ref{L:g-thresh}. 

{\it Case 1. $g> g_\#$.} The desired result follows from Lemma \ref{L:g-thresh} and Corollary \ref{C:branches-1} immediately.  

We start the rest of the proof much as in that of Corollary \ref{C:branches-1} and Proposition \ref{P:e-v-0}. Namely, let $k_0$ be given by Lemma \ref{L:ev-large-k}(3) and we only need to focus on $c^-(k)$ for $|k| \le k_0$. 
From Lemma \ref{L:ev-large-k}(2), there exists $R>0$ such that $F(k, c) \ne 0$ for all $k \in [-k_0-1, k_0+1]$ and $|c|\ge R$, which also implies $c^+(k) \in \big(U(0), R)$ for all $|k| \le k_0+1$ and $c^-(k) \in \big(-R, U(-h)\big)$ for all $|k| \in [k_0, k_0+1]$. 

{\it Case 2. $g= g_\#$.} One the one hand, for any $k_1 \in (k_\#, k_0]$, Lemmas \ref{L:ev-large-k}, \ref{L:e-v-large-k}, and \ref{L:g-thresh} imply that there exists $r_0>0$ such that 
\[
F(k, c)\ne 0, \; \forall k\in [k_1, k_0], \, c \in \p \Omega_1 \cup \CD_{r_0}, \; \text{ where } \Omega_1 = \{c \in \C \mid |c| < R, \, c\notin \overline{\CD_{r_0}}\},
\]  
where $\CD_r$ is the $r$-neighborhood of $U([-h, 0])$ (see also \eqref{E:CD_r}). 
Hence for all $k \in [k_1, k_0]$, we have Ind$\big(F(k, \cdot), \Omega_1\big) = \big(F(k_0, \cdot), \Omega_1\big) =2$, which is equal to the number of roots of $F(k, \cdot)$ in $\Omega_1$. According to Corollary \ref{C:branches-1}, $c^+(k) \in \big(U(0), R\big)$, $\forall |k|\le k_0+1$, is one of them. Therefore neither cases in Lemma \ref{L:continuation}(3) can happen to the branch $c^-(k)$ and the simple root $c^-(k) \in \big(-\infty, U(-h)\big)$ can be extended analytically for all 
$k \in [k_1, k_0]$. Therefore $c^-(k)$ can be extended to at least $(k_\#, \infty)$ which along with $c^+ (k)$ are the only roots of $F(k, \cdot)$ for $k \in (k_\#, \infty)$. On the other hand, according to Lemma \ref{L:bifurcation}, there exists a $C^{1, \alpha}$ branch $\CC(k)$ of the only roots of $F(k, c)$ for $|k-k_\#|, |c-U(-h)|\ll1$. Moreover $\CC(k) < U(-h)$ for $0< |k-k_\#|\ll1$. Therefore $c^-(k)= \CC(k)$ for $0< k- k_\# \ll1$ as $c^\pm (k)$ are the only roots of $F(k, \cdot)$ for $k> k_\#$. In particular, $c^-(k)$ is thus extended to $|k_\# -k| \ll1$ as a $C^{1, \alpha}$ function with $c^-(k) \in (-R, U(-h))$ for $0< |k_\# -k| \ll1$. 


Moreover, on the one hand, $c^-(k)$ is the only root of $F(k, \cdot)$ near $U([-h, 0])$ for $k$ near $k_\#$ and it satisfies $c^-(k_\#)=U(-h)$. On the other hand, the continuity of $c^-(k)$ implies that there exists $\ep_1, r_1>0$ such that $F(k, c)\ne 0$ for any $|k-k_\#|\le \ep_1$ and $dist\big(c, U([-h, 0]) \big) =r_1$. It implies 
\[
\text{Ind}\big(F(k, \cdot), \Omega_2\big) = \text{Ind}\big(F(k_\#+\ep_1, \cdot), \Omega_2\big) =1, \ \forall |k-k_\#|\le \ep_1,
\]
due to the root $c^+(k)$, where
\[ 
\Omega_2 =  \{c \in \C \mid |c| < R, \, dist\big(c, U([-h, 0])\big)>r_1\}.
\]
Therefore, $c^\pm(k)$ are the only root of $F(k, \cdot)$ for $k$ near $k_\#$, which are also simple. As $c^- (k) \in (-\infty, U(-h))$ is away from $c^+(k)$, 
Lemma \ref{L:continuation} implies that the branch $c^-(k)$ of simple roots  can be extended at least to $(-k_\#, +\infty)$ and remains in $\big(-\infty, U(-h)\big)$. As $F$ is even in $k$, we have $c^\pm(k)$ are also the only roots of $F(-k, \cdot)$ for $k \in (-\infty, k_\#)$. Therefore the extension $c^-(k)$ must be even on $(-k_\#, k_\#)$ 
and we obtain the whole branch $c^-(k)$ for $k \in \R$.   

{\it Case 3a. $g< g_\#$ and $U''>0$.} Following the same arguments as in case 2, we obtain that $c^-(k) = c_R^-(k) + i c_I^-(k)$ can be extended to a $C^{1, \alpha}$ function on $(k_1, +\infty)$ for some $k_1 < k_\#^+$, such that  $c^\pm(k)$ and $\overline{c^-(k)}$ are the only roots of $F(k, \cdot)$ for all $k \in (k_1, +\infty)$ and $c_I^-(k) >0$ for $k \in (k_1, k_\#^+)$. Let $(k_1, k_\#^+)$ also denote the maximal interval of the analytic extension of $c^-(k)$ as a simple root of $F(k, \cdot)$ inside $\C \setminus U([-h, 0])$. The same above index based argument (in case 1) applied to $[k, k_\#^+-\ep]$ for  any $k \in(\max \{k_1, k_\#^-\}, k_\#^+)$ and $0\ll \ep < k_\#^+-k$ also implies that $c^\pm(k)$ and $\overline{c^-(k)}$ are the only roots of $F(k, \cdot)$ for all $k \in(\max \{k_1, k_\#^-\}, k_\#^+)$. According to Lemma \ref{L:ev-large-k} we have $k_1 \ge -k_0 >-\infty$. For $k\in (k_1, k_\#^+)$, the semicircle theorem implies that $c^-(k)$ lies  
in the closed upper semi-disk with the boundary diameter $U( [-h, 0])$  
and thus $|c^-(k) - c^+(k)| > \rho_0$ where $\rho_0>0$ is given in Corollary \ref{C:branches-1}. Moreover, since $F (k, c) \ne 0$ for any $c \in U\big((-h, 0]\big)$ (Lemmas \ref{L:e-v-basic-1} and \ref{L:e-v-large-k}), we obtain from Lemma \ref{L:continuation}
\[
\lim_{k \to k_1+} c^-(k) = U(-h) \Longrightarrow F\big(k_2, U(-h)\big) =0 \Longrightarrow k_2 \in\{ k_\#^-, \, -k_\#^-, \, -k_\#^+\}. 
\]
It must hold $k_2 = k_\#^-$, otherwise $c^-(k_\#^-) \ne U(-h)$, $F\big(k_\#^-, U(-h)\big)=0$, $\p_k F\big(k_\#^-, U(-h)\big) >0$, and Lemma \ref{L:bifurcation} imply that there exists the fourth root near $U(-h)$ for $0< k - k_\#^- \ll 1$. 
This contradicts that $F(k, \cdot)$ has exactly three roots for all $k \in(\max \{k_1, k_\#^-\}, k_\#^+)$ and thus $k_2 = k_\#^-$ and $c^-(k_\#^-) =U(-h)$. 
For $0< k_\#^-- k\ll1$, Lemma \ref{L:bifurcation} yields the further extension of $c^-(k)$ back into $\big(-\infty, U(-h)\big)$. From a similar argument, we can extend this branch to $k=-k_\#^-$ with $c^-(-k_\#^-) = U(-h)$.  Finally, the whole branch $c^-(k)$ for $k \in \R$ is obtained by the evenness $c^-(-k)= c^-(-k)$. 

{\it Case 3b. $g< g_\#$ and $U''< 0$.} Following the same arguments as in case 2, we obtain that $c^-(k) = c_R^-(k) + i c_I^-(k)$ can be extended to a $C^{1, \alpha}$ function on $[k_\#^+, +\infty)$ and $c^-(k_\#^+)=U(-h)$. However, for $0< k_\#^+-k \ll1$,  Lemma \ref{L:bifurcation} implies that there does not exist any roots of $F(k, \cdot)$ near $U(-h)$ (as $\CC_I <0$ due to $U''<0$). The same index argument further yields that $c^+(k)$ is the only root for $k \in (k_\#^-, k_\#^+)$. From Lemma \ref{L:bifurcation}, we obtain another branch of roots in $(-\infty, U(-h))$ of $F(k, \cdot)$ for $k \in (-k_\#^-, k_\#^-)$ which along with the $c^+(k)$ are the only roots. The final conclusion again follows from the even symmetry as in the above cases.    
\hfill $\square$

\begin{remark}
As in \cite{Yih72} for the gravity wave, the spectral stability in the case $U''<0$ can also be obtained by directly modifying the usual  proof of the Rayleigh theorem in the  fixed boundary case. Namely, multiplying \eqref{E:Ray-H1-1} by $\bar y$, integrating on $[-h, 0]$, using the homogeneous boundary condition as in \eqref{E:Ray-2} and \eqref{E:Ray-3}, and the semicircle theorem, a contradiction occurs if an unstable mode $c$ exists.  
Our above proof provides a complete picture of the eigenvalue distribution, however.  
\end{remark}

\subsection{Singular neutral modes at inflection values} \label{SS:inflectionV}

To end this section, we discuss the spectrum near inflection values of $U$, which are the only possible singular neutral modes other than $U(-h)$ according to Lemma \ref{L:e-v-basic-1}(6).

\begin{proposition} \label{P:inflectionV} 
Assume $U\in C^5$, $x_{20} \in [-h, 0)$, and $U''(x_{20})=0$, then the following hold for $c_0 = U(x_{20})$. 
\begin{enumerate}
\item For any $\alpha \in (0, \frac 12)$, there exist $C>0$ depending only on $U$, $g$, and $\alpha$, such that, with   
\[
k_* =C  \max\{1, (U(0)-c_0)^{-2}\},  \quad \sigma_0= (U(0)-c_0)^2/(2k_*), 
\]
for any $\sigma \in (0, \sigma_0 )$, there exists a unique  $k_0 \ge k_*$ such that $F(k_0, c_0)=0$. Moreover it  satisfies 
\[
|k_0 - (U(0)- c_0)^{2} / \sigma| \le C(U(0)- c_0)^{-2}, \quad |\p_k F(k_0, c_0) + (U(0)- c_0)^{2}| \le C (U(0)- c_0)^{-2\alpha} \sigma^\alpha.  
\]
\item In addition, suppose $x_{20} \ne -h$ and  
\[
\BF(k_0, c_0)=0, \; k_0>0, \; \p_k F(k_0, c_0)\ne 0,  \; U'''(x_{20}) \ne 0, 
\]
then there exist $\tilde C>0$, $\delta >0$, and a $C^1$ function $c(k)$ defined for 
\[
0\le |k-k_0| \le \delta, \quad (k-k_0) U'''(x_{20}) \p_k F(k_0, c_0) >0,
\] 
such that $c(k_0)=c_0$, $c_I(k)>0$ for the above $k\ne k_0$, and 
\[
F(k, c)=0, \; |k-k_0| \le \delta\text{ and } |c-c_0|\le \tilde C\delta \; \text{ iff } \; c \in \{c(k), \overline{c(k)}\}. 
\]
\end{enumerate}
\end{proposition}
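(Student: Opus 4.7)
\textbf{Proposal for the proof of Proposition \ref{P:inflectionV}.}

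\emph{Stage 1: reduction to a scalar problem and existence of $k_0$.} The key observation is that since $U''(x_{20})=0$, the explicit formula in Lemma \ref{L:Y-I} forces $Y_I(k,c_0)=0$ for every $k$, so $F(k,c_0)\in\R$ and the equation $F(k_0,c_0)=0$ reduces to a real scalar equation. Using the expansion
\[
F(k,c_0)=Y(k,c_0)(U(0)-c_0)^2-U'(0)(U(0)-c_0)-g-\sigma k^2
\]
together with the asymptotic $Y(k,c_0)=k\coth(kh)+O(\mu^{\alpha-1})$ from Lemma \ref{L:Y-def}(3), the leading behavior for large $k$ is the downward quadratic $-\sigma k^2+(U(0)-c_0)^2 k-[U'(0)(U(0)-c_0)+g]$, whose larger root is $\approx(U(0)-c_0)^2/\sigma$. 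Existence of $k_0\in[k_*,\infty)$ then follows from the Intermediate Value Theorem once one chooses the constant in $k_*$ large enough that $F(k_*,c_0)>0$ for $\sigma<\sigma_0$, and the two displayed error estimates on $k_0$ and on $\p_k F(k_0,c_0)$ are read off by substituting $k_0=(U(0)-c_0)^2/\sigma+\delta$ into $F(k_0,c_0)=0$ and solving for $\delta$ using the error bound for $Y$.

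\emph{Stage 1: uniqueness.} To rule out a second solution in $[k_*,\infty)$, the plan is to extend the strict concavity statement $\p_K^2 F(\sqrt{K},c_0)<0$ of Lemma \ref{L:F-signs} from $c\in\R\setminus U((-h,0])$ to the inflection value $c_0$. This extension is legitimate: because $U''(x_{20})=0$, the coefficient $U''/(U-c_0)$ extends continuously to $[-h,0]$ by L'Hopital's rule, the homogeneous Rayleigh equation at $c=c_0$ is regular, and $y_-(k,c_0,\cdot)$ is real, $C^{l_0}$, and positive on $(-h,0]$ (combining Lemma \ref{L:B}, Remark \ref{R:jump-at-sing}, and the argument in the proof of Lemma \ref{L:y-lower-b}(1) applied to the real solution $y_-$). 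All $K$-derivatives of the normalized solution $\tilde y(k,c_0,x_2)=y_-/y_-(k,c_0,0)$ inherit the same regularity, so the identity $\p_K^2 Y(\sqrt{K},c_0)<0$ in \eqref{E:F-signs-0.1} carries over, and $K\mapsto F(\sqrt{K},c_0)$ admits at most two positive roots. The smaller root is of order $U'(0)/(U(0)-c_0)+g/(U(0)-c_0)^2\le k_*$ by the choice of the constant in $k_*$, so $k_0$ is unique in $[k_*,\infty)$.

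\emph{Stage 2: bifurcation of unstable modes.} Under the assumptions $x_{20}\ne-h$, $U'''(x_{20})\ne 0$, and $\p_k F(k_0,c_0)\ne 0$, the point $c_0\notin\{U(-h),U(0)\}$ lies in the domain where $F$, restricted to $\{c_I\ge 0\}$, is $C^{l_0-3}$ in $(k,c)$ and analytic for $c_I>0$ (Lemma \ref{L:e-v-large-k}). The strategy is to apply the (real) Implicit Function Theorem to the system $F_R=F_I=0$ in the three real variables $(k,c_R,c_I)$. At $(k_0,c_0,0)$ one has $F_R=F_I=0$ and $\p_k F_I=(U(0)-c_0)^2\p_k Y_I(k_0,c_0)=0$ because every $k$-derivative of $Y_I$ retains the prefactor $U''(x_2^c)$, which vanishes at $c_0$. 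Differentiating the formula for $Y_I$ using $U''(x_2^{c_R})=(U'''(x_{20})/U'(x_{20}))(c_R-c_0)+O((c_R-c_0)^2)$ yields
\[
\p_{c_R}Y_I(k_0,c_0)=\frac{\pi U'''(x_{20})}{U'(x_{20})^2}\frac{y_-(k_0,c_0,x_{20})^2}{|y_-(k_0,c_0,0)|^2}\ne 0,
\]
where non-vanishing of $y_-(k_0,c_0,x_{20})$ comes from Lemma \ref{L:y-lower-b}(2). The $2\times 2$ minor of the Jacobian of $(F_R,F_I)$ in $(k,c_R)$ at $(k_0,c_0,0)$ is then $\p_k F\cdot(U(0)-c_0)^2\p_{c_R}Y_I\ne 0$, so IFT parametrizes the zero set as $(k,c_R)=(K(c_I),C_R(c_I))$ with $K(0)=k_0$, $C_R(0)=c_0$. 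Implicit differentiation of $F(K(c_I),C_R(c_I)+ic_I)=0$ at $c_I=0$, using the Cauchy-Riemann equations for $F$ in the upper half plane to identify $\p_c F(k_0,c_0)=b_R+ib_I$ with $b_I=(U(0)-c_0)^2\p_{c_R}Y_I(k_0,c_0)$, gives $K'(0)=|b|^2/(\p_k F\cdot b_I)$, whose sign equals $\mathrm{sgn}(\p_k F(k_0,c_0)\cdot U'''(x_{20}))$. Inverting $k=K(c_I)$ produces the branch $c(k)=C_R(c_I)+ic_I$ with $c_I>0$ precisely when $(k-k_0)U'''(x_{20})\p_k F(k_0,c_0)>0$, and uniqueness of complex roots near $(k_0,c_0)$ inside the closed upper half plane follows from the same IFT; complex-conjugating supplies the branch $\overline{c(k)}$.

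\emph{Main obstacle.} The most delicate point is Stage 1: rigorously extending Lemma \ref{L:F-signs} to the interior singular value $c_0$ requires revisiting the Rayleigh analysis of Section \ref{S:Ray-Homo} in the regular regime produced by $U''(x_{20})=0$, in order both to justify the concavity calculation and, ideally, to sharpen the error $O(\mu^{\alpha-1})$ in $Y(k,c_0)$ (which is formally too weak to yield the $\sigma$-independent bound $C(U(0)-c_0)^{-2}$ on $|k_0-(U(0)-c_0)^2/\sigma|$) to $O(1/k)$ by iterating the variation-of-parameters formula for the now-bounded coefficient $U''/(U-c_0)$. Stage 2 is essentially a cleaner analogue of Lemma \ref{L:bifurcation}, the main subtlety being the computation of $\p_{c_R}Y_I$ and the careful identification of the sign of $K'(0)$ in terms of $U'''(x_{20})$ and $\p_k F(k_0,c_0)$.
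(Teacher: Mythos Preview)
Your Part (2) is essentially the paper's argument: both compute $\p_{c_R}Y_I(k_0,c_0)=\pi U'''(x_{20})y_{0-}(k_0,c_0,x_{20})^2/(U'(x_{20})^2 y_{0-}(k_0,c_0,0)^2)\ne 0$ and apply the Implicit Function Theorem to a $C^1$ extension of $F$ into a full neighborhood in $\R\times\C$. The paper parametrizes directly by $k$ (taking the $2\times2$ Jacobian in $(c_R,c_I)$, which is $|\p_cF|^2\ne0$), while you parametrize by $c_I$ and invert; both are fine and give the same sign condition.

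For Part (1) your plan has two related gaps, and the paper resolves them differently from what you suggest. First, you correctly flag that $Y(k,c_0)=k\coth kh+O(\mu^{\alpha-1})$ is too weak for the $\sigma$-independent bound on $|k_0-(U(0)-c_0)^2/\sigma|$. The paper does \emph{not} iterate variation of parameters; instead it uses the Cauchy integral representation of $Y$ (Lemma \ref{L:Y-Cauchy}) together with the explicit formula for $Y_I$ (Lemma \ref{L:Y-I}), noting that the prefactor $U''(x_2^c)$ makes $|Y_I(k,c)|\le C|U''(x_2^c)|e^{2\mu^{-1}x_2^c}$ uniformly, which integrates to $|Y(k,c_0)-|k||\le C\mu$ directly. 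Second, your sentence about ``reading off'' the estimate on $\p_kF(k_0,c_0)$ does not explain how to control $\p_kY$. The paper handles this by repeating the computation behind \eqref{E:F-signs-0.1} at $c=c_0$ (legitimate because $U''/(U-c_0)\in C^3$): one differentiates the normalized solution $\tilde y=y_{0-}/y_{0-}(k,c_0,0)$ in $k$, obtains $\p_kY(k,c_0)=2k\int_{-h}^0 \tilde y^2\,dx_2$, and estimates the integral via Lemma \ref{L:y-pm} to get $|\p_kY(k,c_0)-\mathrm{sgn}(k)|\le C\mu^{\alpha}$. This simultaneously gives $\p_kF(k_0,c_0)=-(U(0)-c_0)^2+O(k_0^{-\alpha})<0$ at every root $k_0\ge k_*$, which \emph{is} the paper's uniqueness argument --- no separate concavity extension is needed. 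Your concavity route would also work once justified, but it does not by itself produce the $\p_kF$ estimate in the statement.
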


In the above statement (2), note that $\BF(k_0, c_0)=0$ and Lemma \ref{L:e-v-basic-1}(4) imply $y_- (k_0, c_0, 0)\ne 0$ and thus  $Y(k, c_0)$ is well-defined which is actually real due to $U''(x_{20}) =0$ and Lemma \ref{L:Y-I}. Therefore it makes sense to talk about the sign of $\p_k F(k_0, c_0)$. Statement (1) also implies that assumptions of statement (2) may be satisfied at inflection values of $U$  with $|k|\gg1$ if $\sigma$ is small.  

\begin{proof} 
From Lemma \ref{L:y-pm} and Remark \ref{R:y-pm}, there exists $C_0>0$ such that 
\[
k y_-(k, c, x_2)  \ge (1/2) \sinh \mu^{-1} (x_2 +h) \implies y_-(k, c, 0) \ne 0, \quad \forall |k|\ge C_0, \, c \in \C,
\] 
and thus $F(k, c)$ and $Y(k, c)$ are defined for all $|k|\ge C_0$. According to \eqref{E:Im-F}, $F_I(k, c_0) =0$ for all $k \in \R$ and thus $F (k, c_0) \in \R$. Lemmas \ref{L:Y-Cauchy} and \ref{L:Y-I} imply, for  $|k|\ge C_0$ and $c\in U([-h, 0))$, 
\[
|Y_I(k, c)| \le C |U''(x_2^c)| e^{2\mu^{-1} x_2^c} \implies \big|Y(k, c_0) - |k|\big| \le C \mu.
\]
Therefore, for $|k|\ge C_0$, it holds 
\[
\big| |k|^{-1} F(k, c_0) - (U(0)-c_0)^2 + \sigma |k| \big| \le C \mu. 
\]
Let  
\[
k_* = \max\{C_0, 3C (U(0)-c_0)^{-2}\} \implies C \langle k_* \rangle^{-1} \le (U(0)-c_0)^2/3.
\]
From the Intermediate Value Theorem, for every $0< \sigma\le \sigma_0$, there exists a root  $k_0\in [k_*, +\infty)$ of $F(\cdot, c_0)$ close to $(U(0)- c_0)^{2} / \sigma$. 

To estimate $\p_k F(k_0, c_0)$ and obtain the uniqueness of $k_0$, we analyze $\p_k Y(k_0, c_0)$ using the same standard method used in the proof of Lemma \ref{L:F-signs}. Let 
\[
y(k, x_2) = \frac {y_{0-} (k, c_0, x_2)}{y_{0-} (k, c_0, 0)} \implies - y'' +\Big(k^2 + \frac {U''}{U - c_0}\Big) y =0, \;\; y(-h)=0, \; y(0)=1, \; Y(k, c_0) = y'(0),  
\]
where $\frac {U''}{U - c_0} \in C^3([-h, 0])$. Differentiating the above equation with respect to $k$ yields 
\begin{align*} 
&- \p_k y'' +\Big(k^2 + \frac {U''}{U - c_0}\Big) \p_k y = -2k y, \;\; \p_k y(-h)= \p_k y(0)=0, \; \p_k Y(k, c_0) = \p_k y'(0), \\
\implies & \p_k Y(k, c_0)= \int_{-h}^0 (\p_k y' y- \p_k y y')' dx_2  = 2k \int_{-h}^0 y(x_2)^2 dx_2. 
\end{align*} 
From Lemma \ref{L:y-pm}, we can estimate, for any $\alpha \in (0, \frac 12)$ and $|k|> k_*$, 
\begin{align*}
&\Big| \p_k Y(k, c_0) -  2k \int_{-h}^0 \Big(\frac {\sinh \mu^{-1} (x_2+h)}{ \sinh \mu^{-1} h}\Big)^2 dx_2 \Big|\le C \mu^{\alpha-1} \int_{-h}^0 \Big(\frac {\sinh \mu^{-1} (x_2+h)}{ \sinh \mu^{-1} h}\Big)^2 dx_2 \\
\implies & |\p_k Y(k, c_0) - sgn(k) | \le  C \mu^\alpha. 
\end{align*} 
Therefore we obtain  
\[
\p_k F(k, c_0) = (U(0)-c_0)^2 \p_k Y(k, c_0) - 2\sigma k = (U(0)-c_0)^2 sgn(k) - 2\sigma k + O(|k|^{-\alpha}), 
\]
which implies 
\[
\p_k F(k_0, c_0)=- (U(0)-c_0)^2 + O(k_0^{-\alpha}) \; \text{ if } \; k_0 \in (k_*, \infty) \text{ and } F(k_0, c_0)=0.
\]  
The desired estimate on $\p_k F(k_0, c_0)$ follows immediately, whose always negative sign also implies the uniqueness of such $k_0 \in (k_*, \infty)$. 

Under the assumption in statement (2) of the proposition, Lemma \ref{L:e-v-basic-1}(4) implies $y_- (k_0, c_0, 0)\ne 0$ and thus $F(k, c)$ is $C^1$ in $(k, c)$ near $(k_0, c_0)$ with $c_I\ge 0$. Much as in the proof of Lemma \ref{L:bifurcation}, statement (2) can be proved by applying the Implicit Function Theorem to $\tilde F(k, c)$, an extension of $F(k, c)$ which is $C^1$ in $(k, c)$ in $\R\times \C$ near $(k_0, c_0)$. The Jacobi matrix of $\tilde F$ is 
\[
D_c \tilde F (k_0, c_0) = \begin{pmatrix} \p_{c_R} \tilde F_R & \p_{c_I} \tilde F_R \\ \p_{c_R} \tilde F_I & \p_{c_I} \tilde F_I \end{pmatrix}\Big|_{(k_0, c_0)} =  
\begin{pmatrix} \p_{c_R} F_R & - \p_{c_R} F_I \\ \p_{c_R} F_I &  \p_{c_R} F_R \end{pmatrix}\Big|_{(k_0, c_0)},
\]
where we also used the Cauchy-Riemann equation. According to Lemma \ref{L:Y-I}, $Y(k, c_0) \in \R$ and  
\[
\p_{c_R} F_I(k_0, c_0) = (U(0)-c_0)^2 \p_{c_R} Y_I(k_0, c_0)= (U(0)-c_0)^2\frac {\pi U'''(x_{20}) y_{0-} (k_0, c_0, x_{20})^2} {U'(x_{20})^2 y_{0-} (k_0, c_0, 0)^2} \ne 0,
\] 
and has the same sign as $U'''(x_{20})$. Therefore $D_c \tilde F (k_0, c_0)$ is invertible and thus there exist $\delta >0$ and a $C^1$ function $c(k) = c_R (k) + i c_I (k)$ defined for all $|k-k_0|\le \delta$ such that $\tilde F(k, c)=0$ for $(k, c) \in \R \times \C$ iff $c = c(k)$. Consequently $F(k, c)=0$ for $(k, c)$ near $(k_0, c_0)$ iff $c\in \{c(k), \overline{c(k)}\}$ and $c_I(k) \ge 0$. Identifying complex numbers with 2-d column vectors, since 
\begin{align*}
\p_k c (k_0) = - (D_c \tilde F (k_0, c_0))^{-1} \p_k \tilde F (k_0, c_0)= - \p_k F(k_0, c_0) / \p_c F(k_0, c_0)   
\end{align*}
implies $c_I(k) (k-k_0) \p_k F(k_0, c_0) U'''(x_{20}) >0$ for $k$ near $k_0$, statement (2) follows readily. 
\end{proof}

\begin{remark} \label{R:inflectionV} 
In part (1) of the proposition, one may also seek $k_0$ satisfying $\BF(k_0, c_0)=0$ using the Intermediate Value Theorem instead. It is easy to see $\BF(k, c_0)\in \R$ approaches $-\infty$ as $k \to \infty$. Therefore such $k_0$ exists if $\sup_{k \ge 0} \BF(k, c_0) >0$ and only if $\sup_{k \ge 0} \BF(k, c_0) \ge 0$, which may not the case if $g$ and $\sigma$ are sufficiently large. This is different from the gravity waves (i.e. $\sigma=0$), see \cite{Yih72, HL08, HL13}. It is also worth pointing out that the smoothness of $F$ for $c_I\ge 0$ based on Section \ref{S:Ray-Homo} made the analysis using the Implicit Function Theorem in part (2) easier, compared with, e.g. \cite{HL08}. 
\end{remark} 


\section{Boundary value problems of the non-homogeneous Rayleigh equation} \label{S:Ray-BC}

In this section, using the fundamental solutions $y_\pm (k, c, x_2)$ to the homogeneous Rayleigh equation \eqref{E:Ray-H1-1}, we study the boundary value problem of the non-homogeneous Rayleigh equation 
\begin{subequations} \label{E:Ray-BVP} 
\be \label{E:Ray-NH-2}
- y'' + \big(k^2 + \frac {U''}{U-c}\big) y = \frac {\psi(c, x_2)}{U-c}, \quad x_2 \in (-h, 0); 
\ee 
\be \label{E:Ray-BC-1}
y(-h)= \zeta_- (c), \quad \big(U(0)-c\big)^2 y'(0) - \big(U'(0)(U(0)-c) + g + \sigma k^2 \big) y(0) = \zeta_+(c),
\ee
\end{subequations}
where the boundary conditions are from the linearized water wave system  \eqref{E:Ray}.

Using the two fundamental solutions $y_\pm$ to the homogeneous equation with zero boundary values, for $c \in \C \setminus U([-h, 0])$
it is standard to compute the solution to \eqref{E:Ray-BVP} in the form
\be \label{E:y_B}
y_B (k, c, x_2) = \frac {\zeta_+(c)}{\BF(k, c)} y_- (k, c, x_2) + \frac {\zeta_-(c)}{y_+(k, c, -h)} y_+(k, c, x_2) + y_{nh} (k, c, x_2), 
\ee
where $y_{nh}$ is the solution to \eqref{E:Ray-NH-2} with zero boundary values in \eqref{E:Ray-BC-1} given 
by 
\be \label{E:y_nh-0} 
y_{nh} (k, c, x_2) = \frac {y_+ (k, c, x_2)}{y_+(k, c, -h)} \int_{-h}^{x_2} \frac {(y_-  \psi) (k, c, x_2')}{U(x_2') -c}  dx_2' +  \frac {y_- (k, c, x_2)}{y_+(k, c, -h)} \int_{x_2}^0 \frac {(y_+  \psi) (k, c, x_2')}{U(x_2') -c}   dx_2'.  
\ee  
Its derivative in $x_2$ is given by 
\be \label{E:y_nh-1} 
y_{nh}' (k, c, x_2) = \frac {y_+' (k, c, x_2)}{y_+(k, c, -h)} \int_{-h}^{x_2} \frac {(y_-  \psi) (k, c, x_2')}{U(x_2') -c}   dx_2' +  \frac {y_-' (k, c, x_2)}{y_+(k, c, -h)} \int_{x_2}^0 \frac {(y_+  \psi) (k, c, x_2')}{U(x_2') -c}   dx_2'.  
\ee  
Here the unique solvability condition of \eqref{E:Ray-BVP} is $\BF(k, c) \ne 0$, where $\BF$ is defined in \eqref{E:BF}, as the Wronskian of the fundamental solutions $y_\pm$,
which is a constant in $x_2$, is given by 
\be \label{E:Wronskian}
y_+(k, c, -h) = (g+\sigma k^2)^{-1} \BF(k, c) =(y_+ y_-' - y_+' y_-)(k, c, x_2). 
\ee 

Throughout this section, we consider 
\[
c = c_R + i c_I, \quad c_R \in \CI=U( [-h-\rho_0, \rho_0]), \quad |c_I| \le \rho_0,  
\]
where $\rho_0\in [0,  h_0]$. 
By choosing $\rho_0$ smaller, we also have that, for some $C>0$ depending only on $|U|_{C^1}$ and $|(U')^{-1}|_{C^0}$, 
\be \label{E:h-0}
\RP \,  \big(g + \sigma k^2 + U'(0) (U(0)-c) \big) \ge (1+k^2)/C, \quad \forall k \in \R, \; c \in \CI + i[-\rho_0, \rho_0].
\ee
This and boundary condition \eqref{E:Ray-BC-1} imply
\be \label{E:y(0)}
|y(0)| \le C\mu^2 (|U(0)-c|^2 |y'(0)| + |\zeta_+|), 
\ee  
which will be used repeatedly to control $y(0)$ in terms of $y'(0)$. 

Throughout this section, we assume that, there exists $\rho_0>0$ such that  
\be \label{E:F_0}
F_0 = \inf \{ (1+k^2)^{-\frac 12} e^{-\frac h\mu} |\BF(k, c)| \mid c_R \in \CI= U([U(-h)-\rho_0, U(0)+\rho_0]), \, |c_I| \in [-\rho_0, \rho_0]\}  > 0. 
\ee 
In this subsection, mostly we shall not vary $k \in \R$, but carefully track the dependence of the estimates on $k$, or equivalently $\mu = (1+k^2)^{-\frac 12}$. From Lemma \ref{L:y-pm}, it is easy to compute that, for any $r_1 \in [1, \infty]$, $r_2 \in [1, \infty)$, and $|c_I| \le \rho_0$, 
\[
\mu^{-(1+ \frac 1{r_1})} |y_\pm|_{L_{x_2}^{r_1} L_{c_R}^\infty } + \mu^{-\frac 1{r_1}}  |y_\pm'|_{L_{x_2}^{r_1} L_{c_R}^{r_2} } + \mu^{-\frac 1{r_2}}  |y_\pm'|_{L_{c_R}^\infty L_{x_2}^{r_2} } +  |y_+' (-h)|_{L_{c_R}^{r_2} } +  |y_-' (0)|_{L_{c_R}^{r_2} } \le C  e^{\mu^{-1} h}, 
\] 
where $x_2 \in [-h, 0]$ and $c_R \in \CI$. This inequality will be used repeatedly in the rest of the paper. 

Solutions to this system are rather smooth away from $c \in \{U(x_2), \, U(0), \, U(-h)\}$   
and their singular behaviors near this set could be analyzed rather detailedly following the approach in Section \ref{S:Ray-Homo}, based on \eqref{E:tw-1} and \eqref{E:Ray-H0-2} and the estimates on $\tilde B$ and $B$. However, for the purpose of this paper, it is sufficient just to obtain certain bounds of the solutions based on the properties of the homogeneous solutions $y_\pm$, which is carried out in this section. 

As a preparation, in Subsection \ref{SS:Ray-NH-HBC} we shall first consider \eqref{E:Ray-BVP} with zero boundary conditions $\zeta_\pm =0$ in \eqref{E:Ray-BC-1}. Subsequently in Subsection \ref{SS:Ray-NH-CGW}, we study the non-homogenous Rayleigh system \eqref{E:Ray-BVP} with $\zeta_\pm$ linear in $c$, particularly focusing on the derivatives of the solutions on $c\in \CI + i [-\rho_0, \rho_0]$.  We sometimes skip writing parameters $k$ and $c$ explicitly.

\subsection{Non-homogeneous Rayleigh system \eqref{E:Ray-BVP} with zero boundary conditions $\zeta_\pm =0$} \label{SS:Ray-NH-HBC}

The formulas \eqref{E:y_nh-0} and \eqref{E:y_nh-1} of $y_{nh}(k, c, x_2)$ and $y_{nh}'(k, c, x_2)$ 
are actually consistent with \eqref{E:tw-1} for $x_2$ near $x_2^c$. In fact, \eqref{E:tw-1} implies that $\begin{pmatrix} 1 & 0 \\ \Gamma & 1 \end{pmatrix} \tilde B$ is a fundamental matrix of \eqref{E:Ray-H1-1} and hence $\tilde B$ can be rewritten in terms of $y_\pm$ and $\Gamma$. A straight forward calculation using \eqref{E:tPhi-0} and \eqref{E:tw-1} also yields \eqref{E:y_nh-0}. This solution also satisfy 
\[
\overline{y_{nh} (k, \bar c, x_2)} = y_{nh} (k, c, x_2) = y_{nh} (-k, c, x_2), 
\]
so we mainly focus on $c_I \ge 0$. Assume $\psi(c_R + ic_I, x_2) \to \psi_0(c_R, x_2)$ as $c_I \to 0+$. 
Due to the singularity of the non-homogeneous term at $x_2 = x_2^c$ (as defined in \eqref{E:x2c-1} by $U(x_2^c) =c_R$) as $c_I \to 0+$, the limits of $y_{nh}$ and $y_{nh}'$ involve $P.V.$ of integrals and delta masses 
\be \label{E:y_nh0-2-0} \begin{split} 
y_{nh0} (x_2)  &=  P.V. \int_{-h}^{0} \psi_0(x_2')\frac {  y_{0+} (x_2) y_{0-}(x_2') \chi_{\{x_2' < x_2\}} + y_{0-} (x_2) y_{0+} (x_2') \chi_{\{x_2' >x_2\}} }{y_{0+}(-h)(U(x_2') -c_R) } dx_2'\\
&+ \frac {i\pi\psi_0(x_2^c)} {U'( x_2^c)}\Big( \frac {y_{0+} (x_2) y_{0-} (x_2^c)}{y_{0+}( -h)} \chi_{\{U(x_2) > c_R> U(-h)\}} +\frac {y_{0-} (x_2) y_{0+} (x_2^c) }{y_{0+}( -h)} \chi_{\{U(0)> c_R >U(x_2) \}}    \Big),
\end{split} \ee
\be \label{E:y_nh0-2-1} \begin{split} 
y_{nh0}' (x_2)  &= P.V. \int_{-h}^{0} \psi_0(x_2')\frac {  y_{0+}' (x_2) y_{0-}(x_2') \chi_{\{x_2' < x_2\}} + y_{0-}' (x_2) y_{0+} (x_2') \chi_{\{x_2' >x_2\}} }{y_{0+}(-h)(U(x_2') -c_R) } dx_2'\\
&+ \frac {i\pi\psi_0(x_2^c)} {U'( x_2^c)}\Big( \frac {y_{0+}' (x_2) y_{0-} (x_2^c)}{y_{0+}( -h)} \chi_{\{U(x_2) > c_R> U(-h)\}} + \frac {y_{0-}' (x_2) y_{0+} (x_2^c) }{y_{0+}( -h)} \chi_{\{U(0)> c_R >U(x_2) \}}    \Big),
\end{split} \ee
where $\chi$ is the characteristic function and we skipped the dependence on $c_R$ of $\psi_0$, $y_{0\pm}$, and $y_{nh0}$. Naturally, in the above the $P.V.$ is taken only when there are singularities in the integral. 


We consider a priori and convergence estimates of $y_{nh}$ as $c_I \to 0+$ in the following two cases of $\psi(c, x_2)$, motivated by the non-homogenous Rayleigh system \eqref{E:Ray}  
and its differentiation in $c$. 

{\bf $\bullet$ Case 1: $\psi' (c, \cdot) \in L_{x_2}^r, \; r \in (1, \infty)$.}
While this case occurs in the linearized capillary gravity wave \eqref{E:Ray} when some regularity is assumed on the initial vorticity, it is also a crucial part of the analysis when \eqref{E:Ray} is differentiated in $c$. 
 
\begin{lemma} \label{L:y_nh-esti-2}
Assume \eqref{E:F_0}. For any $\ep>0$\footnote{Like the generic upper bound $C>0$, the small constant $\ep>0$ in this and the next section may change from line to line.}, there exists $C>0$ depending only on $r$, $\ep$, $F_0$, $\rho_0$, $|U'|_{C^2}$  and $|(U')^{-1}|_{C^0}$, such that the following hold.
\begin{enumerate} 
\item For any $k \in \R$, $x_2 \in [-h, 0]$, $c_I \in (0, \rho_0]$, and $c_R \in \CI
$ it holds
\begin{align*}
|y_{nh}(k, c, x_2)| \le C & \mu^{1-\frac 1r -\ep} 
(\mu |\psi'|_{L_{x_2}^r} +|\psi|_{L_{x_2}^r} ),  
\end{align*}
\begin{align*}
|y_{nh}'(k, c, x_2)| \le C \mu^{-\frac 1r -\ep} &
\big( 1+  \big|\log |U(x_2)-c|\big|\big) (\mu |\psi'|_{L_{x_2}^r} +|\psi|_{L_{x_2}^r} ).  
\end{align*}
\item Assume $\psi(\cdot + ic_I, \cdot) \to \psi(\cdot, \cdot)$ in $L_{c_R}^{r_1} W_{x_2}^{1, r}$ as $c_I \to 0+$ with $r_1 \in (1, \infty)$ and $r \in (1, \infty)$, then 
\begin{enumerate} 
\item $y_{nh} \to y_{nh0}$ in $L_{c_R}^{q_1} L_{x_2}^\infty$ for any $q_1\in [1, r_1)$ and $y_{nh}' \to y_{nh0}'$ in $L_{c_R}^{q_1} L_{x_2}^{q_2}$ for any $q_1\in [1, r_1)$ and $q_2 \in [1, \infty)$;
\item at $\tilde x_2 =-h$ and $\tilde x_2=0$, $y_{nh}(\cdot + ic_I, \tilde x_2) \to y_{nh0} (\cdot, \tilde x_2)$  and $y_{nh}' (\cdot + ic_I, \tilde x_2) \to y_{nh0}' (\cdot, \tilde x_2)$ in $L_{c_R}^{q_1}$ for any $q_1 \in [1, r_1)$. Moreover, and for any $\ep>0$, for any $k\in \R$, $c_I \in [0,1]$,
\[
|y_{nh}' (c, \tilde x_2)| \le C
\big( \mu^{-\frac 1r -\ep} (\mu |\psi'|_{L_{x_2}^r} +|\psi|_{L_{x_2}^r} ) + \big( 1+ \big|\log |U(\tilde x_2)-c|\big|\big) |\psi(\tilde x_2)|\big).
\]
\end{enumerate}
\end{enumerate} 
\end{lemma}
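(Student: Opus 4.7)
The plan is to handle the non-integrable singularity of the Cauchy kernel $1/(U(x_2')-c)$ appearing in the variation of parameter formulas \eqref{E:y_nh-0}--\eqref{E:y_nh-1} by a single integration by parts in $x_2'$, using the identity
\[
\frac{1}{U(x_2')-c} = \frac{1}{U'(x_2')}\,\partial_{x_2'}\log\bigl(U(x_2')-c\bigr),
\]
which is valid for $c_I>0$. Applied to each of the integrals
\[
\int_{-h}^{x_2}\!\!\frac{(y_-\psi)(x_2')}{U(x_2')-c}\,dx_2', \qquad \int_{x_2}^{0}\!\!\frac{(y_+\psi)(x_2')}{U(x_2')-c}\,dx_2',
\]
this produces, besides two boundary contributions at $\{-h,x_2\}$ or $\{x_2,0\}$, an integrand of the form $\log(U-c)\cdot\partial_{x_2'}(y_\mp\psi/U')$. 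Since $\log|U-c|$ lies in $L_{x_2'}^{q}$ uniformly in $c$ for any $q<\infty$, this trade converts the Cauchy-type singularity into an $L_{x_2'}^{q}$ weight, at the cost of putting one derivative on $y_\mp\psi$—exactly the regularity assumption we have on $\psi$, while $\partial_{x_2'}y_\mp=y_\mp'$ is controlled by Lemma~\ref{L:y-pm}.

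For statement (1), I would then apply the H\"older inequality with exponents $(r,r')$ to bound the resulting integral by $|\log(U-c)|_{L_{x_2'}^{r'+\epsilon}}\cdot(|\psi'|_{L^r}+|\psi|_{L^r}\cdot|(y_\mp/U')'|_{L^{r'-\epsilon}})$ (the small $\epsilon$ absorbs the logarithm in the $L^{r'}$ norm and yields the $\mu^{-\epsilon}$ loss in the statement). The remaining pointwise factor $y_\pm(x_2)/y_+(-h)$ is controlled using Lemma~\ref{L:y-pm} together with the assumption \eqref{E:F_0}; the key cancellation is that the $\cosh(\mu^{-1}h)\sim e^{\mu^{-1}h}$ growth of $y_\pm(x_2)$ is exactly compensated by the lower bound $|y_+(-h)|\gtrsim\mu e^{\mu^{-1}h}$, producing the net $\mu^{1-1/r-\epsilon}$ (resp.\ $\mu^{-1/r-\epsilon}$) factor. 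The extra $1+|\log|U(x_2)-c||$ in the bound for $y_{nh}'(x_2)$ is inherited directly from the corresponding logarithmic term present in $y_\pm'(x_2)$ on the interior subinterval $\CI_2$ (inequalities \eqref{E:y-_4}, \eqref{E:y+_4}).

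For statement (2), I would argue by the dominated convergence theorem in the $x_2'$-integrals. The assumption $\psi(\cdot+ic_I,\cdot)\to\psi_0$ in $L_{c_R}^{r_1}W_{x_2}^{1,r}$ together with Proposition~\ref{P:converg} (convergence of $y_\pm$ to $y_{0\pm}$) gives pointwise-a.e.\ convergence of the integrands, while the a priori bound in (1) yields a uniform-in-$c_I$ dominating function in $L_{c_R}^{q_1}L_{x_2}^\infty$ for any $q_1<r_1$. The Plemelj--Sokhotski identity $\lim_{c_I\to 0+}(U-c_R-ic_I)^{-1}=\mathrm{P.V.}(U-c_R)^{-1}+i\pi\,\delta(U-c_R)$, together with the change of variables $U(x_2')\mapsto c$ (whose Jacobian supplies the $1/U'(x_2^c)$ factor), produces exactly the $P.V.$ plus $i\pi$ delta-mass terms in \eqref{E:y_nh0-2-0}--\eqref{E:y_nh0-2-1}. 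Convergence in $L_{c_R}^{q_1}L_{x_2}^{q_2}$ for $y_{nh}'$ on the interior uses the same scheme combined with the locally $L^{q_2}$ character of $\log|U(x_2)-c|$ obtained from (1). The pointwise-at-$\tilde x_2\in\{-h,0\}$ bound in (2b) requires only rerunning (1) with the outer factor $y_\pm(\tilde x_2)/y_+(-h)$ replaced by its exact value; the extra logarithmic factor $1+|\log|U(\tilde x_2)-c||$ multiplying $\psi(\tilde x_2)$ comes from the boundary term of the integration by parts which cannot be absorbed.

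The main obstacle is bookkeeping rather than analysis: one must check that the boundary terms produced by the integration by parts (evaluated at $x_2'\in\{-h,x_2,0\}$) and the matching $y_\pm(x_2)/y_+(-h)$ factors combine to give only an $L^\infty_{x_2}$ loss in $\mu$ consistent with the claim. Near $c_R\in\{U(-h),U(0)\}$ the factors $\log|U(\tilde x_2)-c|$ blow up, but they are accompanied either by the smallness $y_-(-h)=0$ (trivially) or by $(U(0)-c)^2$ appearing in the boundary condition \eqref{E:Ray-BC-1}; for the interior estimate they are absorbed into the explicit $1+|\log|U(x_2)-c||$ factor in the statement. The sharpness of the $\epsilon$ loss is expected to be mild and tied only to the borderline embedding $W^{1,r}\hookrightarrow C^0$ which barely fails in dimension one when $r=1$.
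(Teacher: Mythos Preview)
Your strategy is essentially the same as the paper's: integrate by parts via $\frac{1}{U-c}=\frac{1}{U'}\partial_{x_2'}\log(U-c)$, then apply H\"older, Lemma~\ref{L:y-pm}, and \eqref{E:F_0}; for the limit, pass to $c_I\to 0+$ in the $\log(U-c)$ representation and recover the $P.V.+i\pi\delta$ form. One small correction: the dominant $1+|\log|U(x_2)-c||$ factor in the $y_{nh}'$ bound does \emph{not} come from the logarithmic terms in $y_\pm'$ (those carry an extra factor of $\mu$ by \eqref{E:y-_4}--\eqref{E:y+_4}); it comes from the boundary term of the integration by parts at $x_2'=x_2$, where the two contributions combine via the Wronskian \eqref{E:Wronskian} to give exactly $-\frac{\psi(x_2)}{U'(x_2)}\log(U(x_2)-c)$, which after the Sobolev bound \eqref{E:y_nh-temp-1.5} produces the stated term---this is the same mechanism you already correctly identify in your discussion of (2b).
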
 

Even though the above formulas of $y_{nh0}$ involve some subtlety at $x_2 =x_2^c$, the regularity of $y_{nh0}'$ in $x_2$ implies that $y_{nh0}$ is H\"older continuous. In fact, the continuity of $y_{nh0}$ at $x_2 = x_2^c$ can also be seen directly by using the rather precise local form of $y_{0\pm}$ near $x_2^c$ given in Lemma \ref{L:B}. Moreover, while the convergence is given in the integral norms, one could attempt to obtain more detailed convergence estimates near $x_2^c$ using the tools given in Lemma \ref{L:Gamma-B} and Proposition \ref{P:converg}.  

\begin{proof} 
Since $c_I>0$, no singularity is involved in \eqref{E:y_nh-0} and \eqref{E:y_nh-1}, one can compute via integration by parts 
\[
\int_{-h}^{x_2} \frac {y_-  \psi}{U-c} dx_2' = \int_{-h}^{x_2} \frac {y_-  \psi}{U'} \big(\log (U- c) \big)' dx_2'  = \big(\frac {y_-  \psi}{U'} \log (U- c)\big) (x_2) - \int_{-h}^{x_2} \big( \frac {y_-  \psi}{U'} \big)' \log (U- c) dx_2'.
\]
The other integral can be handled similarly,  
\[
\int_{x_2}^0 \frac {y_+  \psi}{U-c} dx_2' = \big(\frac {y_+  \psi}{U'} \log (U- c)\big)\Big|_{x_2}^0 - \int_{x_2}^0 \big( \frac {y_+  \psi}{U'} \big)' \log (U- c) dx_2'.
\]
Observing that the boundary terms at $x_2$ are canceled and we have 
\be \label{E:y_nh-temp-1} \begin{split}
y_{nh} (x_2) = &- \frac {y_+(x_2)}{y_+ (-h)} \int_{-h}^{x_2} \big( \frac {y_-  \psi}{U'} \big)' \log (U- c) dx_2' - \frac {y_-(x_2)}{y_+ (-h)} \int_{x_2}^0 \big( \frac {y_+  \psi}{U'} \big)' \log (U- c) dx_2' \\
& +\frac {y_-(x_2)}{y_+ (-h)}\big(\frac {y_+  \psi}{U'} \log (U- c)\big)(0). 
\end{split}\ee
The above two integrals can be estimated similarly and we shall focus on the first one only. Lemma \ref{L:y-pm} implies 
\begin{align*}
\big|\big( \frac {y_-  \psi}{U'} \big)' \log (U- c) \big| = &|U'|^{-2} \big| ( y_-'\psi U' + y_- \psi' U' - y_-\psi U'')  \log (U-c) \big|  \\
\le & C \cosh( \mu^{-1} (x_2+ h)) \Big(\mu |\psi'| + \big( 1 + \mu \big|\log |U-c|\big|\big) |\psi| \Big)\big( 1+ \big|\log |U-c|\big|\big). 
\end{align*}
Using the H\"older inequality we obtain  
\begin{align*}
& \Big| \int_{-h}^{x_2} \big( \frac {y_-  \psi}{U'} \big)' \log (U- c) dx_2' \Big| \\
\le & C (\mu |\psi'|_{L_{x_2}^r} + |\psi|_{L_{x_2}^r} ) \big|\cosh (\mu^{-1} (x_2'+h)) \big( 1+ \big|\log |U(x_2')-c|\big|^2\big) \big|_{L_{x_2'}^{\frac r{r-1}} ([-h, x_2])} \\
\le & C \mu^{1-\frac 1r -\ep}  (|\psi|_{L_{x_2}^{r}} +  \mu |\psi'|_{L_{x_2}^{r}})\cosh \mu^{-1} (x_2+h).  
\end{align*}
From the initial condition \eqref{E:y-pm} (in particular $y_+(0) = O(\mu^2 |c-U(0)|^2)$) and \eqref{E:Wronskian}, the remaining boundary term can be estimated as 
\[
\Big| \frac {y_-(x_2)}{y_+ (-h)}\big(\frac {y_+  \psi}{U'} \log (U- c)\big)(0)\Big| \le \frac {C\mu}{|\BF(k, c)|}  |\psi(0)| \sinh \mu^{-1}(x_2+h).  
\]
The desired estimate on $y_{nh}$ follows from \eqref{E:F_0}, \eqref{E:Wronskian}, Lemma \ref{L:y-pm}, the above inequalities, and the standard Sobolev inequality
\be \label{E:y_nh-temp-1.5}
|\psi|_{L_{x_2}^\infty} \le C (\mu^{1-\frac 1r} |\psi'|_{L_{x_2}^r} + \mu^{-\frac 1r} |\psi|_{L_{x_2}^r}). 
\ee 
The estimate of $y_{nh}'$ can be obtained much as in the above. Integrating by parts and using \eqref{E:Wronskian} to handle the boundary terms at $x_2$, we have  
\be \label{E:y_nh-temp-2} \begin{split}
y_{nh}' (x_2) = &  - \frac {y_+'(x_2)}{y_+ (-h)} \int_{-h}^{x_2} \big( \frac {y_-  \psi}{U'} \big)' \log (U- c) dx_2' - \frac {y_-' (x_2)}{y_+ (-h)} \int_{x_2}^0 \big( \frac {y_+  \psi}{U'} \big)' \log (U- c) dx_2' \\
&- \big(\frac {  \psi}{U'} \log (U- c)\big) (x_2) + \frac {y_-'(x_2)}{y_+ (-h)}\big(\frac {y_+  \psi}{U'} \log (U- c)\big)(0).
\end{split} \ee
The desired estimate on $y_{nh}'$ follows from \eqref{E:y_nh-1}, \eqref{E:y_nh-temp-1.5}, the above estimate on the integrals, and Lemma \ref{L:y-pm}. 

To consider the convergence of $y_{nh}$, we first note that, for $c_I>0$, the imaginary part of $\log(U(x_2) -c)$ belongs to $(-\pi, 0)$ and as $c_I \to 0+$, 
\be \label{E:y_nh-temp-2.5}
\log(U(x_2) -c) \to \log |U(x_2) -c_R| + \tfrac {i\pi}2 \big(sgn(U(x_2) -c_R)-1\big) \; \text{ in } \; L_{c_R}^\infty L_{x_2}^q, \;\; \forall q\in [1, \infty).
\ee
Using expression \eqref{E:y_nh-temp-1}, the estimates thereafter, bounds on $y_\pm$ in Lemmas \ref{L:y-pm}, and the convergence of $y_\pm$ to $y_{0\pm}$ as $c_I \to 0$ in Lemma \ref{L:y0}, it is straight forward to obtain 
\begin{align*}
y_{nh} (x_2) \to & - \frac {y_{0+} (x_2)}{y_{0+} (-h)} \int_{-h}^{x_2} \big( \frac {y_{0-}  \psi_0}{U'} \big)' \log |U - c_R| dx_2' - \frac {y_{0-}(x_2)}{y_{0+} (-h)} \int_{x_2}^0 \big( \frac {y_{0+}  \psi_0}{U'} \big)' \log |U- c_R| dx_2'\\
& + \frac {i\pi\psi_0(x_2^c)} {U'( x_2^c)}\Big( \frac {y_{0+} (x_2) y_{0-} (x_2^c)}{y_{0+}( -h)} \chi_{\{U(x_2) > c_R> U(-h)\}} + \frac {y_{0-} (x_2) y_{0+} (x_2^c) }{y_{0+}( -h)} \chi_{\{U(0)> c_R >U(x_2) \}}    \Big)\\
&
+\frac {y_{0-}(x_2)}{y_{0+} (-h)}\big(\frac {y_{0+}  \psi_0}{U'} \log |U- c_R|\big)(0),
\end{align*}
in $L_{c_R}^{q_1} L_{x_2}^\infty$ for any $q_1 \in [1, r_1)$, where,  for $c_R > U(0)$, two other terms involving $sgn(U-c_R)$ (one from upper limit term from the second integral and the other from the boundary term in \eqref{E:y_nh-temp-1}) cancelled each other. Here the loss of the integrability in $c_R$ in the convergence is due to the last logarithmic term. Since $(\log |U-c_R|)'= P.V. \tfrac {U'}{U-c_R}$ in the distribution sense, the above limit is equal to $y_{nh0}$ after integration by parts. The convergence of $y_{nh}'$ is obtained using  \eqref{E:y_nh-temp-2} along with \eqref{E:Wronskian} in a similar fashion   
\begin{align*}
y_{nh}' (x_2&)  \to  - \frac {y_{0+}' (x_2)}{y_{0+} (-h)} \int_{-h}^{x_2} \big( \frac {y_{0-}  \psi_0}{U'} \big)' \log |U - c_R| dx_2' - \frac {y_{0-}'(x_2)}{y_{0+} (-h)} \int_{x_2}^0 \big( \frac {y_{0+}  \psi_0}{U'} \big)' \log |U- c_R| dx_2'\\
& + i\pi\Big( \frac {y_{0+}' (x_2) y_{0-} (x_2^c)\psi_0(x_2^c)}{y_{0+}( -h)U'( x_2^c)} \chi_{\{U(x_2) > c_R> U(-h)\}} -  \frac {  \psi_0 (x_2)}{U'(x_2)} \chi_{\{c_R >U(x_2) \}} \\
&+ \frac {y_{0-}' (x_2) y_{0+} (x_2^c)\psi_0(x_2^c) }{y_{0+}( -h) U'(x_2^c)}  \chi_{\{U(0)> c_R >U(x_2) \}}   \Big)-\big(\big( \frac {  \psi_0}{U'} \log |U- c_R|\big) (x_2) - i\pi \frac {  \psi_0 (x_2)}{U'(x_2)} \chi_{\{c_R > U(x_2) \}}\big)\\
&+ \frac {y_{0-}'(x_2)}{y_{0+} (-h)}\big(\frac {y_{0+}  \psi_0}{U'} \log |U- c_R|\big)(0) 
\end{align*}
where again two other terms involving $sgn(U-c_R)$ cancelled each other for $c_R > U(0)$. Here the convergence in the slightly weaker norm $L_{c_R}^{q_1} L_{x_2}^{q_2}$, for any $q_1\in [1, r_1)$ and $q_2 \in [1, \infty)$ is due to the logarithmic singularity both explicitly  outside the integrals and in $y_\pm'$ (see also Lemma \ref{L:y0}). The limit can be simplified to
\begin{align*}
& - \frac {y_{0+}' (x_2)}{y_{0+} (-h)} \int_{-h}^{x_2} \big( \frac {y_{0-}  \psi_0}{U'} \big)' \log |U - c_R| dx_2' - \frac {y_{0-}'(x_2)}{y_{0+} (-h)} \int_{x_2}^0 \big( \frac {y_{0+}  \psi_0}{U'} \big)' \log |U- c_R| dx_2'\\
& + \frac {i\pi\psi_0(x_2^c)} {U'( x_2^c)}\Big( \frac {y_{0+}' (x_2) y_{0-} (x_2^c)}{y_{0+}( -h)} \chi_{\{U(x_2) > c_R> U(-h)\}} + \frac {y_{0-}' (x_2) y_{0+} (x_2^c) }{y_{0+}( -h)} \chi_{\{U(0)> c_R >U(x_2) \}}  \Big)\\
&-\big( \frac {  \psi_0}{U'} \log |U- c_R|\big) (x_2) + \frac {y_{0-}'(x_2)}{y_{0+} (-h)}\big(\frac {y_{0+}  \psi_0}{U'} \log |U- c_R|\big)(0),
\end{align*}
which is equal to $y_{nh0}'$ after an integration by parts.  

At the end point $x_2=-h, 0$, $y_{nh}(\tilde x_2)$ and $y_{nh}'(\tilde x_2)$ have only one integrals 
and, unlike for general $x_2\in (-h, 0)$, the terms $y_+(0)$, $y_+'(0)$ and $y_-'(-h)$ outside the integrals are prescribed in \eqref{E:y-pm} without any singularity. Hence the same above argument yields slightly better estimates and convergence. One may make the following  computations using \eqref{E:y_nh-0} and \eqref{E:y_nh-1}, 
\[
y_{nh}' (0) = \frac {y_+'(0)}{y_+ (-h)} \int_{-h}^{0} \frac {y_-  \psi}{U-c}  dx_2'  = - \frac {y_+'(0)}{y_+ (-h)} \int_{-h}^{0} \big( \frac {y_-  \psi}{U'} \big)' \log (U- c) dx_2' + \frac {(y_+' y_-\psi)(0)}{U'(0)y_+ (-h)}  \log (U(0)- c),
\]
\begin{align*}
y_{nh}' (-h) = \frac {1}{y_+ (-h)} \int_{-h}^{0} \frac {y_+ \psi}{U-c}  dx_2'  = & - \frac 1{y_+ (-h)} \int_{-h}^{0} \big( \frac {y_+  \psi}{U'} \big)' \log (U- c) dx_2' \\
& + \frac 1{y_+ (-h)} \Big(\frac {y_+\psi}{U'}  \log (U- c)\Big)\Big|_{-h}^0.
\end{align*}
The desired inequalities follow from \eqref{E:y-pm} and the above estimates, 
which  completes the proof of the lemma. 
\end{proof}

Assuming $\psi \in L_{c_R}^2 H_{x_2}^1$, 
in the following we estimate $y_{nh}$ and $y_{nh}'$ as well as their derivatives in $x_2$ in $L_{c_R, x_2}^2$, in particular their dependence on $k$, by an energy estimate approach.  

\begin{lemma} \label{L:y_nh0-2-esti}
Assume \eqref{E:F_0}. For any $\ep \in (0, 1)$, there exists $C>0$ depending only on $\ep$, $F_0$, $\rho_0$, $|U'|_{C^2}$, and $|(U')^{-1}|_{C^0}$, such that for any $c_I\ge 0$ and $k \in \R$, it holds 
\be \label{E:y_nh0-esti-2}
|y_{nh}'|_{L_{c_R, x_2}^2}^2 + \mu^{-2} |y_{nh}|_{L_{c_R, x_2}^2}^2 \le C (|\psi|_{L_{c_R, x_2}^2}^2 + \mu^{2-\ep} |\psi'|_{L_{c_R, x_2}^2}^2 ),  
\ee 
where the norms are taken for $c_R \in \CI$ and $x_2 \in [-h, 0]$. 
\end{lemma}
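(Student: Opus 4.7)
The plan is to prove the $L^2_{c_R, x_2}$ estimate by combining the integration-by-parts representation of $y_{nh}$ from the proof of Lemma \ref{L:y_nh-esti-2} with a Hilbert-transform-flavored bound in the $c_R$ variable. Concretely, I would use the formulas \eqref{E:y_nh-temp-1} and \eqref{E:y_nh-temp-2}, which rewrite $y_{nh}$ and $y_{nh}'$ as $\log(U-c)$-integrals after integrating the original $1/(U-c)$ kernel by parts in $x_2'$, and then exploit the following key inequality: for any $g(c_R, x_2') \in L^2_{c_R, x_2'}$ and any $c_I \ge 0$,
\[
\int_\CI \Big|\int_{-h}^0 g(c_R, x_2') \log(U(x_2')-c_R-ic_I)\, dx_2' \Big|^2 dc_R \le C |g|_{L^2_{c_R, x_2'}}^2,
\]
which follows from Cauchy-Schwarz in $x_2'$ together with the uniform bound $\sup_{c_R, c_I} \int_{-h}^0 |\log(U(x_2') - c_R - ic_I)|^2 dx_2' \le C$ (integrable logarithmic singularity after change of variable $u = U(x_2')$). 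Analogous bounds at the single points $x_2' \in \{-h,0\}$ yield $\int_\CI |\log(U(0)-c)|^2 dc_R \le C$, used for the boundary term.

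The next step is to apply this bound with $g = (y_-\psi/U')'$ and $g = (y_+\psi/U')'$. Expanding the derivative and invoking Lemma \ref{L:y-pm} yields
\[
|g(c_R, x_2')|^2 \le C\cosh^2(\mu^{-1}(x_2'+h))\Big[(1 + \mu^2 |\log|U(x_2')-c||^2)|\psi|^2 + \mu^2 |\psi'|^2\Big],
\]
where the logarithmic correction comes from the $\CI_2$-asymptotics of $y_-'$. Multiplying by the outside factor $|y_+(x_2)/y_+(-h)|^2$ (or its $y_-$ analogue) and integrating in $x_2$, the crucial $\mu$-saving comes from the elementary identity $\cosh(\mu^{-1}x_2)/\sinh(\mu^{-1}h) \lesssim e^{-\mu^{-1}(x_2+h)}$ for $x_2 \in [-h,0]$, which, paired with $\cosh(\mu^{-1}(x_2'+h)) \sim e^{\mu^{-1}(x_2'+h)}$ from inside the integral, produces an effective convolution kernel $e^{-\mu^{-1}|x_2-x_2'|}$ whose $L^1_{x_2}$-norm is $O(\mu)$. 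After swapping the order of integration via Fubini, this is the mechanism behind the $\mu$-gain that ultimately produces the clean right-hand side; the additional power of $\mu$ needed to produce $\mu^{-2}|y_{nh}|^2$ (as opposed to $|y_{nh}'|^2$) on the left comes from the fact that $|y_\pm(x_2)|$ (without derivative) carries an extra factor of $\mu$ relative to $|y_\pm'(x_2)|$ in the bounds of Lemma \ref{L:y-pm}, so the non-primed representation of $y_{nh}$ gives two extra powers of $\mu$.

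The logarithmic correction $\mu^2(\log|U-c|)^2$ is tamed by $\mu^2(\log|U-c|)^2 \le C_\epsilon \mu^{2-\epsilon} |U-c|^{-\epsilon}$ together with the observation that $\int_\CI |U(x_2')-c_R-ic_I|^{-\epsilon} dc_R \le C_\epsilon$ for $\epsilon < 1$ (uniformly in $c_I \ge 0$), giving the $\mu^{2-\epsilon}|\psi'|^2_{L^2}$ term in the bound. Finally, the boundary term $\frac{y_-(x_2)}{y_+(-h)}(y_+\psi/U')(0) \log(U(0)-c)$ in \eqref{E:y_nh-temp-1} is controlled by the Sobolev trace $|\psi(\cdot,0)|^2 \le C(\mu|\psi'|^2_{L^2_{x_2}} + \mu^{-1}|\psi|^2_{L^2_{x_2}})$ combined with $\int_\CI|\log(U(0)-c_R-ic_I)|^2 dc_R \le C$ and $\int_{-h}^0 |y_-(x_2)/y_+(-h)|^2 dx_2 \lesssim \mu$. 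The $c_I \to 0+$ limit is obtained by taking the bounds uniformly in $c_I > 0$ and appealing to the convergence in Lemma \ref{L:y_nh-esti-2}(2).

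The main obstacle will be the delicate $\mu$-accounting: ensuring that the exponential-decay identity $|y_+(x_2)/y_+(-h)| \cdot \cosh(\mu^{-1}(x_2'+h)) \lesssim e^{-\mu^{-1}|x_2-x_2'|}$ is correctly exploited via Fubini so that the $x_2$-integration produces precisely the $O(\mu)$ convolution factor (and not a worse loss), and balancing this against the $\cosh^2(\mu^{-1}(x_2'+h))$ growth inside $|g|^2$. The logarithmic term on the critical layer $\CI_2$ must also be handled so that it contributes the stated $\mu^{2-\epsilon}$ rather than any negative power of $\mu$; this requires keeping the weight $|U-c|^{-\epsilon}$ paired against $L^2_{c_R}$ norms of $\psi$ (rather than $L^\infty$), which is available since we integrate in $c_R$ over the compact interval $\CI$ where $\int|U-c|^{-\epsilon} dc_R$ is finite.
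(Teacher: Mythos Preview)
Your approach has a genuine $\mu$-accounting gap. After applying your key Cauchy--Schwarz inequality $\big|\int_{-h}^{x_2} g\log(U-c)\,dx_2'\big|^2 \le C\int_{-h}^{x_2}|g|^2\,dx_2'$, multiplying by $|y_+(x_2)/y_+(-h)|^2 \lesssim e^{-2\mu^{-1}(x_2+h)}$, and swapping $x_2,x_2'$ via Fubini, the inner integral $\int_{x_2'}^0 e^{-2\mu^{-1}(x_2+h)}\,dx_2 \le \tfrac{\mu}{2}e^{-2\mu^{-1}(x_2'+h)}$ exactly cancels the $e^{2\mu^{-1}(x_2'+h)}$ in your bound on $|g|^2$, leaving
\[
\mu^{-2}\|y_{nh}\|_{L^2_{c_R,x_2}}^2 \;\le\; C\mu^{-2}\cdot \tfrac{\mu}{2}\int_\CI\int_{-h}^0\big[(1+\mu^2\log^2)|\psi|^2+\mu^2|\psi'|^2\big]\,dx_2'\,dc_R \;\sim\; C\mu^{-1}|\psi|_{L^2_{c_R,x_2}}^2,
\]
which is off by $\mu^{-1}$ from the target. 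The loss occurs in the Cauchy--Schwarz step itself: when $g$ carries the exponential weight $\cosh(\mu^{-1}(x_2'+h))$ concentrated near $x_2'=x_2$, the inequality $|\int g\log|^2\le C\int|g|^2$ treats the exponential as spread over the whole interval and misses an $O(\mu)$ gain. You could partially fix this by applying Young's inequality directly to the kernel $e^{-\mu^{-1}|x_2-x_2'|}$ acting on $|\tilde g||\log|$ (with $\tilde g=g/\cosh$), which gives the correct power $\mu^{-2}\|y_{nh}\|^2\le C\|\tilde g\log\|_{L^2_{c_R,x_2}}^2$. But then a second obstruction appears: the leading term is $\int_\CI\int_{-h}^0|\psi(c_R,x_2')|^2\log^2|U(x_2')-c_R|\,dx_2'\,dc_R$, and since multiplication by $\log|U(x_2')-\cdot|$ is unbounded on $L^2_{c_R}$, this is \emph{not} controlled by $|\psi|_{L^2_{c_R,x_2}}^2$ when $\psi$ depends on $c_R$. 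The lemma is applied later (e.g.\ to $\psi_1=(\tfrac{U''(U'-U'(x_2^c))}{U-c}-U''')y_B$ in the proof of Proposition~\ref{P:pcy_B}) with genuinely $c$-dependent $\psi$, so this case cannot be ignored.

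The paper avoids both issues by an energy argument instead of the representation formula: multiply the Rayleigh equation by $\bar y_{nh}$ and integrate over $(c_R,x_2)$ to get $\int\!\!\int(|y'|^2+k^2|y|^2)=\text{(bdry)}+\int\!\!\int\frac{\psi\bar y-U''|y|^2}{U-c}$. The singular right side is handled by subtracting the value at $x_2^c$, bounding the difference via $|\psi\bar y|_{C^\alpha_{x_2}}$ and $||y|^2|_{C^\alpha_{x_2}}$, and using the interpolation $|f|_{C^\alpha}\le C|f|_{L^2}^{1/2-\alpha}|f'|_{L^2}^{1/2+\alpha}$; the resulting fractional powers of $|y|_{L^2},|y'|_{L^2}$ are then absorbed into the left side by Young's inequality for products (the $k^{-2(1-2\alpha)}$ in front of $|\psi'|^2$ produces the $\mu^{2-\ep}$). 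This closes without any logarithmic weight meeting $\psi$ in the $c_R$-variable.
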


\begin{proof}
We first assume $c_I>0$ and drop the subscript $\cdot_{nh}$ for notation simplification. Multiplying the Rayleigh equation \eqref{E:Ray-NH-2} by $\bar y$ and integrating in both $c_R$ and $x_2$, we have 
\begin{align*}
&\quad  \int_\CI \int_{-h}^0 |y'|^2 + k^2 |y|^2 dx_2 dc_R =  \int_\CI y' \bar y dc_R\Big|_{x_2=0} +  \int_\CI \int_{-h}^0 \frac {\psi \bar y - U'' |y|^2}{U-c} dx_2 dc_R \\
&=\int_\CI y' \bar y dc_R\Big|_{x_2=0} + \int_\CI \int_{-h}^0 \frac {U'}{U-c} \Big( \big(\frac {\psi \bar y - U'' |y|^2}{U'}\big) \big(c, x_2) - \big(\frac {\psi \bar y - U'' |y|^2}{U'}\big) \big(c, x_2^c) \Big)  dx_2 dc_R \\
&\;+ \left(\int_{U(-h -\frac 12 \rho_0)}^{U(-\frac 12h)} +\int_{U(-\frac 12h)}^{U(\frac 12 \rho_0)} \right) \big(\frac {\psi \bar y - U'' |y|^2}{U'}\big) \big(c, x_2^c) \big(\log (U(0) - c) - \log (U(-h)-c)  \big) dc_R \triangleq \sum_{j=1}^4 A_j.
\end{align*}
The first term $A_1$ of boundary contribution can be estimated by Lemma \ref{L:y_nh-esti-2}(2b) and 
\eqref{E:y(0)} with $\zeta_\pm=0$, as well as \eqref{E:h-0},  \eqref{E:F_0} and \eqref{E:y_nh-temp-1.5}, 
\[
|A_1|\le \Big| \int_\CI y' \bar y dc_R\Big|_{x_2=0} \Big| \le C\mu^2 \Big| \int_\CI |U(0)- c|^2 |y' (0)|^2 dc_R \Big| \le C \mu^{1 -\ep} (\mu |\psi'|_{L_{c_R, x_2}^2} +|\psi|_{L_{c_R, x_2}^2} )^2. 
\]
Concerning the last integral $A_4$, we first split it as 
\[
|A_4| \le  \int_{U(-\frac 12h)}^{U(\frac 12 \rho_0)} \Big( \big|(\psi \bar y - U'' |y|^2)(c, \cdot) \big|_{C_{x_2}^\alpha} |x_2^c|^{\alpha} + \big|(\psi \bar y - U'' |y|^2)(c, 0) \big|\Big)  \big(1+ \big| \log |U(0) - c|\big|\big) dc_R. 
\] 
The above terms at $x_2=0$ can estimated much as $A_1$ and we obtain 
\begin{align*}
& \int_{U(-\frac 12h)}^{U(\frac 12\rho_0)} \big|(\psi \bar y - U'' |y|^2)(c, 0) \big|  \big(1+ \big| \log |U(0) - c|\big|\big) dc_R \\ 
\le & C\int_{U(-\frac 12h)}^{U(\frac 12 \rho_0)} (\mu^2 |\psi|^2 + \mu^2 |y'|^2 )(c, 0)  |U(0) - c|  dc_R \le C\mu^{1 -\ep} (\mu |\psi'|_{L_{c_R, x_2}^2} +|\psi|_{L_{c_R, x_2}^2} )^2.    
\end{align*}
We shall estimate all the remaining terms  using the H\"older norms of  $\psi \bar y$ and $|y|^2$. For any $H^1$ function $f(x)$ on an interval, 
it holds  
\be \label{E:y_nh-temp-3} 
|f|_{C^\alpha} \le C |f|_{L^2}^{\frac 12 - \alpha} |f|_{H^1}^{\frac 12 +\alpha}, \quad \alpha \in [0, \tfrac 12], 
\ee
which applies to $\psi \bar y$ and $|y|^2$. In the $f|_{H^1}$ can be replaced by $|f'|_{L^2}$ if $f$ vanishes somewhere in the interval. 
Hence for each fixed $c$ with $c_I>0$ and $c_R \in \CI$,   
\[
\big| |y|^2\big|_{C_{x_2}^\alpha} \le C |y|_{C_{x_2}^\alpha}|\bar y|_{C_{x_2}^0}  \le C |y|_{L_{x_2}^2}^{1 -\alpha} |y'|_{L_{x_2}^2}^{1 + \alpha},
\]
\[ 
|\psi \bar y|_{C_{x_2}^\alpha} 
\le  C\big( |\psi|_{L_{x_2}^2}^{\frac 12 -\alpha} |\psi|_{H_{x_2}^1}^{\frac 12 + \alpha} |y|_{L_{x_2}^2}^{\frac 12} |y'|_{L_{x_2}^2}^{\frac 12} +|\psi|_{L_{x_2}^2}^{\frac 12} |\psi|_{H_{x_2}^1}^{\frac 12} |y|_{L_{x_2}^2}^{\frac 12 -\alpha} |y'|_{L_{x_2}^2}^{\frac 12 + \alpha}\big).   
\]
For any $\alpha \in (0, \tfrac 12]$ and $k>0$, using $y(c, -h)= 0$ and the above estimates, 
we obtain   
\begin{align*}
& |y'|_{L_{c_R, x_2}^2}^2 + k^2 |y|_{L_{c_R, x_2}^2}^2 \le C \int_\CI \int_{-h}^0  \big|(\psi \bar y - U'' |y|^2)(c, \cdot) \big|_{C_{x_2}^\alpha} |x_2 -x_2^c|^{\alpha -1} dx_2 dc_R \\
&\qquad \qquad  + C \int_{U(-h-\frac 12\rho_0)}^{U(-\frac 12h)}  \big|(\psi \bar y - U'' |y|^2)(c, \cdot) \big|_{C_{x_2}^\alpha} |x_2^c+h|^{\alpha} \big(1+ \big|\log |U(-h)-c|\big|\big) dc_R \\
& \qquad \qquad  + C \int_{U(-\frac 12h)}^{U(\frac 12 \rho_0)}  \big|(\psi \bar y - U'' |y|^2)(c, \cdot) \big|_{C_{x_2}^\alpha} |x_2^c|^{\alpha} \big(1+ \big|\log |U(0)-c|\big|\big) dc_R \\
& \qquad \qquad + C\mu^{1 -\ep} (\mu |\psi'|_{L_{c_R, x_2}^2} +|\psi|_{L_{c_R, x_2}^2} )^2\\  
\le & C  
\int_\CI  \big(|\psi|_{L_{x_2}^2}^{\frac 12 -\alpha} |\psi|_{H_{x_2}^1}^{\frac 12 + \alpha} |y|_{L_{x_2}^2}^{\frac 12} |y'|_{L_{x_2}^2}^{\frac 12} +|\psi|_{L_{x_2}^2}^{\frac 12} |\psi|_{H_{x_2}^1}^{\frac 12} |y|_{L_{x_2}^2}^{\frac 12 -\alpha} |y' |_{L_{x_2}^2}^{\frac 12 + \alpha}  + |y|_{L_{x_2}^2}^{1 -\alpha} |y' |_{L_{x_2}^2}^{1 + \alpha}\big)\big|_{c} dc_R \\
& + C\mu^{1 -\ep} (\mu |\psi'|_{L_{c_R, x_2}^2} +|\psi|_{L_{c_R, x_2}^2} )^2\\
\le & C\big(|\psi|_{L_{c_R, x_2}^2}^{\frac 12 -\alpha} |\psi|_{L_{c_R}^2 H_{x_2}^1}^{\frac 12 + \alpha} |y|_{L_{c_R, x_2}^2}^{\frac 12} |y'|_{L_{c_R, x_2}^2}^{\frac 12} +|\psi|_{L_{c_R, x_2}^2}^{\frac 12} |\psi|_{L_{c_R}^2 H_{x_2}^1}^{\frac 12} |y|_{L_{c_R, x_2}^2}^{\frac 12 -\alpha} |y' |_{L_{c_R, x_2}^2}^{\frac 12 + \alpha}  + |y|_{L_{c_R, x_2}^2}^{1 -\alpha} |y' |_{L_{c_R, x_2}^2}^{1 + \alpha}\big) \\
& + C\mu^{1 -\ep} (\mu |\psi'|_{L_{c_R, x_2}^2} +|\psi|_{L_{c_R, x_2}^2} )^2\\
\le & \tfrac 12 |y'|_{L_{c_R, x_2}^2}^2 + (C + \tfrac 12 k^2) |y|_{L_{c_R, x_2}^2}^2 + C\big(|\psi|_{L_{c_R, x_2}^2}^2 + k^{-2(1-2\alpha) } |\psi'|_{L_{c_R, x_2}^2}^2  \big). 
\end{align*} 
By choosing $\alpha =\ep/2$, we have that, there exists $k_0>0$ such that for any $|k|\ge k_0$ and $c_I>0$, $y(\cdot +ic_I, \cdot)$ satisfies \eqref{E:y_nh0-esti-2}. To obtain the estimates for $y_{nh0}$ and $y_{nh0}'$ in the limiting case $c_I=0+$, for  $c_I>0$, let $y (c, x_2)$ and $y'(c, x_2)$ be defined by \eqref{E:y_nh-0} and \eqref{E:y_nh-1},  
which satisfy the desired estimates uniform in $c_I>0$. For $|k|\le k_0$ and $c_I>0$, the desired estimates simply follows from the estimates and convergence obtained in Lemma \ref{L:y_nh-esti-2}. 

Finally we consider the case $c_I=0$. Given $\psi(c_R, x_2) \in L_{c_R}^2 H_{x_2}^1$, let $y_{nh} (k, c_R+ic_I, x_2)$ be given by \eqref{E:y_nh-0} with $c= c_R + ic_I$ with $1\gg c_I>0$, which solves \eqref{E:Ray-NH-2}. 
From Lemma \ref{L:y_nh-esti-2}, it holds that $y(\cdot+ i c_I, \cdot) \to y_{nh0}$ 
and $y' (\cdot+ i c_I, \cdot) \to y_{nh0}'$ in $L_{c_R}^{\frac 32} L_{x_2}^2$ 
as $c_I \to 0+$. Therefore $y_{nh0}$ and $y_{nh0}'$ are also the weak limit of $y$ and $y'$ in $L_{c_R, x_2}^2$ as $c_I\to 0+$ and thus also satisfy \eqref{E:y_nh0-esti-2}. 
\end{proof}


{\bf $\bullet$ Case 2:}  
\be \label{E:NH-term-3}
\psi (c, x_2) =f (c, x_2) \psi_0 (x_2),  \quad f(\cdot + ic_I, \cdot) \in  L_{c_R}^{r_1} C_{x_2}^{\alpha}, \;\psi_0 \in L^{r}, \; r>1, \; r_1 \in [\tfrac r{r-1}, \infty], \; \alpha >0. 
\ee
Again we start with rough estimates on $y_{nh}$ and $y_{nh}'$. 

\begin{lemma} \label{L:y_nh-esti-3}
Assume \eqref{E:F_0} and \eqref{E:NH-term-3}. For any $q\in [1, \tfrac {rr_1}{r+r_1})$, the following hold for $x_2 \in [-h, 0]$ and $c_R \in \CI$. 
\begin{enumerate} 
\item There exists $C>0$ depending only on $r$, $r_1$, $q$, $\alpha$, $F_0$, $\rho_0$, $|U'|_{C^2}$, and $|(U')^{-1}|_{C^0}$, such that for any $k \in \R$ and $c_I \in (0, \rho_0]$, it holds
\[
|y_{nh}' (k, \cdot +ic_I, \cdot)|_{L_{x_2}^\infty L_{c_R}^q}  + \mu^{-1} |y_{nh} (k, \cdot +ic_I, \cdot)|_{L_{x_2}^\infty L_{c_R}^{\frac {rr_1}{r+r_1}} }  \le C\mu^{-\alpha} |f(\cdot +ic_I, \cdot)|_{L_{c_R}^{r_1} C_{x_2}^{\alpha}} |\psi|_{L^{r}}.  
\]
\item Assume $f(\cdot + ic_I, \cdot) \to f_0 (\cdot, \cdot)$ in $L_{c_R}^{r_1} C_{x_2}^{\alpha}$ as $c_I \to 0+$, then 
\begin{enumerate} 
\item $y_{nh} \to y_{nh0}$ in $L_{x_2}^\infty L_{c_R}^{\frac {rr_1}{r+r_1} } $ and $y_{nh}' \to y_{nh0}'$ in $L_{x_2}^{\infty} L_{c_R}^q$,
where $y_{nh0}$ and $y_{nh0}'$ are given by \eqref{E:y_nh0-2-0} and \eqref{E:y_nh0-2-1} with $\psi_0$ replaced by $f_0 \psi_0$;
\item at $\tilde x_2=-h, 0$, $y_{nh}' (k, \cdot +ic_I, \tilde x_2) \to y_{nh0}' (k, \cdot, \tilde x_2)$ in $L_{c_R}^{\frac {rr_1}{r+r_1} } $. Moreover, and for any $\ep \in (0, \frac 1r)$ with $\ep\le \alpha$, for any $k\in \R$, $c_I \ge 0$, it holds 
\[
|y_{nh}' (\cdot + ic_I, \tilde  x_2)|_{L_{c_R}^{\frac {rr_1}{r+r_1}}} \le C  \mu^{-\ep} |f|_{L_{c_R}^{r_1} C_{x_2}^\alpha} |\psi|_{L^r},
\]
where $C$ also depends on $\ep>0$. 
\end{enumerate} 
\end{enumerate} 
\end{lemma}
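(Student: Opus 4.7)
The plan is to proceed from the variation--of--parameters formulas \eqref{E:y_nh-0}--\eqref{E:y_nh-1} exactly as in the proof of Lemma \ref{L:y_nh-esti-2}, but since now $\psi_{0}$ enjoys only $L^{r}$--regularity in $x_{2}$, the $x_{2}$--integration--by--parts device used there is unavailable. Instead, the H\"older regularity of $f$ in $x_{2}$ will be used to tame the singularity of $(U-c)^{-1}$ at $x_{2}'=x_{2}^{c}$. Concretely, for the typical ingredient
\[
J(k,c,x_{2})=\int_{-h}^{x_{2}}\frac{y_{-}(k,c,x_{2}')\,f(c,x_{2}')\,\psi_{0}(x_{2}')}{U(x_{2}')-c}\,dx_{2}',
\]
I would split $f(c,x_{2}')=\bigl(f(c,x_{2}')-f(c,x_{2}^{c})\bigr)+f(c,x_{2}^{c})$, writing $J=J_{1}+f(c,x_{2}^{c})J_{2}$, and likewise for the second integral in \eqref{E:y_nh-0}--\eqref{E:y_nh-1}. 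In the ``regular'' piece $J_{1}$, the H\"older bound $|f(c,\cdot)|_{C_{x_{2}}^{\alpha}}$ converts $(U-c)^{-1}$ into the integrable kernel $|x_{2}'-x_{2}^{c}|^{\alpha-1}$ (uniformly in $c_{I}\ge 0$), which, combined with the pointwise bounds on $y_{\pm},y_{\pm}'$ from Lemma \ref{L:y-pm} and H\"older's inequality with exponents $(r,r/(r-1))$ in $x_{2}$, yields an estimate of $J_{1}$ by $C\mu^{-\alpha}|f|_{C_{x_{2}}^{\alpha}}|\psi_{0}|_{L^{r}}\cdot\cosh(\mu^{-1}(x_{2}+h))$, where the $\mu^{-\alpha}$ loss comes from converting the $x_{2}$--scale to the native $\mu$--scale near the singular point.

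For the ``singular'' piece $f(c,x_{2}^{c})J_{2}$, I would repeat the integration by parts against $d\log(U-c)$ used in the proof of Lemma \ref{L:y_nh-esti-2}, but now only on the smooth factor $y_{-}/U'$, and carry the $\psi_{0}$ through untouched. This produces terms of the form $f(c,x_{2}^{c})\cdot(\log(U-c)\cdot\text{smooth})$ plus a boundary contribution; the crucial control in $c_{R}$ is obtained by viewing the map $c_{R}\mapsto f(c,x_{2}^{c})=f\bigl(c,U^{-1}(c_{R})\bigr)$ as a Lipschitz change of variable (thanks to \eqref{E:U-ext}), which transfers the $L^{r_{1}}_{c_{R}}$ bound of $f$ directly to the desired $L^{r_{1}}_{c_{R}}$ control of this factor. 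Combining via the product H\"older relation $1/q=1/r+1/r_{1}$ yields the claimed $L^{\infty}_{x_{2}}L^{q}_{c_{R}}$ bounds; the strict inequality $q<rr_{1}/(r+r_{1})$ absorbs a $|\log|U-c||^{\beta}$--type loss in the second piece via a further H\"older $\ep$--room, exactly as in Lemma \ref{L:y_nh-esti-2}. Boundary terms from the integration by parts at $x_{2}=0$ are controlled by \eqref{E:y-pm}, \eqref{E:Wronskian}, and the lower bound $|y_{+}(-h)|\ge C^{-1}F_{0}\mu e^{h/\mu}$ from \eqref{E:F_0}; the factor $y_{+}(0)=O(\mu^{2}|U(0)-c|^{2})$ neutralizes the logarithm at $x_{2}=0$.

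For part (2a), the convergence $y_{nh}\to y_{nh0}$ and $y_{nh}'\to y_{nh0}'$ in the stated mixed norms follows once one verifies (i) the pointwise-in-$(c_{R},x_{2})$ convergence of each piece to the corresponding expression in \eqref{E:y_nh0-2-0}--\eqref{E:y_nh0-2-1}, and (ii) a uniform-in-$c_{I}$ dominating bound to invoke dominated convergence. For (i), the regular piece $J_{1}$ converges by Lemma \ref{L:y0} and the dominated convergence theorem with dominant $|x_{2}'-x_{2}^{c}|^{\alpha-1}\,|\psi_{0}(x_{2}')|$; for the singular piece, the limit \eqref{E:y_nh-temp-2.5} (the distributional identity $1/(U-c)\to P.V.\,1/(U-c_{R})-i\pi\delta(U-c_{R})/U'$ in the appropriate $L^{p}$ sense) produces precisely the delta-mass contribution appearing in \eqref{E:y_nh0-2-0}--\eqref{E:y_nh0-2-1}, with $\psi_{0}$ there replaced by $f_{0}\psi_{0}$. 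The uniform bound (ii) is supplied by part (1). The loss in the integrability index (so that one lands in $L^{rr_{1}/(r+r_{1})}$ rather than a larger exponent) accommodates the logarithmic factor in the pointwise convergence \eqref{E:y_nh-temp-2.5}.

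Finally, for part (2b), the endpoint estimates at $\tilde x_{2}\in\{-h,0\}$ are easier because only one of the two integrals in \eqref{E:y_nh-0}--\eqref{E:y_nh-1} survives and the prefactors $y_{\pm}(\tilde x_{2})$, $y_{\pm}'(\tilde x_{2})$ are explicit via \eqref{E:y-pm}; the same splitting $f=\bigl(f(\cdot)-f(x_{2}^{c})\bigr)+f(x_{2}^{c})$ yields a gain of $\mu^{-\ep}$ in place of $\mu^{-\alpha}$, with the boundary value of the ``log-term'' generating the extra factor $\bigl(1+|\log|U(\tilde x_{2})-c||\bigr)|\psi(\tilde x_{2})|$ that we have already allowed in Lemma \ref{L:y_nh-esti-2}(2b). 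I expect the principal technical obstacle to be the second step above: verifying, with $\psi_{0}$ only $L^{r}$, that the singular integral $J_{2}$ gives rise to an $L^{q}_{c_{R}}$--bounded operator with the sharp exponent $q<rr_{1}/(r+r_{1})$, since there is no classical Calder\'on--Zygmund kernel structure and one must handle the intertwined $c$--and--$x_{2}$ singularities by hand (in particular, the Lipschitz change of variable $c_{R}\leftrightarrow x_{2}^{c}$ must be tracked carefully through every integration by parts).
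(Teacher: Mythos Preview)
Your overall splitting idea---separate a H\"older-regular piece from a singular Hilbert-transform-type piece---is exactly the paper's strategy, but your execution has two concrete gaps.

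\textbf{The singular piece $J_2$.} By freezing only $f$ at $x_2^c$ you leave $y_-(k,c,x_2')$ inside $J_2=\int_{-h}^{x_2}\tfrac{y_-\psi_0}{U-c}\,dx_2'$. With $y_-$ still depending on both $c$ and $x_2'$, this integral is \emph{not} a convolution in $c_R$, and your proposed ``integration by parts against $d\log(U-c)$ only on the smooth factor $y_-/U'$'' cannot be carried out: integration by parts in $x_2'$ necessarily hits the full product $\tfrac{y_-\psi_0}{U'}$ and therefore requires $\psi_0'$, which does not exist since $\psi_0\in L^r$ only. The paper's fix is to freeze the \emph{entire} smooth factor $\tfrac{y_\pm f}{U'}$ at a cut-off point $\tilde x_{2\mp}^c$ (a clipped version of $x_2^c$ that stays in $[-h,0]$). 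The singular piece then becomes $\bigl(\tfrac{y_- f}{U'}\bigr)(\tilde x_{2-}^c)\int_{-h}^{x_2}\tfrac{\psi_0 U'}{U-c}\,dx_2'$, and after the change of variable $\tau=U(x_2')$ the inner integral is exactly the convolution of $\chi_{U([-h,x_2])}\,\psi_0\circ U^{-1}$ with $-(\tau+ic_I)^{-1}$, bounded on $L^r_{c_R}$ uniformly in $c_I>0$ and converging strongly to $-\pi(\mathcal H+iI)$ as $c_I\to 0+$. H\"older with the $L^{r_1}_{c_R}$ prefactor then gives $L^{rr_1/(r+r_1)}_{c_R}$ with no logarithmic loss.

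\textbf{The regular piece $J_1$.} Your H\"older-in-$x_2$ argument with exponents $(r,\tfrac{r}{r-1})$ requires $|x_2'-x_2^c|^{(\alpha-1)r/(r-1)}$ to be locally integrable, i.e.\ $\alpha>1/r$. Since \eqref{E:NH-term-3} allows arbitrarily small $\alpha$, this step fails in general. The paper instead bounds the regular piece by $C\mu^{1-\alpha}|f|_{C^\alpha_{x_2}}\int|\tau-c_R|^\alpha|\tau-c|^{-1}|\psi_0\circ U^{-1}|(\tau)\,d\tau$ and applies the \emph{weak} Young inequality, using that $|\tau|^\alpha/|\tau+ic_I|$ lies in weak-$L^{1/(1-\alpha)}$ with norm uniform in $c_I$; this maps to $L^{r_2}_{c_R}$ with $1/r_2=1/r+1/r_1-\alpha$, which on the bounded interval embeds into $L^{rr_1/(r+r_1)}$.

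A secondary point: your convergence argument via ``pointwise + dominated convergence'' is too coarse for the $L^\infty_{x_2}L^q_{c_R}$ topology. The paper obtains the $x_2$-uniformity for the singular piece from equicontinuity of $x_2\mapsto(\tfrac{1}{\tau+ic_I})\ast\tilde\psi_\pm(x_2,\cdot)$ as maps into $L^r_{c_R}$ (a bounded operator applied to a uniformly continuous family), and for the regular piece by showing $\bigl|\,|\tau|^\alpha/|\tau+ic_I|-|\tau|^{\alpha-1}\,\bigr|$ has $L^s$-norm of order $|c_I|^{1/s-1+\alpha}\to 0$ for $s<1/(1-\alpha)$.
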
 

\begin{proof}
Since the desired estimates are stronger and with weaker assumptions if  $\alpha \in (0, 1)$ is smaller (with possibly greater $C>0$), without loss of generality, we may assume $\alpha < \tfrac 1r$. 
In the following we shall need the  modification $\tilde x_2^c$ determined by $c_R \in \CI$:
\be \label{E:tx2c}
\tilde x_{2-}^c (c, x_2) = \begin{cases} 
\min\{ x_2, x_2^c\},  & \text{if } c_R >U(-h),\\
-h,  & \text{if } c_R \le U(-h), 
\end{cases}, \;\, \tilde x_{2+}^c (c, x_2) = \begin{cases} 
\max\{ x_2, x_2^c\},  & \text{if } c_R <U(0),\\
0,  & \text{if } c_R \ge U(0). 
\end{cases}\ee
For $c_I>0$, we first split $y_{nh}$ into 
\begin{align*}
y_1 (x_2)=& \frac {y_+(x_2)}{y_+(-h)} \big(\frac {y_- f}{ U'}\big)(\tilde x_{2-}^c) 
\int_{-h}^{x_2} \frac {\psi_0 U'}{U-c} dx_2'  + \frac {y_-(x_2)} {y_+(-h)} \big(\frac {y_+f} {U'}\big) (\tilde x_{2+}^c) 
\int_{x_2}^0 \frac {\psi_0 U'}{U-c} dx_2',
\end{align*}
and 
\begin{align*}
y_2(x_2) = & \frac {y_+(x_2) }{y_+(-h)} \int_{-h}^{x_2} \Big( \big( \frac {y_-f}{U'} \big) (x_2') - \big( \frac {y_-f}{U'} \big) (\tilde x_{2-}^c)
\Big) \frac {\psi_0 U'}{U-c} dx_2' \\
& + \frac {y_- (x_2) }{y_+(-h)} \int_{x_2}^0 \Big( \big( \frac {y_+ f}{U'} \big) (x_2') - \big( \frac {y_+f}{U'} \big) (\tilde x_{2+}^c)
\Big) \frac {\psi_0 U'}{U-c} dx_2',
\end{align*}
where we skipped all the dependence on $c$ and $k$. Clearly $y_{nh} = y_1+ y_2$. 

To estimate $y_1$, we can rewrite its integral part as 
\[
\int_{-h}^{x_2} \frac {\psi_0 U'}{U-c} dx_2' = \int_\R \frac {\chi_{U([-h, x_2])} (\psi_0 \circ U^{-1}) }{\tau-c_R - ic_I} d\tau = -\Big( \big(\frac 1{\tau + i c_I}\big) * \tilde \psi_- (x_2, \cdot) \Big)(c_R). 
\]
where 
\[
\tilde \psi_-(x_2, \tau) = \chi_{U([-h, x_2])} (\psi_0 \circ U^{-1})(\tau), \quad \tilde \psi_+(x_2, \tau) = \chi_{U([x_2, 0])} (\psi_0 \circ U^{-1})(\tau).
\]
The operator of convolution by $\tfrac 1{\tau + i c_I}$ is bounded on $L^r$ uniformly in $c_I>0$ and converges to $\pi (\CH + i I)$ strongly in $L^r$ as $c_I \to 0+$, where $\CH$ is the Hilbert transform and $I$ is the identity. The other integral can be treated similarly and we obtain from \eqref{E:F_0} and Lemma \ref{L:y-pm}
\begin{align*}
|y_1|_{L_{x_2}^\infty L_{c_R}^{\frac {rr_1}{r+r_1} }} \le &C\big(\big|\tfrac {y_+ (x_2) y_-(\tilde x_{2-}^c) }{y_+(-h)}\big|_{L_{c_R, x_2}^\infty} 
+ \big|\tfrac {y_- (x_2) y_+(\tilde x_{2+})}{y_+(-h)}\big|_{L_{c_R, x_2}^\infty} 
\big) |f|_{ L_{c_R}^{r_1} L_{x_2}^\infty} |\psi_0|_{L^r}\le C\mu  |f|_{L_{c_R}^{r_1} L_{x_2}^\infty} |\psi_0|_{L^r}.   
\end{align*} 
Moreover, since $x_2 \to \tilde \psi_\pm (x_2, \cdot)$ are two uniformly continuous mapping from $[-h, 0]$ to $L^r (\R)$ and the above convolution $\big(\frac 1{\tau + i c_I}\big) *$ is bounded on $L_{c_R}^r(\R)$ uniformly in $c_I>0$, we have that $\big(\frac 1{\tau + i c_I}\big) * \tilde \psi_\pm (x_2, \cdot)$ are two families (with parameter $c_I$) of equicontinuous functions (of $x_2$) from $[-h, 0]$ to $L_{c_R}^r$. As $c_I\to 0+$, they converge pointwisely (in $x_2$) to $\pi (\CH + i I) \tilde \psi_\pm (x_2, \cdot)\in L_{c_R}^r$ which are also uniformly continuous in $x_2$. The equicontinuity and the compactness of $[-h, 0]$ imply that the convergence is uniform in $x_2$. Therefore, along with the $L_{c_R, x_2}^\infty$ convergence of $y_\pm$ as $c_I\to 0+$ (Lemma \ref{L:y0}), we obtain that, as $c_I \to 0+$,  
\begin{align*}
y_1 (c_R+ ic_I, x_2) \to & \pi\frac {y_{0+}(c_R, x_2)}{ y_{0+} (c_R, -h)}  \big(\frac {y_{0-} f_0}{U'}\big)(c_R, \tilde x_{2-}^c) \big( (\CH + i I) \tilde \psi_- (x_2, \cdot) \big) (c_R) \\
&  + \pi\frac {y_{0-} (c_R, x_2)}{y_{0+} (c_R, -h)} \big(\frac {y_{0+} f_0}{U'}\big) (c_R, \tilde x_{2+}^c) \big( (\CH + i I) \tilde \psi_+ (x_2, \cdot) \big) (c_R) \quad\text{ in } L_{x_2}^\infty L_{c_R}^{\frac {rr_1}{r+r_1} }. 
\end{align*}

The other part $y_2$ can be estimated by the H\"older continuity of $f$ and $y_{\pm}$ in $x_2$ as 
\begin{align*}
|y_2(c, x_2)| \le& C\Big(\Big|\frac {y_+(x_2) }{y_+(-h)}\Big| |y_- f|_{C_{x_2'}^\alpha ([-h, x_2]) }  \int_{-h}^{x_2} \frac {|U- U(\tilde x_{2-}^c)|^\alpha }{|U-c|} |\psi_0|  U'dx_2'  \\
&\quad + \Big|\frac {y_-(x_2) }{y_+(-h)}\Big| |y_+ f|_{C_{x_2'}^\alpha ([x_2, 0])}  \int_{x_2}^0 \frac {|U- U(\tilde x_{2+}^c)|^\alpha }{|U-c|} |\psi_0|  U'dx_2'  \Big) \\
\le & C \mu^{1-\alpha} |f|_{C_{x_2}^\alpha}  \int_\R \frac {|\tau - c_R|^{\alpha}}{|\tau -c|} \big(\chi_{U([-h, 0])} (|\psi_0| \circ U^{-1}) \big) (\tau) d\tau,  
\end{align*}
where we also used 
\[
|y_- f|_{C_{x_2'}^\alpha [-h, x_2]} \le C |y_-|_{C_{x_2'}^\alpha([-h, x_2])} |f|_{C_{x_2}^\alpha} \le C \mu^{1-\alpha} e^{\mu^{-1} (x_2+h)} |f|_{C_{x_2}^\alpha}.
\]
and a similar estimate for $|y_+ f|_{C_{x_2'}^\alpha ([x_2, 0])}$ due to Lemma \ref{L:y-pm}. 
Since $\frac {|\tau|^{\alpha}}{|\tau + ic_I|}$ is a weak-$L^{\frac 1{1-\alpha}}$ function of $\tau$ with norm uniformly bounded in $c_I>0$, the weak Young's inequality 
yield 
\begin{align*}
|y_2|_{L_{x_2}^\infty L_{c_R}^{r_2} } 
\le C  \mu^{1-\alpha} |f|_{L_{c_R}^{r_1} C_{x_2}^{\alpha}} |\psi_0|_{L^r}, \; \text{ where } \; \tfrac 1{r_2} = \tfrac 1{r_1} + \tfrac 1r -\alpha < \tfrac 1{r_1} + \tfrac 1r. 
\end{align*}
To obtain the convergence of $y_2$ as $c_I \to 0$, using the $L_{c_R, x_2}^\infty$ convergence of $y_\pm$ and the $L_{c_R}^\infty L_{x_2}^{\tilde q}$ and $L_{x_2}^\infty L_{c_R}^{\tilde q}$, $\forall \tilde q\in (1, \infty)$, convergence of $y_\pm'$ (Lemma \ref{L:y0}), one may easily reduce the problem to the convergence 
of 
\begin{align*}
\tilde \Delta  = & \Big| \frac {y_{0+}(x_2) }{y_{0+}(-h)} \int_{-h}^{x_2} \Big( \big( \frac {y_{0-} f_0}{U'} \big) (x_2') - \big( \frac {y_{0-}f_0}{U'} \big) (\tilde x_{2-}^c)
\Big) \big( \frac 1{U-c} - \frac 1{U-c_R}\big)\psi_0 U' dx_2' \Big|_{ L_{x_2}^\infty L_{c_R}^{r_2}}  \\
\le & C \mu^{1-\alpha} \Big| |f_0|_{L_{C_{x_2}^\alpha}} \int_\R \big| \frac {|\tau - c_R|^{\alpha}}{|\tau -c|}  - |\tau -c_R|^{\alpha-1} \Big| \big(\chi_{U([-h, 0])} (|\psi_0| \circ U^{-1}) \big) (\tau) d\tau \Big|_{L_{c_R}^{r_2}}  
\end{align*}
and that of a similar term of the other integral. It is easy to see via a rescaling that, for $s\in [1, \frac 1{1-\alpha})$, 
\[
\Big|\frac {|\tau|^{\alpha}}{|\tau + ic_I|} - |\tau|^{\alpha-1} \Big|_{L^s} = |c_I|^{\alpha -1} \big|\gamma \big(\frac \tau{c_I}\big) \big|_{L^s} 
= |c_I|^{\frac 1s-1+\alpha} |\gamma|_{L^s},
\;\; \text{ where } \; \gamma(\tau) = \frac {|\tau|^{\alpha}}{|\tau + i|} - |\tau|^{\alpha-1}, 
\]
while with the weak-$L^{\frac 1{1-\alpha}}$ norm equal to $|\gamma|_{w-L^{\frac 1{1-\alpha}}}$. Hence
\[
\left| \Big|\frac {|\tau|^{\alpha}}{|\tau + ic_I|} - |\tau|^{\alpha-1}\Big| * \varphi \right|_{L^{\frac 1{\frac 1r -\alpha}}} \to 0, \quad \; \text{ as } c_I\to 0,
\]
for any $\varphi \in L^{\tilde r}$ with $\tilde r>r$. Through a standard density argument and using the above uniform bound on the weak-$L^{\frac 1{1-\alpha}}$ norm of the convolution kernel, this convergence also holds for any $\varphi \in L^r$. 
Therefore, we obtain $\tilde \Delta \to 0$ and thus 
\begin{align*}
y_2(c_R+ic_I, x_2) \to & \frac {y_{0+}(c_R, x_2) }{y_{0+}(c_R, -h)} \int_{-h}^{x_2} \Big( \big( \frac {y_{0-}f_0}{U'} \big) (c_R, x_2') - \big( \frac {y_{0-}f_0}{U'} \big) (c_R, \tilde x_{2-}^c)
\Big) \frac {\psi_0 U'}{U-c_R} dx_2' \\
& + \frac {y_{0-} (c_R, x_2) }{y_{0+} (c_R, -h)} \int_{x_2}^0 \Big( \big( \frac {y_{0+} f_0}{U'} \big) (c_R, x_2') - \big( \frac {y_{0+}f_)}{U'} \big) (c_R, \tilde x_{2+}^c)
\Big) \frac {\psi_0 U'}{U-c_R} dx_2'. 
\end{align*}

The above estimates of $y_1$ and $y_2$ together yield the desired estimates of $y_{nh}$ and its convergence as $c_I \to 0$. The analysis on $y_{nh}'$ also follows from the above estimates with minor modifications, mostly replacing some $|y_\pm|_{L_{c_R, x_2}^\infty}$ by $|y_\pm'|_{L_{x_2}^\infty L_{c_R}^s}$ or $|y_\pm'|_{L_{c_R}^\infty L_{x_2}^s}$ outside the integrals, needed to control its logarithmic singularity caused by $y_\pm'$. We omit the details.   

Finally, as in Lemma \ref{L:y_nh-esti-2}, stronger estimates and convergence can be obtained at $x_2 =-h, 0$ due to prescribed boundary values \eqref{E:y-pm}. 
In fact, 
\begin{align*}
y_{nh}' (0) = & \frac {y_+' (0)}{y_+( -h) } \int_{-h}^{0} \frac {y_- f \psi_0}{U-c}   dx_2'  \\
=& \frac {y_+'(0)} {y_+( -h)} \big( \frac {y_-  f}{U'}\big) (\tilde x_{2-}^c) \int_{-h}^{0} \frac {\psi_0 U'}{U-c}   dx_2' +  \frac {y_+'(0)}{y_+( -h) } \int_{-h}^{0} \Big(\big(\frac {y_- f}{U'} \big) (x_2') - \big(\frac {y_- f}{U'} \big) (\tilde x_{2-}^c)  \Big) \frac {\psi_0}{U-c}   dx_2' 
\end{align*} 
implies 
\begin{align*}
|y_{nh}' (0)|_{L_{c_R}^{\frac {rr_1}{r+r_1}}} \le & C \Big( |f|_{L_{c_R}^{r_1} L_{x_2}^\infty} \Big|  \int_{U(-h)}^{U(0)} \frac {\psi_0}{\tau-c}   d\tau \Big|_{L_{c_R}^r} \\
&+ \mu^{-1} e^{-\mu h} |y_-|_{L_{c_R}^{\frac 1\ep} C_{x_2}^{\ep}} |f|_{L_{c_R}^{r_1} C_{x_2}^{\ep}} \Big| \int_{U(-h)}^{U(0)} \frac {|\psi_0|}{|\tau-c|^{1-\ep}}  d\tau \Big|_{L_{c_R}^{\frac r{1-\ep r}}} \Big).  
\end{align*} 
From the same procedure as in estimating $y_1$ and $y_2$ in the above, we obtain the desired estimate. Its convergence follows much as that of $y_{nh}$. The same argument applies to $y_{nh}'(c, -h)$ and the proof of the lemma is complete. 
\end{proof}

The following is an estimate $y_{nh0}$ and $y_{nh0}'$ in $L_{c_R, x_2}^2$ and their dependence on $k$.

\begin{lemma} \label{L:y_nh0-3-esti}
In addition to \eqref{E:F_0} and \eqref{E:NH-term-3}, assume $\frac 12 \ge \frac 1r + \tfrac 1{r_1}$. For any $\ep \in (0, 1)$, there exists $C>0$ depending only on $\ep$, $r$, $r_1$, $F_0$, $\rho_0$, $|U'|_{C^2}$, and $|(U')^{-1}|_{C^0}$, such that for any $k \in \R$ and $c_I\ge 0$, it holds 
\[
|y_{nh}'|_{L_{c_R, x_2}^2}^2 + \mu^{-2} |y_{nh}|_{L_{c_R, x_2}^2}^2 \le C  \mu^{1-\ep}|f|_{L_{c_R}^{r_1} C_{x_2}^{\alpha}}^2  |\psi_0|_{L^r}^2. 
\] 
where the norms are taken for $c_R \in \CI$ and $x_2 \in [-h, 0]$. 
\end{lemma}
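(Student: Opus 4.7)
The approach will be the energy method of Lemma~\ref{L:y_nh0-2-esti}: for $c_I > 0$ I will multiply \eqref{E:Ray-NH-2} with $\psi = f\psi_0$ by $\bar y_{nh}$, integrate over $(x_2, c_R) \in [-h, 0] \times \CI$, and take real parts. The boundary term at $x_2 = -h$ vanishes since $y_{nh}(-h) = 0$; at $x_2 = 0$ the relation $|y_{nh}(0)| \le C\mu^2 |U(0) - c|^2 |y_{nh}'(0)|$ from \eqref{E:y(0)}, together with the $L^2_{c_R}$ bound on $y_{nh}'(0)$ supplied by Lemma~\ref{L:y_nh-esti-3}(2b) (via the embedding $L^{rr_1/(r+r_1)}_{c_R}(\CI) \hookrightarrow L^2_{c_R}(\CI)$, which uses $\tfrac12 \ge \tfrac1r + \tfrac1{r_1}$), bounds the boundary contribution by $C\mu^{2-2\epsilon} |f|^2_{L^{r_1}_{c_R} C^\alpha_{x_2}} |\psi_0|^2_{L^r_{x_2}}$. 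The $U''|y_{nh}|^2/(U-c)$ term on the right-hand side is handled verbatim as in Lemma~\ref{L:y_nh0-2-esti}, using H\"older regularity of $|y_{nh}|^2$ in $x_2$ and Young's inequality, and absorbed into $|y_{nh}'|^2_{L^2_{c_R, x_2}}$.

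The novelty lies in the estimate of $B := \int_\CI \int_{-h}^0 \frac{f \psi_0 \bar y_{nh}}{U - c}\, dx_2\, dc_R$, for which the mere $L^r_{x_2}$ regularity of $\psi_0$ is too low to admit the integration by parts of Lemma~\ref{L:y_nh0-2-esti}. I will split $B = B_1 + B_2$, where $B_1 = \int_\CI (f \bar y_{nh})(c, x_2^c) G(c)\, dc_R$ with $G(c) = \int_{-h}^0 \psi_0(x_2')/(U(x_2') - c)\, dx_2'$, and $B_2$ is the remainder built from $(f\bar y_{nh})(c, x_2) - (f\bar y_{nh})(c, x_2^c)$. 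For $B_1$, the Cauchy-integral bound $|G|_{L^r_{c_R}} \le C|\psi_0|_{L^r_{x_2}}$ (uniform in $c_I > 0$ after the change of variable $\tau = U(x_2')$), together with H\"older in $c_R$ of dual exponent $r/(r-1)$ and the further split $(r-1)/r = 1/r_1 + 1/s'$ with $s' \le 2$ (again from $\tfrac12 \ge \tfrac1r + \tfrac1{r_1}$), reduces matters to controlling $|\bar y_{nh}(\cdot, x_2^c)|_{L^2_{c_R}}$. The Agmon inequality $|y_{nh}|^2_{L^\infty_{x_2}} \le 2|y_{nh}|_{L^2_{x_2}}|y_{nh}'|_{L^2_{x_2}}$ (valid because $y_{nh}(-h) = 0$) and Cauchy--Schwarz in $c_R$ then give $|\bar y_{nh}(\cdot, x_2^c)|^2_{L^2_{c_R}} \le C|y_{nh}|_{L^2_{c_R, x_2}} |y_{nh}'|_{L^2_{c_R, x_2}} \le C\mu E$, where $E := |y_{nh}'|^2_{L^2_{c_R, x_2}} + \mu^{-2}|y_{nh}|^2_{L^2_{c_R, x_2}}$ and $|y_{nh}|_{L^2_{c_R, x_2}} \le \mu E^{1/2}$ by the definition of $E$. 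Hence $|B_1| \le C\mu^{1/2} |f| |\psi_0| E^{1/2}$.

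For $B_2$, the pointwise bound $|(f\bar y_{nh})(c, x_2) - (f\bar y_{nh})(c, x_2^c)| \le |f\bar y_{nh}|_{C^\alpha_{x_2}}(c) |x_2 - x_2^c|^\alpha$ and the monotonicity-based estimate $|x_2 - x_2^c|^\alpha / |U(x_2) - c| \le C|x_2 - x_2^c|^{\alpha - 1}$ reduce the inner integral to the convolution $(|\cdot|^{\alpha - 1} * |\tilde \psi_0|)(c_R)$, controlled in $L^q_{c_R}$ by weak Young's inequality with $1/q = 1/r - \alpha$ (for $\alpha < 1/r$). Pairing via H\"older in $c_R$ of dual exponent $q' = q/(q-1)$ and the further split $1/q' = 1/r_1 + 1/s''$ with $s'' \le 2$ (once more thanks to $\tfrac12 \ge \tfrac1r + \tfrac1{r_1}$), the Sobolev interpolation $|\bar y_{nh}|_{C^\alpha_{x_2}} \le C|\bar y_{nh}|^{1/2+\alpha}_{H^1_{x_2}} |\bar y_{nh}|^{1/2-\alpha}_{L^2_{x_2}}$ combined with H\"older in $c_R$ with exponents $2/(1\pm 2\alpha)$ yields $|\bar y_{nh}|_{L^2_{c_R} C^\alpha_{x_2}} \le C\mu^{(1-2\alpha)/2} E^{1/2}$, so that $|B_2| \le C\mu^{(1-2\alpha)/2} |f| |\psi_0| E^{1/2}$. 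Young's inequality then absorbs the $\delta E$ contributions of $B_1$ and $B_2$ into the LHS of the energy identity and produces $E \le C\mu^{1-2\alpha} |f|^2 |\psi_0|^2$; the choice $2\alpha = \epsilon$ gives the stated bound for $c_I > 0$.

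Passage to $c_I = 0+$ will use the uniform-in-$c_I > 0$ bound just obtained together with the strong $L^\infty_{x_2} L^{rr_1/(r+r_1)}_{c_R}$ (resp.\ $L^\infty_{x_2} L^q_{c_R}$ with $q < rr_1/(r+r_1)$) convergence of $y_{nh}(\cdot + ic_I)$ (resp.\ $y_{nh}'$) to $y_{nh0}$ (resp.\ $y_{nh0}'$) supplied by Lemma~\ref{L:y_nh-esti-3}(2): these give weak $L^2_{c_R, x_2}$ convergence, and the estimate descends to the limit by weak lower semicontinuity of the $L^2$ norm. The principal obstacle is the estimate of $B_2$, where the low $L^r_{x_2}$ regularity of $\psi_0$, the $L^{r_1}_{c_R} C^\alpha_{x_2}$ structure of $f$, and the singular kernel $1/(U-c)$ must be dissected so that the $\mu$-scaling comes out correctly. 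The compatibility condition $\tfrac12 \ge \tfrac1r + \tfrac1{r_1}$ is tight and used at essentially every stage: it precisely ensures that all H\"older-dual exponents on $\bar y_{nh}$ in $c_R$ end up at most $2$, which is what allows Sobolev embedding into $H^1_{x_2}$ and the Agmon/energy-type estimates to generate the crucial extra $\mu^{1/2}$ factor over the raw scaling of Lemma~\ref{L:y_nh-esti-3}, upgrading it to the $\mu^{(1-\epsilon)/2}$ scaling required on the right-hand side.
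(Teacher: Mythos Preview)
Your approach is essentially identical to the paper's: the same energy identity, the same splitting of the nonhomogeneous term into the frozen-coefficient piece $B_1$ (the paper's $I_2$) and the H\"older remainder $B_2$ (the paper's $I_1$), handled by the same Cauchy-kernel/weak-Young/interpolation ingredients. One small point to make explicit: the $U''|y_{nh}|^2/(U-c)$ term, treated ``verbatim as in Lemma~\ref{L:y_nh0-2-esti}'', leaves a residual $C|y_{nh}|_{L^2_{c_R,x_2}}^2$ after Young's inequality which cannot be absorbed into the left side when $|k|$ is bounded (since the energy identity gives $k^2|y|^2$, not $\mu^{-2}|y|^2$); the paper closes this case separately by bounding $|y_{nh}|_{L^2_{c_R,x_2}}$ directly via Lemma~\ref{L:y_nh-esti-3}, and you should do the same.
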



\begin{proof} 
As in the proof of Lemma \ref{L:y_nh0-2-esti}, we first consider for $c_I>0$ and drop the subscript $\cdot_{nh}$ for notation simplification.  
Multiplying the Rayleigh equation \eqref{E:Ray-NH-2} by $\bar y$ and integrating in both $c_R$ and $x_2$, we have 
\begin{align*}
& \int_\CI \int_{-h}^0 |y'|^2 + k^2 |y|^2 dx_2 dc_R = \int_\CI \int_{-h}^0 \frac {f \psi_0 \bar y - U'' |y|^2}{U-c} dx_2 dc_R + \int_\CI y' \bar y dc_R\Big|_{x_2=0} \\
=& \int_\CI \int_{-h}^0 \frac {U'\psi_0}{U-c} \Big( \big(\frac {f \bar y }{U'}\big) \big(c, x_2) - \big(\frac {f \bar y}{U'}\big) \big(c, x_2^c) \Big)  dx_2 dc_R + \int_\CI \big(\frac {f \bar y}{U'}\big) \big(c, x_2^c) \int_{-h}^{0} \frac {U' \psi_0}{U-c} dx_2'  dc_R \\
& -  \int_\CI \int_{-h}^0 \frac { U'' |y|^2}{U-c} dx_2 dc_R + \int_\CI y' \bar y dc_R\Big|_{x_2=0} \triangleq I_1 + I_2 +I_3 +I_4.
\end{align*}

The term $I_4$ can be estimated much as in the proof of Lemma \ref{L:y_nh0-2-esti} using Lemmas \ref{L:y-pm} and \ref{L:y_nh-esti-3}(2b) 
\[
|I_4| \le C\mu^2 \Big| \int_\CI |U(0)- c|^2 |y' (0)|^2 dc_R \Big| \le C \mu^{2-\ep} |f|_{L_{c_R}^{r_1} C_{x_2}^\alpha}^2 |\psi_0|_{L^r}^2.
\]
Choose $\alpha_1$ and $r_2$ such that  
\[
0< \alpha_1 \le \max\{\tfrac \ep2,  \alpha,  \tfrac 1r + \tfrac 1{r_1}\},  \; \; \tfrac 1{r_2} = 1+ \alpha_1 -\tfrac 1r - \tfrac 1{r_1} \in (\tfrac 12, 1],   
\]
which is possible due to our assumption on $\alpha$, $r$, and $r_1$. The integral $I_1$ can be controlled by the H\"older continuity of $f$ and $y$ in $x_2$, the weak Young's inequality, and the \eqref{E:y_nh-temp-3} type interpolation inequality as 
\begin{align*}
|I_1| \le & C \int_\R \int_\R \big( \chi_\CI |(f \bar y)(c_R, \cdot) |_{C_{x_2}^{\alpha_1}}\big)  |\tau - c_R|^{\alpha_1-1} |(\chi_{U([-h, 0])} \psi_0 \circ U^{-1} ) (\tau)| d\tau d c_R \\
\le & C |f\bar y|_{L_{c_R}^{\frac 1{1+\alpha_1 -\frac 1r}} C_{x_2}^{\alpha_1}} |\psi_0|_{L^r} 
\le C |f|_{L_{c_R}^{r_1} C_{x_2}^{\alpha_1}} |y|_{L_{c_R}^{r_2} C_{x_2}^{\alpha_1}} |\psi_0|_{L^r} \\
\le & C |f|_{L_{c_R}^{r_1}C_{x_2}^{\alpha_1}} \big| | y'|_{L_{x_2}^2}^{\frac 12 + \alpha_1}  |y|_{L_{x_2}^2}^{\frac 12 - \alpha_1} \big|_{L_{c_R}^{r_2}} |\psi_0|_{L^r} 
\le C |f|_{L_{c_R}^{r_1}C_{x_2}^{\alpha_1}} | y'|_{L_{c_R, x_2}^2}^{\frac 12 + \alpha_1}  | y|_{L_{c_R}^{r_3} L_{x_2}^2}^{\frac 12 - \alpha_1} |\psi_0|_{L^r}    
\end{align*}
where $r_3< 2$ is determined by $\frac {\frac 12 +\alpha_1}2 + \frac {\frac 12 -\alpha_1}{r_3} = \frac 1{r_2}$. Therefore we obtain 
\[
|I_1| \le \tfrac 14 \big( |y'|_{L_{c_R, x_2}^2}^2 + k^2 |y|_{L_{c_R, x_2}^2}^2\big) + C k^{- (1-2\alpha_1)} |f|_{L_{c_R}^{r_1}C_{x_2}^{\alpha_1}}^2 |\psi_0|_{L^r}^2. 
\]
The estimate of $I_2$ is much as in the proof of Lemma \ref{L:y_nh-esti-3} based on the boundedness of the convolution operator on $L^r$
\begin{align*}
|I_2| \le & C |\psi|_{L^r} |(f \bar y)(c_R, x_2^c)|_{L_{c_R}^{\frac r{r-1}}}  \le C |\psi|_{L^r} \big| |f|_{L_{x_2}^\infty} |y|_{L_{x_2}^2}^{\frac 12} |y'|_{L_{x_2}^2}^{\frac 12}  \big|_{L_{c_R}^{\frac r{r-1}}} \\
\le & C |\psi|_{L^r} |f|_{L_{c_R}^{r_4} L_{x_2}^\infty} |y|_{L_{c_R, x_2}^2}^{\frac 12} |y'|_{L_{c_R, x_2}^2}^{\frac 12},
\end{align*} 
where $r_4 = \frac {2r}{r-2} \le r_1$. Hence 
\[
|I_2| \le \tfrac 14 \big( |y'|_{L_{c_R, x_2}^2}^2 + k^2 |y|_{L_{c_R, x_2}^2}^2\big) + C k^{- 1} |f|_{L_{c_R}^{r_1}C_{x_2}^{\alpha_1}}^2 |\psi|_{L^r}^2. 
\]
Finally $I_3$ can be estimated exactly as in the proof of Lemma \ref{L:y_nh0-2-esti} (and also applying Lemma \ref{L:y_nh-esti-3}(2b)) and we have 
\[
|I_3| \le \tfrac 14 |y'|_{L_{c_R, x_2}^2}^2 + C \big(|y|_{L_{c_R, x_2}^2}^2 + \mu^{4-\ep} |f|_{L_{c_R}^{r_1} C_{x_2}^\alpha}^2 |\psi_0|_{L^r}^2\big).
\] 
Therefore, there exists $k_0>0$ such that $y$ and $y'$ satisfy the desired estimates for $|k| \ge k_0$ and $c_I>0$. For those $|k|\le k_0$, the $|y|_{L_{c_R, x_2}^2}^2$ term in the upper bound of $I_3$ can be controlled by Lemma \ref{L:y_nh-esti-3} directly and thus the desired estimates  are also satisfied by  $y$ and $y'$. The estimate in the limiting case of $c_I=0+$ can be obtained through the same weak convergence argument as in the proof of Lemma \ref{L:y_nh0-2-esti}.
\end{proof} 

\begin{remark} 
In some sense the $L_{c_R, x_2}^2$ assumption on $\psi$ and $\psi'$ in the Lemma \ref{L:y_nh0-2-esti} is the (unreachable) borderline case of  Lemma \ref{L:y_nh0-3-esti}. In fact, $\psi(c_R, x_2)$ can be written as $\psi \cdot 1$, where the former belongs to $L_{c_R}^{2} C_{x_2}^{\frac 12}$ with $r_1=2$. As $r<\infty$ and $\frac 1r + \frac 1{r_1} =\frac 12$ are assumed in \eqref{E:NH-term-3} and Lemma \ref{L:y_nh0-3-esti}, it does not apply in this case.   
\end{remark}

\subsection{Differentiation in $c$ of solutions to non-homogeneous Rayleigh system
} \label{SS:Ray-NH-CGW}

Based on the analysis of the non-homogeneous Rayleigh equation \eqref{E:Ray-BVP} with zero boundary conditions, in this subsection we shall mainly consider \eqref{E:Ray-3} type non-zero boundary conditions, in particular the estimates of the derivative of solutions $y_B(k, c, x_2)$ given in \eqref{E:y_B} with respect to $c$.   
%

Through straight forward calculations 
and applying Lemma \ref{L:y-pm}, 
we obtain 

\begin{lemma} \label{L:Ray-BVP-1}
Assume \eqref{E:F_0} and $c \in \CI + i [-\rho_0, \rho_0]$. 
For any $1< r_1 < r_2 < \infty$, there exists $C>0$ depending only on $r_1$, $r_2$, $F_0$, $\rho_0$, $|U'|_{C^2}$, and $|(U')^{-1}|_{C^0}$, such that for any $|c_I| \le \rho_0$, the unique solution $y_B(k, c, x_2)$ to \eqref{E:Ray-BVP} satisfies 
\[
|y_B|_{L_{c_R, x_2}^2}  \le C \big( |y_{nh}|_{L_{c_R, x_2}^2} + \mu^{\frac 52} |\zeta_+ |_{L_{c_R}^2} + \mu^{\frac 12} |\zeta_- |_{L_{c_R}^2}\big),    
\]
\[
|y_B'|_{L_{c_R, x_2}^2}\le C \big( |y_{nh}'|_{L_{c_R, x_2}^2} + \mu^{\frac 32} |\zeta_+ |_{L_{c_R}^2} + \mu^{-\frac 12} |\zeta_- |_{L_{c_R}^2}\big), 
\]
\[
|y_B'(-h)|_{L_{c_R}^{r_1}}  \le C \big( |y_{nh}' (-h)|_{L_{c_R}^{r_1}} + \mu^{-1} |\zeta_- |_{L_{c_R}^{r_1}} + |\zeta_- |_{L_{c_R}^{r_2}}
+ \mu e^{-\mu^{-1} h}   |\zeta_+ |_{L_{c_R}^{r_1}} \big),
\]
\[
|y_B' (0)|_{L_{c_R}^{r_1}}  \le C \big(|y_{nh}' (0)|_{L_{c_R}^{r_1}}  + \mu |\zeta_+ |_{L_{c_R}^{r_1}} + \mu^2  |\zeta_+ |_{L_{c_R}^{r_2}} + \mu^{-1} e^{-\mu^{-1} h} |\zeta_- |_{L_{c_R}^{r_1}} )\big),
\]
where the norm is taken on $c_R\in \CI$ and $x_2 \in [-h, 0]$. 
\end{lemma}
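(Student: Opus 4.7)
\medskip

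\noindent\textbf{Proof proposal.} The plan is to insert the variation-of-parameters representation \eqref{E:y_B} into each of the four norms on the left, apply the triangle inequality, and reduce everything to: (a) the a priori bounds on $y_\pm$ and $y_\pm'$ from Lemma \ref{L:y-pm}, (b) the lower bound $|\BF(k,c)|\ge F_0\mu^{-1}e^{h/\mu}$ coming from \eqref{E:F_0} together with the Wronskian identity \eqref{E:Wronskian} (which gives $|y_+(k,c,-h)|\ge C^{-1}\mu e^{h/\mu}$), and (c) the tail bounds already established for the zero-boundary piece $y_{nh}$ in Subsection \ref{SS:Ray-NH-HBC}. Since $y_{nh}$ is already hidden inside the conclusion, only the two algebraic terms $\frac{\zeta_+}{\BF}y_-$ and $\frac{\zeta_-}{y_+(-h)}y_+$ need explicit work.

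For the two $L^2_{c_R,x_2}$ estimates the book-keeping is straightforward. From \eqref{E:y-_1}, \eqref{E:y+_1} one has $|y_\pm|_{L^2_{x_2}}\le C\mu^{3/2}e^{h/\mu}$ and from \eqref{E:y-_2}--\eqref{E:y+_4} (whose logarithmic terms are $L^2_{x_2}$-integrable) one has $|y_\pm'|_{L^2_{x_2}}\le C\mu^{1/2}e^{h/\mu}$ uniformly in $c_R\in\CI$ and $|c_I|\le\rho_0$. Combined with $|\BF|^{-1}\le C\mu e^{-h/\mu}$ and $|y_+(-h)|^{-1}\le C\mu^{-1}e^{-h/\mu}$ this yields
\[
\Big|\tfrac{\zeta_+}{\BF}y_-\Big|_{L^2_{c_R,x_2}}+\Big|\tfrac{\zeta_-}{y_+(-h)}y_+\Big|_{L^2_{c_R,x_2}}
\le C\mu^{5/2}|\zeta_+|_{L^2_{c_R}}+C\mu^{1/2}|\zeta_-|_{L^2_{c_R}},
\]
and the analogous bound with the exponents $3/2$ and $-1/2$ for the derivatives, matching the first two claims of the lemma.

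For the boundary evaluations at $x_2=-h,0$ the slight subtlety—and the reason the statement contains two different exponents $r_1<r_2$—is the logarithmic singularity of $y_\pm'$ at the end-point critical layers. At $x_2=0$, \eqref{E:y-_4} gives $|y_-'(0)|\le Ce^{h/\mu}(1+\mu|\log|U(0)-c||)$, while at $x_2=-h$, \eqref{E:y+_4} gives $|y_+'(-h)|\le Ce^{h/\mu}(1+\mu|\log|U(-h)-c||)$; the other two end-point derivatives $y_-'(-h)=1$ and $|y_+'(0)|\le Ce^{h/\mu}$ are log-free. For the logarithmic contributions I will split $(1+\mu|\log|U(0)-c||)$ (and its analogue at $-h$) and apply H\"older in $c_R$: since $|\log|U(\cdot)-c||_{L^{s}_{c_R}(\CI)}\le C(s)$ for every $s<\infty$, choosing $1/r_1=1/r_2+1/s$ converts the product $\mu\log\cdot\zeta_\pm$ into a term controlled by $\mu|\zeta_\pm|_{L^{r_2}_{c_R}}$. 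Combining this with the sharp lower bounds on $|\BF|$ and $|y_+(-h)|$ gives exactly the advertised splittings $\mu e^{-h/\mu}|\zeta_+|_{L^{r_1}}+\text{regular}+|\zeta_-|_{L^{r_2}}$ at $x_2=-h$, and $\mu|\zeta_+|_{L^{r_1}}+\mu^2|\zeta_+|_{L^{r_2}}+\mu^{-1}e^{-h/\mu}|\zeta_-|_{L^{r_1}}$ at $x_2=0$.

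The only genuinely non-routine step is this last H\"older trade at the end-points: one has to notice that the $L^{r_2}$ slot is needed precisely to absorb the non-integrable (at $c=U(0),U(-h)$) amplification of $y_\pm'$ coming from the logarithm in \eqref{E:y-_4}, \eqref{E:y+_4}, and that the prefactor $\mu$ (respectively $\mu^2$) in the statement is the one left after pairing that logarithm with the lower bound on $|\BF|$ or $|y_+(-h)|$. Everything else is linear bookkeeping with the tools already collected, so I do not expect any further obstacle.
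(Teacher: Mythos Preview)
Your proposal is correct and is precisely the approach the paper has in mind: the paper's proof is a single line (``Through straight forward calculations and applying Lemma~\ref{L:y-pm}, we obtain''), relying on the variation-of-parameters formula \eqref{E:y_B}, the lower bound on $\BF$ from \eqref{E:F_0}, the Wronskian identity \eqref{E:Wronskian}, and the displayed bound $|y_\pm'|_{L^{r_1}_{x_2}L^{r_2}_{c_R}}+|y_+'(-h)|_{L^{r_2}_{c_R}}+|y_-'(0)|_{L^{r_2}_{c_R}}\le Ce^{h/\mu}$ stated just before the lemma. Your identification of the H\"older trade (with $1/r_1=1/r_2+1/s$, $s<\infty$) as the mechanism absorbing the endpoint logarithms from \eqref{E:y-_4} and \eqref{E:y+_4}, thereby producing the split into $L^{r_1}$ and $L^{r_2}$ terms, is exactly the missing detail.
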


 
We shall also consider the limit 
\be \label{E:y_B0} 
y_{B0} = y_B|_{c_I=0+}=  \lim_{c_I\to 0+} y_B = b_{0-} y_{0-} + b_{0+} y_{0+} + y_{nh0}, 
\ee
which exists for appropriate $\psi (c, x_2)$ and satisfies the same estimates as $y_B$ (see Subsection \ref{SS:Ray-NH-HBC}). 
 


In the rest of the subsection, we shall focus on the special case motivated by \eqref{E:Ray}:
\be \label{E:Ray-BC-2} 
\psi=\psi_0(x_2), \quad \zeta_-(c)= \xi_-, \quad \zeta_+ (c) = \xi_1 + (U(0)-c) \xi_2, 
\ee
where $\psi_0$, $\xi_-$, $\xi_1$, and $\xi_2$ are all independent of $c$. Our goal is to obtain the estimates of the derivatives of the solution $y_B(k, c, x_2)$ to \eqref{E:Ray-BVP} in $c_R$.  

\begin{proposition} \label{P:pcy_B} 
Assume  $U \in C^{l_0}$, $l_0\ge 3$, \eqref{E:F_0}, and \eqref{E:Ray-BC-2}. For any $\ep \in (0, 1)$, $r \in (1, \infty)$, there exists $C>0$ depending on $\ep$, $r$, $F_0$, $\rho_0$, $|U'|_{C^{l_0-1}}$, and $|(U')^{-1}|_{C^0}$ such that the solution $y_B(k, c, x_2)$ to \eqref{E:Ray-BVP} satisfies that for any $|c_I| \le \rho_0$ and $k \in \R$, 
\[
|y_B|_{L_{c_R, x_2}^2} + \mu |y_B'|_{L_{c_R, x_2}^2} + \mu^{\frac 32}  |y_B' (0)|_{L_{c_R}^{2}} + \mu^{\frac 12} |y_B(0)|_{L_{c_R}^2}  \le C\mu^{\frac 52} \big( \mu^{-1-\ep} |\psi_0|_{L^2} +  |\xi_1| + |\xi_2| +\mu^{-2} |\xi_-| \big); 
\]
if $l_0\ge 4$, then
\begin{align*}
& |\p_{c_R} y_B|_{L_{c_R, x_2}^2} + \mu |\p_{c_R} y_B' + \tfrac 1{U'(x_2^c)} y_B''|_{L_{c_R, x_2}^2} + \mu^{\frac 32} |(\p_{c_R} y_B' + \tfrac 1{U'(x_2^c)} y_B'')(0)|_{L_{c_R}^2} + \mu^{\frac 12} |\p_{c_R} y_B(0)|_{L_{c_R}^2}\\
\le & C \mu^{\frac 32}\big(  \mu^{-1-\ep}|\psi_0|_{L^2} + \mu^{-\ep}|\psi_0'|_{L^2} + |\xi_1| +  |\xi_2| +\mu^{-2} |\xi_-|\big); 
\end{align*}
and, if $l_0\ge 5$, then 
\[
|\tilde y_B|_{L_{c_R, x_2}^2} \le  C\mu^{\frac 12}\big(  \mu^{-1-\ep}|\psi_0|_{L^2} + \mu^{-\ep}|\psi_0'|_{L^2} + \mu^{1 -\ep}|\psi_0''|_{L^2} +  |\xi_1| + |\xi_2| +\mu^{-2} |\xi_-|\big), 
\]
where 
\[
\tilde y_B =\p_{c_R}^2 y_B + \frac 1{U'(x_2^c)^2} \Big( - y_B'' + \frac { g+ \sigma k^2 
}{\BF(k, c)}  \big( y_B'' (-h) y_+ - y_B''(0) 
 y_-\big)
\Big), 
\]
and all the norms are taken on $(c_R, x_2) \in \CI \times [-h, 0]$. Moreover, as $c_I \to 0+$, the following hold. 
\begin{enumerate}
\item Assume $\psi_0\in L^2$ and $U \in C^3$, then for any $r\in [1, 2)$, $y_B \to y_{B0}$ in $L_{x_2}^\infty L_{c_R}^2$,  $y_B' \to y_{B0}'$ in $L_{x_2}^\infty L_{c_R}^r$, and $y_B'(0) \to y_{B0}'(0)$ in $L_{c_R}^2$. 
\item Assume $\psi_0\in H^1$ and $U \in C^4$, then for any $r\in [1, 2)$ and $q \in [1, \infty)$, $\p_{c_R} y_B \to \p_{c_R} y_{B0}$ in $L_{x_2}^\infty L_{c_R}^r$, $\p_{c} y_B' + \tfrac 1{U'(x_2^c)} y_B'' \to \p_{c} y_{B0}' + \tfrac 1{U'(x_2^c)} y_{B0}''$ in $L_{x_2}^q L_{c_R}^r$, and $(\p_{c} y_B' + \tfrac 1{U'(x_2^c)} y_B'')(0) \to (\p_{c} y_{B0}' + \tfrac 1{U'(x_2^c)} y_{B0}'')(0)$ in $L_{c_R}^r$. 
\item Assume $\psi_0\in H^2$  and $U \in C^5$, then for any $r\in [1, 2)$, $\tilde y_B$ also converges  in $L_{x_2}^\infty L_{c_R}^r$ to its limit $\tilde y_{B0}$.
\end{enumerate}    
\end{proposition}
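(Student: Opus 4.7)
The starting point is the representation \eqref{E:y_B} of $y_B$, so the zeroth-order bound is essentially bookkeeping. I would feed Lemma \ref{L:y-pm} (bounds on $y_\pm$ and $y_\pm'$), the lower bound \eqref{E:F_0} on $\BF$ (which eats the $e^{\mu^{-1}h}$ from $y_\pm$), Lemma \ref{L:y_nh0-2-esti} (with $\psi=\psi_0$ independent of $c$, so $|\psi'|_{L^2_{c_R,x_2}}=0$ and Lemma \ref{L:y_nh0-2-esti} actually reads like Lemma \ref{L:y_nh0-3-esti} applied to the factorization $\psi = 1\cdot \psi_0$), and Lemma \ref{L:Ray-BVP-1} into the formula \eqref{E:y_B} with the special right-hand sides \eqref{E:Ray-BC-2}. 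Because $\zeta_\pm$ are polynomials in $c$ of degree $\le 1$ with constant coefficients $\xi_\pm,\xi_1,\xi_2$, no $L^2_{c_R}$ loss occurs, and the desired bound on $y_B$, $y_B'$, $y_B(0)$, $y_B'(0)$ follows directly.

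\textbf{The key operator.} For higher $c$-derivatives I would avoid differentiating directly in $c$ (the worst singularity in $y_B$ near $U(x_2)=c$ is logarithmic in $y_B'$, and one order worse per $\p_c$), and instead introduce
\[
D_c := \p_{c_R} + \tfrac{1}{U'(x_2^c)}\p_{x_2},
\]
which commutes with $\p_{x_2}$ since $\tfrac{1}{U'(x_2^c)}$ does not depend on $x_2$. The point is that $D_c(U-c)=\tfrac{U'(x_2)-U'(x_2^c)}{U'(x_2^c)}$ vanishes at $x_2=x_2^c$, so
\[
D_c\tfrac{1}{U-c}=\tfrac{U'(x_2^c)-U'(x_2)}{U'(x_2^c)(U-c)^2}
\]
has only a simple pole in $(x_2-x_2^c)$. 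Consequently $w:=D_c y_B$ satisfies a Rayleigh BVP of exactly the form \eqref{E:Ray-BVP}: commuting $D_c$ with $-\p_{x_2}^2+k^2+\tfrac{U''}{U-c}$ produces the source $\tilde\psi^{(1)}/(U-c)$ with $\tilde\psi^{(1)}$ a concrete $H^1_{x_2}$-function built from $\psi_0,\psi_0'$ and from $y_B$ (hence controlled by Step 1). The boundary condition at $x_2=-h$ reads $w(-h)=\tfrac{1}{U'(x_2^c)}y_B'(-h)$ since $\zeta_-$ is a constant; at $x_2=0$, one differentiates \eqref{E:Ray-BC-1} and adds $\tfrac{1}{U'(x_2^c)}$ times its $x_2$-derivative, using the Rayleigh equation at $x_2=0$ to eliminate $y_B''(0)$ in favor of $y_B(0)$ and $\psi_0(0)$. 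Applying Lemma \ref{L:Ray-BVP-1} together with Lemmas \ref{L:y_nh0-2-esti}--\ref{L:y_nh0-3-esti} to $w$, and then unpacking $\p_{c_R}y_B=w-\tfrac{1}{U'(x_2^c)}y_B'$ and $\p_{c_R}y_B'+\tfrac{1}{U'(x_2^c)}y_B''=w'$, yields the first-order estimate.

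\textbf{Second-order differentiation and the meaning of $\tilde y_B$.} Applying $D_c$ once more and using $\p_{c_R}\bigl(\tfrac{1}{U'(x_2^c)}\bigr)=-\tfrac{U''(x_2^c)}{U'(x_2^c)^3}$, one computes
\[
D_c^2 y_B=\p_{c_R}^2 y_B+\tfrac{2}{U'(x_2^c)}\p_{c_R}y_B'+\tfrac{1}{U'(x_2^c)^2}y_B''-\tfrac{U''(x_2^c)}{U'(x_2^c)^3}y_B'.
\]
Thus $\tilde y_B$ from the proposition equals $D_c^2 y_B$ minus the Step-3-controlled lower-order combination $\tfrac{2}{U'(x_2^c)}\p_{c_R}y_B'-\tfrac{U''(x_2^c)}{U'(x_2^c)^3}y_B'$, plus the homogeneous corrector $\tfrac{g+\sigma k^2}{U'(x_2^c)^2\BF(k,c)}\bigl(y_B''(-h)y_+-y_B''(0)y_-\bigr)$ in the span of $\{y_+,y_-\}$. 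The role of this corrector is exactly to cancel, in the new boundary data of the Rayleigh BVP satisfied by $D_c^2 y_B$, the terms proportional to $y_B''(-h)$ and $y_B''(0)$: these boundary values inherit the logarithmic $x_2$-singularity of the equation and cannot be controlled in $L^2_{c_R}$ without such a subtraction. Once subtracted, the resulting BVP for $\tilde y_B$ has source $\tilde\psi^{(2)}/(U-c)$ with $\tilde\psi^{(2)}\in H^2_{x_2}$-norm estimated by $|\psi_0|_{H^2}$ plus first-order quantities from Step 3, and has $L^2_{c_R}$-controlled boundary data; Lemmas \ref{L:Ray-BVP-1} and \ref{L:y_nh0-3-esti} applied to $\tilde y_B$ then yield the second-order estimate, with the required higher power of $\mu^{-1}$ absent due to one extra $\mu$ gained from $D_c\tfrac{1}{U-c}$.

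\textbf{Convergence $c_I\to 0+$ and main obstacle.} The three convergence claims follow by combining the pointwise/integral convergence of $y_\pm,y_\pm'$ (Lemma \ref{L:y0}), the convergence of $y_{nh}, y_{nh}'$ and their boundary traces in Lemmas \ref{L:y_nh-esti-2}--\ref{L:y_nh-esti-3}, and the uniform-in-$c_I$ bounds from Steps 1--4, which let one pass to the limit via dominated convergence; the loss of $c_R$-integrability from $L^2$ to $L^r$ ($r<2$) is precisely the loss incurred by the Hilbert-transform limit of the convolution by $\tfrac{1}{\tau+ic_I}$. The main technical difficulty of the proof, as signaled in the introduction, is Step 4: recognizing that the ad-hoc-looking corrector in $\tilde y_B$ is the unique homogeneous Rayleigh solution eliminating the $y_B''(\pm h,0)$ boundary contributions of $D_c^2 y_B$, and verifying that after the subtraction the resulting source and boundary data fall within the hypotheses of Lemmas \ref{L:Ray-BVP-1} and \ref{L:y_nh0-3-esti}; a secondary but persistent bookkeeping obstacle is tracking the exact powers of $\mu=\langle k\rangle^{-1}$ and the cancellation of the $e^{\pm\mu^{-1}h}$ factors between $y_\pm$, $\BF^{-1}$, and the boundary data throughout.
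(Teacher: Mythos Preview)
Your plan is correct and essentially mirrors the paper's proof: the same $D_c$ operator (the paper normalizes it as $U'(x_2^c)\p_{c_R}+\p_{x_2}$, but your version differs only by the scalar factor $U'(x_2^c)$), the same iterated Rayleigh BVP scheme using Lemmas \ref{L:Ray-BVP-1}, \ref{L:y_nh0-2-esti}, and \ref{L:y_nh0-3-esti}, and the same identification of the corrector in $\tilde y_B$ as the homogeneous solution $y_S$ removing the $y_B''(-h),\,y_B''(0)$ contributions from the boundary data of $D_c^2 y_B$. One small imprecision: the reason these boundary traces must be subtracted is not a logarithmic $x_2$-singularity but the pole $\tfrac{1}{U(\tilde x_2)-c}$ in $c_R$ coming from the Rayleigh equation evaluated at $\tilde x_2\in\{-h,0\}$, which is what fails to lie in $L^2_{c_R}$; otherwise the bookkeeping you outline (including using the Rayleigh equation at $x_2=0$ to eliminate $y_B''(0)$ in $\zeta_{1+}$) matches the paper exactly.
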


Since $y_B$ is holomorphic in $c \notin U([-h, 0])$, $\p_c y_B = \p_{c_R} y_B$. From the Rayleigh equation, singularity at the level of delta mass appears in $y_B''$ along $U(x_2) = c_R$, $x_2 \in [-h, 0]$, as $c_I\to 0+$. Therefore $\p_{c}^2 y_B$ and $\p_c y_B'$ also display such singularities which are singled out in the above estimates. The $y_B''$ involved in the singular terms will be substituted by using the Rayleigh equation \eqref{E:Ray-NH-2} whenever necessary. 

\begin{proof}
The $L_{c_R, x_2}^2$ estimates on $y_B$ and $y_B'$, as well as the $L_{c_R}^r$ estimate of $y_B'(0)$ with $r \in (1, \infty)$, follow readily from \eqref{E:F_0}, \eqref{E:Ray-BC-2}, Lemmas \ref{L:y-pm}, \ref{L:y_nh0-3-esti}, \ref{L:y_nh-esti-3} (with $r=2$, $r_1=\infty$, and $f_0=1$), 
and \ref{L:Ray-BVP-1}. 
The estimate of $y_B(0)$ is simply obtained from those of $y_B$ and $y_B'$. 
Moreover, for the rest of the proof of the proposition we shall also need the following inequality for $r\in (1, \infty)$ which is also derived form Lemma \ref{L:y_nh-esti-3} and  Lemma \ref{L:Ray-BVP-1} and uniform in $c_I \in [0, \rho_0]$
\be \label{E:pcy_nh-temp-1}\begin{split}
&
|y_B' (0)|_{L_{c_R}^{r}}  \le C (\mu^{-\ep} |\psi_0|_{L^{r}} +\mu (|\xi_1| +|\xi_2|) + \mu^{-1} e^{-\mu^{-1} h} |\xi_-|), \\ 
&|y_B' (-h)|_{L_{c_R}^{r}} \le C (\mu^{-\ep} |\psi_0|_{L^{r}} + \mu e^{-\mu^{-1} h} (|\xi_1| +|\xi_2|) +\mu^{-1} |\xi_-| ). 
\end{split} \ee
The convergence of $y_B$, $y_B'$, and $y_B'(0)$ follow directly from the continuity of $\BF$ (Lemma \ref{L:e-v-basic-1}) and the convergence of $y_\pm$ and $y_\pm'$ (Lemma \ref{L:y0}) and $y_{nh}$ (Lemma \ref{L:y_nh-esti-3}). Moreover, we also have the convergence of $y_B'(-h)$ in $L_{c_R}^2$. 

In the following differentiations in $c_R$ are all carried out for $c_I>0$. The convergence analysis based on the convergence results of $y_\pm$ and those of $y_{nh}$ in Subsection \ref{SS:Ray-NH-HBC} ensure that the estimates hold also for $c_I=0+$. Directly differentiating the Rayleigh equation \eqref{E:Ray-NH-2} in $c_R$ directly would cause worse singularity in the equation. Instead we first consider 
 \be \label{E:Dc}
 D_c = U'(x_2^c) \p_{c_R} + \p_{x_2}, \qquad \p_{c_R} = U'(x_2^c)^{-1} (D_c -\p_{x_2}),  \qquad [D_c, \p_{x_2}]=0, 
 \ee
where $x_2^c$ is defined by $U(x_2^c) =c_R$ as in \eqref{E:x2c}. It satisfies 
\be \label{E:Dc-1}
D_c \big(\tfrac 1{U(x_2)-c}\big)  = - \tfrac {U'(x_2) - U'(x_2^c)}{(U(x_2)-c)^2}, \quad D_c^2 \big(\tfrac 1{U(x_2)-c}\big)  = \tfrac {2(U'(x_2) - U'(x_2^c))^2}{(U(x_2)-c)^3} - \tfrac {U''(x_2) - U''(x_2^c)}{(U(x_2)-c)^2}, 
\ee
where the singularity remains at the same level. 

$\bullet$ {\bf Estimating $\p_{c_R} y_B$.} 
Applying $D_c$ to \eqref{E:Ray-NH-2} and simplifying, we obtain 
\begin{subequations} \label{E:Dc1-Ray} 
\be \label{E:Dc1-Ray-NH} \begin{split}
- (D_c y_B)'' + &\big(k^2 + \frac {U''}{U-c}\big)  D_c y_B = \frac {\psi_0'(x_2)  + f_1 (c, x_2) \psi_0 (x_2)+ \psi_1(c, x_2)}{U-c};  
\end{split} \ee
where 
\[
\psi_1 = \big( \frac {U'' (U' - U'(x_2^c))}{U-c} - U'''\big) y_B, \quad f_1 = (U-c)D_c \big( \frac 1{U-c}\big)= - \frac {U' - U'(x_2^c)}{U-c},  
\]
and boundary conditions 
\be \label{E:Dc1-Ray-BC-1} 
D_c y_B(-h) = \zeta_{1-} \triangleq y_B'( -h);
\ee
\be \label{E:Dc1-Ray-BC-2} 
\big(U(0)-c\big)^2 (D_c y_B)' (0) - \big(U'(0) (U(0)-c) + g + \sigma k^2\big) D_c y_B(0) = \zeta_{1+} (c) 
\ee
where 
\begin{align*}
\zeta_{1+} = &  -\xi_2 U'(x_2^c) - (U(0)-c) \psi_0(0) + \big( (2U'(x_2^c) - U'(0)) (U(0)-c) -g -\sigma k^2\big) y_B'(0) \\
&+ \big( k^2 (U(0)-c)^2 + U''(0) (U(0)-c) - U'(x_2^c) U'(0) \big) y_B(0).   
\end{align*}
\end{subequations}

Let $\tilde y_1 (c, x_2)$ and $\tilde y_2 (c, x_2)$ be the solution to the non-homogeneous Rayleigh equation \eqref{E:Ray-NH-2}, but with zero boundary values in \eqref{E:Ray-BC-1}, with $\psi(c, x_2)$ replaced by $\psi_1$ and $\psi_0' + f_1 \psi_0$, respectively. Both are given by the formula \eqref{E:y_nh-0}. 
Using the estimates of $y_B$ derived in the above and apply Lemmas \ref{L:y_nh0-2-esti}, we have    
\[
|\tilde y_1|_{L_{c_R, x_2}^2} + \mu |\tilde y_1'|_{L_{c_R, x_2}^2} \le C\mu \big(|y_B|_{L_{c_R, x_2}^2} + \mu^{1-\frac \ep4} |y_B'|_{L_{c_R, x_2}^2} \big) \le C\mu^{\frac 52-\ep} \big(  \mu |\xi_1| + \mu |\xi_2| + |\psi_0|_{L^2}+\mu^{-1} |\xi_-|  \big). 
\]
Moreover, from  Lemma \ref{L:y_nh-esti-2}(2b) and \eqref{E:Ray-BC-1}, \eqref{E:y(0)}, and \eqref{E:Ray-BC-2}, one can compute   
\begin{align*}
|\tilde y_1'(c, 0)| \le &C\mu^{-\frac 12 (1+\ep)} (|y_B|_{L_{x_2}^2} + \mu |y_B'|_{L_{x_2}^2}) + C \mu^2 \big(1+ \big| \log |U(0)-c|\big|\big) (|\zeta_+|+ |U(0)-c|^2 |y_B' (c, 0)| ),
\end{align*}
where $y_B(0)$ was substituted by using  \eqref{E:Ray-BC-1}. It along with the above estimates on $y_B$ implies 
\begin{align*}
|\tilde y_1'(0)|_{L_{c_R}^2} \le &C\mu^{-\frac 12 (1+\ep)} (|y_B|_{L_{c_R, x_2}^2} + \mu |y_B'|_{L_{c_R, x_2}^2}) + C \mu^2 ( |\xi_1| +  |\xi_2|+ |y_B'(0)|_{L_{c_R}^2})\\
\le & C\mu^{1-\ep} \big(  \mu |\xi_1| + \mu |\xi_2| + |\psi_0|_{L^2} + \mu^{-1} |\xi_-| \big).
\end{align*}
The estimate at $x=-h$ based on Lemma \ref{L:y_nh-esti-2}(2b)  is similar
\[
|\tilde y_1'(c, -h)|
\le   C\mu^{-\frac 12 (1+\ep)} \big(|y_B|_{L_{x_2}^2} + \mu |y_B'|_{L_{x_2}^2} \big) + C\big(1+ \big| \log |U(-h)-c|\big|\big) |\xi_-|,
\]
which yields 
\begin{align*}
|\tilde y_1'(-h)|_{L_{c_R}^2} \le &C\mu^{-\frac 12 (1+\ep)} (|y_B|_{L_{c_R, x_2}^2} + \mu |y_B'|_{L_{c_R, x_2}^2}) + C|\xi_-| \\
\le & C\mu^{1-\ep} \big(  \mu |\xi_1| + \mu |\xi_2| + |\psi_0|_{L^2} +\mu^{-1} |\xi_-| \big). 
\end{align*}
From the convergence of $y_B$ and Lemma \ref{L:y_nh-esti-2}, as $c_I \to 0+$, we have the convergence of $\tilde y_1$ in $ L_{c_R}^r L_{x_2}^\infty$, $\tilde y_1'$ in $L_{c_R}^r L_{x_2}^q$, and $\tilde y_1'(0)$ in $L_{c_R}^r$, for any $r \in [1, 2)$ and $q\in [1, \infty)$.  

Due to the smoothness of $f_1$, we apply Lemmas \ref{L:y_nh0-3-esti}  
and \ref{L:y_nh-esti-3} 
instead to  estimate $\tilde y_2$
\[
|\tilde y_2|_{L_{c_R, x_2}^2} + \mu |\tilde y_2'|_{L_{c_R, x_2}^2} \le C\mu^{\frac 32-\ep}  |\psi_0|_{H^1}, \quad 
|\tilde y_2'(-h)|_{L_{c_R}^2}+|\tilde y_2'(0)|_{L_{c_R}^2} \le C\mu^{-\ep} |\psi_0|_{H^1}.
\]
Again from Lemma \ref{L:y_nh-esti-3}, as $c_I \to 0+$, we have the convergence of $\tilde y_2$ in $  L_{x_2}^\infty L_{c_R}^2$, $\tilde y_2'$ in $L_{x_2}^\infty L_{c_R}^r$, for any $r \in [1, 2)$, and $\tilde y_2'(0)$ in $L_{c_R}^2$.  

Finally, from \eqref{E:pcy_nh-temp-1} and \eqref{E:y(0)}, 
we have, for any $r \in (1, \infty)$,   
\[
|\zeta_{1-}|_{L_{c_R}^r} \le C\big(\mu^{-\ep} |\psi_0|_{L^r} + \mu e^{-\mu^{-1} h} (|\xi_1| +|\xi_2|) +\mu^{-1} |\xi_-| \big), 
\]
\[ 
|\zeta_{1+}|_{L_{c_R}^r} \le C \mu^{-1}\big(  |\xi_1|+  |\xi_2| + \mu|\psi_0(0)| + \mu^{-1-\ep} |\psi_0|_{L^r}+ \mu^{-2} e^{-\mu^{-1} h} |\xi_-| \big),
\]
where again we substituted $y_B(0)$ by \eqref{E:Ray-BC-1} and \eqref{E:y(0)}. Moreover, from the convergence of $y_B$ and $y_B'$, we have the convergence of $\zeta_{1\pm}$  
in $L_{c_R}^2$.  

As $\tilde y_1 + \tilde y_2$ plays the role of "$y_{nh}$" in the representation of $D_cy_B$ as given in Lemma \ref{L:Ray-BVP-1}, the above estimates 
imply 
\be \label{E:pcy_nh-temp-2}
|D_c y_B|_{L_{c_R, x_2}^2} + \mu |D_c y_B'|_{L_{c_R, x_2}^2} \le C \big(  \mu^{\frac 32} |\xi_1| + \mu^{\frac 32} |\xi_2| + \mu^{-\frac 12} |\xi_-| + \mu^{\frac 12-\ep}|\psi_0|_{L^2} + \mu^{\frac 32-\ep}|\psi_0'|_{L^2}\big), 
\ee
where the $\psi_0(0)$ term was bounded by the other norms of $\psi_0$ via interpolation. The desired $L_{c_R, x_2}^2$ estimates on $\p_{c_R} y_B$ and $\p_{c_R} y_B'$ follow from that of $y_B'$, \eqref{E:Dc}, and the above inequality. We also obtain the $L_{c_R}^2$ estimate of $D_c y_B(0)$ from \eqref{E:pcy_nh-temp-2} which in turn yields the $L_{c_R}^2$ bound on $\p_{c_R} y_B(0)$. The convergence of $\p_{c_R} y_B$ is a direct consequence of those of $\tilde y_1$, $\tilde y_2$, $\zeta_{1\pm}$, and the representation formula given in Lemma \ref{L:Ray-BVP-1}. Moreover, we also have the convergence of $D_c y_B'|_{x_2 =0, -h}$ in $L_{c_R}^r$ for any $r \in [1, 2)$. 

To complete the estimates on $\p_{c_R} y_B$ and also for the next step, we also need the following inequalities which are also derived from the above estimates and Lemma \ref{L:Ray-BVP-1} 
\begin{align*}
|D_c y_B'(-h)|_{L_{c_R}^2} \le &C \big(\mu^{-1} |\psi_0|_{L^2} +|\psi_0|_{L^r} + |\psi_0'|_{L^2} +\mu^{2-\ep} (|\xi_1| +|\xi_2|) \big) + C \mu^{-2}|\xi_-|\\
\le & C\mu^{-\ep} \big( \mu^2 (|\xi_1| +|\xi_2|) + \mu^{-1} |\psi_0|_{L^2}  + |\psi_0'|_{L^2} \big)+ C \mu^{-2}|\xi_-|,  
\end{align*}
\begin{align*}
|D_c y_B'(0)|_{L_{c_R}^2} \le &C  \big(\mu^{-1} |\psi_0|_{L^2} +|\psi_0|_{L^r} +\mu |\psi_0 (0)| + |\psi_0'|_{L^2} +|\xi_1| +|\xi_2|\big)\\
\le & C  \big(|\xi_1| +|\xi_2| + \mu^{-1-\ep} |\psi_0|_{L^2} +\mu^{-\ep}|\psi_0'|_{L^2} + \mu^{-\ep} |\xi_-| \big),
\end{align*}
where the terms involving $|\psi_0 (0)|$ and $|\psi_0|_{L^r}$, $r>2$, are bounded by other norms of $\psi_0$. 

$\bullet$ {\bf Estimating $\p_{c_R}^2 y_B$.} 
In order to analyze $\p_{c_R}^2 y_B$, we still first apply $D_c$ to \eqref{E:Dc1-Ray}. 
Due to the commutativity \eqref{E:Dc} between $D_c$ and $\p_{x_2}$, the Rayleigh equation \eqref{E:Ray-NH-2} and \eqref{E:Ray-BC-2} imply  
\begin{subequations} \label{E:Dc2-Ray}
\begin{align*} 
- (D_c^2 y_B)'' + \big(k^2 + \frac {U''}{U-c}\big)  D_c^2 y_B = &\frac {\psi_0'' - U^{(4)}y_B - 2 U''' D_c y_B}{U-c} + 2D_c \big( \frac 1{U-c}\big) \big( \psi_0' - D_c (U'' y_B)\big) \\
&+ D_c^2 \big( \frac 1{U-c}\big) (\psi_0 - U'' y_B). 
\end{align*}
We can write 
\be \label{E:Dc2-Ray-NH}
- (D_c^2 y_B)'' + \big(k^2 + \frac {U''}{U-c}\big)  D_c^2 y_B = \frac {\psi_0''(x_2)  + f_2 (c, x_2) \psi_0 (x_2) + 2 f_1 (c, x_2) \psi_0' (x_2)+ \psi_2(c, x_2)}{U-c},  
\ee
where $f_1$ was defined in \eqref{E:Dc1-Ray} and 
\[
f_2 = (U-c)D_c^2 \big( \frac 1{U-c}\big), \quad 
\psi_2 = - (2U'''+ U'' f_1 ) D_c y_B - (U^{(4)} + U''' f_1  +U'' f_2 ) y_B.   
\]
From \eqref{E:Dc-1} and the assumption $U \in C^4$, it holds $f_2$ and $f_3$ are $C^1$ in $x_2$ and $c_R$ with bounds uniform in $|c_I|\le \rho_0$. At $x_2=-h$, one can compute using \eqref{E:pc-x2c},  
\[
(D_c^2 y_B)(-h) = \big(U'(x_2^c)^2 \p_{c_R}^2 y_B + U''(x_2^c) \p_{c_R} y_B + U'(x_2^c) \p_{c_R} y_B' + (D_c y_B)' \big)\big|_{x_2=-h}. 
\]  
From \eqref{E:Ray-BC-2} and \eqref{E:Ray-NH-2}, we can write 
\be \label{E:Dc2-Ray-BC-1} 
(D_c^2 y_B)(c, -h) = \zeta_{2-} (c) \triangleq \big( 2(D_c y_B)' - y_B'' \big)(-h) = 2(D_c y_B)' (-h) + \frac {\psi_0(-h)}{U(-h) -c}.
\ee
At $x_2=0$, we write 
\be \label{E:Dc2-Ray-BC-2} 
\big(U(0)-c\big)^2 (D_c^2 y_B)' (0) - \big(U'(0) (U(0)-c) + g + \sigma k^2\big) D_c^2 y_B(0) = \zeta_{2+} (c).
\ee
\end{subequations} 
One may compute $\zeta_{2+}$ using \eqref{E:Dc} and \eqref{E:Dc1-Ray-BC-2} 
\begin{align*}
\zeta_{2+} =& U'(x_2^c)  \big(\p_{c_R} \zeta_{1+} + 2 (U(0)-c) (D_c y_B)' (0) - U'(0) D_c y_B(0) \big) \\
&+ \big(U(0)-c\big)^2 (D_c y_B)'' (0) - \big(U'(0) (U(0)-c) + g + \sigma k^2\big) (D_c y_B)' (0). 
\end{align*}
On the one hand, the $U'(x_2^c)\p_{c_R} \zeta_{1+}$ turns out to involve some of the most singular terms in $\zeta_{2+}$,  
\begin{align*}
U'(x_2^c) \p_{c_R} \zeta_{1+}=& -\xi_2 U''(x_2^c) + U'(x_2^c) \psi_0 (0) + \big( 2U''(x_2^c) (U(0)-c) - U'(x_2^c) (2U'(x_2^c) - U'(0) )\big) y_B' (0) \\
&+ U'(x_2^c) \big( (2U'(x_2^c) - U'(0)) (U(0)-c) -g -\sigma k^2\big) \p_{c_R} y_B'(0) \\
&+ U'(x_2^c)\big( k^2 (U(0)-c)^2  + U''(0) (U(0)-c) - U'(x_2^c) U'(0) \big) \p_{c_R} y_B(0) \\
&+ \big( U'(x_2^c)(2k^2 (c-U(0)) - U''(0)) - U''(x_2^c) U'(0)\big) y_B(0).
\end{align*}
We shall use \eqref{E:Dc} to replace $\p_{c_R} y_B$ and $\p_{c_R} y_B'$ by $D_c y_B$ and $(D_c y_B)'$, the latter of which would produce $y_B''(0)$. All those $y_B''(0)$ multiplied by $U(0)-c$ can be substituted by \eqref{E:Ray-NH-2}, but we keep other $y_B''(0)$ terms in the expression. On the other hand, we use \eqref{E:Dc1-Ray-NH} to substitute $(D_c y_B)''(0)$ in $\zeta_{2+}$, which turns out to be rather regular due to the multiplier $(U(0)-c)^2$. Finally, we can write 
\begin{align*}
\zeta_{2+} =& -\xi_2 U''(x_2^c) + f_3(c) \psi_0 (0) + f_4( c) \psi_0'(0) + f_5(k, c) y_B (0) +  f_6(k, c) y_B' (0) +  f_7(k, c) D_c y_B (0) \\
&+  f_8(k, c) (D_c y_B)' (0) +  ( g+ \sigma k^2) y_B''(0), 
\end{align*}
where the functions $f_j(k, c, x_2)$, $j=3, \ldots, 8$, are 
\begin{align*}
& f_3 =(c -U(0)) f_1 + 3 U'(x_2^c) - U'(0), \quad f_4 = c - U(0), \\
& f_5 = k^2 \big((4U'(x_2^c) - U'(0)) (c-U(0)) \big) +  U'''(0) (U(0)-c) -2 U''(0) U'(x_2^c) - U'(0) U''(x_2^c),  \\
& f_6 =-k^2 (U(0)-c)^2 + (2U''(x_2^c) -U''(0)) (U(0)-c) - 2U'(x_2^c) (U'(x_2^c) - U'(0) ), \\
& f_7 = 2\big(  k^2 (U(0)-c)^2  + U''(0) (U(0)-c) - U'(x_2^c) U'(0) \big), \\
& f_8= 2\big( (2U'(x_2^c) - U'(0)) (U(0)-c) -g -\sigma k^2\big),
\end{align*}
and are at least $C^1$ in $c_R$ and $x_2$.  

The terms $y_B''(-h)$ in $\zeta_{2-}$ and $y_B''(0)$ in $\zeta_{2+}$ generate the most singular part of $D_c^2 y_B$ which, based on Lemma \ref{L:Ray-BVP-1}, takes the form 
\[
y_S (x_2) = -\frac {y_B''(-h)}{y_+(-h)} y_+(x_2) + \frac { ( g+ \sigma k^2) y_B''(0) 
}{\BF(k, c)} y_-(x_2) = \frac { g+ \sigma k^2}{\BF(k, c)}\big( -y_B''(-h) y_+(x_2) +  y_B''(0)  y_-(x_2)\big). 
\]
Let 
\[
\tilde y = D_c^2 y_B - y_S.  
\]
Clearly, it satisfies the same non-homogeneous Rayleigh equation \eqref{E:Dc2-Ray-NH} and boundary conditions 
\be \label{E:Dc2-Ray-BC-3} 
\tilde y (c, -h) = \tilde \zeta_{2-} (c) \triangleq  2 (D_c y_B)' (c, -h)   
\ee
\be \label{E:Dc2-Ray-BC-4} 
\big(U(0)-c\big)^2 \tilde y' (0) - \big(U'(0) (U(0)-c) + g + \sigma k^2\big) \tilde y (0) = \tilde \zeta_{2+} (c) \triangleq \zeta_{2+} -  ( g+ \sigma k^2) y_B''(0).    
\ee
Let $\tilde y_3$ and $\tilde y_4$ be the solutions to \eqref{E:Dc2-Ray-NH}  with zero boundary values in \eqref{E:Ray-BC-1} and non-homogeneous terms 
\[
\frac {\psi_2}{U-c}, \quad \frac {\psi_0'' + f_2 \psi_0 + 2f_1 \psi_0'}{U-c},  
\]
respectively. Using the above estimates of $y_B$ and $D_c y_B$ and applying Lemma \ref{L:y_nh0-2-esti}, we obtain
\begin{align*}
|\tilde y_3|_{L_{c_R, x_2}^2} + \mu |\tilde y_3'|_{L_{c_R, x_2}^2} \le & C\mu \big(|y_B|_{L_{c_R, x_2}^2} + |D_c y_B|_{L_{c_R, x_2}^2} + \mu^{1-\ep} |y_B'|_{L_{c_R, x_2}^2} + \mu^{1- \ep} |(D_c y_B)'|_{L_{c_R, x_2}^2} \big) \\
\le & C\mu^{\frac 52-\ep} \big( |\xi_1| + |\xi_2| + \mu^{-1} |\psi_0|_{L^2} + |\psi_0'|_{L^2} + \mu^{-2} |\xi_-|\big). 
\end{align*}
As $c_I \to 0+$, the convergence of $y_B$ and $D_c y_B$ implies that of $\psi_2$ in $L_{c_R}^r W_{x_2}^{1,r}$ for any $r\in [1, 2)$. From Lemma \ref{L:y_nh-esti-2}(2a), we obtain the convergence of $\tilde y_3$ in $L_{c_R}^r L_{x_2}^\infty$.  

Again we apply Lemma \ref{L:y_nh0-3-esti} 
to  estimate $\tilde y_4$
\[
|\tilde y_4|_{L_{c_R, x_2}^2} + \mu |\tilde y_4'|_{L_{c_R, x_2}^2} \le C\mu^{\frac 32-\ep} \big(  |\psi_0|_{L^2} + |\psi_0'|_{L^2} + |\psi_0''|_{L^2} \big). 
\]
As $c_I \to 0+$, Lemma \ref{L:y_nh0-3-esti}(2a)  implies that $\tilde y_4$ converges in $L_{x_2}^\infty L_{c_R}^2$. 

The boundary values of $\tilde y$ satisfy 
\[
|\tilde \zeta_{2-}|_{L_{c_R}^2} \le C\mu^{-\ep} \big( \mu^2 (|\xi_1| +|\xi_2|) + \mu^{-1} |\psi_0|_{L^2}  + |\psi_0'|_{L^2} \big) + C \mu^{-2} |\xi_-|,  
\]
\begin{align*}
|\tilde \zeta_{2+}|_{L_{c_R}^2} \le &C \big( |\xi_2| + |\psi_0(0)| + |\psi_0'(0)| + \mu^{-2} \big( |y_B |_{L_{c_R}^2} + |D_c y_B |_{L_{c_R}^2} + |y_B' |_{L_{c_R}^2} + |(D_c y_B)' |_{L_{c_R}^2} \big)\big|_{x_2=0} \big)\\
\le & C \big( |\psi_0'(0)| + \mu^{-2} \big(
|\xi_1| +|\xi_2| + \mu^{-1-\ep} |\psi_0|_{L^2} +\mu^{-\ep}|\psi_0'|_{L^2} +\mu^{-\ep} |\xi_-|\big)\big),
\end{align*}
where we also used the boundary conditions of $y_B$ and $D_c y_B$ to express them in terms of $y_B'$ and $D_c y_B'$ at $x_2=0$. As $c_I \to 0+$, the convergence of $y_B$ and $D_c y_B$ at $x_2 =0, -h$ implies that of $\xi_\pm$ in $L_{c_R}^r$ for any $r\in [1, 2)$.

As $\tilde y_3 + \tilde y_4$ plays the role of "$y_{nh}$" in the representation of $D_c^2y_B$ as given in Lemma \ref{L:Ray-BVP-1}, the above estimates 
and Lemma \ref{L:Ray-BVP-1} imply 
\[
|\tilde y|_{L_{c_R, x_2}^2} + \mu |\tilde y'|_{L_{c_R, x_2}^2} \le C \big(  \mu^{\frac 12} |\xi_1| + \mu^{\frac 12} |\xi_2| + \mu^{-\frac 12-\ep}|\psi_0|_{L^2} + \mu^{\frac 12-\ep}|\psi_0'|_{L^2} + \mu^{\frac 32 -\ep}|\psi_0''|_{L^2} \big), 
\]
where the $\psi_0'(0)$ term was bounded by the other norms of $\psi_0$ via interpolation. Finally, using \eqref{E:Dc} one can compute 
\be \label{E:Dc-2}
\p_{c_R}^2 = U'(x_2^c)^{-2} (D_c^2 - 2\p_{x_2} D_c + \p_{x_2}^2)  - (U'(x_2^c))^{-3} U''( x_2^c) (D_c - \p_{x_2}). 
\ee
This relationship and the definition of $\tilde y_B$ and $\tilde y$ yield
\begin{align*}
\tilde y_B =& \p_{c_R}^2 y_B + \frac 1{U'(x_2^c)^2} \Big( - y_B'' + \frac { g+ \sigma k^2 
}{\BF(k, c)}  \big( y_B'' (-h) y_+ - y_B''(0) 
 y_-\big) \Big)\\
= & \frac 1{U'(x_2^c))^2} \Big(\tilde y - 2 (D_c y_B)' - \frac{U''(x_2^c)}{U'(x_2^c)} (D_c y_B - y_B')\Big).   
\end{align*}
Therefore the desired estimate on $\tilde y_B$ follows from those of $\tilde y$, $y_B$, and $D_c y_B$. The convergence of $\tilde y_B$ is also obtained much as that of $D_c y_B$. 
\end{proof}

\section{Solutions to the Euler equation linearized at shear flows} \label{S:Linear}

In this section, we finally return to the linearized flow of the capillary gravity water waves at the shear flow $U(x_2)$ in both the horizontally $L$-periodic (in $x_1$) case and the $x_1 \in \R$ case. 
Under the assumption \eqref{E:no-S-M} of the absence of singular modes for all $k$, we shall show that a.) inviscid damping occurs to a large component (remotely related to the rotational part) of the solutions and b.) what is left in the solutions are superpositions of non-singular modes (smooth eigenfunctions). The latter is a linear dispersive flow which is asymptotic to the linear irrotational flow for high spatial wave numbers $k$.

\subsection{Estimating each Fourier mode of the linear solutions} \label{SS:integral formulas}


Based on \eqref{E:Ray} and the formula of the inverse Laplace transform, we first derive some integral representation formulas of the linear solution $\big(\hat v (t, k, x_2), \hat \eta (t, k, x_2)\big)$ of \eqref{E:LE-F} for a fixed wave number $k \ne 0$ satisfying \eqref{E:F_0}. This procedure is essentially obtaining the linear solution group from contour integrals of the resolvents of the linear operator defined by the linearized water wave problem at the shear flow. Subsequently estimates of solutions are obtained using these formulas. Due to the conjugacy relation $\hat v(t, -k, x_2)= \overline {\hat v(t, k, x_2)}$ and $\hat \eta (t, -k)= \overline {\hat \eta(t, k)}$, we shall mostly work on estimates for $k>0$ in this subsection, unless otherwise specified. 

Recall $\BF$ defined in \eqref{E:BF}. Denote the set of non-singular modes 
\be \label{E:R(k)}
R(k) = \{ c \notin U([-h, 0]) \mid \BF(k, c)=0\}
\ee 
Throughout this subsection, we fix $k \ne 0$ and assume \eqref{E:F_0}. We shall also use \eqref{E:h-0}, possibly after choosing smaller $\rho_0$. The continuity of $\BF$ and \eqref{E:F_0} imply that $R(k)$ is a finite set, which consists of only simple roots $c^\pm (k)$ for large $k$ due to Lemma \ref{L:ev-large-k}(3).  
We shall work on the following type of neighborhoods of $U([-h, 0]) \subset \C$
\be \label{E:CD}
\CD_{r_1, r_2} =[-r_1 +U(-h), U(0)+ r_1] + i [-r_2, r_2] \subset R(k)^c, \quad r_1, r_2 \in (0, \rho_0), 
\ee     
where $\rho_0$ is given in \eqref{E:F_0}. 

Recall the Laplace transform $V_2 (k, c, x_2)$ of $\hat v_2(t, k, x_2)$, defined by \eqref{E:LT-1} and \eqref{E:s-c}, is the solution of the boundary value problem \eqref{E:Ray} of the Rayleigh equation, or equivalently, the solution to \eqref{E:Ray-BVP} and \eqref{E:Ray-BC-2} with 
\be \label{E:Ray-data-1}
\psi  =-\hat \omega_0(k, x_2) = -i k^{-1} (k^2 -\p_{x_2}^2) \hat v_{20}, \;\; \xi_-=0, \;\; \xi_1 = (g+ \sigma k^2) \hat \eta_0 (k), \;\; \xi_2 = -i k^{-1} \hat v_{20}' (k, 0),
\ee
and $\hat \omega_0(k, x_2)$, $\hat \eta_0(k)$ and $\hat v_{20} (k, x_2)$ are the Fourier transforms with respect to $x_1$ of the initial values $\omega_0(x)$, $\eta_0 (x_1)$ and $v_{20} (x)$. 
The solution $V_2(k, c, x_2)$ to \eqref{E:Ray} is still given by Lemma \ref{L:Ray-BVP-1} along with \eqref{E:Ray-BC-2} and \eqref{E:Ray-data-1}. More explicitly, if $\BF(k, c)\ne 0$, then 
\be \label{E:V2} \begin{split}
V_2(k, c, x_2) =  & \frac {(g+ \sigma k^2) \hat \eta_0 (k) -\frac ik (U(0)-c) \hat v_{20}' (k, 0) }{\BF(k, c)} y_-(k, c, x_2)  + y_{nh} (k, c, x_2),
\end{split} \ee 
where 
$y_\pm$ are solutions to the homogeneous Rayleigh equation \eqref{E:Ray-H0-1} satisfying initial conditions \eqref{E:y-pm}   
and $y_{nh}$ the solution to \eqref{E:Ray-BVP} 
given by \eqref{E:y_nh-0} with  $\zeta_\pm =0$ and $\psi=\hat \omega_0(k, x_2)$. 
The Laplace transform $\tilde \eta (k, c)$ of $\hat \eta (t, k)$ can be computed by using \eqref{E:t-eta} and the boundary condition \eqref{E:Ray-BC-1} along with \eqref{E:Ray-BC-2}, \eqref{E:Ray-data-1}, and \eqref{E:h-0}  
\be \label{E:LT-eta} \begin{split}
\tilde \eta (k, c) = & \frac {V_2(k, c, 0) +\hat \eta_0 (k)}{ik(U(0)-c)} 
= \frac { V_2'(k, c, 0) (U(0)-c) + U'(0)\hat \eta_0 (k) + \frac ik \hat v_{20}'(k, 0)   }{ik \big( U'(0)(U(0)-c) + g + \sigma k^2\big)}, \quad k \ne 0.
\end{split} \ee
We shall also need the following quantities   
\be \label{E:Bb} \begin{split}
&\Bb(t, k, c_*, x_2) = - (ik) Res \big( V_2 e^{-ik(c-c_*)t}, c_*\big), \\ 
& \Bb_S (t, k, c_*) = - (ik) Res \big( \tilde \eta e^{-ik(c-c_*)t}, c_*\big) = - Res \big( V_2(k, c, 0) e^{-ik(c-c_*)t}/(U(0)-c), c_*\big)
\end{split} \ee
where $Res(f(z), z_*)$ is the residue of a meromorphic function $f(z)$ at $z_*$. Apparently $\Bb= \Bb_S =0$ unless $\BF(k, c_*) =0$, or equivalently $c_* \in R(k)$. 
The following lemma is obtained from applying the inverse Laplace transform. 

\begin{lemma} \label{L:integralF}
Assume $U\in C^3$ and $k > 0$ satisfies \eqref{E:F_0}, then 
for any $r_1,r_2\in (0, \rho_0)$, we have 
\[
\hat v_2 (t, k, x_2) = \hat v_2^c + \hat v_2^p \triangleq - \frac k{2 \pi } \oint_{\p \CD_{r_1, r_2}}  e^{-ik ct} V_2(k, c, x_2)  d c + \sum_{c_* \in R(k)} 
e^{-ic_* kt} \Bb(t, k, c_*, x_2), 
\]
\[
\hat \eta (t, k) = \hat \eta^c + \hat \eta^p \triangleq - \frac k{2 \pi } \oint_{\p \CD_{r_1, r_2}}  e^{-ik ct} \tilde \eta (k, c)  d c + \sum_{c_* \in R(k)} 
e^{-ic_* kt} \Bb_S (k, c_*).
\]
\end{lemma}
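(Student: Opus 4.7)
The plan is to apply the inverse Laplace transform formula \eqref{E:ILT} to $V_2(k,c,x_2)$ and $\tilde\eta(k,c)$ and then deform the $c$-contour into the sum of a counterclockwise loop around $\CD_{r_1,r_2}$, which captures the continuous spectrum supported on $U([-h,0])$, plus small loops around the finitely many isolated poles $c_*\in R(k)$, which supply the residue contributions $\Bb$ and $\Bb_S$. I focus on $t>0$ (the case $t\le 0$ then follows from the analyticity in $t$ of both sides).

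First I would confirm that $V_2$ and $\tilde\eta$ are meromorphic in $c\in\C\setminus U([-h,0])$ with poles exactly at $c_*\in R(k)$. For $V_2$ this is immediate from the representation \eqref{E:V2}, the analyticity of $y_\pm$ and $y_{nh}$ in $c\notin U([-h,0])$ (Lemma \ref{L:Ray-H-reg}), and \eqref{E:BF-ana}. For $\tilde\eta$, the first form in \eqref{E:LT-eta} has an apparent pole at $c=U(0)$, but substituting the boundary condition \eqref{E:Ray-3} at $c=U(0)$ forces $V_2(k,U(0),0)+\hat\eta_0(k)=0$, so $\tilde\eta$ is regular at $U(0)$; the second form of \eqref{E:LT-eta} makes this cancellation explicit and, a similar direct calculation using \eqref{E:Ray-3} shows that the other candidate pole at $U'(0)(U(0)-c)+g+\sigma k^2=0$ is also spurious, so the poles of $\tilde\eta$ coincide with those of $V_2$.

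Next I would establish the decay $|V_2|,|\tilde\eta|=O(|c|^{-1})$ as $|c|\to\infty$. For $|c|$ large (hence far from $U([-h,0])$), Lemma \ref{L:ev-large-k}(2) gives $|\BF(k,c)|\gtrsim |c|^2$, while the singular term $U''/(U-c)$ in \eqref{E:Ray-H1-1} reduces to a small perturbation of $-y''+k^2 y=0$, so a variant of Lemma \ref{L:Ray-regular-1} (or \ref{L:regular-small-k} for bounded $|k|$) yields uniform-in-$c$ bounds on $y_\pm$ and $y_\pm'$. The numerator of the coefficient of $y_-$ in \eqref{E:V2} is $O(|c|)$ and $y_{nh}$ is $O(1)$, whence $V_2=O(|c|^{-1})$; for $\tilde\eta$ the combined factor $(U(0)-c)V_2'(k,c,0)$ in the numerator of the second form of \eqref{E:LT-eta} is $O(1)$ while the denominator grows like $|c|$. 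Since $k,t>0$, $|e^{-ikct}|=e^{kc_I t}$ decays as $c_I\to-\infty$, so Jordan's lemma guarantees that the integrals along a large semicircle in the lower half $c$-plane vanish as the radius tends to infinity.

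With these ingredients the contour deformation is standard. By the semicircle theorem and Lemma \ref{L:ev-large-k}(2), choose $\gamma>0$ large enough that the original horizontal contour $c_I=\gamma/k$ lies strictly above $R(k)\cup U([-h,0])$. Closing in the lower half plane with the vanishing large semicircle and applying Cauchy's theorem to the resulting clockwise closed contour gives
\[
\int_{c_I=\gamma/k} e^{-ikct}\, V_2\,dc = -\oint_{\p\CD_{r_1,r_2}} e^{-ikct}\, V_2\,dc - 2\pi i\sum_{c_*\in R(k)}Res\bigl(e^{-ikct}V_2,c_*\bigr),
\]
using that $\CD_{r_1,r_2}\subset R(k)^c$ by hypothesis, so no isolated pole lies inside the $\CD_{r_1,r_2}$ detour. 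Multiplying by the prefactor $k/(2\pi)$ from \eqref{E:ILT} and writing $Res(e^{-ikct}V_2,c_*)=e^{-ikc_*t}\,Res(e^{-ik(c-c_*)t}V_2,c_*)$ together with the definition \eqref{E:Bb} of $\Bb$ yields the formula for $\hat v_2$; the identity for $\hat\eta$ is obtained identically, with $\Bb_S$ in place of $\Bb$. The main obstacle is securing the uniform-in-$c$ decay of $V_2,\tilde\eta$ along the vertical rays required for Jordan's lemma, since the estimates of Section \ref{S:Ray-Homo} are formulated only in a neighborhood of $U([-h,0])$; but for $|c|\gg 1$ the perturbation $U''/(U-c)$ in \eqref{E:Ray-H1-1} is small, and a routine variant of the a priori bounds of Subsection \ref{SS:a-esti} via the variation of parameters formula supplies the required control on $y_\pm$ and $y_{nh}$.
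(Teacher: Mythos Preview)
Your approach is essentially the same as the paper's: establish $V_2,\tilde\eta = O(|c|^{-1})$ as $|c|\to\infty$ and then deform the inverse Laplace contour to $\partial\CD_{r_1,r_2}$ plus small loops at the poles $c_*\in R(k)$. The paper's execution differs only cosmetically --- it first shows the integral along $c_I=-\gamma$ vanishes (by closing further downward in the pole-free region) and writes $\hat v_2$ as the difference of the two horizontal integrals, which then deforms exactly as you describe; your single Jordan-lemma closure is equivalent.

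There is one slip. You assert ``$y_{nh}$ is $O(1)$, whence $V_2=O(|c|^{-1})$,'' but $O(1)$ for $y_{nh}$ would only give $V_2=O(1)$, which is not enough for Jordan's lemma (one needs $V_2\to 0$ on the large arc). In fact $y_{nh}=O(|c|^{-1})$: in \eqref{E:y_nh-0} the denominator $y_+(-h)=(g+\sigma k^2)^{-1}\BF(k,c)$ grows like $|c|^2$ by Lemma \ref{L:ev-large-k}(2), while each numerator term is $O(|c|)$ (use that $y_-$ is bounded, $y_+=O(|c|^2)$ by Lemma \ref{L:regular-small-k} applied with the initial data \eqref{E:y-pm}, and $1/(U-c)=O(|c|^{-1})$). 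With this correction your argument goes through; the paper records exactly this bound as $\limsup_{|c|\to\infty}|c|\,|y_{nh}|<\infty$.
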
    

From Lemma \ref{L:ev-large-k}, $c_* \in R(k)$ implies $y_-(k, c, 0)\ne 0$ and thus $F(k, c)$ is well-defined for $c$ near $c_*$. In part (2), similar types of formula and estimates of $\Bb_S$ can be obtained from those of $\Bb$ and \eqref{E:Bb}. 
In the subsequent analysis, the limits of the above contour integrals as $\CD_{r_1, r_2}$ shrinks to $U([-h, 0])$ will be taken and estimated whenever needed.  
  
\begin{proof} 
From the definition \eqref{E:LT-1} and the inverse Laplace transform formula \eqref{E:ILT}, we have 
\[ 
\hat v_2 (t, k, x_2) = \frac k{2 \pi }\int_{ -\infty +i \gamma}^{ +\infty+ i\gamma}  e^{-ik ct} V_2(k, c, x_2)  d c, 
\]
where $\gamma >0$ is chosen such that the above integrand is analytic for $c_I > \gamma$. 
Apparently $V_2$ is analytic in $c \notin \big(U([-h, 0]) \cup \{ \BF=0\}\big)$. In order to analyze $V_2$ for $|c|\gg1$, we first consider  $y_+$ and then $y_{nh}$ for $|c|\gg1$. From Lemma \ref{L:regular-small-k} and initial conditions \eqref{E:y-pm}, it holds that 
\[
0< \liminf_{|c| \to \infty} |y_+(k, c, x_2)|/( 1+ |c|^2) \le \limsup_{|c| \to \infty} |y_+(k, c, x_2)|/( 1+ |c|^2) <\infty.
\]  
Along with \eqref{E:Ray-data-1} and Lemma \ref{L:y-pm} which yields the boundedness of $y_-$ for $|c|\gg1$, it implies 
\[
\limsup_{|c| \to \infty} |c| |y_{nh}(k, c, x_2)| < \infty. 
\]
From Lemma \ref{L:ev-large-k}(2) and again Lemma \ref{L:y-pm}, we obtain\footnote{Through a more careful analysis we may obtain  a Taylor expansion   of $V_2$ in terms of $\frac 1c$ as $|c|\to \infty$.
} 
\[
\limsup_{|c| \to \infty} |c| |V_2 (k, c, x_2)| < \infty. 
\]
As $|e^{-ikct}| = e^{kt \IP\, c}$, the Cauchy integral theorem yields     
\begin{align*} 
\int_{ -\infty -i \gamma}^{ +\infty- i\gamma}  e^{-ik ct} V_2(k, c, x_2)  d c =0 \; \text{ and }\;  \hat v_2 (t, k, x_2) = \frac k{2 \pi } \Big( \int_{ -\infty +i \gamma}^{ +\infty+ i\gamma}  -  \int_{ -\infty -i \gamma}^{ +\infty- i\gamma}  \Big) e^{-ik ct} V_2(k, c, x_2)  d c.
\end{align*}
The desired expression of $\hat v_2$ follows immediately from the residue calculation. 

Concerning $\hat \eta$, one first obtains
\[
\hat \eta(t, k) = \frac k{2 \pi } \Big( \int_{ -\infty +i \gamma}^{ +\infty+ i\gamma}  -  \int_{ -\infty -i \gamma}^{ +\infty- i\gamma}  \Big) e^{-ik ct}  
\tilde \eta (k, c) dc.  
\]
Using the first expression in \eqref{E:LT-eta}, the desired formula for $\hat \eta$ is derived via the same arguments as in the above. In particular, the $\hat \eta_0$ term does not contribute to the residue as $R(k)$ is away from $U([-h, 0])$ due to assumption \eqref{E:F_0}. 
\end{proof}

From the divergence free condition on the velocity, it holds that the Fourier transform (in $x_1$) of the velocity field satisfies $ik \hat v_1 =- \hat v_2'$. Therefore, we have  

\begin{corollary} \label{C:integralF}
Under the assumptions of Lemma \ref{L:integralF}, we have 
\[
\hat v_1 (t, k, x_2) = \hat v_1^c + \hat v_1^p \triangleq - \frac i{2 \pi } \oint_{\p \CD_{r_1, r_2}}  e^{-ik ct} V_2'(k, c, x_2)  d c +  \sum_{c_* \in R(k)
} \frac ik e^{-ic_* kt} \Bb' (t, k, c_*, x_2).  
\]
\end{corollary}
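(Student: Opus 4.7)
The plan is to obtain this corollary as an immediate consequence of Lemma~\ref{L:integralF} together with the divergence-free condition. From the second equation of \eqref{E:LE-F}, one has $ik\hat v_1 + \hat v_2' = 0$ for $k\ne 0$, so
\[
\hat v_1(t,k,x_2) = \frac{i}{k}\,\partial_{x_2}\hat v_2(t,k,x_2).
\]
Applying $\partial_{x_2}$ to the decomposition of $\hat v_2$ in Lemma~\ref{L:integralF} and multiplying by $i/k$ yields the claimed formula, once one checks that $\partial_{x_2}$ may be pulled inside both the contour integral and the residue operation (which I denote schematically by $\partial_{x_2}\circ \oint = \oint \circ\, \partial_{x_2}$ and $\partial_{x_2}\circ \mathrm{Res} = \mathrm{Res}\circ\, \partial_{x_2}$). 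The arithmetic $\frac{i}{k}\cdot\bigl(-\tfrac{k}{2\pi}\bigr) = -\tfrac{i}{2\pi}$ then gives the $\hat v_1^c$ integral, and $\frac{i}{k}\cdot 1$ in front of each residue gives the sum defining $\hat v_1^p$.

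First I would justify the interchange with the contour integral. By the choice \eqref{E:CD}, the contour $\partial\CD_{r_1,r_2}$ is a compact subset of $\C$ disjoint from $U([-h,0])\cup R(k)$, so on $\partial\CD_{r_1,r_2}\times[-h,0]$ the function $V_2(k,c,x_2)$ is given by the regular representation \eqref{E:V2} in which $y_\pm$ and $y_{nh}$ (from \eqref{E:y_nh-0}) are smooth in $x_2$ and continuous in $c$; in particular $\BF(k,c)$ is bounded away from $0$ by \eqref{E:F_0}. Since $c$ stays at positive distance from the singular locus $\{U(x_2)=c\}$, both $V_2$ and $V_2' = \partial_{x_2}V_2$ are jointly continuous and uniformly bounded on $\partial\CD_{r_1,r_2}\times[-h,0]$, which legitimizes differentiating under the integral sign. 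Next, for each $c_*\in R(k)$ I choose a small circle $\gamma_*\subset\C\setminus(U([-h,0])\cup(R(k)\setminus\{c_*\}))$ enclosing only $c_*$ and rewrite
\[
\Bb(t,k,c_*,x_2) = -\frac{ik}{2\pi i}\oint_{\gamma_*} V_2(k,c,x_2)\,e^{-ik(c-c_*)t}\,dc;
\]
the same joint continuity argument applied on the compact set $\gamma_*\times[-h,0]$ permits differentiation under this integral, so that $\partial_{x_2}\Bb(t,k,c_*,x_2) = -(ik)\,\mathrm{Res}\bigl(V_2'\,e^{-ik(c-c_*)t},c_*\bigr) = \Bb'(t,k,c_*,x_2)$.

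The main (and only) technical point is the joint regularity of $V_2(k,c,x_2)$ and $V_2'(k,c,x_2)$ on the compact cylindrical region $\{\p\CD_{r_1,r_2}\cup\bigcup_{c_*\in R(k)}\gamma_*\}\times[-h,0]$; this is genuinely easier than the delicate analysis needed in Section~\ref{S:Ray-Homo} and Section~\ref{S:Ray-BC} because we have deliberately arranged the contour to avoid the singular set $U([-h,0])$, where $y_-'$ exhibits its logarithmic singularity and $V_2'$ can blow up. Away from that set the Rayleigh equation \eqref{E:Ray} is a regular ODE with coefficients smooth in $(c,x_2)$, so its solution $V_2$ depends smoothly on $(c,x_2)$ jointly, and in particular $\partial_{x_2}$ commutes with both operations. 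There is no deeper obstacle: the corollary is essentially bookkeeping combining the divergence-free condition with Lemma~\ref{L:integralF}.
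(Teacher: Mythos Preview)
Your proposal is correct and takes essentially the same approach as the paper, which simply notes that the divergence-free condition $ik\hat v_1 = -\hat v_2'$ gives $\hat v_1 = \tfrac{i}{k}\hat v_2'$ and states the corollary without further argument. Your added justification for differentiating under the contour integral and the residue is a reasonable elaboration, since the contour lies in the region of analyticity where $V_2$ depends smoothly on $(c,x_2)$; the paper regards this as evident and omits it.
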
 

In the following lemma, we give some basic properties of $\Bb(t, k, c, x_2)$ and $\Bb_S (t, k, c)$ at some $c_*\in R(k)$. Since $c_*$ is away from $U([-h, 0])$ and $\BF(k, \cdot)$ and $F(k, \cdot)$ are analytic in a neighborhood of $c_*$, the assumption \eqref{E:F_0} is not needed. 

\begin{lemma} \label{L:Bb} 
Assume $U\in C^{l_0}$, $l_0\ge 3$, and $k > 0$. 
Let $c_*\in R(k)$ be a root of $\BF(k, \cdot)$ (or equivalently, of $F(k, \cdot)$ defined in \eqref{E:dispersion}) of degree $n\ge 1$, then the following hold. 
\begin{enumerate} 
\item $e^{-ik c_* t} \Bb(t, k, c_*, x_2)$ is a solution to \eqref{E:LEuler-v2}.     
\item $\Bb (t, k, c_*, x_2)$ is a linear combination of $t^{l_1} \p_c^{l_2} y_-(k, c_*, x_2)$, $0\le l_1 + l_2 \le l=n-1$, and $\Bb_S(t, k, c_*)$ a linear combination of $t^l$, $0\le l \le n-1$, with coefficients depending on $k$ and $c_*$. The leading terms of $\Bb(t, k, c_*, x_2)$ with $l_1+l_2= n-1$ are given by 
\be \label{E:Bb-2} \begin{split}
\frac { (n!) (-ik)^{l_1+1}}{l_1! l_2! \p_c^n F(k, c_*)} \Big( &(g+ \sigma k^2) \hat \eta_0 (k) -\frac ik (U(0)-c_*) \hat v_{20}' (k, 0) \\
& +\frac  {(U(0)-c_*)^2 }{y_- (k, c_*, 0) } \int_{-h}^0 \frac {(y_-  \hat \omega_0) (k, c_*, x_2')}{U(x_2') -c_*}  dx_2' \Big) \frac {t^{l_1} \p_c^{l_2} y_-(k, c_*, x_2) }{y_-(k, c_*, 0) },   
\end{split} \ee
and the leading terms of $ik (U(0)-c_*)\Bb_S(t, k, c_*)$ is given by the above expression evaluated at $x_2=0$.  
\item If $c_*$ is a simple root of $\BF(k, \cdot)$, i.e., $n=1$, then $\Bb$ and $ik (U(0)-c_*)\Bb_S$ are given by the  above expression  and 
there exists $C>0$ determined only by $|U'|_{C^{l_0-1}}$ and $|(U')^{-1}|_{C^0}$ such that  
\begin{align*}
|\p_{x_2}^{n_2} \Bb (k, c_*, x_2)| \le C |\p_c F(k, c_*)|^{-1} \Big( & |k|\mu^{-2} |\hat \eta_0(k)| + (1+ |c_*|) |\hat v_{20}' (k, 0)| \\
& + \frac {|k| \mu^{\frac 32} e^{\mu^{-1} h} (1+|c_*|^2)|\hat \omega_0(k)|_{L_{x_2}^2}}{dist (c_*, U([-h, 0])) |y_-(k, c_*, 0)|} \Big)\Big|\frac {\mu^{1-n_2} e^{\mu^{-1} (x_2+h)}}{y_-(k, c_*, 0)}\Big|,
\end{align*}
for any $n_2 \in [0, l_0]$, where we recall $\mu= (1+k^2)^{-\frac 12}$. 
\end{enumerate} 
\end{lemma}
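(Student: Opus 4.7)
The strategy is to analyze the meromorphic resolvent $V_2(k,c,x_2)$ via Laurent expansion at its pole $c_*$; all three statements then follow from different aspects of this analysis.

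For statement (1), I would apply the operator $\mathcal{T}:=(\p_t+ikU)(k^2-\p_{x_2}^2)+ikU''$ from \eqref{E:LEuler-v2-1} directly to $e^{-ikct}V_2$. Using the Rayleigh equation \eqref{E:Ray-1} rewritten as $(k^2-\p_{x_2}^2)V_2=-(U''V_2+\hat\omega_0)/(U-c)$ together with the identity $(\p_t+ikU)[e^{-ikct}g]=ik(U-c)e^{-ikct}g$ for any $t$-independent $g$, a short computation yields $\mathcal{T}[e^{-ikct}V_2]=-ike^{-ikct}\hat\omega_0(k,x_2)$, which is entire in $c$. Parallel computations show that the boundary operator of \eqref{E:LEuler-v2-2} applied at $x_2=0$ produces $-k^2 e^{-ikct}N(k,c)$ (entire in $c$, where $N(k,c)=(g+\sigma k^2)\hat\eta_0-(i/k)(U(0)-c)\hat v_{20}'(0)$) thanks to \eqref{E:Ray-3}, while $V_2|_{x_2=-h}\equiv 0$ by \eqref{E:Ray-2}. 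Since the residue in $c$ at $c_*$ commutes with $\p_t,\p_{x_2}$ and multiplication by the $c$-independent $U,U',U''$, taking $\mathrm{Res}$ delivers $\mathcal{T}[e^{-ikc_*t}\Bb]=0$ together with both boundary conditions of \eqref{E:LEuler-v2}.

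For statement (2), I would set $\Psi:=V_2\BF=Ny_-+\BF y_{nh}$; the Wronskian identity \eqref{E:Wronskian} together with \eqref{E:y_nh-0} makes $\BF y_{nh}=-(g+\sigma k^2)[y_+I_-^\omega+y_-I_+^\omega]$ analytic at $c_*$, so $\Psi$ is analytic there. With $\BF=(c-c_*)^n\tilde G$ and $\tilde G(k,c_*)=\p_c^n\BF(k,c_*)/n!$, the Laurent expansion $V_2=\sum_{l\ge-n}\phi_l(k,x_2)(c-c_*)^l$ convolved with the Taylor series of $e^{-ik(c-c_*)t}$ produces, upon extracting the $(c-c_*)^{-1}$ coefficient,
\[
\Bb(t,k,c_*,x_2)=\sum_{j=0}^{n-1}\frac{(-ik)^{j+1}t^j}{j!}\,\phi_{-1-j}(k,x_2).
\]
To identify each $\phi_{-1-j}$ as a linear combination of $\p_c^{l_2}y_-(k,c_*,\cdot)$ with $l_2\le n-1-j$, multiply \eqref{E:Ray-1} by $(U-c)$ and match coefficients of $(c-c_*)^l$: the most singular level $l=-n$ gives $\CR_*\phi_{-n}=0$ together with the homogeneous forms of \eqref{E:Ray-2}-\eqref{E:Ray-3} (since $N(k,c)$ is analytic), forcing $\phi_{-n}=\beta\, y_-(k,c_*,\cdot)$ for an $x_2$-independent scalar $\beta$; the levels $l=-n+j$, $j\ge 1$, satisfy the Jordan-chain recursion $(U-c_*)\CR_*\phi_{-n+j}=(k^2-\p_{x_2}^2)\phi_{-n+j-1}$, solvable inductively in the span $\{y_-(c_*),\p_c y_-(c_*),\ldots,\p_c^j y_-(c_*)\}$ because $j$-fold Leibniz differentiation of $\CR y_-=0$ in $c$ furnishes exactly the right particular solutions. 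The explicit leading coefficient in \eqref{E:Bb-2} follows from $\phi_{-n}=\Psi(k,c_*,\cdot)/\tilde G(k,c_*)$: at $c_*$, the vanishing Wronskian $y_+(k,c_*,-h)=0$ and the initial conditions \eqref{E:y-pm} force $y_+(k,c_*,\cdot)=\alpha_0\, y_-(k,c_*,\cdot)$ with $\alpha_0=(U(0)-c_*)^2/[(g+\sigma k^2)y_-(k,c_*,0)]$, allowing $\Psi(k,c_*,x_2)$ to factor as $y_-(k,c_*,x_2)$ times the scalar bracket in \eqref{E:Bb-2}; division by $\tilde G(k,c_*)=y_-(k,c_*,0)\p_c^n F(k,c_*)/n!$ (obtained from $\BF=y_-(k,c,0)F$ via Leibniz, since only the $l=0$ term of $\p_c^n\BF$ survives as $F$ has an order-$n$ zero at $c_*$) yields the stated leading formula. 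The counterpart for $\Bb_S$ is analogous using the second expression in \eqref{E:Bb} and the analyticity of $1/(U(0)-c)$ at $c_*\ne U(0)$.

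For statement (3) with $n=1$ the formula from (2) is the sole term. The pointwise bound then follows from combining Lemma \ref{L:y-pm}, which yields $|\p_{x_2}^{n_2}y_-(k,c_*,x_2)|\le C\mu^{1-n_2}e^{\mu^{-1}(x_2+h)}$ uniformly (the logarithmic singularity terms in \eqref{E:y-_4} are absorbed since $c_*$ is bounded away from $U([-h,0])$), Cauchy-Schwarz on the vorticity integral giving $\big|\int_{-h}^0 y_-\hat\omega_0/(U-c_*)\,dx_2'\big|\le |y_-|_{L^2_{x_2}}|\hat\omega_0|_{L^2_{x_2}}/\mathrm{dist}(c_*,U([-h,0]))\le C\mu^{3/2}e^{\mu^{-1}h}|\hat\omega_0|_{L^2}/\mathrm{dist}$, and the elementary estimates $|g+\sigma k^2|\le C\mu^{-2}$ and $|U(0)-c_*|\le C(1+|c_*|)$ governing the weights on $\hat\eta_0$ and $\hat v_{20}'(k,0)$. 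The principal obstacle is in statement (2): verifying the Jordan-chain identification requires careful tracking that the inductively constructed $\sum\alpha_{l_2,j}\p_c^{l_2}y_-(k,c_*,\cdot)$ satisfies the non-homogeneous boundary condition at $x_2=0$ emerging at each Laurent level from the $c$-dependent datum in \eqref{E:Ray-3}, and that the combinatorial factor $n!/(l_1!\,l_2!)$ in \eqref{E:Bb-2} arises correctly from the interplay between Leibniz-differentiation of $\BF=y_-(k,c,0)F(k,c)$ at the order-$n$ zero $c_*$ and the Taylor expansion of $e^{-ik(c-c_*)t}$.
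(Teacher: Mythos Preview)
Your treatment of statements (1) and (3) matches the paper's argument essentially verbatim, and your computation of the leading Laurent coefficient $\phi_{-n}=\Psi(c_*)/\tilde G(c_*)$ via the colinearity $y_+(k,c_*,\cdot)=\alpha_0\,y_-(k,c_*,\cdot)$ is exactly what the paper does.

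The difference is in how you establish the full span structure in statement (2). The paper does \emph{not} run a Jordan-chain recursion on Laurent coefficients. Instead it upgrades the single-point colinearity to an identity valid for all $c$ near $c_*$: writing $\tilde y(c,x_2)=y_+(c,x_2)-\dfrac{(U(0)-c)^2}{(g+\sigma k^2)y_-(c,0)}\,y_-(c,x_2)$, one checks from \eqref{E:y-pm} that $\tilde y(c,0)=0$ and $\tilde y'(c,0)=-F(k,c)/(g+\sigma k^2)=O((c-c_*)^n)$, so $\tilde y(c,\cdot)=O((c-c_*)^n)$ uniformly. Substituting this into $y_{nh}$, the $\tilde y$ contributions cancel the order-$n$ pole of $1/y_+(-h)$ and drop out of the residue, leaving the singular part of $V_2$ in the form $\dfrac{h(c)}{(c-c_*)^n}\,y_-(k,c,x_2)$ with $h$ analytic. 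A single Leibniz expansion of the residue formula then gives every $\phi_{-1-j}$ as a combination of $\p_c^{l_2}y_-(c_*)$ with $l_2\le n-1-j$, and simultaneously produces the combinatorial factor $n!/(l_1!\,l_2!)$ from $\binom{n-1}{l_1,l_2}/(n-1)!$ times the denominator $\tilde G(c_*)=y_-(c_*,0)\p_c^n F(c_*)/n!$.

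This buys the paper a complete proof without ever touching the recursive boundary-condition matching you flag as the principal obstacle: that obstacle simply does not arise, because the $y_-(c,\cdot)$ factor in the singular part automatically carries the correct boundary behavior in $c$. Your Jordan-chain route is sound in principle but leaves exactly the gap you identify; the missing observation is that the identity you use only at $c=c_*$ holds modulo $O((c-c_*)^n)$ in a full neighborhood.
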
 

\begin{proof}
According to Lemma \ref{L:e-v-basic-1}(4), $\BF (k, c_*)=0$ implies $y_-(k, c_*, 0)\ne 0$ and thus $F(k, c)$ is analytic in $c$ for $c$ near $c_*$ and the degree of $c_*$ as a root of both $F(k, \cdot)$ and $\BF(k, \cdot)$ is $n\ge1$. By the definition of $R(k)$ and the analyticity of $F(k, \cdot)$, $c_*\in R(k)$ is an isolated root of $F(k, \cdot)$. Let $1\gg R>0$ such that there are no other roots of $F(k, \cdot)$ in the disk $B(c_*, R)$ centered at $c_*$ with radius $R$. Using the fact that $V_2(k, c, x_2)$ solves \eqref{E:Ray}, one may compute 
\begin{align*}
& (\p_t + ik U) (k^2 - \p_{x_2}^2) (e^{-ik c_* t} \Bb(t, k, c_*, x_2)) = \frac {\p_t + ik U}{2\pi} \oint_{\p B(c_*, R)} k e^{-ik c t} (k^2 - \p_{x_2}^2) V_2(k, c, x_2)dc \\
=& \frac {i k^2}{2\pi } \oint_{\p B(c_*, R)} e^{-ik c t} (U-c)  (k^2 - \p_{x_2}^2) V_2(k, c, x_2)dc = -  \frac {i k^2 U''}{2\pi } \oint_{\p B(c_*, R)} e^{-ik c t} V_2(k, c, x_2)dc \\
=& - ik U'' e^{-ik c_* t} \Bb(t, k, c_*, x_2),
\end{align*}
and thus \eqref{E:LEuler-v2-1} is satisfied. Similar calculation also proves the boundary condition \eqref{E:LEuler-v2-2} at $x_2=0$. The zero boundary value at $x_2 =-h$ is obvious from that of $V_2$ at $x_2 =-h$. Therefore statement (1) is proved. 

To analyze $\Bb$ in more details, let 
\[
F_1 (c) = (c-c_*)^{-n} F(k, c) \implies F_1 (c_*) = \p_c^n F(k, c_*)/(n!) \ne 0, 
\]
and 
\[
\tilde y(c, x_2) = y_+ (k, c, x_2) - \frac {(U(0)-c)^2}{(g+ \sigma k^2) y_-(k, c, 0)} y_-(k, c, x_2). 
\]
From the initial conditions \eqref{E:y-pm} of $y_\pm$, it is straight forward to verify 
\[
\tilde y(c, 0) =0, \quad \tilde y'(c, 0)= - \frac {F(k,c)}{g+ \sigma k^2} = O(|c-c_*|^n) \implies y_+ (x_2) = \frac {(U(0)-c)^2y_-(x_2)}{(g+ \sigma k^2) y_-(0)}  + O(|c-c_*|^n). 
\] 

Using the above expression to substitute $y_+(k, c, x_2)$ in the residue (in the definition of $\Bb$) and observing that the $O(|c-c_*|^n)$ term cancels the singularity of $y_+ (k, c, -h)$ for $|c-c_*|\ll1$ which results in an analytic function contributing nothing to the residue, we have 
\[
Res(y_{nh} (k, c, x_2)e^{-ik(c-c_*)t}, c_*) 
= Res \Big( \frac {(U(0)-c)^2 y_- (k, c, x_2) e^{-ik(c-c_*)t}}{(g+\sigma k^2) y_- (k, c, 0) y_+(k, c, -h) } \int_{-h}^{0} \frac {(y_-  \hat \omega_0) (k, c, x_2')}{U(x_2') -c}  dx_2', c_*\Big).
\]
From definitions \eqref{E:y_nh-0}, \eqref{E:BF}, and \eqref{E:dispersion}, of $y_{nh}$, $\BF$, and $F$, \eqref{E:Ray-data-1},  
we have   
\begin{align*}
&Res(y_{nh} (k, c, x_2)e^{-ik(c-c_*)t}, c_*) 
= Res \Big( \frac {(U(0)-c)^2 y_- (k, c, x_2)e^{-ik(c-c_*)t}}{(c-c_*)^n F_1(c)y_- (k, c, 0)^2 } \int_{-h}^0 \frac {(y_-  \hat \omega_0) (k, c, x_2')}{U(x_2') -c}  dx_2', c_* \Big)\\
=& \frac 1{(n-1)!} \p_c^{n-1} \Big( \frac {(U(0)-c)^2 y_- (k, c, x_2)e^{-ik(c-c_*)t}}{F_1(c)y_- (k, c, 0)^2 } \int_{-h}^0 \frac {(y_-  \hat \omega_0) (k, c, x_2')}{U(x_2') -c}  dx_2' \Big)\Big|_{c=c_*}\\
=& \sum_{l=0}^{n-1} \frac 1{l! (n-l-1)!} \p_c^{n-l-1} \Big( \frac {(U(0)-c)^2 e^{-ik(c-c_*)t}}{F_1(c)y_- (k, c, 0)^2 } \int_{-h}^0 \frac {(y_-  \hat \omega_0) (k, c, x_2')}{U(x_2') -c}  dx_2' \Big)\Big|_{c=c_*} \p_c^l y_- (k, c_*, x_2). 
\end{align*}
Therefore this residue is a linear combination of $t^{l_1} \p_c^{l_2} y_-(k, c_*, x-2)$, $0\le l_1 + l_2 \le n-1$, with coefficients depending on $k$ and $c_*$. The coefficients for  $l_1+l_2= n-1$ are given by 
\begin{align*}
&\frac { (\p_c^{l_1}e^{-ik(c-c_*)t})|_{c=c_*}}{l_1! l_2! t^{l_1}} \frac {(U(0)-c_*)^2 }{F_1(c_*)y_- (k, c_*, 0)^2 } \int_{-h}^0 \frac {(y_-  \hat \omega_0) (k, c_*, x_2')}{U(x_2') -c_*}  dx_2'  \\
= & \frac { (n!) (-ik)^{l_1} (U(0)-c_*)^2 }{l_1! l_2! \p_c^n F(k, c_*)y_- (k, c_*, 0)^2 } \int_{-h}^0 \frac {(y_-  \hat \omega_0) (k, c_*, x_2')}{U(x_2') -c_*}  dx_2'.  
\end{align*}
The contributions of the terms involving $\eta_0(k)$ and $\hat v_{20}'(k, 0)$ can be analyzed similarly (actually simpler as $y_+$ is not involved) and we obtain the desired statement (2) on the form of $\Bb$ and $\Bb_S$. 

If $c_* \in R(k)$ is a simple root of $\BF(k, \cdot)$, i.e. $n=1$, then $\Bb$ and $\Bb_s$ have only one term with $l_1=l_2=0$ and are constants in $t$ as given in statement (2). 
It along with Lemma \ref{L:y-pm} readily leads to its estimate. 
\end{proof} 
 
\begin{corollary} \label{C:v^c} 
$\hat v_2^c$ is also a solution to \eqref{E:LEuler-v2}. Moreover if $c_*$ is a simple root of $F(k, \cdot)$, then the corresponding eigenvalue $-ik c_*$ is algebraically simple in the subspace of the $k$-th Fourier modes.  
\end{corollary}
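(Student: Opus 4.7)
The plan is to use linearity of the transport-type system \eqref{E:LEuler-v2} together with the decomposition in Lemma \ref{L:integralF}. The full Fourier mode $\hat v_2(t, k, x_2)$ solves \eqref{E:LEuler-v2} because it is the $k$-th Fourier coefficient of the actual linear solution $v$. By Lemma \ref{L:Bb}(1), each residue contribution $e^{-ic_* k t}\Bb(t, k, c_*, x_2)$ solves \eqref{E:LEuler-v2}, hence their finite sum $\hat v_2^p$ also solves \eqref{E:LEuler-v2}, and so does the difference $\hat v_2^c = \hat v_2 - \hat v_2^p$. This is essentially a one-line argument.

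\textbf{Plan for (2) -- geometric multiplicity.} Assume $c_*$ is a simple root of $F(k,\cdot)$; since $\BF(k,c) = F(k,c)\, y_-(k,c,0)$ and $y_-(k, c_*, 0) \ne 0$ by Lemma \ref{L:e-v-basic-1}(4), this is equivalent to $c_*$ being a simple root of $\BF(k,\cdot)$. First I would invoke Lemma \ref{L:e-value}: eigenfunctions of $-ik c_*$ in the $k$-th Fourier mode subspace correspond (up to scalar) to non-trivial solutions of the homogeneous boundary value problem of \eqref{E:Ray}. Such solutions must be proportional to $y_-(k, c_*, \cdot)$ (the unique solution of the homogeneous Rayleigh equation with $y(-h)=0$, up to scalar), and $\BF(k, c_*)=0$ is exactly the single scalar condition that makes $y_-(k,c_*,\cdot)$ satisfy \eqref{E:Ray-3} as well. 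Hence the eigenspace is one-dimensional.

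\textbf{Plan for (2) -- algebraic multiplicity.} The idea is to show that the resolvent has only a simple pole at $-ikc_*$, from which algebraic simplicity follows by standard spectral theory. From the representation \eqref{E:V2} and Lemma \ref{L:Ray-BVP-1}, $V_2(k,c,x_2)$ is meromorphic in $c$ near $c_*$, with the only singularity coming from the factor $1/\BF(k,c)$; since $\BF$ has a simple zero at $c_*$ and $y_{nh}(k,c,x_2)$ together with the remaining prefactors are analytic there, $V_2$ has a simple pole at $c_*$. Equivalently, Lemma \ref{L:Bb}(3) with $n=1$ says that for every initial datum $(\hat v_{20}, \hat\eta_0)$ the projection $e^{-ikc_* t}\Bb(k,c_*,x_2)$ onto the generalized eigenspace carries no polynomial-in-$t$ factor. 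Thus if a generalized eigenfunction of algebraic degree $\ge 2$ existed, yielding a linear solution $e^{-ikc_*t}(\phi_0 + t\phi_1)$ with $\phi_1\not\equiv 0$, its Laplace transform would produce a $(c-c_*)^{-2}$ singularity in $V_2$, contradicting the simple-pole property. The main obstacle I expect is making this Laplace-transform reading of algebraic multiplicity fully rigorous in the correct function-space setting; this should follow by combining the explicit pole-order calculation above with standard resolvent/semigroup theory applied to the operator defined by \eqref{E:LEuler-v2} restricted to the $k$-th Fourier mode, since the isolated eigenvalue $-ikc_*$ is separated from the rest of the spectrum by the semicircle theorem and \eqref{E:F_0}.
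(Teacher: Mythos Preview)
Your proposal is correct and matches the paper's intended argument: part (1) is exactly the linearity observation using Lemma \ref{L:Bb}(1), and for part (2) both you and the paper rely on the simple-pole structure of $V_2$ at $c_*$, equivalently that $\Bb$ carries no $t$-factor when $n=1$ (Lemma \ref{L:Bb}(2)--(3)). Your caveat about rigor is not a real obstacle here: the paper sidesteps abstract resolvent theory by working directly with the explicit residue formula, which itself furnishes the spectral projection (formalized immediately afterward in Lemma \ref{L:S-projection-1}), so the one-dimensionality of its range for $n=1$ is read off directly from \eqref{E:Bb-2}.
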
   
 
Based on the above lemmas, it is natural to define 
\be \label{E:BX-k}
\BP(k, c_*): (\hat v_{0}, \hat \eta_0) \to \big(i \Bb(0, k, c_*, \cdot)'/k, \Bb(0, k, c_*, \cdot), \Bb_S (0, k, c_*)\big), \; 
\BX (k, c_*) = range (\BP(k, c_*)).
\ee 
The following lemma gives that $\BP(k, c_*)$ defines the invariant spectral projection to the eigenspace $\BX(k, c_*)$ of $-ikc_*$ spanned by $\p_c^l y_- (k, c_*, \cdot)$, $0\le l \le n-1$. 
  
\begin{lemma} \label{L:S-projection-1} 
Assume the same conditions as in Lemma \ref{L:Bb}, then 
\[
\BX (k, c_*) = span\big\{ \big(i \p_c^l y_-' (k, c_*, \cdot)/k, \p_c^l y_-(k, c_*, \cdot), \p_c^l y_- (k, c_*, 0)/(ik (U(0)-c_*) \big) \mid l=0, \ldots, n-1\},
\]
is an invariant subspace of \eqref{E:LE-F} and 
\[
\BP(k, c_*): (\hat v_{0}, \hat \eta_0) \to \big(i \Bb(0, k, c_*, \cdot)'/k, \Bb(0, k, c_*, \cdot), \Bb_S (0, k, c_*)\big)
\]
is an invariant projection operator of \eqref{E:LE-F} to $\BX(k, c_*)$ with 
\[
\ker \big(\Sigma_{c_*\in R(k)} \BP(k, c_*)\big) = \big\{ \big(\hat v^c(0, k, \cdot), \hat \eta^c(0, k)\big)  \mid \text{ all initial values } \hat v_0(k, \cdot), \eta_0 (k) \big\}. 
\]
\end{lemma}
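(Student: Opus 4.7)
The plan is to identify $\BX(k,c_*)$ as the generalized eigenspace of $-ikc_*$ for the linearized operator in the $k$-th Fourier mode, and $\BP(k,c_*)$ as the corresponding residue (spectral) projection; the required properties then follow by combining Lemma~\ref{L:integralF} (the contour/residue decomposition of the solution) with Lemma~\ref{L:Bb} (the Jordan-chain structure of $\Bb$). Since Lemma~\ref{L:Bb}(1) already shows that $e^{-ikc_* t}(i\Bb'/k,\Bb,\Bb_S)$ is a solution of \eqref{E:LE-F}, invariance of the image of $\BP(k,c_*)$ under the linear evolution is automatic, and by Lemma~\ref{L:Bb}(2) this image is \emph{a priori} contained in $\BX(k,c_*)$. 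It remains to prove (a) the image of $\BP(k,c_*)$ equals $\BX(k,c_*)$, (b) $\BP(k,c_*)$ is idempotent with $\BP(k,c_*')\BP(k,c_*)=0$ for $c_*'\ne c_*$, (c) $\BP(k,c_*)$ commutes with the linear flow $(\hat v_0,\hat\eta_0)\mapsto(\hat v(t),\hat\eta(t))$, and (d) the kernel formula.

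First I would verify that for each $l=0,\ldots,n-1$ the triple $(i\p_c^l y_-'(k,c_*,\cdot)/k,\,\p_c^l y_-(k,c_*,\cdot),\,\p_c^l y_-(k,c_*,0)/(ik(U(0)-c_*)))$ is a generalized eigenvector of order $l+1$ for the eigenvalue $-ikc_*$. For $l=0$ this reduces to checking that $e^{-ikc_* t}$ times the spatial part solves \eqref{E:LEuler-v2-1}--\eqref{E:LEuler-v2-2}, which uses the homogeneous Rayleigh equation \eqref{E:Ray-H1-1} at $c=c_*$ and the identity $\BF(k,c_*)=0$; for $l\ge 1$ the Jordan chain relations are obtained by differentiating these identities $l$ times in $c$, and the chain has length exactly $n$ because $\p_c^j F(k,c_*)=0$ for $j<n$ while $\p_c^n F(k,c_*)\ne 0$ (and $y_-(k,c_*,0)\ne 0$ by Lemma~\ref{L:e-v-basic-1}(4)). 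Counting dimensions then identifies $\BX(k,c_*)$ with the full $n$-dimensional generalized eigenspace and establishes its invariance.

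For idempotency, the pivotal observation is that if the initial data $(\hat w_0,\hat\zeta_0)$ already lies in $\BX(k,c_*)$, then the solution has the form $e^{-ikc_* t}P(t,x_2)$ with $P$ polynomial in $t$ of degree at most $n-1$. Its Laplace transform $V_2(k,c,\cdot)$ is therefore a rational function of $c$ whose only singularity is a pole of order $\le n$ at $c=c_*$. Choosing $r_1,r_2\in(0,\rho_0)$ small enough that $c_*\notin\CD_{r_1,r_2}$, the contour integral in Lemma~\ref{L:integralF} vanishes by analyticity, so $(\hat v^c,\hat\eta^c)\equiv 0$, and among the residues only the one at $c_*$ contributes; summing recovers $(\hat w_0,\hat\zeta_0)$. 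This simultaneously proves $\BP(k,c_*)^2=\BP(k,c_*)$, that $\mathrm{Range}\,\BP(k,c_*)=\BX(k,c_*)$, and that $\BP(k,c_*')\BP(k,c_*)=0$ for $c_*'\ne c_*$.

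Finally, commutativity with the flow follows because the Laplace transform of the evolved solution at time $t_0$ differs from that at $t=0$ by the analytic factor $e^{-ikct_0}$, so the residue at $c_*$ defining $\BP(k,c_*)$ transforms equivariantly under time shift; and the kernel formula is a direct reading of Lemma~\ref{L:integralF}, since $(\hat v_0,\hat\eta_0)-\sum_{c_*\in R(k)}\BP(k,c_*)(\hat v_0,\hat\eta_0) = (\hat v^c(0,k,\cdot),\hat\eta^c(0,k))$. The hard part will be the Jordan chain verification in step two: matching the boundary combination appearing in \eqref{E:LEuler-v2-2} with the $\eta$-component normalization $\p_c^l y_-(k,c_*,0)/(ik(U(0)-c_*))$ requires differentiating $F(k,c_*)=0$ repeatedly and tracking how derivatives of $1/(U(0)-c)$ interact with derivatives of $y_-(k,c,0)$, and the vanishing $\p_c^j F(k,c_*)=0$ for $j<n$ is precisely what makes these boundary contributions assemble into a genuine generalized eigenvector of the correct order rather than a lower element of the chain.
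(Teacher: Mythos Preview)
Your proposal is correct and follows essentially the same route as the paper: both use the key observation that if the initial data lies in the claimed span, then the evolved solution is $e^{-ikc_*t}$ times a polynomial in $t$, whose Laplace transform has its only pole at $c_*$, so the contour piece $\hat v^c$ vanishes and the residue reproduces the data. The only minor difference is in establishing $\BX(k,c_*)=\mathrm{span}\{\p_c^l y_-\}$: the paper reads this off from the explicit leading-term formula \eqref{E:Bb-2} by varying the initial data, whereas you verify the Jordan chain relations directly by differentiating the homogeneous Rayleigh equation and $F(k,c)=0$ in $c$; both arguments work and yield the same conclusion.
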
 


\begin{proof} 
The statement of the lemma is rather standard in the operator calculus and Laplace transform, while constructing solutions to \eqref{E:LE-F} using Laplace transform is equivalent to using contour integrals of the resolvent operators in the complex spectral plane. We shall only outline the proof and skip some details. 

Due to the translation invariance in $t$ of solutions to \eqref{E:LE-F}, the $t=0$ in the definition of $\BX(k, c_*)$ can be replaced by any $t\in \R$. From Lemma \ref{L:Bb}, all solutions $(i\Bb'/k, \Bb, \Bb_S)$ are polynomials of $t$ of degree no more than $n-1$. It is standard to show inductively that $\BX(k, c_*)$ consists of all possible coefficients of $t^l$, which can be computed to be generated by $\p_c^l y_- (k, c_*, \cdot)$, $0\le l \le n-1$, using \eqref{E:Bb-2} and the relationship between $\Bb$ and $\Bb_S$. The invariance of $\BX(k, c_*)$ under \eqref{E:LE-F} is due to the fact that $(i\Bb/k, \Bb, \Bb_s)$ are solutions to  \eqref{E:LE-F}. To show $\BP(k, c_*)^2 = \BP(k, c_*)$, let $(\hat u_0, \hat \nu_0) = \BP(k, c_*) (\hat v_0, \hat \eta_0) \in \BX(k, c_*)$. With this initial value, the solution $(\hat u(t), \hat \nu(t))$ is simply the $(i\Bb/k, \Bb, \Bb_S)$ component of the solution with the initial value $(\hat v_0, \hat \eta_0)$. Hence $(\hat u(t), \hat \nu(t))$ takes the form given in Lemma \ref{L:Bb}(2). Its Laplace transform is analytic at all $c \ne c_*$ and thus the $(i\Bb/k, \Bb, \Bb_S)$ component of $(\hat u(t), \hat \nu(t))$ is equal to itself. Therefore we obtain $\BP(k, c_*) (\hat u_0, \hat \nu_0) = (\hat u_0, \hat \nu_0)$. Finally the description of the kernel of $\sum_{c_*\in R(k)} \BP(k, c_*)$ is obvious due to the fact that both $(i\Bb/k, \Bb, \Bb_S)$ and $(\hat v^c (t), \hat \eta^c(t))$ are solutions. 
\end{proof} 
 
\begin{remark} \label{R:e-func}
In particular, if 
\[
\hat v_{20} (k, x_2) = y_- (k, c_*, x_2), \quad \hat \eta_0 = y_-(k, c_*, 0)/\big(ik (U(0)-c_*\big),
\] 
then straight forward verification yields 
\[
V_2 (k, c, x_2) = \frac {y_-(k, c_*, x_2)}{ik (c_*-c)}, \quad \hat v^c =0, \quad  \Bb  =y_-(k, c_*, x_2) = \hat v_{20}, \quad \Bb_S  = \frac {y_-(k, c_*, 0)}{ik (U(0)-c_*)} = \hat \eta_0.  
\]
From Lemma \ref{L:e-value}, $-ic_*k$ is an eigenvalue (with the above eigenfunctions generated by $y_-(k, c_*, x_2)$) of the linearized capillary gravity water wave at the shear flow, which has to be geometrically simple when restricted to the $k$-th Fourier mode in $x_1$. Its algebraic multiplicity is equal to the degree of the root $c_*$ of $\BF(k, \cdot)$. The eigenfunctions of the linearized irrotational capillary gravity wave are generated by $\frac 1k \sinh k(x_2+h)$. From Lemmas \ref{L:Ray-regular-1}(1) (with $\rho= O(k^{-\frac 52})$, $s=0$,  $C_0=0$, and $\Theta_1=\Theta_2 =\sinh$) and \ref{L:ev-large-k}(3), it is straight forward to estimate that, after normalizing the $L^2$ norm of $v_2$ to be $1$, the $L^2$ and $H^1$ differences in the $v$ and $\eta$ components, respectively, between the eigenfunctions of \eqref{E:LE-F} and the irrotational capillary gravity waves linearized at zero is of order $O(k^{-\frac 32})$ as $|k| \to \infty$.   
\end{remark} 

In the rest of this subsection we consider $\hat v^c(t, k, x_2)$ and $\hat \eta^c(t, k)$. We shall always work on $c \in [U(-h)-\rho_0, U(0) +\rho_0] + i [-\rho_0, \rho_0]$. We first present some properties of $V_2$ and $\tilde \eta$. Let us keep in mind that for analytic functions, $\p_c$ and $\p_{c_R}$ are equivalent.  

\begin{lemma} \label{L:V2-esti-1}
It holds that $V_2$ and $\tilde \eta$ are analytic in $c \in \C  \setminus \big(U([-h, 0]) \cup R(k)\big)$ and satisfy 
\[
V_2(-k, \bar c, x_2) = \overline{V_2(k, c, x_2)}, \quad \tilde \eta(-k, \bar c, x_2) = \overline{\tilde \eta (k, c, x_2)}.
\] 
Assume $U \in C^{l_0}$, $l_0\ge 3$, and \eqref{E:F_0}, 
then the following hold for some . 
\begin{enumerate} 
\item For any $\ep>0$, there exists $C>0$ determined only by $\ep$, $F_0$, $\rho_0$, $|U'|_{C^{l_0-1}}$, and $|(U')^{-1}|_{C^0}$ (independent of $k \in \R$) such that  for any $c_I \in [0, \rho_0]$,  
\begin{align*}
&|V_2|_{L_{c_R, x_2}^2} + \mu |V_2'|_{L_{c_R, x_2}^2} + \mu^{\frac 32} |V_2' (0)|_{L_{c_R}^2} + \mu^{\frac 12} |V_2 (0)|_{L_{c_R}^2} \\
\le & C \big( \mu^{\frac 12} |\hat \eta_0(k)| + |k|^{-1} \mu^{\frac 52} |\hat v_{20}' (k, 0)| + \mu^{\frac 32-\ep} |\hat \omega_0(k)|_{L_{x_2}^2} \big);
\end{align*} 
\[
|\tilde \eta|_{L_{c_R}^2} \le C \big( |k|^{-1} \mu |\hat \eta_0(k)| + |k|^{-2} \mu^2 |\hat v_{20}' (k, 0)| +|k|^{-1} \mu^{2-\ep} |\hat \omega_0(k)|_{L_{x_2}^2} \big), 
\]
if $l_0\ge 4$, then 
\[
|\p_{c_R} \tilde \eta|_{L_{c_R}^2} \le  C \big( |k|^{-1} |\hat \eta_0 (k)| + |k|^{-2} \mu^2 |\hat v_{20}' (k, 0)| + |k|^{-1} \mu^{1-\ep} |\hat \omega_0 (k)|_{L_{x_2}^2} + |k|^{-1} \mu^{2-\ep} |\hat \omega_0' (k)|_{L_{x_2}^2} \big),
\]
\begin{align*}
&|\p_{c_R} V_2|_{L_{c_R, x_2}^2} + \mu |\p_{c_R} V_2' + \tfrac {V_2''}{U'} |_{L_{c_R, x_2}^2} + \mu^{\frac 32}\big|\big(\p_{c_R} V_2' + \tfrac {V_2''}{U'}\big) (0)\big|_{L_{c_R}^2} + \mu^{\frac 12} |\p_{c_R}V_2 (0)|_{L_{c_R}^2} \\ 
\le & C \big(  \mu^{-\frac 12} |\hat \eta_0(k) | + |k|^{-1} \mu^{\frac 32} |\hat v_{20}' (k, 0)| + \mu^{\frac 12-\ep}|\hat \omega_0(k)|_{L_{x_2}^2} + \mu^{\frac 32-\ep}| \hat \omega_0'(k)|_{L_{x_2}^2}\big);
\end{align*}
and if $U\in C^5$, then 
\[
|\tilde V_2|_{L_{c_R, x_2}^2} \le  C\big(  \mu^{-\frac 32} |\hat \eta_0(k) | + |k|^{-1} \mu^{\frac 12} |\hat v_{20}' (k, 0) | + \mu^{-\frac 12-\ep}|\hat \omega_0(k) |_{L_{x_2}^2} + \mu^{\frac 12-\ep} |\hat \omega_0'(k)|_{L_{x_2}^2}+ \mu^{\frac 32 -\ep} | \hat \omega_0''(k)|_{L_{x_2}^2} \big), 
\]
where 
\[
\tilde V_2 (x_2) =\p_{c_R}^2  V_2(x_2) - \frac {V_2''(x_2)}{U'(x_2)^2} +  \frac {g+\sigma k^2}{\BF(k, c)} \Big( \frac {V_2'' (-h)}{U'(-h)^2} y_+(x_2) - \frac {V_2''(0)}{U'(0)^2 } y_- (x_2)\Big), 
\]
and all the norms are taken on $(c_R, x_2) \in [U(-h)-\rho_0,  U(0)+\rho_0] \times [-h, 0]$. 
\item As $c_I\to 0+$, on $[-r_1 + U(-h), r_1]$
\[
V_{20} (k, c_R, x_2) \triangleq \lim_{c_I \to 0+} V_2(k, c_R+ic_I, x_2), \quad \tilde \eta_{0} (k, c_R, x_2) \triangleq \lim_{c_I \to 0+} \tilde \eta (k, c_R+ic_I, x_2) 
\]
exist and the following hold. 
\begin{enumerate}
\item Assume $\hat \omega_0(k)\in L^2$ and $U \in C^3$, then for any $r\in [1, 2)$, $V_2 \to V_{20}$ in $L_{x_2}^\infty L_{c_R}^2$,  $V_2' \to V_{20}'$ in $L_{x_2}^\infty L_{c_R}^r$, and $V_2'(0) \to V_{20}'(0)$ and $\tilde \eta \to \tilde \eta_0$ in $L_{c_R}^2$. 
\item Assume $\hat \omega_0(k)\in H^1$  and $U \in C^4$, then for any $r\in [1, 2)$ and $q \in [1, \infty)$, $\p_{c_R} V_2 \to \p_{c_R} V_{20}$ in $L_{x_2}^\infty L_{c_R}^r$, $\p_{c_R} V_2' + \tfrac {V_2''}{U'} \to \p_{c_R} V_{20}' + \tfrac {V_{20}'' }{U'(x_2)}$ in $L_{x_2}^q L_{c_R}^r$, and $\big(\p_{c_R} V_2' + \tfrac { V_2''}{U'} \big)(0) \to \big(\p_{c_R} V_{20}' + \tfrac {V_{20}'' }{U'} \big)(0)$ and $\p_{c_R} \tilde \eta \to \p_{c_R} \tilde \eta_0$ in $L_{c_R}^r$. 
\item Assume $\hat \omega_0(k)\in H^2$  and $U \in C^5$, then for any $r\in [1, 2)$, $\tilde V_2$ converges  to its limit $\tilde V_{20}$ in $L_{x_2}^\infty L_{c_R}^r$.
\end{enumerate}    
\end{enumerate}
\end{lemma}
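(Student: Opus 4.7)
The plan is to recognize that $V_2(k,c,x_2)$ is exactly the solution $y_B$ of the boundary value problem \eqref{E:Ray-BVP} with the data \eqref{E:Ray-BC-2} specialized according to \eqref{E:Ray-data-1}, namely $\psi_0 = -\hat\omega_0(k,\cdot)$, $\xi_- = 0$, $\xi_1 = (g+\sigma k^2)\hat\eta_0(k)$, and $\xi_2 = -ik^{-1}\hat v_{20}'(k,0)$. Hence all the estimates in statement (1) of the lemma and the limiting assertions in statement (2) concerning $V_2$, $V_2'$, $V_2'(0)$, $\p_{c_R}V_2$, $\p_{c_R}V_2' + V_2''/U'$, and the remainder $\tilde V_2$ will be read off directly from Proposition \ref{P:pcy_B} by plugging these data in and tracking $|\xi_1| \le C\mu^{-2}|\hat\eta_0(k)|$, $|\xi_2| \le |k|^{-1}|\hat v_{20}'(k,0)|$, and $|\psi_0|_{H^j} = |\hat\omega_0(k)|_{H^j_{x_2}}$. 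The analyticity in $c \in \C\setminus(U([-h,0])\cup R(k))$ and the conjugacy $V_2(-k,\bar c,x_2) = \overline{V_2(k,c,x_2)}$ follow from \eqref{E:V2} together with Lemma \ref{L:Ray-H-reg} (evenness of $y_\pm$ in $k$ and conjugacy in $c$), Lemma \ref{L:y0}, and \eqref{E:BF-conj}; the analyticity and conjugacy of $\tilde\eta$ are then inherited from \eqref{E:LT-eta}.

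For $\tilde\eta$, the plan is to use the second identity in \eqref{E:LT-eta}. By \eqref{E:h-0}, the denominator $ik\bigl(U'(0)(U(0)-c) + g+\sigma k^2\bigr)$ has modulus bounded below by $|k|\mu^{-2}/C$, uniformly in $c_R \in \CI$ and $|c_I|\le\rho_0$. The numerator is $V_2'(k,c,0)(U(0)-c) + U'(0)\hat\eta_0(k) + \tfrac{i}{k}\hat v_{20}'(k,0)$, so the $L^2_{c_R}$ bound on $\tilde\eta$ follows by bounding $|V_2'(\cdot,0)|_{L^2_{c_R}}$ via Proposition \ref{P:pcy_B} and observing that $|U(0)-c|$ is bounded on $\CI + i[-\rho_0,\rho_0]$.

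The estimate on $\p_{c_R}\tilde\eta$ is the one requiring care because differentiating the numerator produces $(\p_c V_2'(0))(U(0)-c)$, which inherits the $V_2''$ singularity of $\p_c V_2'$ as $c_I\to 0+$. To handle this, I will write
\[
(\p_c V_2')(U(0)-c)\big|_{x_2=0} = \Bigl(\p_c V_2' + \tfrac{V_2''}{U'}\Bigr)(0)\,(U(0)-c) - \tfrac{U(0)-c}{U'(0)} V_2''(0),
\]
so that the first term is controlled in $L^2_{c_R}$ by Proposition \ref{P:pcy_B}, and the second one is rewritten using the Rayleigh equation \eqref{E:Ray-NH-2} at $x_2=0$ with data \eqref{E:Ray-data-1}, namely
\[
(U(0)-c)V_2''(0) = \bigl(k^2(U(0)-c)+U''(0)\bigr)V_2(0) + \hat\omega_0(k,0).
\]
The $V_2(0)$ contribution is dominated by the $V_2'(0)$ and $\hat\eta_0$ norms via the boundary condition \eqref{E:Ray-3}, while $|\hat\omega_0(k,0)|$ is absorbed into $\mu^{1/2-\ep}|\hat\omega_0|_{L^2_{x_2}} + \mu^{3/2-\ep}|\hat\omega_0'|_{L^2_{x_2}}$ by interpolation. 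The remaining contribution from differentiating the denominator is regular and gives only lower order terms.

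The convergence statements in (2) reduce in the same way to the convergence assertions of Proposition \ref{P:pcy_B}: statement (2a) uses the convergence of $y_B$, $y_B'$ and $y_B'(0)$; statement (2b) uses that of $\p_{c_R} y_B$, $\p_c y_B' + y_B''/U'$ and its boundary trace; and statement (2c) uses the convergence of $\tilde y_B$. For $\tilde\eta$ and $\p_{c_R}\tilde\eta$ the convergence follows from the explicit formula \eqref{E:LT-eta} combined with the convergence at $x_2=0$ and the uniform lower bound on the denominator. The main obstacle—and essentially the only nontrivial computation—is the rewriting above needed to identify the genuine singular contributions in $\p_c V_2'(0)$ with the quantity $\p_c V_2' + V_2''/U'$ that Proposition \ref{P:pcy_B} bounds, and then eliminating the resulting $V_2''(0)$ via the Rayleigh equation; the rest is bookkeeping of the $\mu$ weights.
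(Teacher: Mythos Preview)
Your proposal is correct and follows essentially the same route as the paper: identify $V_2 = y_B$ with the data \eqref{E:Ray-data-1}, invoke Proposition \ref{P:pcy_B} for all the $V_2$ estimates and convergences, and handle $\tilde\eta$ and $\p_{c_R}\tilde\eta$ via the second formula in \eqref{E:LT-eta} together with the rewriting of $(\p_c V_2')(U(0)-c)$ through $\p_c V_2' + V_2''/U'$ and the Rayleigh equation at $x_2=0$.

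One small point you gloss over: the quantities in the lemma are not literally those of Proposition \ref{P:pcy_B}. In $\tilde y_B$ and in $\p_{c_R} y_B' + \tfrac{1}{U'(x_2^c)} y_B''$ the factor is $U'(x_2^c)$, whereas in $\tilde V_2$ and in $\p_{c_R} V_2' + \tfrac{V_2''}{U'}$ the lemma uses $U'(x_2)$ (and $U'(-h)$, $U'(0)$ at the endpoints). The paper bridges this by noting $|U'(x_2^c)^{-n} - U'(x_2)^{-n}| \le C|U(x_2)-c|$ and then bounding $|U(x_2)-c|\,|V_2''|$ via the Rayleigh equation by $\mu^{-2}|V_2| + |\hat\omega_0|$; the analogous corrections at $x_2=0,-h$ are handled the same way. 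This is routine, but ``read off directly'' is not quite accurate without it.
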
 

Compared to Proposition \ref{P:pcy_B}, the modifications in the definition of $\tilde V_2$ is to make it analytic in $c$ which will make it more convenient in applying Lemma \ref{L:auxL-1} in the below. 

\begin{proof}
The estimates of $V_2$, $V_2'$, $V_2'(0)$, $\p_{c_R} V_2$, and their convergences are all direct corollaries of \eqref{E:Ray-data-1} and Proposition \ref{P:pcy_B}. 
The estimate of $\tilde \eta$ and its convergence follows from the second expression of \eqref{E:LT-eta} and the above properties of $V_2$. 

We also notice that, compared to Propositions \ref{P:pcy_B}, in the definition of $\tilde V_2$ as well as in the estimate related to $\p_{c_R} V_2'$, the $U'(x_2^c)$ in front of $V_2''$, $V_2''(-h)$, and $V_2''(0)$ had been replaced by $U'(x_2)$, $U'(-h)$, and $U'(0)$, respectively. This modification brings at most minor changes to the upper bounds. In fact,  
\[
\big|\big(U'(x_2^c)^{-n} - U'(x_2)^{-n}\big) V_2'' \big|_{L_{c_R, x_2}^2} \le C \big| |U(x_2) - c| |V_2''| \big|_{L_{c_R, x_2}^2} \le C \big(\mu^{-2} |V_2|_{L_{c_R, x_2}^2} + |\hat \omega_0 (k) |_{L_{x_2}^2}\big),   
\]    
for $n=1, 2$, where the Rayleigh equation was also used. This error bound and the estimate on $V_2$ are  then used to obtain the desired inequality on $\p_{c_R} V_2'$. The term $\frac {V_2''}{U'(x_2)^2}$ in $\tilde V_2$ is handled by the same argument. Similarly, 
\[
\big|\big(U'(x_2^c)^{-1} - U'(0)^{-1}\big) V_2''(0) \big|_{L_{c_R}^2} \le C \big| |U(0) - c| |V_2''(0)| \big|_{L_{c_R}^2} \le C \big(\mu^{-2} |V_2(0)|_{L_{c_R}^2} + |\hat \omega_0 (k, 0) |\big),   
\]    
and this along with the estimate on $V_2(0)$ yields the estimate on $\big(\p_{c_R} V_2' + \tfrac { V_2''}{U'} \big)(0)$. It remain the consider the modifications to the correction terms in $\tilde V_2$ at $x_2=-h$ and $x_2=0$. Similarly, 
\begin{align*}
& \big|\big(U'(x_2^c)^{-2} - U'(0)^{-2}\big) V_2'' (0)  y_- (x_2)\big|_{L_{c_R, x_2}^2} \le  C \big(\mu^{-2} |V_2(0)|_{L_{c_R}^2} + |\hat \omega_0 (k, 0) |\big) |y_-|_{L_{c_R}^\infty L_{x_2}^2}  
\end{align*}
which is controlled using $ |y_-|_{L_{c_R}^\infty L_{x_2}^2} \le C \mu^{\frac 32} e^{\frac h\mu}$ due to lemma \ref{L:y-pm}. The last remaining modification from $U'(x_2^c)^{-2}$ to $U'(-h)^{-2}$ can be justified by the same argument (even easier as $V_2(-h)=0$.)
 
Finally we consider $\p_{c_R} \tilde \eta$ for in $c\in \CD_{\rho_0, \rho_0}$. 
From \eqref{E:LT-eta}, one may compute 
\begin{align*}
&\p_{c_R} \tilde \eta = \tfrac 1{ik} \big( U'(0)(U(0)-c) + g + \sigma k^2\big)^{-2} \Big(
 \p_{c_R} V_2'( 0) (U(0)-c) \big( U'(0)(U(0)-c) + g + \sigma k^2\big) \\
 &- (g +\sigma k^2) V_2'(0)  + U'(0)\big( U'(0)\hat \eta_0 (k) + \tfrac ik \hat v_{20}'(k, 0)\big)\Big)\\  
=& \tfrac 1{ik} \big( U'(U-c) + g + \sigma k^2\big)^{-2} \Big( \big( \p_{c_R} V_2' + \tfrac {V_2''}{U'} - \tfrac 1{U'} \big(k^2 V_2+ \tfrac {U''V_2 + \hat \omega_0}{U-c}\big) \big)  (U-c) \big( U'(U-c) + g + \sigma k^2\big) \\
 &- (g +\sigma k^2) V_2'  + U'\big( U'\hat \eta_0 (k) + \tfrac ik \hat v_{20}'(k, 0)\big)\Big)\Big|_{x_2=0}
\end{align*}
where we used the Rayleigh equation \eqref{E:Ray-NH-2} in the last step. 
Therefore from Proposition \ref{P:pcy_B} we have,  for any $k\in \R$ and $c_I \in [0, r_2]$, 
\begin{align*}
&|\p_{c_R} \tilde \eta|_{L_{c_R}^2} \le C \big( |k|^{-1} \mu^4 |\hat \eta_0 (k)| + |k|^{-2} \mu^4 |\hat v_{20}' (k, 0)| + |k|^{-1} \mu^2 |V_{2}'(0)|_{L_{c_R}^2} + |k|^{-1} \mu^2 \big|\big(\p_{c_R} V_{2}' + \tfrac {V_2''}{U'}\big)(0)\big|_{L_{c_R}^2} \\
&\qquad\qquad \quad+ |k|^{-1} \mu^2 |(k^2(U(0)-c) + U''(0))V_{2}(0) + \hat \omega(k, 0) |_{L_{c_R}^2} \big) \\
\le & C \big( |k|^{-1} |\hat \eta_0 (k)| + |k|^{-2} \mu^2 |\hat v_{20}' (k, 0)| + |k|^{-1} \mu^{1-\ep} |\hat \omega_0 (k)|_{L_{x_2}^2} + |k|^{-1} \mu^{2-\ep} |\hat \omega_0' (k)|_{L_{x_2}^2}+  |k|^{-1} \mu^2 |\hat \omega_0 (k, 0)|\big).
\end{align*}
The last terms can be controlled by the previous two terms, which completes the estimate on $\p_{c_R} \tilde \eta$. The convergence of $\p_{c_R} \eta$ also follows from those of $V_2(0)$, $V_2' (0)$ and $\big(\p_{c_R} V_2' + \tfrac 1{U'(x_2^c)} V_2''\big)(0)$. 
\end{proof} 

The following lemma will be used in the decay estimates. 

\begin{lemma} \label{L:auxL-1}
Suppose  $n \ge 0$ is an integer,  $q \in [2, \infty]$, $f(c)$ and $f_1(c)$ are analytic functions on    
\[
\CD \setminus \CI_0 \subset \C, \; \text{  where } \;\CD = \CI + i [-\rho, \rho], \;\;  \CI_0 \subset (b_1, b_2), \;\;
\CI =[b_1, b_2] \subset \R, \;\; \rho>0, 
\]
and there exists $M>0$ such that $ |(f^{(n)} - f_1)(\cdot +ic_I) |_{L^{\frac q{q-1}}(\CI)} \le M$ for all $0< |c_I| \le \rho $, then there exists $C>0$ depending only on $b_2-b_1$ such that, for any $k\ne 0$,  
\[
\Big| \oint_{\p \CD} e^{-i ckt} \big( t^n f(c)  - (ik)^{-n} f_1(c) \big) dc\Big|_{L_t^q (\R)} \le C |k|^{-n-\frac 1q} M.    
\]
\end{lemma}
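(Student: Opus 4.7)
The plan is to combine two moves: an integration by parts in $c$ along the closed contour $\p\CD$ to extract the prefactor $(ik)^{-n}$, followed by a contour deformation plus the Hausdorff-Young inequality on $\R$ to produce the factor $|k|^{-1/q}$. For the first move, I use the identity $t^n e^{-ickt} = (-ik)^{-n}\, \p_c^n e^{-ickt}$ and integrate by parts $n$ times along $\p\CD$. Since $\p\CD$ is a piecewise-smooth closed curve and $f$ is analytic in a neighborhood of $\p\CD$ (after slightly shrinking $\CI$ if necessary so that $\CI_0 \subset (b_1, b_2)$, which is permitted by the hypothesis $\CI_0 \subsetneqq \CI$ and preserves the bound $M$), the corner contributions from consecutive smooth pieces cancel and no boundary terms appear, yielding
\[
\oint_{\p\CD} e^{-ickt}\bigl[t^n f(c) - (ik)^{-n} f_1(c)\bigr]\, dc \;=\; (ik)^{-n}\, G(t), \qquad G(t) := \oint_{\p\CD} e^{-ickt}\, h(c)\, dc,
\]
where $h := f^{(n)} - f_1$ is analytic in $\CD \setminus \CI_0$.

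Next, using the analyticity of $h$ in $\CD \setminus \CI_0$, I deform $\p\CD$ through the analytic region to a homotopic rectangular contour $\gamma_\epsilon$ with horizontal sides at heights $\pm\epsilon$ (any $\epsilon \in (0,\rho]$) and the same vertical sides at $c_R = b_1, b_2$. Writing out the contributions from $\gamma_\epsilon$ gives
\[
G(t) \;=\; e^{-\epsilon kt}\, \widehat{g_\epsilon^-}(kt) \;-\; e^{\epsilon kt}\, \widehat{g_\epsilon^+}(kt) \;+\; B_L^\epsilon(t) \;+\; B_R^\epsilon(t),
\]
where $g_\epsilon^\pm(c_R) := h(c_R \pm i\epsilon)\, \chi_\CI(c_R)$, $\widehat{\,\cdot\,}$ denotes the Fourier transform on $\R$, and $B_{L,R}^\epsilon$ are the short vertical pieces at $c_R = b_1, b_2$. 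The key point is that $G(t)$ is independent of $\epsilon$, so I can pass to $\epsilon \to 0^+$ pointwise in $t$: by weak-$*$ compactness of the bounded family $\{h(\cdot \pm i\epsilon)\}$ in $L^{q/(q-1)}(\CI)$ (with a direct $L^1$ bound replacing the weak-$*$ argument when $q = \infty$), one extracts boundary values $h_\pm^0 \in L^{q/(q-1)}(\CI)$ with $|h_\pm^0|_{L^{q/(q-1)}} \le M$; meanwhile $B_{L,R}^\epsilon(t) \to 0$ for each fixed $t$ because $h$ is locally bounded near $b_1, b_2 \notin \CI_0$. This identifies
\[
G(t) \;=\; \widehat{g_-^0}(kt) \;-\; \widehat{g_+^0}(kt), \qquad g_\pm^0 := h_\pm^0\, \chi_\CI \in L^{q/(q-1)}(\R).
\]

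The estimate now follows from the Hausdorff-Young inequality: for $q \in [2,\infty]$ the Fourier transform maps $L^{q/(q-1)}(\R)$ into $L^q(\R)$ with universal constant, so $|\widehat{g_\pm^0}|_{L^q(\R)} \le C M$; the substitution $\tau = kt$ then produces $|\widehat{g_\pm^0}(k\,\cdot\,)|_{L^q_t(\R)} = |k|^{-1/q}\, |\widehat{g_\pm^0}|_{L^q_\tau(\R)} \le C |k|^{-1/q} M$, which combined with the prefactor $|k|^{-n}$ delivers the claimed bound $C |k|^{-n - 1/q} M$. The main obstacle is the passage $\epsilon \to 0^+$: for any fixed $\epsilon>0$ the vertical pieces $B_{L,R}^\epsilon(t)$ grow like $e^{\epsilon|kt|}/|kt|$ at infinity and are not in $L^q_t$, so one cannot bound term by term on $\gamma_\epsilon$ for fixed $\epsilon$; what saves the argument is that the identification of $G(t)$ uses only pointwise-in-$t$ convergence (since $G$ is $\epsilon$-independent), and this in turn is exactly what needs the assumption $\CI_0 \subsetneqq \CI$ so that the vertical sides of $\gamma_\epsilon$ can be placed strictly outside $\CI_0$.
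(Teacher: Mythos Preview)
Your proof is correct and uses the same core ingredients as the paper (integration by parts to extract $(ik)^{-n}$, contour deformation toward the real axis, and the Hausdorff--Young inequality), but the execution differs in two places worth noting.

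First, you integrate by parts $n$ times along the full closed contour $\p\CD$, so the corner terms telescope and no boundary contributions appear. The paper instead integrates by parts only along the horizontal segments $\CI \pm ir$, which produces explicit boundary terms $f^{(l-1)}(b_j \pm ir)$ that must then be shown to vanish as $r\to 0$ by analyticity at $b_1,b_2$. Your version is cleaner here.

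Second, to pass to the limit $\epsilon\to 0^+$ you invoke weak-$*$ compactness of the bounded family $\{h(\cdot \pm i\epsilon)\}$ in $L^{q/(q-1)}(\CI)$ (reflexivity for $q<\infty$, a direct $L^1\to L^\infty$ bound for $q=\infty$) to extract a representation $G(t)=\widehat{g_-^0}(kt)-\widehat{g_+^0}(kt)$ and then apply Hausdorff--Young on all of $\R$. This is valid: you only need \emph{some} subsequential limits $h_\pm^0$ with norm $\le M$, and the $\epsilon$-independence of $G$ forces the representation along that subsequence. The paper avoids this functional-analytic step by the more elementary device of restricting to $t\in[-T,T]$ first, applying Hausdorff--Young at level $r>0$ (picking up a harmless factor $e^{r|k|T}$), letting $r\to 0$ so that both the vertical contributions and the integration-by-parts boundary terms vanish by analyticity near $b_1,b_2$, and finally sending $T\to\infty$. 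The paper's route is more hands-on; yours trades bookkeeping for a slightly bigger tool.
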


\begin{proof}
Integrating by parts we have, for any $0< |r| \le \rho$,  
\[
\int_{\CI + ir} t^n e^{-i ckt} f(c) dc = (ik)^{-n} e^{r kt} \int_{\CI} e^{-i c_R kt} f^{(n)}(c_R + ir) dc_R - e^{-i c kt} \sum_{l=1}^{n} t^{n-l} (ik)^{-l} f^{(l-1)} (c) \big|_{c=b_1 + ir}^{c=b_2 + ir}. 
\]
For any $T>0$, the $L^{\frac q{q-1}} \to L^q$ boundedness (for $q\in [2, \infty]$) of the Fourier transform  
implies 
\[
\Big|  \int_{\CI } e^{-i c_R kt} (f^{(n)} - f_1)(c_R + ir) dc_R \Big|_{L_t^q ([-T, T])} \le C |k|^{-\frac 1q} |(f^{(n)} - f_1) (\cdot + ir)|_{L^{\frac q{q-1}}(\CI)} \le C |k|^{-\frac 1q}M.  
\]
From this inequality and the Cauchy integral theorem, we obtain, for any $r\in (0, \rho]$,  
\begin{align*}
&\Big| \oint_{\p \CD} e^{-i ckt}  \big( t^n f(c)  - (ik)^{-n} f_1(c) \big) dc\Big|_{L_t^q ([-T, T])} \\
= & \Big|  \oint_{\p (\CI + i[-r, r])} e^{-i ckt}  \big( t^n f(c)  - (ik)^{-n} f_1(c) \big)  dc\Big|_{L_t^q ([-T, T])} \\
\le & C |k|^{-n-\frac 1q}e^{r |k|T} M + \Big|  \Big( \int_{b_1 +ir}^{b_1-ir} + \int_{b_2-ir}^{b_2+ir} \Big)e^{-i ckt} \big( t^n f(c)  - (ik)^{-n} f_1(c) \big)  dc \\
&+ \sum_{l=1}^{n} t^{n-l} (ik)^{-l} \big( e^{-ickt}f^{(l-1)} (c) \big|_{c=b_1 + ir}^{c=b_2 + ir} - e^{-ickt} f^{(l-1)} (c) \big|_{c=b_1 - ir}^{c=b_2 - ir}\big) \Big|_{L_t^q ([-T, T])}.
\end{align*}
Letting $r\to 0$, the analyticity assumption of $f$ and $f_1$ implies all those terms on the vertical boundary of $\CD$ vanish and the above estimates on the integrals along the horizontal edges yield 
\[
\Big| \oint_{\p \CD} t^n e^{-i ckt} \big( t^n f(c)  - (ik)^{-n} f_1(c) \big) dc \Big|_{L_t^q ([-T, T])} \le C |k|^{-n-\frac 1q}M.
\]
The lemma follows by letting $T \to +\infty$. 
\end{proof} 

\begin{remark} \label{R:contour}
In the following applications of this lemma, we often use the $L^2$ norm to control the $L^{\frac q{q-1}}$ norm. This leads to fact that the regularity requirements in $x_1$ (i.e. the exponents of $k$) may not be close to optimal. 
\end{remark} 

Applying the above lemma, we first obtain the decay of $\hat v^c (t, k, x_2)$ and $\hat \eta^c (t, k)$.

\begin{lemma} \label{L:mode-decay-1}
Assume $U \in C^{l_0}$, $l_0\ge 3$, and \eqref{E:F_0},  
then for any $\ep\in (0, 1)$, $q\in [2, \infty]$, and integer $m \ge 0$, there exists $C>0$ determined only by $\ep$, $q$, $m$, $F_0$, $\rho_0$, $|U'|_{C^{l_0-1}}$, and $|(U')^{-1}|_{C^0}$ (independent of $k \ne 0$) such that   
\begin{align*}
|\p_t^m \hat v_2^c(k)|_{L_{x_2}^2 L_t^q (\R)} & + |k|\mu |\p_t^m \hat v_1^c(k)|_{L_{x_2}^2 L_t^q (\R) } + \mu |\p_t^m (\hat v_2^c)' (k)|_{L_{x_2}^2 L_t^q (\R)} \\
\le & C |k|^{m+1 -\frac 1q}  \big( \mu^{\frac 12} |\hat \eta_0(k)| + |k|^{-1} \mu^{\frac 52} |\hat v_{20}' (k, 0)| + \mu^{\frac 32-\ep} |\hat \omega_0(k)|_{L_{x_2}^2} \big),
\end{align*}
\[
|\p_t^m \hat \eta^c(k)|_{L_t^q (\R)}  \le C |k|^{m-1 -\frac 1q} \big( |k| \mu |\hat \eta_0(k)| + \mu^2 |\hat v_{20}' (k, 0)| +|k| \mu^{2-\ep} |\hat \omega_0(k)|_{L_{x_2}^2} \big);
\]
and if $l_0\ge 4$,
\[
|t \p_t^m \hat v_2^c(k)|_{L_{x_2}^2 L_t^q (\R) } \le C |k|^{m-\frac 1q} \big(  \mu^{-\frac 12} |\hat \eta_0(k) | + |k|^{-1} \mu^{\frac 32} |\hat v_{20}' (k, 0)| + \mu^{\frac 12-\ep}|\hat \omega_0(k)|_{L_{x_2}^2} + \mu^{\frac 32-\ep}| \hat \omega_0'(k)|_{L_{x_2}^2}\big),
\]
\[
|t \p_t^m \hat \eta^c(k)|_{L_t^q (\R)}  \le C |k|^{m-2-\frac 1q} \big( |k| |\hat \eta_0 (k)| +  \mu^2 |\hat v_{20}' (k, 0)| + |k| \mu^{1-\ep} |\hat \omega_0 (k)|_{L_{x_2}^2} + |k| \mu^{2-\ep} |\hat \omega_0' (k)|_{L_{x_2}^2} \big).
\]
\end{lemma}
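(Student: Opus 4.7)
The plan is to apply the contour integral representations in Lemma \ref{L:integralF} and Corollary \ref{C:integralF}, then reduce every estimate to the analytic Fourier bound in Lemma \ref{L:auxL-1} combined with the $L^2_{c_R}$ estimates already furnished by Lemma \ref{L:V2-esti-1}. Throughout, \eqref{E:F_0} ensures that $V_2$ and $\tilde\eta$ are analytic on $\CD_{\rho_0,\rho_0}\setminus U([-h,0])$, so they satisfy the analyticity hypothesis of Lemma \ref{L:auxL-1} with $\CI=[U(-h)-\rho_0,U(0)+\rho_0]$ and $\CI_0=U([-h,0])$, once we let $r_2\to 0+$ and use the convergence statements in Lemma \ref{L:V2-esti-1}(2).

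First I would handle the estimates without a factor of $t$. Bringing $\p_t^m$ inside the contour integrals produces a factor $(-ikc)^m$, and since $c$ is bounded on $\CI$, one reduces to
\[
\p_t^m \hat v_2^c(t,k,x_2)=-\tfrac{k}{2\pi}\oint_{\p\CD_{r_1,r_2}} (-ikc)^m e^{-ikct}V_2(k,c,x_2)\,dc,
\]
and analogously for $\hat v_1^c$ (with factor $-i$ and integrand $V_2'$) and $\hat\eta^c$ (with factor $-k$ and integrand $\tilde\eta$). Applying Lemma \ref{L:auxL-1} with $n=0$, $f_1=0$, and $f(c)=c^m V_2(k,c,x_2)$, etc., and using $q/(q-1)\le 2$ to control the $L^{q/(q-1)}_{c_R}$ norm by the $L^2_{c_R}$ norm, yields a factor $C|k|^{-1/q}$. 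Taking $L^2_{x_2}$ outside the $L^q_t$ norm by Minkowski (valid since $q\ge 2$), the pointwise-in-$x_2$ estimate becomes
\[
|\p_t^m \hat v_2^c|_{L^2_{x_2}L^q_t}\le C|k|^{m+1-\frac 1q}|V_2|_{L^2_{c_R,x_2}},
\]
and similarly for $\hat v_1^c$ (using $|V_2'|_{L^2_{c_R,x_2}}$) and $\hat\eta^c$ (using $|\tilde\eta|_{L^2_{c_R}}$). Substituting the bounds of Lemma \ref{L:V2-esti-1}(1) produces the first two estimates of the lemma.

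For the estimates with a factor of $t$, I would apply Lemma \ref{L:auxL-1} with $n=1$ and $f_1=0$: after introducing the $(-ikc)^m$ factor from $\p_t^m$ as before, the extra $t$ costs $|k|^{-1}$ and transfers one $c$-derivative onto $f=c^m V_2$ (resp.\ $c^m\tilde\eta$). The $L^{q/(q-1)}_{c_R}\hookrightarrow L^2_{c_R}$ embedding then gives
\[
|t\p_t^m \hat v_2^c|_{L^2_{x_2}L^q_t}\le C|k|^{m-\frac 1q}\bigl(|\p_{c_R}V_2|_{L^2_{c_R,x_2}}+|V_2|_{L^2_{c_R,x_2}}\bigr),
\]
with the lower-order $|V_2|$ term coming from the $c$-derivative hitting $c^m$ (when $m\ge 1$) and being dominated by the previously established bounds. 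The same scheme for $\hat\eta^c$ uses $|\p_{c_R}\tilde\eta|_{L^2_{c_R}}$. Inserting the corresponding bounds from Lemma \ref{L:V2-esti-1}(1) yields the last two inequalities.

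The only real subtlety I anticipate is the justification of collapsing the contour $\p\CD_{r_1,r_2}$ to the two horizontal segments at $c_I=0\pm$: this is where the analyticity of $V_2$ and $\tilde\eta$ away from $U([-h,0])$ in $\CD_{\rho_0,\rho_0}$ together with the $L^2_{c_R}$ convergence as $c_I\to 0+$ from Lemma \ref{L:V2-esti-1}(2a) is used, exactly to put the setting into the hypothesis of Lemma \ref{L:auxL-1}. Everything else is a direct bookkeeping of the powers of $|k|$ and $\mu$: the gain $|k|^{-1/q}$ (resp.\ $|k|^{-1-1/q}$) from Lemma \ref{L:auxL-1} combines with the prefactors $|k|$, $i$, $|k|^{1+m}$ from the integral representations and from $\p_t^m$ to produce exactly the exponents displayed in the statement.
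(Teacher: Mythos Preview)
Your proposal is correct and follows exactly the paper's approach: apply Lemma~\ref{L:auxL-1} on $\CD_{\rho_0,\rho_0}$ with $f_1=0$, $n=0$ or $n=1$, and $f=c^m V_2$, $c^m V_2'$, $c^m\tilde\eta$, then feed in the uniform $L^2_{c_R}$ bounds from Lemma~\ref{L:V2-esti-1}(1). One small clarification: you do not actually need the convergence statements in Lemma~\ref{L:V2-esti-1}(2) here, since the hypothesis of Lemma~\ref{L:auxL-1} only asks for a uniform $L^{q/(q-1)}_{c_R}$ bound over $0<|c_I|\le\rho$, which is already supplied by part~(1); likewise the ``Minkowski'' remark is unnecessary---you simply apply Lemma~\ref{L:auxL-1} pointwise in $x_2$ and then take $L^2_{x_2}$ of the resulting bound $M(x_2)$, controlling $|M|_{L^2_{x_2}}$ by $\sup_{c_I}|\cdot|_{L^2_{c_R,x_2}}$ via the embedding $L^2_{c_R}(\CI)\hookrightarrow L^{q/(q-1)}_{c_R}(\CI)$.
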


\begin{proof}
The estimates of $\p_t^m \hat v_2^c$, $t \p_t^m \hat v_2^c$, $\p_t^m (\hat v_2^c)'$, $\p_t^m \hat v_1^c$, $\p_t^m \hat \eta^c$, and $t \p_t^m \hat \eta^c$ are based on the definitions of $\hat v^c (t, k, x_2)$ and $\hat \eta^c(t, k)$ from direct application of Lemma \ref{L:V2-esti-1} and Lemma \ref{L:auxL-1} on $\CD_{\rho_0, \rho_0}$ with $f_1=0$ and $f$ being $c^m V_2$ (with $n=0, 1$), $c^m V_2'$, $c^m \tilde \eta$ (with $n=0,1$), respectively.  We omit the details.
\end{proof} 

In the following we shall focus on $t \p_t^m \hat v_1^c(t, k, x_2)$, $t^2 \p_t^m \hat v_2^c(t, k, x_2)$, and $\p_t^m \hat \omega^c (t, k , x_2)$, where $\hat \omega^c$ is the Fourier transform (in $x_1$) of the vorticity $\omega^c = \p_{x_1} v_2^c - \p_{x_2} v_1^c$ of $v^c(t, x)$. In order to characterize their asymptotic behavior, define 
\be \label{E:Omega-1} \begin{split}
\hat \Omega^c(k, x_2) =& \hat \omega_0 (k, x_2) \\
&+ \tfrac 12 U''(x_2)  \big( (1+ sgn(kt))V_{20} (k, U(x_2), x_2)  + (1-sgn(kt)) \overline{V_{20} (-k, U(x_2), x_2)}  \big). 
\end{split} \ee  
In the above expression, exactly one of $1+ sgn(kt)$ and $1- sgn(kt)$ is equal to $2$ and the other equal to $0$. The dependence of $\hat \Omega^c$ on $t$ is only through its sign, so we skipped specifying the $t$ dependence. We also notice that $V_2$ may not be $C^0$ at $c\in U([-h, 0]) \subset \C$. The available conjugacy properties of $V_2$ are not sufficient to imply $\overline{V_{20} (-k, U(x_2), x_2)} = V_{20} (k, U(x_2), x_2)$. We shall see that $\hat \Omega^c$ provides the asymptotic profile of the vorticity $\hat \omega^c$. We first give the following some basic properties of $\hat \Omega^c$. 

\begin{lemma} \label{L:Omega} 
Assume $U \in C^4$ and \eqref{E:F_0}, then $\hat \Omega^c (-k, x_2) = \overline {\hat \Omega^c (k, x_2)}$ and, for any $\ep\in (0, 1)$, there exists $C>0$ determined only by $\ep$, $F_0$, $\rho_0$, $|U'|_{C^3}$, and $|(U')^{-1}|_{C^0}$ (independent of $k \ne 0$) such that  
\[
|\hat \Omega^c - \hat \omega_0|_{L_{x_2}^2} \le C\big( |\hat \eta_0(k) | + |k|^{-1} \mu^2 |\hat v_{20}' (k, 0)| + \mu^{1-\ep}|\hat \omega_0(k)|_{L_{x_2}^2} ), 
\] 
\[
|(\hat \Omega^c)' - \hat \omega_0'|_{L_{x_2}^2} \le C\mu^{-1} \big( |\hat \eta_0(k) | + |k|^{-1} \mu^2 |\hat v_{20}' (k, 0)| + \mu^{1-\ep}|\hat \omega_0(k)|_{L_{x_2}^2} + \mu^{2-\ep}| \hat \omega_0'(k)|_{L_{x_2}^2}\big). 
\]
\end{lemma}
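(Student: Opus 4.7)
The conjugacy $\hat\Omega^c(-k,x_2)=\overline{\hat\Omega^c(k,x_2)}$ should be immediate: substituting $-k$ into the definition flips $\mathrm{sgn}(kt)$, which swaps the two bracketed terms, and the identity $\hat\omega_0(-k,x_2)=\overline{\hat\omega_0(k,x_2)}$ (because $\omega_0$ is real) finishes it. The substance of the lemma is the two $L^2_{x_2}$ bounds on $\hat\Omega^c-\hat\omega_0$ and its $x_2$-derivative, and for this it suffices to estimate $U''(x_2)V_{20}(k,U(x_2),x_2)$ and its derivative in $L^2_{x_2}$, since the other summand is its analogue with $-k$.

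The key device will be a trace-type inequality on the curve $c_R=U(x_2)$. For any $g(c_R,x_2)$ with $g(c_R,-h)=0$, I would write
\[
g(U(x_2),x_2)^2 = 2\int_{-h}^{x_2} g(U(x_2),s)\,\partial_s g(U(x_2),s)\,ds,
\]
extend the inner integral to $[-h,0]$, apply Cauchy--Schwarz, and change variables $c_R=U(x_2)$ (with Jacobian $1/U'$, which is bounded by (H)) to obtain
\[
\bigl|g(U(\cdot),\cdot)\bigr|_{L^2_{x_2}}^{2}\le C\,\bigl|g\bigr|_{L^2_{c_R,x_2}}\bigl|\partial_{x_2}g\bigr|_{L^2_{c_R,x_2}}.
\]
Applying this to $g=V_{20}$, which vanishes at $x_2=-h$ by the boundary condition in \eqref{E:Ray-2}, and then inserting the two estimates $|V_{20}|_{L^2_{c_R,x_2}}\le C\,X$ and $|V_{20}'|_{L^2_{c_R,x_2}}\le C\mu^{-1}X$ from Lemma \ref{L:V2-esti-1}, where $X=\mu^{1/2}|\hat\eta_0|+|k|^{-1}\mu^{5/2}|\hat v_{20}'(k,0)|+\mu^{3/2-\epsilon}|\hat\omega_0|_{L^2_{x_2}}$, yields $|V_{20}(k,U(\cdot),\cdot)|_{L^2_{x_2}}^2\le C\mu^{-1}X^2$, which is exactly the first claimed bound.

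For the derivative, $\partial_{x_2}[U''(x_2)V_{20}(k,U(x_2),x_2)]=U'''V_{20}(k,U(x_2),x_2)+U''(x_2)\,(D_cV_{20})(k,U(x_2),x_2)$, where $D_c=U'(x_2^c)\partial_{c_R}+\partial_{x_2}$ restricts on the diagonal to $U'(x_2)\partial_{c_R}+\partial_{x_2}$ and agrees with the total $x_2$-derivative of the trace. The first term costs a uniform factor and is bounded by the first step. For the second term, which contains the main obstacle, I plan to run the same trace argument on $g=D_cV_{20}$. Although $V_{20}'$ individually carries a logarithmic singularity on the diagonal (Lemma \ref{L:B}), the sum $D_cV_{20}$ is tame enough that $|D_cV_{20}|_{L^2_{c_R,x_2}}\le C|U'|_\infty|\partial_{c_R}V_{20}|_{L^2_{c_R,x_2}}+|V_{20}'|_{L^2_{c_R,x_2}}\le C\mu^{-1}Y$, with $Y$ the bound on $\partial_{c_R}V_{20}$ from Lemma \ref{L:V2-esti-1} (which is the one that brings in the extra $\mu^{3/2-\epsilon}|\hat\omega_0'|_{L^2}$ term). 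Its $x_2$-derivative is $D_cV_{20}'=(\partial_{c_R}V_{20}'+V_{20}''/U'(x_2^c))U'(x_2^c)$, and Lemma \ref{L:V2-esti-1} gives exactly $|\partial_{c_R}V_{20}'+V_{20}''/U'|_{L^2_{c_R,x_2}}\le C\mu^{-1}Y$.

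The main technical obstacle is the boundary term at $x_2=-h$: unlike $V_{20}$, the function $D_cV_{20}$ does not vanish on the bottom since $(D_cV_{20})(k,c_R,-h)=V_{20}'(k,c_R,-h)\neq0$ in general (while $\partial_{c_R}V_{20}(k,c_R,-h)=0$ follows from $V_{20}(k,c_R,-h)\equiv0$). The plan is to supplement the trace identity by the boundary contribution,
\[
h(x_2)^2 = h(-h)^2+2\int_{-h}^{x_2} f\,\partial_s f\,ds,\qquad h(x_2)=(D_cV_{20})(k,U(x_2),x_2),
\]
integrate in $x_2$, and control $|h(-h)|_{L^2_{x_2}}=|V_{20}'(k,U(\cdot),-h)|_{L^2_{x_2}}$ by a boundary trace in $c_R$ at $x_2=-h$; for this the estimates of $y_B'(-h)$ in the variation-of-parameter representation, recorded in \eqref{E:pcy_nh-temp-1} within the proof of Proposition \ref{P:pcy_B}, provide the needed $L^2_{c_R}$-bound on $V_{20}'(-h)$ (with at worst a $\mu^{-\epsilon}$ loss). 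Combining these contributions and comparing with the weighted $Y$ delivers the $\mu^{-1}$-weighted bound stated in the lemma. I expect the boundary handling and the careful accounting of the $\mu^{-\epsilon}$ losses to constitute the main bookkeeping effort, but no new analytic input beyond Lemma \ref{L:V2-esti-1} is required.
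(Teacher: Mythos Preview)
Your approach is essentially the paper's: the same diagonal trace inequality is applied first to $V_{20}$ and then to $D_cV_{20}$, with the needed $L^2_{c_R,x_2}$ bounds on $D_cV_{20}$ and $(D_cV_{20})'$ supplied by \eqref{E:pcy_nh-temp-2} from the proof of Proposition~\ref{P:pcy_B}. The one unnecessary complication is your boundary handling. The paper uses the version
\[
|g(U(\cdot),\cdot)|_{L^2_{x_2}}\le C\,|g|_{L^2_{c_R,x_2}}^{1/2}\,|g|_{L^2_{c_R}H^1_{x_2}}^{1/2},
\]
which follows from the one-dimensional embedding $|g(c_R,\cdot)|_{L^\infty_{x_2}}\le C|g(c_R,\cdot)|_{L^2_{x_2}}^{1/2}|g(c_R,\cdot)|_{H^1_{x_2}}^{1/2}$ and requires no vanishing at $x_2=-h$. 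With this form there is no boundary term for $D_cV_{20}$, and the extra $|g|_{L^2}$ hidden in $|g|_{H^1}$ is harmless since $|D_cV_{20}|_{L^2_{c_R,x_2}}\le \mu|(D_cV_{20})'|_{L^2_{c_R,x_2}}$ up to constants by \eqref{E:pcy_nh-temp-2}. Your detour through $|V_{20}'(\cdot,-h)|_{L^2_{c_R}}$ via \eqref{E:pcy_nh-temp-1} is valid and lands at the right order, but it can be dropped entirely.
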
 

\begin{proof}
The conjugacy relation of $\hat \Omega^c$ is clear from its definition. According to Lemma \ref{L:V2-esti-1}, $V_{20}$ satisfies the same estimates as $V_2$ for $|c_I| \in (0, \rho_0]$. 
We have, for $x_2\in [-h, 0]$ and $c \in U([-h, 0])$, 
\be \label{E:V2-L2L^in} \begin{split}
|V_{20}(k, U(\cdot), \cdot)|_{L_{x_2}^2} \le & C |V_{20}|_{L_{c_R, x_2}^2}^{\frac 12} |V_{20}|_{L_{c_R}^2 H_{x_2}^1}^{\frac 12} \\
\le & C\big( |\hat \eta_0(k) | + |k|^{-1} \mu^2 |\hat v_{20}' (k, 0)| + \mu^{1-\ep}|\hat \omega_0(k)|_{L_{x_2}^2}),   
\end{split} \ee
which implies the estimate of $\hat \Omega^c$. Apparently the estimate of $(\hat \Omega^c)'$ depends on that of 
\[
\p_{x_2} \big( V_{20} (k, U(x_2), x_2)\big) = (D_c V_{20}) (k, U(x_2), x_2), 
\]
where $D_c$ was defined in \eqref{E:Dc}. From \eqref{E:pcy_nh-temp-2} and \eqref{E:Ray-data-1}, we have 
\begin{align*}
\big|\p_{x_2} \big( V_{20} (k, U(\cdot), \cdot)\big)\big|_{L_{x_2}^2} \le & C | D_c V_{20}|_{L_{c_R, x_2}^2}^{\frac 12} |D_c V_{20}|_{L_{c_R}^2 H_{x_2}^1}^{\frac 12} \\
\le & C\mu^{-1} \big( |\hat \eta_0(k) | + |k|^{-1} \mu^2 |\hat v_{20}' (k, 0)| + \mu^{1-\ep}|\hat \omega_0(k)|_{L_{x_2}^2} + \mu^{2-\ep}| \hat \omega_0'(k)|_{L_{x_2}^2}\big),
\end{align*}
which yields and completes the proof of the lemma. 
\end{proof} 

In the following lemma, we obtain the leading order terms of $t\hat v_1^c$, $(\hat v_2^c)''$, and $\hat \omega^c$.  

\begin{lemma} \label{L:mode-decay-2}
Assume $U \in C^4$ and \eqref{E:F_0},  
then, for any $\ep\in (0, 1)$, $q\in (2, \infty]$, and integer $m \ge 0$, there exists $C>0$ determined only by $\ep$, $q$, $m$, $F_0$, $\rho_0$, $|U'|_{C^3}$, and $|(U')^{-1}|_{C^0}$ (independent of $k \ne 0$) such that 
\begin{align*} 
& k^2 \big| \p_t^m \big( t \hat v_1^c (t, k, x_2) + i k^{-1}U'(x_2)^{-1} e^{-ik U(x_2) t} \hat \Omega^c (k , x_2) \big) \big|_{L_{x_2}^2 L_t^q (\R) } \\
&+ |k| \big| \p_t^m \big(\hat \omega^c (t, k, x_2) - e^{-ik U(x_2) t} \hat \Omega^c (k , x_2) \big) 
\big|_{L_{x_2}^2 L_t^q (\R) } \\
&+ \big|\p_t^m\big(  (\hat v_2^c)'' (t, k, x_2) -ik  e^{-ik U(x_2) t} \hat \Omega^c (k, x_2)\big) \big|_{L_{x_2}^2 L_t^q (\R) }\\
\le & C  |k|^{m+1-\frac 1q} \mu^{-\frac 32} \big( |\hat \eta_0(k) | + |k|^{-1} \mu^2 |\hat v_{20}' (k, 0)| + \mu^{1-\ep}|\hat \omega_0(k)|_{L_{x_2}^2} + \mu^{2-\ep}| \hat \omega_0'(k)|_{L_{x_2}^2}\big). 
\end{align*}
\end{lemma}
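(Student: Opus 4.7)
\medskip

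The plan is to reduce all three estimates to bounds on contour integrals of $V_2$ and its derivatives, identify the leading-order singular contribution from the pole at $c = U(x_2)$, and then apply Lemma~\ref{L:auxL-1}. First I reduce the vorticity estimate: from $\hat\omega^c = ik\hat v_2^c - (\hat v_1^c)'$ together with the divergence-free condition $ik\hat v_1^c + (\hat v_2^c)' = 0$, one has $ik\hat\omega^c = (\hat v_2^c)'' - k^2\hat v_2^c$, so the bound on $\hat\omega^c - e^{-ikU(x_2)t}\hat\Omega^c$ follows from that on $(\hat v_2^c)'' - ike^{-ikU(x_2)t}\hat\Omega^c$ once $|k^2 \hat v_2^c|_{L_{x_2}^2 L_t^q}$ is absorbed using Lemma~\ref{L:mode-decay-1}.

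Second, I substitute the Rayleigh equation $V_2'' = k^2 V_2 + (U''V_2 + \hat\omega_0)/(U(x_2)-c)$ into the contour integral representations of Lemma~\ref{L:integralF} and Corollary~\ref{C:integralF}. This yields
\[
(\hat v_2^c)'' = k^2\hat v_2^c + \frac{k}{2\pi}\oint_{\p\CD_{r_1,r_2}} e^{-ikct}\frac{U''(x_2)V_2(k,c,x_2)+\hat\omega_0(k,x_2)}{c-U(x_2)}\,dc,
\]
while for $t\hat v_1^c$ I use $te^{-ikct}=(i/k)\p_c e^{-ikct}$ and integrate by parts on the closed contour to obtain $t\hat v_1^c = -\frac{1}{2\pi k}\oint e^{-ikct}\p_c V_2'\,dc$; the identity $\p_c V_2' \approx -V_2''/U'(x_2)$ (modulo the $L^2_{c,x_2}$-bounded correction from Lemma~\ref{L:V2-esti-1}(1)) then places $t\hat v_1^c$ into the same framework. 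The would-be residue at $c = U(x_2)$ is $U''(x_2)V_2(k,U(x_2),x_2) + \hat\omega_0$, but $V_2$ has a jump across $U([-h,0])$ with limits $V_{20}^+ = V_{20}(k,U(x_2),x_2)$ from above and $V_{20}^- = \overline{V_{20}(-k,U(x_2),x_2)}$ from below. The sign of $kt$ selects which side dominates, and by the definition of $\hat\Omega^c$ in \eqref{E:Omega-1} this selection yields exactly the asserted leading term.

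Third, I extract the leading term explicitly by subtracting $f_1^\pm(c,x_2) = (U''V_{20}^\pm + \hat\omega_0)/(c-U(x_2))$, which is meromorphic on $\C$ with a single simple pole at $U(x_2)$. For the counterclockwise contour $\p\CD_{r_1,r_2}$, the residue theorem gives $\oint e^{-ikct}f_1^\pm\,dc = 2\pi i\,e^{-ikU(x_2)t}(U''V_{20}^\pm+\hat\omega_0)$, which is exactly $2\pi i\,e^{-ikU(x_2)t}\hat\Omega^c$ with the correct $\pm$ according to $\mathrm{sgn}(kt)$. The error integrand after subtraction is $(U''/\text{const})\cdot(V_2 - V_{20}^\pm)/(c-U(x_2))$, and the key estimate is that, on the relevant half-plane,
\[
\bigl|(V_2-V_{20}^\pm)/(c-U(x_2))\bigr|_{L^2_{x_2} L^{q/(q-1)}_{c_R}} \le C\mu^{-3/2}\bigl(|\hat\eta_0(k)| + |k|^{-1}\mu^2|\hat v_{20}'(k,0)| + \mu^{1-\ep}|\hat\omega_0|_{L^2_{x_2}} + \mu^{2-\ep}|\hat\omega_0'|_{L^2_{x_2}}\bigr),
\]
which follows from the $L^2_{c_R,x_2}$ estimate on $\tilde V_2$ in Lemma~\ref{L:V2-esti-1}(1) combined with the interpolation bound \eqref{E:V2-L2L^in} controlling $V_{20}(k,U(\cdot),\cdot)$ in $L^2_{x_2}$ by $\hat\omega_0$, $\hat v_{20}'(k,0)$, and $\hat\eta_0$. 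Applying Lemma~\ref{L:auxL-1} (with $n=0$ for the $(\hat v_2^c)''$ branch and $n=1$ for the $t\hat v_1^c$ branch), then multiplying by the prefactor $|k|$ from the contour integral and by $|k|^m$ from each $\p_t$ derivative, produces the asserted bound $C|k|^{m+1-1/q}\mu^{-3/2}(\ldots)$.

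The hardest step will be the rigorous handling of the $\mathrm{sgn}(kt)$ split, since Lemma~\ref{L:auxL-1} as stated requires a single meromorphic $f_1$ on $\C\setminus\CI_0$. I will treat $kt>0$ and $kt<0$ separately: on $\{t>0\}\cap\{k>0\}$ and $\{t<0\}\cap\{k<0\}$ use $f_1^+$, and on the other two regions use $f_1^-$; each case gives an $L_t^q$ estimate on a half-line, and by combining one obtains the full $L_t^q(\R)$ bound (using that the norms on the two half-lines differ only by the interchange of $V_{20}^+$ and $V_{20}^-$, both already bounded). The remaining delicate point is verifying that the error $(V_2 - V_{20}^\pm)/(c-U(x_2))$ really has the claimed $L^{q/(q-1)}_{c_R}$ bound uniformly for $c_I$ in the appropriate half-plane; this uses that $V_2$ is locally $C^\alpha$ in $c$ for all $\alpha<1$ (Lemma~\ref{L:y0}) together with the derivative estimate in Lemma~\ref{L:V2-esti-1}(1) applied via the weak Young/convolution technique employed in Subsection~\ref{SS:Ray-NH-HBC}.
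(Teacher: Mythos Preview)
Your overall strategy—substitute the Rayleigh equation to expose the simple pole $(U''V_2+\hat\omega_0)/(c-U(x_2))$, extract its leading part, and control the remainder via Lemma~\ref{L:auxL-1}—is the same as the paper's. However, the extraction step as you describe it does not work.

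The gap is this. You propose to subtract the meromorphic function $f_1^\pm=(U''V_{20}^\pm+\hat\omega_0)/(c-U(x_2))$ on the whole closed contour $\p\CD_{r_1,r_2}$ and then bound the error $(V_2-V_{20}^\pm)/(c-U(x_2))$ in $L_{c_R}^{q/(q-1)}$ uniformly in $c_I$. But $V_2$ has \emph{different} boundary values on the two sides of $U([-h,0])$: as $c_I\to 0+$ one has $V_2\to V_{20}^+$, while as $c_I\to 0-$ one has $V_2\to V_{20}^-=\overline{V_{20}(-k,\cdot,\cdot)}$. Hence on the lower edge the error tends to $(V_{20}^- - V_{20}^+)/(c_R-U(x_2))$, which is generically a genuine simple pole and lies in no $L^{q/(q-1)}_{c_R}$ space. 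Lemma~\ref{L:auxL-1} requires the bound for all $0<|c_I|\le\rho$, so it cannot be applied. Your proposed fix—split according to $\mathrm{sgn}(kt)$ and use $f_1^+$ on one time half-line and $f_1^-$ on the other—does not resolve this, because for each fixed $t$ the contour still traverses both half-planes. The selection of $\mathrm{sgn}(kt)$ is not produced by choosing between $f_1^+$ and $f_1^-$.

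What the paper does instead is split the contour, not the time axis: on the top edge $\p^T\CD$ it subtracts $V_{20}^+$ (so the remainder is a genuine difference quotient there, controlled by the $\p_{c_R}V_2$ estimate in Lemma~\ref{L:V2-esti-1}—not the $\tilde V_2$ estimate you cite, which is for $\p_c^2$), and on the bottom edge it subtracts $V_{20}^-$. The leading term is then no longer a clean residue but rather $V_{20}^+\int_{\p^T}\frac{e^{-ikct}}{c-U(x_2)}\,dc + V_{20}^-\int_{\p^B}\frac{e^{-ikct}}{c-U(x_2)}\,dc$. These half-line oscillatory integrals are computed explicitly (via a contour deformation to $\R+ir$) to be $-i\pi(1+\mathrm{sgn}(kt))$ and $i\pi(1-\mathrm{sgn}(kt))$ respectively up to $O((1+|kt|)^{-1})$, and \emph{this} is where the sign selection actually arises. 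To fix your argument you must carry out this top/bottom splitting and the explicit half-integral computation; once you do, the rest of your outline goes through.
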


\begin{remark} 
This lemma also implies, for any integer $m \ge 1$, 
\begin{align*}
|t \p_t^m \big( e^{ikU(x_2)t}\hat v_1^c (t, k, x_2)\big)\Big|_{L_{x_2}^2 L_t^q (\R) } \le  C  |k|^{m-1-\frac 1q} \mu^{-\frac 32} \big( & |\hat \eta_0(k) | + |k|^{-1} \mu^2 |\hat v_{20}' (k, 0)| \\
&+ \mu^{1-\ep}|\hat \omega_0(k)|_{L_{x_2}^2} + \mu^{2-\ep}| \hat \omega_0'(k)|_{L_{x_2}^2}\big),
\end{align*} 
while for $m=0$, there is another term $i k^{-1}U'(x_2)^{-1} e^{-ik U(x_2) t}\hat \Omega^c (k , x_2)$ on the left side.  
The form in the lemma is more consistent with other estimates including that  of $t^2 \hat v_2^c$ to be given in the following, however. 
\end{remark}

\begin{proof}
The definition of $\hat v^c$ 
implies, for each $x_2 \in [-h, 0]$ and $r_1, r_2 \in (0, \rho_0]$,  
\begin{align*}
& t \p_t^m \hat v_1^c (t, k, x_2) = \frac {-i(-ik)^m}{2 \pi }  \oint_{\p \CD_{r_1, r_2}}   t e^{-ik ct} c^mV_2' (k, c, x_2)  d c, \\ 
&\p_t^m \big((ik \hat \omega^c + k^2 \hat v_2^c) (t, k, x_2)\big)= \p_t^m (\hat v_2^c)'' (t, k, x_2)  = \frac {-k(-ik)^m}{2 \pi }  \oint_{\p \CD_{r_1, r_2}}   e^{-ik ct} c^mV_2'' (k, c, x_2)  d c.
\end{align*}
Applying Lemma \ref{L:auxL-1} 
with $n=1$ and $f= c^m V_2'$ and $f_1 = - \frac {c^m}{U'(x_2)} V_2''$
and Lemma \ref{L:V2-esti-1}, we obtain 
\be \label{E:decay-temp-0.3} \begin{split}
& \Big| t \p_t^m \hat v_1^c (t, k, x_2) - \oint_{\p \CD_{r_1, r_2}} \frac {(-i)^m k^{m-1} c^m}{2\pi U'(x_2)} e^{-ik ct}V_2'' (k, c, x_2) dc\Big|_{L_{x_2}^2 L_t^q (\R) } \\
\le & C |k|^{m-1-\frac 1q} \sup_{|c_I| \in (0, r_2]} \big( \big| \p_{c_R} V_2' + U'(x_2)^{-1} V_2''\big|_{L_{c_R, x_2}^2} + | V_2' |_{L_{c_R, x_2}^2}\big) \\
\le & C  |k|^{m-1-\frac 1q} \mu^{-\frac 32} \big( |\hat \eta_0(k) | + |k|^{-1} \mu^2 |\hat v_{20}' (k, 0)| + \mu^{1-\ep}|\hat \omega_0(k)|_{L_{x_2}^2} + \mu^{2-\ep}| \hat \omega_0'(k)|_{L_{x_2}^2}\big).
\end{split}\ee

In the rest of the proof, we shall focus on the integral involving $V_2''$ which also yields the other desired estimates. 
Substituting the term $V_2''$ by the Rayleigh equation \eqref{E:Ray-1} and applying the Cauchy Integral Theorem yield 
\begin{align*}
& \oint_{\p \CD_{r_1, r_2}} \frac { c^m e^{-ik ct}} {U'(x_2)} V_2'' dc = \oint_{\p \CD_{r_1, r_2}} \frac { c^m e^{-ik ct}}{U'(x_2)} \big( k^2 V_2 + \frac {U''(x_2) V_2 + \hat \omega_0 (k, x_2)}{U(x_2)-c} \big) dc \\
=& \oint_{\p \CD_{r_1, r_2}} \frac { e^{-ik ct}}{U'(x_2)}  \Big( k^2 c^m + \big( \frac {U(x_2)^m}{U(x_2)-c} +  \frac {c^m -U(x_2)^m}{U(x_2)-c} \big) U''(x_2)  \Big)V_2  dc  - \frac {2\pi i U(x_2)^m \hat \omega_0 (k, x_2)}{U'(x_2)} e^{-ik U(x_2) t} 
\end{align*}
Since $k^2 c^m + \frac {c^m -U^m}{U-c} U''$ is bounded by $C\mu^{-2}$ on $ \CD_{r_1, r_2}$, 
we can control those terms using Lemma  \ref{L:auxL-1} and obtain 
\be \label{E:decay-temp-1} \begin{split}
& \Big| 
\oint_{\p \CD_{r_1, r_2}} \frac {(-i)^m k^{m-1} c^m}{2\pi U'(x_2)} e^{-ik ct}V_2'' (k, c, x_2) dc +  \frac {(-ik)^{m-1} U(x_2)^m }{U'(x_2)}e^{-ik U(x_2) t} \Big( \hat \omega_0 (k, x_2)
\\& \qquad \qquad \qquad \qquad \qquad \qquad 
- \frac {i U''(x_2) }{2\pi  } \oint_{\p \CD_{r_1, r_2}^{x_2}} \frac 1c e^{-ik ct}V_2  (k, c+U(x_2), x_2) dc \Big)\Big|_{L_{x_2}^2 L_t^q (\R) }\\
\le & C  |k|^{m-1-\frac 1q} \mu^{-\frac 32} \big( |\hat \eta_0(k) | + |k|^{-1} \mu^2 |\hat v_{20}' (k, 0)| + \mu^{1-\ep}|\hat \omega_0(k)|_{L_{x_2}^2} + \mu^{2-\ep}| \hat \omega_0'(k)|_{L_{x_2}^2}\big), 
\end{split} \ee
where we also changed the variable $c-U(x_2)  \to c$ in the last integral and 
\be \label{E:CDx_2}
\CD_{r_1, r_2}^{x_2} = \CD_{r_1, r_2} -U(x_2).
\ee
It remains to handle this integral term and we shall identify its leading terms. 

Fix $T>0$. We first let 
\[
w(k, c, x_2) = V_2 (k, c+U(x_2), x_2) - V_{20} (k, c_R+U(x_2), x_2) \ \Longrightarrow \  \lim_{c_I \to 0+}  |w(k, \cdot+ic_I, \cdot)|_{L_{x_2}^\infty W_{c_R}^{1, q_1}} = 0,     
\]
for any $q_1 \in [1, 2)$, where $\hat \omega_0 \in H_{x_2}^1$, Lemma \ref{L:V2-esti-1}
was used. In the rest of the proof of this lemma, we use $\p^\dagger \CD_{r_1, r_2}^{x_2}$, $\dagger =L, R, T, B$, to denote the left, right, top, bottom sides of the rectangle $\CD_{r_1, r_2}^{x_2}$ with the counterclockwise orientation. For any $r\in (0, r_2]$ and $k \ne 0$, and  $1\le \frac q{q-1} < q_1<2$, 
integrating by parts and using the $L^{\frac q{q-1}} \to L^q$ boundedness (for $q\in [2, \infty]$) of the Fourier transform,  we obtain     
\begin{align*}
& \Big| \oint_{\p^T \CD_{r_1, r}^{x_2}}  \frac 1c e^{-ik ct} w  dc \Big|_{L_{x_2}^2 L_t^q ([-T, T]) } = \Big| (e^{-ik ct} w \log c) \big|_{U(0) -U(x_2) +r_1+ ir}^{U(-h)-U(x_2)-r_1 +ir} \\
& \qquad \qquad \qquad \qquad \qquad\qquad - e^{krt} \oint_{\p^T \CD_{r_1, r}^{x_2}} e^{-ik c_Rt} (-ikt w+  \p_{c_R} w) \log c \, dc \Big|_{L_{x_2}^2 L_t^q ([-T,T]) } \\
\le & C e^{|k|rT} \big(T^{\frac 1q} ( 1 +| \log r_1|) |w(k, \cdot +ir, \cdot)|_{L_{x_2}^2 L_{c_R}^\infty} + |k|^{-\frac 1q} (1+|k|T) |w(k, \cdot +ir, \cdot)|_{L_{x_2}^2 W_{c_R}^{1, q_1}}\big),  
\end{align*}
where $\log$ is taken along $\p^T \CD_{r_1, r}^{x_2}$ which in the upper half plane. Next from Lemma \ref{L:V2-esti-1}   we have
\begin{align*}
&\Big| \oint_{\p^T \CD_{r_1, r}^{x_2}}  \frac 1c  e^{-ik ct} \big(V_{20} (k, c_R +U(x_2), x_2) - V_{20} (k, U(x_2), x_2) \big)  dc \Big|_{L_{x_2}^2 L_t^q ([-T, T]) } \\
\le & C |k|^{-\frac 1q} e^{|k|rT} |\p_{c_R} V_{20} |_{L_{x_2}^2 L_{c_R}^{q_1}} \\
\le & C |k|^{-\frac 1q} \mu^{-\frac 12}  e^{|k|rT} \big( |\hat \eta_0(k) | + |k|^{-1} \mu^2 |\hat v_{20}' (k, 0)| + \mu^{1-\ep}|\hat \omega_0(k)|_{L_{x_2}^2} + \mu^{2-\ep}| \hat \omega_0'(k)|_{L_{x_2}^2}\big).
\end{align*} 
The above error analysis implies that the main contribution of the integral along $\p^T \CD_{r_1, r}^{x_2}$ would come from the product 
\[
V_{20} (k, U(x_2), x_2) f(r, x_2, kt), \; \text{ where } \; f(r, x_2, \tau) = \oint_{\p^T \CD_{r_1, r}^{x_2}} \frac {e^{-i\tau c}}{c} dc.
\]
For any $r \in (0, r_2]$, on the one hand, 
\[
|f(r, x_2, \tau)| = e^{r\tau} \big| - (e^{-i \tau c_R} \log c)\big|_{U(-h)-U(x_2)-r_1}^{U(0)-U(x_2)+r_1} + i \tau \oint_{\p^T \CD_{r_1, r}^{x_2}} e^{-i\tau c_R} \log c \, dc \big| \le C (1+|\tau|) e^{r\tau},  
\]
which is useful for $|\tau|\le 1$. On the other hand,  
\[
f(r, x_2, \tau) =  - \Big(\int_{\R+ir} - \int_{(\R+ir) \setminus \p^T \CD_{r_1, r}^{x_2}}\Big) \frac 1c e^{-i\tau c } dc.
\]
The first integral can be evaluated as $ i \pi (sgn(\tau) +1)$ by using the Cauchy Integral Theorem. Integrating the second integral (in the way opposite to the above) we obtain 
\[
\Big|\int_{(\R+ir) \setminus \p^T \CD_{r_1, r}^{x_2}} \frac 1c e^{-i \tau c} dc\Big| = \frac {e^{r \tau}}{|\tau|} \Big| \frac {e^{-i\tau c_R}}c \big|_{U(0)-U(x_2)+r_1}^{U(-h)-U(x_2)-r_1} + \int_{(\R+ir) \setminus \p^T \CD_{r_1, r}^{x_2}} \frac {e^{-i \tau c_R}}{c^2} dc\Big| \le C \frac {e^{r \tau}}{|\tau|}. 
\]
Therefore 
\[
|f(r, x_2, \tau) - i \pi (sgn(\tau) +1)| \le C (1+ |\tau|)^{-1} e^{r |\tau|}, \quad \forall \tau \in \R. 
\]
Along with \eqref{E:V2-L2L^in}, we have 
\begin{align*}
& \Big| V_{20} (k, U(x_2), x_2) \Big( \oint_{\p^T \CD_{r_1, r}^{x_2}}  \frac 1c  e^{-ik ct}  dc -i \pi (sgn(kt) +1)\Big)  \Big|_{L_{x_2}^2 L_t^q ([-T, T]) } \\
\le & C |k|^{-\frac 1q} e^{r|k|T} \big( |\hat \eta_0(k) | + |k|^{-1} \mu^2 |\hat v_{20}' (k, 0)| + \mu^{1-\ep}|\hat \omega_0(k)|_{L_{x_2}^2} 
\big).
\end{align*}
The integrals along the vertical sides of $\p \CD_{r_1, r}^{x_2}$ converge to $0$ as $r \to 0+$ as all the integrands are smooth there. The integrals along $\p^B \CD_{r_1, r}$, $ r\in (0, r_2]$, can be treated much as in the above. Recall $V_2 (k, \bar c, x_2) = \overline{V_2 (-k, c, x_2)}$. Letting $r \to 0+$, the Cauchy Integral Theorem and the above error analysis imply 
\begin{align*}
& \Big| \oint_{\p \CD_{r_1, r_2}^{x_2}} \frac 1c e^{-ik ct}V_2  (k, c+U(x_2), x_2) dc \\
& \qquad \quad - i\pi \big( (1+ sgn(kt))V_{20} (k, U(x_2), x_2)   + (1-sgn(kt)) \overline{V_{20} (-k, U(x_2), x_2)}  \big)\Big|_{L_{x_2}^2 L_t^q ([-T, T]) }\\
\le & C  |k|^{-\frac 1q}\mu^{-\frac 12} \big( |\hat \eta_0(k) | + |k|^{-1} \mu^2 |\hat v_{20}' (k, 0)| + \mu^{1-\ep}|\hat \omega_0(k)|_{L_{x_2}^2} + \mu^{2-\ep}| \hat \omega_0'(k)|_{L_{x_2}^2}\big). 
\end{align*}
Taking $T \to \infty$, it follows from the above inequality and \eqref{E:decay-temp-1}
\be\label{E:decay-temp-2}  \begin{split} 
&\Big| 
\oint_{\p \CD_{r_1, r_2}} \frac {(-i)^m k^{m-1} c^m}{2\pi U'(x_2)} e^{-ik ct}V_2'' (k, c, x_2) dc + \frac {  (-ik)^{m-1} U(x_2)^m}{U'(x_2)} e^{-ik U(x_2) t} 
\hat \Omega^c(k, x_2) \Big|_{L_{x_2}^2 L_t^q (\R) }\\
\le & C  |k|^{m-1-\frac 1q} \mu^{-\frac 32} \big( |\hat \eta_0(k) | + |k|^{-1} \mu^2 |\hat v_{20}' (k, 0)| + \mu^{1-\ep}|\hat \omega_0(k)|_{L_{x_2}^2} + \mu^{2-\ep}| \hat \omega_0'(k)|_{L_{x_2}^2}\big). 
\end{split} \ee
Along with \eqref{E:decay-temp-0.3} and Lemma \ref{L:mode-decay-1} it implies the desired estimate of $\p_t^m (t \hat v_1^c)$. The estimates on $\p_t^m (\hat v_2^c)''$ and $\p_t^m \hat \omega^c$ are also obtained from the above inequality and Lemma \ref{L:mode-decay-1}. 
\end{proof}

Finally we consider $t^2 \hat v_2$. 

\begin{lemma} \label{L:mode-decay-3} 
Assume $U \in C^5$ and \eqref{E:F_0}, 
then, for any $\ep\in (0, 1)$, $q\in (2, \infty]$, and integer $m \ge 0$, there exists $C>0$ determined only by $\ep$, $q$, $m$, $F_0$, $|U'|_{C^4}$, and $|(U')^{-1}|_{C^0}$ (independent of $k \ne 0$) such that 
\begin{align*} 
& \Big| \p_t^m \Big( t^2 \hat v_2^c (t, k, x_2) -  \Big( -\frac {ie^{-ik U(x_2)t}}{k U'(x_2)^2} \hat \Omega^c (k, x_2) 
+ e^{-ik U(0)t}  \hat \Lambda_T(k, x_2) +  e^{-ik U(-h)t}  \hat \Lambda_B(k, x_2) \Big)\Big)\Big|_{L_{x_2}^2 L_t^q (\R) }\\
\le & C  |k|^{m-1-\frac 1q} \mu^{-\frac 32} \big( |\hat \eta_0(k) | + |k|^{-1} \mu^2 |\hat v_{20}' (k, 0)| + \mu^{1-\ep}|\hat \omega_0(k)|_{L_{x_2}^2} + \mu^{2-\ep}| \hat \omega_0'(k)|_{L_{x_2}^2}+ \mu^{3-\ep}| \hat \omega_0''(k)|_{L_{x_2}^2}\big), 
\end{align*}
where 
\be \label{E:Lambda_T}
\hat \Lambda_T( k, x_2) 
=- \frac ik  \frac {U''(0) \hat \eta_0(k) - \hat \omega_0(k, 0)}{U'(0)^2 y_{0-} (k, U(0), 0)} y_{0-} (k, U(0), x_2), 
\ee
\be \label{E:Lambda_B}
\hat \Lambda_B (k, x_2) =\frac {i \hat \omega_0(k, -h) y_{0+} (k, U(-h), x_2)}{kU'(-h)^2 y_{0+} (k, U(-h), -h)}. 
\ee
\end{lemma}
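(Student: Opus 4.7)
\medskip

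\noindent \textbf{Plan of proof.} The starting point is the contour integral representation from Lemma \ref{L:integralF}, which gives
\[
\partial_t^m t^2 \hat v_2^c(t,k,x_2) = -\frac{k(-ik)^m}{2\pi}\oint_{\partial \CD_{r_1,r_2}} t^2 c^m e^{-ikct} V_2(k,c,x_2)\,dc.
\]
The plan is to apply Lemma \ref{L:auxL-1} with $n=2$, $f(c)=c^m V_2$, extracting for $f_1$ precisely the three asymptotic profiles. Lemma \ref{L:V2-esti-1}(1) is designed for exactly this: the modified quantity $\tilde V_2$ satisfies an $L^2_{c_R,x_2}$ bound compatible with the desired rate, and its definition tells us that $\partial_{c_R}^2 V_2$ equals a ``good'' part plus three explicit singular contributions, namely $V_2''/U'(x_2)^2$ in the interior, and boundary corrections proportional to $V_2''(-h)y_+$ and $V_2''(0)y_-$ (weighted by $(g+\sigma k^2)/\BF$). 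By Lemma \ref{L:auxL-1} with $n=2$, the $L^2_{c_R,x_2}$ bound on $\tilde V_2$ furnishes the stated error, so everything reduces to analyzing the three singular pieces.

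\medskip

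The interior piece $\oint c^m e^{-ikct}V_2''/U'(x_2)^2\,dc$ is handled by the same argument used in Lemma \ref{L:mode-decay-2}: substitute $V_2''$ via the Rayleigh equation \eqref{E:Ray-1}, absorb the polynomial-in-$c$ contributions into an $L^q_t$ error via Lemma \ref{L:auxL-1}, and peel off the delta-type contribution at $c=U(x_2)$ from the $1/(U(x_2)-c)$ factor. The only change compared with Lemma \ref{L:mode-decay-2} is the extra $1/U'(x_2)$, which accounts for the factor $1/U'(x_2)^2$ (rather than $1/U'(x_2)$) in the claimed interior profile $-iU'(x_2)^{-2}k^{-1}e^{-ikU(x_2)t}\hat\Omega^c$. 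The explicit form of $\hat\Omega^c$ from \eqref{E:Omega-1} then reproduces the stated leading term, together with the appropriate power of $k$ from the two integrations by parts.

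\medskip

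For the two boundary pieces, the essential observation is that $V_2''(0)$ and $V_2''(-h)$, after substitution via the Rayleigh equation, create simple poles at $c=U(0)$ and $c=U(-h)$ respectively (coming from $\hat\omega_0(k,0)/(U(0)-c)$ and $\hat\omega_0(k,-h)/(U(-h)-c)$, together with contributions from $V_2(0)$ and the surface data via the boundary relation \eqref{E:Ray-3}). Once the contour $\partial\CD_{r_1,r_2}$ is deformed and shrunk, each pole produces an oscillating factor $e^{-ikU(0)t}$ or $e^{-ikU(-h)t}$ times the residue of $(g+\sigma k^2)y_-(k,c,x_2)/(U'(0)^2\BF(k,c))$ at $c=U(0)$, respectively $(g+\sigma k^2)y_+(k,c,x_2)/(U'(-h)^2\BF(k,c))$ at $c=U(-h)$. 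Using $\BF(k,c)=(g+\sigma k^2)y_+(k,c,-h)$ from \eqref{E:Wronskian} and evaluating at $c=U(0)$, $c=U(-h)$, the residue collapses to the quotients $y_{0-}(k,U(0),x_2)/y_{0-}(k,U(0),0)$ and $y_{0+}(k,U(-h),x_2)/y_{0+}(k,U(-h),-h)$ that appear in \eqref{E:Lambda_T}--\eqref{E:Lambda_B}; the prefactors $U''(0)\hat\eta_0(k)-\hat\omega_0(k,0)$ and $\hat\omega_0(k,-h)$ emerge from writing $V_2''(0)$ and $V_2''(-h)$ through the Rayleigh equation and the boundary condition \eqref{E:Ray-BC-1} with data \eqref{E:Ray-data-1}.

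\medskip

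The error between the contour integral and these three explicit profiles is then controlled as follows: the Lemma \ref{L:auxL-1} applied to $\tilde V_2$ (with $n=2$) gives the remainder after extracting the singular pieces; the bulk contour deformation away from $c\in\{U(-h),U(0)\}$ loses nothing because the integrand is analytic there on $\CD_{r_1,r_2}\setminus R(k)$; the regular parts near the two poles (after subtracting residues) contribute $L^2_{c_R}$--type tails that are bounded by $\mu^{-3/2}$ times the initial-data norms on the right-hand side, exactly as in the analogous treatment in the proof of Lemma \ref{L:mode-decay-2}. I expect the main obstacle to be the bookkeeping of the boundary residues: the pieces $V_2''(0)$ and $V_2''(-h)$ depend on $V_2(0)$ and $\hat\eta_0$ through the Rayleigh equation and boundary conditions, and one must track carefully how the Wronskian identity $\BF=(g+\sigma k^2)y_+(\cdot,\cdot,-h)$ converts the residue into the clean forms \eqref{E:Lambda_T}--\eqref{E:Lambda_B}, while simultaneously confirming that the non-resonant parts contribute only to the claimed $L^q_t$ error.
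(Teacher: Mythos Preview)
Your plan is correct and follows essentially the same approach as the paper's proof: apply Lemma~\ref{L:auxL-1} with $n=2$ and $f=c^m V_2$, take $f_1 = c^m(\partial_{c_R}^2 V_2 - \tilde V_2)$ so that the $\tilde V_2$ bound from Lemma~\ref{L:V2-esti-1} controls the remainder, then substitute $V_2''$ via the Rayleigh equation \eqref{E:Ray-1} to split $f_1$ into the interior piece (handled exactly via \eqref{E:decay-temp-2} from the proof of Lemma~\ref{L:mode-decay-2}), a regular $k^2 V_2(0)$ piece absorbed by Lemma~\ref{L:auxL-1}, and the two simple-pole pieces $f_{1T}/(U(0)-c)$ and $f_{1B}/(U(-h)-c)$. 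One terminological caution: the boundary contributions are not literal residues, since $y_\pm$ and $V_2$ are not analytic across $U([-h,0])$; the paper instead shrinks the contour, passes to the $c_I\to 0^+$ limit $f_{1T}^0$, integrates by parts in $c_R$ against $\log(U(0)-c)$, and identifies the jump via the explicit contour integral $\oint_{\partial^T\CD}\frac{e^{-ikct}}{U(0)-c}\,dc$ exactly as in the proof of Lemma~\ref{L:mode-decay-2}---but this is precisely the mechanism you describe, and the evaluation $V_2(k,U(0),0)=-\hat\eta_0(k)$ together with $\BF(k,U(0))=-(g+\sigma k^2)y_{0-}(k,U(0),0)$ produces the formulas \eqref{E:Lambda_T}--\eqref{E:Lambda_B} as you indicate.
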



\begin{remark}
In the above lemmas, we also notice $\hat \Lambda_\dagger (-k, x_2) = \overline{\hat \Lambda_\dagger (k, x_2)}$, $\dagger=T, B$. The leading order terms $\hat \Lambda_B$ and $\hat \Lambda_T$ represent the contribution  from the rigid bottom and the water surface, while the asymptotic vorticity $\hat \Omega^c$ 
from the fluid interior. In the fixed boundary problem for $x_2 \in [-h, 0]$ with slip boundary condition on both horizontal boundaries, $\hat \Omega^c$ and $\hat \Lambda_B$ would take similar forms and $\hat \Lambda_T$ would be similar to $\hat \Lambda_B$. See Subsection \ref{SS:Euler-Channel}.  
\end{remark}

\begin{proof}
The definition of $\hat v_2^c$ implies, for each $x_2 \in [-h, 0]$ and $r_1, r_2 \in (0, \rho_0]$,  
\begin{align*}
& t^2 \p_t^m  \hat v_2^c (t, k, x_2) = \frac {-(-i)^m k^{m+1}}{2 \pi }  \oint_{\p \CD_{r_1, r_2}}   t^2 e^{-ik ct} c^mV_2 (k, c, x_2)  d c.
\end{align*}
Let $f=c^mV_2 $ and 
\[
f_1 = c^m \Big( \frac {V_2''(x_2)}{U'(x_2)^2} - \frac {g+\sigma k^2}{\BF(k, c)} \Big( \frac {V_2'' (-h)}{U'(-h)^2} y_+(x_2) - \frac {V_2''(0)}{U'(0)^2 } y_- (x_2)\Big) \Big) = c^m (\p_{c_R}^2 V_2 - \tilde V_2), 
\]
with $\tilde V_2$ defined in Lemma \ref{L:V2-esti-1}.
Applying Lemma \ref{L:auxL-1} with $n=2$ and Lemma \ref{L:V2-esti-1}, we obtain 
\begin{align*}
& \Big| t^2 \p_t^m \hat v_2^c (t, k, x_2) - \frac {(-i)^m k^{m-1} }{2\pi } \oint_{\p \CD_{r_1, r_2}} e^{-ik ct}f_1 (k, c, x_2) dc\Big|_{L_{x_2}^2 L_t^q (\R) } \\
\le & C |k|^{m-1-\frac 1q} \sup_{c_I \in (0, r_2]}\big( \big| \tilde V_2 |_{L_{c_R, x_2}^2}  + | \p_{c_R} V_2 |_{L_{c_R, x_2}^2} + | V_2 |_{L_{c_R, x_2}^2}\big) \\
\le & C  |k|^{m-1-\frac 1q} \mu^{-\frac 32} \big( |\hat \eta_0(k) | + |k|^{-1} \mu^2 |\hat v_{20}' (k, 0)| + \mu^{1-\ep}|\hat \omega_0(k)|_{L_{x_2}^2} + \mu^{2-\ep}| \hat \omega_0'(k)|_{L_{x_2}^2}+ \mu^{3-\ep}| \hat \omega_0''(k)|_{L_{x_2}^2}\big).
\end{align*}
Substituting $V_2''$ in $f_1$ by using the Rayleigh equation \eqref{E:Ray-1} yields 
\[
f_1 = c^m \Big( 
\frac {V_2''(x_2)}{U'(x_2)^2}+ \frac {f_{1B}}{U(-h)-c} + \frac {f_{1T}}{U(0)-c} 
+ \frac { ( g+ \sigma k^2) y_-(x_2)}{U'(0)^2 \BF(k, c)} k^2 V_2(0) \Big),
\]
where 
\[
f_{1B} = - \frac { ( g+ \sigma k^2)  \hat \omega_0 (-h) }{ 
U'(-h)^2\BF(k, c) } y_+(x_2) = - \frac {  \hat \omega_0 (-h) y_+(x_2)}{ U'(-h)^2 y_+(-h)}, 
\]
\[
f_{1T}= \frac {( g+ \sigma k^2)\big(U''(0) V_2(0) + \hat\omega_0 (0)\big)  }{
U'(0)^2 \BF(k, c) }y_-(x_2).
\]
Again the terms involving $k^2 V_2(0)$ not being divided by $U-c$ can be estimated by using assumption \eqref{E:F_0} and Lemmas \ref{L:auxL-1}, \ref{L:y-pm}, and \ref{L:V2-esti-1} and we have 
 \begin{align*}
& \Big| t^2 \p_t^m \hat v_2^c (t, k, x_2) - \frac {(-i)^m k^{m-1}}{2\pi } \oint_{\p \CD_{r_1, r_2}}  e^{-ik ct} c^m \Big( 
\frac {V_2''(x_2)}{U'(x_2)^2} + \frac {f_{1B}}{U(-h)-c} + \frac {f_{1T}}{U(0)-c} \Big) dc\Big|_{L_{x_2}^2 L_t^q (\R) } \\
\le & C  |k|^{m+1-\frac 1q} \mu^{\frac 12} \big( |\hat \eta_0(k) | + |k|^{-1} \mu^2 |\hat v_{20}' (k, 0)| + \mu^{1-\ep}|\hat \omega_0(k)|_{L_{x_2}^2} + \mu^{2-\ep}| \hat \omega_0'(k)|_{L_{x_2}^2}+ \mu^{3-\ep}| \hat \omega_0''(k)|_{L_{x_2}^2}\big).
\end{align*}
We shall identify the principle contributions from the terms $V_2''(x_2)$, $f_{1B}$, and $f_{1T}$ following a similar strategy and use the same notations $\p^\dagger \CD_{r_1, r_2}$, $\dagger=T, B, L, R$, as in the proof of Lemma \ref{L:mode-decay-2}, with necessary modifications to treat the contributions from the $x_2=0, -h$. 

Fix $T>0$. We start with $f_{1T}$ by letting 
\[
f_{1T}^0 (k, c_R, x_2)= \lim_{c_I \to 0+}   f_{1T} (k, c, x_2) = \frac {( g+ \sigma k^2)\big(U''(0) V_{20}(0) + \hat\omega_0 (0)\big)  }{U'(0)^2 \BF(k, c_R)}y_{0-} (x_2).  
\] 
From assumption \eqref{E:F_0}, Lemmas \ref{L:y0}, \ref{L:pcy-complex}(2b), \ref{L:ev-large-k}, and \ref{L:V2-esti-1}, 
we have, for any $q_1\in [1, 2)$,  
\[
|(f_{1T} -  f_{1T}^0)(k, \cdot+c_I, \cdot)|_{L_{x_2}^\infty W_{c_R}^{1, q_1}} \to 0, \; \text{ as } c_I \to 0+. 
\] 
The next step is the same argument via integrating by parts in $c_R$ as in the proof of Lemma \ref{L:mode-decay-2}, 
\begin{align*}
& \Big| \oint_{\p^T \CD_{r_1, r}}  e^{-ik ct} c^m \frac {f_{1T} (k, c, x_2) - f_{1T}^0 (k, c_R, x_2)}{ U(0)-c}  dc\Big|_{L_{x_2}^2 L_t^q ([-T,T]) } \\
= & \Big| \big(e^{-ik ct} c^m ( f_{1T}  - f_{1T}^0 ) \log (U(0)-c)\big) \big|_{U(0)+r_1+ir}^{U(-h)-r_1 +ir} \\
& \quad \; - \oint_{\p^T \CD_{r_1, r}}  e^{-ik ct} ( -ikt + \p_{c_R}) \big(c^m(f_{1T} - f_{1T}^0 )\big)  \log( U(0)-c)  dc\Big|_{L_{x_2}^2 L_t^q ([-T,T]) }
\to 0 \; \text{ as } r\to 0+.
\end{align*}
From \eqref{E:F_0} and Lemmas \ref{L:y-pm}, \ref{L:pcy1}--\ref{L:pcy3}, \ref{L:e-v-basic-1}(3), and \ref{L:ev-large-k}(1),  one may estimate,
\[
|y_{0-}/\BF |_{L_{x_2}^2 L_{c_R}^\infty} + |\p_{c_R} ( y_{0-}/\BF)|_{L_{x_2}^2 L_{c_R}^{q_1}} \le C \mu^{\frac 52}, \;\; \forall q_1 \in [1, \infty).
\]
Along with Lemma \ref{L:V2-esti-1}, it implies, for any $q_2 \in [1, 2)$,  
\[
|f_{1T}^0|_{L_{x_2}^2 L_{c_R}^\infty}+ 
|\p_{c_R} f_{1T}^0|_{L_{x_2}^2 L_{c_R}^{q_2}} \le C \mu^{-\frac 12} \big( |\hat \eta_0(k) | + |k|^{-1} \mu^2 |\hat v_{20}' (k, 0)| +  |\hat \omega_0(k)|_{L_{x_2}^2} + \mu^{2-\ep} | \hat \omega_0'(k)|_{L_{x_2}^2} |\big),
\]
where $|\hat \omega_0(0)|$ and $|V_2(0)|_{L_{c_R}^\infty}$ were bounded by the $L^2$ norms of $\hat \omega_0(k)$, $\hat \omega_0' (k)$, $|V_2(0)|_{L_{c_R}^2}$, and $|\p_{c_R}V_2(0)|_{L_{c_R}^2}$. Consequently, for any $r\in (0, r_2]$ 
\begin{align*}
&\Big| \oint_{\p^T \CD_{r_1, r}}  \frac {e^{-ik ct} c^m}{U(0)-c} f_{1T}^0 (k, c_R, x_2) dc - f_{1T}^0 (k, U(0), x_2)\oint_{\p^T \CD_{r_1, r}}  \frac {e^{-ik ct} c^m}{U(0)-c}  dc\Big|_{L_{x_2}^2 L_t^q ([-T,T]) } \\
\le & Ce^{r |k| T} |k|^{-\frac 1q}  \mu^{-\frac 12} \big( |\hat \eta_0(k) | + |k|^{-1} \mu^2 |\hat v_{20}' (k, 0)| + |\hat \omega_0(k)|_{L_{x_2}^2} + \mu^{2-\ep} | \hat \omega_0'(k)|_{L_{x_2}^2} \big).
\end{align*}
As in the proof of Lemma \ref{L:mode-decay-2}, by considering contour integrals, we have 
\[
\Big| \oint_{\p^T \CD_{r_1, r}}  \frac {e^{-ik ct}}{U(0)-c}  dc + i\pi (1+ sgn(kt)) e^{-ik U(0)t} \Big| \le \frac {Ce^{r |kt|}}{1+ |kt|}. 
\]
Again, since $\frac {c^m - U(0)^m}{c-U(0)}$ is bounded for $m\ge 1$, the above $|f_{1T}^0|_{L_{x_2}^2 L_{c_R}^\infty}$ estimate implies 
\begin{align*}
&\Big| f_{1T}^0 (k, U(0), x_2)\Big( \oint_{\p^T \CD_{r_1, r}}  \frac {e^{-ik ct} c^m}{U(0)-c}  dc + i\pi (1+ sgn(kt)) e^{-ik U(0)t} U(0)^m \Big) \Big|_{L_{x_2}^2 L_t^q ([-T,T]) } \\
\le & Ce^{r |k|T} |k|^{-\frac 1q}  \mu^{-\frac 12} \big( |\hat \eta_0(k) | + |k|^{-1} \mu^2 |\hat v_{20}' (k, 0)| + |\hat \omega_0(k)|_{L_{x_2}^2} + \mu^2 | \hat \omega_0'(k)|_{L_{x_2}^2} \big).
\end{align*}
The contributions from the integral along $\p^B \CD_{r_1, r_2}$ can be treated similarly and using the conjugacy relation, while the integrals along the vertical boundaries of $\p\CD_{r_1, r_2}$ vanish as $r \to 0+$. Using the Cauchy Integral Theorem, combining the above analysis, letting $r \to 0+$, and then $T\to 0+$, we obtain 
\begin{align*}
& \Big| \oint_{\p^T \CD_{r_1, r_2}}  e^{-ik ct} c^m f_{1T} (k, c, x_2) dc - 2\pi k e^{-ik U(0)t} U(0)^m \hat \Lambda_T(k, x_2)
\Big|_{L_{x_2}^2 L_t^q (\R) }\\
\le & C|k|^{-\frac 1q}  \mu^{-\frac 12} \big( |\hat \eta_0(k) | + |k|^{-1} \mu^2 |\hat v_{20}' (k, 0)| + |\hat \omega_0(k)|_{L_{x_2}^2} + \mu^{2-\ep} | \hat \omega_0'(k)|_{L_{x_2}^2} \big),
\end{align*}
where 
\[
\hat \Lambda_T(k, x_2) = - \tfrac i{2k} \big( (1+ sgn(kt))  f_{1T}^0 (k, U(0), x_2) +(1- sgn(kt))  \overline{f_{1T}^0 (-k, U(0), x_2)}\big).
\]
We give closer look at $\hat \Lambda_T$. From boundary condition \eqref{E:Ray-BC-1}, \eqref{E:Ray-BC-2}, and \eqref{E:Ray-data-1}, 
\[
V_2 (k, U(0), 0)= - \zeta_+ (U(0)) /(g+\sigma k^2) = -\hat \eta_0 (k),
\]
and thus  
\[
f_{1T}^0 (k, U(0), x_2) = \frac {U''(0) \hat \eta_0(k) - \hat \omega_0(k, 0)}{ U'(0)^2 y_{0-} (k, U(0), 0)} y_{0-} (k, U(0), x_2).
\]
Since $y_{0-} (k, U(0), x_2) \in \R$ for $x_2 \in [-h, 0]$, we obtain $\overline {f_{1T}^0 (k, U(0), x_2)}= f_{1T}^0 (-k, U(0), x_2)$ and hence leads to the desired form \eqref{E:Lambda_T} of $\Lambda_T$. The term involving $f_{1B}$ can be analyzed similarly (actually slightly simpler due to $V_2 (-h)=0$) using Lemmas \ref{L:pcy1}--\ref{L:pcy3} and \ref{L:pcy-complex}. The term involving $V_2''$ can be estimated much as in \eqref{E:decay-temp-2}.
Summarizing this estimates we obtain 
\begin{align*} 
& \Big| t^2 \p_t^m \hat v_2^c (t, k, x_2) - (-ik)^{m} \Big(-\frac {iU(x_2)^m}{k U'(x_2)^2}  e^{-ik U(x_2)t} \hat \Omega^c (k, x_2)  
\\
& \qquad \qquad + U(0)^m e^{-ik U(0)t} \hat \Lambda_T(k, x_2) + U(-h)^m e^{-ik U(-h)t}  \hat \Lambda_B(k, x_2) \Big)\Big|_{L_{x_2}^2 L_t^q (\R) }\\
\le & C  |k|^{m-1-\frac 1q} \mu^{-\frac 32} \big( |\hat \eta_0(k) | + |k|^{-1} \mu^2 |\hat v_{20}' (k, 0)| + \mu^{1-\ep}|\hat \omega_0(k)|_{L_{x_2}^2} + \mu^{2-\ep}| \hat \omega_0'(k)|_{L_{x_2}^2}+ \mu^{3-\ep}| \hat \omega_0''(k)|_{L_{x_2}^2}\big). 
\end{align*}
Combining it with Lemma \ref{L:mode-decay-1}, the desired estimate on $\p_t^m (t^2 \hat v_2^c)$  follows.
\end{proof}

\subsection{Linearized capillary gravity waves in the horizontally periodic case of $x_1 \in  \mathbb{T}_L$} \label{SS:per-case} 

In this subsection, we consider the case where the system is periodic in $x_1$ with wave length $L>0$. In this case 
\[
k \in \tfrac {2\pi}L \mathbb{Z}, 
\quad \hat v_{2} (t, k=0, x_2)=0, 
\]
where the latter properties is due to the divergence free condition on $v$. 
For $\dagger=c, p$, let  
\[
v_2^\dagger (t, x) = \sum_{|k| \in \tfrac {2\pi}L \mathbb{N}} \hat v_2^\dagger (t, k, x_2) e^{ik x_1}, \;\; \eta^c (t, x_1) = \sum_{|k| \in \tfrac {2\pi}L \mathbb{N}} \hat \eta^c (t, k) e^{ik x_1}, \;\; v_1^c (t, x) = \sum_{|k| \in \tfrac {2\pi}L \mathbb{N}} \hat v_1^c (t, k, x_2) e^{ik x_1}, 
\]
\[
\eta^p (t, x_1) = \hat \eta_0(0) + \sum_{|k| \in \tfrac {2\pi}L \mathbb{N}} \hat \eta^p (t, k) e^{ik x_1}, \;\; v_1^p (t, x) = \hat v_1(0, x_2) +  \sum_{|k| \in \tfrac {2\pi}L \mathbb{N}} \hat v_1^p (t, k, x_2) e^{ik x_1},
\quad v^\dagger = (v_1^\dagger, v_2^\dagger), 
\]
where $\hat v_1^\dagger$, $\hat v_2^\dagger$, and $\hat \eta^\dagger$ are defined in Lemma \ref{L:integralF} and Corollary \ref{C:integralF}. 
Here we used \eqref{E:0-mean} that the zeroth modes $\hat v_1(k=0)$ and $\hat \eta(0)$ are invariant in $t$. 
Throughout this subsection, we assume \eqref{E:no-S-M} holds for $\BK=\frac {2\pi}L\mathbb{N}$. 

We first give the decay estimates of $(v^c, \eta^c)$ based on Lemma \ref{L:mode-decay-1}--\ref{L:mode-decay-3}. In particular, for the estimates of $t v_1^c$ and $t^2 v_2^c$, recall $\hat \Omega^c (k, x_2)$ and $\hat \Lambda_\dagger (k, x_2)$, $\dagger=B, T$ defined in \eqref{E:Omega-1}, \eqref{E:Lambda_B}, and \eqref{E:Lambda_T}, respective. Let 
\be \label{E:Lambda-def}
\Omega^c (x_1, x_2) = \sum_{|k| \in \tfrac {2\pi}L \mathbb{N}} \hat \Omega^c (k, x_2) e^{ikx_1}, \quad \Lambda_\dagger (x_1, x_2) = \sum_{|k| \in \tfrac {2\pi}L \mathbb{N}} \hat \Lambda_\dagger (k, x_2) e^{ikx_1}.
\ee

\noindent {\it Proof of Theorem \ref{T:decay-per}(1--2).} 
The assumption of the non-existence of singular modes is given in the form of \eqref{E:no-S-M}. 
According to Proposition \ref{P:e-v-0}, \eqref{E:no-S-M} for $\BK=\frac {2\pi}L \mathbb{N}$ implies \eqref{E:F_0} holds for all $k$ with constants $\rho_0$ and $F_0$ uniform in $k$. Therefore from the definition of $v_2^c$ and Lemma \ref{L:mode-decay-1}, it is straight forward to estimate 
\begin{align*}
|\p_t^{n_0} v^c|_{H_{x_1}^{n_1} L_{x_2}^2 L_t^{q_1} (\R)}^2 \le & C \sum_{|k| \in \tfrac {2\pi}L \mathbb{N}} \mu^{-2n_1}  |k|^{2n_0+2 -\frac 2{q_1}}  \big( \mu^{\frac 12} |\hat \eta_0(k)| + |k|^{-1} \mu^{\frac 52} |\hat v_{20}' (k, 0)| + \mu^{\frac 32-\ep} |\hat \omega_0(k)|_{L_{x_2}^2} \big)^2\\
\le & C \sum_{|k| \in \tfrac {2\pi}L \mathbb{N}}  |k|^{2(n_0+n_1+1 -\frac 1{q_1})}  \big( |k|^{-1} |\hat \eta_0(k)|^2 + |k|^{-7} |\hat v_{20}' (k, 0)|^2 + |k|^{2\ep- 3} |\hat \omega_0(k)|_{L_{x_2}^2}^2 \big)\\
\le & C  \big( |\eta_0|_{H_{x_1}^{n_0+n_1+\frac 12 -\frac 1{q_1}}}^2 + |\p_{x_2} v_{20} (\cdot, 0)|_{H_{x_1}^{n_0+n_1-\frac 52-\frac 1{q_1}}}^2 + |\omega_0|_{H_{x_1}^{n_0+n_1-\frac 12 -\frac 1{q_1} +\ep} L_{x_2}^2}^2 \big).
\end{align*} 
The desired inequality follows from $\p_{x_2} v_{20} = -\p_{x_1} v_{10}$.  
The estimates on $\p_t^{n_0} \eta^c$, $t \p_t^{n_0} v_2^c$ and $t \p_t^{n_0} \eta^c$ are obtained similarly. The inequalities on $\p_t^{n_0} (t v_1^c)$ and $\p_t^{n_0} (t^2 v_2^c)$ are obtained by applying Lemma \ref{L:mode-decay-2} and \ref{L:mode-decay-3} through a similar procedure. The estimates on $\Omega^c$ and $\Lambda_\dagger$, $\dagger=B, T$, follow directly from the formula and estimates of their each Fourier modes given in those lemmas and 
\be \label{E:y_--Hs} \begin{split}
&|y_{0\pm} (k, c, \cdot)/y_{0\pm} (k, c, 0)|_{L_{x_2}^q} \le C\mu^{\frac 1q}, \; \forall q\in [1, \infty]; \\
& |\p_{x_2} y_{0\pm} (k, c, \cdot)/y_{0\pm} (k, c, 0)|_{L_{x_2}^q} \le C\mu^{\frac 1q-1}, \; \forall q\in [1, \infty); 
\end{split} \ee
which is obtained using 
\eqref{E:F_0} and Lemma \ref{L:y-pm}. The singular elliptic equations in \eqref{E:Lambda-BVP} are simply from the homogeneous Rayleigh equation with $c=U(-h), U(0)$, satisfied by $y_{0\pm}$ in $(-h, 0)$. The boundary conditions of $\Lambda_B$ and $\Lambda_T$ are direct corollaries of their definitions and the boundary conditions \eqref{E:y-pm} of $y_{0\pm}$.  
\hfill $\square$ \\

Next we consider the $(v^p(t, x), \eta^p(t, x_1))$ part of the linear solution $(v, \eta)$. Let 
\be \label{E:lambda&N} \begin{split}
&\lambda_0 = \max\{\RP\, (-ic_*k) \mid k \in \tfrac {2\pi}L \mathbb{N}, \ c_* \in R(k)\} \ge 0,  \\
&N = \max \{ \text{degree of root } c_* \text{ of } \BF(k, \cdot)  \mid k \in \tfrac {2\pi}L \mathbb{N}, \ c_* \in R(k), \ \RP \, (-ikc_*) =\lambda_0\} \ge 1, 
\end{split} \ee 
where the lower bounds are obtained due to the roots $c^\pm (k)$ for large $k$ (Lemma \ref{L:ev-large-k}(3)). \\


\noindent {\it Proof of Theorem \ref{T:decay-per}(3).} 
On the one hand, according to Lemma \ref{L:ev-large-k}(3), there exists $k_0>0$ such that $R(k) = \{c^\pm (k)\}$ with simple roots $c^\pm(k)$ for all $|k|\ge k_0$. On the other hand, \eqref{E:no-S-M} and Proposition \ref{P:e-v-0} imply that \eqref{E:F_0} holds for all $k \in \frac {2\pi}L \mathbb{N}$. Along with Lemma \ref{L:ev-large-k}(2), we obtain that, for all $k \in \frac {2\pi}L \mathbb{N}$ with $|k|< k_0$, the set of roots $R(k)$ is contained in a subset in the domain of analyticity of $\BF(k, \cdot)$ uniformly in such $k$. Hence $R(k)$ is a discrete set and the total algebraic multiplicity of $c_*\in R(k)$ for all $k \in \frac {2\pi}L \mathbb{N}$ with $|k|< k_0$ is finite. This proves $\lambda_0, N< \infty$. 

For any $k \in \frac {2\pi}L \mathbb{N}$ and $c_*\in R(k)$, let $n$ denote the degree of $c_*$ as a root of $\BF(k, \cdot)$, then $\Bb$ and $\Bb_S$ are polynomials of $t$ of degree $n-1$ (Lemma \ref{L:Bb}). Hence to prove the regularity estimates, we only need to consider $k \in \frac {2\pi}L \mathbb{N}$ with $|k|\ge k_0$ where all roots of $\BF(k, \cdot)$ are simple. For such $k$, $R(k) = \{c^\pm (k)\}$ and Lemma \ref{L:ev-large-k}(3) implies that there exists $C>0$ such that 
\[
|c_*|\ge \tfrac 1C|k|^{\frac 12}, \;\; |\p_c F(k, c_*)| \ge \tfrac 1C |k|^{\frac 32}, \quad \forall c_*\in R(k), \; k_0 \le |k| \in \tfrac {2\pi}L \mathbb{N}.
\] 
From the homogeneous Rayleigh equation \eqref{E:Ray-H1-1}, \eqref{E:F_0}, and Lemma \ref{L:y-pm}, it holds, 
\be \label{E:y_--Hs-reg}
|\p_{x_2}^s y_{-} (k, c_*, \cdot)|_{L_{x_2}^2} \le C\mu^{\frac 32-s}, \quad  \forall \, s\in [0, l_0], \ k \in \R, \ c_* \in R(k). 
\ee
Hence Lemmas \ref{L:integralF} and \ref{L:Bb} and the definition of $v_2^p$ imply, for any $n_1 \in \R$ and $n_2\in [0, l_0]$,  
\begin{align*} 
&\sum_{k_0\le |k| \in \tfrac {2\pi}L \mathbb{N}} \mu^{-2n_1}|\hat v_2^p (t, k, \cdot)|_{H_{x_2}^{n_2}}^2
\le C\sum_{k_0\le |k| \in \tfrac {2\pi}  L \mathbb{N}} \sum_{c_*= c^\pm (k)}\mu^{-2n_1} |\Bb(k, c_*, \cdot)|_{H_{x_2}^{n_2}}^2 \\
\le & C\sum_{k_0\le |k| \in \tfrac {2\pi} L \mathbb{N}} |k|^{2(n_1+n_2)-4} \big(|k|^3 |\hat \eta_0(k)| + |k|^{\frac 12} |\hat v_{20}' (k, 0)| + |k| |\hat \omega_0(k, \cdot)|_{L_{x_2}^2}\big)^2 \\
\le & C
\big(|\eta_0|_{H_{x_1}^{n_1+n_2+1}}^2 + |\p_{x_2} v_{20} (\cdot, 0)|_{H_{x_1}^{n_1+n_2-\frac 32}}^2 + |\omega_0|_{H_{x_1}^{n_1+n_2-1}L_{x_2}^2}^2\big).    
\end{align*}
The desired inequality follows from the divergence free condition. 
The expression of $v_1^p$ involves $y_-'$ and thus it can be differentiated in $x_2$ at most $l_0-1$ times. The procedure to obtain the estimates of $v_1^p$ and $\eta^p$ are similar and we skip the details. 
\hfill $\square$

Finally we give the invariant decomposition of the phase space which proves Theorem \ref{T:decay-per}(4). 

\begin{lemma} \label{L:inv-decomp-per} 
Let 
\[
\BX^p = \overline{ span\{ range (e^{ikx_1} \BP(k, c_*)) \mid c_* \in R(k), \, k \in \tfrac {2\pi}L \mathbb{Z}\}} \subset H^1 (\mathbb{T}_L \times (-h, 0)) \times H^2(\mathbb{T}_L), 
\]
\[
\BP (v, \eta)  = \oplus_{c_* \in R(k), k \in \tfrac {2\pi}L \mathbb{Z}} e^{ikx_1}\BP(k, c_*) \big(\hat v(k), \hat \eta(k)\big),  \quad 
\BX^c = \ker \BP \subset H^1 (\mathbb{T}_L \times (-h, 0)) \times H^2(\mathbb{T}_L).  
\]
where $\BP(k, c_*)$ was defined in \eqref{E:BX-k}, then the following hold. 
\begin{enumerate}
\item $\BP$ is a bounded projection operator from $H^{n} (\mathbb{T}_L \times (-h, 0)) \times H^{n+1}(\mathbb{T}_L)$ to $\BX^p \cap \big(H^{n} (\mathbb{T}_L \times (-h, 0)) \times H^{n+1}(\mathbb{T}_L)\big)$ for any integer $n \in [1, l_0-1]$. 
\item $\BX^p$ and $\BX^c$ are both invariant subspaces of \eqref{E:LE-F}. 
\item Moreover \eqref{E:LE-F} is also well-posed on the $L^2 \times H^1$ completion of $\BX^p$
and is a (possibly unstable) dispersive equation with the (multi-branches of) dispersion relation given by $k \to -k c_*$ including all $c_* \in R(k)$. 
\end{enumerate}\end{lemma}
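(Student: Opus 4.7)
The plan is to prove the three statements in sequence, establishing first that $\BP$ is a bounded projection, then invariance of $\BX^p$ and $\BX^c$, and finally well-posedness with dispersive structure on $\BX^p$.

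For statement (1), I would observe that $\BP$ acts mode-by-mode via $e^{ikx_1}\BP(k,c_*)$, whose range is the finite-dimensional eigenspace $\BX(k,c_*)$ characterized in Lemma \ref{L:S-projection-1}. The quantitative input is Lemma \ref{L:Bb}(3): for $|k|\ge k_0$ with $k_0$ from Lemma \ref{L:ev-large-k}(3), $R(k)=\{c^\pm(k)\}$ consists only of simple roots, and the explicit pointwise bound on $\Bb$, combined with the asymptotics $|\p_c F(k,c^\pm(k))|\sim|k|^{3/2}$, $|y_-(k,c^\pm(k),0)|\sim\mu e^{h/\mu}$, and $|c^\pm(k)|\sim|k|^{1/2}$, yields a Parseval-compatible $L^2_{x_2}$ bound for each Fourier mode of $\BP(v_0,\eta_0)$ in terms of $|\hat\eta_0(k)|$, $|\hat v_{10}(k,0)|$, and $|\hat\omega_0(k)|_{L^2_{x_2}}$ with explicit powers of $\langle k\rangle$. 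Summing over $k$ reproduces precisely the $H^n\times H^{n+1}$ bound stated in Theorem \ref{T:decay-per}(3) evaluated at $t=0$. For the finitely many $|k|<k_0$, assumption \eqref{E:no-S-M} together with Lemma \ref{L:ev-large-k}(2) confine $R(k)$ to a finite set strictly inside $\C\setminus U([-h,0])$, giving finitely many finite-dimensional contributions of uniformly bounded norm. That $\BP^2=\BP$ follows mode-by-mode from Lemma \ref{L:S-projection-1}, and surjectivity onto $\BX^p\cap(H^n\times H^{n+1})$ follows from the definition of $\BX^p$ and the boundedness just obtained.

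For statement (2), the invariance of each $e^{ikx_1}\BX(k,c_*)$ under \eqref{E:LE-F} is the content of Lemma \ref{L:S-projection-1}; on such a finite-dimensional subspace the evolution is polynomial in $t$ times $e^{-ikc_*t}$ of at most order $N-1$. Invariance of the algebraic direct sum is immediate, and the bounded projection property from (1) propagates invariance to the $H^1\times H^2$ closure $\BX^p$ by a standard density argument. The invariance of $\BX^c=\ker\BP$ then follows because $\BP$ commutes with the linear flow, itself a consequence of the mode-by-mode invariance in Lemma \ref{L:S-projection-1}.

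For statement (3), on $\BX^p$ restricted to each Fourier mode, \eqref{E:LE-F} reduces to a finite-dimensional ODE with spectrum $\{-ikc_*:c_*\in R(k)\}$ and Jordan structure of algebraic multiplicity equal to the order of $c_*$ as a root of $\BF(k,\cdot)$. For $|k|\ge k_0$ only the simple real modes $c^\pm(k)$ appear, so the $k$-th Fourier mode evolves isometrically in $L^2_{x_2}$ as a superposition of $e^{-ikc^\pm(k)t}$ times the eigenfunctions generated by $y_-(k,c^\pm(k),\cdot)$, which is exactly the dispersion relation $k\mapsto-kc^\pm(k)$. The low-frequency contribution $|k|<k_0$ is a finite-dimensional flow bounded by $e^{\lambda_0|t|}(1+|t|^{N-1})$. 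Combining these mode-by-mode estimates with density of $\BX^p\cap(H^1\times H^2)$ in its $L^2\times H^1$ completion yields well-posedness of the semigroup, and the full dispersion relation is $\bigcup_{k\in\tfrac{2\pi}L\mathbb{Z}}\{-kc_*:c_*\in R(k)\}$.

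The main obstacle will be the uniform-in-$k$ control of $\BP(k,c_*)$ for $|k|\ge k_0$: one must precisely track how the bounds on $\Bb$ from Lemma \ref{L:Bb}(3) scale with $k$ through $|\p_c F|^{-1}$, $|y_-(k,c_*,0)|^{-1}$, $|c_*|$, and the $L^2_{x_2}$ norm of $e^{(x_2+h)/\mu}$, so that the resulting mode-by-mode estimate, when weighted by $\langle k\rangle^{2n}$ and summed, matches the Sobolev exponents on both sides of the inequality in Theorem \ref{T:decay-per}(3). This tracking has essentially been done in the proof of that theorem; the present argument reduces to identifying the $t=0$ trace of those estimates with the operator norm of $\BP$, together with the density extension from $\BX^p\cap(H^1\times H^2)$ to the $L^2\times H^1$ completion.
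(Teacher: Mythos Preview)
Your proposal is correct and follows essentially the same route as the paper, which gives only a one-paragraph proof: boundedness of $\BP$ is read off from the estimates in Theorem~\ref{T:decay-per}(3) at $t=0$, invariance of $\BX^p$ and $\BX^c$ is attributed to Lemma~\ref{L:Bb} and Corollary~\ref{C:v^c}, and well-posedness on the $L^2\times H^1$ completion of $\BX^p$ comes from the fact that $R(k)=\{c^\pm(k)\}\subset\R\setminus U([-h,0])$ except for finitely many $k$. The only stylistic difference is that for the invariance of $\BX^c$ you argue via commutation of $\BP$ with the flow, whereas the paper cites Corollary~\ref{C:v^c} directly (which asserts that $\hat v_2^c$ itself solves \eqref{E:LEuler-v2}); the two are equivalent.
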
 

The boundedness of $\BP$ follows from the estimates in Theorem \ref{T:decay-per} at $t=0$. The invariance of $\BX^p$ and $\BX^c$ is due to Lemma \ref{L:Bb} and Corollary \ref{C:v^c}. The well-posedness of \eqref{E:LE-F} on the $L^2 \times H^1$ completion of $\BX^p$
is due to the fact that $R(k) = \{c^\pm (k)\} \subset \R\setminus U([-h, 0])$ except for finitely many $k \in \tfrac {2\pi}L \mathbb{Z}$. Here we did not set $\BX^p$ and $\BX^c$ in $L^2 \times H^1$ is due to the issue that we can not ensure $v_1 (\cdot, 0) \in H_{x_1}^{-\frac 12}$ for $v \in L^2$.

\subsection{Linearized capillary gravity waves in the horizontally infinite case of $x_1 \in \R$} \label{SS:R-case}

In this subsection, we consider the case where $x_1 \in \R$ and thus $k \in \R$. Throughout this subsection, we assume \eqref{E:no-S-M} for $\BK=\R$. 
For $\dagger=c, p$, let  
\be \label{E:component-R}
v^\dagger (t, x) = \int_{\R} \hat v^\dagger (t, k, x_2) e^{ik x_1} dk, \;\; \eta^\dagger (t, x_1) =  \int_{\R}  \hat \eta^\dagger (t, k) e^{ik x_1} dk, \quad v^\dagger = (v_1^\dagger, v_2^\dagger), 
\ee
where $\hat v_1^\dagger$, $\hat v_2^\dagger$, and $\hat \eta^\dagger$ are defined in Lemma \ref{L:integralF} and Corollary \ref{C:integralF}. 

Again we first carry out the decay estimates of $(v^c, \eta^c)$ based on Lemma \ref{L:mode-decay-1}--\ref{L:mode-decay-3}. 
Let 
\be \label{E:Lambda-def-R}
\Omega^c (x_1, x_2) = \int_{\R} \hat \Omega^c (k, x_2) e^{ikx_1}dk, \quad \Lambda_\dagger (x_1, x_2) = \int_{\R} \hat \Lambda_\dagger (k, x_2) e^{ikx_1} dk.
\ee



\noindent {\it Proof of Theorem \ref{T:decay-R}(1--3).} 
Again the assumption of the non-existence of singular modes is given in the form of \eqref{E:no-S-M}. 
According to Proposition \ref{P:e-v-0}, assumption \eqref{E:no-S-M} for $\BK =\R$ implies that \eqref{E:F_0} holds and $R(k) = \{c^\pm (k)\}$ with all these simple roots $c^\pm (k)$ of $\BF(k, \cdot)$ away from $U([-h, 0])$ for all $k \in \R$. Moreover, Lemma \ref{L:ev-large-k} yields  
\[
|dist(c^\pm(k), U([-h, 0]))|\ge \tfrac 1C \mu^{-\frac 12}, \;\; |\p_c F(k, c^\pm (k))| \ge \tfrac 1C \mu^{-\frac 32}, \quad \forall k\in \R.
\] 
Like in the periodic-in-$x_1$ case, the proof of the decay of $(v^c, \eta^c)$ is also a direct verification using Lemmas \ref{L:mode-decay-1}--\ref{L:mode-decay-3} along with \eqref{E:y_--Hs} and the divergence free condition. We omit the details. 

From Lemmas \ref{L:integralF} and \ref{L:Bb}(3), we obtain $\Bb$ and $\Bb_S$ are independent of $t$ and satisfy, for any $n_2 \in [0, l_0]$,  
\[
|\p_{x_2}^{n_2} \Bb(k, c^\pm(k), x_2)| \le C\big(|k|\mu^{-\frac 12} |\hat \eta_0(k)| + \mu |\hat v_{20}' (k, 0)| + |k|\mu^{\frac 32} |\hat \omega_0(k)|_{L_{x_2}^2}\big) \big|\tfrac {\mu^{1-n_2} e^{\mu^{-1} (x_2+h)}
}{y_-(k, c^\pm (k), 0)}\big|,
\]
\[
|\Bb_S (k, c^\pm(k))| \le C\big( |\hat \eta_0(k)| + |k|^{-1} \mu^{\frac 32} |\hat v_{20}' (k, 0)| + \mu^{2} |\hat \omega_0(k)|_{L_{x_2}^2} \big).
\]
The desired estimates follow from \eqref{E:y_--Hs-reg}, $ik \hat v_1 = - \hat v_2$, and direct computations. 
\hfill $\square$ \\

Similar to the periodic case, we also have the decomposition by invariant subspaces. 

\begin{lemma} \label{L:inv-decomp-R} 
Let 
\[
\BP (v, \eta)  = \int_\R \BP (k, c^+ (k)) (v, \eta) dk + \int_\R \BP (k, c^- (k)) (v, \eta) dk,  
\]
\[
\BX^p = range(\BP) \subset H^1 (\R \times (-h, 0)) \times H^2(\R), \quad 
\BX^c = \ker \BP \subset H^1 (\R \times (-h, 0)) \times H^2(\R),  
\]
where $\BP(k, c^\pm(k))$ was defined in \eqref{E:BX-k}, then the following hold. 
\begin{enumerate}
\item $\BP$ is a bounded projection operator from $H^{n} (\R \times (-h, 0)) \times H^{n+1}(\R)$ to $\BX^p \cap \big(H^{n} (\R \times (-h, 0)) \times H^{n+1}(\R)\big)$ for any integer $n \in [1, l_0-1]$.
\item $\BX^p$ and $\BX^c$ are both invariant subspaces of \eqref{E:LE-F}. 
\item In fact \eqref{E:LE-F} is also well-posed on the $L^2 \times H^1$ completion of $\BX^p$
and is a  dispersive equation with the dispersion relation given by $k\to - k c^\pm(k)$. 
\end{enumerate}\end{lemma}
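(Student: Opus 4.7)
The proof will mirror the strategy of Lemma \ref{L:inv-decomp-per}, but with the discrete sum over $k \in \tfrac{2\pi}{L}\mathbb{Z}$ replaced by the integral $\int_\R dk$ and with the extra information that, under \eqref{E:no-S-M} for $\BK = \R$, Proposition \ref{P:e-v-0} guarantees $R(k) = \{c^+(k), c^-(k)\}$ with $c^\pm(k) \in \R \setminus U([-h,0])$ for \emph{every} $k\in\R$, uniformly bounded away from $U([-h,0])$ together with the uniform bound \eqref{E:F_0} on $F_0$. The plan is to handle each assertion by decomposing in the $x_1$-Fourier variable and then invoking Lemma \ref{L:S-projection-1} modewise.

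First, for statement (1) I would apply Plancherel in $x_1$ to reduce to estimating, for almost every $k$, the operator $\BP(k, c^+(k)) + \BP(k, c^-(k))$ on the Fourier fibre. Lemma \ref{L:S-projection-1} already tells us each $\BP(k, c^\pm(k))$ is a projection onto the one-dimensional space spanned by the eigenfunction generated by $y_-(k, c^\pm(k), \cdot)$, and the two spectral projections commute and have disjoint ranges because $c^+(k) \ne c^-(k)$. The quantitative bound comes from Lemma \ref{L:Bb}(3), which controls the coefficient of the eigenfunction in $\BP(k, c^\pm(k))(\hat v_0, \hat \eta_0)$ by the initial data, combined with the Sobolev bound \eqref{E:y_--Hs-reg} on $y_-$ and its derivatives and the lower bounds $|\p_c F(k, c^\pm(k))| \gtrsim \mu^{-3/2}$ and $|y_-(k, c^\pm(k), 0)| \gtrsim \mu e^{h/\mu}$ valid for all $k \in \R$. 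Integrating the modewise bound in $k$ and using the divergence-free condition to relate $v_1$ to $v_2$ yields the desired $H^n \times H^{n+1}$ boundedness.

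For statement (2), Lemma \ref{L:S-projection-1} says that each $\BP(k, c^\pm(k))$ commutes with the evolution \eqref{E:LE-F} restricted to the $k$-th Fourier fibre; since \eqref{E:LE-F} decouples across Fourier modes in $x_1$, the total operator $\BP$ commutes with the linear flow, so its range $\BX^p$ and kernel $\BX^c$ are both invariant. Alternatively one observes that $(v^p, \eta^p)$ and $(v^c, \eta^c)$ are both solutions (Corollary \ref{C:v^c} and Lemma \ref{L:Bb}(1)), so the splitting is preserved in time. For statement (3), Theorem \ref{T:e-values}(1) and the integral representation in Lemma \ref{L:integralF} and Corollary \ref{C:integralF} give the explicit solution on $\BX^p$:
\[
(v^p, \eta^p)(t,x) = \sum_{\pm} \int_\R e^{ik(x_1 - c^\pm(k)t)} \BP(k, c^\pm(k))(\hat v_0, \hat \eta_0)(k, x_2) \, dk,
\]
which is manifestly a dispersive superposition with real dispersion relations $\omega = kc^\pm(k)$. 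Because $c^\pm(k) \in \R$, the modewise evolution is a pure phase rotation, so the $H^n \times H^{n+1}$ norm is conserved on $\BX^p$ for each $n$; standard extension gives well-posedness on the $L^2 \times H^1$ completion.

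The only genuine subtlety I anticipate is verifying that the $L^2 \times H^1$ completion of $\BX^p$ is meaningful: one must check that on $\BX^p$ the trace $v_1(\cdot, 0) \in H^{-1/2}$ and $\eta \in H^1$ are controlled compatibly by the $L^2$ norm of $v$, which follows from the explicit form of the eigenfunctions (Remark \ref{R:e-func}) together with the $|c^\pm(k)| \sim \sqrt{\sigma|k|}$ asymptotic balance and the uniform lower bound on $|y_-(k, c^\pm(k), 0)|$. This is what turns the dispersive evolution into a unitary-like one on the completed space, closing the proof.
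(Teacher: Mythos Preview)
Your proposal is correct and follows essentially the same route as the paper. The paper gives no separate proof of Lemma~\ref{L:inv-decomp-R}; it is treated as the obvious analogue of Lemma~\ref{L:inv-decomp-per}, whose brief justification reads boundedness of $\BP$ off the $t=0$ estimates in Theorem~\ref{T:decay-R}(3), invariance from Lemma~\ref{L:Bb} and Corollary~\ref{C:v^c}, and well-posedness on the $L^2\times H^1$ completion from $R(k)=\{c^\pm(k)\}\subset\R\setminus U([-h,0])$ for all $k$. Your argument unpacks exactly these ingredients (invoking Lemma~\ref{L:Bb}(3) and \eqref{E:y_--Hs-reg} directly rather than citing Theorem~\ref{T:decay-R}(3), which is what that theorem's proof does anyway), and your remark on the trace issue for the $L^2\times H^1$ completion matches the paper's own caveat after Lemma~\ref{L:inv-decomp-per}.
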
 

To end this subsection we show that, under assumptions \eqref{E:no-S-M} for $\BK =\R$ and \eqref{E:c(k)-mono-1} for $c^\pm(k)$, due to the monotonicity of $c^\pm(k)$ in $k>0$ (Lemma \ref{L:c(k)-mono}) and the asymptotics of $c^\pm (k)$ for $|k|\gg1$ (Lemma \ref{L:ev-large-k}(3)), the dynamics of the non-singular modes is conjugate to that of linear irrotational capillary gravity waves.  

For $k \in \R$, let 
\[
e^\pm (k, x_2) = (v_1, v_2, \eta) = e^{- |k|h } \Big( \mu^{-\frac 12} y_-' (k, c^\pm (k), x_2),  -i k \mu^{-\frac 12} y_-(k, c^\pm (k), x_2),  -\frac {  y_-(k, c^\pm (k), 0)}{\mu^{\frac 12}(U(0) - c^\pm (k))} \Big), 
\]
\[
e_{ir}^\pm (k, x_2) = (v_1, v_2, \eta) = e^{-|k|h} \Big( \mu^{-\frac 12} \cosh k( x_2+h), -i \mu^{-\frac 12} \sinh k(x_2+h),  \frac { \sinh kh}{k\mu^{\frac 12} c_{ir}^\pm (k)} \Big), \; 
\]
where $c_{ir}^\pm (k)$ 
is the wave speed of the free linear capillary gravity wave (system \eqref{E:LEuler} with $U\equiv 0$ and $\nabla \times v\equiv 0$) given in \eqref{E:dispersion-F}. 
Here $e^\pm (k)$ correspond to the two non-singular modes in the $k$-th Fourier modes in $x_1$, while $e_{ir}^\pm$ the modes of irrotational linear capillary gravity waters waves. Define 
\[
\CE^\pm \big(f \big) = \int_\R f(k) e^{ikx_1} e^{\pm} (k) dk, \quad \CE_{ir}^\pm \big(f \big) = \int_\R f(k) e^{ikx_1} e_{ir}^{\pm} (k) dk, 
\]
\[
\BX^\pm = \{ \CE^\pm (f) \mid  f \in L^2(\R)\}, \quad \BX_{ir}^\pm = \{ \CE_{ir}^\pm (f) \mid  f \in L^2(\R)\}. 
\]
Clearly $\BX^+ \oplus \BX^-$ is equal to the $L^2 \times H^1$ completion of $\BX^p$ and $\CE^\pm : L^2 (\R) \to \BX^\pm$ and $\CE_{ir}^\pm : L^2 (\R) \to \BX_{ir}^\pm$ parametrize $\BX^\pm$ and $\BX_{ir}^\pm$ by $L^2$.  
The following proposition finishes the proof of Theorem \ref{T:e-values}(2b) and Theorem \ref{T:decay-R}(4). 

\begin{proposition} \label{P:conjugacy}
Assume $U\in C^3$ and \eqref{E:no-S-M} for $\BK =\R$, then the following hold. 
\begin{enumerate}
\item The mappings $\CE^\pm$ and $\CE_{ir}^\pm$ are isomorphisms. Moreover there exists $C>0$ depending only on $U$ such that 
\[
C^{-1}\le |e^\pm (k)|_{L^2},  |e_{ir}^\pm (k)|_{L^2}\le C, \;\;\ C^{-1}|f|_{L^2} \le |\CE^\pm (f)|_{L^2}, |\CE_{ir}^\pm (f)| \le C|f|_{L^2}, \quad \forall k \in \R, \, f \in L^2(\R).   
\]
\item For any solution $(v(t, x), \eta(t, x_1))$ to the capillary gravity wave linearized at the shear flow $U(x_2)$, if its component $(v^p, \eta^p)$ as defined in \eqref{E:component-R} belongs to $\BX^+\oplus \BX^-$, then it takes the form 
\be \label{E:p-comp-R}
(v^p, \eta^p) = \CE^+ (e^{-i k c^+(k)t} f_+ (k)) + \CE^- (e^{-i k c^-(k)t} f_- (k)), 
\ee
for some unique $f_\pm \in L^2(\R)$. Similarly, any solution $(v(t, x), \eta(t, x_1)) \in L^2$ to the free linear capillary gravity wave (system \eqref{E:LEuler} with $U\equiv 0$), then it takes the form 
\be \label{E:free-CGWW-R}
(v, \eta) = \CE_{ir}^+ (e^{-i k c_{ir}^+(k)t} f_+ (k)) + \CE_{ir}^- (e^{-i k c_{ir}^-(k)t} f_- (k)), \quad f_\pm \in L^2.
\ee
\item In addition, assume \eqref{E:c(k)-mono-1} for $c^\pm(k)$ and $0\in U\big([-h, 0]\big)$, then there exist odd $C^1$ functions $\varphi_\pm (k)$ and $C>0$ depending only on $U$ such that 
\[
\varphi^\pm (k)  c^\pm (\varphi^\pm (k)) =  k c_{ir}^\pm (k), \quad C^{-1}\le |k|^{-1} |\varphi^\pm(k)|, (\varphi^\pm)' (k) \le C, \quad  
\forall k \in \R.
\]
Define $\Phi^\pm: \BX^\pm \to \BX_{ir}^\pm$ as  
\[
\Phi^\pm \big(\CE^\pm (f)\big) = \CE_{ir}^\pm ( f \circ \varphi^\pm ) 
\]
 for any $\CE^\pm (f) \in \BX^\pm$, then $\Phi^+ + \Phi^-$ is an isomorphism from $(\BX^+ \oplus \BX^-) \cap (H^n \times H^{n+1})$ to $(\BX_{ir}^+ \oplus \BX_{ir}^-) \cap (H^n \times H^{n+1})$ for any $n \in [0, l_0-1]$. Moreover flows \eqref{E:p-comp-R} and \eqref{E:free-CGWW-R} are conjugate through $\Phi^+ + \Phi^-$. Namely, for any $f_\pm \in L^2$, it holds 
\be \label{E:conj} \begin{split}
&\Phi^+  \big(\CE^+ (e^{-i k c^+(k)t} f_+ (k))\big) +  \Phi^-  \big(\CE^- (e^{-i k c^-(k)t} f_- (k))\big) \\
= & \CE_{ir}^+(e^{-i k c_{ir}^+(k)t} f_+ (\varphi^+(k))) + \CE_{ir}^- (e^{-i k c_{ir}^-(k)t} f_- (\varphi^-(k))). 
\end{split}\ee   
\end{enumerate}
\end{proposition}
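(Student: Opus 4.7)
\medskip

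\noindent\textbf{Proof proposal for Proposition \ref{P:conjugacy}.}

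For statement (1), the plan is to reduce everything to the uniform-in-$k$ two-sided bound
\[
C^{-1} \le |e^\pm(k)|_{L^2_{x_2} \oplus \C}^2 = e^{-2|k|h} \mu^{-1}\Big(\int_{-h}^0 \big(|y_-'|^2 + k^2 |y_-|^2\big)(k, c^\pm(k), x_2)\, dx_2 + \frac{|y_-(k, c^\pm(k), 0)|^2}{|U(0)-c^\pm(k)|^2}\Big) \le C,
\]
and its counterpart for $e_{ir}^\pm$. Since Corollary \ref{C:branches-1} and Lemma \ref{L:ev-large-k}(3) give $\mathrm{dist}(c^\pm(k), U([-h,0])) \ge \rho_0$ for bounded $k$ and $|c^\pm(k)| \sim \sqrt{\sigma |k|}$ for $|k|\gg 1$, the regime $\CI_2 = \emptyset$ in Lemma \ref{L:y-pm} applies, which yields $y_- \sim \mu \sinh(\mu^{-1}(x_2+h))$ and $y_-' \sim \cosh(\mu^{-1}(x_2+h))$. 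A direct computation shows that the $v$-component contributes $\sim e^{2(\mu^{-1}-|k|)h}$, which is bounded above and below since $\mu^{-1}-|k|=O(|k|^{-1})$ for $|k|\gg 1$, while the $\eta$-component is dominated by the $v$-component. The free analogue is a direct integration. The isomorphism property then follows from Plancherel applied to the vector-valued Fourier multiplier $\CE^\pm$.

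For statement (2), I would invoke Lemma \ref{L:Bb}(3) and Lemma \ref{L:S-projection-1}: since each $c^\pm(k)$ is a simple root of $F(k, \cdot)$ under \eqref{E:no-S-M} (by Proposition \ref{P:e-v-0}), the eigenvalue $-ikc^\pm(k)$ is algebraically simple in the $k$-th Fourier mode (Corollary \ref{C:v^c}) with eigenfunction a scalar multiple of $e^\pm(k, x_2)$. Hence every element of $\BX^\pm$ evolves by multiplication by $e^{-ikc^\pm(k)t}$ on the Fourier side, and any $(v^p, \eta^p)$ in the closure $\BX^+ \oplus \BX^-$ can be written uniquely in the form \eqref{E:p-comp-R} with $f_\pm \in L^2$ given by the inverse of the isomorphism in (1). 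The formula \eqref{E:free-CGWW-R} for the free linear wave is obtained by an elementary separation-of-variables/Fourier transform as indicated in Subsection \ref{SS:Couette}.

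For statement (3), the construction of $\varphi^\pm$ relies on showing that $k \mapsto k c^\pm(k)$ is a $C^1$ diffeomorphism of $\R$ with $|k c^\pm(k)| \sim \sqrt{\sigma} |k|^{3/2}$ for $|k|\to\infty$. Under \eqref{E:c(k)-mono-1}, Lemma \ref{L:c(k)-mono} gives $(c^+)'(k) > 0$ and $(c^-)'(k) < 0$ for $k > 0$, so combined with $c^+(k) > U(0) \ge 0 \ge U(-h) > c^-(k)$ (using $0 \in U([-h,0])$), both $\frac{d}{dk}(kc^+(k)) = c^+(k) + k(c^+)'(k) > 0$ and $\frac{d}{dk}(kc^-(k)) = c^-(k) + k(c^-)'(k) < 0$ for $k > 0$, giving the required monotonicity. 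The free dispersion $k c_{ir}^\pm(k) = \pm\sqrt{k(g+\sigma k^2)\tanh kh}$ is checked directly to have the same monotonicity and asymptotics. Defining $\varphi^\pm$ as odd extensions of the corresponding inverse compositions gives $\varphi^\pm \in C^1(\R)$ with $C^{-1} \le |\varphi^\pm(k)|/|k|, (\varphi^\pm)'(k) \le C$. The conjugacy \eqref{E:conj} is then a one-line check: after the change of variables $k' = \varphi^\pm(k)$ inside $\CE^\pm$, the exponent $-ik'c^\pm(k')t$ becomes exactly $-ikc_{ir}^\pm(k)t$ by construction. For the isomorphism on $(H^n \times H^{n+1})$-intersections, Plancherel reduces the problem to checking that, for each $n \in [0, l_0-1]$ and each $x_2$-derivative count up to the stated order, the norms $|e^\pm(k)|_{H^m_{x_2}\oplus\C}$ and $|e_{ir}^\pm(k)|_{H^m_{x_2}\oplus\C}$ are each comparable to $1$ uniformly in $k$, combined with boundedness of the change of variable $f \mapsto f\circ\varphi^\pm$ on weighted $L^2(\R, (1+k^2)^n\, dk)$.

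The main obstacle will be Part (3), specifically the uniform-in-$k$ Sobolev comparability of $e^\pm(k)$ and $e_{ir}^\pm(k)$ in $x_2$ to the orders $n$ and $n+1$ required to pass through the full $(H^n \times H^{n+1})$ scale. Getting $H^m_{x_2}$ bounds on $y_-(k, c^\pm(k), \cdot)/y_-(k, c^\pm(k), 0)$ up to $m = l_0$ requires iterating the homogeneous Rayleigh equation \eqref{E:Ray-H1-1} to express higher $x_2$-derivatives in terms of lower ones with coefficients that are uniformly controlled in $k$, exploiting that $c^\pm(k)$ stays off the range of $U$ by a distance $\gtrsim\mu^{-1/2}$. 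A related issue is showing that the Fourier multipliers carrying $e^\pm(k,x_2)$ to $e_{ir}^\pm(k,x_2)$ (in each fixed $x_2$-regularity class) are bounded Fourier multipliers on the relevant weighted spaces; this is where the normalization factors $\mu^{-1/2}e^{-|k|h}$ in the definitions of $e^\pm, e_{ir}^\pm$ must be used decisively so that the ratios have no unbounded growth in $k$.
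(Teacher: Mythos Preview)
Your proposal is essentially correct and follows the same route as the paper's proof: Lemma \ref{L:y-pm} for the uniform mode bounds in (1), the eigenfunction description for (2), and Lemma \ref{L:c(k)-mono} plus the large-$|k|$ asymptotics of $c^\pm(k)$ for (3).

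One point to tighten in Part (1): your lower bound on $|e^\pm(k)|_{L^2}$ via $y_- \sim \mu\sinh(\mu^{-1}(x_2+h))$ from Lemma \ref{L:y-pm} is only valid once $|k|$ is large enough that the relative error $C\mu^\alpha$ in \eqref{E:y-_1} is strictly below $1$; for $|k|$ near $0$ one has $\mu^\alpha = O(1)$ and Lemma \ref{L:y-pm} alone does not yield a positive lower bound. The paper covers this regime by invoking the explicit formula \eqref{E:y_--0} for $y_-(0,c,x_2)$ (which is manifestly nonzero since $c^\pm(0)\notin U([-h,0])$) together with the $\p_k y_-$ bounds, and then continuity/compactness in $k$ on $[-k_0,k_0]$. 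This is a minor patch and your overall strategy is sound.
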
 

\begin{proof}
The estimates on $|e^\pm (k)|_{L^2}$ and $|e_{ir}^\pm (k)|_{L^2}$ are derived from direct computations based on Lemma \ref{L:y-pm}. In particular, since $c^\pm (k) \in \R \setminus U([-h, 0])$, formula \eqref{E:y_--0} of $y_-$ for $k=0$ and the bound on $\p_k y_-$ are used in obtaining the lower bounds of $|e^\pm (k)|_{L^2}$ for $|k|$ close to $0$. The estimates of $|\CE^\pm (f)|_{L^2}$ and $|\CE_{ir}^\pm (f)|_{L^2}$ follow from those of $e^\pm (f)$ and $e_{ir}^\pm (f)$ and the Parseval's identity. Statement (2) is a direct consequence of Lemma \ref{L:e-value} and the definition of $c^\pm (k)$ and $c_{ir}^\pm(k)$. 

Since $c_{ir}^\pm (0) =\sqrt{gh} \ne 0$ and $c^\pm(0) \notin U([-h, 0])$, under the additional assumptions \eqref{E:c(k)-mono-1} and $0\in U\big((-h, 0)\big)$, Proposition \ref{P:e-v-0} and Lemma \ref{L:c(k)-mono} imply that a.) both $k c^\pm(k)$ and $k c_{ir}^\pm (k)$ are odd in $k$, b.) both $\pm k c^\pm(k)$ and $\pm k c_{ir}^\pm (k)$ have positive derivative for $k >0$, and c.) both are of the order $O(|k|^{\frac 32})$ for $|k|\gg1$ and of the order $O(|k|)$ for $|k|\ll1$. 
Hence $\varphi^\pm$ exist and satisfy the estimates, which implies the boundedness of $\Phi$.   
The conjugacy relation \eqref{E:conj} can be verified directly using \eqref{E:p-comp-R}, \eqref{E:free-CGWW-R}, and the definition of $\varphi^\pm$. 
\end{proof}

\begin{remark} \label{R:conjugacy} 
Under \eqref{E:c(k)-mono-1}, $0\in U([-h, 0])$, and $F(k, U(-h))\ne 0$ for all $k \in \R$, without assuming  \eqref{E:no-S-M}, $\BX^+\oplus \BX^-$ may only be a closed subspace of $\BX^p$, but $c^\pm(k) \in \R \setminus U([-h, 0])$ are still monotonic and isolated from the rest of the singular or non-singular modes. The exactly same argument implies that the conclusions of the above proposition still hold on $\BX^+\oplus \BX^-$. 
\end{remark}

\subsection{A remark on the linearized Euler equation on a fixed 2-d channel} \label{SS:Euler-Channel}

We briefly comment on the 2-d Euler equation on a fixed channel $x_2 \in (-h, 0)$ with slip boundary condition $v_2 =0$ at $x_2 =-h, 0$. Let $U(x_2)$ be a shear flow and we assume 
\be \tag{$\mathbf{H}$}
U'>0 \text{ and there are no singular modes.}  
\ee
As in the literatures, singular modes mean linearized solutions in the form of  $e^{ik (x_1 -ct)} v(x_2)$ with $v \in H_{x_2}^1$ and $c \in U([-h, 0])$.

The approach in this paper can be easily adapted to analyze this problem. While the non-homogeneous term in the Rayleigh equation \eqref{E:Ray-1} is still $-\frac {\hat \omega_0(k, x_2)}{U(x_2)-c}$, the main modifications are: a.) replacing $y_+(k, c, x_2)$ and $V_2(k, c, x_2)$ by $\tilde y_+(k, c, x_2)$ and $y_E (k, c, x_2)$ which solve the homogeneous and non-homogeneous Rayleigh equations satisfying boundary conditions 
\[
\tilde y_+(0) = y_E(0) = y_E(-h)=0, \quad \tilde y_+'(0)=1,
\] 
respectively, 
and b.) replacing $\BF(k,c)$  by $y_-(k, c, 0)$. For the simplification of notations, we also use $y_-$, $\tilde y_+$, and $y_E$ to denote their limits as $c_I \to 0+$. In this case of channel flow with fixed boundary, obviously the set of non-singular modes (roots of $y_-(k, c, 0)$ outside $U([-h, 0])$) for all $k\in \R$ is finite, actually empty if $U''\ne 0$. Assuming ($\mathbf{H}$), 
through the same procedure as in Lemma \ref{L:integralF}, the solution $v(t, x)$ to the linearized Euler equation at the shear flow $U(x_2)$ can also be split into 
\[
v(t, x)= v^c (t, x) + v^p (t, x)
\]
associated to the continuous spectra and point spectra. Under assumption ($\mathbf{H}$), $v^p(t, \cdot)$ belongs to the eigenspace of unstable modes which is finite dimensional if $x_1 \in \mathbb{Z}_L$. 
Let 
\begin{align*}
\hat \Omega^c(k, x_2) =& \hat \omega_0 (k, x_2) \\
&+ \tfrac 12 U''(x_2)  \big( (1+ sgn(kt))y_E (k, U(x_2), x_2)  + (1-sgn(kt)) \overline{y_E (-k, U(x_2), x_2)}  \big),  \\
\hat \Lambda_T( k, x_2) 
=& \frac { i\hat \omega_0(k, 0)y_{-} (k, U(0), x_2)}{k U'(0)^2 y_{-} (k, U(0), 0)}, \quad \hat \Lambda_B (k, x_2) = \frac {i \hat \omega_0(k, -h) \tilde y_{+} (k, U(-h), x_2)}{kU'(-h)^2 \tilde y_{+} (k, U(-h), -h)},  
\end{align*}
and $\Omega^c$ and $\Lambda_\dagger$, $\dagger =B, T$, be defined as in \eqref{E:Lambda-def} for the $L$-periodic-in-$x_1$ case and in \eqref{E:Lambda-def-R} for the case of $x_1 \in \R$. 

\begin{theorem} \label{T:decay-channel}
Assume $U \in C^{l_0}$, $l_0\ge 3$, and ($\mathbf{H}$) holds for all $k \in K$ where $K=\frac {2\pi}L \mathbb{N}$ or $K =\R$,  then, for any $q_1\in [2, \infty]$, $q_2\in (2, \infty]$, $\ep>0$, $n_1\in \R$, and integer $n_0\ge 0$, there exists $C>0$ depending only on $q_1$, $q_2$, $\ep$, and $U$ such that any solution with $\hat v_{10} (0, x_2)=0$ satisfy  
\begin{align*}
|\p_t^{n_0} \p_{x_1}^{n_1} v_1^c|_{L_x^2 L_t^{q_1} (\R)} + |\p_t^{n_0} \p_{x_1}^{n_1-1}  (1-\p_{x_1}^2)^{\frac 12}& v_2^c|_{L_x^2 L_t^{q_1} (\R)}  \le C   \big| |\p_{x_1}|^{n_0+n_1-\frac 1{q_1}}\omega_0\big|_{H_{x_1}^{\ep-\frac 12} L_{x_2}^2};  
\end{align*}
if $l_0\ge 4$, then 
\begin{align*}
& \big| t\p_t^{n_0} \p_{x_1}^{n_1} (1-\p_{x_1}^2)^{\frac 12} v_2^c \big|_{L_{x}^2  L_t^{q_1} (\R)} +  \big|\p_t^{n_0} \p_{x_1}^{n_1+1} \big( t v_1^c - U'(x_2)^{-1} \p_{x_1}^{-1}\Omega^c (x_1- U(x_2)t, x_2) \big) \big|_{L_{x}^2  L_t^{q_2} (\R)}\\
&+ \big|\p_t^{n_0} \p_{x_1}^{n_1} \big( \omega^c - \Omega^c (x_1- U(x_2)t, x_2) \big) \big|_{L_{x}^2  L_t^{q_2} (\R)} \\
&+  \big|\p_t^{n_0}\p_{x_1}^{n_1-1} \big( \p_{x_2}^2 v_2^c - \p_{x_1} \Omega^c (x_1- U(x_2)t, x_2) \big) \big|_{L_{x}^2  L_t^{q_2} (\R)} \\
\le & C  \big(  \big| |\p_{x_1}|^{n_0+n_1-\frac 1{q_1}}\omega_0\big|_{H_{x_1}^{\ep+\frac 12} L_{x_2}^2} +  \big| |\p_{x_1}|^{n_0+n_1-\frac 1{q_1}} \p_{x_2} \omega_0\big|_{H_{x_1}^{\ep-\frac 12} L_{x_2}^2}\big);
\end{align*}
and if $U \in C^5$, then 
\begin{align*}
&\big|\p_t^{n_0}\p_{x_1}^{n_1+1} \big( t^2 v_2^c - U'(x_2)^{-2} \p_{x_1}^{-1} \Omega^c (x_1- U(x_2)t, x_2) 
 - \Lambda_B (x_1- U(-h)t, x_2)  \\
 & \qquad \qquad \qquad \qquad \qquad \qquad\qquad  \qquad \qquad\qquad  \qquad 
- \Lambda_T (x_1- U(0)t, x_2) \big) \big|_{L_{x}^2  L_t^{q_2} (\R)}  \\ 
\le & C  \big(  \big| |\p_{x_1}|^{n_0+n_1-\frac 1{q_1}}\omega_0\big|_{H_{x_1}^{\ep+\frac 12} L_{x_2}^2}  +  \big| |\p_{x_1}|^{n_0+n_1-\frac 1{q_1}} \p_{x_2} \omega_0\big|_{H_{x_1}^{\ep-\frac 12} L_{x_2}^2}+  \big| |\p_{x_1}|^{n_0+n_1-\frac 1{q_1}} \p_{x_2}^2 \omega_0\big|_{H_{x_1}^{\ep-\frac 32} L_{x_2}^2} \big).
\end{align*}
Moreover, 
\[
|\Omega^c -\omega_0|_{H_{x_1}^{n_1} L_{x_2}^2} \le C  |\omega_0|_{H_{x_1}^{n_1-1+\ep}L_{x_2}^2},
\quad 
|\p_{x_2} \Omega^c -  \p_{x_2} \omega_0|_{H_{x_1}^{n_1} L_{x_2}^2} 
\le C  \big(  |\omega_0|_{H_{x_1}^{n_1 +\ep} L_{x_2}^2}+ |\p_{x_2} \omega_0|_{H_{x_1}^{n_1-1 +\ep} L_{x_2}^2} \big). 
\]
\[
|k \hat \Lambda_B(k, \cdot)|_{L_{x_2}^q} \le C \langle k\rangle^{-\frac 1q} |\hat \omega_0(\cdot, -h)|, 
\quad |k \hat \Lambda_T(k, \cdot)|_{L_{x_2}^q} \le C \langle k\rangle^{-\frac 1q} |\hat \omega_0(k, 0)|, \quad \; \forall q\in [1, \infty],
\]
\[
|k\p_{x_2}  \hat \Lambda_B(k, \cdot)|_{L_{x_2}^q} \le C \langle k\rangle^{1-\frac 1q} |\hat \omega_0(\cdot, -h)|, 
\;\; |k \p_{x_2} \hat \Lambda_T(k, \cdot)|_{L_{x_2}^q} \le C \langle k\rangle^{1-\frac 1q}  |\hat \omega_0(k, 0)|, \;\; \forall q\in [1, \infty).
\]
Finally, $\Lambda_\dagger$, $\dagger=B, T$, satisfy $\hat \Lambda_\dagger (k=0, x_2) =0$ and  
\[\begin{cases}  
- (U- U(0)) \Delta \Lambda_T + U'' \Lambda_T =0, \qquad \qquad \qquad \qquad \qquad \qquad \qquad x_2 \in (-h, 0), \\ 
\Lambda_T (x_1, -h) = 0, \qquad \p_{x_1} \Lambda_T (x_1, 0) = - U'(0)^{-2}  \omega_0(x_1, 0); 
\end{cases}\]
\[ \begin{cases}  
- (U- U(-h)) \Delta \Lambda_B + U'' \Lambda_B =0, \qquad \qquad \qquad \qquad \qquad \qquad \qquad x_2 \in (-h, 0), \\ 
\p_{x_1} \Lambda_B (\cdot, -h) = - U'(-h)^{-2} \omega_0(x_1, -h), \qquad \Lambda_B (x_1, 0) =0. 
\end{cases}\]
\end{theorem}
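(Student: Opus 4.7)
The plan is to mimic the structure of Sections \ref{S:Ray-BC}--\ref{S:Linear} with the following bookkeeping simplifications imposed by the fixed-boundary setting. First, Fourier-transforming \eqref{E:Euler-1}--\eqref{E:Euler-2} with $g=0$ in $x_1$, applying the Laplace transform in $t$ with the change of variable $c = is/k$, and using the slip condition $\hat v_2(t,k,\pm h, 0)=0$ in place of \eqref{E:LEuler-4}--\eqref{E:LEuler-5} yields the same Rayleigh equation \eqref{E:Ray-1} but now with the zero Dirichlet conditions $V_2(k,c,-h)=V_2(k,c,0)=0$. Thus $V_2(k,c,x_2) = y_E(k,c,x_2)$ directly, with no surface forcing term. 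The inverse Laplace transform (as in Lemma \ref{L:integralF}) gives
\[
\hat v_2(t,k,x_2) = \hat v_2^c + \hat v_2^p, \qquad \hat v_2^c = -\tfrac{k}{2\pi}\oint_{\p\CD_{r_1,r_2}}e^{-ikct}y_E(k,c,x_2)\,dc,
\]
with $\hat v_2^p$ collecting residues at roots of $y_-(k,c,0)=0$ (all in a compact subset of $\C\setminus U([-h,0])$ by the standard Howard semicircle theorem and $\mathbf{H}$). The assumption $\hat v_{10}(0,x_2)=0$ ensures the $k=0$ Fourier mode is trivial.

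Next I would adapt Subsections \ref{SS:Ray-NH-HBC}--\ref{SS:Ray-NH-CGW} to estimate $y_E$ and its derivatives in $c$. Since $y_E$ already has zero boundary data, Lemmas \ref{L:y_nh-esti-2}, \ref{L:y_nh0-2-esti}, \ref{L:y_nh-esti-3}, \ref{L:y_nh0-3-esti} apply verbatim with $\tilde y_+$ replacing $y_+$ (note $\tilde y_+$ satisfies \eqref{E:y-_1}--\eqref{E:y-_4} with $x_2$ reflected, under the analogous non-vanishing hypothesis on $\tilde y_+(-h)\sim y_-(k,c,0)$). The $c$-derivative estimates of Proposition \ref{P:pcy_B} go through with $\xi_-=\xi_1=\xi_2=0$ and $\psi_0=-\hat\omega_0(k,\cdot)$; the only change is that the correction terms at $x_2=0,-h$ in $\tilde y_B$ now use $y_-(k,c,0)^{-1}$ in place of $(g+\sigma k^2)/\BF(k,c)$, which is uniformly bounded thanks to Lemma \ref{L:y-lower-b}(5) and the non-existence-of-singular-modes hypothesis. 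This yields the $L^2_{c_R,x_2}$ bounds on $y_E$, $\p_{c_R}y_E$, and the modified $\tilde y_E = \p_{c_R}^2 y_E - U'(x_2)^{-2}y_E'' + U'(0)^{-2}y_E''(0)y_-(x_2)/y_-(0) - U'(-h)^{-2}y_E''(-h)\tilde y_+(x_2)/\tilde y_+(-h)$ analogous to that in Lemma \ref{L:V2-esti-1}.

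Third, I apply Lemma \ref{L:auxL-1} exactly as in Lemmas \ref{L:mode-decay-1}--\ref{L:mode-decay-3}, but now without the $\mu^{\frac12}|\hat\eta_0|$ and $\mu^{5/2}|k|^{-1}|\hat v_{20}'(k,0)|$ terms. For $tv_1^c$, $\omega^c$, $\p_{x_2}^2 v_2^c$, I substitute $V_2''=k^2V_2 + (U''V_2+\hat\omega_0)/(U-c)$ via the Rayleigh equation, pushing the contour to $U([-h,0])$; the Cauchy-integral analysis of $\oint c^{-1}e^{-ikct}V_{20}(k,c+U(x_2),x_2)dc$ on the shifted rectangle produces the interior-scattering term $\hat\Omega^c$ exactly as in \eqref{E:decay-temp-2}. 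For $t^2 v_2^c$, after substituting $V_2''$, the terms $f_{1T}=-\hat\omega_0(0)y_-(x_2)/(U'(0)^2 y_-(0)(U(0)-c))$ and $f_{1B}=-\hat\omega_0(-h)\tilde y_+(x_2)/(U'(-h)^2\tilde y_+(-h)(U(-h)-c))$ generate, by the same $\oint (U(0)-c)^{-1}e^{-ikct}dc \approx -i\pi(1+\mathrm{sgn}(kt))e^{-ikU(0)t}$ computation, the asymptotic terms $\hat\Lambda_T$ and $\hat\Lambda_B$ as defined in the theorem. The absence of the $\hat\eta_0$ dependence inside $V_2(0)$ cleans up the expression of $\hat\Lambda_T$ compared with \eqref{E:Lambda_T}.

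Finally, the estimates on $\Omega^c-\omega_0$, $\Lambda_T$, $\Lambda_B$ follow immediately from their explicit forms, Lemma \ref{L:V2-esti-1}(1) restricted to the channel case, and \eqref{E:y_--Hs}; the elliptic boundary value problems for $\Lambda_T,\Lambda_B$ are verified by noting that $y_-(k,U(0),\cdot)$ and $\tilde y_+(k,U(-h),\cdot)$ solve the corresponding homogeneous Rayleigh equation on $(-h,0)$, reading off the boundary values from the prescribed initial conditions of $y_-,\tilde y_+$ at $x_2=-h,0$. The main technical obstacle is the verification that the $c$-derivative estimates of $y_E$ (Proposition \ref{P:pcy_B} adapted) hold with the replacement $\BF\leadsto y_-(k,c,0)$ uniformly in $k$; this requires checking that the lower bound analogous to \eqref{E:F_0} for $|y_-(k,c,0)|$ holds in a uniform neighborhood $\CD_{\rho_0,\rho_0}$ of $U([-h,0])$, which is exactly the content of Lemma \ref{L:y-lower-b}(5) under hypothesis $\mathbf{H}$. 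Everything else is a bookkeeping reduction of the water-wave arguments.
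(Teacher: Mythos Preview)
Your proposal is correct and follows essentially the same approach as the paper. The paper itself presents Theorem \ref{T:decay-channel} only as a brief remark, listing precisely the modifications you identify (replacing $y_+$ by $\tilde y_+$, $V_2$ by $y_E$, and $\BF$ by $y_-(k,c,0)$) and noting that the arguments of Sections \ref{S:Ray-BC}--\ref{S:Linear} carry over; your write-up fills in the bookkeeping details of that adaptation, including the key observation that Lemma \ref{L:y-lower-b}(5) supplies the uniform lower bound replacing \eqref{E:F_0}.
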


\begin{remark} \label{R:boundryC}
In the case of the Couette flow $U(x_2)=x_2$, assumption ($\mathbf{H}$) is satisfied. Obviously $\Omega^c = \omega_0$, which in fact gives the whole linearized vorticity $\omega(t, x) = \omega_0(x_1 - x_2t, x_2)$ and the leading asymptotic terms of $tv_1$ and $\p_{x_2}^2 v_2$. However, $t^2 v_2$ does also include contributions $\Lambda_T$ and $\Lambda_B$ from the top and bottom boundaries. These  asymptotic leading order  terms are essentially same as those obtained in \cite{Jia20S} (after simplifications of (5.1) in Lemma 5.1 there), see also Lemma 3 in \cite{Zi16}. 
\end{remark}

\begin{center} Acknowledgement \end{center} 

The second author would like to thank Zhiwu Lin, Hao Jia, and Zhifei Zhang for helpful discussions during the completion of the paper.

\bibliographystyle{abbrv}
\bibliography{bibliography}

\begin{thebibliography}{10}

\bibitem{AK98}
V.~I. Arnold and B.~A. Khesin.
\newblock {\em Topological methods in hydrodynamics}, volume 125 of {\em
  Applied Mathematical Sciences}.
\newblock Springer-Verlag, New York, 1998.

\bibitem{BCV19}
J.~Bedrossian, M.~Coti~Zelati, and V.~Vicol.
\newblock Vortex axisymmetrization, inviscid damping, and vorticity depletion
  in the linearized 2{D} {E}uler equations.
\newblock {\em Ann. PDE}, 5(1):Paper No. 4, 192, 2019.

\bibitem{BeMa15}
J.~Bedrossian and N.~Masmoudi.
\newblock Inviscid damping and the asymptotic stability of planar shear flows
  in the 2{D} {E}uler equations.
\newblock {\em Publ. Math. Inst. Hautes \'{E}tudes Sci.}, 122:195--300, 2015.

\bibitem{BM10}
F.~Bouchet and H.~Morita.
\newblock Large time behavior and asymptotic stability of the 2{D} {E}uler and
  linearized {E}uler equations.
\newblock {\em Phys. D}, 239(12):948--966, 2010.

\bibitem{BSWZ16}
O.~Buhler, J.~Shatah, S.~Walsh, and C.~Zeng.
\newblock On the wind generation of water waves.
\newblock {\em Arch. Ration. Mech. Anal.}, 222:827 -- 878, 2016.

\bibitem{Ca60}
K.~M. Case.
\newblock Stability of inviscid plane {C}ouette flow.
\newblock {\em Phys. Fluids}, 3:143--148, 1960.

\bibitem{DR04}
P.~Drazin and W.~Reid.
\newblock Hydrodynamic stability.
\newblock {\em Cambridge: Cambridge University Press.}, 2004.

\bibitem{Fj50}
R.~Fj\o~rtoft.
\newblock Application of integral theorems in deriving criteria of stability
  for laminar flows and for the baroclinic circular vortex.
\newblock {\em Geofys. Publ. Norske Vid.-Akad. Oslo}, 17(6):52, 1950.

\bibitem{FH98}
S.~Friedlander and L.~Howard.
\newblock Instability in parallel flows revisited.
\newblock {\em Stud. Appl. Math.}, 101(1):1--21, 1998.

\bibitem{FSV97}
S.~Friedlander, W.~Strauss, and M.~Vishik.
\newblock Nonlinear instability in an ideal fluid.
\newblock {\em Ann. Inst. H. Poincar\'{e} Anal. Non Lin\'{e}aire},
  14(2):187--209, 1997.

\bibitem{Gre00}
E.~Grenier.
\newblock On the nonlinear instability of euler and prandtl equations.
\newblock {\em Communications on Pure and Applied Mathematics}, 53:1067--1091,
  2000.

\bibitem{GNRS20}
E.~Grenier, T.~T. Nguyen, F.~Rousset, and A.~Soffer.
\newblock Linear inviscid damping and enhanced viscous dissipation of shear
  flows by using the conjugate operator method.
\newblock {\em J. Funct. Anal.}, 278(3):108339, 27, 2020.

\bibitem{How61}
L.~Howard.
\newblock Note on a paper of john w. miles.
\newblock {\em J. Fluid. Mech.}, 10:509--512, 1961.

\bibitem{HL08}
V.~Hur and Z.~Lin.
\newblock Unstable surface waves in running water.
\newblock {\em Comm. Math Phys.}, 282:733 -- 796, 2008.

\bibitem{HL13}
V.~M. Hur and Z.~Lin.
\newblock Erratum to: {U}nstable surface waves in running water [mr2426143].
\newblock {\em Comm. Math. Phys.}, 318(3):857--861, 2013.

\bibitem{IJ20a}
A.~D. Ionescu and H.~Jia.
\newblock Inviscid damping near the {C}ouette flow in a channel.
\newblock {\em Comm. Math. Phys.}, 374(3):2015--2096, 2020.

\bibitem{IJ20b}
A.~D. Ionescu and H.~Jia.
\newblock Nonlinear inviscid damping near monotonic shear flows.
\newblock {\em arXiv:2001.03087}, 2020.

\bibitem{IJ21}
A.~D. Ionescu and H.~Jia.
\newblock Linear vortex symmetrization: the spectral density function.
\newblock {\em arXiv:2109.12815}, 2021.

\bibitem{Jia20A}
H.~Jia.
\newblock Linear inviscid damping in {G}evrey spaces.
\newblock {\em Arch. Ration. Mech. Anal.}, 235(2):1327--1355, 2020.

\bibitem{Jia20S}
H.~Jia.
\newblock Linear inviscid damping near monotone shear flows.
\newblock {\em SIAM J. Math. Anal.}, 52:623--652, 2020.

\bibitem{LinC55}
C.~C. Lin.
\newblock {\em The theory of hydrodynamic stability}.
\newblock Cambridge, at the University Press, 1955.

\bibitem{Lin03}
Z.~Lin.
\newblock Instability of some ideal plane flows.
\newblock {\em SIAM J. Math. Anal.}, 35(2):318--356, 2003.

\bibitem{Lin04}
Z.~Lin.
\newblock Some stability and instability criteria for ideal plane flows.
\newblock {\em Commun.Math.Phys.}, 246:87--112, 2004.

\bibitem{LZ11}
Z.~Lin and C.~Zeng.
\newblock Inviscid dynamical structures near couette flow.
\newblock {\em Arch Rational Mech Anal 200.}, page 1075–1097, 2011.

\bibitem{LZ13}
Z.~Lin and C.~Zeng.
\newblock Unstable manifolds of euler equations.
\newblock {\em Communications on Pure and Applied Mathematics}, 66:1803--1836,
  2013.

\bibitem{MP94}
C.~Marchioro and M.~Pulvirenti.
\newblock {\em Mathematical theory of incompressible nonviscous fluids},
  volume~96 of {\em Applied Mathematical Sciences}.
\newblock Springer-Verlag, New York, 1994.

\bibitem{MZ20}
N.~Masmoudi and W.~Zhao.
\newblock Nonlinear inviscid damping for a class of monotone shear flows in
  finite channel.
\newblock {\em arXiv:2001.08564}, 2020.

\bibitem{Mi57}
J.~Miles.
\newblock On the generation of surface waves by shear flows.
\newblock {\em J. Fluid Mech.}, 3:185--204, 1957.

\bibitem{Orr1907}
W.~M.~F. Orr.
\newblock Stability and instability of steady motions of a perfect liquid.
\newblock {\em Proc. Ir. Acad. Sect. A, Math Astron. Phys. Sci.}, 27:9--66,
  1907.

\bibitem{Ray1880}
L.~Rayleigh.
\newblock On the stability or instability of certain fluid motions.
\newblock {\em Pro. London Math. Soc.}, 9:57--70, 1880.

\bibitem{RR13}
M.~Renardy and Y.~Renardy.
\newblock On the stability of inviscid parallel shear flows with a free
  surface.
\newblock {\em Math. Fluid Mech.}, 15:129--137, 2013.

\bibitem{SZ08a}
J.~Shatah and C.~Zeng.
\newblock Geometry and a priori estimates for free boundary problems of the
  {E}uler equation.
\newblock {\em Comm. Pure Appl. Math.}, 61(5):698--744, 2008.

\bibitem{Ste95}
S.~A. Stepin.
\newblock The nonselfadjoint {F}riedrichs model in the theory of hydrodynamic
  stability.
\newblock {\em Funktsional. Anal. i Prilozhen.}, 29(2):22--35, 95, 1995.

\bibitem{To35}
W.~Tollmien.
\newblock Ein allgemeines kriterium der instabititat laminarer
  geschwindigkeitsverteilungen.
\newblock {\em Nachr. Ges. Wiss. Gottingen Math. Phys.}, 50:79--114, 1935.

\bibitem{WZZ18}
D.~Wei, Z.~Zhang, and W.~Zhao.
\newblock Linear inviscid damping for a class of monotone shear flow in sobolev
  spaces.
\newblock {\em Commun. on pure and applied math}, 71:617--687, 2018.

\bibitem{WZZ19}
D.~Wei, Z.~Zhang, and W.~Zhao.
\newblock Linear inviscid damping and vorticity depletion for shear flows.
\newblock {\em Ann. PDE}, 5(1):Paper No. 3, 101, 2019.

\bibitem{WZZ20}
D.~Wei, Z.~Zhang, and W.~Zhao.
\newblock Linear inviscid damping and enhanced dissipation for the {K}olmogorov
  flow.
\newblock {\em Adv. Math.}, 362:106963, 103, 2020.

\bibitem{WZZhu20}
D.~Wei, Z.~Zhang, and H.~Zhu.
\newblock Linear inviscid damping for the {$\beta$}-plane equation.
\newblock {\em Comm. Math. Phys.}, 375(1):127--174, 2020.

\bibitem{Yih72}
C.-S. Yih.
\newblock Surface waves in flowing water.
\newblock {\em J. Fluid. Mech.}, 51:209--220, 1972.

\bibitem{Zi16}
C.~Zillinger.
\newblock Linear inviscid damping for monotone shear flows in a finite periodic
  channel, boundary effects, blow-up and critical {S}obolev regularity.
\newblock {\em Arch. Ration. Mech. Anal.}, 221(3):1449--1509, 2016.

\bibitem{Zi17}
C.~Zillinger.
\newblock Linear inviscid damping for monotone shear flows.
\newblock {\em Trans. Amer. Math. Soc.}, 369(12):8799--8855, 2017.

\end{thebibliography}
\endgroup
\end{document}